\numberwithin{equation}{section}
\numberwithin{figure}{section}
\newtheorem{theorem}{Theorem}[section]
\newtheorem{assumption}[theorem]{Assumption}
\newtheorem{corollary}[theorem]{Corollary}
\newtheorem{proposition}[theorem]{Proposition}
\newtheorem{lemma}[theorem]{Lemma}
\theoremstyle{definition}
\newtheorem{definition}[theorem]{Definition}
\newtheorem*{note}{Note}
\newcommand*{\Id}{\ensuremath{\mathrm{I}_d}}
\newcommand{\M}{\mathscr{M}}
\newcommand*{\N}{\ensuremath{\mathbb{N}}}
\newcommand*{\Z}{\ensuremath{\mathbb{Z}}}
\newcommand*{\R}{\ensuremath{\mathbb{R}}}
\newcommand{\eps}{\varepsilon}
\renewcommand*{\tilde}{\widetilde}
\renewcommand{\P}{\ensuremath{\mathbb{P}}}
\newcommand{\ep}{\eps}
\newcommand{\ap}{\mathfrak{p}}
\DeclareMathOperator{\SL}{SL}
\DeclareMathOperator{\linspan}{span}
\DeclareMathOperator{\Reach}{Reach}
\DeclareMathOperator{\Lip}{Lip}
\DeclareMathOperator{\dist}{dist}
\DeclareMathOperator{\Leb}{Leb}
\DeclareMathOperator{\id}{id}
\DeclareSymbolFont{boldoperators}{OT1}{cmr}{bx}{n}
\newcommand{\T}{\mathbb{T}}
\def\Xint#1{\mathchoice
  {\XXint\displaystyle\textstyle{#1}}%
  {\XXint\textstyle\scriptstyle{#1}}%
  {\XXint\scriptstyle\scriptscriptstyle{#1}}%
  {\XXint\scriptscriptstyle\scriptscriptstyle{#1}}%
  \!\int}
\def\XXint#1#2#3{{\setbox0=\hbox{$#1{#2#3}{\int}$}
    \vcenter{\hbox{$#2#3$}}\kern-.5\wd0}}
\def\fint{\Xint-}
\let\originalleft\left
  \let\originalright\right
\renewcommand{\left}{\mathopen{}\mathclose\bgroup\originalleft}
  \renewcommand{\right}{\aftergroup\egroup\originalright}
\newcommand{\<}{\langle}
\renewcommand{\>}{\rangle}
\newcommand{\E}{\mathbb{E}}
\newcommand{\xspace}{\T^2}
\newcommand{\veespace}{S^1}
\newcommand{\uspace}{H^5_{\mathrm{sol},0}(\xspace; \R^2)}
\newcommand{\uspacenotsol}{H^5(\xspace; \R^2)}
\newcommand{\wspace}{H^4(\xspace)}
\newcommand{\vfspace}{\mathfrak{X}^1}
\newcommand{\dif}{\mathrm{d}}
\renewcommand{\hat}{\widehat}
  \edef\sign{\pgfmathresult}%
  \edef\x{\pgfmathresult}%
  \edef\t{\pgfmathresult}%
  \edef\y{\pgfmathresult}%
\newcommand{\addperiod}[1]{#1.}
\titleformat*{\subsection}{\bfseries}
\titleformat{\subsubsection}[runin]
{\normalfont\bfseries}
{\thesubsubsection.}
{0.5em}
{\addperiod}
\titleformat*{\subsubsection}{\normalfont\itshape}
\titleformat*{\paragraph}{\bfseries}
\titleformat*{\subparagraph}{\large\bfseries}
\title{Exponential scalar mixing for the 2D Navier--Stokes equations with degenerate stochastic forcing}
\author{William Cooperman\thanks{Courant Institute of Mathematical Sciences, New York University, NY, USA.
    {\footnotesize \href{mailto: bill@cprmn.org}{bill@cprmn.org}.}
  }
  \and
  Keefer Rowan\thanks{Courant Institute of Mathematical Sciences,  New York University, NY, USA.
    {\footnotesize \href{mailto:keefer.rowan@cims.nyu.edu}{keefer.rowan@cims.nyu.edu}.}
  }
}
\date{\today}
\begin{document}

\maketitle

\begin{abstract}
    We show exponential mixing of passive scalars advected by a solution to the stochastic Navier--Stokes equations with finitely many (e.g.\ four) forced modes satisfying a hypoellipticity condition. Our proof combines the asymptotic strong Feller framework of Hairer and Mattingly with the mixing theory of Bedrossian, Blumenthal, and Punshon-Smith.
\end{abstract}


\section{Introduction}
Consider the incompressible two-dimensional stochastic Navier--Stokes equations on the torus,
\begin{equation}
  \label{eq:sns-velocity-form}
  \begin{cases}
    \dot{u}_t = \nu \Delta u_t - u_t \cdot \nabla u_t + \sum_{k \in F} c_k \nabla^\perp \Delta^{-1} e_k(x) \dif W^k_t &\quad \text{for } t > 0,\\
    \nabla \cdot u_t = 0,\\
    u_0 \in \uspace,
  \end{cases}
\end{equation}
where $F \subseteq \Z^2 \setminus \{0\}$ is the finite collection of forced modes, $\{W^k_t\}_{k \in F}$ are independent standard Brownian motions, $\nu > 0$, $c_k \neq 0$ for each $k \in F$, $\{e_k\}_{k \in \Z^2}$ is the real-valued Fourier basis defined in~\eqref{eq:fourier-basis}, and $\uspace$ denotes the divergence-free, mean-zero vector fields in $\uspacenotsol$. We let constants freely depend on $F, \nu, \max_k |c_k| + |c_k|^{-1}$. We also make the following assumption on the forced modes, which exactly matches~\cite{hairer_ergodicity_2006}.
\begin{assumption}\label{asmp:modes}
    We assume throughout that $F = -F \subseteq \Z^2 \setminus \{0\}$ is finite, there exist $k,j \in F$ such that $|k| \ne |j|$, and the $\Z$-linear span of $F$ is equal to $\Z^2$.
\end{assumption}

Our main result is the exponential mixing of passive scalars advected by $u_t$.

\begin{theorem}[Exponential mixing of passive scalars]\label{thm:main-scalar-mixing}
  Let $\varphi_0 \in L^2(\xspace)$ with $\int \varphi_0 = 0$ and for $t > 0$, let $\varphi_t$ solve the transport equation
  \begin{equation}\label{eq:phi-transport}
    \dot{\varphi}_t + u_t \cdot \nabla \varphi_t = 0,
  \end{equation}
  where $u_t$ solves~\eqref{eq:sns-velocity-form} with degenerate forcing satisfying Assumption~\ref{asmp:modes}. Then for any $p>0$, there exists $C(p)>0$ and a random variable $K(p,\varphi_0,u_0)>0$ with the (uniform in $\varphi_0, u_0$) moment bound $\E K^p \leq C$, such that for all $t \geq 0$,
  \begin{equation}\label{eq:main-scalar-almost-sure-decay}
    \|\varphi_t\|_{H^{-1}} \leq K e^{- C^{-1} t} (\|u_0\|_{H^1}^{1/p} + 1)\|\varphi_0\|_{L^2} \quad \text{almost surely.}
  \end{equation}
\end{theorem}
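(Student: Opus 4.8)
The plan is to recast the problem in the random-dynamical-systems framework of Bedrossian--Blumenthal--Punshon-Smith and to supply the single ingredient that is not automatic under degenerate forcing: a quantitative hypoelliptic smoothing estimate for an augmented ``projective'' Markov process, obtained by porting the asymptotic strong Feller / Malliavin calculus machinery of Hairer--Mattingly from the velocity equation to this lifted process. Concretely, alongside $u_t$ one runs the Lagrangian flow $\flow_t$ on $\xspace$, $\dot\flow_t = u_t(\flow_t)$, together with the derivative cocycle $D\flow_t$, which by incompressibility takes values in $\Aspace = \SL(2,\R)$, and its projectivization $v_t \in \veespace$ solving $\dot v_t = \nabla u_t(\flow_t) v_t - \langle \nabla u_t(\flow_t) v_t, v_t\rangle v_t$; then $z_t := (u_t,\flow_t,v_t)$ is a Markov process on $\mathcal Z := \uspace \times \xspace \times \veespace$. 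Following Bedrossian--Blumenthal--Punshon-Smith, almost-sure exponential mixing of $\varphi_t$ in $H^{-1}$ with the stated moment bound on $K$ reduces to showing (a) the top Lyapunov exponent $\lambda = \lim_{t\to\infty} t^{-1}\log|D\flow_t|$ is strictly positive (equivalently nonzero, since the two exponents sum to zero by incompressibility), and (b) the process $z_t$ is geometrically ergodic, uniformly over initial data, in a weighted total-variation/Wasserstein sense with a Lyapunov weight $V(z) = 1 + \|u\|_{\uspace}^{\eta}$; moreover (a) follows from the irreducibility half of (b) plus a Furstenberg-type nondegeneracy argument, using that the forcing genuinely acts on a shear direction.

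For the drift side, the standard dissipative estimates for the 2D stochastic Navier--Stokes equations give, uniformly in $u_0$, exponential-return and polynomial-moment bounds for $\|u_t\|_{\uspace}$; since $\xspace \times \veespace$ is compact and the extra drift terms are bounded in terms of $\|u\|_{\uspace}$, the same $V$ is a Lyapunov function for the augmented generator $\mathcal L = \mathcal L_{\mathrm{SNS}} + u\cdot\nabla_x + \bigl(\nabla u\, v - \langle \nabla u\, v, v\rangle v\bigr)\cdot\nabla_v$. This also explains the prefactor $(\|u_0\|_{H^1}^{1/p}+1)$ in~\eqref{eq:main-scalar-almost-sure-decay}: the random constant $K$ absorbs a power of the Lyapunov function at time zero, and because $\|u_t\|_{\uspace}$ has moments of all orders (with the relevant a priori bound requiring only a low norm of $u_0$), this initial dependence can be allocated as $\|u_0\|_{H^1}^{1/p}$ at the cost of a $p$-dependent constant.

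The core of the argument, and the expected main obstacle, is the ``small-set'' (local minorization) condition for $z_t$, where degeneracy of the forcing must be confronted. It has two halves. First, irreducibility: via geometric control theory one must steer $z_t$, with positive probability in finite time, from any compact set of initial data into any open set. Assumption~\ref{asmp:modes} is precisely the hypothesis under which~\cite{hairer_ergodicity_2006} proves approximate controllability of the velocity component; the quadratic nonlinearity then produces a genuinely nontrivial velocity field, and because the $\Z$-span of $F$ is all of $\Z^2$ the associated flow can transport $\flow_t$ to an arbitrary point and, through the corresponding $\nabla u_t$, rotate $v_t$ to an arbitrary direction. Second, a gradient-type smoothing estimate: one needs the transition probabilities of $z_t$ to depend Hölder-continuously, in the appropriate gradient sense, on the initial condition, i.e.\ the asymptotic strong Feller property (or, sharper, a spectral gap in a suitable Wasserstein distance) for $z_t$. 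This is obtained by running the Hairer--Mattingly Malliavin scheme on $\mathcal L$ in place of $\mathcal L_{\mathrm{SNS}}$: the iterated Lie brackets of the forcing vector fields already span the velocity directions by the Hörmander analysis of~\cite{hairer_ergodicity_2006}; bracketing these against $u\cdot\nabla_x$ fills $T\xspace$, and a further bracket with the projective drift fills $T\veespace$, so a weak Hörmander condition holds for $\mathcal L$ off a negligible set of configurations treated by hand. The delicate point is quantitative: one must control the inverse Malliavin matrix of $z_t$ with the same care as in~\cite{hairer_ergodicity_2006}, now in the presence of the extra non-dissipative, non-divergence-free drift terms on the compact factors, and turn this into an estimate of the form $|\nabla \E_z \phi(z_t)| \le C(V(z))\bigl(\|\phi\|_\infty e^{-ct} + \|\nabla\phi\|_\infty\bigr)$, which together with irreducibility and the Lyapunov bound yields geometric ergodicity through the Harris-type theorem of Hairer--Mattingly.

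With (a) and (b) established, the passive-scalar conclusion follows by the Bedrossian--Blumenthal--Punshon-Smith transfer: geometric ergodicity of the projective process with weight $V$ upgrades, via their Fourier-splitting/two-point-motion argument, to the almost-sure bound $\|\varphi_t\|_{H^{-1}} \le K e^{-C^{-1}t}\|\varphi_0\|_{L^2}$ with $\E K^p \le C(p)$, the $V(u_0)^{1/p}$-type prefactor entering as the initial value of the Lyapunov function and rewritten in terms of $\|u_0\|_{H^1}$ by interpolation and the smoothing of the Navier--Stokes flow; keeping track of constants gives exactly the form in Theorem~\ref{thm:main-scalar-mixing}. I expect essentially all the new work to sit in step (b), and within it in the quantitative Malliavin estimate for the augmented process; verifying that the Bedrossian--Blumenthal--Punshon-Smith framework uses the velocity process only through moment bounds and the ergodicity of its projective lift — both of which are insensitive to the degeneracy of the forcing once (b) is in hand — should be comparatively routine.
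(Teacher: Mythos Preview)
Your high-level picture is right—port the Hairer--Mattingly Malliavin machinery to augmented processes, use Furstenberg to get $\lambda^+>0$, then run a Harris-type argument—but you have misidentified where the real work sits. Scalar mixing does \emph{not} reduce to geometric ergodicity of the projective process $z_t=(u_t,\flow_t,v_t)$; it reduces (Proposition~\ref{prop:2pt-implies-scalar}) to geometric ergodicity of the \emph{two-point} process on the noncompact manifold $M^2=(\xspace\times\xspace)\setminus\{\text{diagonal}\}$. Your last paragraph calls the passage from the projective lift to the two-point chain ``comparatively routine''; this is exactly the step that breaks under degenerate forcing. Bedrossian--Blumenthal--Punshon-Smith themselves flag (their Remarks~2.19--2.20) that strong Feller is \emph{critically} required at two places in their argument, and both must be rebuilt here.

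First, Furstenberg's criterion needs a \emph{continuous} family $(u,x)\mapsto\nu_{u,x}$ of fiber measures on $S^1$. In the nondegenerate setting continuity comes for free by disintegrating the stationary measure and pulling back once through strong Feller; here that fails, and the paper instead constructs a different family satisfying a \emph{forward} twisted-pullback identity~\eqref{eq:twisted-pullback} and proves its Lipschitz regularity by iterating the unit-time smoothing estimate (Proposition~\ref{prop:nu-continuity}). Your sketch does not supply this continuity.

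Second, and more seriously, the two-point Lyapunov function in BBPS is produced via a spectral gap for a twisted transfer operator, an argument that again leans on strong Feller and does not go through in the weak (asymptotic strong Feller) setting. The paper instead works directly with the two-point chain (Proposition~\ref{prop:two-point-drift-function}): the positive Lyapunov exponent gives drift only in an additive/logarithmic sense, and one needs a reverse-Jensen step plus a random-waiting-time reduction (Lemmas~\ref{lem:different-waiting-lyapunov}--\ref{lem:exponent-to-drift-function}) to obtain the multiplicative drift condition. Closing the weak-Harris contraction then requires a smoothing estimate for the two-point process with the \emph{sharp} scaling $|v_x-v_y|/|x-y|$ near the diagonal (Lemma~\ref{lem:two-point-smoothing}); this is not the generic estimate you propose for the projective process, and is obtained by approximating the two-point linearization by the tangent process. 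In short: the projective smoothing you plan is necessary but far from sufficient, and treating the two-point analysis as a routine transfer is the main gap.
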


We note that since $\|\varphi_t\|_{L^2} = \|\varphi_0\|_{L^2}$, an interpolation argument shows that $H^{-1}$ plays no special role; we could bound the $H^{-\ep}$ norm for any $\ep>0$ in the same way, provided the constants are allowed to depend on $\ep$. 

We emphasize the presence of the $L^2$ norm on the right-hand side of the mixing estimate~\eqref{eq:main-scalar-almost-sure-decay}. Mixing estimates are often stated with an $H^1$ norm on the right-hand side as, at least in the deterministic setting, for any $T$, one can choose an ``unmixed'' initial data $\varphi_0$ with $\|\varphi_0\|_{L^2} \leq C$ but $\|\varphi_T\|_{H^{-1}} = 1$ by running the equation backward from time $T$. Thus, in the deterministic setting, an estimate like~\eqref{eq:main-scalar-almost-sure-decay} is impossible. In the random case we consider here, some unmixing is of course still possible---in fact it must happen for some data---but the data that unmixes will be different on each event. The estimate~\eqref{eq:main-scalar-almost-sure-decay} shows that it is highly unlikely the same data gets ``unmixed'' across many different events. This unmixing for \textit{some} data means that it is essential that we allow the random prefactor $K$ to depend on the initial data $\varphi_0$, in fact it proves that for every $u_0,p$, we have that almost surely,
\[\sup_{\varphi_0 \in L^2_0(\T^2)} K(p, \varphi_0, u_0) = \infty.\]

The next result we state is then a \textit{universal mixing result}, which gives a mixing rate with random prefactor \textit{independent of initial data}. By the discussion above, we need to take a positive regularity norm, such as $H^1$, on the right-hand side of the estimate. The universal mixing is a direct consequence of Theorem~\ref{thm:main-scalar-mixing}, together with the abstract Proposition~\ref{prop:data-dependent-mixing-implies-universal-mixing}. We note that below, by an interpolation argument made clear in the proof of Proposition~\ref{prop:data-dependent-mixing-implies-universal-mixing}, $H^1$ plays no special role; we could take the data be uniformly bounded in $H^\ep$ for any $\ep>0$ and get the same result with constants depending on $\varepsilon$.

\begin{corollary}[Universal $H^1$ mixing]\label{cor:main-universal-mixing}
  Let $\varphi_t$ solve~\eqref{eq:phi-transport} as in Theorem~\ref{thm:main-scalar-mixing} with initial data $\varphi_0 \in H^1(\xspace)$. Then for any $p>0$, there exists $C(p)>0$ and a random variable $K(p,u_0)>0$ with $\E K^p \leq C$ such that we have the universal mixing estimate for all $t \geq 0,$
  \begin{equation}\label{eq:cor-scalar-almost-sure-decay}
    \sup_{\|\varphi_0\|_{H^1} \leq 1} \left\|\varphi_t - \fint_{\xspace} \varphi_0\right\|_{H^{-1}} \leq K e^{- C^{-1} t} (\|u_0\|_{H^1}^{1/p} + 1) \quad \text{almost surely.}
  \end{equation}
\end{corollary}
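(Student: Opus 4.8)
The idea is to deduce the universal (data-independent) mixing estimate from the data-dependent one in Theorem~\ref{thm:main-scalar-mixing} via the abstract Proposition~\ref{prop:data-dependent-mixing-implies-universal-mixing}, which is cited as the mechanism. So my plan is essentially to verify the hypotheses of that proposition and then quote it. Concretely, I would first reduce to mean-zero data: writing $\varphi_0 = \fint \varphi_0 + (\varphi_0 - \fint \varphi_0)$ and noting that the constant part is transported to itself (so it contributes nothing to $\varphi_t - \fint \varphi_0$ since $\fint \varphi_t = \fint \varphi_0$ by incompressibility), it suffices to bound $\|\varphi_t\|_{H^{-1}}$ uniformly over mean-zero $\varphi_0$ with $\|\varphi_0\|_{H^1} \le 1$. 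Since $\|\varphi_0\|_{L^2} \le \|\varphi_0\|_{H^1} \le 1$ on this set, Theorem~\ref{thm:main-scalar-mixing} already gives, for each fixed such $\varphi_0$, the bound $K(p,\varphi_0,u_0) e^{-C^{-1}t}(\|u_0\|_{H^1}^{1/p}+1)$; the only issue is that $\sup_{\varphi_0} K(p,\varphi_0,u_0) = \infty$ almost surely, as the authors emphasize, so one cannot naively take a supremum.

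The key point is that the supremum over the $H^1$-unit ball can be \emph{upgraded} by using a slightly better decay rate on a slightly larger space and interpolating: pick $q$ with, say, $2q = p$ (or any convenient relation), apply Theorem~\ref{thm:main-scalar-mixing} with exponent $q$ to get $\|\varphi_t\|_{H^{-1}} \le K(q,\varphi_0,u_0) e^{-C^{-1}t}(\|u_0\|_{H^1}^{1/q}+1)$ with $\E K^q \le C$, and then exploit that the $H^1$-unit ball is (pre)compact in $L^2$ — or rather, cover it by finitely many $L^2$-balls at scale $\delta$ and combine the finite maximum of the $K$'s on the net with the crude estimate $\|\varphi_t\|_{H^{-1}} \le \|\varphi_t\|_{L^2} = \|\varphi_0\|_{L^2}$ to control the error off the net. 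This is exactly the interpolation argument alluded to in the statement (" by an interpolation argument made clear in the proof of Proposition~\ref{prop:data-dependent-mixing-implies-universal-mixing}"): trading a factor $\delta$ in $L^2$ against $e^{+C^{-1}t}$ growth, one optimizes $\delta = \delta(t) \sim e^{-\lambda t}$ for a small $\lambda$, so the net has sub-exponential cardinality and the finite maximum of $K$-values over it still has a $p$-th moment bound (here the gap between $q$ and $p$ absorbs the polynomial-in-$t$ or sub-exponential loss from the union bound over the net).

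Carrying this out, I would (i) define $K(p,u_0) := \sup_{t \ge 0} e^{C^{-1}t} \sup_{\|\varphi_0\|_{H^1}\le 1}\|\varphi_t - \fint\varphi_0\|_{H^{-1}} / (\|u_0\|_{H^1}^{1/p}+1)$, (ii) bound $\E K^p$ by splitting $[0,\infty)$ over dyadic time intervals, on each using the net construction above with $\delta$ chosen appropriately for that time scale, (iii) sum the resulting geometric series in the moment bound, using that the sub-exponentially-many net points each contribute a uniformly-$p$-moment-bounded $K(q,\cdot,u_0)$, and the off-net error is deterministically $\delta \le e^{-\lambda t}$. The main obstacle — and the only genuinely nontrivial step — is step (ii)/(iii): making the quantitative trade-off between the covering number of the $H^1$-ball in $L^2$ (which grows like $\exp(C\delta^{-2})$, i.e. doubly-exponentially badly if $\delta$ is taken polynomially small, hence the need for $\delta$ merely exponentially small in $t$) and the exponential decay, so that the union bound does not destroy the moment estimate. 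Since the paper explicitly defers this to Proposition~\ref{prop:data-dependent-mixing-implies-universal-mixing}, in the write-up I would simply state that Corollary~\ref{cor:main-universal-mixing} follows by applying that proposition to the conclusion of Theorem~\ref{thm:main-scalar-mixing}, after the trivial reduction to mean-zero data noted above.
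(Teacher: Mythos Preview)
Your final sentence --- simply invoking Proposition~\ref{prop:data-dependent-mixing-implies-universal-mixing} on the output of Theorem~\ref{thm:main-scalar-mixing} --- is exactly the paper's proof, and is correct. However, the mechanism you sketch for that proposition is not what the paper does, and in fact your covering argument would not close.

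The gap is quantitative. You propose to cover the $H^1$-unit ball by an $L^2$-net at scale $\delta$, take the maximum of the $K$'s over the net, and control the off-net error by $\|\varphi_0 - \varphi_0^{(j)}\|_{L^2} \le \delta$. For the off-net term to decay like $e^{-\lambda t}$ you need $\delta \sim e^{-\lambda t}$. But the $L^2$-covering number of the $H^1$-unit ball on $\T^2$ at scale $\delta$ is $\exp(C\delta^{-2})$, so with $\delta = e^{-\lambda t}$ the net has $\exp(C e^{2\lambda t})$ points --- doubly exponential in $t$. A union bound (or even a $p$-th moment bound on the maximum) over that many $K_j$'s blows up catastrophically; no gap between $q$ and $p$ can absorb a doubly-exponential loss. (You wrote the dependence correctly but then drew the wrong conclusion: exponential-in-$t$ $\delta$ is precisely the regime where the covering number is doubly exponential, not the regime that saves you.)

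The paper's Proposition~\ref{prop:data-dependent-mixing-implies-universal-mixing} avoids this entirely by using the Fourier basis rather than an $\varepsilon$-net. One applies Theorem~\ref{thm:main-scalar-mixing} to each mode $e_k$ to get $K_k$ with uniform $p$-th moment, then sets $K := \sum_k |k|^{-3} K_k$; the weights make this an absolutely convergent \emph{sum} (not a maximum), so $\E K^p$ is bounded by the triangle inequality. Decomposing arbitrary $\varphi$ in Fourier gives $\|\Phi_t\|_{H^3 \to H^{-1}} \le K e^{-\alpha t}$, and then one interpolates this operator norm with the isometry $\|\Phi_t\|_{L^2 \to L^2} = 1$ to obtain $\|\Phi_t\|_{H^1 \to H^{-1}} \le K^{1/3} e^{-\alpha t/3}$. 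The ``interpolation'' the paper alludes to is this operator-norm interpolation between Sobolev scales, not a net/compactness argument.
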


Following~\cite{bedrossian_batchelor_2021}, Theorem~\ref{thm:main-scalar-mixing} implies that the statistics of the passive scalar obey a cumulative version of Batchelor's law~\cite{batchelor_small-scale_1959}. For a discussion of the cumulative Batchelor law compared to the usual pointwise version, see~\cite[Remark 1.9]{bedrossian_batchelor_2021}. Batchelor's law is usually stated for a passive scalar with nonzero diffusivity $\kappa$ and is only supposed to be valid on length scales well above the diffusive scale at which the diffusivity becomes dominant. As such there is usually an upper bound $\approx \kappa^{-1/2}$ to the wavenumbers for which the Batchelor scaling should hold. In our case, we take $\kappa =0$ and so the Batchelor scaling holds for all (sufficiently large) wavenumbers. Since Batchelor's law is about the effect of the advective term of the passive scalar equation, considering only the zero diffusivity case simplifies the setting without changing the essential physical content. As is made clear in~\cite{bedrossian_batchelor_2021}, in order to prove the cumulative Batchelor spectrum for nonzero diffusivity requires showing uniform-in-diffusivity mixing for the passive scalar equation, as was shown in the case of nondegenerate forcing in~\cite{bedrossian_almost-sure_2021}.

In a forthcoming work, we will treat the problem of uniform-in-diffusivity mixing and hence the positive diffusivity version of the cumulative Batchelor spectrum; see Subsection~\ref{ss:uniform-in-diffusivity} for further discussion. We additionally note that while~\cite{bedrossian_batchelor_2021} gives the cumulative Batchelor spectrum by discussing expectations with respect to the invariant measure, we give a statement about the long time behavior of expectations with respect to the stochastic forcing. These statements are essentially equivalent under a suitable ergodicity property, though the form we give allows us to avoid the somewhat technical discussion of ergodicity and invariant measures for the zero-diffusivity passive scalar problem. 

Due to the mixing of $L^2$ initial data given by Theorem~\ref{thm:main-scalar-mixing}, we give a version of the Batchelor spectrum with constants proportional to the $L^2$ norm of the forcing $g$, which is in agreement with physical predictions; see~\cite[Section 1]{bedrossian_batchelor_2021} and references therein for an overview.

\begin{corollary}[Cumulative Batchelor spectrum]
    \label{cor:Batchelor-main}
    Let $\Pi_{\leq N} : L^2_0(\T^2) \to L^2_0(\T^2)$ denote the orthogonal projection to Fourier modes of wavenumber $\leq N$. Let $g \in L^2_0(\T^2)$ and $\varphi_t$ solve
    \begin{equation}
    \label{eq:phi-forced}
         \dot{\varphi}_t + u_t \cdot \nabla \varphi_t = g\,\dif B_t,
    \end{equation}
    with $\varphi_0 \in L^2_0(\T^2)$ and $B_t$ an $\R$-valued standard Brownian motion independent of $u_t$. Then there exists $C>0$ and $N_0(g) \geq 2$ such that for any $N \geq N_0$,
    \[C^{-1} \|g\|_{L^2} \log N \leq \liminf_{t \to \infty} \E\|\Pi_{\leq N} \varphi_t\|_{L^2}^2 \leq \limsup_{t \to \infty} \E\|\Pi_{\leq N} \varphi_t\|_{L^2}^2 \leq C \|g\|_{L^2} \log N.\]
\end{corollary}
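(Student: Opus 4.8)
The plan is to use the exact solution formula for the linear equation~\eqref{eq:phi-forced} together with Theorem~\ref{thm:main-scalar-mixing} (upper bound) and a complementary ``finite speed of mixing'' estimate (lower bound). Let $\Phi_{s,t}$ denote the ($L^2$-isometric, volume-preserving) solution operator of the unforced transport equation~\eqref{eq:phi-transport} driven by $u$ on $[s,t]$. Since $B$ is independent of the velocity noise, conditioning on the whole path of $u$ turns~\eqref{eq:phi-forced} into a linear equation with additive scalar noise, whose solution is the conditionally Gaussian Duhamel representation $\varphi_t=\Phi_{0,t}\varphi_0+\int_0^t\Phi_{s,t}g\,\dif B_s$. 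Taking conditional expectations given $u$ and using the It\^o isometry (the cross term vanishes, since $\Phi_{0,t}\varphi_0$ is $u$-measurable while the stochastic integral has conditional mean zero) gives
\begin{equation*}
  \E\|\Pi_{\leq N}\varphi_t\|_{L^2}^2=\E\|\Pi_{\leq N}\Phi_{0,t}\varphi_0\|_{L^2}^2+\int_0^t\E\|\Pi_{\leq N}\Phi_{s,t}g\|_{L^2}^2\,\dif s.
\end{equation*}
By the Bernstein-type bound $\|\Pi_{\leq N}f\|_{L^2}\leq N\|f\|_{H^{-1}}$ and Theorem~\ref{thm:main-scalar-mixing} with $p=2$, the first term is $\leq CN^2e^{-2C^{-1}t}(\|u_0\|_{H^1}^{1/2}+1)^2\|\varphi_0\|_{L^2}^2\to0$ as $t\to\infty$, so the whole problem reduces to two-sided control of $I_N(t):=\int_0^t\E\|\Pi_{\leq N}\Phi_{s,t}g\|_{L^2}^2\,\dif s$ for large $t$. (Note the right-hand side is quadratic in $\|g\|_{L^2}$.)

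For the upper bound, split $I_N(t)$ at $s=t-C\log N$ with $C$ the decay constant of Theorem~\ref{thm:main-scalar-mixing}. On $[t-C\log N,t]$ use $\|\Pi_{\leq N}\Phi_{s,t}g\|_{L^2}\leq\|\Phi_{s,t}g\|_{L^2}=\|g\|_{L^2}$, which contributes at most $C\|g\|_{L^2}^2\log N$. On $[0,t-C\log N]$ apply Theorem~\ref{thm:main-scalar-mixing} from the intermediate time $s$: by time-homogeneity of the Markov process $u$, the theorem holds with $0$ replaced by $s$ and $u_0$ by $u_s$, with a random constant $\tilde K_s$ (a functional of $u_s$ and the noise on $[s,\infty)$) satisfying $\E[\tilde K_s^2\mid u_s]\leq C$ uniformly. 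Together with $\|\Pi_{\leq N}f\|_{L^2}\leq N\|f\|_{H^{-1}}$ and the standard uniform-in-$u_0$ moment bound $\sup_{s\geq1}\sup_{u_0}\E\|u_s\|_{H^1}<\infty$ for 2D Navier--Stokes, this yields $\E\|\Pi_{\leq N}\Phi_{s,t}g\|_{L^2}^2\lesssim N^2e^{-2C^{-1}(t-s)}\|g\|_{L^2}^2$ for $s\geq1$ (and $\leq\|g\|_{L^2}^2$ for $s\in[0,1]$). Integrating the geometric factor past $t-C\log N$ contributes $\lesssim\|g\|_{L^2}^2$, so $\limsup_{t\to\infty}I_N(t)\leq C\|g\|_{L^2}^2\log N$.

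The lower bound is the crux. Fix $M=M(g)$ with $\|\Pi_{>M}g\|_{L^2}\leq\tfrac1{10}\|g\|_{L^2}$ and put $g_{\mathrm{lo}}=\Pi_{\leq M}g$, so $\|g_{\mathrm{lo}}\|_{L^2}\geq\tfrac9{10}\|g\|_{L^2}$ and $\|g_{\mathrm{lo}}\|_{H^1}\lesssim M\|g\|_{L^2}$. On the event $G_{s,t}:=\big\{\int_s^t\|\nabla u_\tau\|_{L^\infty}\,\dif\tau\leq\log N-\log(C_0 M)\big\}$, with $C_0$ absolute, the flow of $u$ on $[s,t]$ is bi-Lipschitz with constant $e^{\int_s^t\|\nabla u_\tau\|_{L^\infty}\,\dif\tau}$, so $\|\Phi_{s,t}g_{\mathrm{lo}}\|_{H^1}\leq\tfrac1{10}N\|g\|_{L^2}$; combining this with $\|\Phi_{s,t}g_{\mathrm{lo}}\|_{L^2}=\|g_{\mathrm{lo}}\|_{L^2}$, the inequality $\|\Pi_{\leq N}f\|_{L^2}^2\geq\|f\|_{L^2}^2-N^{-2}\|f\|_{H^1}^2$, and absorbing the high-frequency remainder $\Phi_{s,t}(g-g_{\mathrm{lo}})$ of norm $\leq\tfrac1{10}\|g\|_{L^2}$, one gets $\|\Pi_{\leq N}\Phi_{s,t}g\|_{L^2}^2\geq c_0\|g\|_{L^2}^2$ on $G_{s,t}$, for an absolute $c_0>0$. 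The point is that $\P(G_{s,t})$ is bounded below \emph{uniformly in the window}: since $H^3(\xspace)\hookrightarrow W^{1,\infty}(\xspace)$ and the standard Sobolev moment bounds for 2D Navier--Stokes give $m':=\sup_{\tau\geq1}\sup_{u_0}\E\|\nabla u_\tau\|_{L^\infty}<\infty$, Markov's inequality gives $\P(G_{s,t})\geq1-\frac{(t-s)m'}{\log N-\log(C_0M)}\geq\tfrac12$ whenever $s\geq1$ and $t-s\leq T_N:=\tfrac1{2m'}\big(\log N-\log(C_0M)\big)$. Hence, for $t\geq T_N+1$,
\begin{equation*}
  I_N(t)\;\geq\;\int_{t-T_N}^t\E\|\Pi_{\leq N}\Phi_{s,t}g\|_{L^2}^2\,\dif s\;\geq\;\tfrac{c_0}{2}\|g\|_{L^2}^2\,T_N\;\geq\;C^{-1}\|g\|_{L^2}^2\log N,
\end{equation*}
provided $N\geq N_0(g):=\max\{2,(C_0M(g))^2\}$, giving the lower bound on $\liminf_{t\to\infty}I_N(t)$.

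The main obstacle is the lower bound on $\P(G_{s,t})$. Requiring a pathwise bound on $\|\nabla u_\tau\|_{L^\infty}$ over the entire window, which has length $\sim\log N$, would yield a probability that decays polynomially in $N$ and destroys the logarithm; the fix is to use only the first moment of the \emph{time-integrated} Lipschitz norm $\int_s^t\|\nabla u_\tau\|_{L^\infty}\,\dif\tau$, which needs nothing more than a uniform-in-time, uniform-in-$u_0$ bound on $\E\|\nabla u_\tau\|_{L^\infty}$ and keeps $\P(G_{s,t})\geq\tfrac12$ throughout the window. A secondary point requiring care is the application of Theorem~\ref{thm:main-scalar-mixing} from the random intermediate times $s$ in the upper bound, which is where the time-homogeneity of $u$ and the uniformity of the bound $\E K^p\leq C$ in the initial data $u_0$ are used.
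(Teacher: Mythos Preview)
Your argument is essentially the paper's: the same Duhamel/It\^o isometry reduction to $\int_0^t\E\|\Pi_{\leq N}\Phi_{s,t}g\|_{L^2}^2\,\dif s$, the same time-split for the upper bound, and for the lower bound the same low/high-frequency decomposition of $g$ combined with an $H^1$ growth bound on $\Phi_{s,t}$ over a window of length $\sim\log N$. The one technical variation is that the paper bounds $\E\|\Phi_{s,t}\|_{H^1\to H^1}^2\leq e^{C(t-s)}$ directly via the exponential moments of $\int_s^t\|\nabla u_r\|_{L^\infty}\,\dif r$ (Proposition~\ref{prop:omega-bounds}), obtaining the pointwise-in-expectation bound $\E\|\Pi_{\leq N}\Phi_{s,t}g\|_{L^2}^2\geq\tfrac14\|g\|_{L^2}^2$ on the whole window, whereas you pass through a good event and Markov's inequality on the first moment. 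Both routes yield the same $\log N$; yours uses a slightly weaker moment input.

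One correction: the claim $m'=\sup_{\tau\geq1}\sup_{u_0}\E\|\nabla u_\tau\|_{L^\infty}<\infty$ is false as written (take $\|u_0\|$ large and $\tau=1$). Since $u_0$ is fixed in the corollary, what you actually need---and what the paper also uses---is $\sup_{\tau\geq T_0}\E\|\nabla u_\tau\|_{L^\infty}\leq m'$ for some $T_0=T_0(u_0)\sim\log\|u_0\|_{H^1}$ and a \emph{universal} $m'$; this follows from the super-Lyapunov property (Corollary~\ref{cor:super-lyapunov}). Replacing ``$s\geq1$'' by ``$s\geq T_0(u_0)$'' in your window restriction is harmless for the $\liminf_{t\to\infty}$ and gives constants independent of $u_0$, matching the paper.
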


We note that $\E \|\Pi_{\leq N} \varphi_t\|_{L^2}^2$ can be replaced with $\big(\E \|\Pi_{\leq N} \varphi_t\|_{L^2}^{2p}\big)^{1/p}$ for $p \geq 1$. The lower bound for $p>1$ follows from the lower bound of Corollary~\ref{cor:Batchelor-main} and Jensen's inequality. The upper bound follows from applying the BDG inequality and using the integral representation of $\varphi_t$, as in~\cite[Section 3.1]{bedrossian_batchelor_2021}. Note also that the statement of Corollary~\ref{cor:Batchelor-main} is independent of $\varphi_0$. This is because the mixing guaranteed by Theorem~\ref{thm:main-scalar-mixing} ensures any contribution from the initial data is eliminated in the large time asymptotic. Similarly the constants are independent of $u_0$, as the Laplacian in~\eqref{eq:sns-velocity-form} erases the dependence of $u_t$ on the initial data in the large time asymptotic. Lastly, see Subsection~\ref{ss:batchelor}, in particular the discussion proceeding Proposition~\ref{prop:lower-bound-batchelor}, for a discussion of the lower bound $N_0$ on the wavenumbers for which the cumulative Batchelor spectrum holds. 

\subsection{Background}

The use of Gaussian forcing as a statistical model for turbulence has a long history, going back at least 60 years~\cite{edwards_statistical_1964,novikov_functionals_1965,bensoussan_equations_1973,vishik_mathematical_1979}. Of particular interest is when the forcing is isolated to a fixed macroscopic injection scale, allowing the nonlinearity of the Navier--Stokes equation to develop an inertial range of scales that are not directly forced. Mathematically, this corresponds to a highly degenerate forcing, involving only a finite-dimensional spatially-smooth noise process. A particularly natural case---such as when the forcing is statistically translation invariant---is where finitely many Fourier modes are directly forced, as in~\eqref{eq:sns-velocity-form}. Taking the forcing white-in-time is computationally the most convenient, though not altogether necessary, as discussed in Subsection~\ref{ss:correlated}.

While appealing as a physical model, degenerately forced stochastic Navier--Stokes brings with it substantial mathematical challenges. A question as fundamental as the uniqueness of invariant measures---which in finite dimensions is often reasonably straightforward---becomes exceptionally complex in this infinite dimensional setting. Even if one allows infinitely many modes to be forced, which is substantially less degenerate, uniqueness is decidedly non-obvious and was first shown in~\cite{flandoli_ergodicity_1995}. Following this, uniqueness was shown in the ``essentially elliptic'' setting, where a large but finite number (depending on the viscosity, $\nu$) of modes are forced, contemporaneously in the works~\cite{bricmont_ergodicity_2001, e_gibbsian_2001, kuksin_stochastic_2000}. Finally, in~\cite{hairer_ergodicity_2006}, uniqueness in the totally degenerate case of a small finite number (independent of $\nu$) of directly forced modes was proven. As a direct consequence of this uniqueness, Hairer and Mattingly established from first principles a basic tenet of physical turbulence theory---the ergodicity which equates time averages of observables with statistical ensemble averages---for the degenerately forced stochastic model of turbulence. While we will primarily work in the framework introduced by~\cite{hairer_ergodicity_2006}, we note the alternative framework of~\cite{kuksin_exponential_2020}, which is able to prove uniqueness of invariant measures for two-dimensional Navier--Stokes subject to a wide variety of Gaussian and non-Gaussian degenerate forcings, including the interesting case of bounded noise. The more subtle problem of 3D stochastic Navier--Stokes has also seen progress; ergodicity and mixing of transition probabilities is proved in~\cite{da2003ergodicity, albeverio2012exponential} when all but finitely many modes are forced.

The proof of ergodicity opened the door to rigorous treatment of other statistical properties of the stochastic fluid model. Of particular interest would be the inertial range statistics of the fluid predicted by Kraichnan~\cite{kraichnan_inertial_1967}, a two-dimensional analogue of K41 theory~\cite{kolmogorov_local_1941, kolmogorov_degeneration_1941, kolmogorov_dissipation_1941}. This involves studying a vanishing viscosity limit---though one needs to add other frictional damping for the equation to remain sensible, see~\cite[Section 8]{kupiainen_ergodicity_2011} for a lucid discussion. On the other hand, the machinery of~\cite{hairer_ergodicity_2006} is built to study the fluid at some fixed positive viscosity. As such, the questions that are most apt to study are those for which no vanishing viscosity limit is needed. Perhaps the most accessible problem for which no vanishing viscosity limit is needed is the dynamics of passive tracers advected by the fluid, rather than the dynamical properties of the fluid itself. Passive tracers have long been a foundational part of turbulence theory---for a comprehensive review from the physics perspective, see~\cite{falkovichParticlesFieldsFluid2001}.  

Understanding the dynamics of passive tracers for any nontrivial model is still quite difficult. In the deterministic setting, a simple time-periodic Lipschitz velocity field which exhibits exponential mixing is constructed in recent work~\cite{elgindi_optimal_2023}. In the random setting, there are several works (see for example~\cite{carverhill1985flows, baxendale1988large, dolgopyat2004sample}) addressing \emph{stochastic flows}, where the velocity is given by a linear combination of deterministic vector fields with independent white noise as coefficients. We follow the same basic strategy laid out in these works, though our setting is complicated by the fact that we must track the evolution of the velocity field, which is coupled to the trajectories of various features of interest (for example, points advected by the flow and accompanying tangent vectors). In~\cite{bedrossian_lagrangian_2022,bedrossian_almost-sure_2022,bedrossian_almost-sure_2021,bedrossian_batchelor_2021}, Bedrossian, Blumenthal, and Punshon-Smith establish myriad dynamical statistical properties of passive tracers for several fluid models. Most notably, they establish exponential mixing for passive scalars advected by stochastic Navier-Stokes, where all but finitely many Fourier modes are independently forced.  

Our contribution thus can be viewed as the merging of the ideas of~\cite{hairer_ergodicity_2006} (as well as the follow-up papers~\cite{hairer_theory_2011} and~\cite{hairer_asymptotic_2011}) with those of~\cite{bedrossian_lagrangian_2022,bedrossian_almost-sure_2022,bedrossian_almost-sure_2021,bedrossian_batchelor_2021} in order to prove a basic statistical property of the dynamics of passive tracers advected by the natural physical model of degenerately forced stochastic Navier--Stokes.

\subsection{Sketch of the main contributions}

The main challenge of this work is the lack of the \emph{strong Feller} property for various relevant Markov processes; that is, the transition probabilities are continuous only in a weak sense, not in total variation. Bedrossian, Blumenthal, and Punshon-Smith observe that the strong Feller assumption is critically required at two points in their work~\cite[Remark 2.20]{bedrossian_almost-sure_2022}: (i) proving continuity of a family of measures given by Furstenberg's criterion, and (ii) deducing exponential mixing for the two-point process from a positive Lyapunov exponent of the derivative cocycle. Since we deal with finite-dimensional forcing, we do not expect the strong Feller property to hold. We therefore address these problems directly and show that, in both cases, a smoothing estimate similar to~\cite[Proposition~4.15]{hairer_ergodicity_2006} is sufficient.

In the case of (i), we construct a family of measures which is unconditionally continuous; as far as we are aware, this is the first such constructed family of measures which is not merely continuous under the assumption (assumed for sake of contradiction) that the top Lyapunov exponent is zero, even in the setting of elliptic forcing. The family of measures constructed is however not identical to that considered in~\cite{bedrossian_lagrangian_2022}. While they consider the conditional measures of the stationary measure for the projective process, the family we consider can be seen as the conditional measures of the stationary measure for the projective \textit{backward} process. See Subsection~\ref{ss:lyapunov-overview} for more discussion. A key idea is to construct the measures iteratively, starting from a smooth family and using the smoothing estimate, Proposition~\ref{prop:unit-time-smoothing}, to inductively propagate regularity through the iteration. An iteration of this sort is essential when working in the asymptotic strong Feller setting, as compared to the strong Feller setting of~\cite{bedrossian_lagrangian_2022} in which one can take a measurable object and immediately deduce its continuity after pulling back under the dynamics.

To prove (ii), we show that our setting fits into the weak Feller framework of~\cite{hairer_asymptotic_2011}. The primary obstacle is the construction of a Lyapunov function for the two-point process. The form of our constructed Lyapunov function closely matches that of~\cite{bedrossian_lagrangian_2022}, but we prove the drift condition differently in order to avoid a technical problem related to unbounded semigroups observed in~\cite[Remark~2.19]{bedrossian_almost-sure_2022}. After approximating the two-point process by the projective process, we use the positivity of the top Lyapunov exponent, in addition with exponential moment bounds on the velocity field to upgrade the (additive) statement of a positive Lyapunov exponent to the (multiplicative) statement of the drift condition.

Our version of the smoothing estimate required for (i) is unsurprising; our setting is identical to that of~\cite{hairer_ergodicity_2006} but with an additional coupled ODE on a compact manifold. On the other hand, the smoothing estimate (Lemma~\ref{lem:two-point-smoothing}) required for (ii) involves an ODE on a noncompact manifold and relies on a sharp scaling near the boundary for the argument to close. In order to prove such an estimate, we approximate the dynamics of two closely separated points with the dynamics of a single point and a vector tangent to that point, which can be naturally seen as a first order approximation. We show that this approximation is close in the appropriate sense, which involves showing not just the dynamics are close but also their linearizations are close. We then use the smoothing estimate for the tangent process and the closeness of the approximation to attain the smoothing estimate for the two-point process. Since the tangent vector obeys linear dynamics, the smoothing estimate automatically scales correctly. This correct scaling is then passed to the two-point dynamics, attaining the desired sharp smoothing estimate. See Subsection~\ref{ss:two-pt-by-tangent-overview} for further discussion.

Since the addition of the coupled ODEs on manifolds to stochastic Navier--Stokes takes us out of the framework of~\cite{hairer_theory_2011}, we need to redo much of the work therein to match our setting. This is covered in Section~\ref{sec:malliavin}. Along the way, we provide some simplifications to the proof of the unit-time smoothing estimate, in particular showing that in the estimates such as Proposition~\ref{prop:malliavin-induction-base} and Proposition~\ref{prop:malliavin-inductive-step}, no space around the final time $T$ is needed for the adjoint linearization $J^*_{t,T}$ to regularize the initial data. In~\cite[Section 6]{hairer_theory_2011}, Hairer and Mattingly utilize regularizing properties of $J^*_{t,T}$, allowing them to improve the result of~\cite{mattingly_malliavin_2006} by removing the higher regularity norm in their Malliavin matrix invertibility bound. In order to use this regularization, they must work on time intervals $[T/2,T-\delta]$ away from the final time, where $\delta =\ep^r$ for some well-chosen $r$---see the discussion of~\cite[Sections 1.4, 6.0, 6.4]{hairer_theory_2011} for more detail. We avoid a similar argument by using the spatial regularity of $\omega_t$ for $t \geq T/2$ and adjointness to move derivatives from $J_{t,T}^*$ onto $\omega_t$, see the proof of Proposition~\ref{prop:malliavin-induction-base} and Proposition~\ref{prop:malliavin-inductive-step}.

Finally, we show in Corollary~\ref{cor:long-time-smoothing} how the unit-time smoothing estimate together with the super-Lyapunov\footnote{See Corollary~\ref{cor:super-lyapunov}; if $P \colon X \to \mathcal{P}(X)$ is a Markov transition kernel on a space $X$, then a function $f \colon X \to \R_{\geq 1}$ is said to be super-Lyapunov (cf.~\cite[Proposition~5.5]{hairer_asymptotic_2011}) if the pullback is bounded by $Pf \leq Cf^\alpha$ for some $C > 0$ and $\alpha \in (0, 1)$. We note that if $f$ is super-Lyapunov then $\log (f + 1)$ is a Lyapunov function by Jensen, so the super-Lyapunov property can be seen as a version of the Lyapunov property with stronger stochastic integrability.} function allow us to directly deduce the long-time smoothing estimate. This simplifies the argument of~\cite{hairer_theory_2011}, allowing one to work on finite time horizons and iterate.

\subsection{Acknowledgments}

We thank Vlad Vicol for several helpful discussions and feedback on an early draft. W.C.\ was partially supported by NSF grant DMS-2303355. K.R.\ was partially supported by NSF grants DMS\mbox{-}2350340, DMS\mbox{-}2000200, and DMS\mbox{-}1954357.

\section{Overview and notation}

\subsection{Setting and notation}
We identify $\xspace$ with $[-\pi,\pi]^2$ throughout. We fix a finite set $F \subseteq \Z^2 \setminus \{0\}$ of forced modes, nonzero coefficients ${\{c_k\}}_{k \in F}$, independent standard Brownian motions ${\{W^k_t\}}_{k \in F}$, and a viscosity $\nu >0$.

\begin{note}
  Throughout, all analytic constants $C$ depend freely on $\nu, F$ and $\max_{k \in F} |c_k| + |c_k|^{-1}$. Any other dependencies of the constants will be stated explicitly, e.g.\ ``there exists a constant $C(p,\eta,\alpha)>0$ such that\dots'' means the constant $C$ depends on $p,\eta,\alpha, \nu, F$ and $ \max_{k \in F} |c_k| + |c_k|^{-1}$.
\end{note}

We define a real-valued Fourier basis by
\begin{equation}
  \label{eq:fourier-basis}
  e_k(x) :=
  \begin{cases}
    \sin(k \cdot x) &\quad \text{if $k > (0, 0)$ lexicographically,}\\
    \cos(k \cdot x) &\quad \text{otherwise.}\\
  \end{cases}
\end{equation}
\begin{definition}
  The \emph{base process} refers to the Markov process ${\{\omega_t\}}_{t \geq 0}$ on $\wspace$ which solves the (vorticity-form) incompressible stochastic Navier--Stokes equation, that is,
  \begin{equation}
    \label{eq:sns}
    \dot{\omega}_t = \nu \Delta \omega_t -  (\nabla^\perp\Delta^{-1})\omega_t \cdot \nabla \omega_t +\sum_{k \in F} c_k e_k(x) \dif W^k_t,
  \end{equation}
  with initial data given by $\omega_0$. For the general theory of such infinite-dimensional stochastic differential equations, see~\cite{da_prato_stochastic_2014}. Throughout, we write $u_t := \nabla^\perp\Delta^{-1}\omega_t$ to refer to the velocity field induced by the vorticity $\omega_t$, and equivalently we may refer to ${\{u_t\}}_{t \geq 0}$ as the base process as well.
\end{definition}

In the following definition, we write $\vfspace_{\text{loc}}(M)$ to denote the set of $C^1_{\text{loc}}$ vector fields on a manifold $M$.

\begin{definition}
\label{defn:arbitrary-process}
  Given a geodesically complete Riemannian manifold $(M, g)$ and an associated vector field map $\Theta \colon \uspace \to \vfspace_{\text{loc}}(M)$---associating to any velocity field $u$ a vector field $\Theta_u$ on $M$---we define the associated Markov process ${\{u_t, p_t\}}_{t \geq 0}$ where $u_t$ is the base process and $p_t$ solves
  \begin{equation}
    \label{eq:manifold-ode}
    \dot{p}_t = \Theta_{u_t}(p_t),
  \end{equation}
  with initial condition given by $p_0 \in M$. We additionally require that for any $u_0 \in \uspace$, the random ODE~\eqref{eq:manifold-ode} almost surely admits a global-in-time solution for any $p_0 \in M$. There are several special processes\footnote{We call the process on $\T^2\times S^1$ the projective process in order to emphasize the similarity with~\cite{bedrossian_almost-sure_2022}, although because we work on $S^1$ it should naturally be named the ``sphere process''. We note that one could simply pass all the results through the natural projection $S^1 \to RP^1$ and consider the ``true'' projective process on $RP^1$ throughout.} that we will focus on listed in Table~\ref{table:special-processes}.
\end{definition}

\begin{table}
  \caption{Several associated Markov processes of interest}
  \begin{center}
    \begin{tabular}{l l l l l}
      \bottomrule
      process & & manifold & & vector field\\
      \midrule
      one-point & $M^1$ &$:= \xspace$ & $\Theta^1_u(x)$ &$:= u(x)$\\
      two-point & $M^2$ &$:= (\xspace \times \xspace) \setminus \{(x, x) \mid x \in \xspace\}$ & $\Theta^2_u(x, y)$ &$:= (u(x), u(y))$\\
      tangent & $M^T$ & $:= \xspace \times (\R^2 - \{0\})$ & $\Theta^T_u(x,\tau)$ &$:= (u(x), \tau \cdot \nabla u(x))$\\
      projective & $M^P$ &$:= \xspace \times \veespace$ & $\Theta^P_u(x, v)$ &$:= (u(x),  v \cdot \nabla u(x)\cdot v^\perp v^\perp)$\\
      Jacobian & $M^J$ &$:= \xspace \times \SL(2, \R)$ & $\Theta^J_u(x, A)$ &$:= (u(x), A \nabla u(x))$\\
      \toprule
    \end{tabular}
    \label{table:special-processes}
  \end{center}
\end{table}

We write $P_t \colon \uspace \times M \to \mathcal{P}(\uspace \times M)$ to denote the time $t$ transition kernel, where $\mathcal{P}(X)$ denotes the space of probability measures on $X$. We also let $P_t$ act adjointly on observables $\varphi \colon \uspace \times M \to X$ by pulling back:
\[P_t \varphi(u_0,p_0) = \int \varphi(u_t,p_t) P_t(u_0,p_0; \dif u_t, \dif p_t).\]

\subsection{\textit{A priori} estimates and a super-Lyapunov function for stochastic Navier--Stokes}

As we describe in Subsection~\ref{ss:reduction-to-two-point}, the main challenge of this work is to prove that the two-point process, whose manifold component lives on $M^2$, mixes exponentially. Since $M^2$ is not compact, there are substantial obstacles to proving mixing. Before confronting these issues, we need to build tools that are sufficient to show the substantially simpler mixing of the one-point process. A key ingredient to control the $\uspace$ component of this process is a super-Lyapunov function of $u_t$. In order to construct the super-Lyapunov function, we need the following \textit{a priori} estimates on the vector field $u_t$, which will also be used many times throughout. We defer their fairly standard proofs to Subsection~\ref{ssa:sns-bounds}.

\begin{proposition}\label{prop:omega-bounds}
  Suppose that $\omega_t$ is a solution to~\eqref{eq:sns}. Then there exists $C>0$ such that for all $\eta \leq C^{-1}$ and for all $0 \leq s \leq t,$
  \begin{equation}
    \label{eq:l2-energy-bound}
    \E \exp\Big(\eta \sup_{s \leq r \leq t}  \Big(\|\omega_r\|_{L^2}^2  + \nu \int_s^r \|\omega_a\|_{H^1}^2\,\dif a\Big) \Big) \leq Ce^{C (t-s)} \exp\big( e^{-C^{-1} s}\eta \|\omega_0\|_{L^2}^2\big).
  \end{equation}
  Additionally, there exists $C(n)>0$ such that for all $0 \leq \eta \leq C^{-1}$ and $0 \leq s \leq t,$
  \begin{equation}
    \label{eq:Hn-bound}
    \E \exp\Big( \eta \sup_{s \leq r \leq t} \Big(\|\omega_r\|_{H^n}^2 + \nu \int_s^r \|\omega_a\|_{H^{n+1}}^2\,\dif a\Big)^{\frac{1}{n+2}}\Big)\leq C e^{C t}\exp\Big(e^{-C^{-1} s} \eta \|\omega_0\|_{H^n}^{{\frac{2}{n+2}}}+C \eta \|\omega_0\|_{L^2}^2\Big),
  \end{equation}
  and
  \begin{equation}
    \label{eq:Hn-regularization}
    \E \exp\Big( \eta \sup_{s \leq r \leq t} \Big(\|\omega_r\|_{H^n}^2 + \nu \int_s^r \|\omega_a\|_{H^{n+1}}^2\,\dif a\Big)^{\frac{1}{n+2}}\Big) \leq C e^{C s^{-1}} e^{Ct}\exp\Big(C \eta \|\omega_0\|_{L^2}^2\Big).
  \end{equation}
\end{proposition}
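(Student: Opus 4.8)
The plan is to derive the three exponential moment bounds by combining Itô's formula for $\|\omega_t\|_{H^n}^2$ with the standard dissipative structure of 2D Navier--Stokes, then upgrading moment bounds on linear functionals of the integrated energy to exponential moment bounds via a stopping-time/exponential-supermartingale argument.

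First I would treat the base case \eqref{eq:l2-energy-bound}. Apply Itô to $\|\omega_t\|_{L^2}^2$: the nonlinear term $\langle (\nabla^\perp \Delta^{-1}\omega)\cdot\nabla\omega, \omega\rangle$ vanishes by incompressibility, the dissipation contributes $-2\nu\|\omega\|_{H^1}^2$ (using mean-zero to get a Poincaré-type gap), and the noise contributes a martingale term $\sum_k c_k \int \langle e_k,\omega_s\rangle \dif W^k_s$ plus a bounded quadratic-variation drift $\sum_k c_k^2 \|e_k\|_{L^2}^2 = : \mathcal{E}$. Thus $\|\omega_t\|_{L^2}^2 + 2\nu\int_0^t\|\omega_a\|_{H^1}^2\,\dif a \leq \|\omega_0\|_{L^2}^2 + \mathcal{E} t + M_t$ where $M_t$ is the martingale. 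To get the exponential moment with the $\sup$ inside, I would consider the exponential supermartingale $\exp(\eta(\|\omega_t\|_{L^2}^2 + \nu\int_0^t\|\omega\|_{H^1}^2) - C\eta t - \eta^2 \langle M\rangle_t/(\text{something}))$; the key point is that $\langle M\rangle_t \lesssim \int_0^t \|\omega_a\|_{L^2}^2\,\dif a \lesssim \int_0^t \|\omega_a\|_{H^1}^2\,\dif a$, so for $\eta$ small the quadratic-variation correction is absorbed by a fraction of the dissipation term. A Doob maximal inequality for the resulting nonnegative supermartingale then yields the $\sup$-in-time bound. The exponential relaxation factor $e^{-C^{-1}s}\eta\|\omega_0\|_{L^2}^2$ comes from the fact that between $0$ and $s$ the deterministic dissipation forces $\|\omega_s\|_{L^2}^2$ to decay exponentially toward the ``noise-driven'' equilibrium size $O(1)$; one makes this quantitative by a Gr\"onwall argument on $\E\|\omega_s\|_{L^2}^2$ combined with the semigroup property of the exponential bound.

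For \eqref{eq:Hn-bound} I would run Itô on $\|\omega_t\|_{H^n}^2$. Now the nonlinear term does not vanish; the standard 2D estimate gives $|\langle u\cdot\nabla\omega,\omega\rangle_{H^n}| \lesssim \|\omega\|_{H^n}^2 \|\omega\|_{H^1}$ (or a similar trilinear bound), which after Young's inequality is controlled by $\nu\|\omega\|_{H^{n+1}}^2/2 + C\|\omega\|_{H^n}^{2}\|\omega\|_{H^1}^{2}\cdot(\text{lower order})$; the precise exponent $\frac{1}{n+2}$ in the statement is exactly what is needed so that the nonlinear contribution, which is roughly of order (top norm)$\times$(a power of lower norms), closes after taking that root — this is the delicate bookkeeping step. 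Raising to the power $\frac{1}{n+2}$ and using the already-established $L^2$ bound \eqref{eq:l2-energy-bound} to handle the $\|\omega_0\|_{L^2}^2$ remainder term, one repeats the exponential-supermartingale argument; the martingale's quadratic variation in the $H^n$ Itô expansion is now $\lesssim \int \|\omega_a\|_{H^n}^2\,\dif a$, again absorbable by the $H^{n+1}$ dissipation after taking the root and for $\eta$ small. I would run this as an induction on $n$, feeding the order-$n-1$ bound into the order-$n$ estimate.

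Finally, \eqref{eq:Hn-regularization} is the instantaneous-smoothing (parabolic regularization) version: it replaces the $\|\omega_0\|_{H^n}$-dependence by the $e^{Cs^{-1}}$ blow-up as $s\to 0$. I would obtain it from \eqref{eq:Hn-bound} by a standard interpolation/bootstrapping argument — split $[0,s]$ and use that the deterministic parabolic flow gains $n$ derivatives in time $\sim s$ with the $e^{Cs^{-1}}$ cost, or equivalently multiply the differential inequality for $\|\omega_t\|_{H^n}^2$ by a time weight $t^{N}$ and integrate, transferring the initial $H^n$ norm into an $L^2$-in-space, $L^\infty_t$ quantity already controlled by \eqref{eq:l2-energy-bound}. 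The main obstacle throughout is the nonlinear term in the higher-norm estimates: getting the trilinear estimate with exactly the right split of derivatives so that, after taking the $(n+2)$-th root and choosing $\eta$ small, both the nonlinearity and the martingale bracket are dominated by a fixed fraction of the $H^{n+1}$ dissipation. Everything else is bookkeeping with exponential supermartingales and Doob's inequality; since the proposition explicitly flags these as ``fairly standard'' and defers them to Subsection~\ref{ssa:sns-bounds}, I would not belabor the routine computations.
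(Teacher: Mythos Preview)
Your outline for \eqref{eq:l2-energy-bound} matches the paper's argument: It\^o's formula, the nonlinearity vanishes at the $L^2$ level, the quadratic variation of the martingale is absorbed into the $H^1$ dissipation, and an exponential supermartingale estimate (Lemma~\ref{lem:exp-martingale-bound} in the paper) gives the $\sup$-in-time bound. The decay factor $e^{-C^{-1}s}$ is obtained by first proving a fixed-time version with an $e^{\nu t}$ weight (Lemma~\ref{lem:initial-energy-bound}) and then conditioning at time $s$.

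For \eqref{eq:Hn-bound}, however, your proposed trilinear bound $|\langle u\cdot\nabla\omega,\omega\rangle_{H^n}| \lesssim \|\omega\|_{H^n}^2\|\omega\|_{H^1}$ is not what the paper uses, and it is not clear from your sketch how to close the argument from it with the power $\frac{1}{n+2}$. The paper instead proves (Lemma~\ref{lem:cross-term-interpolation}) the sharper inequality
\[
2\sum_{j=0}^{n-1}\binom{n}{j}\int |\nabla^n\omega|\,|\Delta^{-1}\nabla^\perp\nabla^{n-j}\omega|\,|\nabla\nabla^j\omega|\,\dif x \le \tfrac{\nu}{2}\|\omega\|_{H^{n+1}}^2 + C\|\omega\|_{L^2}^{2n+4},
\]
obtained by H\"older with carefully chosen exponents and two rounds of Gagliardo--Nirenberg interpolation between $L^2$ and $H^{n+1}$ (first collapsing all intermediate norms to $\|\omega\|_{W^{n,4}}^2\|\omega\|_{L^2}$, then $\|\omega\|_{W^{n,4}}\lesssim\|\omega\|_{H^{n+1}}^{(2n+1)/(2n+2)}\|\omega\|_{L^2}^{1/(2n+2)}$, then Young). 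The exponent $2n+4$ is precisely what makes $\big(\|\omega\|_{L^2}^{2n+4}\big)^{1/(n+2)}=\|\omega\|_{L^2}^2$, which already has exponential moments by \eqref{eq:l2-energy-bound}; this is the mechanism behind the $\tfrac{1}{n+2}$ power, and it goes directly from $L^2$ to $H^n$ without the induction on $n$ you propose. You correctly flag this step as delicate, but your sketch does not identify the interpolation structure that makes it close.

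For \eqref{eq:Hn-regularization} your time-weight idea is a plausible alternative, but the paper takes a different, more probabilistic route: it uses \eqref{eq:Hn-bound} and Chebyshev to bound $\P\big(\inf_{0\le r\le t}\|\omega_r\|_{H^{n+1}}\ge K\big)$, applies the strong Markov property at the first time $\|\omega\|_{H^{n+1}}$ drops below $K/2$, and iterates this one-step smoothing from $H^n$ to $H^{n+1}$ over the subintervals $[jt/n,(j+1)t/n]$ (Lemmas~\ref{lem:regularization-initial} and~\ref{lem:Hn-regularization-initial-2}).
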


\begin{definition}[Super-Lyapunov function for the base process]\label{def:V}
  For $\omega \in \wspace$, we define \[ V(\omega) := \sigma(\|\omega\|_{L^2}^2 + \alpha\|\omega\|_{H^4}^{1/3}), \] where $\alpha, \sigma > 0$ are some fixed small constants, chosen according to Proposition~\ref{prop:omega-bounds} so that Corollary~\ref{cor:super-lyapunov} holds for all $\eta \in (0,2)$.
\end{definition}

In the following, we will abuse notation: when $u \in \uspace$ we write $V(u)$ to mean $V(\nabla^\perp\cdot u)$, since $\omega = \nabla^\perp \cdot u$ is the vorticity associated to the velocity $u$. There is no ambiguity as $u$ is vector-valued and $\omega$ is scalar-valued.

\begin{corollary}[Super-Lyapunov property]\label{cor:super-lyapunov} There exists $\delta \in (0,1)$ and a constant $C>0$ such that for all $\eta \in (0,2)$ and $t \geq 1$,
  \[\E \exp\left(\eta V(\omega_t)\right)\leq C\exp(\delta \eta V(\omega_0)).\]
\end{corollary}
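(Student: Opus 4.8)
The plan is to split $V$ into its $L^2$ and $H^4$ pieces, estimate each exponential moment separately via Proposition~\ref{prop:omega-bounds}, and recombine with a H\"older exponent tuned to preserve the $L^2$ contraction. Since $V(\omega_t)=\sigma\|\omega_t\|_{L^2}^2+\sigma\alpha\|\omega_t\|_{H^4}^{1/3}$, I would apply H\"older with the conjugate pair $\big(\tfrac1{1-\alpha},\tfrac1\alpha\big)$ to write
\[
  \E\exp\big(\eta V(\omega_t)\big)\le\Big(\E\exp\big(\tfrac{\eta\sigma}{1-\alpha}\|\omega_t\|_{L^2}^2\big)\Big)^{1-\alpha}\Big(\E\exp\big(\eta\sigma\|\omega_t\|_{H^4}^{1/3}\big)\Big)^{\alpha}.
\]
The point of the exponent $\tfrac1{1-\alpha}$ is that the outer power $1-\alpha$ exactly cancels it inside the first factor, so whatever contraction rate comes out of the $L^2$ estimate passes through undamaged, while the $H^4$ contribution enters only through the small power $\alpha$. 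Throughout, $\sigma$ is taken small enough (which is permitted by Definition~\ref{def:V}) that $\tfrac{\eta\sigma}{1-\alpha}$, $\eta\sigma$ and $C\eta\sigma$ are all $\le C^{-1}$ when $\eta<2$, so the estimates of Proposition~\ref{prop:omega-bounds} apply with these parameters.

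For the first factor I would bound $\|\omega_t\|_{L^2}^2$ by the supremum appearing in \eqref{eq:l2-energy-bound} and apply that estimate on the interval $[s,t]$ with the choice $s:=\max(t/2,t-1)$. For every $t\ge 1$ this gives $t-s=\min(t/2,1)\le 1$, so the prefactor $e^{C(t-s)}$ is bounded by $e^{C}$, and at the same time $s\ge t/2\ge\tfrac12$, so the data-dependence factor obeys $e^{-C^{-1}s}\le e^{-1/(2C)}=:\delta_1\in(0,1)$. Raising to the power $1-\alpha$, the first factor is at most $C\exp\big(\delta_1\eta\sigma\|\omega_0\|_{L^2}^2\big)$.

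For the second factor the obstacle is that \eqref{eq:Hn-regularization}, used with $n=4$ (so that its exponent $\tfrac1{n+2}=\tfrac16$ matches $\|\cdot\|_{H^4}^{1/3}$ and only $\|\omega_0\|_{L^2}$ appears on the right), carries a prefactor that grows with the time horizon, which must not be allowed to propagate. I would remove it by conditioning at time $t-1$: by the Markov property and \eqref{eq:Hn-regularization} applied over the unit window $[t-1,t]$ (with intermediate time $\tfrac12$ after the time shift, so that the $e^{Cs^{-1}}$ factor becomes the harmless constant $e^{2C}$), $\E\big[\exp(\eta\sigma\|\omega_t\|_{H^4}^{1/3})\,\big|\,\mathcal{F}_{t-1}\big]\le C\exp\big(C\eta\sigma\|\omega_{t-1}\|_{L^2}^2\big)$; then \eqref{eq:l2-energy-bound} again, now used only to keep the prefactor bounded (start time $\max(0,t-2)$, elapsed time $\le 1$), gives $\E\exp\big(C\eta\sigma\|\omega_{t-1}\|_{L^2}^2\big)\le C\exp\big(C\eta\sigma\|\omega_0\|_{L^2}^2\big)$. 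Hence the second factor is at most $C\exp\big(C\alpha\eta\sigma\|\omega_0\|_{L^2}^2\big)$.

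Multiplying the two bounds and using $\sigma\|\omega_0\|_{L^2}^2\le V(\omega_0)$ gives $\E\exp(\eta V(\omega_t))\le C\exp\big((\delta_1+C\alpha)\eta V(\omega_0)\big)$, with $C$ and the rate independent of $\eta\in(0,2)$, $t\ge1$ and $\omega_0$, as required; the corollary then holds with $\delta:=\delta_1+C\alpha$. Since $\delta_1<1$ strictly and $C$ is by now a fixed analytic constant, taking $\alpha$ small makes $\delta\in(0,1)$ --- this is precisely the choice of the small parameters $\alpha,\sigma$ alluded to in Definition~\ref{def:V}. The main (and essentially only) delicate point is the $H^4$ term: both its time-growing regularization prefactor, handled by the Markov reduction to unit-length intervals, and the constraint that it may contribute only an $O(\alpha)$ amount to the decay rate, handled by the H\"older exponent $\tfrac1{1-\alpha}$ --- which is exactly why $\alpha$ must be chosen small relative to the constants of Proposition~\ref{prop:omega-bounds}.
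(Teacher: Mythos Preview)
Your proof is correct. It differs from the paper's one-line argument (``Direct from Cauchy--Schwarz, \eqref{eq:l2-energy-bound}, and \eqref{eq:Hn-bound}'') in two mostly cosmetic respects.

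First, the paper uses Cauchy--Schwarz rather than your H\"older split with exponents $\big(\tfrac{1}{1-\alpha},\tfrac{1}{\alpha}\big)$. Both choices work for the same reason: the small parameter $\alpha$ already sits in front of the $H^4$ term in the definition of $V$, so with Cauchy--Schwarz the second factor is $\big(\E\exp(2\eta\sigma\alpha\|\omega_t\|_{H^4}^{1/3})\big)^{1/2}$ and the $\alpha$ enters the exponent directly; your choice relocates the $\alpha$ to the outer power instead. Either way the $H^4$ piece contaminates the $L^2$ decay rate only at order $O(\alpha)$.

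Second, for the $H^4$ factor the paper invokes \eqref{eq:Hn-bound}, which retains the $H^4$ initial data on the right with its own exponential decay $e^{-C^{-1}t}\eta\|\omega_0\|_{H^4}^{1/3}$, while you use \eqref{eq:Hn-regularization} together with a Markov reduction to a unit window, discarding the $H^4$ initial data entirely in favor of $\|\omega_0\|_{L^2}^2$. Your route is one step longer but more explicit about uniformity in $t\ge 1$: the displayed estimate \eqref{eq:Hn-bound} carries the growing prefactor $e^{Ct}$, so the paper's terse citation is really pointing at the pointwise-in-time version (Lemma~\ref{lem:Hn-initial} in the appendix, which is the main ingredient in the proof of \eqref{eq:Hn-bound} and has no such prefactor). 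Your Markov conditioning is simply an alternative way to neutralize that growth.
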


\begin{proof}
  Direct from Cauchy--Schwarz,~\eqref{eq:l2-energy-bound}, and~\eqref{eq:Hn-bound}, provided $\alpha, \sigma>0$ are chosen sufficiently small.
\end{proof}

\begin{corollary}[Exponential moments of $C^\alpha$ norms]
\label{cor:C-alpha-moments}
    There exists $\alpha>0$ such that for all $K,\eta,t>0$ there exists $C(K,\eta,t)>0$ such that
    \[\E \exp\Big(K \int_0^t \|\nabla u_s\|_{C^\alpha}\,\dif s\Big) \leq C \exp\big(\eta V(\omega_0)\big).\]
\end{corollary}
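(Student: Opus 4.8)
The plan is to reduce the estimate to the \emph{a priori} bounds of Proposition~\ref{prop:omega-bounds} by a Sobolev embedding, an interpolation inequality, and then Hölder's, Young's, and the Cauchy--Schwarz inequalities, choosing all exponents so that the right-hand side involves only the two norms $\|\omega_0\|_{L^2}^2$ and $\|\omega_0\|_{H^4}^{1/3}$ that make up $V$, and with the exact powers that appear in $V$. The mild regularity gain $C^\alpha$ (rather than, say, $L^\infty$) is harmless and in fact we will need $\alpha$ to be \emph{small}.

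Fix a small Hölder exponent, say $\alpha=\tfrac14$ (any $\alpha\in(0,\tfrac3{10})$ works). Since $\nabla u_s=\nabla\nabla^\perp\Delta^{-1}\omega_s$ is obtained from the mean-zero function $\omega_s$ by a zeroth-order Fourier multiplier, the two-dimensional Sobolev embedding gives $\|\nabla u_s\|_{C^\alpha}\les\|\nabla u_s\|_{H^{1+2\alpha}}\les\|\omega_s\|_{H^{1+2\alpha}}$, and interpolating $H^{1+2\alpha}$ between $H^1$ and $H^4$ yields
\[
\|\nabla u_s\|_{C^\alpha}\les \|\omega_s\|_{H^1}^{1-\frac{2\alpha}3}\,\|\omega_s\|_{H^4}^{\frac{2\alpha}3}.
\]
Writing $T:=\sup_{0\le s\le t}\|\omega_s\|_{H^4}$ and $P:=\int_0^t\|\omega_s\|_{H^1}^2\,\dif s$, I would pull the $H^4$-factor out of the time integral and apply Hölder's inequality in $s$, obtaining $\int_0^t\|\nabla u_s\|_{C^\alpha}\,\dif s\le C(t)\,T^{\frac{2\alpha}3}P^{\frac12(1-\frac{2\alpha}3)}$. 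The decisive point is that $2\alpha/3<\tfrac13$, so Young's inequality applied to this product produces precisely $T^{1/3}$: for every $\lambda>0$ there is a constant such that
\[
K\int_0^t\|\nabla u_s\|_{C^\alpha}\,\dif s\le \lambda\,T^{1/3}+C(K,\lambda,t)\,P^{\beta},
\qquad \beta:=\frac{3-2\alpha}{6(1-2\alpha)},
\]
and the restriction $\alpha<\tfrac3{10}$ is exactly what makes $\beta<1$.

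Now take expectations and apply the Cauchy--Schwarz inequality to split $\E\exp$ of the right-hand side into $\big(\E\exp(2\lambda T^{1/3})\big)^{1/2}$ and $\big(\E\exp(2C(K,\lambda,t)P^\beta)\big)^{1/2}$. For the first factor, estimate~\eqref{eq:Hn-bound} with $n=4$ and $s=0$ gives, once $2\lambda\le C^{-1}$, a bound $C e^{Ct}\exp\big(2\lambda\|\omega_0\|_{H^4}^{1/3}+C\lambda\|\omega_0\|_{L^2}^2\big)$, in which the coefficient $2\lambda$ of $\|\omega_0\|_{H^4}^{1/3}$ --- note the matching $\tfrac13$ power, which is why $V$ is built from $\|\omega\|_{H^4}^{1/3}$ --- is at our disposal. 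For the second factor, since $\beta<1$ a second Young inequality gives $2C(K,\lambda,t)P^\beta\le \mu P+C(K,\lambda,\mu,t)$ for any $\mu>0$, and then~\eqref{eq:l2-energy-bound} with $s=0$ (in the form $\E\exp(\tfrac{\mu}{\nu}\cdot\nu P)\le Ce^{Ct}\exp(\tfrac{\mu}{\nu}\|\omega_0\|_{L^2}^2)$ for $\mu\le C^{-1}\nu$) controls it with the coefficient $\mu/\nu$ of $\|\omega_0\|_{L^2}^2$ again at our disposal. Hence, given any $\eta'>0$, choosing $\lambda$ small and then $\mu$ small yields $\E\exp\big(K\int_0^t\|\nabla u_s\|_{C^\alpha}\,\dif s\big)\le C(K,\eta',t)\exp\big(\eta'(\|\omega_0\|_{L^2}^2+\|\omega_0\|_{H^4}^{1/3})\big)$; since $\|\omega_0\|_{L^2}^2+\|\omega_0\|_{H^4}^{1/3}\les V(\omega_0)$ by Definition~\ref{def:V}, taking $\eta'$ small enough depending on $\eta$ gives the corollary. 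The only real work is the exponent arithmetic of the previous paragraph: $\alpha$ must be small enough that the interpolation weight $2\alpha/3$ on $\|\omega_s\|_{H^4}$ does not exceed $\tfrac13$ (so Young's inequality produces $T^{1/3}$, which has exponential moments by~\eqref{eq:Hn-bound}, and not a larger power of $T$, which would not) and that the companion exponent $\beta$ on $\int_0^t\|\omega_s\|_{H^1}^2$ stays strictly below $1$ (so a second Young inequality drives its coefficient into the range where~\eqref{eq:l2-energy-bound} applies with a small parameter). There is no analytic difficulty beyond Proposition~\ref{prop:omega-bounds} itself.
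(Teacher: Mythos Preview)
Your proof is correct and is essentially a fleshed-out version of the paper's one-line proof (``Direct from Cauchy--Schwarz, \eqref{eq:l2-energy-bound}, \eqref{eq:Hn-bound}, and Morrey's inequality''). You use exactly the same ingredients---Sobolev/Morrey embedding, interpolation between $H^1$ and $H^4$, Cauchy--Schwarz/Young to split, and the two \textit{a priori} bounds \eqref{eq:l2-energy-bound} and \eqref{eq:Hn-bound}---and your exponent tracking (in particular the constraint $\alpha<\tfrac{3}{10}$ that makes $\beta<1$) is correct.
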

\begin{proof}
    Direct from Cauchy--Schwarz,~\eqref{eq:l2-energy-bound},~\eqref{eq:Hn-bound}, and Morrey's inequality.
\end{proof}

\subsection{Reduction to mixing of the two-point process}

\label{ss:reduction-to-two-point}

With these preliminary estimates in hand, we are ready to prove Theorem~\ref{thm:main-scalar-mixing}, under the assumption of mixing for the two-point process. This scheme of reducing scalar mixing to Markov mixing of the two-point process is the general strategy of~\cite{dolgopyat2004sample}, recently applied in~\cite{bedrossian_almost-sure_2022} to the stochastic Navier--Stokes equations. The necessary Markov mixing result is that the transition probabilities of the two-point process converge to a unique stationary measure at an exponential rate. Proposition~\ref{prop:2pt-implies-scalar} then shows that this Markov mixing result implies mixing for the passive scalar equation. At the end of Section~\ref{s:exponential-mixing}, we will conclude Theorem~\ref{thm:main-scalar-mixing} as a consequence of Proposition~\ref{prop:2pt-implies-scalar} as well as the mixing result Theorem~\ref{thm:2pt-mixing}. 

\begin{proposition}[Two-point mixing implies scalar mixing]\label{prop:2pt-implies-scalar}
  Assume there are $\alpha \in (0, 1)$ and $t_0 \geq 1$ and a distance function $d \colon {(\uspace \times M^2)}^2 \to \R_{\geq 0}$ with
  \begin{equation}\label{eq:Lipschitz-domination}
    d((u, x_1, y_1), (u, x_2, y_2)) \geq |(x_1, y_1)-(x_2, y_2)|,
  \end{equation}
    such that for any probability measure $\nu \in \mathcal{P}(\uspace \times M^2)$, we have the exponential mixing rate
  \begin{equation*}
    d(P_{nt_0}\nu, \mu^b \times \Leb) \leq \alpha^n d(\nu, \mu^b \times \Leb)
  \end{equation*}
  for $n \in \N$, where $d(\cdot, \cdot)$ denotes the Wasserstein distance associated to $d$ (see Definition~\ref{def:Wasserstein}) and $\mu^b$ is the stationary measure of the base process, given by Corollary~\ref{cor:projective-ergodicity}.

  Assume also that for the super-Lyapunov function $V$, defined above in Definition~\ref{def:V}, we have for some $C>0,$
  \begin{equation}\label{eq:d-L2}
    \int {d(\delta_{(u, x, y)}, \mu^b \times \Leb)}^2 \, \dif x \dif y \leq C\exp(V(u))
  \end{equation}
  for every $u \in \uspace$.

  Let $\varphi_0 \in L^2(\xspace)$ with $\int \varphi_0 = 0$ and for $t > 0$, let $\varphi_t$ solve the transport equation
  \begin{equation}\label{eq:scalar-transport}
      \dot{\varphi}_t + u_t \cdot \nabla \varphi_t = 0.
  \end{equation}
    Then there exists a random variable $K(\varphi_0, u_0, \alpha, t_0)\geq 0$ and constants $C(\alpha, t_0), p(\alpha, t_0) > 0$ such that for all $t \geq 0$,
    \begin{equation}
      \label{eq:scalar-uniform-in-time}
    \|\varphi_t\|_{H^{-1}}  \leq K \alpha^{t/(16t_0)} (\|u_0\|_{H^1}^p +1) \|\varphi_0\|_{L^2}^2
    \end{equation}
    where $K$ has the (uniform in $u_0,\varphi_0$) moment bound
    \[\E K \leq C.\]
\end{proposition}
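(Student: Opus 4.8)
\emph{Proof strategy.} Let $\flow_t$ be the (volume-preserving, since $\nabla\cdot u_t=0$) flow of $u_t$, so that $\varphi_t=\varphi_0\circ\flow_t^{-1}$. Using the equivalent norm $\|\varphi\|_{H^{-1}}^2=\langle\varphi,(-\Delta)^{-1}\varphi\rangle$ on mean-zero functions, with $G$ the associated Green's function on $\xspace$ (smooth off the diagonal, $G(x)=-\tfrac{1}{2\pi}\log|x|+O(1)$ near $0$, $\fint_{\xspace}G=0$), the change of variables $x=\flow_t(a)$, $y=\flow_t(b)$ gives the correlation representation
\begin{equation*}
  \|\varphi_t\|_{H^{-1}}^2=\int_{\xspace\times\xspace}\varphi_0(a)\,\varphi_0(b)\,G\big(\flow_t(a)-\flow_t(b)\big)\,\dif a\,\dif b,
\end{equation*}
which converges absolutely since, undoing the change of variables, its modulus is at most $\|\varphi_t\|_{L^2}^2\|G\|_{L^1}=\|\varphi_0\|_{L^2}^2\|G\|_{L^1}$ by Young's inequality. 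The plan is: (i) bound $\E\|\varphi_{nt_0}\|_{H^{-1}}^2$ using the hypothesized Wasserstein contraction; (ii) replace the resulting exponential prefactor $e^{V(u_0)}$ (from \eqref{eq:d-L2}) by a polynomial in $\|u_0\|_{H^1}$ via a burn-in time $T\asymp\log(2+\|u_0\|_{H^1})$; (iii) upgrade to an almost sure estimate by a maximal argument over the lattice $\{T+nt_0\}_{n\ge 0}$.

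The only obstruction in (i) is the logarithmic singularity of $G$ on the diagonal, so that $G$ is neither bounded nor $d$-Lipschitz on $\uspace\times M^2$. Fix $\eps\in(0,1)$ and split $G=G_\eps+(G-G_\eps)$ with $G_\eps(x):=\chi(|x|/\eps)G(x)$, $\chi$ a smooth cutoff vanishing near $0$; then $G_\eps$ is smooth with $\|G_\eps\|_{L^\infty}+\eps\|\nabla G_\eps\|_{L^\infty}\lesssim 1+|\log\eps|$, while $\|G-G_\eps\|_{L^1}\lesssim\eps^2(1+|\log\eps|)$ and $|\fint_{\xspace}G_\eps|\lesssim\eps^2(1+|\log\eps|)$. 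The near-diagonal piece is controlled deterministically for every realization: the $(G-G_\eps)$ contribution to the correlation integral is, by the same change-of-variables-and-Young estimate, at most $\|\varphi_0\|_{L^2}^2\|G-G_\eps\|_{L^1}\lesssim\eps^2(1+|\log\eps|)\|\varphi_0\|_{L^2}^2$. For the smooth piece, the function $(u,x,y)\mapsto G_\eps(x-y)$ on $\uspace\times M^2$ is bounded and, by \eqref{eq:Lipschitz-domination} (together with boundedness for far-apart points), $d$-Lipschitz with constant $\lesssim\eps^{-1}(1+|\log\eps|)$. Since the law of $\big(\flow_{nt_0}(a),\flow_{nt_0}(b)\big)$ is the $M^2$-marginal of $P_{nt_0}\delta_{(\omega_0,a,b)}$, the contraction hypothesis and $\fint_{\xspace}G_\eps=O(\eps^2(1+|\log\eps|))$ give, for each $(a,b)\in M^2$,
\begin{equation*}
  \big|\E\,G_\eps\big(\flow_{nt_0}(a)-\flow_{nt_0}(b)\big)\big|\lesssim(1+|\log\eps|)\Big(\eps^{-1}\alpha^n\,d\big(\delta_{(\omega_0,a,b)},\mu^b\times\Leb\big)+\eps^2\Big).
\end{equation*}
Multiplying by $|\varphi_0(a)\varphi_0(b)|$, integrating, applying Cauchy--Schwarz in $(a,b)$ and then \eqref{eq:d-L2}, and finally optimizing $\eps=\alpha^{n/3}e^{V(\omega_0)/6}$ (absorbing the accompanying power of $n+V(\omega_0)$ into a slightly slower rate) yields
\begin{equation*}
  \E\,\|\varphi_{nt_0}\|_{H^{-1}}^2\lesssim\alpha^{n/2}\,e^{V(\omega_0)}\,\|\varphi_0\|_{L^2}^2.
\end{equation*}

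For (ii), first run the base process for one unit of time: combining \eqref{eq:Hn-regularization} (with $n=4$) and \eqref{eq:l2-energy-bound} gives $\E\,e^{\eta V(\omega_1)}\lesssim e^{C\eta\|\omega_0\|_{L^2}^2}\lesssim e^{C\eta\|u_0\|_{H^1}^2}$ for $\eta$ small, removing the dependence on the higher Sobolev norms of $u_0$. Then iterating Corollary~\ref{cor:super-lyapunov} from time $1$ over $m\asymp\log(2+\|u_0\|_{H^1})$ unit steps contracts the exponent by $\delta^m$ and, after balancing the accumulated constant $C^m$ against $\delta^m$, produces $\E\,e^{V(\omega_{1+m})}\lesssim(2+\|u_0\|_{H^1})^{C_0}=:Q(u_0)$. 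With $T:=1+m$, the Markov property at time $T$ and the bound from (i) (applied conditionally with initial data $(\omega_T,\varphi_T)$ and $\|\varphi_T\|_{L^2}=\|\varphi_0\|_{L^2}$) then yield $\E\,\|\varphi_{T+nt_0}\|_{H^{-1}}^2\lesssim\alpha^{n/2}\,Q(u_0)\,\|\varphi_0\|_{L^2}^2$. For (iii), testing \eqref{eq:scalar-transport} against $(-\Delta)^{-1}\varphi_t$ and integrating by parts (the cubic term vanishing since $\nabla\cdot u_t=0$) gives the multiplicative bound $\tfrac{\dif}{\dif t}\|\varphi_t\|_{H^{-1}}^2\le 2\|\nabla u_t\|_{L^\infty}\|\varphi_t\|_{H^{-1}}^2$, hence $\sup_{T+nt_0\le t\le T+(n+1)t_0}\|\varphi_t\|_{H^{-1}}^2\le\|\varphi_{T+nt_0}\|_{H^{-1}}^2\,R_n$ with $R_n:=\exp\!\big(2\int_{T+nt_0}^{T+(n+1)t_0}\|\nabla u_s\|_{L^\infty}\,\dif s\big)$ and $\E[R_n\mid\mathcal F_{T+nt_0}]\lesssim e^{V(\omega_{T+nt_0})/2}$ by Corollary~\ref{cor:C-alpha-moments} and $\|\cdot\|_{L^\infty}\le\|\cdot\|_{C^\alpha}$. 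Combining this with the preceding bound, the deterministic inequality $\|\varphi_t\|_{H^{-1}}\le\|\varphi_0\|_{L^2}$, Cauchy--Schwarz, and Corollary~\ref{cor:super-lyapunov} run from time $T$ (so $\E\,e^{V(\omega_{T+nt_0})}\lesssim Q(u_0)$), one obtains $\E\big[\sup_{T+nt_0\le t\le T+(n+1)t_0}\|\varphi_t\|_{H^{-1}}^2\big]\lesssim\alpha^{n/4}\,Q(u_0)\,\|\varphi_0\|_{L^2}^2$. Dominating the supremum over $t\ge T$ by a sum over $n$, using Jensen's inequality, and summing the geometric series gives $\E\big[\sup_{t\ge T}\alpha^{-(t-T)/(16t_0)}\|\varphi_t\|_{H^{-1}}\big]\lesssim Q(u_0)^{1/2}\|\varphi_0\|_{L^2}$, the range $t<T$ being covered by $\|\varphi_t\|_{H^{-1}}\le\|\varphi_0\|_{L^2}$. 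Since $T\asymp\log(2+\|u_0\|_{H^1})$, the offset $\alpha^{-T/(16t_0)}$ is polynomial in $\|u_0\|_{H^1}$, so it and $Q(u_0)^{1/2}$ are absorbed by dividing through by a sufficiently large power $p=p(\alpha,t_0)$ of $1+\|u_0\|_{H^1}$; taking $K$ to be the resulting normalized supremum (plus a constant for $t<T$) gives $\E K\le C$ and \eqref{eq:scalar-uniform-in-time}.

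The crux is the joint treatment of the two noncompactness effects. The logarithmic blow-up of $G$ on the diagonal forces the truncation at scale $\eps$, with a delicate balance between the $\eps^{-1}$ loss in the $d$-Lipschitz constant of $G_\eps$ and the $\eps^{2}$ gains in $\|G-G_\eps\|_{L^1}$ and in $|\fint_{\xspace}G_\eps|$; this fixes the exponent $\alpha^{n/2}$ in (i). The exponential $e^{V(u_0)}$ coming from \eqref{eq:d-L2} must then be traded for a polynomial in $\|u_0\|_{H^1}$, and doing so---via the unit-time regularization \eqref{eq:Hn-regularization} followed by a logarithmically long iteration of the super-Lyapunov property---is the step most sensitive to the precise form of the a priori estimates.
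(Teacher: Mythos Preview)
Your proof is correct, and in the burn-in step~(ii) and the maximal/Gr\"onwall upgrade~(iii) it parallels the paper closely. The genuine difference is in step~(i), where you convert two-point Wasserstein mixing into decay of $\E\|\varphi_{nt_0}\|_{H^{-1}}^2$.

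The paper does this via Fourier coefficients: it tests the correlation integral against the smooth two-point observables $(x,y)\mapsto e_k(x)e_k(y)$, which are bounded and $d$-Lipschitz with constant $\lesssim|k|$, so no truncation is needed. Summing $|k|^{-4}$ times the resulting bound on $\E|\widehat{\varphi_{nt_0}}(k)|^2$ gives control on $\E\|\varphi_{nt_0}\|_{H^{-2}}^2$, and interpolation against the conserved $L^2$ norm then reaches $H^{-1}$ with rate $\alpha^{n/2}$. You instead represent $\|\varphi_t\|_{H^{-1}}^2$ directly through the Green's function $G$, which forces you to excise its logarithmic singularity at scale~$\eps$ and balance the $\eps^{-1}(1+|\log\eps|)$ Lipschitz loss of $G_\eps$ against the $\eps^2(1+|\log\eps|)$ errors from $\|G-G_\eps\|_{L^1}$ and $\fint G_\eps$; the optimization recovers essentially the same exponent.

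The Fourier route is a bit cleaner---it sidesteps the singularity entirely and needs only the summability of $|k|^{-3}$---while your Green's function route lands in $H^{-1}$ directly without passing through $H^{-2}$ and would adapt more readily to settings lacking a convenient eigenbasis. One small remark on~(iii): your claimed identity $\tfrac{\dif}{\dif t}\|\varphi_t\|_{H^{-1}}^2\le 2\|\nabla u_t\|_{L^\infty}\|\varphi_t\|_{H^{-1}}^2$ is correct, but the justification ``the cubic term vanishing since $\nabla\cdot u_t=0$'' is slightly misleading; after integrating by parts one finds $\tfrac{\dif}{\dif t}\|\varphi_t\|_{H^{-1}}^2=2\int(\nabla\psi_t)^T\nabla u_t\,\nabla\psi_t$ with $\psi_t=(-\Delta)^{-1}\varphi_t$, which gives the stated bound. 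The paper obtains the same Gr\"onwall factor via the dual formulation of $H^{-1}$.
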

\begin{proof}
  We bound the variance of Fourier coefficients of $\varphi_{nt_0}$ by, for $k \in \Z^2 \setminus \{0\}$,

  \begin{align*}
    \E\left[{\left(\int \varphi_{nt_0}(x) e_k(x) \, \dif x\right)}^2\right] &\leq \E\left[\int \varphi_{nt_0}(x) \varphi_{nt_0}(y) e_k(x) e_k(y) \, \dif x \dif y\right]\\
                                                              &\leq \int \varphi_0(x) \varphi_0(y) P_{nt_0}[e_k(\cdot)e_k(\cdot)](x, y) \, \dif x \dif y\\
                                                              &\leq \|\varphi_0\|_{L^2}^2\|P_{nt_0}[e_k(\cdot)e_k(\cdot)]\|_{L^2}\\
                                                              &\leq \|\varphi_0\|_{L^2}^2\Lip_d(e_k(\cdot)e_k(\cdot)){\left(\int d(P_{nt_0} \delta_{(u_0, x, y)}, \mu^b \times \Leb)^2 \, \dif x \dif y\right)}^{1/2}\\
                                                              &\leq C\exp\left(\frac{V(u_0)}{2}\right)\|\varphi_0\|_{L^2}^2|k|\alpha^n,
  \end{align*}
  where we use the fact that $\Lip_d(e_k(\cdot)e_k(\cdot)) \leq \Lip(e_k(\cdot)e_k(\cdot)) \leq C|k|$ since $d$ dominates Euclidean distance.
  Summing over $k$, we obtain
  \begin{align*}
    \E\left[\|\varphi_{nt_0}\|_{H^{-2}}^2\right] &= \sum_{|k| > 0} {|k|}^{-4}\E\left[{\left(\int \varphi_{nt_0}(x) e_k(x) \, \dif x\right)}^2\right]\\
                                    &\le C\exp\left(\frac{V(u_0)}{2}\right)\|\varphi_0\|_{L^2}^2\alpha^n \sum_{|k| > 0}{|k|}^{-3}\\
                                    &\leq C\exp\left(\frac{V(u_0)}{2}\right)\|\varphi_0\|_{L^2}^2\alpha^n.
  \end{align*}
  Since $\nabla \cdot u = 0$, we have $\|\varphi_{nt_0}\|_{L^2} = \|\varphi_0\|_{L^2}$ almost surely. Thus, we have from interpolation between these two bounds,
  \begin{equation}
  \label{eq:phi-mix-bad-constant}
  \E\left[\|\varphi_{nt_0}\|_{H^{-1}}^2\right] \leq C\exp\left(\frac{V(u_0)}{4}\right)\|\varphi_0\|_{L^2}^2\alpha^{n/2}.
  \end{equation}
  Using~\eqref{eq:Hn-regularization} and~\eqref{eq:l2-energy-bound}, we note that if $T \in t_0\N$ is minimal such that $T \geq C \log \|u_0\|_{H^1}$, then we have
  \begin{equation}
  \label{eq:V-small-on-log-time}
  \E\exp\left(\frac{V(u_T)}{4}\right) \leq C.
  \end{equation}
  Thus we can improve the dependence on $u_0$ by waiting until time $T$ to apply the scalar mixing estimate. That is, 
  \begin{align*}
    \E\left[\|\varphi_{T+nt_0}\|_{H^{-1}}^2\right] &\leq \E \Big[ \E \left[\|\varphi_{T+nt_0}\|_{H^{-1}}^2 \mid u_T\right] \Big]\\
                                           &\leq \E \Big[  C\exp\left(\frac{V(u_T)}{4}\right)\|\varphi_T\|_{L^2}^2\alpha^{n/2}\Big]\\
                                           &\leq C \alpha^{-T/(2t_0)}\|\varphi_0\|_{L^2}^2 \alpha^{(T+nt_0)/(2t_0)}.
  \end{align*}
  Using the definition of $T$ and the trivial bound $\|\varphi_t\|_{H^{-1}}\leq \|\varphi_t\|_{L^2} = \|\varphi_0\|_{L^2}$ for $t \leq T,$ we see that for some $C(\alpha), p(\alpha)>0$ and all $n \in \N$
  \begin{equation}
    \label{eq:phi-expectation-bound}
  \E \|\varphi_{nt_0}\|_{H^{-1}}^2 \leq C (\|u_0\|_{H^1}^p + 1) \|\varphi_0\|_{L^2}^2\alpha^{n/2}.
  \end{equation}
  
  In order to get the uniform-in-time bound~\eqref{eq:scalar-uniform-in-time}, we first get a bound for times in $t_0\N$. Let
  \[M := \max_{n \in \N} \|\varphi_{nt_0}\|_{H^{-1}}^2 \alpha^{-n/4} \|\varphi_0\|_{L^2}^{-2},\]
  so that for $n \in \N,$
  \[\|\varphi_{nt_0}\|^2_{H^{-1}} \leq M \|\varphi_0\|_{L^2}^2 \alpha^{n/4}.\]
  Then using~\eqref{eq:phi-expectation-bound},
  \[\E M\leq\sum_n  \alpha^{-n/4} \|\varphi_0\|_{L^2}^{-2}  \E \|\varphi_{nt_0}\|_{H^{-1}}^2 \leq C (\|u_0\|_{H^1}^p +1)  \sum_n \alpha^{n/4} \leq C (\|u_0\|_{H^1}^p +1).\]
    For times outside $t_0\N$, we bound how much $\|\varphi_t\|_{H^{-1}}$ can change on the unit time intervals. Using the dual formulation of the norm on $H^{-1}$ and the equation for $\varphi_s$, we see for $t \leq s$,
  \begin{equation*}
    \|\varphi_s\|_{H^{-1}} \leq \|\varphi_t\|_{H^{-1}}\exp\left(\int_t^s\|\nabla u_r\|_{L^\infty} \, \dif r\right).
  \end{equation*}
  Thus for $t \geq T$ with $\lfloor t/t_0\rfloor = n$, we have
  \[\|\varphi_t\|^2_{H^{-1}} \leq \|\varphi_{nt_0}\|_{H^{-1}}^2 \alpha^{-n/8} \max_{m \in \N, mt_0 \geq T} \alpha^{m/8} \exp\left(2\int_{mt_0}^{(m+1)t_0} \|\nabla u_r\|_{L^\infty}\,\dif r\right) \leq K^2\|\varphi_0\|_{L^2}^2 \alpha^{t/(8t_0)},\]
  where
  \[K := C M^{1/2} \max_{m \in \N, mt_0 \geq T} \alpha^{m/16} \exp\left(\int_{mt_0}^{(m+1)t_0} \|\nabla u_r\|_{L^\infty}\,\dif r\right).\]
  We note that, again using~\eqref{eq:Hn-regularization} and~\eqref{eq:l2-energy-bound}, as well as Corollary~\ref{cor:C-alpha-moments} and the definition of $T$,
  \begin{align*}
       \E \max_{n \in \N, nt_0 \geq T} \alpha^{n/8} \exp\left(2\int_{nt_0}^{(n+1)t_0} \|\nabla u_r\|_{L^\infty}\,\dif r\right)
       &\leq \sum_{n = T/t_0}^\infty  \alpha^{n/8} \E\exp\left(2\int_{nt_0}^{(n+1)t_0} \|\nabla u_r\|_{L^\infty}\,\dif r\right)
       \\&\leq C\sum_{n = T/t_0}^\infty  \alpha^{n/8} \leq C,
  \end{align*}
  so by Cauchy-Schwarz and the moment bound on $M$, we have that
  \[\E K \leq C (\|u_0\|^{p/2}_{H^1} +1).\]
  Redefining $K, p$ and using the trivial bound for $t \leq T$, we conclude~\eqref{eq:scalar-uniform-in-time}.
\end{proof}

\subsection{Asymptotic strong Feller and smoothing}

We have established that, to prove scalar mixing, it suffices to prove a mixing result for the two-point process. Before considering the two-point process, we need to establish sufficient tools to prove mixing for the one-point process, which has a compact manifold component. In this subsection, we state Proposition~\ref{prop:unit-time-smoothing} and Corollary~\ref{cor:long-time-smoothing}, which give the central smoothing estimates for the processes we consider. In particular, Corollary~\ref{cor:long-time-smoothing} is a sufficient condition for the \textit{asymptotic strong Feller} property introduced in~\cite{hairer_ergodicity_2006}. Existence of a unique stationary probability measure for the relevant processes is then essentially a direct corollary of the asymptotic strong Feller property. Proposition~\ref{prop:unit-time-smoothing} is a special case of Theorem~\ref{thm:smoothing}, which is in turn a mild generalization of~\cite[Proposition~4.15]{hairer_ergodicity_2006} to the setting of stochastic Navier--Stokes \textit{coupled to vector fields on manifolds}. We note that below we take norms of $\nabla \varphi(u,p) \in \uspace \times TM$, denoted by $\|\nabla \varphi(u,p)\|_{H^5}$. By this, we mean that we take the sum of the $H^5$ norm in the $\uspace$ component and the norm induced by the Riemannian metric in the $TM$ component.

\begin{proposition}[Unit-time smoothing]\label{prop:unit-time-smoothing}
  Consider either the base, one-point, two-point, tangent, projective, or Jacobian process.
  Then for each $\eta, \gamma \in (0, 1)$ there is $C(p, \eta, \gamma) > 0$, locally bounded in $p \in M$, such that for each Fr\'echet differentiable $\varphi \colon \uspace \times M \to \R$,
  \begin{equation} \label{eq:unit-time-smoothing}
    \|\nabla P_1\varphi(u, p)\|_{H^5} \leq \exp(\eta V(u)) \left(C\sqrt{P_1{|\varphi|}^2(u, p)} + \gamma\sqrt{P_1{\|\nabla \varphi\|_{H^5}^2}(u, p)}\right).
  \end{equation}
\end{proposition}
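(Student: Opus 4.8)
The plan is to run the Hairer--Mattingly Malliavin-calculus gradient estimate for the semigroup, adapted to the coupled system $(u_t,p_t)$. Fix a base point $(u,p)$ and a tangent direction $\xi=(\xi^u,\xi^p)\in\uspace\times T_pM$, and write, via the chain rule, $\nabla P_1\varphi(u,p)\cdot\xi=\E[(\nabla\varphi)(u_1,p_1)\cdot J_{0,1}\xi]$, where $J_{s,t}$ is the linearization of the coupled SDE--ODE flow along the trajectory: its $\uspace$-block is the usual linearized Navier--Stokes flow, and its manifold block solves the linearized ODE $\dot{(\delta p)}=(D_p\Theta_{u_t})(p_t)\,\delta p+(\partial_u\Theta)(p_t)\,\delta u_t$. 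Introduce the Malliavin derivative $\mathcal D$ with respect to $\{W^k\}_{k\in F}$ and the associated Malliavin covariance operator $\mathcal M_{0,1}$. The core step is to produce a $\{W^k\}$-adapted control $v=(v_s)_{s\in[0,1]}$, $v_s\in\R^{F}$, so that $J_{0,1}\xi=\mathcal D_v(u_1,p_1)+r_1$ with $r_1$ small in $\uspace\times T_{p_1}M$; then Malliavin integration by parts gives $\nabla P_1\varphi(u,p)\cdot\xi=\E\big[\varphi(u_1,p_1)\int_0^1 v_s\cdot\dif W_s\big]+\E[(\nabla\varphi)(u_1,p_1)\cdot r_1]$, and Cauchy--Schwarz together with the It\^o isometry bounds this by $\big(\E\int_0^1|v_s|^2\,\dif s\big)^{1/2}\sqrt{P_1|\varphi|^2(u,p)}+\big(\E\|r_1\|_{H^5}^2\big)^{1/2}\sqrt{P_1\|\nabla\varphi\|_{H^5}^2(u,p)}$. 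So \eqref{eq:unit-time-smoothing} reduces to the two estimates $\big(\E\int_0^1|v|^2\big)^{1/2}\le C(p,\eta,\gamma)e^{\eta V(u)}$ and $\big(\E\|r_1\|_{H^5}^2\big)^{1/2}\le\gamma e^{\eta V(u)}$.

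To construct $v$ I would iterate over the dyadic sub-intervals $I_n=[1-2^{-n+1},1-2^{-n}]$, $n=1,\dots,N$. On $I_n$, given the residual $\rho$ at its left endpoint, choose $v|_{I_n}$ to approximately invert the restricted Malliavin covariance so as to cancel the low-Fourier-mode part of $\rho$, leaving the high-mode part to be damped by the dissipation $\nu\Delta$ and, crucially, regularized in the strong $H^5$ norm by the smoothing of the (adjoint) linearized flow over the remaining time --- this is the place where one can, as the paper indicates, move derivatives off $J^*$ onto the already-regular $\omega_t$ for $t\ge 1/2$ rather than leaving a collar near the final time. The partial invertibility of $\mathcal M$ is exactly where Assumption~\ref{asmp:modes} enters: the iterated Lie brackets of the forced directions span the low Navier--Stokes modes and --- this is the genuinely new point --- also the finitely many extra directions $T_pM$, even though the forcing never acts directly on $p$ and only reaches it through the flow and its linearizations. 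On an interval of length $h\sim 2^{-n}$ the controllable part of $\mathcal M$ is invertible with norm $\lesssim h^{-q}$ for a fixed power $q$, so one step contracts $\|\rho\|$ by a fixed factor $\beta<1$ at control cost $\lesssim 2^{nq}\|\rho\|^2$; after $N$ steps $\|r_1\|\lesssim\beta^N\|\xi\|$, so picking $N=N(\gamma)$ with $\beta^N\le\gamma$ gives the residual bound, with accumulated cost $\lesssim\sum_{n\le N}2^{nq}=C(\gamma)$, blowing up polynomially as $\gamma\to0$ --- consistent with the $\gamma$-dependence of the constant in the statement. Every random prefactor appearing along the way (operator norms of $J_{s,t}$ and $J^*_{s,t}$, inverse-Malliavin bounds, the manifold-block Gronwall constants) is polynomial in Sobolev norms of $\omega_t$ and in $\int_0^1\|\nabla u_s\|_{C^\alpha}\,\dif s$, so its moments are controlled by Proposition~\ref{prop:omega-bounds}, Corollary~\ref{cor:super-lyapunov}, and Corollary~\ref{cor:C-alpha-moments}; a Hölder split with $\eta$-dependent exponents then absorbs all of these into the single factor $\exp(\eta V(u))$, with $C$ enlarged.

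Two features force the $p$-dependence and the ``locally bounded in $p$'' qualifier. First, the linearized manifold flow $J^M_{s,t}$ and the manifold component of $r_1$ are estimated by Gronwall with rate $\int_0^1\|D\Theta_{u_s}\|_{C^0}\,\dif s$, which for the processes $M^1,\dots,M^J$ is controlled by $\|\nabla u\|$-type quantities and hence by Corollary~\ref{cor:C-alpha-moments}; on the noncompact manifolds $M^2$ (off-diagonal), $M^T$ and $M^J$ these involve factors like $\mathrm{dist}(x,y)^{-1}$ or $|\tau|^{\pm1}$, so the constants degenerate near the bad sets but are fine uniformly on compacta --- and for a unit-time estimate no uniformity there is needed. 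Second, and this is the main obstacle, is precisely the Malliavin-invertibility for the coupled system: showing that the span of the forced directions propagated through the flow and closed under the relevant brackets controls not only the low Navier--Stokes modes, as in \cite{hairer_ergodicity_2006,mattingly_malliavin_2006}, but also the extra directions $T_pM$. I would isolate this hypoelliptic spanning statement as the key lemma; granted it, the remainder is the bookkeeping assembly described above, and indeed Proposition~\ref{prop:unit-time-smoothing} will be recovered as the special case of the general Theorem~\ref{thm:smoothing}, which carries out this program in detail.
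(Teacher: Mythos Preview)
Your high-level framework --- approximate Malliavin integration by parts, split into a cost-of-control term and a residual, Cauchy--Schwarz, and identifying Malliavin nondegeneracy in the $T_pM$ directions as the new obstacle --- matches the paper. But the control construction and the handling of the nondegeneracy differ, and there is a genuine gap.

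The paper does \emph{not} iterate over dyadic subintervals of $[0,1]$. It uses a single Tikhonov-regularized control $v^\beta_t = A^*_{0,1/2}(\mathcal M_{1/2}+\beta)^{-1}J_{0,1/2}\mathfrak p$ on $[0,1/2]$, extended by zero on $[1/2,1]$; the residual is $\rho^\beta=\beta J_{1/2,1}(\mathcal M_{1/2}+\beta)^{-1}J_{0,1/2}\mathfrak p$, and the small $\gamma$ is obtained by sending $\beta\to 0$ (Propositions~\ref{prop:size-of-error} and~\ref{prop:cost-of-control}). Iteration is deferred entirely to the passage from unit time to long time (Corollary~\ref{cor:long-time-smoothing}).

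The gap is your claim that on an interval of length $h$ the Malliavin matrix is invertible with norm $\lesssim h^{-q}$ for a fixed $q$. The paper's Theorem~\ref{thm:malliavin-nondegenerate} only achieves $\P(\langle\mathfrak p,\mathcal M_T\mathfrak p\rangle<\ep)\le g(\ep)e^{\eta V^n(\omega_0)}$ with a merely \emph{qualitative} rate $g(\ep)\to 0$, not $\ep^q$: controlling $\pi_{TM}\mathfrak p$ forces one through Proposition~\ref{prop:H-n-2-malliavin}, which must bound infinitely many Fourier modes of $\pi_{H^n}\mathfrak p$ at once via the soft Lemma~\ref{lem:soft-decay}, and this destroys polynomial stochastic integrability (see the discussion in Subsection~\ref{ss:proof-of-smoothing}). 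Your dyadic scheme, as written, relies on the polynomial rate to produce a uniform contraction factor with summable cost; with only $g(\ep)$ available, that bookkeeping does not close. The paper's remedy is Lemma~\ref{lem:low-modes-low-error}: interpolate between the deterministic bound $\|\beta(\mathcal M_{1/2}+\beta)^{-1}\|\le 1$ and the qualitative nondegeneracy of Theorem~\ref{thm:malliavin-nondegenerate} to force $\E\|\rho^\beta\|^2\le\gamma e^{\eta V^n}$ for $\beta$ small. So the hypoelliptic spanning lemma you isolate is necessary but not sufficient; the \emph{form} of the Malliavin bound dictates the control construction, and the Tikhonov-plus-interpolation argument is the missing idea.

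A smaller correction: $v^\beta$ is not adapted (it depends on $\mathcal M_{1/2}$, hence on the whole path on $[0,1/2]$), so the It\^o isometry does not apply. One needs the Skorokhod-integral bound $\E\,\delta(v)^2\le\E\|v\|_{L^2}^2+\E\|\mathcal D v\|_{L^2}^2$, which is why Proposition~\ref{prop:cost-of-control} also requires the second-derivative estimate~\eqref{eq:J2-moment-bound-asmp}.
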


We will discuss the proof of this proposition in Subsection~\ref{ss:proof-of-smoothing}. When $M$ is compact, the super-Lyapunov property allows us to translate the unit-time estimate of Proposition~\ref{prop:unit-time-smoothing} into the following long-time estimate.

\begin{definition}
  For any normed space $X$ and $\tilde{V} \colon \uspace \to \R$ and $\varphi \colon \uspace \to X$, we define the weighted supremum norm
  \begin{equation}\label{eq:weighted-V-norm}
    \|\varphi\|_{\tilde{V}} := \sup_{u \in \uspace} \exp(-\tilde{V}(u))\|\varphi(u)\|_X.
  \end{equation}
\end{definition}

\begin{corollary}[Long-time smoothing]\label{cor:long-time-smoothing}
  Consider either the base, one-point, or projective process.
  For every $\eta, \gamma \in (0, 1)$ there is $C(\eta, \gamma) > 0$ such that for each Fr\'echet differentiable observable $\varphi \colon \uspace \times M \to \R$ and $t \geq 1$,
  \begin{equation}
    \label{eq:long-time-smoothing}
    \|\nabla P_t\varphi\|_{\eta V} \leq C\|\varphi\|_{\eta V} + \gamma^t\|\nabla \varphi\|_{\eta V}.
  \end{equation}
\end{corollary}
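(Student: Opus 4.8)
The plan is to iterate the unit-time estimate of Proposition~\ref{prop:unit-time-smoothing} using the semigroup property $P_t = P_1 \circ P_{t-1}$ (for integer times, and then handle the general case), controlling the growth of the super-Lyapunov weight via Corollary~\ref{cor:super-lyapunov}. The key point is that each application of $P_1$ both contracts the gradient term by a factor $\gamma$ and produces an error term controlled by the sup-norm of the function; the super-Lyapunov property is exactly what prevents the weight $\exp(\eta V)$ from accumulating badly across iterations. Since we are in the regime where $M$ is compact (base, one-point, or projective process), the constant $C(p,\eta,\gamma)$ from Proposition~\ref{prop:unit-time-smoothing} is uniformly bounded over $p \in M$, so we may treat it as a genuine constant $C(\eta,\gamma)$.

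First I would prove the estimate for integer $t = n \geq 1$ by induction on $n$. The base case $n=1$ is essentially Proposition~\ref{prop:unit-time-smoothing}: dividing through by $\exp(\eta V(u))$ and using Jensen together with the super-Lyapunov property to absorb $\sqrt{P_1|\varphi|^2(u,p)} \leq \exp(C) \|\varphi\|_{\eta V} \exp(\eta' V(u))$ and similarly for the gradient term, where we choose the small parameter in Corollary~\ref{cor:super-lyapunov} so that the exponent produced is still $\leq \eta V(u)$ after the $\exp(\eta V(u))$ prefactor is divided out --- this is where we need $\eta < 1$ so we have room, and where we use that $\exp(\eta V(\omega_1)) \leq C \exp(\delta \eta V(\omega_0))$ with $\delta < 1$. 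For the inductive step, write $P_n \varphi = P_1(P_{n-1}\varphi)$, apply Proposition~\ref{prop:unit-time-smoothing} to the observable $\psi := P_{n-1}\varphi$, and then use: (a) $P_1 |\psi|^2 \leq (P_1|\psi|)^2 \leq$ a weighted-sup bound on $\|\varphi\|_{\eta V}$ via contractivity of $P_1$ on the weighted sup norm (which follows from Corollary~\ref{cor:super-lyapunov} with $\delta<1$), and (b) the inductive hypothesis applied to $\|\nabla P_{n-1}\varphi\|_{\eta V}$. Tracking the constants: the gradient term picks up $\gamma \cdot \gamma^{n-1} = \gamma^n$, and the function-error terms sum a geometric series $C(1 + \delta\gamma + (\delta\gamma)^2 + \cdots) \leq C/(1-\delta\gamma)$, which is bounded. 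The main bookkeeping subtlety is keeping the weight exponent from drifting above $\eta V$; this is handled by always choosing the free small parameter in the super-Lyapunov estimate small enough (depending only on $\eta$) that $\delta \eta + (\text{error from Jensen}) \leq \eta$, using the $\delta < 1$ slack, so no drift occurs.

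For non-integer $t \geq 1$, write $t = n + s$ with $n = \lfloor t \rfloor \geq 1$ and $s \in [0,1)$, and set $P_t = P_n \circ P_s$. Apply the integer case to $P_n$ acting on the observable $P_s \varphi$; it remains to control $\|P_s\varphi\|_{\eta V}$ and $\|\nabla P_s \varphi\|_{\eta V}$ by $\|\varphi\|_{\eta V}$ and $\|\nabla\varphi\|_{\eta V}$ respectively, up to constants. The first is again contractivity of $P_s$ on the weighted sup norm (short-time version of Corollary~\ref{cor:super-lyapunov}, or just a crude bound since $s \leq 1$); the second requires a short-time gradient bound --- this can be obtained either from Theorem~\ref{thm:smoothing}/Proposition~\ref{prop:unit-time-smoothing} run on the interval $[0,s]$ rather than $[0,1]$ (the same proof gives a constant possibly blowing up as $s \to 0$, but that is fine since we only need $\|\nabla P_s\varphi\|_{\eta V} \leq C(s)\|\varphi\|_{\eta V} + \|\nabla\varphi\|_{\eta V}$, and in fact the trivial bound $\|\nabla P_s \varphi\| \leq P_s \|\nabla \varphi\| \cdot (\text{Jacobian bound})$ via the chain rule plus exponential moments on $\|\nabla u\|_{C^\alpha}$ from Corollary~\ref{cor:C-alpha-moments} suffices). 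Absorbing the resulting $C(s)$-factors into $C(\eta,\gamma)$ by taking a supremum over $s \in [0,1)$ --- or, more cleanly, just reducing to integer times by noting $P_t \varphi = P_1(P_{t-1}\varphi)$ and running the whole induction with real increments of length $1$ --- completes the proof.

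The step I expect to be the main obstacle is the weight-drift control: making sure that across $n$ iterations the exponential weight stays exactly at $\eta V$ rather than degrading, and simultaneously that the geometric series of error terms converges with a constant independent of $n$. Both hinge on the strict inequality $\delta < 1$ in Corollary~\ref{cor:super-lyapunov} and on having chosen the free small parameter in Proposition~\ref{prop:unit-time-smoothing}'s application (the $\eta$ there, distinct from the $\eta$ in the statement) small relative to the slack. Everything else is routine semigroup bookkeeping.
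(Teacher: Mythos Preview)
Your approach is essentially the same as the paper's: use compactness of $M$ to make the constant in Proposition~\ref{prop:unit-time-smoothing} uniform, establish the case $t=1$ by combining the unit-time smoothing with the super-Lyapunov bound (choosing the auxiliary $\eta$-parameter small enough that the weight stays at $\eta V$), then iterate $P_n = P_1 P_{n-1}$ and sum the geometric series $\tilde C(1+\gamma+\cdots+\gamma^{n-1}) \leq \tilde C/(1-\gamma)$.

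One small slip to fix: in your step (a) you write $P_1|\psi|^2 \leq (P_1|\psi|)^2$, which is the wrong direction of Jensen. What you actually need (and what the paper does) is the pointwise bound $|\psi(u,p)|^2 \leq \|\psi\|_{\eta V}^2 \exp(2\eta V(u))$, push $P_1$ through, and then apply Corollary~\ref{cor:super-lyapunov} to $\exp(2\eta V)$ to get $\sqrt{P_1|\psi|^2(u,p)} \leq C\|\psi\|_{\eta V}\exp(\delta\eta V(u))$. Also, for the inductive step you should control $\|\psi\|_{\eta V} = \|P_{n-1}\varphi\|_{\eta V}$ directly by $\|\varphi\|_{\eta V}$ (again via the super-Lyapunov bound), not by the inductive hypothesis on the gradient. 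On non-integer $t$: the paper simply remarks that, by initially shrinking $\gamma$, it suffices to prove the estimate for $t\in\N$; your more explicit $P_t = P_n P_s$ decomposition with a crude short-time gradient bound also works and is a bit more transparent.
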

\begin{proof}
  Given $\eta, \gamma \in (0, 1)$ as in the statement, let $\tilde{\eta} := \delta \eta/2$ (where $\delta$ is from Corollary~\ref{cor:super-lyapunov}) and $\tilde{\gamma} := C^{-1}\gamma$, where $C$ is the constant in Corollary~\ref{cor:super-lyapunov}. Apply Proposition~\ref{prop:unit-time-smoothing} to obtain a constant $\tilde{C} > 0$ satisfying~\eqref{eq:unit-time-smoothing} (with $\tilde{\eta}$ and $\tilde{\gamma}$ in place of $\eta$ and $\gamma$) at every $p \in M$, using the fact that $M$ is compact.

  First, we use the super-Lyapunov property to obtain~\eqref{eq:long-time-smoothing} for $t=1$ with $\tilde{C}$ in place of $C$. Indeed, we bound
  \begin{equation*}
    \sqrt{P_1{|\varphi|}^2(u, p)} \leq \sqrt{P_1 \|\varphi\|_{\eta V}^2 \exp(2\eta V)(u, p)} \leq C\|\varphi\|_{\eta V}\exp((1-\delta/2)\eta V(u))
  \end{equation*}
  and similarly
  \begin{equation*}
    \sqrt{P_1{\|\nabla \varphi\|}^2(u, p)} \leq C\|\nabla \varphi\|_{\eta V}\exp((1-\delta/2)\eta V(u)).
  \end{equation*}
  Combining the previous two inequalities with~\eqref{eq:unit-time-smoothing} (and scaling $\tilde{C}$ by the constant from Corollary~\ref{cor:super-lyapunov}) yields~\eqref{eq:long-time-smoothing} for $t=1$.

  To conclude, we iterate to obtain~\eqref{eq:long-time-smoothing} for any $t \geq 1$. Without loss of generality (initially choosing $\gamma$ smaller if necessary) it suffices to prove~\eqref{eq:long-time-smoothing} for $t \in \N$. Inductively, we have
  \begin{equation*}
    \|\nabla P_t \varphi\|_{\eta V} \leq \left(\tilde{C} + \gamma \tilde{C} + \gamma^2 \tilde{C} + \cdots + \gamma^{t-1}\tilde{C}\right)\|\varphi\|_{\eta V} + \gamma^t\|\nabla \varphi\|_{\eta V},
  \end{equation*}
  so choosing $C := \tilde{C}{(1-\gamma)}^{-1}$ satisfies~\eqref{eq:long-time-smoothing}.
\end{proof}

\begin{corollary}\label{cor:projective-ergodicity}
  The base, one-point, and projective processes each have a unique invariant measures, denoted $\mu^b$, $\mu^1$, and $\mu^P$ respectively. Furthermore, for each initial probability measure $\nu \in \mathcal{P}(\uspace \times M)$, the averaged measures
  \[
    t^{-1}\int_0^t P_s\nu \, \mathrm{d}s
  \]
  converge weakly to the invariant measure as $t \to \infty$.
\end{corollary}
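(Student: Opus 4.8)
The plan is to deduce existence and uniqueness of the invariant measure, together with Cesàro convergence, from the asymptotic strong Feller property encapsulated in Corollary~\ref{cor:long-time-smoothing}, exactly following the Hairer--Mattingly framework~\cite{hairer_ergodicity_2006}. Fix one of the three processes (base, one-point, or projective); in each case the manifold component $M$ is compact (taking $M$ to be a point for the base process), which is what lets us invoke Corollary~\ref{cor:long-time-smoothing}.

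First I would establish \emph{existence} of an invariant measure by a Krylov--Bogolyubov argument. The super-Lyapunov property (Corollary~\ref{cor:super-lyapunov}) gives $\E\exp(\eta V(\omega_t)) \leq C\exp(\delta\eta V(\omega_0))$ for $t\geq 1$, so iterating shows $\sup_t \E V(\omega_t) < \infty$ when started from a fixed point; since $\{V \leq R\} \times M$ is compact in $\uspace \times M$ (using that $V$ controls an $H^4$ norm, hence sublevel sets are precompact in $\uspace = H^5_{\mathrm{sol},0}$ — more precisely one uses that the dynamics regularizes, cf.~\eqref{eq:Hn-regularization}, so at positive times the relevant sublevel sets live in a compact subset of $\uspace$), the averaged measures $t^{-1}\int_0^t P_s\delta_{(u_0,p_0)}\,\dif s$ are tight, and any weak limit is invariant by the Feller property of $P_t$ (continuity of $P_t\varphi$ for bounded continuous $\varphi$, which follows from well-posedness and continuous dependence on initial data for~\eqref{eq:sns} and~\eqref{eq:manifold-ode}).

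Next, \emph{uniqueness}. The key point is that Corollary~\ref{cor:long-time-smoothing} says $P_t$ is asymptotic strong Feller: taking $\gamma < 1$ and sending $t \to \infty$, the gradient bound $\|\nabla P_t\varphi\|_{\eta V} \leq C\|\varphi\|_{\eta V} + \gamma^t\|\nabla\varphi\|_{\eta V}$ shows that for any bounded $\varphi$ with bounded gradient, $\limsup_t \|\nabla P_t\varphi(u,p)\|$ is controlled by $\|\varphi\|_\infty$ locally uniformly — this is precisely the defining estimate of the asymptotic strong Feller property of~\cite[Definition~3.8 / Proposition~3.12]{hairer_ergodicity_2006} with the totally separating sequence of metrics built from $H^5 \times TM$ scaled down geometrically. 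By~\cite[Theorem~3.16]{hairer_ergodicity_2006}, an asymptotic strong Feller process can have at most one invariant measure whose support contains a common point; so it remains to show all invariant measures share a point in their supports. For this one shows the origin $0 \in \uspace$ (and, in the one-point / projective case, that times a suitable point of $M$ — here one uses controllability/irreducibility of the coupled ODE on the compact manifold, which under Assumption~\ref{asmp:modes} follows from the Hörmander-type spanning condition) is in the support of every invariant measure: given any $\mu$ invariant and any $\eps > 0$, the Lyapunov bound forces $\mu(\{V \leq R\}) > 0$ for large $R$, and then an approximate-controllability argument — the noise can push $\omega$ into a small $\uspace$-neighborhood of $0$ with positive probability in unit time, via the Stroock--Varadhan support theorem applied to the finite-dimensional noise together with the smoothing~\eqref{eq:Hn-regularization} — shows $P_1(\omega_0; B_\eps(0)) > 0$ on $\{V \leq R\}$, hence $\mu(B_\eps(0)) > 0$; combined with irreducibility on $M$ this gives the common support point.

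Finally, \emph{Cesàro convergence}: once uniqueness is known, for an arbitrary initial law $\nu$ the family $\{t^{-1}\int_0^t P_s\nu\,\dif s\}$ is tight by the same Lyapunov/regularization argument applied to $\int V\,\dif\nu$ (finite can be assumed by truncation, or one notes any weak limit of $t^{-1}\int_0^t P_s\nu\,\dif s$ over a subsequence is invariant regardless), and every subsequential weak limit is an invariant measure, hence equals the unique invariant measure; so the whole family converges. I expect the main obstacle to be the support/irreducibility step: one must combine the (standard, finite-dimensional-noise) Stroock--Varadhan support theorem for the Navier--Stokes component with the geometric control statement for the coupled ODE on $M$ guaranteeing that the accessible set on the compact manifold is all of $M$, and verify this uniformly enough to conclude a common support point for \emph{all} invariant measures rather than just the constructed one — this is where Assumption~\ref{asmp:modes} (the spanning condition on $F$) is essential, and where one must be slightly careful that the smoothing estimate is being used to land in $\uspace = H^5$, not merely in a weaker topology.
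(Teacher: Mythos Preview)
Your proposal is correct and follows essentially the same route as the paper: existence via Krylov--Bogolyubov and the super-Lyapunov bound, uniqueness via asymptotic strong Feller (from Corollary~\ref{cor:long-time-smoothing}) plus a common support point, and Cesàro convergence from uniqueness by the standard subsequence argument. The only substantive difference is in the irreducibility step: where you sketch an argument via the Stroock--Varadhan support theorem combined with geometric control on $M$, the paper instead packages this into a concrete approximate-controllability lemma (Lemma~\ref{lem:proj-control}), which shows directly that from any initial $(u_0,x_0,v_0)$ with $V(u_0)\leq C$ one can reach any neighborhood of $(0,0,\tilde v)$ with positive probability, and then invokes~\cite[Corollary~3.17]{hairer_ergodicity_2006}. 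Your description of what is needed is accurate, and you correctly flag the $H^4$-versus-$H^5$ compactness subtlety (handled by the regularization estimate~\eqref{eq:Hn-regularization}), which the paper passes over.
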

\begin{proof}
  It suffices to consider the projective process. Corollary~\ref{cor:long-time-smoothing} shows that the process is asymptotically strong Feller, and Lemma~\ref{lem:proj-control} shows that the process is approximately controllable, so~\cite[Corollary 3.17]{hairer_ergodicity_2006} proves that there is at most one invariant measure. Existence follows from tightness of the sequence of averaged measures, a consequence of Corollary~\ref{cor:super-lyapunov} and compactness of $M$. Finally, any subsequence of the averaged measures converges (along another subsequence) to an invariant measure, which is unique, therefore the original sequence of averaged measures converges as well.
\end{proof}

\subsection{Repulsion from the diagonal: positivity of the top Lyapunov exponent}
\label{ss:lyapunov-overview}

With the smoothing estimates established, we can now turn our attention to dealing with the distinct problems of proving mixing for the two-point process. As noted by~\cite{baxendale1988large, dolgopyat2004sample, bedrossian_almost-sure_2022}, the primary obstacle is that the two-point manifold $M^2$ is not compact. Indeed, since both points $(x_t, y_t) \in M^2$ follow integral curves of the same vector field, the diagonal $\{(x, x) \mid x \in \xspace\}$ is invariant under the dynamics. For transition probabilities to mix, we must see that the dynamics near the diagonal is repulsive: we hope that if $|x_0-y_0| \approx \varepsilon$, then $|x_t-y_t| \approx 1$ with high probability after time $t \approx \log \varepsilon^{-1}$.

A Lyapunov function is used to show that the dynamics near the noncompact parts of phase space (in our case, the diagonal) is repulsive. The key input to the construction of the Lyapunov function for the two-point process is the positivity of the top Lyapunov exponent of the random dynamics $u_t$ induces on $\T^2$. That is because positivity of the top Lyapunov exponent is equivalent to showing that infinitesimally close to the diagonal the dynamics are repulsive, an infinitesimal version of our desired macroscopic repulsion.

Furstenberg's criterion~\cite{furstenberg_noncommuting_1963} is a powerful tool in the theory of dynamical systems which provides a nondegeneracy condition that rules out all Lyapunov exponents being zero. We use Ledrappier's~\cite{ledrappier_positivity_1986} generalization of Furstenberg's criterion, which shows that the top Lyapunov exponent is positive unless every family $\nu \colon \uspace \times M^1 \to \mathcal{P}(S^{1})$ of probability measures on the circle which is stationary (that is, invariant in expectation) under the projective dynamics is \emph{almost surely} invariant under the same.

To contradict the existence of such a family---and thus ensure positivity of the top Lyapunov exponent---we use the fact that for fixed approximate endpoints $(u_0, x_0)$ and $(u_T, x_T)$, the set of possible trajectories is sufficiently nondegenerate that the family $(\nu_{u,x})$ cannot be deterministically invariant. Because we are only able to \emph{approximately} control the endpoints, it is crucial to obtain some continuity of the map $(u, x) \mapsto \nu_{u,x}$.

Bedrossian, Blumenthal, and Punshon-Smith~\cite{bedrossian_lagrangian_2022} apply Furstenberg's criterion to the family $(\nu_{u,x})$, specified $\mu^1$-almost everywhere by $\mu^P(\dif u, \dif x, \dif v) = \nu_{u,x}(\dif v)\mu^1(\dif u, \dif x)$. The stationarity condition for this family formally reads
\begin{equation}\label{eq:wrong-twisted-stationarity}
  \nu_{u_0,x_0} = \int {(A_1)}^{-1}_* \nu_{u_1, x_1}\, \overleftarrow{\mu}^J(u_0, x_0, \Id, \dif u_1, \dif x_1, \dif A_1),
\end{equation}
where $\overleftarrow{\mu}^J$ denotes the Markov kernel for the backward-in-time one-point process with $A_t$ solving $\dot{A}_t = -A_t\nabla u_t(x_t)$.

Ignoring the pushforward by $A_1^{-1}$, the stationarity condition~\eqref{eq:wrong-twisted-stationarity} seems to produce a smoothing effect. In particular, if we knew that the backward process satisfied a smoothing estimate like Proposition~\ref{prop:unit-time-smoothing}, then we could construct a locally Lipschitz family $(\nu_{u,x)}$  satisfying~\eqref{eq:wrong-twisted-stationarity} as in Section~\ref{sec:lyapunov}.

Unfortunately, positive viscosity for the forward process means that the equation for the backward trajectories has a negative viscosity, so the transition probabilities appear to be highly singular. On the other hand, if we are only interested in whether or not the Lyapunov exponents are identically zero, observing the process forward or backward in time makes no difference.

To circumvent this difficulty, we construct a different family of measures $(\nu_{u,x})$ satisfying~\eqref{eq:twisted-pullback}, which is a condition similar to~\eqref{eq:wrong-twisted-stationarity} but with respect to the forward process. The joint measure $\nu_{u,x}(\dif v)\mu^1(\dif u, \dif x)$ is stationary under the projective backward dynamics, and if the top Lyapunov exponent is zero, then it is almost surely invariant, and we conclude with an argument similar to that of~\cite{bedrossian_lagrangian_2022}.

The much more convenient form of~\eqref{eq:twisted-pullback} compared to~\eqref{eq:wrong-twisted-stationarity} allows for an iterative construction of the family of measures $(\nu_{u,x})$. We then use Proposition~\ref{prop:unit-time-smoothing} to propagate regularity through this iterative construction, which allows us to show that the family of measures $(\nu_{u,x})$ that we construct to satisfy~\eqref{eq:twisted-pullback} is \textit{unconditionally continuous}, that is we do not need to assume the top Lyapunov exponent is zero to deduce continuity.

\subsection{Lyapunov function for the two-point process and exponential mixing}

To obtain exponential mixing for the two-point process, we use the weak Harris framework of Hairer, Mattingly, and Scheutzow~\cite{hairer_asymptotic_2011}. The framework requires three main ingredients; the first two are similar to those required by the usual Harris theorem (see e.g.~\cite{harris1956existence, nummelin_general_1984, hairer2011yet}) and the third is unique to the weak setting.

First, we need to construct a Lyapunov function for the two-point process. This is a function $W \colon \uspace \times M^2 \to \R_{\geq 1}$ satisfying a drift condition $P_1W \leq \alpha W + C$ for some $\alpha \in (0, 1)$. This condition forces the trajectory $(u_t, x_t, y_t)$ to typically stay in a region where $W(u_t, x_t, y_t)$ is small.

Second, we need to show that sublevel sets of $W$ satisfy a \emph{small set condition}. This condition ensures that, for a fixed sublevel set $S := \{W \leq C\}$, the transition probabilities starting from any two points $z_0, z_0' \in S$ have some small overlap in a weak sense. Since the trajectories spend most of their time in $S$, the small set condition gives two independent trajectories many chances to couple.

Third, we must construct a distance $d$ on $\uspace \times M^2$ that is \emph{contracting}, which means that if $d(z_0, z_0')$ is already small (which we take to mean that $z_0, z_0'$ are already coupled), then the induced Wasserstein distance on transition probabilities contracts, in the sense that
\begin{equation*}
  d(P_1(z_0, \cdot), P_1(z_0', \cdot)) \leq \alpha d(z_0, z_0')
\end{equation*}
for $\alpha \in (0, 1)$.

If our process were to satisfy a \emph{strong} small set condition, which gives that transition probability measures overlap in the stronger sense of total variation, then trajectories which have coupled would be unable to uncouple and therefore the contraction ingredient would be unnecessary. However, since we use a weak small set condition---taking trajectories that are merely close to each other to be coupled---some quantitative information about how likely they are to uncouple, or to couple more closely, is needed. This is what contraction supplies.

Of these three ingredients, the second is immediate from a controllability argument. The third ingredient follows from a similar argument to~\cite[Proposition~5.5]{hairer_asymptotic_2011} with the help of Lemma~\ref{lem:two-point-smoothing}, a version of Proposition~\ref{prop:unit-time-smoothing} for the two-point process with a sharp scaling. The scaling in Lemma~\ref{lem:two-point-smoothing} allows us to close the argument with a Lyapunov function for the two-point process which is only super-Lyapunov in the $\uspace$ coordinate, rather than a full super-Lyapunov function as in~\cite{hairer_asymptotic_2011}, which may not exist for the two-point process.

Proving the first ingredient requires some innovation; in our weak Harris setting we are unable to use the twisted semigroup methods of~\cite{bedrossian_almost-sure_2022} (see~\cite[Remark~2.19]{bedrossian_almost-sure_2022}) and therefore choose to work directly with the two-point chain. We use the positivity of the top Lyapunov exponent as well as a weak reverse-Jensen type inequality to obtain a function that satisfies the drift condition at a bounded random time, and then argue that, after a small modification, this function satisfies the original drift condition.

\subsection{Discussion of the proof of the smoothing estimate}
\label{ss:proof-of-smoothing}
This discussion assumes substantial familiarity with~\cite{hairer_ergodicity_2006,hairer_theory_2011}. Our goal in this subsection is to explain where new arguments are needed to prove the smoothing estimate Theorem~\ref{thm:smoothing}, which is for a stochastic PDE coupled to a nonlinear flow on a manifold, compared to the proofs of the analogous smoothing estimates which do not cover manifold systems of~\cite{hairer_ergodicity_2006,hairer_theory_2011}. We first note that Theorem~\ref{thm:smoothing} is only a unit time smoothing estimate, compared to the global-in-time estimates of~\cite{hairer_ergodicity_2006,hairer_theory_2011}, which makes the proof somewhat simpler. We note also that Corollary~\ref{cor:long-time-smoothing} shows that the short time estimate implies a global-in-time estimate, at least in the case that the manifold is compact. 

The proof of a smoothing estimate like that of Theorem~\ref{thm:smoothing} proceeds in two parts. The first is proving a non-degeneracy bound on the Malliavin matrix, given here as Theorem~\ref{thm:malliavin-nondegenerate}, which one can compare with~\cite[Theorem 6.7]{hairer_theory_2011}. This is then used as the crucial ingredient to proving the smooth estimate using an approximate Malliavin control argument, as is well explained in~\cite[Section 4]{hairer_ergodicity_2006}.

The idea of~\cite[Section 6]{hairer_theory_2011} is to show that with high probability, 
\begin{equation}
\label{eq:hypo-overview-1}
\<\ap,\M_T\ap\> \leq \ep \|\ap\|_{H^4}^2  \implies \max_{|k| \leq R} \sup_{t \in [T/2,T]} |\< e_k, J^*_{t,T} \ap\>| \leq o_\ep(1) \|\ap\|_{H^4},
\end{equation}
where $\M_T$ is the Malliavin matrix, $e_k$ are the Fourier modes defined in~\eqref{eq:fourier-basis}, and $J_{t,T}^*$ is the adjoint of the linearization of process $(\omega_t,p_t)$, and ``with high probability'' means the outside of an event whose probability vanishes super-polynomially fast in $\ep$ as $\ep \to 0$. The above implication lets one immediately conclude the nondegeneracy of the Malliavin matrix given in Theorem~\ref{thm:malliavin-nondegenerate} in the case that $M = \{0\}$.  When $M$ is nontrivial, we also have to deal with directions in $TM$, not just in $\uspace$. Following the same proof scheme as~\cite[Section 6]{hairer_theory_2011}, Proposition~\ref{prop:malliavin-inducted} gives that with high probability we have both~\eqref{eq:hypo-overview-1} and
\begin{equation}
\label{eq:hypo-overview-2}\<\ap,\M_T\ap\> \leq \ep \|\ap\|_{H^4}^2  \implies \max_{|k| \leq R} |\<\Theta_{e_k}(p_T)  -  \nabla^\perp \Delta^{-1} \omega_T \cdot \nabla e_k - \nabla^{\perp} \Delta^{-1} e_k \cdot \nabla \omega_T, \ap\>| \leq o_\ep(1)\|\ap\|_{H^4}.
    \end{equation}
Assuming $\Theta_{e_k}$ uniformly span $TM$, we would be able to conclude directly analogously to~\cite[Section 6]{hairer_theory_2011} provided that instead of the second implication above, we got the implication 
\[\<\ap,\M_T\ap\> \leq \ep \|\ap\|_{H^4}^2  \implies \max_{|k| \leq R} |\<\Theta_{e_k}(p_T), \ap\>| \leq o_\ep(1) \|\ap\|_{H^4}.\]
Thus to conclude, we want to show,
\begin{equation}
\label{eq:hypo-overview-3}
\<\ap,\M_T\ap\> \leq \ep \|\ap\|_{H^4}^2  \implies \max_{|k| \leq R} |\< \nabla^\perp \Delta^{-1} \omega_T \cdot \nabla e_k+ \nabla^{\perp} \Delta^{-1} e_k \cdot \nabla \omega_T, \ap\>| \leq o_\ep(1) \|\ap\|_{H^4}.
\end{equation}
Since $\nabla^\perp \Delta^{-1} \omega_T \cdot \nabla e_k+ \nabla^{\perp} \Delta^{-1} e_k \cdot \nabla \omega_T \in H^4(\T^2),$ this implication ``should'' be a consequence of~\eqref{eq:hypo-overview-1} with a perhaps much larger $R$. The issue is that generically $\nabla^\perp \Delta^{-1} \omega_T \cdot \nabla e_k+ \nabla^{\perp} \Delta^{-1} e_k \cdot \nabla \omega_T$ has nontrivial projections onto all Fourier modes, while no matter how larger $R$ is chosen~\eqref{eq:hypo-overview-1} only gives that projections onto finitely many Fourier modes are small. 

This obstacle is overcome in Proposition~\ref{prop:H-n-2-malliavin}, which shows that if $\<\ap, \M_T\ap\> \leq \ep \|\ap\|_{H^4}^2$, then with high probability all Fourier modes are small, more particularly that $\|\Pi_{H^4}\ap\|_{H^{2}}$ is small, where $\Pi_{H^4}$ is the projection onto the $H^4$ component of $\ap$. This is done by simply using the implication~\eqref{eq:hypo-overview-1} for a well-chosen $R$ and using that $\|\Pi_{\geq R} \ap\|_{H^2} \leq R^{-2} \|\ap\|_{H^4}$ to control the high modes, where $\Pi_{\geq R}$ is the projection onto Fourier modes with wavenumber bigger than $R$. We thus get that with high probability
\[\<\ap, \M_T\ap\> \leq \ep \|\ap\|_{H^4}^2 \implies \|\Pi_{H^4} \ap\|_{H^2} = o_\ep(1).\]
Unfortunately, this does not allow us to conclude~\eqref{eq:hypo-overview-3} holds with high probability. This is because we have to control the probability $\|\omega_T\|_{H^7}$ is larger than $\frac{1}{o_\ep(1)}$. Since this is just some arbitrarily slow rate, no matter how good our bounds on moments of $\omega_T$, we only get that the probability this happens $o_\ep(1)$. This then gives that~\eqref{eq:hypo-overview-3} holds, but only with probability $1-o_\ep(1)$ instead of with high probability (in the sense defined above). This then allows us to conclude Theorem~\ref{thm:malliavin-nondegenerate}, but with the dramatically worse stochastic integrability than~\cite[Theorem 6.7]{hairer_theory_2011}, in that instead of getting $\ep^q$ (for any $q$) on the right-hand side of the bound, we just get $o_\ep(1)$. 

The takeaway from the above is that we can show a non-degeneracy bound on the Malliavin matrix in the case we consider here with coupled dynamics on a manifold that is analogous to~\cite[Theorem 6.7]{hairer_theory_2011} and follows with a similar proof, except that we get dramatically worse stochastic integrability in the bound as a consequence of needing to control infinitely many Fourier modes in order to show the nondegeneracy in the manifold directions.

The question at this point is then how we can conclude the smoothing estimate Theorem~\ref{thm:smoothing} under these worse stochastic integrability conditions. The proof of Theorem~\ref{thm:smoothing} follows from the approximate integration by parts computation~\eqref{eq:integration-by-parts} together with a bound on the cost of the Malliavin control $v^\beta$---which is a bound on a Skorokhod integral---and the size of the error $\rho^\beta$. The bound on the cost of the Malliavin control $v^\beta$ is exactly as in~\cite[Section 4.8]{hairer_ergodicity_2006}---the worse stochastic integrability and the presence of the manifold coordinates do not meaningfully alter the argument whatsoever. The difference is in the bounding of the error $\rho^\beta$. The presence of the manifold coordinates does not have any effect; it is the stochastic integrability that is the obstacle.

It turns out the stochastic integrability is not a substantial issue though, as a simple argument reduces the error bound to controlling the size of the low modes of
\[ \beta (\M_{1/2} + \beta)^{-1} J_{0,1/2}\ap\]
for $\beta >0$. But since $\|\beta (\M_{1/2} + \beta)^{-1} \| \leq 1$ \textit{deterministically} and Theorem~\ref{thm:malliavin-nondegenerate} gives that $\M_{1/2}^{-1}$ is nondegenerate on the low modes with \textit{some} stochastic integrability, interpolating these two bounds gives precisely the control of the error we need. 

\subsection{Approximating the two-point process by the tangent process}
\label{ss:two-pt-by-tangent-overview}

In this subsection we discuss Lemma~\ref{lem:two-point-smoothing}, which gives a version of the smoothing estimate of Proposition~\ref{prop:unit-time-smoothing} for the two-point process but with sharper dependence on the manifold coordinate $x,y$ as they approach the diagonal. Since this process is on a non-compact manifold, getting good constant dependence in the manifold coordinates is nontrivial. In particular what we show is a smoothing estimates with constants totally uniform in $x,y$, but with respect to a metric on $M^2$ that norms tangent vectors $(v_x,v_y) \in T_{(x,y)}M^2$ by
\begin{equation}
\label{eq:two-point-metric-overview}
|(v_x,v_y)| = |v_x+ v_y| + \frac{|v_x - v_y|}{|x-y|}.\end{equation}
This corresponds to the fact that it is hard to move points $x,y$ relatively to each other when they are close, but it never gets more difficult to move their center of mass. The heuristic behind this estimate is to imagine when $x,y$ are very close together, then $y-x$ should evolve like a tangent vector $\tau$ to $x$ in the $y-x$ direction. This suggests we can approximate the two-point process on $M^2$ by the tangent process on $M^T$. Since we are trying to prove a derivative estimate, we need this approximation not just in a $C^0$ way but also in a $C^1$ way: that is, we need to show that not only are the processes close, but their linearizations are close as well. This is what is shown in Subsection~\ref{ssa:two-point-by-tangent}.

With this closeness in hand, we can redo the proof of Theorem~\ref{thm:smoothing}. The trick now is to use the Malliavin control coming from the tangent process as a control for the two point process. We can then use the smoothing estimate for the tangent process, which will have constants independent of $x,y$, together with the closeness of the tangent process to the two-point process to give the smoothing estimate on the two-point process of Lemma~\ref{lem:two-point-smoothing}. The metric~\eqref{eq:two-point-metric-overview} appears naturally when going back and forth between the two-point process and the tangent process.

\subsection{Almost-sure mixers with uniform rates are universal mixers}
We now consider corollaries and extensions of the main result, Theorem~\ref{thm:main-scalar-mixing}. First, we see how it implies Corollary~\ref{cor:main-universal-mixing}, which is a consequence of the following abstract proposition. The idea is to apply the mixing given by Theorem~\ref{thm:main-scalar-mixing}, which gives a different random constant for each initial data, to an orthonormal basis---in this case the Fourier modes. Decomposing arbitrary data using this basis and using the triangle inequality, we get a mixing bound with a constant depending only on the $H^3$ norm. Interpolating the bound on the solution operator $H^3 \to H^{-1}$ with the fact that the solution operator is an isometry $L^2 \to L^2$, we conclude. 

\begin{proposition}
    \label{prop:data-dependent-mixing-implies-universal-mixing}
    Let $v_t$ be an arbitrary random vector field taking values in $\uspace$. Let $\Phi_t : L^2_0(\T^2) \to L^2_0(\T^2)$ be the solution operator to the passive scalar equation
    \[\dot \varphi_t + v_t \cdot \nabla \varphi_t = 0.\]
    Let $\alpha>0$ and suppose that for each Fourier mode $e_k, k \in \Z^2 \setminus \{0\}$, there is a random variable $K_k$ such that we have the mixing estimate
    \[\|\Phi_t e_k\|_{H^{-1}} \leq K_k e^{-\alpha t}.\]
    Suppose also that for some $p \geq 1,M>0$ and each $k$,
    \[\big(\E K_k^p\big)^{1/p} \leq M.\]
    Then there exists $C>0$ and a random variable $K$ such that
    \[  \sup_{\|\varphi\|_{H^1} \leq 1, \int \varphi\,\dif x =0 } \left\|\Phi_t \varphi\right\|_{H^{-1}} \leq Ke^{- \alpha t/3},\]
    and
    \[\big(\E K^{3p}\big)^{1/3p} \leq CM^{1/3}.\]
\end{proposition}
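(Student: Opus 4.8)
The plan is to prove the estimate in three stages: first upgrade the single-mode bounds to a bound on $H^3$ data via a union-type argument over Fourier modes, then interpolate against the $L^2$ isometry to land in $H^1$, and finally collect the random constants with a moment bound. Throughout, write $\varphi = \sum_{k \neq 0} \hat\varphi(k) e_k$ for the Fourier expansion (with the real basis $e_k$ of~\eqref{eq:fourier-basis}), so that $\|\varphi\|_{H^s}^2 \sim \sum_{k} |k|^{2s} |\hat\varphi(k)|^2$.

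First I would define the random constant that controls mixing of $H^3$ data. Set $K^{(3)} := \big(\sum_{k \neq 0} |k|^{-2} K_k^2\big)^{1/2}$, where the weight $|k|^{-2}$ is chosen so that the sum converges after we pay the $|k|$ losses coming from comparing $\|e_k\|_{H^3} \sim |k|^3$ with $\|e_k\|_{L^2} \sim 1$; indeed $\sum_k |k|^{-2} \cdot (\text{const}) < \infty$ in dimension two is false, so I would instead use a slightly stronger weight, say $K^{(3)} := \big(\sum_{k \neq 0} |k|^{-5} K_k^2\big)^{1/2}$, and carry the matching power of $|k|$ in the triangle-inequality step (this is the ``interpolation argument made clear in the proof'' referred to in the remark before Corollary~\ref{cor:main-universal-mixing}, and the exact exponent is a routine bookkeeping choice). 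By linearity of $\Phi_t$ and the triangle inequality in $H^{-1}$,
\[
\|\Phi_t \varphi\|_{H^{-1}} \leq \sum_{k \neq 0} |\hat\varphi(k)| \, \|\Phi_t e_k\|_{H^{-1}} \leq e^{-\alpha t} \sum_{k \neq 0} |\hat\varphi(k)| K_k,
\]
and then Cauchy--Schwarz against the weight gives $\|\Phi_t \varphi\|_{H^{-1}} \leq e^{-\alpha t} K^{(3)} \|\varphi\|_{H^3}$ uniformly over $\varphi$, hence $\sup_{\|\varphi\|_{H^3} \leq 1} \|\Phi_t \varphi\|_{H^{-1}} \leq K^{(3)} e^{-\alpha t}$. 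Since $\|\hat K_k\|_{L^p} \leq M$ and $p \geq 1$, Minkowski's inequality (triangle inequality for $\|\cdot\|_{L^{p}}$ applied to the sum defining $(K^{(3)})^2$, or more directly $\E (K^{(3)})^{p} \leq \big(\sum_k |k|^{-5} (\E K_k^{p})^{2/p}\big)^{p/2} \leq M^{p} \big(\sum_k |k|^{-5}\big)^{p/2}$ after Jensen if $p \geq 2$, with a small variant for $1 \leq p < 2$) yields $\big(\E (K^{(3)})^{p}\big)^{1/p} \leq C M$.

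Second I would interpolate. The solution operator $\Phi_t$ is an isometry on $L^2_0$ since $\nabla \cdot v_t = 0$, so $\|\Phi_t \varphi\|_{L^2} = \|\varphi\|_{L^2}$. The operator norm bound just obtained says $\|\Phi_t\|_{H^3 \to H^{-1}} \leq K^{(3)} e^{-\alpha t}$. Complex (or real $K$-method) interpolation of the pair $(H^3, H^{-1})$ at the parameter that produces $H^1$ on the domain side and $H^{-1}$... no: I interpolate the two bounds $\|\Phi_t\|_{L^2 \to L^2} \leq 1$ and $\|\Phi_t\|_{H^3 \to H^{-1}} \leq K^{(3)} e^{-\alpha t}$. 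Choosing the interpolation parameter $\theta = 1/3$ sends the domain scale $H^3$ down to $[L^2, H^3]_{1/3} = H^1$ and the target scale $H^{-1}$ up to $[L^2, H^{-1}]_{1/3} = H^{-1/3}$; to land exactly at $H^{-1}$ on the target I instead take $\theta$ so that $-\theta = -1$ is impossible with this pair, so the clean route is: interpolate $\|\Phi_t \varphi\|_{H^{-1}} \leq \|\Phi_t\varphi\|_{L^2}^{1-\theta}\|\Phi_t\varphi\|_{H^{-1}}^{\theta}$ trivially, and instead bound the mixed quantity by $\|\Phi_t \varphi\|_{H^{-1}} \leq \|\Phi_t \varphi\|_{L^2}^{2/3}\, \|\Phi_t\varphi\|_{H^{-3}}^{1/3}$? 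The cleanest correct statement: by the standard interpolation inequality for Sobolev norms, $\|w\|_{H^{-1}} \leq \|w\|_{L^2}^{2/3}\|w\|_{H^{-3}}^{1/3}$ is false in direction; the right one is $\|w\|_{H^{-1}} \leq \|w\|_{L^2}^{?}\|w\|_{H^{-3}}^{?}$ with $-1$ between $-3$ and $0$, giving $\|w\|_{H^{-1}} \leq \|w\|_{L^2}^{2/3}\|w\|_{H^{-3}}^{1/3}$ — this is correct. But we do not have an $H^{-3}$ bound. So instead apply interpolation on the \emph{data}: for $\|\varphi\|_{H^1} \leq 1$, split $\varphi$... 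Actually the honest and simplest route, and the one I would write, is: interpolate the operator bounds directly. Since $\Phi_t : L^2 \to L^2$ has norm $1$ and $\Phi_t : H^3 \to H^{-1}$ has norm $K^{(3)} e^{-\alpha t}$, and both $L^2 = H^0$ and the pairs form interpolation scales, we get for $\theta = 1/3$ that $\Phi_t : H^{3\theta} \to H^{-\theta}$, i.e. $\Phi_t : H^1 \to H^{-1/3}$, with norm $\leq (K^{(3)})^{1/3} e^{-\alpha t/3}$. Finally $\|w\|_{H^{-1}} \leq \|w\|_{H^{-1/3}}$, so $\sup_{\|\varphi\|_{H^1}\leq 1} \|\Phi_t \varphi\|_{H^{-1}} \leq (K^{(3)})^{1/3} e^{-\alpha t/3} =: K e^{-\alpha t/3}$ with $K := (K^{(3)})^{1/3}$. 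Then $\big(\E K^{3p}\big)^{1/3p} = \big(\E (K^{(3)})^{p}\big)^{1/3p} \leq (CM)^{1/3} = C^{1/3} M^{1/3}$, which is the claimed moment bound (absorb $C^{1/3}$ into $C$).

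The main obstacle, such as it is, is bookkeeping rather than conceptual: one must pick the Fourier weight in the definition of $K^{(3)}$ compatibly with (a) summability of $\sum_k |k|^{-\sigma}$ in two dimensions (needs $\sigma > 2$), (b) the factor $|k|^{3}$ lost in $\|e_k\|_{H^3}/\|e_k\|_{L^2}$ when applying the triangle inequality, and (c) a second application of Cauchy--Schwarz or Minkowski against $\hat\varphi$, while keeping the final Sobolev index on the domain side equal to the advertised value (here I used $H^3$ to match the statement's remark that $H^1$, hence any $H^\eps$, works with $\eps$-dependent constants — the same argument with $H^{3}$ replaced by $H^{2\eps'}$ for suitable $\eps'$ gives the $H^\eps$ version). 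The interpolation step is completely standard (Riesz--Thorin / complex interpolation of Bessel-potential spaces, or elementary Fourier-side Hölder), and the moment step is just Minkowski's integral inequality together with $\sum_k |k|^{-\sigma} < \infty$. I would present the weight choice and the two Cauchy--Schwarz applications carefully and treat interpolation and the moment bound as one-liners.
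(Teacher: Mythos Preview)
Your approach is correct and essentially identical to the paper's: triangle inequality over Fourier modes to bound $\|\Phi_t\|_{H^3 \to H^{-1}}$, then operator interpolation against the $L^2$ isometry at $\theta = 1/3$ to get $\|\Phi_t\|_{H^1 \to H^{-1/3}} \leq K^{1/3} e^{-\alpha t/3}$, followed by the embedding $H^{-1/3} \hookrightarrow H^{-1}$. The paper streamlines your first step by taking the $\ell^1$ sum $K := \sum_{k} |k|^{-3} K_k$ directly (using $|k|^3|\hat\varphi(k)| \leq \|\varphi\|_{H^3}$) rather than your $\ell^2$ sum via Cauchy--Schwarz; this makes the moment bound a single application of Minkowski in $L^p$ and avoids your case distinction on $p$.
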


\begin{proof}
    Let $\varphi \in H^3_0$ arbitrary. Then 
    \begin{align*}\|\Phi_t \varphi\|_{H^{-1}} &\leq \sum_{k \in \Z^2 \setminus \{0\}} |\hat \varphi(k)| \|\Phi_t e_k\|_{H^{-1}}
    \\&\leq e^{-\alpha t} \sum_{k \in \Z^2 \setminus \{0\}}  |k|^3|\hat \varphi(k)|  |k|^{-3} K_k
    \\&\leq e^{-\alpha t} \|\varphi\|_{H^3} \sum_{k \in \Z^2 \setminus \{0\}} |k|^{-3} K_k. 
    \end{align*}
    We then define
    \[K := \sum_{k \in \Z^2 \setminus \{0\}} |k|^{-3} K_k\]
    and note by the triangle inequality
    \[\big(\E K^p\big)^{1/p} \leq C M.\]
    We have then shown that 
    \[ \|\Phi_t\|_{H^3_0 \to H^{-1}}:= \sup_{\|\varphi\|_{H^3} \leq 1, \int \varphi\,\dif x =0 } \left\|\Phi_t \varphi\right\|_{H^{-1}} \leq K e^{-\alpha t}.\]
    Note though that since $u_t$ is divergence-free, we have that
    \[\|\Phi_t\|_{L^2_0 \to L^2_0} = 1.\]
    Then by interpolation of linear operators on Sobolev spaces~\cite{bergh_interpolation_1976}, we have 
    \[\|\Phi_t\|_{H^1_0 \to H^{-1}} \leq \|\Phi_t\|_{H^1_0 \to H^{-1/3}} \leq 
    \|\Phi_t\|_{H^3_0 \to H^{-1}}^{1/3} \|\Phi_t\|_{L^2_0 \to L^2_0}^{2/3} \leq K^{1/3} e^{-\alpha t/3}.\]
    Redefining $K$ and using the definition of $\|\cdot\|_{H^1_0\to H^{-1}}$, we conclude.
\end{proof}

\begin{proof}[Proof of Corollary~\ref{cor:main-universal-mixing}]
    Direct from Proposition~\ref{prop:data-dependent-mixing-implies-universal-mixing} and Theorem~\ref{thm:main-scalar-mixing} applied to each Fourier mode separately.
\end{proof}

\subsection{Cumulative Batchelor spectrum}
\label{ss:batchelor}
We prove Corollary~\ref{cor:Batchelor-main} in two parts, following the general scheme of~\cite{bedrossian_batchelor_2021}, using mixing for the upper bound and regularity of $u_t$ for the lower bound.

As noted below the statement of Corollary~\ref{cor:Batchelor-main}, the mixing given by Theorem~\ref{thm:main-scalar-mixing} erases any dependence on the initial data $\varphi_0$ in the long time asymptotic considered. As such we without loss of generality suppose that $\varphi_0 =0$. As we will see in the proof, the dependence on the initial data $u_0 \in H^1_0(\T^2)$ to~\eqref{eq:sns-velocity-form} will also vanish due to the decay of the equation induced by the Laplacian. Let $\Phi_{s,t}$ be the solution operator---random through its dependence on $(u_r)_{s \leq r \leq t}$---to the equation
    \[\dot{\varphi}_t + u_t \cdot \nabla \varphi_t =0.\]
    Then the solution to~\eqref{eq:phi-forced} with $\varphi_0 =0$ is given by the stochastic integral
    \[\varphi_t = \int_0^t \Phi_{s,t} g\,\dif B_s,\]
    so by the It\^o isometry,
    \[\E \|\Pi_{\leq N} \varphi_t\|_{L^2}^2 =  \int \E \Big(\int_0^t \Pi_{\leq N} \Phi_{s,t} g\,\dif B_s\Big)^2\,\dif x =  \int \int_0^t \E \big(\Pi_{\leq N} \Phi_{s,t} g\big)^2\,\dif s\dif x = \int_0^t  \E \|\Pi_{\leq N} \Phi_{s,t} g\|_{L^2}^2\,\dif s,\]
    where we recall that $\Pi_{\leq N} : L^2_0(\T^2) \to L^2_0(\T^2)$ denotes the orthogonal projection onto the Fourier modes with wavenumber $\leq N$. We will also denote by $\Pi_{>N} : L^2_0(\T^2) \to L^2_0(\T^2)$ the orthogonal projection onto the Fourier modes of wavenumber $> N$, so that $\Pi_{\leq N} + \Pi_{> N} = 1$. Below we see the upper bound holds for any $N \geq 2$, uniform in $g$.

\begin{proposition}[Upper bound of Corollary~\ref{cor:Batchelor-main}]
    Let $\varphi_t$ solve~\eqref{eq:phi-forced} with $\varphi_0 =0$ and for $u_t$ solving~\eqref{eq:sns-velocity-form} with initial data $u_0 \in H^1_0(\T^2)$. Then there exists $C<\infty$ such that for all $N \geq 2$,
     \[\limsup_{t \to \infty} \E\|\Pi_{\leq N} \varphi_t\|_{L^2}^2 \leq C \|g\|^2_{L^2} \log N.\]
\end{proposition}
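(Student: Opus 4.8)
The plan is to work from the It\^o-isometry identity already recorded above,
\[
\E\|\Pi_{\leq N}\varphi_t\|_{L^2}^2 = \int_0^t \E\|\Pi_{\leq N}\Phi_{s,t}g\|_{L^2}^2\,\dif s = \int_0^t \E\|\Pi_{\leq N}\Phi_{t-r,t}g\|_{L^2}^2\,\dif r,
\]
after the change of variables $r=t-s$, and to split the $r$-integral at a crossover time $T_N := C_\ast \log N$, where $C_\ast$ is a constant chosen below depending only on the mixing rate from Theorem~\ref{thm:main-scalar-mixing}. On the short window $r \le T_N$ one discards the projection and uses that the transport flow is an $L^2$ isometry (since $\nabla\cdot u_t = 0$), so $\|\Pi_{\leq N}\Phi_{t-r,t}g\|_{L^2} \le \|\Phi_{t-r,t}g\|_{L^2} = \|g\|_{L^2}$; this part contributes at most $T_N\|g\|_{L^2}^2 = C_\ast\|g\|_{L^2}^2\log N$, and this is precisely the term producing the claimed $\log N$.

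On the long window $r > T_N$ I would combine the elementary Bernstein-type bound $\|\Pi_{\leq N}h\|_{L^2} \le N\|h\|_{H^{-1}}$ with Theorem~\ref{thm:main-scalar-mixing}. Conditioning on $\mathcal{F}_{t-r}$ and using the Markov property of the base process, $\Phi_{t-r,t}g$ is distributed as the time-$r$ transport of $g$ along a Navier--Stokes velocity started from $u_{t-r}$, so Theorem~\ref{thm:main-scalar-mixing} with $p=2$ and the uniform moment bound $\E K^2 \le C$ yields
\[
\E\big[\|\Phi_{t-r,t}g\|_{H^{-1}}^2 \,\big|\, \mathcal{F}_{t-r}\big] \le C\, e^{-2C^{-1}r}\big(\|u_{t-r}\|_{H^1}+1\big)\|g\|_{L^2}^2 \quad \text{a.s.}
\]
Taking expectations and using $\E\|u_{t-r}\|_{H^1}^2 \le C\,\E\|\omega_{t-r}\|_{L^2}^2 \le C\big(1 + e^{-C^{-1}(t-r)}\|\omega_0\|_{L^2}^2\big)$, which follows from \eqref{eq:l2-energy-bound} (take $s=t$ there and apply Jensen), the long-window contribution is at most
\[
\int_{T_N}^t C N^2 e^{-2C^{-1}r}\big(1 + e^{-C^{-1}(t-r)}\|\omega_0\|_{L^2}^2\big)\|g\|_{L^2}^2\,\dif r.
\]
Choosing $C_\ast$ large enough (depending only on the mixing rate) that $N^2 e^{-2C^{-1}T_N}\le 1$ for all $N\ge 2$, I would estimate the piece of this integral over $r\in[T_N,t/2]$ by $C\|g\|_{L^2}^2\big(1+e^{-C^{-1}t/2}\|\omega_0\|_{L^2}^2\big)$ and the piece over $r\in[t/2,t]$ by $C N^2 t\, e^{-C^{-1}t}\big(1+\|\omega_0\|_{L^2}^2\big)\|g\|_{L^2}^2$; as $t\to\infty$ these tend to $C\|g\|_{L^2}^2$ and $0$ respectively. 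Combining the two windows gives $\limsup_{t\to\infty}\E\|\Pi_{\leq N}\varphi_t\|_{L^2}^2 \le (C_\ast\log N + C)\|g\|_{L^2}^2 \le C\|g\|_{L^2}^2\log N$ for $N\ge 2$, since $\log N \ge \log 2 > 0$ absorbs the additive constant.

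The routine ingredients here are the Bernstein inequality, the $L^2$-isometry of divergence-free transport, and extracting the energy decay $\E\|\omega_a\|_{L^2}^2 \le C(1+e^{-C^{-1}a}\|\omega_0\|_{L^2}^2)$ from Proposition~\ref{prop:omega-bounds}. The only genuinely delicate point is the application of the \emph{data-dependent} mixing estimate of Theorem~\ref{thm:main-scalar-mixing} when the ``initial'' velocity $u_{t-r}$ is itself random: one must invoke the Markov property to condition on $\mathcal{F}_{t-r}$, using that the moment bound $\E K^2 \le C$ is uniform over all deterministic initial velocities so that it survives the conditioning, and one must arrange the sub-split of the long window so that the $\|\omega_0\|_{L^2}$-dependence, which is unavoidable when $r$ is comparable to $t$ (equivalently when $t-r$ is small), is killed by the factor $t\,e^{-C^{-1}t}$ in the limit. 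Everything else is bookkeeping with the two-parameter family of constants.
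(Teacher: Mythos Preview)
Your proposal is correct and follows essentially the same approach as the paper: both start from the It\^o isometry, split the time integral at a crossover of order $C\log N$ from the endpoint, use the trivial $L^2$-isometry bound on the short window, and use Bernstein plus Theorem~\ref{thm:main-scalar-mixing} (applied via the Markov property with initial velocity $u_{t-r}$) on the long window. Your explicit sub-split at $t/2$ to kill the $\|\omega_0\|$-dependence is a slightly more careful version of what the paper does by passing directly to the $\limsup$.
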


\begin{proof}
    Note that by Theorem~\ref{thm:main-scalar-mixing}
    \begin{align*}
        \E\|\Pi_{\leq N} \Phi_{s,t} g\|_{L^2}^2 &\leq N^2 \E\|\Phi_{s,t} g\|_{H^{-1}}^2
        \\&\leq CN^2(\E \|u_s\|_{H^1} +1) e^{-C^{-1}(t-s)} \|g\|_{L^2}^2
        \\&\leq CN^2(e^{-C^{-1} s} \|u_0\|_{H^1} +1) e^{-C^{-1}(t-s)} \|g\|_{L^2}^2.
    \end{align*}
    We also have the trivial bound $\E\|\Pi_{\leq N} \Phi_{s,t} g\|_{L^2}^2 \leq \|g\|_{L^2}^2$. Then, for any $\alpha>0,$
    \begin{align*}
        \limsup_{t \to \infty} \E\|\Pi_{\leq N} \varphi_t\|_{L^2}^2 &= \limsup_{t \to\infty }\int_0^t  \E \|\Pi_{\leq N} \Phi_{s,t} g\|_{L^2}^2\,\dif s
        \\&\leq  C  \|g\|_{L^2}^2 \limsup_{t \to\infty} \int_0^t 1 \land N^2(e^{-C^{-1} s}\|u_0\|_{H^1}^{1/p} +1) e^{-C^{-1}(t-s)}\,\dif s
        \\&\leq   C  \|g\|_{L^2}^2 \Big(\limsup_{t \to\infty} \int_0^{t-\alpha} N^2(e^{-C^{-1} s}\|u_0\|_{H^1}^{1/p} +1) e^{-C^{-1}(t-s)}\,\dif s+ \alpha\Big)
        \\&=  C  \|g\|_{L^2}^2 \Big( \int_\alpha^\infty N^2 e^{-C^{-1}t}\,\dif s+ \alpha\Big)
        \\&\leq C \|g\|_{L^2}^2 \big(N^2 e^{-C^{-1} \alpha} + \alpha\big).
    \end{align*}
    Then choosing $\alpha = C \log N$, we conclude.
\end{proof}

We now show the lower bound. Here is where we need $N \geq N_0(g)$, in particular we take $N_0(g)$ to be the square of the wavenumber magnitude below which a majority of the Fourier mass of $g$ lives,
\[\|\Pi_{\leq \sqrt{N_0}} g\|_{L^2} \geq \frac{3}{4} \|g\|_{L^2}.\]
The square root could be replaced with a $1-\ep$ power while only affecting the constant. It is clear that \textit{some} lower bound on $N$ is needed for the lower bound of Corollary~\ref{cor:Batchelor-main} as we only know $g \in L^2(\T^2)$, so arbitrarily much of the Fourier mass can live on arbitrarily high wavenumbers, and so will not be seen in $\|\Pi_{\leq N} \varphi_t\|_{L^2}^2$ for $N$ sufficiently small. A simple computation shows that one could take
\[N_0 = \Big(\frac{\|g\|_{H^1}}{\|g\|_{L^2}}\Big)^{1+\ep}\]
for some $\ep>0$. Since we do not assume $g \in H^1$, we rely on the (rateless) dominated convergence theorem to give that some $N_0$ exists with the property we need. 

\begin{proposition}[Lower bound of Corollary~\ref{cor:Batchelor-main}]
\label{prop:lower-bound-batchelor}
    Let $\varphi_t$ solve~\eqref{eq:phi-forced} with $\varphi_0 =0$ and for $u_t$ solving~\eqref{eq:sns-velocity-form} with initial data $u_0 \in H^1_0(\T^2)$. Then there exists $C<\infty, N_0(g) \geq 2$ such that for all $N \geq N_0$,
     \[\liminf_{t \to \infty} \E\|\Pi_{\leq N} \varphi_t\|_{L^2}^2 \leq C \|g\|_{L^2}^2 \log N.\]
\end{proposition}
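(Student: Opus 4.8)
The plan is to exploit the conservation of the $L^2$ norm under the passive scalar flow together with the fact that the flow can only cascade $L^2$ mass to high wavenumbers at a rate governed by $\|\nabla u_r\|_{L^\infty}$; over a time window of length of order $\log N$ this rate is too small to push more than a fixed fraction of a band‑limited signal past wavenumber $N$, so for such times a definite fraction of the mass remains at wavenumbers $\le N$. Recalling from the discussion preceding the statement that $\varphi_t = \int_0^t \Phi_{s,t} g\,\dif B_s$, so that by the It\^o isometry $\E\|\Pi_{\le N}\varphi_t\|_{L^2}^2 = \int_0^t \E\|\Pi_{\le N}\Phi_{s,t}g\|_{L^2}^2\,\dif s$, the first step is to discard everything but the recent past:
\[
  \E\|\Pi_{\le N}\varphi_t\|_{L^2}^2 \;\ge\; \int_{t - c\log N}^{t} \E\|\Pi_{\le N}\Phi_{s,t}g\|_{L^2}^2\,\dif s \qquad (t \ge c\log N),
\]
with $c>0$ a small universal constant to be fixed at the end. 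It then suffices to show $\E\|\Pi_{\le N}\Phi_{s,t}g\|_{L^2}^2 \ge c_1\|g\|_{L^2}^2$ for some universal $c_1>0$ whenever $0 \le t-s \le c\log N$ and $t$ is sufficiently large (the threshold on $t$ may depend on $u_0$ and $N$, since we take $\liminf_{t\to\infty}$), as then integrating over $s$ and taking $\liminf_{t\to\infty}$ yields the bound with $C := (c_1 c)^{-1}$.

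To prove this pointwise‑in‑$(s,t)$ lower bound, I would split $g = g_< + g_>$ with $g_< := \Pi_{\le \sqrt{N_0}}g$. The choice of $N_0(g)$---whose existence follows from dominated convergence, as noted before the statement---gives $\|g_<\|_{L^2} \ge \tfrac34\|g\|_{L^2}$, hence $\|g_>\|_{L^2} \le \tfrac{\sqrt{7}}{4}\|g\|_{L^2}$; moreover $g_<$ is band‑limited to wavenumbers $\le \sqrt{N_0}$, so $\|g_<\|_{\dot H^1} \le \sqrt{N_0}\,\|g\|_{L^2}$. Since $\nabla\cdot u = 0$ the flow $\Phi_{s,t}$ is an $L^2$ isometry, and the standard $\dot H^1$ energy estimate for $\dot\varphi_r + u_r\cdot\nabla\varphi_r = 0$---in which the transport term contributes only $-2\int \partial_i\varphi_r\,(\partial_i u_r^j)\,\partial_j\varphi_r$ after integration by parts---gives by Gr\"onwall
\[
  \|\Pi_{>N}\Phi_{s,t}g_<\|_{L^2} \;\le\; N^{-1}\|\Phi_{s,t}g_<\|_{\dot H^1} \;\le\; N^{-1}\sqrt{N_0}\,\exp\!\Big(\int_s^t \|\nabla u_r\|_{L^\infty}\,\dif r\Big)\|g\|_{L^2}.
\]
Thus on the event $E_{s,t} := \big\{\int_s^t \|\nabla u_r\|_{L^\infty}\,\dif r \le \log\!\big(N/(10\sqrt{N_0})\big)\big\}$ we have $\|\Pi_{>N}\Phi_{s,t}g_<\|_{L^2} \le \tfrac1{10}\|g\|_{L^2}$, hence $\|\Pi_{\le N}\Phi_{s,t}g_<\|_{L^2}^2 = \|g_<\|_{L^2}^2 - \|\Pi_{>N}\Phi_{s,t}g_<\|_{L^2}^2 \ge \big(\tfrac9{16}-\tfrac1{100}\big)\|g\|_{L^2}^2$, and therefore
\[
  \|\Pi_{\le N}\Phi_{s,t}g\|_{L^2} \;\ge\; \|\Pi_{\le N}\Phi_{s,t}g_<\|_{L^2} - \|g_>\|_{L^2} \;\ge\; \Big(\sqrt{\tfrac9{16}-\tfrac1{100}} - \tfrac{\sqrt{7}}{4}\Big)\|g\|_{L^2} \;=:\; c_0\|g\|_{L^2} > 0
\]
(the numerical fraction $\tfrac34$ can be pushed toward $1$ if a larger $c_0$ is wanted). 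Squaring and taking expectations gives $\E\|\Pi_{\le N}\Phi_{s,t}g\|_{L^2}^2 \ge c_0^2 \|g\|_{L^2}^2\,\P(E_{s,t})$, so it remains to bound $\P(E_{s,t})$ below, uniformly for $t-s \le c\log N$ and $t$ large.

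For this I would use Markov's inequality: $\P(E_{s,t}^{\mathrm c}) \le \big(\log(N/(10\sqrt{N_0}))\big)^{-1}\,\E\int_s^t\|\nabla u_r\|_{L^\infty}\,\dif r$, where for $N \ge 100 N_0$ the denominator is at least $\tfrac12\log N$. The numerator is controlled by the \emph{a priori} estimates: Proposition~\ref{prop:omega-bounds} and Morrey's inequality (cf.\ Corollary~\ref{cor:C-alpha-moments}) bound $\|\nabla u_r\|_{L^\infty} \les \|\omega_r\|_{H^n}$ for a suitable $n$, and the regularizing estimate~\eqref{eq:Hn-regularization} applied on unit intervals, together with~\eqref{eq:l2-energy-bound} to forget the initial data, yields a constant $\lambda_\ast < \infty$ \emph{independent of} $u_0$ with $\E\|\nabla u_r\|_{L^\infty} \le \lambda_\ast$ for all $r$ beyond a ($u_0$‑dependent) threshold; equivalently, one may invoke ergodicity of the base process (Corollary~\ref{cor:projective-ergodicity}) to reduce to running the estimate at the stationary measure $\mu^b$, for which $\E_{\mu^b}\|\nabla u\|_{L^\infty} < \infty$ by Corollary~\ref{cor:super-lyapunov} and Morrey. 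Hence for $t$ large, $\E\int_s^t\|\nabla u_r\|_{L^\infty}\,\dif r \le (\lambda_\ast+1)(t-s) \le (\lambda_\ast+1)c\log N$, and choosing $c := \tfrac1{4(\lambda_\ast+1)}$ gives $\P(E_{s,t}^{\mathrm c}) \le \tfrac12$, so $\E\|\Pi_{\le N}\Phi_{s,t}g\|_{L^2}^2 \ge \tfrac12 c_0^2\|g\|_{L^2}^2 =: c_1\|g\|_{L^2}^2$, which combined with the first display proves $\liminf_{t\to\infty}\E\|\Pi_{\le N}\varphi_t\|_{L^2}^2 \ge C^{-1}\|g\|_{L^2}^2\log N$ for $N \ge N_0(g)$. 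I expect this last step---the uniform‑in‑$t$ control of $\E\int_{t-c\log N}^{t}\|\nabla u_r\|_{L^\infty}\,\dif r$ as $t\to\infty$, with a constant independent of $u_0$---to be the main point requiring care, and it is exactly where the velocity‑field regularity theory enters; in contrast to the upper bound, no mixing input from Theorem~\ref{thm:main-scalar-mixing} is used.
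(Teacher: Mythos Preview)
Your proof is correct and follows essentially the same strategy as the paper: split $g$ into a band-limited low part and a small remainder, control high-frequency leakage of the low part via the $H^1$ Gr\"onwall bound on $\Phi_{s,t}$ over time windows of length $\sim\log N$, and invoke the \emph{a priori} velocity estimates (Proposition~\ref{prop:omega-bounds}) to make this uniform as $t\to\infty$. The only technical variation is that the paper bounds $\E\|\Phi_{s,t}\|_{H^1\to H^1}^2$ directly via the exponential moments of $\int_s^t\|\nabla u_r\|_{L^\infty}\,\dif r$ and plugs this into the lower bound for $\E\|\Pi_{\le N}\Phi_{s,t}g\|_{L^2}^2$, whereas you restrict to the event where the integral is small and control its complement by Markov on the first moment---a cosmetic difference that uses slightly less input at the price of one extra line.
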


\begin{proof}
    We let $R \geq 2$ be such that 
    \[\|\Pi_{\leq R} g\|_{L^2} \geq \frac{3}{4} \|g\|_{L^2}.\]
    We recall
    \[ \E\|\Pi_{\leq N} \varphi_t\|_{L^2}^2 =\int_0^t  \E \|\Pi_{\leq N} \Phi_{s,t} g\|_{L^2}^2\,\dif s.\]
    Then we note that
    \begin{align*}
        \|\Pi_{\leq N} \Phi_{s,t} g\|_{L^2} &\geq \|\Pi_{\leq N} \Phi_{s,t}  \Pi_{\leq R}g\|_{L^2} - \|\Pi_{\leq N} \Phi_{s,t} \Pi_{>R} g\|_{L^2}
        \\&\geq  \|\Pi_{\leq N} \Phi_{s,t}  \Pi_{\leq R}g\|_{L^2} - \|\Pi_{>R} g\|_{L^2}
        \\&\geq \|\Pi_{\leq N} \Phi_{s,t}  \Pi_{\leq R}g\|_{L^2} - \frac{1}{4} \|g\|_{L^2}.
    \end{align*}
    Then we also have that 
\begin{align*}
    \|\Pi_{\leq N} \Phi_{s,t}  \Pi_{\leq R}g\|_{L^2}^2 
    &= \| \Phi_{s,t}  \Pi_{\leq R}g\|_{L^2}^2  - \|\Pi_{> N} \Phi_{s,t}  \Pi_{\leq R}g\|_{L^2}^2 
    \\&\geq \|\Pi_{\leq R} g\|_{L^2}^2 - N^{-2} \|\Phi_{s,t} \Pi_{\leq R} g\|_{H^1}^2
    \\&\geq \frac{9}{16}\|g\|_{L^2}^2 - \|\Phi_{s,t}\|^2_{H^1 \to H^1} N^{-2} \|\Pi_{\leq R} g\|_{H^1}^2
    \\&\geq \frac{9}{16}\|g\|_{L^2}^2 - \|\Phi_{s,t}\|^2_{H^1 \to H^1} \frac{R^2}{N^2} \|g\|_{L^2}^2.
    \end{align*}
    Thus combining the displays,
    \[ \E \|\Pi_{\leq N} \Phi_{s,t} g\|_{L^2}^2 \geq \frac{1}{2}\E\|\Pi_{\leq N} \Phi_{s,t}  \Pi_{\leq R}g\|_{L^2}^2 - \frac{1}{16} \|g\|_{L^2}^2 \geq \frac{1}{2} \|g\|_{L^2}^2 - \frac{R^2}{N^2} \|g\|_{L^2}^2\E \|\Phi_{s,t}\|_{H^1 \to H^1}^2.\]
    Then by Gr\"onwall's inequality,
    \[\|\Phi_{s,t}\|_{H^1 \to H^1} \leq \exp\Big(\int_s^t \|\nabla u_r\|_{L^\infty}\,\dif r\Big),\]
    and so by Proposition~\ref{prop:omega-bounds} and Morrey's inequality, we have that 
    \[\E \|\Phi_{s,t}\|_{H^1 \to H^1}^2 \leq e^{C(t-s)}\exp\big(e^{-C^{-1} s} \|u_0\|_{H^1}^2\big).\]
    Thus
    \[ \E \|\Pi_{\leq N} \Phi_{s,t} g\|_{L^2}^2 \geq \Big(\frac{1}{2} - \frac{R^2}{N^2} e^{C(t-s)} \exp\big(e^{-C^{-1} s} \|u_0\|_{H^1}^2\big)\Big) \|g\|_{L^2}.\]
    Thus for $s \geq C \log \|u_0\|_{H^1}^2$ and $t \leq s + C\log\Big(\frac{N}{R}\Big)$, we have that 
    \[ \E \|\Pi_{\leq N} \Phi_{s,t} g\|_{L^2}^2 \geq \frac{1}{4} \|g\|_{L^2}^2.\]
    Thus
    \[\liminf_{t \to \infty} \E\|\Pi_{\leq N} \varphi_t\|_{L^2}^2  \geq \liminf_{t \to \infty} \int_{t- C \log(N/R)}^t  \E \|\Pi_{\leq N} \Phi_{s,t} g\|_{L^2}^2\,\dif s \geq C \log\Big(\frac{N}{R}\Big) \|g\|_{L^2}^2.\]
    Then for $N \geq N_0 := R^2$, we conclude. 
\end{proof}

\subsection{Uniform-in-diffusivity mixing}
\label{ss:uniform-in-diffusivity}

Bedrossian, Blumenthal, and Punshon-Smith~\cite{bedrossian_almost-sure_2021} prove that the conclusion of Theorem~\ref{thm:main-scalar-mixing} holds for the systems they consider even when the transport equation~\eqref{eq:phi-transport} is replaced with the advection-diffusion equation
\begin{equation*}
  \dot{\varphi}_t + u_t \cdot \nabla \varphi_t = \kappa \Delta \varphi_t,
\end{equation*}
for $\kappa > 0$, with constants independent of the diffusivity, $\kappa$.

The conclusion of Theorem~\ref{thm:main-scalar-mixing} is a long-time statement, which would appear to be sensitive to small perturbations of the equation~\eqref{eq:phi-transport}. On the other hand, we deduce scalar mixing from mixing for the two-point process, which is the unit-time statement that a certain distance on transition probabilities is a contraction. It is therefore reasonable to expect such a statement to be robust to small perturbations, and therefore we expect scalar mixing to hold uniformly in $\kappa$ as well.

In a setting where the vector field, instead of solving~\eqref{eq:sns-velocity-form}, is chosen independently at random from some family in each time interval $[t, t+1]$, Blumenthal, Coti~Zelati and Gvalani~\cite{blumenthal_exponential_2023}, construct a strong Harris framework that proves scalar mixing. Iyer, Son and the first author~\cite{cooperman_harris_2024} show that the framework is robust to small perturbations and therefore prove uniform-in-diffusivity mixing.

We expect the weak Harris framework in this work to be robust in this way as well, and we plan to investigate the question of uniform-in-diffusivity mixing in a future work.

\subsection{Correlated-in-time forcings}
\label{ss:correlated}

In this work, we only consider forcings to stochastic Navier--Stokes which are white-in-time. This is certainly the most convenient forcing to work with for the Malliavin calculus, as the extreme temporal irregularity of white noise---as well as the still low regularity of its integral, Brownian motion---create a sharp separation of time regularities of modes that are directly forced by Brownian motion, forced at first order, forced at second order, etc. This is what allows for the application of the Norris lemma~\cite[Theorem 7.1]{hairer_theory_2011}, which we make heavy use of throughout Section~\ref{sec:malliavin}. 

A clever trick that allows for more regular-in-time forcings while still working with the tools of Malliavin calculus is ``OU tower'' forcing. That is, we take stochastic Navier--Stokes $\omega_t$ with forcing
\[ \dot{\omega}_t = \nu \Delta \omega_t -  (\nabla^\perp\Delta^{-1})\omega_t \cdot \nabla \omega_t +\sum_{k \in F} c_k e_k(x) v^{k}_t,\]
where $v^k_t$ are i.i.d.\ Ornstein--Uhlenbeck processes, that is they solve
\[\dot v^k_t = - v^k_t + \dif W^k_t.\]
Then $v^k_t \in C^{1/2-}_t$ and so $\omega_t \in C^{1,1/2-}_t$. So we have constructed a version of stochastic Navier--Stokes that is at least $C^1$ in time. Further, one can convince oneself that the \textit{a priori} estimates of this equation compare favorably to~\eqref{eq:sns}, e.g.\ one could reprove Proposition~\ref{prop:omega-bounds} for this forcing. The key is that since the white noise is still there, just buried one layer deep, we can still apply the Malliavin calculus to this equation to prove smoothing estimates like those of Proposition~\ref{prop:unit-time-smoothing}. In fact, if one only considers the ``base process'' and ignores any manifold components, one can verify that the nondegeneracy condition~\cite[Assumption C.2]{hairer_theory_2011} applies to this equation. Since the results of~\cite{hairer_theory_2011} essentially allow one to take brackets as in the classical H\"ormander theorem, verifying Assumption C.2 for this system reduces to a straightforward Lie bracket computation.  

We then do not foresee any essential difficulties in extending the rest of the arguments of this work to the OU forcing. Further, by iterated the above construction---that is by allowing the first OU process $v^k_t$ to be forced by another OU process in place of white noise, then allowing the second OU process to also be forced by another OU process, and so on---one can make the forcing have any desired finite temporal regularity. The sequence of OU processes that bury the white noise under an arbitrary number of time integrals is what lends the ``OU tower'' its name. Thus, this scheme suggests a construction for a $C^k_t C^\infty_x$ version of stochastic Navier--Stokes which still has the exponential mixing properties of Theorem~\ref{thm:main-scalar-mixing} as well as the cumulative Batchelor spectrum given in Corollary~\ref{cor:Batchelor-main}. The idea for this construction was taken from~\cite[System 5]{bedrossian_almost-sure_2022}, which treats the same forcing but replaces stochastic Navier--Stokes with a Galerkin truncated version of the same. The Galerkin truncation allows for the application of the classical, finite-dimensional H\"ormander theorem, in place of the infinite dimensional analogue developed in~\cite{hairer_theory_2011}.

\section{Positivity of the top Lyapunov exponent}\label{sec:lyapunov}

\subsection{A continuous family of invariant measures}
We first formulate a condition which implies that a Markov process has a positive top Lyapunov exponent.

\begin{proposition}\label{prop.markov-furstenberg}
  Let $P(z,\mathrm{d}y)$ be a Markov kernel on a Radon space $(Z,\mathcal{M},\mu)$ where $\mu$ is stationary under $P$, that is,
  \[\int P(z,\mathrm{d}y) \; \mu(\mathrm{d}z) = \mu(\mathrm{d}y).\]
  Let $A$ be a cocycle on $Z$, identified with a measurable $A \colon Z \to \SL(d,\R)$. We also write $A_z$ to denote the induced map $A_z \colon S^{d-1} \to S^{d-1}$ given by $A_z \colon v \mapsto \frac{Av}{|Av|}$.

  Suppose $\nu \colon Z \to \mathcal{P}(S^{d-1})$ is a measurable family of probability measures on the sphere $S^{d-1}$ satisfying the twisted pullback condition
  \begin{equation}\label{eq:twisted-pullback}
    \nu_z = A(z)_*^{-1}\int \nu_y P(z, \dif y),
  \end{equation}
  where $A_* \nu$ denotes the pushforward measure of $\nu$ under the map $A$.

  Let $\lambda^+$ be the top Lyapunov exponent of the cocycle,
  \[\lambda^+ := \lim_{n \to \infty} \frac{1}{n} \E \log \| A(z_{n-1}) \cdots A(z_{1}) A(z_0)\|,\]
  where $z_0$ is distributed according to $\mu$ and the remaining $z_j$ are distributed according to the trajectory of the Markov process started from $z_0$. If $\lambda^+ = 0$, then the family is almost surely invariant:
    \[
      A(z_0)_*\nu_{z_0} = \nu_{z_1},\quad P(z_0,\mathrm{d}z_1) \mu(\mathrm{d}z_0)\text{-almost surely.}
    \]
\end{proposition}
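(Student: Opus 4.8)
The statement is Ledrappier's version of Furstenberg's criterion, and the plan is to follow the classical martingale argument, using the twisted pullback condition~\eqref{eq:twisted-pullback} to build a bounded martingale out of a suitable entropy- or energy-type functional of the measures $\nu_z$. First I would set up the natural filtration: let $(z_n)_{n \geq 0}$ be the stationary trajectory with $z_0 \sim \mu$, let $\mathcal{F}_n = \sigma(z_0, \dots, z_n)$, and write $A_n := A(z_n)$. The key observation is that~\eqref{eq:twisted-pullback} says precisely that $(A_n)_* \nu_{z_n}$ is a version of the conditional expectation of $\nu_{z_{n+1}}$ given $\mathcal{F}_n$ (as measures on $S^{d-1}$, tested against continuous functions). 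Equivalently, for the reversed/pulled-back process, the sequence of measures obtained by pushing $\nu_{z_n}$ forward under the partial cocycle $A_{n-1} \cdots A_0$ should form a measure-valued martingale.

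\textbf{Key steps.} The standard device is to fix a reference measure and track a Radon--Nikodym-type functional. Concretely, I would consider the partial products $B_n := A(z_{n-1}) \cdots A(z_0) \in \SL(d,\R)$ and the pushed-forward measures $\mu_n := (B_n)_* \nu_{z_0}$ on $S^{d-1}$; iterating~\eqref{eq:twisted-pullback} shows $\E[\nu_{z_n} \mid \mathcal{F}_0 \text{ reversed}] $-type relations, so that $\mu_n$ is a bounded martingale in the space of probability measures on the compact space $S^{d-1}$ (which is weak-$*$ compact, hence the martingale converges a.s.\ to some random limit measure $\mu_\infty$). Then I would bring in the Lyapunov exponent: by the Furstenberg--Kesten / Oseledets theory applied to $\|B_n\|$, the hypothesis $\lambda^+ = 0$ forces $\frac{1}{n}\log\|B_n\| \to 0$, and more importantly $\frac1n \log \|B_n v\| \to 0$ for $\nu_{z_0}$-a.e.\ $v$ and the singular values all have zero exponent; in particular the cocycle cannot be asymptotically contracting any direction onto a lower-dimensional set at exponential rate. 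Comparing this with the martingale convergence of $\mu_n$: if the family were \emph{not} almost surely invariant, the martingale would be genuinely non-constant, and a strict-convexity / entropy-production estimate (the quantity $\E[\text{some convex functional of } \mu_{n+1}] - \E[\text{same for } \mu_n]$ is strictly negative by Jensen unless $(A_n)_*\nu_{z_n} = \nu_{z_{n+1}}$ a.s.) would produce a definite amount of ``spreading'' per step, which over $n$ steps forces exponential separation of singular values, contradicting $\lambda^+ = 0$. Summing the per-step entropy gap and using that the total is bounded gives that the gap is zero at a.e.\ step, which is exactly the claimed almost-sure invariance $A(z_0)_* \nu_{z_0} = \nu_{z_1}$.

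\textbf{Main obstacle.} The technical heart---and where I expect to spend the most care---is making the ``strict convexity forces positive exponent'' step quantitative in a way compatible with $\SL(d,\R)$ acting projectively: one needs a functional $\Psi(\mu)$ on $\mathcal{P}(S^{d-1})$ that is (i) a supermartingale along $\mu_n$, (ii) whose per-step decrement controls $\log$ of the relevant singular-value ratio, and (iii) is bounded so the telescoping sum converges. The clean choice is Furstenberg's: use $\Psi$ related to $\int \int \log\frac{|u \wedge v|}{\dots}$ or, following Ledrappier, a relative-entropy functional between $\nu_z$ and a second stationary family; the computation that its expected decrement equals (a multiple of) $\lambda^+$ up to the invariance defect is the crux. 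A secondary subtlety is measurability: one must ensure $z \mapsto \nu_z$ composed with the cocycle gives jointly measurable maps so the conditional-expectation manipulations are legitimate on the Radon space $(Z, \mathcal{M}, \mu)$, but this is routine given $S^{d-1}$ is Polish. I would also need to confirm that the stationarity of $\mu$ under $P$ (not just the existence of the trajectory) is what licenses identifying~\eqref{eq:twisted-pullback} with a genuine backward-martingale relation, which is the reason the hypothesis is stated with $\mu$ stationary.
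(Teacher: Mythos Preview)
The paper's proof is entirely different and much shorter: rather than reproducing the Furstenberg--Ledrappier martingale/entropy machinery, it reduces Proposition~\ref{prop.markov-furstenberg} to Ledrappier's theorem for deterministic systems (quoted as Proposition~\ref{prop.ledrappier-furstenberg}) as a black box. The reduction encodes the Markov chain as a shift on backward trajectory space $X = Z^{\Z_{\leq 0}}$ with the stationary path measure $\rho$ built via Kolmogorov extension, takes $T$ to be the backward shift, the cocycle $B(\dots,z_{-1},z_0) := A(z_{-1})^{-1}$, and the fiber measures $\gamma_x := \nu_{z_0}$. The twisted pullback condition~\eqref{eq:twisted-pullback} is then exactly the statement that $\gamma_x(\dif v)\,\rho(\dif x)$ is invariant for the skew product $\hat T$; after checking that the top Lyapunov exponents agree under this time reversal, Ledrappier's theorem yields the almost-sure invariance in one line. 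This buys a proof of a few paragraphs (modulo the routine verification of $\hat T$-invariance), at the cost of citing a nontrivial result.

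Your approach could in principle be carried through---it is essentially Ledrappier's own argument transplanted to the Markov setting---but two points need attention. First, your proposed martingale $\mu_n = (B_n)_*\nu_{z_0}$ is $\mathcal{F}_{n-1}$-measurable and is not a martingale; the correct object is $M_n := (B_n^{-1})_*\nu_{z_n}$, for which $\E[M_{n+1}\mid\mathcal{F}_n] = M_n$ follows directly from~\eqref{eq:twisted-pullback} since $B_{n+1}$ is $\mathcal{F}_n$-measurable. Second, the step you flag as the ``crux''---choosing a functional $\Psi$ on $\mathcal{P}(S^{d-1})$ whose expected per-step decrement equals $\lambda^+$ up to the invariance defect---is the entire content of Ledrappier's proof and is not actually sketched here; without it the argument is a plan rather than a proof. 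Given that this machinery is already packaged in the cited result, the paper's reduction is the more economical route.
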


Our goal is to reduce this result to~\cite[Proposition~2 and Theorem~3]{ledrappier_positivity_1986}, Ledrappier's powerful generalization of Furstenberg's criterion~\cite{furstenberg_noncommuting_1963}. We restate the result here for convenience.

\begin{proposition}[\cite{ledrappier_positivity_1986}]\label{prop.ledrappier-furstenberg}
  Let $(X,\mathcal{F},\rho)$ be a measure space, $T \colon X \to X$ and $B \colon X \to \SL(d,\R)$ be measurable maps such that $T_* \rho = \rho$. Define the extended dynamics $\hat T \colon X \times S^{d-1} \to X \times S^{d-1}$ by
  \[\hat T(x,v) = (Tx, B(x) v),\]
  abusing notation to let $B \in \SL(d,\R)$ act $S^{d-1} \to S^{d-1}$ as above. Let $\gamma \colon X \to \mathcal{P}(S^{d-1})$ be a measurable family of measures on $S^{d-1}$ such that $\gamma_x(\mathrm{d}v) \rho(\mathrm{d}x)$ is an invariant measure for $\hat T$, $\hat T_* \gamma_x \rho =  \gamma_x \rho$. Define
  \[\lambda^+ := \lim_{n \to \infty} \frac{1}{n} \int \log \|B(T^{n-1}x) \cdots B(x)\| \; \rho(\mathrm{d}x).\]
  Then if $\lambda^+=0$, we have
  \[B(x)_* \gamma_x = \gamma_{Tx} \quad \rho \text{-almost surely.}\]
\end{proposition}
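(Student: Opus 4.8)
The statement to prove is Ledrappier's generalization of Furstenberg's criterion (Proposition~\ref{prop.ledrappier-furstenberg}): under zero top Lyapunov exponent, the stationary family $\gamma_x$ on $S^{d-1}$ is actually invariant, $B(x)_* \gamma_x = \gamma_{Tx}$ $\rho$-a.s. Since the paper cites this from~\cite[Proposition~2 and Theorem~3]{ledrappier_positivity_1986}, the ``proof'' here should be a self-contained recollection of Ledrappier's argument, adapted to the notation of the excerpt. The central mechanism is the theory of projective actions and the decomposition of the Lyapunov exponent via the Furstenberg entropy (or, equivalently via the martingale convergence/cocycle argument). I would organize it in three stages.

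\textbf{Stage 1: Lift to the projective bundle and set up the martingale.} Consider the skew product $\hat T$ on $X \times S^{d-1}$ with invariant measure $\hat\rho(\dif x, \dif v) = \gamma_x(\dif v)\rho(\dif x)$. Write $B_n(x) := B(T^{n-1}x)\cdots B(x)$ for the cocycle. The key quantity is the ``reversed'' cocycle applied to $\gamma$: define, for each $n$, the pulled-back measure $\gamma^{(n)}_x := (B_n(x))^{-1}_* \gamma_{T^n x}$. Using the stationarity/invariance of $\hat\rho$ under $\hat T$, one checks that for $\rho$-a.e.\ $x$ the sequence $\gamma^{(n)}_x$ is a bounded martingale (in the weak-$*$ sense, tested against continuous functions on $S^{d-1}$) with respect to the filtration generated by the forward orbit $x, Tx, T^2x, \dots$; hence it converges $\rho$-a.s.\ to a limiting family $\gamma^\infty_x \in \mathcal{P}(S^{d-1})$. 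The identity $B(x)_* \gamma^\infty_x = \gamma^\infty_{Tx}$ holds automatically for the limit, by construction of the martingale (the pullback by one more step is absorbed). So the problem reduces to showing $\gamma^\infty_x = \gamma_x$, i.e.\ that the martingale is in fact constant.

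\textbf{Stage 2: Entropy/Lyapunov identity forces the martingale to be constant.} This is the heart of the matter and the main obstacle. One introduces the Furstenberg boundary entropy (or the analogous $L^1$ quantity)
\[
  h(\gamma) := \int_X \int_{S^{d-1}} \log \frac{\dif (B(x)^{-1})_* \gamma_{Tx}}{\dif \gamma_x}(v)\, \gamma_x(\dif v)\, \rho(\dif x),
\]
which is $\leq 0$ by Jensen, with equality iff $(B(x)^{-1})_* \gamma_{Tx} = \gamma_x$ $\rho$-a.s.---which is exactly the conclusion we want. The crucial step is to relate $h(\gamma)$ to the top Lyapunov exponent: a direct computation using the cocycle identity $|B_n(x) v| = \prod_{j=0}^{n-1} |B(T^j x)(B_j(x)v/|B_j(x)v|)|$ and the $\hat T$-invariance of $\hat\rho$ gives a formula of the form $\lambda^+ = -h(\gamma) + (\text{a nonnegative ``expansion'' term})$, or more precisely that $\lambda^+ \geq -h(\gamma)$ after a careful estimate (this is where Ledrappier's Proposition~2 enters, controlling the log-derivative term; one must be mindful of integrability of $\log|B|$, which should be assumed or follows from the finiteness of $\lambda^+$). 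Since $\lambda^+ = 0$ by hypothesis and $-h(\gamma) \geq 0$, we conclude $h(\gamma) = 0$, hence $(B(x)^{-1})_* \gamma_{Tx} = \gamma_x$ $\rho$-a.s., which rearranges to $B(x)_* \gamma_x = \gamma_{Tx}$ $\rho$-a.s. The delicate points are (i) justifying the integrability needed to run Jensen and the Lyapunov identity, and (ii) the equality case of Jensen requiring the Radon--Nikodym derivative to be well-defined, which needs $(B(x)^{-1})_*\gamma_{Tx} \ll \gamma_x$; this absolute continuity is itself part of what must be established, typically via the martingale of Stage~1 (the limit $\gamma^\infty_x$ dominates and one works with $\gamma^\infty$ in place of $\gamma$, then shows they coincide).

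\textbf{Stage 3: Conclude.} Combining Stages 1 and 2: the martingale $\gamma^{(n)}_x$ converges to $\gamma^\infty_x$ satisfying $B(x)_*\gamma^\infty_x = \gamma^\infty_{Tx}$; the entropy identity applied to $\gamma^\infty$ (or directly to $\gamma$, once absolute continuity is in hand) with $\lambda^+ = 0$ forces the one-step pullback to be the identity, so $\gamma$ itself satisfies $B(x)_*\gamma_x = \gamma_{Tx}$ $\rho$-a.s. I expect Stage~2---the precise derivation of the inequality $\lambda^+ \geq -h(\gamma)$ with all integrability subtleties, essentially reproducing~\cite[Proposition~2]{ledrappier_positivity_1986}---to be the main obstacle; the martingale convergence in Stage~1 is comparatively routine, and Stage~3 is bookkeeping. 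Given that the excerpt explicitly attributes this to Ledrappier, the cleanest presentation is to state the two inputs from~\cite{ledrappier_positivity_1986} (the entropy--Lyapunov inequality and its equality case) and assemble them, rather than reprove them from scratch.
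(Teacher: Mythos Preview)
The paper does not prove this proposition at all: it is restated verbatim from \cite[Proposition~2 and Theorem~3]{ledrappier_positivity_1986} and used as a black box to deduce Proposition~\ref{prop.markov-furstenberg}. There is therefore no ``paper's own proof'' to compare your proposal against; any sketch you give here goes beyond what the paper provides.

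As for the sketch itself: the overall architecture (integral formula for $\lambda^+$ over the sphere bundle, then an equality-case argument forcing invariance) is correct in spirit, but your Stage~2 is not quite Ledrappier's argument and has a real gap. You define $h(\gamma)$ via a Radon--Nikodym derivative $\dif(B(x)^{-1})_*\gamma_{Tx}/\dif\gamma_x$ whose existence you have not established; you then acknowledge this circularity but do not resolve it. Ledrappier avoids this entirely: his Proposition~2 gives the direct formula $\lambda^+ = \int\!\!\int \log|B(x)v|\,\gamma_x(\dif v)\,\rho(\dif x)$ from $\hat T$-invariance of $\gamma_x\rho$, and his Theorem~3 exploits that when $\lambda^+=0$ \emph{all} Lyapunov exponents vanish (since $\sum\lambda_i=0$ for $\SL(d,\R)$), so the Oseledets splitting is trivial and the projective action preserves the fiber measures deterministically. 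The entropy/Jensen route you outline is closer to Furstenberg's original i.i.d.\ argument than to Ledrappier's ergodic-theoretic one, and would require substantially more work to make rigorous in this generality. If you want to include a proof, it is cleaner to either cite the result (as the paper does) or follow Ledrappier's actual line via the Lyapunov spectrum rather than the boundary entropy.
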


We now argue that Proposition~\ref{prop.markov-furstenberg} is a consequence of Proposition~\ref{prop.ledrappier-furstenberg}.

\begin{proof}
  We assume the notation of Proposition~\ref{prop.markov-furstenberg} and assume $\lambda^+ = 0$.

  We need to construct $X, \rho, B, T, \gamma_x$ as in Proposition~\ref{prop.ledrappier-furstenberg}. We let
  \[X := Z^{\Z_{\leq 0}}\]
  and attach the usual product $\sigma$-algebra. We want $X$ to represent the space of trajectories of the Markov process given by $P$ from times $-\infty$ to $0$. We do this by attaching a measure $\rho$ to $X$ constructed by Kolmogorov's extension theorem. To construct $\rho$, it suffices to consistently specify marginal measures $\rho_n$ on $Z^{\{-n,\dots,0\}}$. We let $\rho_n$ be given as the law of $(z_{-n}, z_{-n+1},\dots,z_0)$, where $z_{-n}$ is distributed according to $\mu$ and $z_{-n+j}$ follows the trajectory of the Markov process given by $P$ started from $z_{-n}$. More precisely,
  \[\rho_n(\mathrm{d}z_{-n},\dots,\mathrm{d}z_0) = \mu(\mathrm{d}z_{-n}) P(z_{-n}, \mathrm{d}z_{-n+1}) \cdots P(z_{-1}, \mathrm{d}z_0).\]
  One can readily verify the $\rho_n$ form a consistent family using that $\mu$ is a stationary measure of $P$. Thus, $\rho$ is uniquely defined by the Kolmogorov extension theorem.

  We have constructed $X,\rho$; we now need to specify $B,T,
  \gamma_x$. We let
  \[B(x) = B(\dots,z_{-1},z_0) := {A(z_{-1})}^{-1}.\]
  Also, we define $T$ to be the backward shift, that is
  \[Tx = T(\dots,z_{-1},z_0) := (\dots,z_{-2}, z_{-1}).\]
  Lastly, we define
  \[\gamma_x = \gamma_{\dots,z_{-1},z_0} := \nu_{z_0}.\]

  We need to check that $T_* \rho = \rho$ and $\hat T_* \gamma_x \rho = \gamma_x \rho$. Before this computation, we first assume these conditions are verified and see that the second conclusion of Proposition~\ref{prop.markov-furstenberg} follows.

  Note that the top Lyapunov exponent in Proposition~\ref{prop.ledrappier-furstenberg} is given as
  \[\lambda^+ := \lim_{n \to \infty} \frac{1}{n} \int \log \|B(T^{n-1}x) \cdots B(x)\| \; \rho(\mathrm{d}x) =\lim_{n \to \infty} \frac{1}{n} \int \log \|A(z_{-n})^{-1} \cdots A(z_1)^{-1}\| \; \rho(\mathrm{d}x),\]
  which by the definition of $\rho$ and the stationarity of $\mu$ is the same as the definition of the top Lyapunov exponent in Proposition~\ref{prop.markov-furstenberg}. Therefore, we have $\lambda^+ =0$ and the assumptions of Proposition~\ref{prop.ledrappier-furstenberg} are fulfilled, so we conclude that
  \[B(x)_* \gamma_x = \gamma_{Tx} \quad \rho\text{-almost surely.}\]
  Unpacking notation, this says that
  \[A(z_{-1})_*^{-1} \nu_{z_0} = \gamma_{z_{-1}} \quad \mu(\mathrm{d}z_{-1})P(z_{-1}, \mathrm{d}z_0)\text{-almost surely,}\]
  which, up to re-indexing, is exactly the conclusion of Proposition~\ref{prop.markov-furstenberg}. It remains to check the two stationarity conditions above.

  First, we observe that $T_* \rho = \rho$ follows from checking the result on finite dimensional marginal laws and using the stationarity of $\mu$. Next, we check $\hat T_* \gamma_x \rho = \gamma_x \rho$; fix $\varphi \colon X \times S^{d-1} \to \R$ bounded and measurable. We get
  \begin{align*}
    \int \varphi(x,v) \; (\hat T_* \gamma_x \rho)(\mathrm{d}x,\mathrm{d}v) &= \int \varphi(\dots,z_{-2},z_{-1}, A(z_{-1})^{-1} v) \; \nu_{z_0}(\mathrm{d}v)\rho(\mathrm{d}x)
    \\&=  \int \varphi(\dots,z_{-2},z_{-1},  v) \; A(z_{-1})_*^{-1}\nu_{z_0}(\mathrm{d}v) P(z_{-1}, \mathrm{d}z_0)\rho(\dots,\mathrm{d}z_{-2}, \mathrm{d}z_{-1})
    \\&=  \int \varphi(\dots,z_{-2},z_{-1},  v) \; \nu_{z_{-1}}(\mathrm{d}v) \rho(\dots,\mathrm{d}z_{-2}, \mathrm{d}z_{-1})
    \\&= \int \varphi(x,v) \; \gamma_x(\mathrm{d}v) \rho(\mathrm{d}x),
  \end{align*}
  where we use the twisted pullback condition~\eqref{eq:twisted-pullback} for the penultimate equality.
\end{proof}

\begin{proposition}\label{prop:nu-continuity}
  There exists a locally Lipschitz family of probability measures
  \[
    \nu \colon \uspace \times M^J \to \mathcal{P}(S^{1})
  \]
  such that
  \[\nu_{u_0,x_0,A_0} = {(A_0)}_*^{-1}\int \nu_{u,x, A} \; P_1(u_0,x_0,\Id, \mathrm{d}u,\mathrm{d}x,\mathrm{d}A),\]
  where we metrize $\mathcal{P}(S^{1})$ with the $W^{-1,1}$ norm.
\end{proposition}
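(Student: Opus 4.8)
The plan is to construct the family $\nu_{u,x,A}$ as the limit of an iteration scheme, using the unit-time smoothing estimate Proposition~\ref{prop:unit-time-smoothing} (for the Jacobian process, which is on the compact-in-the-$M^J$-sense manifold $\xspace \times \SL(2,\R)$—note $\SL(2,\R)$ is noncompact, but the smoothing constant is locally bounded, which is what we need) to propagate regularity. Concretely, define the ``twisted transfer operator'' $\mathcal{T}$ acting on families $\mu \colon \uspace \times M^J \to \mathcal{P}(S^1)$ by
\[
(\mathcal{T}\mu)_{u_0,x_0,A_0} := (A_0)_*^{-1} \int \mu_{u,x,A}\, P_1(u_0,x_0,\Id,\dif u,\dif x,\dif A),
\]
so that the desired family is a fixed point of $\mathcal{T}$. (One should double-check the precise bookkeeping of how the $\SL(2,\R)$-coordinate resets to $\Id$ at each step versus accumulates; the $(A_0)^{-1}$ pushforward and the kernel $P_1(\cdots,\Id,\cdots)$ combine so that the cocycle is composed correctly over $n$ steps—this matches the structure of~\eqref{eq:twisted-pullback} in Proposition~\ref{prop.markov-furstenberg}.) Start the iteration from any smooth family $\mu^{(0)}$, e.g.\ the constant family $\mu^{(0)}_{u,x,A} = \mathrm{Leb}_{S^1}$, and set $\mu^{(n)} := \mathcal{T}\mu^{(n-1)}$.

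First I would establish that $\mathcal{T}$ contracts in a suitable Wasserstein-type distance on families of measures (weighting by $e^{-\eta V(u)}$ to handle the noncompact $\uspace$ factor and locally uniformly in $(x,A)$), so that $\mu^{(n)}$ converges to a fixed point $\nu$; this is where ergodicity of the base/Jacobian dynamics and the exponential moment bounds of Corollary~\ref{cor:super-lyapunov} enter, and contraction in the $S^1$-fiber direction comes from the averaging in $\mathcal{T}$ together with approximate controllability. Second—and this is the crux—I would show the iterates $\mu^{(n)}$ have uniformly bounded local Lipschitz seminorm (with respect to the $W^{-1,1}$ metric on $\mathcal{P}(S^1)$, and a weighted Lipschitz norm in $(u,x,A)$), by an inductive application of Proposition~\ref{prop:unit-time-smoothing}: differentiating $(\mathcal{T}\mu)_{u_0,x_0,A_0}$ tested against a fixed $\varphi \in W^{1,\infty}(S^1)$ in the $(u_0,x_0,A_0)$-directions produces (i) a ``frozen-fiber'' term where the derivative hits the transition kernel—controlled by Proposition~\ref{prop:unit-time-smoothing} applied to the observable $(u,x,A) \mapsto \int \varphi(A_0^{-1}v)\,\mu_{u,x,A}(\dif v)$, which contributes a factor $\gamma$ times the $H^5$-derivative norm of that observable plus $C$ times its sup—and (ii) a term where the derivative hits $A_0 \mapsto (A_0)_*^{-1}$, which is a smooth finite-dimensional manipulation bounded by the Lipschitz constant of $\varphi$ times a local constant. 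Choosing $\gamma<1$ in Proposition~\ref{prop:unit-time-smoothing} small enough, the induction closes: $\mathrm{Lip}(\mu^{(n+1)}) \leq \gamma\,\mathrm{Lip}(\mu^{(n)}) + C$, hence $\sup_n \mathrm{Lip}(\mu^{(n)}) < \infty$. Passing to the limit preserves the (locally uniform) Lipschitz bound, so $\nu$ is locally Lipschitz.

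The main obstacle I expect is step two: making the inductive smoothing argument rigorous requires carefully identifying the observable to which Proposition~\ref{prop:unit-time-smoothing} is applied, tracking that its $\sqrt{P_1|\varphi|^2}$ and $\sqrt{P_1\|\nabla\varphi\|_{H^5}^2}$ terms are both finite and controlled by the inductive hypothesis (in particular that $\int \varphi(A_0^{-1}v)\,\mu_{u,x,A}(\dif v)$ has the right differentiability in $(u,x,A)$, with the $\nabla$ in the $H^5$ sense meaning the sum over $\uspace$ and $TM^J$ components as stipulated in the paper), and handling the weight $e^{\eta V(u)}$ so that it is absorbed by the super-Lyapunov property Corollary~\ref{cor:super-lyapunov} exactly as in the proof of Corollary~\ref{cor:long-time-smoothing}. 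A secondary technical point is the interplay between the $A_0^{-1}$-pushforward and the regularity: since $A_0 \mapsto A_0^{-1}$ is smooth but not Lipschitz uniformly over $\SL(2,\R)$, the Lipschitz bound is necessarily only \emph{local} in the $\SL(2,\R)$-coordinate, which is why the statement asserts \emph{locally} Lipschitz; one must check this localization is consistent through the iteration, using that $P_1(u_0,x_0,\Id,\cdot)$ always starts the $\SL(2,\R)$-component at $\Id$ so only the prefactor $(A_0)_*^{-1}$ (and not the interior $A$-integration range) carries the $A_0$-dependence. Finally, uniqueness of the fixed point is not claimed and not needed, so the contraction in step one only needs to be used for existence and convergence of the particular iteration started from a smooth seed.
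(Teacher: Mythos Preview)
Your core idea---iterate the twisted transfer operator $\mathcal{T}$ starting from Lebesgue measure, and propagate a uniform local Lipschitz bound on the iterates using the unit-time smoothing estimate Proposition~\ref{prop:unit-time-smoothing}---matches the paper exactly. The inductive step you sketch in your ``second'' point is essentially what the paper does: it proves inductively that $\|\nabla_{u,x,A}\nu^n_{u,x,A}\|_{W^{-1,1}} \le K e^{\beta V(u)}|A|^3$ for fixed $K,\beta$, by differentiating $\int\varphi(A_0^{-1}v)\,\nu^n_z(\dif v)\,P_1((u_0,x_0,\Id),\dif z)$, applying Proposition~\ref{prop:unit-time-smoothing}, and closing the induction using Corollary~\ref{cor:super-lyapunov} together with the moment bound $\E|A_1|^n \le C(n,\zeta)e^{\zeta V(u_0)}$. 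The explicit weight $|A|^3$ (from differentiating $A_0 \mapsto A_0^{-1}$ acting on $S^1$) is what makes the bound only \emph{local}, as you correctly anticipated.

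The gap is in your ``first'' step. You propose to show that $\mathcal{T}$ contracts in some weighted Wasserstein-type distance on families of measures, invoking ``ergodicity'' and ``approximate controllability'' for fiberwise contraction. This is not established in the paper and is likely not true: pushing forward by a generic $A_0^{-1} \in \SL(2,\R)$ does not contract $W^{-1,1}$ distances on $\mathcal{P}(S^1)$, and averaging over $P_1$ alone gives no strict contraction either. The paper sidesteps this entirely: once the iterates $\nu^n$ have a uniform local Lipschitz bound, it passes to the Ces\`aro averages $\sigma^n_z = \tfrac{1}{n}\sum_{j=1}^n \nu^j_z$, which inherit the same Lipschitz bound, and applies Arzel\`a--Ascoli to extract a subsequential pointwise limit $\nu$. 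Since $\mathcal{T}\sigma^n - \sigma^n = \tfrac{1}{n}(\nu^{n+1}-\nu^1) \to 0$, the limit is a fixed point. You should replace your contraction argument with this softer compactness route; everything else in your outline survives.
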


\begin{proof}
  Let $\nu_z^0$ be the uniform probability measure on $S^{d-1}$ for each $z = (u, x, A) \in \uspace \times M^J$, and define
  \[
    \nu_{(u, x, A)}^{n+1} := {A}_*^{-1} \int \nu_{z}^n \; P_1((u, x, \Id), \mathrm{d}z).
  \]
  We will first show that the family $z \mapsto \nu_z^n$ is locally Lipschitz in $(u, x, A)$, uniformly in $n$. Then we will apply the Arzela--Ascoli theorem to partial averages
  \[\sigma^n_z := \frac{1}{n}\sum_{j=1}^n\nu_z^j,\]
  to give a subsequence $n_j$ along which $\sigma^{n_j}_z$ converges pointwise to a locally Lipschitz limiting family of probability measures $z \mapsto \nu_z$. One can then verify that this limiting family satisfies
  \begin{equation}\label{eq:nu-invariance}
    \nu_{(u, x, A)} := {A}_*^{-1} \int \nu_{z} \; P_1((u, x, \Id), \mathrm{d}z),
  \end{equation}
  as desired.

  We now prove inductively that for each $(u, x, A)$,
  \begin{equation}\label{eq:nu-local-lipschitz}
    \|\nabla_{u,x,A} \nu_{u,x,A}^n\|_{W^{-1, 1}} \leq T(u, x, A) := Ke^{\beta V(u)}|A|^3,
  \end{equation}
  for some constants $K, \beta > 0$, where for $\nu : \uspace \times \T^2 \times \SL(2;\R) \to \mathcal{P}(S^1),$
  \[\|\nabla_{u,x,A} \nu\|_{W^{-1,1}} = \sup_{\|\varphi\|_{W^{1,\infty}(S^{1})} \leq 1} \left|\nabla_{u,x,A} \int \varphi(v) d\nu(v)\right|,\]
  and for $\varphi :  \uspace \times \T^2 \times \SL(2;\R) \to \R$,
  \[|\nabla_{u,x,A} \varphi| = \sup_{\|v\|_{H^5} + |y| + |B|\leq 1} \left| D_{v,y,B} \varphi\right|,\]
  where $D_{v,y,B} \varphi$ denotes the directional derivative of $\varphi$ in the $(v,y,B)$ direction.

  Assuming the bound~\eqref{eq:nu-local-lipschitz}, we fix a test function $\varphi \colon S^{1} \to \R$ with $\|\varphi\|_{W^{1, \infty}} \leq 1$ and write
  \begin{align*}
    &\left|\nabla_{u,x,A}\int \varphi(v) \; \nu^{n+1}_{u_0,x_0,A_0}(\mathrm{d}v)\right|\\ &\qquad = \left|\nabla_{u,x,A}\int \varphi\left(A_0(v)\right) \; \nu^n_{z}(\mathrm{d}v) P_1((u_0,x_0,\Id), \mathrm{d}z)\right|\\
    &\qquad \leq C|A_0|^3 + \left|\nabla_{u,x}\int \varphi\left(A_0(v)\right) \; \nu^n_{z}(\mathrm{d}v) P_1((u_0,x_0,\Id), \mathrm{d}z)\right|\\
                                                                       &\qquad \leq C|A_0|^3 + e^{\eta V(u_0)}\left(C(\eta, \gamma)\sqrt{P_1 |\psi|^2(u_0,x_0,\Id)} + \gamma \sqrt{P_1 |\nabla_{u,x,A} \psi|^2(u_0,x_0,\Id)}\right).
  \end{align*}
  where $\psi(z) := \int \varphi(A_0^{-1}(v)) \; \nu^n_z(\mathrm{d}v)$ and we used Proposition~\ref{prop:unit-time-smoothing} in the last inequality. Using that $|\nabla \varphi| \leq 1$, we note
  \[|\nabla_{u,x,A} \psi|(z) \leq C |A_0|^3\|\nabla_{u,x,A} \nu^n_z\|_{W^{-1,1}}.\]
  Using now that that $|\psi| \leq 1$, the inductive hypothesis, and the previous display, we have for any $\eta,\gamma>0$
  \begin{equation}\label{eq:inductive-step-1}
    \left|\nabla_{u,x,A}\int \varphi(v) \; \nu^{n+1}_{u_0,x_0,A_0}(\mathrm{d}v)\right| \leq |A_0|^3e^{\eta V(u_0)}\left(C(\eta,\gamma) + \gamma \sqrt{\int {T(z)}^2 \; \mathrm{d}P_1((u_0, x_0, \Id), z)}\right).
  \end{equation}
    Note that
    \begin{equation}
    \label{eq:A-1-moment-bound}
    \E[|A_1|^{n}] \leq \E \exp\Big(n \int_0^1 \|\nabla u_t\|_{L^\infty}\,\dif t\Big) \leq  C(n,\zeta) \exp\big(\zeta V(u_0)\big),
    \end{equation}
    where we use Corollary~\ref{cor:C-alpha-moments} for the final inequality.
  Then
    \begin{align*}
        \int {T(z)}^2 \; P_1((u_0, x_0, \Id), \mathrm{d}z)&= K^2\, \E[e^{2\beta V(u_1)} |A_1|^6]\\
        &\leq  K^2\, \E\left[\exp\big(4\beta V(u_1)\big)\right]^{1/2}\E[|A_1|^{12}]^{1/2}
        \\&\leq C(\beta) K^2\, \exp\big(2 \delta \beta V(u_0)\big),
    \end{align*}
    for some $\delta \in (0,1)$, where we use Corollary~\ref{cor:super-lyapunov} and~\eqref{eq:A-1-moment-bound} with $\zeta$ chosen sufficiently small.

  Thus by~\eqref{eq:inductive-step-1}, we have that
    \begin{align*}
     \left|\nabla_{u,x,A}\int \varphi(v) \; \nu^{n+1}_{u_0,x_0,A_0}(\mathrm{d}v)\right| &\leq|A_0|^3e^{\eta V(u_0)}\left(C(\eta,\gamma) + \gamma C(\beta)K \exp\big( \delta \beta V(u_0)\big)\right)
    \end{align*}
  We are now in a position to choose $\eta,\beta,\gamma$, and $K$. We take $\beta = \frac{1}{4},$ which ensures we could apply Corollary~\ref{cor:super-lyapunov} above. Then choose $\eta>0$ so that $\delta \beta +\eta = \beta$. Then we choose $\gamma$ so that $\gamma C(\beta) = \frac{1}{2}$. Under these choices, we get that
  \[     \left|\nabla_{u,x,A}\int \varphi(v) \; \nu^{n+1}_{u_0,x_0,A_0}(\mathrm{d}v)\right| \leq\Big(C + \frac{K}{2}\Big) |A_0|^3e^{\beta V(u_0)}.\]
  The induction then closes, choosing $K =2C$.
\end{proof}

Combining Proposition~\ref{prop.markov-furstenberg}, Proposition~\ref{prop:nu-continuity}, and the cocycle property of the Jacobian process, we obtain:

\begin{proposition}\label{prop:almost-sure-nu-invariance}
  Consider the Jacobian process $\{(u_t, x_t, A_t)\}_{t \geq 0}$ with initial data $(u_0, x_0)$ distributed according to the invariant measure of the one-point process and $A_0 = \Id$. If the top Lyapunov exponent
  \begin{equation*}
    \lambda^+ := \lim_{t \to \infty} t^{-1}\E[\log |A_t|]
  \end{equation*}
  is zero, then there is a continuous map $\nu \colon \uspace \times M^1 \to W^{-1, 1}(S^{d-1})$ satisfying the invariance formula
  \begin{equation}
    \label{eq:almost-sure-nu-invariance}
    {(A_t)}_*\nu_{u_t, x_t} = \nu_{u_0, x_0}
  \end{equation}
  $\mu^1$-almost surely for $t \in \N$.
\end{proposition}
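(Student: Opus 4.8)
The plan is to take $\nu := \nu^1$ with $\nu^1_{u,x} := \nu^J_{u,x,\Id}$, where $\nu^J$ is the family from Proposition~\ref{prop:nu-continuity}, and to deduce the invariance from the abstract criterion Proposition~\ref{prop.markov-furstenberg}. The first step is a structural observation about the iteration in Proposition~\ref{prop:nu-continuity}: each iterate $\nu^{n+1}_{(u,x,A)} = A_*^{-1}\int\nu^n_z\,P_1((u,x,\Id),\dif z)$ is, for $n\ge 1$, the pushforward under $A_*^{-1}$ of a measure depending only on $(u,x)$ (immediate at $n=1$ since $\nu^0$ is constant, and then it propagates), so the partial averages and hence the limit inherit the same form, i.e.\ $\nu^J_{u,x,A} = A_*^{-1}\nu^1_{u,x}$. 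Substituting this ansatz into the invariance formula~\eqref{eq:nu-invariance} at $A_0 = \Id$ gives the twisted-pullback relation for the one-point process,
\[
  \nu^1_{u_0,x_0} \;=\; \int A_*^{-1}\,\nu^1_{u_1,x_1}\;\overline P_1\!\left((u_0,x_0,\Id);\dif u_1,\dif x_1,\dif A\right),
\]
where $\overline P_1$ is the time-$1$ law of the Jacobian process from $(u_0,x_0,\Id)$; local Lipschitz continuity of $\nu^1\colon\uspace\times M^1\to W^{-1,1}(S^1)$ is inherited from Proposition~\ref{prop:nu-continuity}.

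Next I would realize the one-point dynamics together with its derivative cocycle as a measure-preserving skew product suited to Proposition~\ref{prop.markov-furstenberg}. Let $\mathcal W := C([0,1];\R^F)$ with Wiener measure $\mathbb P_{\mathcal W}$, set $Z := \uspace\times\xspace\times\mathcal W$ (a Polish space, hence Radon), and define a Markov kernel $P$ on $Z$ sending $(u,x,w)$ to $(u',x',w')$, where $(u',x')$ is the one-point process run for unit time driven by $w$ and $w'\sim\mathbb P_{\mathcal W}$ is freshly resampled. Since the $(u,x)$-marginal of $P$ is the unit-time one-point kernel and the resampled path is independent of the new base point, $\mu := \mu^1\otimes\mathbb P_{\mathcal W}$ is $P$-stationary. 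Take the cocycle $A\colon Z\to\SL(2,\R)$ that sends $(u,x,w)$ to the time-$1$ value of the Jacobian process started from $(u,x,\Id)$ and driven by $w$ --- now genuinely a measurable function of the enlarged state --- and set $\nu^Z_{(u,x,w)} := A(u,x,w)_*^{-1}\nu^1_{u',x'}$. The condition $\nu^Z_z = A(z)_*^{-1}\int\nu^Z_y\,P(z,\dif y)$ then holds: the integral over $y$ averages only over the resampled path, so it reproduces the right-hand side of the previous display evaluated at $(u',x')$ and equals $\nu^1_{u',x'}$, whence $A(z)_*^{-1}\int\nu^Z_y\,P(z,\dif y) = A(z)_*^{-1}\nu^1_{u',x'} = \nu^Z_z$. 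Thus all structural hypotheses of Proposition~\ref{prop.markov-furstenberg} are in place.

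It remains to identify the top Lyapunov exponent. By the cocycle property of the Jacobian process, along the stationary chain $z_0,z_1,\dots$ the product $A(z_{n-1})\cdots A(z_0)$ equals the time-$n$ value of the Jacobian process, up to the transpose/composition-order conventions relating the $A_t$ of Table~\ref{table:special-processes} to the linearized flow; those conventions are harmless here, since for an $\SL(2,\R)$-valued cocycle $\lambda^+ = 0$ forces the whole two-element Lyapunov spectrum to vanish, a property stable under transposition and under passing between forward and reversed products. The abstract discrete-time limit also equals the continuous-time $\lim_t t^{-1}\E\log|A_t|$ of the statement: across a unit interval $\log|A_t|$ varies by at most $\int\|\nabla u_s\|_{L^\infty}\,\dif s$ by Gr\"onwall, which has uniformly bounded expectation under $\mu^1$ by Corollary~\ref{cor:C-alpha-moments} (and $\mu^1$ has finite exponential moments of $V$ by Corollary~\ref{cor:super-lyapunov} and stationarity), while $\log|A|$ and $\log\|A\|$ differ by $O(1)$ on $\SL(2,\R)$. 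Hence $\lambda^+ = 0$ abstractly, Proposition~\ref{prop.markov-furstenberg} applies, and its conclusion $A(z_0)_*\nu^Z_{z_0} = \nu^Z_{z_1}$ almost surely unwinds --- using $A(z_0)_*\nu^Z_{z_0} = \nu^1_{u',x'}$ --- to the statement that, $\mu^1$-a.s.\ and for a.e.\ noise, $(A_1)_*\nu^1_{u_0,x_0} = \nu^1_{u_1,x_1}$ with $(u_1,x_1,A_1)$ the time-$1$ Jacobian process. Iterating over $n$ unit steps via the cocycle property, with a union bound over the finitely many steps, gives the claimed identity for all $t\in\N$, after lining up the transpose/inverse convention so that it reads $(A_t)_*\nu_{u_t,x_t} = \nu_{u_0,x_0}$.

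The step I expect to demand the most care is the construction of the skew product: the derivative cocycle is not a function of $(u,x)$ alone, so one must enlarge the state by a unit of forcing path to make it measurable while simultaneously building the resampling into the Markov kernel, so that the twisted-pullback condition remains a genuine averaged constraint rather than collapsing to a path-by-path identity --- and one must track the transpose/inverse conventions relating the Jacobian process of Table~\ref{table:special-processes} to the linearized flow so that the final formula comes out in the asserted form. The remaining ingredients --- the ansatz observation and the exponent comparison --- are routine once the skew product is set up.
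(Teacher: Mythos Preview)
Your argument is correct, but it takes a more roundabout path than the paper's. The paper applies Proposition~\ref{prop.markov-furstenberg} with $Z := \uspace \times M^J$ and the ``restarted'' kernel $P^Z(u,x,A) := P^J_1(u,x,\Id)$; the cocycle is then simply the projection $(u,x,A)\mapsto A$, the stationary measure is $P^J_1(\mu^1\times\delta_{\Id})$, and the twisted-pullback condition~\eqref{eq:twisted-pullback} is \emph{literally} the conclusion of Proposition~\ref{prop:nu-continuity}, so no further verification is needed. Your construction instead enlarges the state by a unit of noise path to realize the cocycle as a genuine function of $z$, and builds the resampling into the kernel. This is valid, but it forces you to define a lifted family $\nu^Z$, re-derive the twisted pullback for it, check stationarity of $\mu^1\otimes\P_{\mathcal W}$, and argue that the enlargement does not change the Lyapunov exponent. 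The paper's trick of storing the cocycle value as a coordinate of the state and resetting it to $\Id$ at each step sidesteps all of that bookkeeping, since it keeps $Z$ equal to the space on which the continuous family $\nu^J$ was already constructed. Your structural observation $\nu^J_{u,x,A}=A_*^{-1}\nu^J_{u,x,\Id}$ is exactly what the paper invokes at the very end to pass from $\nu^J$ to $\nu_{u,x}:=\nu^J_{u,x,\Id}$ and read off~\eqref{eq:almost-sure-nu-invariance}; and your careful flagging of the transpose/composition-order conventions is a point the paper leaves implicit.
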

\begin{proof}
  We apply Proposition~\ref{prop.markov-furstenberg} to the family of measures given by Proposition~\ref{prop:nu-continuity} on the space $Z := \uspace \times M^J$, with the ``restarted'' transition probabilities $P^Z(u, x, A) := P^J_1(u, x, \Id)$. Since $P^Z(u, x, A)$ does not depend on $A$, it is clear that $P^Z (\mu^1 \times \delta_{I_d})$ is the unique stationary measure for $P^Z$. The cocycle on $Z$ is the projection $(u, x, A) \mapsto A$. The hypotheses of Proposition~\ref{prop.markov-furstenberg} are therefore satisfied and the result follows, noting that the constructed family of probability measures satisfies $A_*\nu_{u, x, A} = \nu_{u, x, \Id}$, so we put $\nu_{u, x} := \nu_{u, x, \Id}$.
\end{proof}
\subsection{Contradicting Furstenberg's criterion}
This section directly follows the arguments of~\cite[Sections~4.3~and~7]{bedrossian_lagrangian_2022}, using our Corollary~\ref{eq:almost-sure-nu-invariance} in place of the strong Feller assumption. We start by quoting a useful result.

\begin{theorem}[Classification of invariant families, {{\cite[Theorem~4.7]{bedrossian_lagrangian_2022}}}]\label{bbps-classification}
  Let $(u, x) \mapsto \nu_{u,x}$ be the continuous family of probability measures satisfying the invariance formula~\ref{eq:almost-sure-nu-invariance} almost surely. Then one of the following alternatives holds.
  \begin{enumerate}
  \item There is a continuously-varying inner product $\langle \cdot, \cdot \rangle_{u,x}$ on $\R^2$ with the property that for any $(u_0, x_0) \in \uspace \times M^1$ and $A_0 := \Id$, the map $A_t \colon (\R^2, \langle \cdot, \cdot \rangle_{u_0,x_0}) \to (\R^2, \langle \cdot, \cdot \rangle_{u_t,x_t})$ is an isometry almost surely, where $(u_t, x_t, A_t)$ is distributed according to the transition probability of the Jacobian process.
  \item For some $p \in \N$, there are measurably-varying assignments from $z_0 \in \uspace \times M^1$ to one-dimensional linear subspaces $E^i_{z_0} \subsetneq \R^2$, for each $1 \leq i \leq p$, with the property that, if $z_t$ is distributed according to the transition measure $P^J_t(z_0)$, then $A_tE^i_{z_0} = E^{\pi(i)}_{z_t}$ for some permutation $\pi$. Furthermore, the collection $(E^i_z)$ is locally (on small neighborhoods) continuous up to re-labelling.
  \end{enumerate}
\end{theorem}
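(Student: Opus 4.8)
The statement is quoted verbatim as~\cite[Theorem~4.7]{bedrossian_lagrangian_2022}, so my ``proof'' would consist of a reference together with a check that the hypothesis transfers: what is needed there is a continuous family $(u,x)\mapsto\nu_{u,x}$ obeying the almost-sure invariance identity~\eqref{eq:almost-sure-nu-invariance}, and this is exactly the output of Proposition~\ref{prop:almost-sure-nu-invariance}; the argument in~\cite{bedrossian_lagrangian_2022} uses no further input, strong Feller entering that paper only in the construction of such a family, a role played for us by Section~\ref{sec:lyapunov}. For completeness I would recall the shape of the classification. Passing through the double cover $S^1\to RP^1$ and pushing $(\nu_{u,x})$ forward, it suffices to classify a continuous family on $RP^1$ satisfying $\nu_{u_0,x_0}=(A_t)_*\nu_{u_t,x_t}$ almost surely for $t\in\N$, since $A_t$ acts on $RP^1$ as the projectivization of an $\SL(2,\R)$ matrix (with $\det A_t\equiv 1$ because $\nabla u$ is trace-free), and the conclusion then lifts back to $S^1$.

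The dichotomy is governed by the atomic part of $\nu_z$, where I write $z=(u,x)\in\uspace\times M^1$. Let $m(z)\in[0,1]$ be the mass of the heaviest atom of $\nu_z$ (so $m(z)=0$ if $\nu_z$ is non-atomic). Since $A_t$ acts by homeomorphisms, the invariance identity forces $m(z_t)=m(z_0)$ almost surely along trajectories of the one-point process, so $m$ is an invariant function; as that process has a unique stationary measure (Corollary~\ref{cor:projective-ergodicity}), $m\equiv m_0$ is $\mu^1$-almost surely constant. If $m_0>0$, let $F_z\subseteq RP^1$ be the set of atoms of $\nu_z$ of mass $m_0$; it has at most $m_0^{-1}$ points, and the invariance identity gives $A_tF_{z_0}=F_{z_t}$ almost surely, whence $z\mapsto|F_z|=:p$ is almost surely constant, the points of $F_z$ are exactly the one-dimensional subspaces $E^i_z\subsetneq\R^2$ of alternative~(2), they are permuted by the cocycle, and their local continuity up to relabelling is inherited from the continuity of $z\mapsto\nu_z$ in the $W^{-1,1}$ metric. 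This gives alternative~(2).

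If $m_0=0$, then $\nu_z$ is non-atomic for $\mu^1$-a.e.\ $z$, and one appeals to the structure theory of non-atomic $\SL(2,\R)$-invariant measure families on $RP^1$. The mechanism is that such a family must in fact have full support on $RP^1$ (a minimal invariant compact subset of the skew product $(\uspace\times M^1)\times RP^1$ would otherwise project fibrewise to a finite set, returning us to the previous paragraph), so that the cumulative distribution function of $\nu_z$ is a homeomorphism $h_z\colon RP^1\to RP^1$ with $(h_z)_*\Leb=\nu_z$ and $h_{z_0}^{-1}\circ A_t\circ h_{z_t}$ preserves $\Leb$, hence is a rotation; using the non-degeneracy of the achievable Jacobians $A_t$ (which precludes the $h_z$ being non-projective) this exhibits $\nu_z$ as the round measure of a continuously varying inner product $\langle\cdot,\cdot\rangle_z$ on $\R^2$, the scale being pinned by $\det A_t\equiv 1$. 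Equivariance of round measures under the projective action then forces $A_t$ to be an isometry between $(\R^2,\langle\cdot,\cdot\rangle_{z_0})$ and $(\R^2,\langle\cdot,\cdot\rangle_{z_t})$, which is alternative~(1).

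The genuinely delicate step is this non-atomic case --- ruling out invariant families with Cantor-type (proper) support and identifying the full-support non-atomic invariant families with round measures --- which is the technical core of~\cite[Theorem~4.7]{bedrossian_lagrangian_2022} and lies in the lineage of Furstenberg~\cite{furstenberg_noncommuting_1963} and Ledrappier~\cite{ledrappier_positivity_1986}. Since that argument consumes only a continuous invariant family and no regularity of transition probabilities, it applies unchanged in our degenerately forced setting, which is precisely what allows the classification to be used here despite the absence of the strong Feller property.
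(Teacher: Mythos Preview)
Your proposal is correct and matches the paper's approach exactly: the paper does not prove this theorem at all but simply quotes it as \cite[Theorem~4.7]{bedrossian_lagrangian_2022}, introducing it with ``We start by quoting a useful result.'' Your additional sketch of the atomic/non-atomic dichotomy is more than the paper provides and is a reasonable summary of the cited argument, though it is not required here.
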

We conclude with the main result of this section, that the top Lyapunov exponent of the derivative cocycle is positive. Although we only need the following result in expectation (but for every initial data), a $\mu^P$-almost sure version of the  conclusion follows by the ergodic theorem.
\begin{theorem}\label{thm:lyapunov-positive}
  There is $\lambda^+ > 0$ such that, for every initial $(u_0, x_0, v_0) \in \uspace \times M^P$ and $A_0 := \Id$,
  \begin{equation}\label{eq:lyapunov-positive}
    \lambda^+ = \lim_{t \to \infty} t^{-1}\E\left[\log |A_t|\right] = \lim_{t \to \infty} t^{-1} \E\left[\int_0^t v_s \cdot \nabla u_s(x_s) v_s \, \dif s\right].
  \end{equation}
\end{theorem}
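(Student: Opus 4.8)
\emph{Overall strategy.} The plan is to argue by contradiction. Recall from Proposition~\ref{prop.markov-furstenberg} and Proposition~\ref{prop:almost-sure-nu-invariance} that $\lambda^+$ is defined as $\lim_{t\to\infty}t^{-1}\E[\log|A_t|]$ for the Jacobian process started at $(u_0,x_0,\Id)$. I would first check that this limit exists and is independent of $(u_0,x_0)$, then identify it with the integral representation in~\eqref{eq:lyapunov-positive}, and finally establish $\lambda^+>0$. The last step follows the scheme of~\cite[Sections~4.3 and~7]{bedrossian_lagrangian_2022}, with Proposition~\ref{prop:almost-sure-nu-invariance} supplying (unconditionally) the continuous invariant family that in their setting came from the strong Feller property; since the genuinely new content is already contained in that proposition, what remains is bookkeeping together with the controllability argument needed to defeat Theorem~\ref{bbps-classification}.

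\emph{Existence and representations of the limit.} Writing the Jacobian process as $\dot A_t = A_t\nabla u_t(x_t)$, each row of $A_t$ (with $A_0=\Id$) solves the tangent equation $\dot\tau_t=\tau_t\cdot\nabla u_t(x_t)$, and projecting to $S^1$ gives $\tfrac{d}{dt}\log|\tau_t| = v_t\cdot\nabla u_t(x_t)v_t$ with $v_t:=\tau_t/|\tau_t|$ the projective process; hence $\log|A_t| = \sup_{|\tau_0|=1}\int_0^t v_s\cdot\nabla u_s(x_s)v_s\,\dif s$. Submultiplicativity of $t\mapsto |A_t|$, the exponential moment bound~\eqref{eq:A-1-moment-bound} (via Corollary~\ref{cor:C-alpha-moments}), and stationarity of the base process (Corollary~\ref{cor:projective-ergodicity}) let Kingman's subadditive ergodic theorem produce $\lambda^+=\lim_t t^{-1}\E[\log|A_t|]$; the same moment bound gives uniform integrability of $\{t^{-1}\log|A_t|\}$, and with the averaged convergence of the one-point process to $\mu^1$ from Corollary~\ref{cor:projective-ergodicity} one checks the limit is independent of $(u_0,x_0)$. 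For the integral representation, integrate the pointwise identity above along the projective process: averaged convergence of the projective process to $\mu^P$ (Corollary~\ref{cor:projective-ergodicity}), together with integrability of $(u,x,v)\mapsto v\cdot\nabla u(x)v$ (dominated by $\|\nabla u\|_{L^\infty}$, which has all moments by Corollary~\ref{cor:C-alpha-moments}), identifies $\lim_t t^{-1}\E[\int_0^t v_s\cdot\nabla u_s(x_s)v_s\,\dif s]$ with $\int v\cdot\nabla u(x)v\,\dif\mu^P$, uniformly in the deterministic initial $(u_0,x_0,v_0)$. The inequality $\int v\cdot\nabla u(x)v\,\dif\mu^P\le\lambda^+$ is immediate from $\log|A_t|\ge\log|v_0 A_t|$; the reverse is Furstenberg's formula~\cite{furstenberg_noncommuting_1963}, which applies once $\lambda^+>0$ because then the (forward) Oseledets line field is well-defined, its law is a stationary measure on the projective bundle, hence equals $\mu^P$ by the uniqueness in Corollary~\ref{cor:projective-ergodicity}, and along that line the stretching rate time-averages to the top exponent.

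\emph{Positivity.} Suppose for contradiction $\lambda^+=0$. By Proposition~\ref{prop:almost-sure-nu-invariance} there is a continuous family $\nu\colon\uspace\times M^1\to W^{-1,1}(S^1)$ satisfying the invariance~\eqref{eq:almost-sure-nu-invariance} $\mu^1$-almost surely for $t\in\N$. Theorem~\ref{bbps-classification} then forces one of two rigid structures: either (1) a continuously-varying inner product $\langle\cdot,\cdot\rangle_{u,x}$ on $\R^2$ with respect to which every $A_t$ is an isometry, or (2) finitely many locally-continuous one-dimensional line fields $E^i_z\subsetneq\R^2$ that $A_t$ permutes. To rule out both I would use approximate controllability of the Jacobian process: combining the projective controllability of Lemma~\ref{lem:proj-control} with the hypoellipticity encoded in Assumption~\ref{asmp:modes} (the mechanism underlying Proposition~\ref{prop:unit-time-smoothing}), one shows that from any configuration the process can, with positive probability, return $(u,x)$ to any prescribed small neighborhood while $A_t$ approximately realizes any prescribed element of $\SL(2,\R)$. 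An $A_t$ that rotates a prescribed line off itself along a near-loop in $(u,x)$-space is incompatible with $A_tE^i_{z_0}=E^{\pi(i)}_{z_t}$ and the local continuity of $(E^i_z)$ near $z_0$, ruling out (2); an $A_t$ with $|A_t|$ arbitrarily large along a near-loop is incompatible with the isometry property, since the norms induced by $\langle\cdot,\cdot\rangle_{u,x}$ vary continuously and hence stay comparable to the Euclidean norm uniformly along the (compact) traversed trajectory, ruling out (1). Either way we contradict Theorem~\ref{bbps-classification}, so $\lambda^+\neq 0$; since $\det A_t=1$ forces $|A_t|\ge 1$ and thus $\lambda^+\ge 0$, we conclude $\lambda^+>0$, and the integral representation of the previous step then yields~\eqref{eq:lyapunov-positive} for every initial datum.

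\emph{Main obstacle.} The genuinely hard part is the controllability input used to contradict the two alternatives of Theorem~\ref{bbps-classification} — the same H\"ormander-bracket/approximate-control computation performed in~\cite[Sections~4.3 and~7]{bedrossian_lagrangian_2022}, which relies only on Assumption~\ref{asmp:modes}. Since our continuous family is obtained under exactly those hypotheses, that argument transfers essentially verbatim; the only point requiring care is that the regularity we have proved for $\nu$ (local Lipschitz continuity into $W^{-1,1}(S^1)$, Proposition~\ref{prop:nu-continuity}) is of the strength their argument consumes. A secondary subtlety is the Furstenberg-formula step identifying $\int v\cdot\nabla u(x)v\,\dif\mu^P$ with $\lambda^+$, for which it is essential that Corollary~\ref{cor:projective-ergodicity} furnishes a \emph{unique} stationary measure on the projective bundle.
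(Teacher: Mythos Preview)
Your proposal is correct and follows essentially the same contradiction strategy as the paper: invoke Proposition~\ref{prop:almost-sure-nu-invariance} under the hypothesis $\lambda^+=0$, feed the resulting continuous family into Theorem~\ref{bbps-classification}, and rule out both alternatives via approximate controllability. Two minor differences are worth noting. First, the paper does not import the controllability step from~\cite{bedrossian_lagrangian_2022} but uses its own elementary Lemmas~\ref{lem:proj-control} and~\ref{lem:Jacobian-control} (the latter, which you do not name, gives $\Reach((0,0,\Id),\{(0,0,A):|A|>M\})$ and is exactly what kills the inner-product alternative). Second, for the second equality in~\eqref{eq:lyapunov-positive} the paper argues more directly: it uses the identity $\log|A_t v_0|=\int_0^t v_s\cdot\nabla u_s(x_s)v_s\,\dif s$ together with Corollary~\ref{cor:projective-ergodicity} to see that the right-hand limit exists, equals $\int v\cdot\nabla u(x)v\,\dif\mu^P$, and is independent of the initial data, rather than going through Kingman and a Furstenberg-formula step conditional on positivity as you do. Your route is a bit longer but logically sound.
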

\begin{proof}
  We first note that the second equality in~\eqref{eq:lyapunov-positive} follows from the fact that
  \[
    \log |A_tv_0| = \int_0^t v_s \cdot \nabla u_s(x_s) v_s \, \dif s
  \]
and Corollary~\ref{cor:projective-ergodicity} (which shows that the limit on the right-hand side exists and is independent of $v_0$). Indeed, we can rewrite the right-hand side of~\eqref{eq:lyapunov-positive} as
  \[
    \lim_{t \to \infty} \int_{\uspace \times M^P} v \cdot \nabla u(x) v \; \mu^P_t(\dif u, \dif x, \dif v),
  \]
  where $\mu^P_t$ denotes the averaged measure on the projective process up to time $t$ starting from $\delta_{(u_0, x_0, v_0)}$. By Corollary~\ref{cor:projective-ergodicity} and the fact that $\mu^P_t(\{\|\nabla u\|_{L^\infty} \geq R\} \times M^P) \leq C\exp(-R)$, the limit exists and is equal to
  \[
    \int_{\uspace \times M^P} v \cdot \nabla u(x) v \; \mu^P(\dif u, \dif x, \dif v),
  \]
  which is visibly deterministic and independent of initial data.

  Assume for contradiction that $\lambda^+ = 0$. By Corollary~\ref{prop:almost-sure-nu-invariance}, we obtain a continuous family $\nu \colon \uspace \times M^1 \to W^{-1, 1}(S^{d-1})$ of probability measures satisfying~\eqref{eq:almost-sure-nu-invariance}. We now obtain a contradiction using an argument identical to that in~\cite{bedrossian_lagrangian_2022}. Their classification of invariant fiber measure families and approximate control results apply verbatim (in fact, our setting makes this argument slightly easier because only finitely many modes are forced, so there are no regularity concerns).

  To conclude, we apply Theorem~\ref{bbps-classification} and reach a contradiction, using controllability properties of both the projective and Jacobian process.

  Indeed, if there is a continuously-varying inner product $\langle \cdot, \cdot \rangle_{u,x}$, then the quantities
  \begin{equation*}
    M := \max \left\{ \langle v, v \rangle_{u,x} \mid \|u\|_{H^6} \leq 1, \; x \in \xspace, \; v \in S^1 \right\}
  \end{equation*}
  and
  \begin{equation*}
    m := \min \left\{ \langle v, v \rangle_{u,x} \mid \|u\|_{H^6} \leq 1, \; x \in \xspace, \; v \in S^1 \right\}
  \end{equation*}
  are positive and finite. Then Lemma~\ref{lem:Jacobian-control} shows that, for $(u_0, x_0) := (0, 0)$ the event \[ \left\{\|u_1\|_{\uspace} \le 1 \text{ and } |A_1| > Mm^{-1}\right\} \] has positive probability; in this event, for some (random) $v \in S^1$ we have $\langle v, v \rangle_{u_0, x_0} < \langle A_1 v, A_1 v \rangle$ and therefore $A_1$ is not an almost-sure isometry, contradicting Theorem~\ref{bbps-classification}.

  On the other hand, assume that there is a continuously-varying collection $(E^i_{u,x})_{1 \leq i \leq p}$ of lines which are almost surely invariant under the projective dynamics. By continuity and the fact that $p$ is finite, there is some $\tilde{v} \in S^1$ and $\varepsilon > 0$ such that, for all $\|u\|_{\uspace},|x| \leq \varepsilon$ we have $\dist(\tilde{v}, E^i_{u,x}) \geq 2\varepsilon$. Fix initial data $(u_0, x_0) := (0, 0)$ and $v_0 \in E^0_{u_0, x_0}$. Lemma~\ref{lem:proj-control} shows that we have $\|u_1\|_{\uspace}, |x_1|, |v_1-\tilde{v}| \leq \varepsilon$ with positive probability, which implies that the collection of lines is not almost surely invariant, since our choice of $\tilde{v}$ ensures $v_1 \not\in E^i_{u_1, x_1}$ for any $1 \leq i \leq p$.
\end{proof}
\section{Exponential mixing for the two-point process}
\label{s:exponential-mixing}
In this section we apply the weak Harris framework of Hairer, Mattingly, and Scheutzow~\cite{hairer_asymptotic_2011} to prove that the two-point process mixes exponentially fast, with errors measured in a Wasserstein sense.

Our first step is to build a Lyapunov function $W(u, x, y)$ for the two-point process, using the super-Lyapunov function $\exp(V(u))$ for the base process and the positivity of the top Lyapunov exponent for the derivative cocycle from Theorem~\ref{thm:lyapunov-positive}.

First, we show that a drift function with finitely many possible waiting times induces another drift function with a single waiting time.

\begin{lemma}\label{lem:different-waiting-lyapunov}
  Let $z_0, z_1, \dots$ be a Markov process on a set $Z$ with transition kernel $P_t \colon Z \to \mathcal{P}(Z)$ for $t \in \N$. Let $V \colon Z \to \R_{\geq 1}$, $\alpha \in (0, 1)$, $C > 0$, and $T \in \N$ be such that, for each $z_0 \in Z$, there is some $n \in \{1, \dots, T\}$ such that $\E[V(z_n)] \leq \alpha^n V(z_0) + C$. Then the function
  \[
    W := \min_{1 \leq n \leq T} \alpha^{-n}\E[P_nV]
  \]
  satisfies $\E[P_1W] \leq \alpha W + \alpha^{-T}C$.
\end{lemma}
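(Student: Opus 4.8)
The plan is to exploit the fact that $W$ is a minimum over the finitely many waiting times $n \in \{1, \dots, T\}$ and to treat the contributions of $n = 1$ and of $n \geq 2$ separately. Two elementary facts will do all the work: the Chapman--Kolmogorov identity $P_1 P_{n-1} = P_n$, and the monotonicity of the positive operator $P_1$, i.e.\ $f \leq g \implies P_1 f \leq P_1 g$. (One should also note in passing that $W$ is measurable, being a finite minimum of the measurable functions $\alpha^{-n}P_n V$, and that $W \geq \alpha^{-1} \geq 1$ since $V \geq 1$, so $W$ is a legitimate Lyapunov candidate.)

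The first step is to record the pointwise a priori bound
\[
  W(z) \;\leq\; V(z) + \alpha^{-T} C \qquad\text{for every } z \in Z .
\]
For each $z$ choose the guaranteed index $n(z) \in \{1,\dots,T\}$ with $P_{n(z)}V(z) \leq \alpha^{n(z)}V(z) + C$; then $W(z) \leq \alpha^{-n(z)}P_{n(z)}V(z) \leq V(z) + \alpha^{-n(z)}C \leq V(z) + \alpha^{-T}C$, using $\alpha^{-n(z)} \leq \alpha^{-T}$ since $0 < \alpha < 1$ and $n(z) \leq T$. Note no measurable selection of $n(z)$ is needed, since the resulting inequality is pointwise.

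The main claim is then that for \emph{every} fixed $n \in \{1, \dots, T\}$ one has, pointwise on $Z$,
\[
  P_1 W \;\leq\; \alpha \cdot \alpha^{-n} P_n V + \alpha^{-T} C .
\]
For $n \geq 2$ this needs no hypothesis: since $n - 1 \in \{1, \dots, T\}$, the definition of $W$ gives $\alpha^{-(n-1)} P_{n-1} V \geq W$ pointwise, and applying the monotone operator $P_1$ together with $P_1 P_{n-1} = P_n$ yields $\alpha^{-(n-1)} P_n V \geq P_1 W$, i.e.\ $P_1 W \leq \alpha \cdot \alpha^{-n} P_n V$, which is even stronger than claimed. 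For $n = 1$ one instead applies $P_1$ to the a priori bound of the previous paragraph, obtaining $P_1 W \leq P_1 V + \alpha^{-T} C = \alpha \cdot \alpha^{-1} P_1 V + \alpha^{-T} C$. Taking the minimum over $n \in \{1, \dots, T\}$ of these $T$ inequalities, and using that the additive term $\alpha^{-T}C$ does not depend on $n$, gives
\[
  P_1 W \;\leq\; \alpha \min_{1 \leq n \leq T} \alpha^{-n} P_n V + \alpha^{-T} C \;=\; \alpha W + \alpha^{-T} C ,
\]
which is the desired drift inequality.

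The only genuinely delicate point is the $n = 1$ slot in the minimum: the ``free'' argument available for $n \geq 2$ breaks down there because $n-1 = 0$ is not an admissible waiting time, so that term must instead be routed through the a priori bound $W \leq V + \alpha^{-T}C$ --- and this is exactly, and only, where the random-waiting-time hypothesis is used. Everything else is bookkeeping with the semigroup identity and positivity of $P_1$, so I do not anticipate any real obstacle.
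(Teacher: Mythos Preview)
Your proof is correct and follows essentially the same approach as the paper's: both use the a priori bound $W \leq V + \alpha^{-T}C$ to handle the $n=1$ case and the semigroup shift $P_1 P_{n-1} = P_n$ for $n \geq 2$. Your presentation is slightly cleaner in that you prove the inequality $P_1 W \leq \alpha \cdot \alpha^{-n} P_n V + \alpha^{-T}C$ uniformly in $n$ and then take the minimum, whereas the paper fixes the minimizing $n$ at each point and splits into cases, but the content is identical.
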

\begin{proof}
  Let $z_0 \in Z$ and let $1 \leq n \leq T$ be such that $W(z_0) = \alpha^{-n}\E[P_nV(z_0)]$. If $n > 1$, then
  \begin{align*}
    \E[P_1W(z_0)] &\leq \E\left[P_1\left(\alpha^{-(n-1)}\E[P_{n-1}V]\right)(z_0)\right]\\
               &= \alpha^{-n+1}\E[P_nV(z_0)]\\
               &= \alpha W(z_0).
  \end{align*}
  On the other hand, if $n=1$ then
  \begin{align*}
    \E[P_1W(z_0)] &= \E\left[P_1\left(\min_{1 \leq n \leq T}\alpha^{-n}\E[P_nV]\right)(z_0)\right]\\
               &\leq \E[P_1V(z_0) + \alpha^{-T}C]\\
               &= \alpha W(z_0) + \alpha^{-T}C.\qedhere
  \end{align*}
\end{proof}
Next, we prove a weak bound in the opposite direction of Jensen's inequality.
\begin{lemma}\label{lem:exponent-to-drift-function}
  Let $X$ be a real-valued random variable with $\E[\exp(|X|)] \leq C$ for some $C > 0$ and $\E[X] \geq 1$. Then there is some $p_0(C) > 0$ such that $\E[\exp(-pX)] < 1-\frac{p}{2}$ for all $p \in (0, p_0)$.
\end{lemma}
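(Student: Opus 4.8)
The plan is to Taylor-expand $\exp(-pX)$ to second order and absorb the remainder into the given exponential moment. The elementary inequality I would use is
\[
  e^{-t} \le 1 - t + \frac{t^2}{2}e^{|t|} \qquad \text{for all } t \in \R .
\]
For $t \ge 0$ this follows from the standard bound $e^{-t} \le 1 - t + t^2/2 \le 1 - t + \tfrac{t^2}{2}e^{t}$ (one checks $1-t+t^2/2-e^{-t}$ is convex and vanishes to second order at $0$); for $t < 0$, writing $s = |t|$, it is equivalent to $e^{s} - 1 - s \le \tfrac{s^2}{2}e^{s}$, which holds term by term in the power series since $\frac{1}{m!} \le \frac{1}{2(m-2)!}$ for every $m \ge 2$.

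Applying this with $t = pX$ and taking expectations (the relevant quantities are finite since $|X|^2 e^{p|X|}$ is dominated by a multiple of $e^{|X|}$ for $p \le 1/2$, see below) gives
\[
  \E[\exp(-pX)] \le 1 - p\,\E[X] + \frac{p^2}{2}\,\E\!\big[X^2 e^{p|X|}\big] \le 1 - p + \frac{p^2}{2}\,\E\!\big[X^2 e^{p|X|}\big],
\]
where the second inequality uses the hypothesis $\E[X] \ge 1$. The next step is to bound the remainder uniformly for $p \le 1/2$: there $X^2 e^{p|X|} \le X^2 e^{|X|/2}$, and since $\sup_{x \ge 0} x^2 e^{-x/2} = 16e^{-2}$ we have the pointwise bound $X^2 e^{|X|/2} \le 16 e^{-2}\, e^{|X|}$, so that $\E[X^2 e^{p|X|}] \le 16 e^{-2} C =: C_1$, a constant depending only on $C$.

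Combining, for $p \le 1/2$ we get $\E[\exp(-pX)] \le 1 - p + \tfrac{p^2}{2}C_1 = 1 - p\big(1 - \tfrac{p}{2}C_1\big)$. Setting $p_0 := \min\!\big(\tfrac12,\, C_1^{-1}\big)$, for every $p \in (0,p_0)$ one has $\tfrac{p}{2}C_1 < \tfrac12$, hence $\tfrac{p^2}{2}C_1 < \tfrac{p}{2}$ and therefore $\E[\exp(-pX)] < 1 - \tfrac{p}{2}$, as claimed. There is no genuine obstacle here; the only points needing a little care are the validity of the two-sided Taylor inequality for negative arguments and maintaining the restriction $p \le 1/2$ so that the exponential weight $e^{p|X|}$ stays controlled by $e^{|X|}$ up to the polynomial factor — everything else is bookkeeping with the constant $C_1$.
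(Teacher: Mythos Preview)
Your proof is correct and follows essentially the same approach as the paper: Taylor-expand $e^{-pX}$ and use the exponential moment $\E[e^{|X|}]\le C$ to show the remainder is $O(p^2)$ with a constant depending only on $C$. The only cosmetic difference is that the paper sums the full series $\sum_{n\ge2}\frac{(-p)^n}{n!}\E[X^n]$ (bounding $\E[|X|^n]\le n!\,C$), whereas you use the pointwise second-order inequality $e^{-t}\le 1-t+\tfrac{t^2}{2}e^{|t|}$; both routes yield the same conclusion.
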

\begin{proof}
  Write $\E[\exp(-pX)] = 1 - p\E[X] + \sum_{n=2}^\infty \frac{{(-p)}^{n}}{n!}\E[X^n] = 1 - p\E[X] + O(p^2)$, using the moment bound on $\exp(|X|)$.
\end{proof}

We are now ready to prove the existence of a Lyapunov function for the two-point process.
\begin{proposition}\label{prop:two-point-drift-function}
  For any $\beta > 0$, there is a continuous function $W \colon \uspace \times M^2 \to \R_{\geq 1}$, with
  \begin{equation}\label{eq:W-bound}
    W(u, x, y) \leq C|x-y|^{-\beta}\exp(\beta V(u)),
  \end{equation}
  and constants $\alpha \in (0, 1)$ and $C > 0$ such that the drift condition
  \begin{equation}
    \label{eq:W-drift-condition}
    P_1W(u, x, y) \leq \alpha W(u, x, y) + C
  \end{equation}
  is satisfied.
\end{proposition}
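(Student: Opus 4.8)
\emph{Proposed approach.} The plan is to first exhibit a function satisfying the drift condition at a bounded, state-dependent waiting time, and then feed it into Lemma~\ref{lem:different-waiting-lyapunov}. We may assume $\beta \in (0,1)$: for $\beta \ge 1$ the function produced with exponent $\tfrac12$ still obeys~\eqref{eq:W-bound}, since $|x-y|$ is bounded on $\xspace$ and $V \ge 0$ (and the restriction $\beta<1$ is what lets Corollary~\ref{cor:super-lyapunov} control $\E\exp(2\beta V)$). Write $z_t = (u_t,x_t,y_t)$ for the two-point process and set
\[
  \tilde W(u,x,y) := 1 + |x-y|^{-\beta}\exp(\beta V(u)),
\]
which is continuous on $\uspace \times M^2$ and satisfies $1 \le \tilde W \le C|x-y|^{-\beta}\exp(\beta V(u))$. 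The aim is to find $R_0,r_0>0$, $T\in\N$, and $\alpha\in(0,1)$ such that for every $z_0 \in \uspace \times M^2$ there is $n \in \{1,\dots,T\}$ with $\E[\tilde W(z_n)] \le \alpha^n\tilde W(z_0) + C$; then $W := \min_{1\le n\le T}\alpha^{-n}P_n\tilde W$ works by Lemma~\ref{lem:different-waiting-lyapunov}, with $W \ge 1$ automatic and~\eqref{eq:W-bound} coming from $W \le \alpha^{-1}P_1\tilde W$ together with the one-step estimate below. Continuity of $W$ reduces to continuity of each $P_n\tilde W$, a weak-Feller statement: the flow maps depend continuously on initial data and, on any compact subset of $M^2$, $\tilde W(z_n)$ is dominated by an integrable majorant via the deterministic Gr\"onwall bound $|x_n-y_n| \ge |x_0-y_0|\exp(-\int_0^n\|\nabla u_s\|_{L^\infty}\,\dif s)$ and Corollary~\ref{cor:C-alpha-moments}.

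The basic tool is a one-step estimate: for every $\eta'>0$, combining Gr\"onwall, Cauchy--Schwarz, Corollary~\ref{cor:super-lyapunov} (to bound $\E\exp(2\beta V(u_1))$, using $2\beta<2$), and Corollary~\ref{cor:C-alpha-moments} (to bound $\E\exp(2\beta\int_0^1\|\nabla u_s\|_{L^\infty}\,\dif s)$ by a constant times $\exp(\eta'V(u_0))$) gives
\[
  \E\big[|x_1-y_1|^{-\beta}\exp(\beta V(u_1))\big] \le C\,|x_0-y_0|^{-\beta}\exp\big((\delta\beta+\eta')V(u_0)\big),
\]
with $\delta\in(0,1)$ as in Corollary~\ref{cor:super-lyapunov}. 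Fixing $\eta'$ so small that $\kappa := (1-\delta)\beta-\eta'>0$, this yields~\eqref{eq:W-bound} and disposes of two regimes at $n=1$: if $V(u_0)>R_0$, with $R_0$ chosen (independently of $T$) so that $Ce^{-\kappa R_0}\le\tfrac12$, the estimate reads $\E[\tilde W(z_1)] \le 1+\tfrac12\tilde W(z_0) \le \alpha\tilde W(z_0)+1$ for any $\alpha\ge\tfrac12$; and if $V(u_0)\le R_0$ but $|x_0-y_0|>r_0$, then $\tilde W(z_0)$ and $\E[\tilde W(z_1)]$ are both bounded by a constant $C(R_0,r_0)$, so the drift at $n=1$ is trivial.

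The remaining regime, $V(u_0)\le R_0$ and $|x_0-y_0|\le r_0$, is the crux: here $\tilde W(z_0)$ is large and contraction must be extracted from repulsion of the diagonal over a long waiting time $T=T(R_0,\lambda^+,\beta)$. Two ingredients enter. First, Theorem~\ref{thm:lyapunov-positive}: writing $v_0:=(y_0-x_0)/|y_0-x_0|$, letting $v_s$ be the projective process started from $v_0$, and setting $r_T := \exp(\int_0^T v_s\cdot\nabla u_s(x_s)v_s\,\dif s)$, one has $\lim_{T\to\infty}T^{-1}\E[\log r_T] = \lambda^+$; unpacking the proof (convergence of the averaged projective measures $\mu^P_T$ together with the exponential tail bound on $\|\nabla u\|_{L^\infty}$) shows this holds uniformly over $\{V(u_0)\le R_0\}$, so for $T$ large, $\E[\log r_T] \ge \tfrac12\lambda^+T$ for all such data. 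Second, the $C^1$ comparison of the two-point and tangent processes from Subsection~\ref{ssa:two-point-by-tangent}: on an event $G$ whose complement has probability $\le C|x_0-y_0|^\theta$ for every $\theta>0$ (by Corollary~\ref{cor:C-alpha-moments}), one has $|x_T-y_T| \ge \tfrac12|x_0-y_0|\,r_T$ --- exactly the scaling encoded in the metric~\eqref{eq:two-point-metric-overview}. On $G$ we bound $|x_T-y_T|^{-\beta}\le 2^\beta|x_0-y_0|^{-\beta}r_T^{-\beta}$, peel off $\E[\exp(2\beta V(u_T))]^{1/2}\le C(R_0)$ by Cauchy--Schwarz and Corollary~\ref{cor:super-lyapunov}, and bound $\E[r_T^{-2\beta}]$ using the reverse-Jensen inequality Lemma~\ref{lem:exponent-to-drift-function}; off $G$ we use the Gr\"onwall bound and H\"older, the smallness of the probability of $G^c$ absorbing the factor $|x_0-y_0|^{-\beta}$ once $r_0=r_0(T,R_0)$ is small. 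Assembling the pieces gives $\E[\tilde W(z_T)] \le \alpha^T\tilde W(z_0)+C$ after choosing $\alpha\in[\tfrac12,1)$ suitably (enlarging $T$ if needed), and Lemma~\ref{lem:different-waiting-lyapunov} completes the proof.

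I expect the reverse-Jensen step to be the main obstacle. Lemma~\ref{lem:exponent-to-drift-function} contracts only by a factor slightly below $1$, so one must genuinely upgrade the \emph{additive} bound $\E[\log r_T]\ge\tfrac12\lambda^+T$ to the \emph{multiplicative} statement that $\E[r_T^{-2\beta}]$ --- and, more precisely, $\E[r_T^{-\beta}\exp(\beta V(u_T))]$ --- contracts below $\alpha^T\exp(\beta V(u_0))$. This is subtle because $\log r_T=\int_0^T v_s\cdot\nabla u_s(x_s)v_s\,\dif s$ can be very negative and the exponential moment $\E\exp\big(p\int_0^T\|\nabla u_s\|_{L^\infty}\,\dif s\big)$ grows in $T$; the way through is to work interval by interval on $[0,T]$, exploiting the \emph{contraction} in Corollary~\ref{cor:super-lyapunov} (not mere finiteness of exponential moments) to keep the per-step cost uniform, together with Corollary~\ref{cor:C-alpha-moments}, before invoking Lemma~\ref{lem:exponent-to-drift-function}. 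The secondary technical points --- extracting the sublevel-set uniformity in Theorem~\ref{thm:lyapunov-positive}, quantifying the failure probability and $C^1$ error in the comparison of Subsection~\ref{ssa:two-point-by-tangent}, and the ordered choice of $R_0$, then $T$, then $r_0$, then $\alpha$ --- are routine once this is settled.
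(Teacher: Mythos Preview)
Your overall architecture --- case split on $V(u_0)$ and $|x_0-y_0|$, feed into Lemma~\ref{lem:different-waiting-lyapunov}, invoke positivity of $\lambda^+$ and Lemma~\ref{lem:exponent-to-drift-function} in the hard regime --- matches the paper. But the reverse-Jensen step, which you correctly flag as the crux, does not close as written.

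The gap is the Cauchy--Schwarz separation. After peeling off $\E[\exp(2\beta V(u_T))]^{1/2}\le C(R_0)$ you are left needing $2^\beta C(R_0)\,\E[r_T^{-2\beta}]^{1/2}$ to be strictly below $1$ (indeed below $\exp(\beta V(u_0))\ge 1$). But Lemma~\ref{lem:exponent-to-drift-function} only gives $\E[r_T^{-2\beta}]\le 1-c\beta$, which is \emph{close to} $1$ and cannot absorb the multiplicative constant $C(R_0)$. Your ``interval by interval'' proposal does not obviously rescue this: the unit-time projective increment $\int_n^{n+1}v_s\cdot\nabla u_s v_s\,\dif s$ does not have uniformly positive conditional mean over the whole sublevel set (that is the content of Theorem~\ref{thm:lyapunov-positive} at large times, not at $t=1$), so there is no per-step contraction to iterate; and the increments are not independent, so you cannot simply multiply one-step bounds to manufacture exponential-in-$T$ decay of $\E[r_T^{-2\beta}]$.

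The paper's remedy is to \emph{not separate}. Set $f(u,x,y)=|x-y|^{-1}\exp(V(u))$ and apply Lemma~\ref{lem:exponent-to-drift-function} to the combined variable
\[
  X \;=\; \log\frac{f(z_0)}{f(z_n)} \;=\; V(u_0)-V(u_n)+\log|x_n-y_n|-\log|x_0-y_0|.
\]
One first shows $\E[X]\ge 1$: the $V$-part contributes $\ge -C$ by super-Lyapunov (Corollary~\ref{cor:super-lyapunov} and Jensen), and the separation part is $\ge$ any prescribed constant by Theorem~\ref{thm:lyapunov-positive} once $n$ is chosen large (and the projective approximation, done directly via Gr\"onwall rather than the machinery of Subsection~\ref{ssa:two-point-by-tangent}, transfers this to the two-point process for $|x_0-y_0|$ small). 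One also has $\E[\exp|X|]\le C(n,C_V)$ from $V(u_0)\le C_V$ and Corollaries~\ref{cor:super-lyapunov}--\ref{cor:C-alpha-moments}. Then Lemma~\ref{lem:exponent-to-drift-function} gives $\E[f(z_n)^p]\le (1-\tfrac p2)f(z_0)^p$ for all small $p$, and one \emph{takes} $\beta=p$ (the reduction to small $\beta$ is exactly your opening observation). No Cauchy--Schwarz, and no need for $T$-uniform exponential moments.

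A secondary difference: rather than asserting uniform convergence of $T^{-1}\E[\log r_T]$ over the sublevel set, the paper uses a compactness covering (continuity in $H^1\times M^P$ of the finite-time expectation, then finitely many balls) to produce a data-dependent waiting time $n\in\{1,\dots,T\}$. This is what Lemma~\ref{lem:different-waiting-lyapunov} is designed for, and it avoids having to extract uniformity from Corollary~\ref{cor:projective-ergodicity}.
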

\begin{proof}
  Let $f(u, x, y) := |x-y|^{-1}\exp(V(u))$.

  \textbf{Step 0.}
  It suffices to find such a $W$ which satisfies~\eqref{eq:W-drift-condition} outside a sublevel set $f^{-1}([0, C'])$. for some constant $C' > 0$. Indeed, using~\eqref{eq:W-bound}, choosing the constant in~\eqref{eq:W-drift-condition} sufficiently large shows that~\eqref{eq:W-drift-condition} is trivially satisfied on $f^{-1}([0, C'])$.

  \textbf{Step 1.}
  First, we show that $\E[f(u_1, x_1, y_1)] \leq \frac{1}{2} f(u_0, x_0, y_0)$ whenever $V(u_0) \geq C_V$, for some sufficiently large $C_V > 0$. Indeed,
  \begin{align*}
    \E[f(u_1, x_1, y_1)] &= \E\left[|x_1-y_1|^{-1}\exp(V(u_1))\right]\\
                         &\leq |x_0-y_0|^{-1}\E\left[\left(\int_0^1 \|\nabla u_t\|_{L^\infty} \, \dif t\right)\exp(V(u_1))\right]\\
                         &\leq C|x_0-y_0|^{-1}\exp(\delta V(u_0))\\
                         &\leq \frac{1}{2}f(u_0, x_0, y_0),
  \end{align*}
  where the penultimate inequality follows from Corollary~\ref{cor:super-lyapunov}, Corollary~\ref{cor:C-alpha-moments}, and Cauchy--Schwarz and the final inequality holds when we choose $C_V$ large enough so that $\exp((1-\delta) V(u_0)) \geq \exp((1-\delta)C_V) \geq 2C$.

  \textbf{Step 2.}
  Next, we find a uniform bound on the time to witness a positive Lyapunov exponent, for initial velocity $u_0$ bounded. Precisely, we show that for any $\delta > 0$, there is some $T \in \N$ such that, whenever $V(u_0) \leq C_V$, there is some integer $1 \leq n \leq T$ such that
  \begin{equation}\label{eq:uniform-growth-delta}
    \E\left[V(u_0) - V(u_n) + \int_0^n v_t \cdot \nabla u_t(x_t)v_t \, \dif t\right] \geq \delta,
  \end{equation}
  where $x_0 \in \T^2$ and $v_0 \in S^1$.
  Theorem~\ref{thm:lyapunov-positive} shows that, for each $(u_0, x_0, v_0) \in \uspace \times M^P$ and $K > 0$, there is a time $\tau = \tau(u_0, x_0, v_0, K) \in \N$ such that
  \begin{equation*}
    \E\left[\int_0^\tau v_t \cdot \nabla u_t(x_t)v_t \, \mathrm{d}t\right] \geq \frac{\tau\lambda^+}{2} \geq K,
  \end{equation*}
  where the expectation is taken over trajectories $(u_t, x_t, v_t)$ of the projective process. For any $(u_0, x_0, v_0) \in \uspace \times M^P$ with $V(u_0) \leq C_V$ and other $(u_0', x_0', v_0')$ with $V(u_0') \leq C_V$, we can estimate
  \begin{align}
    &\left|\E\left[\int_0^\tau v_t \cdot \nabla u_t(x_t)v_t \, \mathrm{d}t\right] - \E\left[\int_0^\tau v_t' \cdot \nabla u_t'(x_t')v_t' \, \mathrm{d}t\right]\right|\notag\\
    &\quad\leq \E\left[\int_0^\tau |v_t-v_t'|\,\left(|\nabla u_t(x_t)| + |\nabla u_t'(x_t')|\right) + |\nabla u_t(x_t)-\nabla u_t'(x_t')|\, \mathrm{d}t\right]\notag\\
    &\quad\leq \int_0^\tau {\E\left[{|v_t-v_t'|}^2\right]}^{1/2}{\E\left[{\left(|\nabla u_t(x_t)| + |\nabla u_t'(x_t')|\right)}^2\right]}^{1/2} + \E\left[|\nabla u_t(x_t)-\nabla u_t'(x_t')|\right]\, \mathrm{d}t.\label{eq:vuv-bound}
  \end{align}
  Interpolating between the bounds from Proposition~\ref{prop:omega-bounds} and $H^1$-stability of~\eqref{eq:sns-velocity-form}, we conclude that \[ \E\left[\exp\left(C\int_0^t\|\nabla u_s-\nabla u_s'\|_{L^\infty} \, \dif s\right)\right] \to 0 \] as $\|u_0-u_0'\|_{H^1} \to 0$, so the first term in~\eqref{eq:vuv-bound} tends to zero as $|v_0-v_0'| + |x_0-x_0'| + \|u_0-u_0'\|_{H^1} \to 0$, using the equation for $v_t$ and Gr\"onwall's inequality. For the second term, we write
  \begin{equation*}
    \int_0^\tau \E\left[|\nabla u_t(x_t)-\nabla u_t'(x_t')|\right]\, \mathrm{d}t \leq \int_0^\tau \E\left[\|\nabla u_t'\|_{C^\alpha}|x_t-x_t'|^\alpha + \|\nabla u_t - \nabla u_t'\|_{L^\infty}\right]\, \mathrm{d}t
  \end{equation*}
  and observe that the right-hand side similarly tends to zero as $|x_0-x_0'| \to 0$ and $\|u_0-u_0'\|_{H^1} \to 0$, using Corollary~\ref{cor:C-alpha-moments} to bound $u_t$ in $C^\alpha$.

  We conclude that there is $\varepsilon = \varepsilon(\tau, u_0, x_0, v_0)$ such that, if $|x_0-x_0'|, |v_0-v_0'|, \|u_0-u_0'\|_{H^1} \leq \varepsilon$, then
  \begin{equation*}
    \E\left[\int_0^{\tau} v_t \cdot \nabla u_t(x_t)v_t \, \mathrm{d}t\right] \geq \frac{\tau \lambda^+}{4}.
  \end{equation*}
  On the other hand, we have $\E[V(u_0) - V(u_\tau)] \geq -C$ for some constant $C > 0$, not depending on $\tau$. Taking $K$ large enough so that $\frac{\tau\lambda^+}{8} \geq \frac{K}{4} \geq C \lor \delta$ and using relative compactness of the sublevel set $S := \{(u_0, x_0, v_0) \mid V(u_0) \leq C_V\}$ in $H^1$, we conclude that we can cover $S$ with finitely many $H^1$-balls, each of which satisfies~\eqref{eq:uniform-growth-delta}.

  \textbf{Step 3.}
  We now approximate the two-point process by the projective process.
  Let $u_0$ be such that $V(u_0) \leq C_V$ and let $x_0 \in \T^2$ and $v_0 \in S^1$. Let $y_0 = x_0 + \varepsilon v_0$, for some $\varepsilon \in (0, 1)$. We observe:
  \begin{align*}
    \log |y_n - x_n| - \log |y_0-x_0| &= \int_0^n \frac{(y_t-x_t) \cdot \left(u_t(y_t) - u_t(x_t)\right)}{|y_t-x_t|^2} \, \mathrm{d}t\\
                                      &=: \int_0^n v_t \cdot \nabla u_t(x_t)v_t \, \mathrm{d}t + R,
  \end{align*}
  where
  \begin{align*}
    \E\left[|R|\right] &\leq \E\left[\int_0^n \left|v_t-\frac{y_t-x_t}{|y_t-x_t|}\right|\frac{|u_t(y_t)-u_t(x_t)|}{|y_t-x_t|} + \left|v_t \cdot \nabla u_t(x_t) - \frac{u_t(y_t)-u_t(x_t)}{|y_t-x_t|}\right|\, \mathrm{d}t\right]\\
                       &\leq \int_0^n 2\E\left[\left|v_t-\frac{y_t-x_t}{|y_t-x_t|}\right|\|\nabla u_t\|_{L^\infty}\right] + \E\left[\|\nabla u_t\|_{C^\alpha}|y_t-x_t|^\alpha\right] \, \mathrm{d}t.
  \end{align*}

  The second term above tends to zero, uniformly in $u_0$, as $\varepsilon \to 0$. To bound the first term, we write $R_v^t := \left|v_t-\frac{y_t-x_t}{|y_t-x_t|}\right|$ and compute
  \begin{align*}
    R_v^t &\leq \int_0^t \left|\Pi_{v_s}(v_s \cdot \nabla u_s(x_s)) - \Pi_{{(y_s-x_s)}} \left(\frac{u_s(y_s)-u_s(x_s)}{|y_s-x_s|}\right)\right| \, \mathrm{d}s\\
          &\leq \int_0^t 2R_v^s\|\nabla u_s\|_{L^\infty} +  \|\nabla u_s\|_{C^\alpha}|y_s-x_s|^\alpha\, \mathrm{d}s.
  \end{align*}
  Using Gr\"onwall's inequality and Corollary~\ref{cor:C-alpha-moments} shows that (for example) we have $\E[{(R_v^t)}^2] \to 0$ as $\varepsilon \to 0$, so $E[|R|] \to 0$ as well.

  We conclude that there is $\varepsilon > 0$ sufficiently small such that, for every $u_0$ with $V(u_0) \leq C_V$ and $x_0, y_0 \in \T^2$ with $|x_0-y_0| \leq \varepsilon$, we there is some $1 \leq n \leq T$ such that
  \begin{equation*}
    \E\left[\log\left(\frac{f(u_0, x_0, y_0)}{f(u_n, x_n, y_n)}\right)\right] = \E[V(u_0) - V(u_n) + \log |y_n - x_n| - \log |y_0 - x_0|] \geq \frac{\delta}{2},
  \end{equation*}
  where $\delta > 0$ is the constant from Step 2.

  \textbf{Step 4.}
  For each $u_0, x_0, y_0$ with $V(u_0) \leq C_V$ and $|x_0-y_0| \leq \varepsilon$, we choose $\delta = 2$ and apply Lemma~\ref{lem:exponent-to-drift-function} to the random variable $X := \log\left(\frac{f(u_0, x_0, y_0)}{f(u_n, x_n, y_n)}\right)$ to find $p(n, C_V) > 0$ such that $\E[{f(u_n, x_n, y_n)}^p] \leq (1-\frac{p}{2}){f(u_0, x_0, y_0)}^p$. Taking the smallest $p$ for any $1 \leq n \leq T$, letting $\alpha := 1-\frac{p}{2}$, and applying Lemma~\ref{lem:different-waiting-lyapunov}, we conclude that the function
  \begin{equation*}
    W(u_0, x_0, y_0) := \min_{1 \leq n \leq T} \alpha^{-n}\E[{f(u_n, x_n, y_n)}^p]
  \end{equation*}
  is a Lyapunov function for the two-point process.
\end{proof}

With our Lyapunov function $W$ in hand, we introduce a distance on $\uspace$ which extends to a Wasserstein distance between probability measures on the same space.
\begin{definition}\label{def:d_V}
  For $z_1, z_2 \in \uspace \times M^2$, let
  \[
    d_V(z_1, z_2) := \inf \left\{ \int_0^1 \|\gamma'(t)\|_{H^5,2}\exp(V(\gamma(t))/2) \; \mathrm{d}t : (\gamma(0), \gamma(1)) = (z_1, z_2)\right\},
  \]
  where the infimum is taken over $\gamma \colon [0, 1] \to \uspace \times M^2$ Lipschitz.
\end{definition}
\begin{definition}\label{def:Wasserstein}
  If $X$ is a measurable space and $d \colon X \times X \to \R_\geq 0$ and $\mu_1, \mu_2 \in \mathcal{P}(X)$, we write
  \[
    d(\mu_1, \mu_2) := \inf_{\mu \in \Gamma(\mu_1, \mu_2)} \int d(x, y) \, \dif \mu(x, y),
  \]
  where $\Gamma(\mu_1, \mu_2)$ denotes the set of couplings of $\mu_1$ and $\mu_2$, that is, probability measures on $X \times X$ which have marginals $\mu_1$ and $\mu_2$. We abuse notation and identify a point $x \in X$ with the measure $\delta_x$, so (for example) $d(\mu, x)$ means $d(\mu, \delta_x)$.
\end{definition}

We will also need a smoothing estimate for the two-point process, which follows from Proposition~\ref{prop:unit-time-smoothing}. It is important to note that the estimate respects the natural scaling of the two-point process; see the definition of $\|\cdot\|_{H^5,2}$ below. In particular, the constant $C$ appearing below is \textit{independent of $(x,y)\in M^2$.} We defer the proof to Section~\ref{sec:malliavin}.
\begin{lemma}\label{lem:two-point-smoothing}
  For each $\eta, \gamma \in (0, 1)$, there is $C(\eta, \gamma) > 0$ such that, for each Fr\'echet differentiable observable $\varphi \colon \uspace \times M^2 \to \R$ and each $(u,x,y) \in \uspace \times M^2$, we have
  \begin{equation*}
    \|\nabla P_1 \varphi(u, x, y)\|_{H^5,2} \leq \exp(\eta V(u))\left(C\sqrt{P_1|\varphi|^2(u, x, y)} + \gamma\sqrt{P_1\|\nabla \varphi\|_{H^5,2}^2(u, x, y)}\right),
  \end{equation*}
  where
  \[
    \|v\|_{H^5,2} := \|v_u\|_{H^5} + |v_x + v_y| + \frac{|v_x-v_y|}{|x-y|}
  \]
  for any $v = (v_u, v_x, v_y) \in \uspace \times \R^4$.
\end{lemma}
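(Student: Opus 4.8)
The plan is to transfer the smoothing estimate from the tangent process, to which Proposition~\ref{prop:unit-time-smoothing} applies on $M^T = \xspace \times (\R^2 \setminus \{0\})$, using the fact that a pair of nearby points is, to first order in their separation, described by a single point together with a tangent vector. Two features of the tangent equation $\dot\tau_t = \tau_t\cdot\nabla u_t(x_t)$ are essential. First, it is linear in $\tau$, so its flow is $1$-homogeneous in the fibre variable; consequently, if tangent vectors to $M^T$ are measured with the fibre-homogeneous norm $|v_x| + |v_\tau|/|\tau|$ rather than the flat norm $|v_x|+|v_\tau|$, the scaling symmetry $\tau \mapsto \lambda\tau$ conjugates the dynamics and turns the merely locally-bounded constant of Proposition~\ref{prop:unit-time-smoothing} into one that is uniform over all $\tau \neq 0$ --- this is the ``automatically correct scaling'' alluded to in Subsection~\ref{ss:two-pt-by-tangent-overview}. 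Second, the diagonal $\{x=y\}\subset M^2$ corresponds to $\tau = 0$, and writing $y_0 = x_0 + \tau_0$ with $|\tau_0| = |x_0-y_0|$ small, the two-point trajectory $(x_t, y_t)$ and its linearisation are close to the tangent trajectory $(x_t, \tau_t)$ and its linearisation, with errors that are higher order in $|\tau_0|$; this closeness is the content of Subsection~\ref{ssa:two-point-by-tangent}.

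I would first dispose of the region away from the diagonal. On $\{(u,x,y) : |x-y| \geq \delta\}$ the manifold factor lies in the compact set $\xspace\times\xspace \setminus \{|x-y| < \delta\}$, so Proposition~\ref{prop:unit-time-smoothing} for the two-point process directly yields the desired bound with a constant depending only on $\delta$ (and $\eta,\gamma$), since there $\|\cdot\|_{H^5,2}$ and the standard product metric on $\uspace\times\R^4$ are comparable with constants depending on $\delta$. Fix such a small $\delta>0$ once and for all and work henceforth in the regime $0 < |x-y| < \delta$, introducing coordinates $(x,\tau)$ with $\tau := y-x$. Under this change of variables $\|v\|_{H^5,2}$ is, up to universal constants, the pullback of $\|v_u\|_{H^5} + |v_x| + |v_\tau|/|\tau|$, so the claim is exactly the tangent-type smoothing bound for the two-point process written in $(x,\tau)$ coordinates with the fibre-homogeneous norm.

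For the core of the argument I would rerun the approximate integration-by-parts proof of Theorem~\ref{thm:smoothing} --- that is, the approximate Malliavin control scheme of~\cite[Section~4]{hairer_ergodicity_2006} --- for the two-point process, but \emph{borrowing} the Malliavin control $v^\beta$ constructed for the tangent process started from $(u_0, x_0, \tau_0)$ rather than building one for the two-point process directly. The proof of Theorem~\ref{thm:smoothing} reduces the smoothing estimate to two ingredients: a bound on the cost of the control (a Skorokhod integral) and a bound on the residual error $\rho^\beta$. With the tangent control in hand, the cost is controlled by the Malliavin non-degeneracy for the tangent process, uniformly in $\tau_0$ by the scaling symmetry; the residual error is the one for the tangent process plus an additional contribution coming from the mismatch between the tangent and two-point dynamics and between their Jacobians. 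The $C^1$-closeness from Subsection~\ref{ssa:two-point-by-tangent} bounds this additional contribution by a factor that is $O(|\tau_0|^{\alpha})$, hence $O(\delta^{\alpha})$ (where $\alpha$ is the H\"older exponent of Corollary~\ref{cor:C-alpha-moments}), relative to the terms already present, so it is harmlessly absorbed. Feeding the scale-uniform tangent smoothing estimate through this computation produces the estimate of the lemma in $(x,\tau)$ coordinates; combined with the away-from-diagonal case, this completes the proof.

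The main obstacle is the $C^1$-closeness used in the previous step: one must show not merely that $|y_t - (x_t+\tau_t)| = O(|\tau_0|^{1+\alpha})$ uniformly --- a Gr\"onwall estimate powered by the exponential moments of $\|\nabla u\|_{C^\alpha}$ from Corollary~\ref{cor:C-alpha-moments}, of the same flavour as Step~3 of Proposition~\ref{prop:two-point-drift-function} --- but also that the derivative of the two-point flow with respect to its initial data, viewed as a map between the appropriately weighted tangent spaces, converges to the derivative of the tangent flow. This requires differentiating the error ODE and controlling an extra commutator-type term, again by Gr\"onwall together with the $C^\alpha$ exponential moment bounds, and it is precisely this step that pins down the sharp power $1$ of $|x-y|$ in the weight in $\|\cdot\|_{H^5,2}$ and explains why a zeroth-order approximation of the two-point process by a single point would not suffice.
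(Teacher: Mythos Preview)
Your plan is essentially the paper's own argument: split into the regions $|x-y|\geq r_0$ and $|x-y|<r_0$, handle the first with Proposition~\ref{prop:unit-time-smoothing} for the two-point process, and on the second rerun the approximate integration-by-parts using the Malliavin control $v^\beta$ built from the \emph{tangent} process, appealing to the scaling/homogeneity of the tangent dynamics for uniformity and to the $C^1$-closeness of Subsection~\ref{ssa:two-point-by-tangent} to absorb the mismatch. The paper packages the change of variables you describe via the explicit linear map $B_{x_0,y_0}$ of Definition~\ref{defn:B-xy-def} and normalises $|\tau_0|=1$, but this is just another way of exploiting the same scaling symmetry you invoke.

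One point you glossed over does need an extra ingredient. In the away-from-diagonal case you say the two norms are comparable with constants depending on $\delta$, but the right-hand side of the inequality involves $\|\nabla\varphi\|_{H^5,2}$ at the time-$1$ configuration $(u_1,x_1,y_1)$, whose weight is $|x_1-y_1|^{-1}$, not $|x_0-y_0|^{-1}$. Knowing $|x_0-y_0|\geq\delta$ gives no deterministic lower bound on $|x_1-y_1|$; you must invoke the moment bound of Lemma~\ref{lem:x-y-dont-separate} and a H\"older step, exactly as the paper does (this is also why the paper pays a small price of $e^{\eta V/2}$ and lands on a fourth-moment of $\|\nabla\varphi\|_{H^5,2}$ in that case). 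A second minor inaccuracy: the cost of the control $\delta(v^\beta)$ is not bounded via Malliavin non-degeneracy but via the moment bounds on $J$, $J^2$ and $A$ (Proposition~\ref{prop:cost-of-control}); non-degeneracy enters only in bounding the residual $\rho^\beta$. Neither of these affects the validity of your overall strategy.
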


\begin{proposition}[A contractive distance]\label{prop:contraction}
  For any $\beta > 0$, let
  \[
    d^\beta((u_1, x_1, y_1), (u_2, x_2, y_2)) := 1 \land \beta^{-1}d_V((u_1, x_1, y_1), (u_2, x_2, y_2)).
  \]
  There is a constant $\beta_0 > 0$ such that, for all $\beta \in (0, \beta_0)$, if $d^\beta((u_1, x_1, y_1), (u_2, x_2, y_2)) < 1$, then
  \[
    d^\beta(P_1^2(u_1, x_1, y_1), P_1^2(u_2, x_2, y_2)) \leq \frac{1}{2} d^\beta((u_1, x_1, y_1), (u_2, x_2, y_2)),
  \]
  where $P_t^2$ denotes the two-point transition kernel.
\end{proposition}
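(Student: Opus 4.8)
The plan is to follow the proof of~\cite[Proposition~5.5]{hairer_asymptotic_2011}, using the two-point smoothing estimate of Lemma~\ref{lem:two-point-smoothing} in place of the gradient estimate there. First I would reduce the Wasserstein contraction to a pointwise gradient bound. Since $d_V$ is a genuine (finite, geodesic) metric on the connected space $\uspace \times M^2$ and $d^\beta = 1 \wedge \beta^{-1}d_V \leq 1$, Kantorovich duality gives $d^\beta(P_1^2 z_1, P_1^2 z_2) = \sup_\varphi \bigl(P_1^2\varphi(z_1) - P_1^2\varphi(z_2)\bigr)$ over $\varphi$ with $\Lip_{d^\beta}(\varphi) \leq 1$, and this last condition is equivalent to $\osc(\varphi) \leq 1$ together with $\Lip_{d_V}(\varphi) \leq \beta^{-1}$. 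Subtracting a constant I may assume $\|\varphi\|_\infty \leq \tfrac12$, and unwinding Definition~\ref{def:d_V} the Lipschitz condition reads $\|\nabla \varphi(u, x, y)\|_{H^5,2} \leq \beta^{-1}\exp(V(u)/2)$ for all $(u,x,y)$, where (after a standard approximation) $\varphi$ may be taken Fr\'echet differentiable. Since $d^\beta(z_1,z_2) < 1$ forces $d^\beta(z_1,z_2) = \beta^{-1}d_V(z_1,z_2)$, it suffices to prove the pointwise bound $\|\nabla P_1^2\varphi(u,x,y)\|_{H^5,2} \leq \tfrac12 \beta^{-1}\exp(V(u)/2)$ for all $(u,x,y)$ and then integrate it along a near-$d_V$-geodesic path.

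For the pointwise bound I would apply Lemma~\ref{lem:two-point-smoothing} twice, to $P_1^2\varphi = P_1(P_1\varphi)$, writing $\psi := P_1\varphi$. Fix $\eta := (1-\delta)/2$, where $\delta \in (0,1)$ is from Corollary~\ref{cor:super-lyapunov}, and let $\gamma \in (0,1)$ and $C_\gamma := C(\eta,\gamma)$ be as in Lemma~\ref{lem:two-point-smoothing}. Using $\|\varphi\|_\infty \leq \tfrac12$ and $\|\nabla\varphi\|_{H^5,2} \leq \beta^{-1}\exp(V/2)$, the first application gives
\[
  \|\nabla\psi(u,x,y)\|_{H^5,2} \leq \tfrac{C_\gamma}{2}\exp(\eta V(u)) + \gamma\beta^{-1}\exp(\eta V(u))\sqrt{P_1\exp(V)(u,x,y)},
\]
and Corollary~\ref{cor:super-lyapunov} (with exponent $1 \in (0,2)$) bounds $\sqrt{P_1\exp(V)} \les \exp(\delta V/2)$. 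Since also $\|\psi\|_\infty \leq \|\varphi\|_\infty \leq \tfrac12$, a second application of Lemma~\ref{lem:two-point-smoothing} to $\psi$, together with Corollary~\ref{cor:super-lyapunov} applied to the resulting $P_1\exp(cV)$ factors, yields a bound of the schematic form
\[
  \|\nabla P_1^2\varphi(u,x,y)\|_{H^5,2} \leq \bigl(C_\gamma(1 + C_*\gamma) + C_*\gamma^2\beta^{-1}\bigr)\exp(V(u)/2),
\]
where $C_*$ depends only on the universal constant of Corollary~\ref{cor:super-lyapunov}. The crucial point is that every exponent of $V$ produced by the iteration --- namely $\eta$, $\eta(1+\delta)$, and $\eta(1+\delta) + \delta^2/2$ --- equals at most $\tfrac12$ for this choice of $\eta$, so that $\exp(cV) \leq \exp(V/2)$ since $V \geq 0$; the super-Lyapunov applications are legitimate because the corresponding exponents $2\eta$ and $2\eta+\delta$ are both $<2$. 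The constants are then chosen in order: $\eta$ is already fixed; next pick $\gamma$ small enough that $C_*\gamma \leq 1$ and $C_*\gamma^2 \leq \tfrac14$, which makes $C_\gamma = C(\eta,\gamma)$ a fixed (now universal) constant; finally set $\beta_0 := (4C_\gamma)^{-1}$, so that for $\beta < \beta_0$ the prefactor above is at most $C_\gamma + \tfrac14\beta^{-1} \leq \tfrac12\beta^{-1}$, giving the desired pointwise estimate.

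To conclude, given $\varphi$ as above and $z_1, z_2$ with $d^\beta(z_1, z_2) < 1$, choose a Lipschitz path $\gamma \colon [0,1] \to \uspace \times M^2$ from $z_1$ to $z_2$ whose $d_V$-length is within $\varepsilon$ of $d_V(z_1, z_2)$; then
\[
  |P_1^2\varphi(z_1) - P_1^2\varphi(z_2)| \leq \int_0^1 \|\nabla P_1^2\varphi(\gamma(t))\|_{H^5,2}\,\|\gamma'(t)\|_{H^5,2}\,\dif t \leq \tfrac12\beta^{-1}\bigl(d_V(z_1,z_2) + \varepsilon\bigr),
\]
and letting $\varepsilon \to 0$ and using $d^\beta(z_1,z_2) = \beta^{-1}d_V(z_1,z_2)$ gives $|P_1^2\varphi(z_1) - P_1^2\varphi(z_2)| \leq \tfrac12 d^\beta(z_1, z_2)$; taking the supremum over admissible $\varphi$ finishes the proof via Kantorovich duality.

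The main obstacle is the constant bookkeeping in the middle paragraph: one must arrange the order of quantifiers so that $\eta$ is chosen first (to force every exponent of $V$ appearing after iterating the smoothing estimate down to $\tfrac12$ --- this is exactly where $\delta < 1$ and $V \geq 0$ are used), then $\gamma$ is taken small to beat the \emph{universal} super-Lyapunov constants sitting in front of the ``bad'' $\beta^{-1}$ term, and only afterwards $\beta$ is taken small relative to the now-frozen constant $C_\gamma$ from Lemma~\ref{lem:two-point-smoothing}. Secondary technical points are the approximation reducing to Fr\'echet-differentiable $\varphi$ (interpreting $\|\nabla\varphi\|_{H^5,2}$ as the local Lipschitz constant throughout otherwise) and the verification that $d_V$ is a genuine finite geodesic metric on $\uspace \times M^2$, so that the duality and path-integration steps are valid.
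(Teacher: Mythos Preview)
Your overall strategy---Kantorovich duality, reduction to a pointwise gradient bound via the path integral in Definition~\ref{def:d_V}, then Lemma~\ref{lem:two-point-smoothing} combined with the super-Lyapunov property, with the constants chosen in the right order---is exactly the paper's approach and is correct.

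There is, however, a notational misreading that makes you work harder than necessary and, strictly speaking, prove the wrong statement. In the proposition, $P_1^2$ is not the square $P_1 \circ P_1$: the superscript $2$ labels the \emph{two-point} process (see the clause ``where $P_t^2$ denotes the two-point transition kernel''), and the subscript $1$ is the time. So $P_1^2\varphi$ is a single application of the time-$1$ two-point semigroup, and your identity $P_1^2\varphi = P_1(P_1\varphi)$ is false under the paper's convention. Consequently your iterated argument actually establishes contraction for the time-$2$ kernel, not the time-$1$ kernel as stated.

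The fix is to simply drop the second iteration: one application of Lemma~\ref{lem:two-point-smoothing} already gives
\[
  \|\nabla P_1\varphi\|_{H^5,2} \le e^{\eta V}\Bigl(C_\gamma\sqrt{P_1|\varphi|^2} + \gamma\sqrt{P_1\|\nabla\varphi\|_{H^5,2}^2}\Bigr)
  \le e^{\eta V}\Bigl(C_\gamma + C\gamma\beta^{-1}e^{\delta V/2}\Bigr),
\]
using $|\varphi|\le 1$, $\|\nabla\varphi\|_{H^5,2}\le \beta^{-1}e^{V/2}$, and Corollary~\ref{cor:super-lyapunov}. Now choose $\gamma$ so that $C\gamma\le \tfrac14$, then $\eta$ so that $\eta+\delta/2\le \tfrac12$, then $\beta$ small enough that $C_\gamma+\tfrac14\beta^{-1}\le\tfrac12\beta^{-1}$. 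This is precisely the paper's proof; your two-step bookkeeping with exponents $\eta$, $\eta(1+\delta)$, $\eta(1+\delta)+\delta^2/2$ is unnecessary.
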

\begin{proof}
  This is a straightforward consequence of Lemma~\ref{lem:two-point-smoothing}; we follow the proof of Proposition 5.5 of~\cite{hairer_asymptotic_2011}. Note that, for two probability measures $\mu_1, \mu_2$ on the two-point process,
  \begin{equation*}
    d^\beta(\mu_1, \mu_2) = \sup_{\|\varphi\|_{\Lip_{d^\beta}} \leq 1} \left|\int\varphi \, \mathrm{d}\mu_1 - \int \varphi \, \mathrm{d}\mu_2\right|,
  \end{equation*}
  where
  \begin{equation*}
    \|\varphi\|_{\Lip_{d^\beta}} := \sup_{z_1, z_2 \in \uspace \times M^2} \frac{|\varphi(z_2)-\varphi(z_1)|}{d^\beta(z_1, z_2)} \lor \|\varphi\|_{L^\infty}.
  \end{equation*}
  Fix $(u_1, x_1, y_1), (u_2, x_2, y_2) \in \uspace \times M^2$ with $d^\beta((u_1, x_1, y_1), (u_2, x_2, y_2)) < 1$. Fix an arbitrary Lipschitz $\gamma \colon [0, 1] \to \uspace \times M^2$ with $(\gamma(0), \gamma(1)) = ((u_1, x_1, y_1), (u_2, x_2, y_2))$.
  For each $\varphi \colon \uspace \times M^2 \to [-1, 1]$ with $\|\varphi\|_{\Lip_{d^\beta}}\leq 1$, we use Lemma~\ref{lem:two-point-smoothing} and Corollary~\ref{cor:super-lyapunov} to compute
  \begin{align*}
    &|P_1\varphi(u_1, x_1, y_1) - P_1\varphi(u_2, x_2, y_2)| \\
                       &\qquad \leq \int_0^1 \left|\gamma'(t) \cdot \nabla P_1\varphi(\gamma(t), x_1, y_1)\right| \; \mathrm{d}t\\
                       &\qquad \leq \int_0^1 \|\gamma'(t)\|_{H^5,2}\Big| e^{\eta V(\gamma(t))}\Big(C_\zeta\sqrt{P_1|\varphi|^2(\gamma(t), x_1, y_1)} + \zeta \sqrt{P_1\|\nabla \varphi\|_{H^5,2}^2(\gamma(t), x_1, y_1)}\Big)\Big| \; \mathrm{d}t\\
                       &\qquad \leq \int_0^1 \|\gamma'(t)\|_{H^5,2} \left| e^{\eta V(\gamma(t))}\Big(C_\zeta + \zeta \beta^{-1}C\exp(\delta V(\gamma(t))/2)\Big)\right| \; \mathrm{d}t\\
                       &\qquad \leq \int_0^1 \|\gamma'(t)\|_{H^5,2} \left| e^{\eta V(\gamma(t))}\Big(C + \frac{1}{4} \beta^{-1}\exp(\delta V(\gamma(t))/2)\Big)\right| \; \mathrm{d}t
  \end{align*}
  for some constant $\delta < 1$, where we use the super-Lyapunov property, noting that $\|\nabla \varphi\|_{H^5,2}(u, x, y) \leq \exp(V(u)/2)$ by the assumption that $\|\varphi\|_{\Lip_{d^\beta}} \leq 1$ and we choose $\zeta \leq \frac{1}{4C}$. Choosing $\eta$ small enough so that $\eta + \delta/2 \leq \frac{1}{2}$ and $\beta$ small enough so that $C + \frac{1}{4}\beta^{-1} \leq \frac{1}{2}\beta^{-1}$, we take a supremum over $\varphi$ and an infimum over $\gamma$ to conclude.
\end{proof}

\begin{definition}[$d$-small sets]
  Given a distance $d \colon {(\uspace \times M^2)}^2 \to [0, 1]$, a subset $S \subseteq \uspace \times M^2$ is called $d$-small if there are $\varepsilon > 0$ and $t \in \N$ such that $d(P_t(z_1), P_t(z_2)) \leq 1-\varepsilon$ for every $z_1, z_2 \in S$.
\end{definition}

\begin{proposition}[Small sublevel sets of $W$]\label{prop:small-set}
  For any $K, \beta > 0$, the sublevel set
  \[
    S_K := \{(u, x, y) \in \uspace \times M^2 \mid W(u, x, y) \leq K\}
  \]
  is $d^\beta$-small. Here, $W$ is the Lyapunov function for the two-point process defined in Proposition~\ref{prop:two-point-drift-function} and $d^\beta$ is the distance-like function defined in Proposition~\ref{prop:contraction}.
\end{proposition}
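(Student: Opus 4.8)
The plan is to use the drift condition for $W$ together with its explicit form to funnel the chain, in a number of steps bounded uniformly over $S_K$, into a region where the two points stay separated and the velocity is controlled, and then to conclude by approximate controllability of the two-point process. First I would localize. Write $f(u,x,y) := |x-y|^{-1}\exp(V(u))$, so that $W(z_0) = \min_{1 \le n \le T}\alpha^{-n}\E[f(u_n,x_n,y_n)^p]$ for the chain started at $z_0$, as in Proposition~\ref{prop:two-point-drift-function}. Fix $z_0 = (u_0,x_0,y_0) \in S_K$. By definition of $W$ there is $n_0 = n_0(z_0) \in \{1,\dots,T\}$ with $\E[f(u_{n_0},x_{n_0},y_{n_0})^p] \le \alpha^{n_0}K \le K$. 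Since $V \ge 0$ and $\T^2$ has bounded diameter, $1 \le \exp(pV)$ and $|x-y|^{-p} \ge c_0 > 0$, so Markov's inequality gives that with probability at least $\tfrac34$ one has $|x_{n_0}-y_{n_0}|^{-p}\exp(pV(u_{n_0})) \le 4K$, hence simultaneously $|x_{n_0}-y_{n_0}| \ge \delta_K := (4K)^{-1/p}$ and $V(u_{n_0}) \le M_K := p^{-1}\log(4K/c_0)$. Thus, uniformly over $z_0 \in S_K$, at the bounded time $n_0(z_0) \le T$ the chain lies with probability at least $\tfrac34$ in the fixed set $\mathcal G_K := \{(u,x,y) \in \uspace \times M^2 : V(u) \le M_K,\ |x-y| \ge \delta_K\}$.

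Next, fix a reference point $\bar z = (\bar u, \bar x, \bar y) \in \uspace \times M^2$ with $\bar x \ne \bar y$, and observe that near $\bar z$ the distance $d_V$ of Definition~\ref{def:d_V} is comparable to the ambient $H^5 \times \R^4$ distance (the weight $e^{V/2}$ and the factor $|x-y|^{-1}$ in $\|\cdot\|_{H^5,2}$ are both bounded there), so the $d_V$-ball $B := B_{d_V}(\bar z, \beta/4)$ is open and has $d^\beta$-diameter at most $\tfrac12$. I would then invoke approximate controllability of the two-point process: since two trajectories of the same velocity field never collide, the only content is steering the positions $x_1,y_1$ close to $(\bar x, \bar y)$ while leaving $u_1$ near $\bar u$, which follows from the controllability argument used for the one-point and projective processes (cf.\ Lemma~\ref{lem:proj-control}) together with the Stroock--Varadhan support theorem. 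This yields, for every sufficiently large $m \in \N$, a constant $\varepsilon_1 = \varepsilon_1(K,m) > 0$ with $P_m(z, B) \ge \varepsilon_1$ for all $z \in \mathcal G_K$. Uniformity over the bounded but non-compact set $\mathcal G_K$ is either built into the controllability estimate for $2$D Navier--Stokes, or obtained by first running one unit of time and applying the regularization estimate~\eqref{eq:Hn-regularization} with a large Sobolev index, together with Corollary~\ref{cor:C-alpha-moments} to keep $|x-y|$ bounded below, which lands the chain in a genuinely compact subset of $\uspace \times M^2$.

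Combining the two steps via the Markov property, for $z_0 \in S_K$ and $t_\star := T + m$ one gets $P_{t_\star}(z_0, B) \ge \tfrac34\varepsilon_1 =: \varepsilon$ by running for $t_\star - n_0(z_0) \ge m$ steps after reaching $\mathcal G_K$; if one prefers not to fix the total time this way, the residual dependence of the waiting time on $z_0$ is absorbed using the $d^\beta$-contraction of Proposition~\ref{prop:contraction}. Then for any $z_1, z_2 \in S_K$, coupling $P_{t_\star}(z_1,\cdot)$ and $P_{t_\star}(z_2,\cdot)$ so that both components land in $B$ with probability at least $\varepsilon$ (and are then at $d^\beta$-distance at most $\tfrac12$), while on the complement the $d^\beta$-distance is at most $1$, yields
\[
  d^\beta\big(P_{t_\star}(z_1,\cdot), P_{t_\star}(z_2,\cdot)\big) \le \tfrac12\varepsilon + (1-\varepsilon) = 1 - \tfrac{\varepsilon}{2},
\]
which is precisely the assertion that $S_K$ is $d^\beta$-small.

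The main obstacle is that $S_K$ is genuinely non-compact: its velocity component is unbounded --- for instance, data with large high-frequency content lies in $S_K$, since $W$ controls $|x-y|^{-1}e^{V}$ only in expectation and at a bounded time, and such data regularizes immediately. What makes the argument work is exactly this feature of $W$: the drift inequality together with the explicit form of $W$ localizes the chain to a set bounded in $V$, and parabolic smoothing then upgrades this to compactness, on which controllability is uniform. The only remaining delicate point is the bookkeeping of the $z_0$-dependent waiting time $n_0(z_0) \le T$, which is handled either by fixing the target time in the controllability step or by Proposition~\ref{prop:contraction}.
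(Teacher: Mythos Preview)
Your approach is essentially the same as the paper's: use the explicit form of $W$ to land, at a bounded time and with positive probability uniform over $S_K$, in a set where $V$ is bounded and $|x-y|$ is bounded below, then invoke approximate controllability of the two-point process to steer both trajectories near a fixed reference point, and couple. The paper cites its dedicated two-point controllability result Lemma~\ref{lem:2pt-control}, which already takes as input initial data with $V(u),|x-y|^{-1}\le C$ and builds in the uniformity you spend a paragraph arranging; you instead point to the projective Lemma~\ref{lem:proj-control} and sketch the uniformity separately via regularization, which works but is the longer route. Your remark that the residual $z_0$-dependence of $n_0$ can be absorbed by Proposition~\ref{prop:contraction} is not quite right on its own (that proposition only contracts when $d^\beta<1$), but your primary fix---taking the controllability to hold for all sufficiently large times and setting $t_\star=T+m$---is correct and is how the paper (implicitly) handles it.
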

\begin{proof}
  First, by our construction, observe that $W(u_0, x_0, y_0) \leq K$ implies $V(u_T), |x_T-y_T|^{-1} \leq K_V$ with probability at least $\frac{1}{2}$ for some constant $K_V > 0$. We therefore work in this event and assume that we are given $(u_0, x_0, y_0), (u'_0, x'_0, y'_0) \in \uspace \times M^2$ which satisfy
  \begin{equation}\label{eq:u-x-y-bounds}
    V(u_0), |x_0-y_0|^{-1}, V(u'_0), |x'_0-y'_0|^{-1} \leq K_V
  \end{equation}
  without loss of generality.

  By Lemma~\ref{lem:2pt-control}, we observe that there is $\tilde{y} \in \xspace \setminus \{0\}$ such that for every $\varepsilon > 0$ there is a time $T > 0$ and $\delta > 0$ such that
  \[
    \P[\|u_T\|_{\uspace}, |x_T|, |y_T-\tilde{y}| \leq \varepsilon] > \delta,
  \]
  and the same holds for initial data $(u'_0, x'_0, y'_0)$. Therefore, there is a coupling where
  \[
    \P[\|u_T-u'_T\|_{\uspace}, |x_T-x'_T|, |y_T-y'_T| \leq 2\varepsilon] > \delta.
  \]
  Choosing $\varepsilon > 0$ sufficiently small relative to $\beta$ (possibly making $\delta$ smaller) shows that
  \[
    \P[d^\beta((u_T, x_T, y_T), (u'_T, x'_T, y'_T)) < 1-\delta] > \delta.
  \]
  We conclude from the previous display that $d^\beta(P_T(u_0, x_0, y_0), P_T(u'_0, x'_0, y'_0)) < 1 - \delta^2$ as desired.
\end{proof}

\begin{theorem}[Exponential mixing for the two-point process]\label{thm:2pt-mixing}
  There are $\alpha \in (0, 1)$ and $\beta > 0$ and $t_0 \geq 1$ such that for any two probability measures $\mu, \nu$ on the two-point process, we have the contraction estimate $d(P_{t_0}\mu, P_{t_0}\nu) \leq \alpha d(\mu, \nu)$, where
  \[
    d((u_1, x_1, y_1), (u_2, x_2, y_2)) := \sqrt{d^\beta((u_1, x_1, y_1), (u_2, x_2, y_2))(1 + W(u_1, x_1, y_1) + W(u_2, x_2, y_2))},
  \]
  where $d^\beta$ is the distance-like function in Proposition~\ref{prop:contraction} and $W$ is the Lyapunov function for the two-point process from Proposition~\ref{prop:two-point-drift-function}.

  In particular, $\mu^b \times \Leb$ is the unique invariant measure of the two-point process, where $\mu^b$ is the invariant measure for the base process and $\Leb$ is the Lebesgue measure on $M^2$, and we have the exponential mixing rate $d(P_{nt_0} \nu, \mu^b \times \Leb) \leq \alpha^{n}d(\nu, \mu^b \times \Leb)$ for all probability measures $\nu$ and $n \in \N$.

  We also note that inequalities~\eqref{eq:Lipschitz-domination} and~\eqref{eq:d-L2} hold, and therefore our choice of $d$ satisfies the hypotheses of Proposition~\ref{prop:2pt-implies-scalar}.
\end{theorem}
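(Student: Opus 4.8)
The substance of this theorem already sits in the three preceding propositions; the plan is to feed them into the weak Harris framework of Hairer, Mattingly, and Scheutzow~\cite{hairer_asymptotic_2011} and then to identify the invariant measure. I would first fix $\beta \in (0, \beta_0)$ with $\beta_0$ as in Proposition~\ref{prop:contraction} (and $\beta < 1$, so that the integrability estimates below go through), and pass to the time-$t_0$ skeleton chain, where $t_0 \in \N$ is a common multiple of the finitely many times appearing in Propositions~\ref{prop:contraction} and~\ref{prop:small-set}. For this chain all three hypotheses of the weak Harris theorem hold: the drift condition~\eqref{eq:W-drift-condition} of Proposition~\ref{prop:two-point-drift-function} propagates from $P_1$ to $P_{t_0}$ by iteration (with $\alpha$ replaced by $\alpha^{t_0}$ and $C$ enlarged); the sublevel sets of $W$ are $d^\beta$-small by Proposition~\ref{prop:small-set}; and $d^\beta$ is $P_{t_0}$-contracting by Proposition~\ref{prop:contraction}. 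I would also record the routine facts needed to invoke~\cite{hairer_asymptotic_2011}: that $d^\beta = 1 \wedge \beta^{-1} d_V$ is a lower semicontinuous distance-like function separating points (which follows from the lower bound on $d_V$ established in the last step below), and that $W$ is integrable against the relevant measures (from~\eqref{eq:W-bound} and Corollary~\ref{cor:super-lyapunov}). The abstract theorem of~\cite{hairer_asymptotic_2011} then produces a unique invariant probability measure $\mu_\star$ for the two-point process, together with the contraction $d(P_{t_0}\mu, P_{t_0}\nu) \le \alpha\, d(\mu, \nu)$ in exactly the weighted Wasserstein distance $d$ of the statement; iterating this contraction yields $d(P_{nt_0}\nu, \mu_\star) \le \alpha^n d(\nu, \mu_\star)$.

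Second, I would identify $\mu_\star$ with $\mu^b \times \Leb$. Conditionally on a realization of the base process, the map $(x,y) \mapsto (x_{t_0}, y_{t_0})$ on $M^2$ is the time-$t_0$ flow of the velocity field $u_t$, which is a volume-preserving diffeomorphism of $\xspace$ (since $u_t$ is divergence free) that fixes the diagonal, hence preserves normalized Lebesgue measure on $M^2$. Therefore, for bounded $\varphi \colon \uspace \to \R$ and bounded $g \colon M^2 \to \R$,
\[
  \int P_{t_0}(\varphi \otimes g)\, \dif(\mu^b \times \Leb) = \Big(\int g \,\dif \Leb\Big) \int P_{t_0}\varphi \,\dif \mu^b = \Big(\int g \,\dif \Leb\Big)\Big(\int \varphi \,\dif \mu^b\Big),
\]
where the first equality uses volume preservation in the $M^2$ variables together with Fubini, and the second uses invariance of $\mu^b$ under the base process (Corollary~\ref{cor:projective-ergodicity}); here $P_{t_0}\varphi$ denotes the action of the base transition kernel. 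Thus $\mu^b \times \Leb$ is invariant for the two-point process, and by the uniqueness just obtained, $\mu_\star = \mu^b \times \Leb$.

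Third, I would verify that $d$ satisfies~\eqref{eq:Lipschitz-domination} and~\eqref{eq:d-L2}, so that Proposition~\ref{prop:2pt-implies-scalar} applies. For~\eqref{eq:Lipschitz-domination}: since $1 + W + W \ge 1$ and $d^\beta \le 1$ we have $d \ge \sqrt{d^\beta} \ge d^\beta$; and unwinding the definition of $d_V$, using $\exp(V/2) \ge 1$, the pointwise bound $\|v\|_{H^5,2} \ge c(|v_x| + |v_y|)$ for a fixed $c > 0$ (which comes from $|v_x + v_y| + |v_x - v_y| \ge |v_x| + |v_y|$ and the boundedness of $|x-y|$ on $M^2$), and $\int_0^1 |\gamma_x'(t)| \,\dif t \ge |x_1 - x_2|$ along any Lipschitz path, one finds that $d_V$---hence $d^\beta$, hence $d$---dominates a fixed multiple of the Euclidean distance on the $M^2$ coordinates; the multiplicative constant is immaterial because~\eqref{eq:Lipschitz-domination} enters the proof of Proposition~\ref{prop:2pt-implies-scalar} only through an estimate of the form $\Lip_d(\cdot) \lesssim \Lip(\cdot)$. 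For~\eqref{eq:d-L2}: because $d^\beta \le 1$, Jensen's inequality gives $d(\delta_{(u,x,y)}, \mu^b \times \Leb)^2 \le \int\big(1 + W(u,x,y) + W\big)\,\dif(\mu^b \times \Leb)$, so after integrating in $(x,y) \in \xspace \times \xspace$ it suffices to bound $\int_{\xspace \times \xspace} W(u,x,y)\,\dif x\,\dif y$ and $\int W \,\dif(\mu^b \times \Leb)$. By~\eqref{eq:W-bound} the first is $\lesssim e^{\beta V(u)} \int_{\xspace \times \xspace} |x-y|^{-\beta}\,\dif x\,\dif y \lesssim e^{V(u)}$, the singular integral converging because $\beta < 2$; and the second is finite by~\eqref{eq:W-bound} and the finiteness of $\int e^{\beta V}\,\dif \mu^b$, which is the standard consequence of the super-Lyapunov bound of Corollary~\ref{cor:super-lyapunov}.

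The only parts requiring genuine care are (a) matching the precise hypotheses of~\cite{hairer_asymptotic_2011}---in particular choosing a single pair $(\beta, t_0)$ for which the drift, small-set, and contraction statements all hold simultaneously, and checking that $d^\beta$ has the regularity and separation properties their theorem demands---and (b) the short density argument identifying $\mu_\star$ with $\mu^b \times \Leb$. I do not expect either to be a real obstacle: the analytic content of the two-point mixing result lives entirely in Propositions~\ref{prop:two-point-drift-function}, \ref{prop:small-set}, and~\ref{prop:contraction}.
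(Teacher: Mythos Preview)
Your proposal is correct and follows the same overall approach as the paper---feeding Propositions~\ref{prop:two-point-drift-function}, \ref{prop:contraction}, and~\ref{prop:small-set} into~\cite[Theorem~4.8]{hairer_asymptotic_2011}. The one noteworthy difference is your verification of~\eqref{eq:d-L2}: you bound $d^2 \le 1 + W_1 + W_2$ directly via $d^\beta \le 1$ and Jensen, and then integrate $W$ using~\eqref{eq:W-bound}, whereas the paper runs a triangle-inequality chain through the intermediate points $\delta_{(u,0,e_1)}$ and $\mu^b \times \delta_{(0,e_1)}$ to obtain the sharper pointwise estimate $d(\delta_{(u,x,y)}, \mu^b \times \Leb) \le C\exp(V(u))\log|x-y|^{-1}$ before integrating in $(x,y)$. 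Your route is more economical and already suffices for Proposition~\ref{prop:2pt-implies-scalar}; the paper's argument gives extra pointwise information that is not used elsewhere. You also spell out the volume-preservation argument that identifies the invariant measure with $\mu^b \times \Leb$, which the paper simply asserts.
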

\begin{proof}
  This is a direct application of~\cite[Theorem~4.8]{hairer_asymptotic_2011}; Proposition~\ref{prop:two-point-drift-function}, Proposition~\ref{prop:contraction}, and Proposition~\ref{prop:small-set} prove that the hypotheses of their result hold.

  We now prove the bounds on $d$: inequality~\eqref{eq:Lipschitz-domination} follows immediately from the fact that $\|(u, x, y)\|_{H^5,2} \geq |(x, y)|$. For inequality~\eqref{eq:d-L2}, we note first that \[ d(\delta_{u, 0, e_1}, \delta_{u, x, y}) \leq C\exp(V(u))\log|x-y|^{-1}. \] Indeed, this follows from the fact that $d((u, x, y), (u, x', y')) \leq C\exp(V(u))$ for any $x', y' \in \xspace$ with $\frac{|x-y|}{|x'-y'|} \leq 2$. A similar argument (along with the fact that $\E_{\mu^b}[\exp(V(u))] \leq C$) shows that $d(\mu^b \times \delta_{0, e_1}, \mu^b \times \Leb) \leq C$. Finally, the same exponential moment bound on $V(u)$ shows that $d(\mu^b \times \delta_{0, e_1}, \delta_{u, 0, e_1}) \leq C\exp(V(u))$. By the triangle inequality, we thus obtain
  \begin{equation*}
    d(\delta_{u, x, y}, \mu^b \times \Leb) \leq C\exp(V(u))\log|x-y|^{-1}.
  \end{equation*}
  Integrating over $(x, y) \in M^2$ completes the proof.
\end{proof}
\begin{proof}[Proof of Theorem~\ref{thm:main-scalar-mixing}]
  A direct consequence of Proposition~\ref{prop:2pt-implies-scalar} and Theorem~\ref{thm:2pt-mixing}, together with interpolating for any $p>1$, 
  \[\|\varphi_t\|_{H^{-1}} \leq \|\varphi_t\|_{H^{-1/p}} \leq \|\varphi_t\|_{H^{-1}}^{1/p} \|\varphi_t\|_{L^2}^{1-1/p} = \|\varphi_t\|_{H^{-1}}^{1/p} \|\varphi_0\|_{L^2}^{1-1/p}.\qedhere\] 
\end{proof}

\section{Malliavin calculus and a smoothing estimate}\label{sec:malliavin}

We recommend that the reader of this section be familiar with the hypoellipticity theory of~\cite{hairer_ergodicity_2006,hairer_theory_2011} prior to reading this section. The goal of this section is to adapt the techniques of~\cite{hairer_theory_2011} to our specific setting. Throughout this section, we consider an arbitrary manifold $M$ with vector field map $\Theta$ as in Definition~\ref{defn:arbitrary-process}. We recall that $\Theta$ and $\omega_t$ induces a coupled process $(\omega_t,p_t)$ where $p_t \in M$. A central idea introduced in~\cite{hairer_ergodicity_2006} is to work differentially. As such, we will be primarily concerned with the linearization of the $(\omega_t, p_t)$ process.

\begin{definition}
  \label{defn:linearization}
  We denote the derivative of the process $(\omega_t,p_t)$---viewed as a (random) function of its previous value $(\omega_s, p_s)$---by the (random) linear operator $J_{s,t} \colon H^n \times T_{p_s} M \to H^n \times T_{p_t} M$. We note that $J_{s,t}$ solves an equation in $t$, that is
  \[\begin{cases}
    \frac{\dif}{\dif t} J_{s,t} = L_t J_{s,t}\\
    J_{s,s} = \id,
  \end{cases}\]
where $L_t := L(\omega_t, p_t) \colon H^6(\T^2) \times T_{p_t} M \to H^4(\T^2) \times T_{p_t} M$ is given by
\[L_t \begin{pmatrix} \varphi \\ q \end{pmatrix} := \begin{pmatrix}
  \nu\Delta \varphi_t - \nabla^\perp \Delta^{-1} \varphi \cdot \nabla \omega_t - \nabla^\perp \Delta^{-1} \omega_t \cdot \nabla \varphi\\
  q_t \cdot \nabla \Theta_{\omega_t}(p_t) + \Theta_{\varphi}(p_t)
\end{pmatrix}.\]

We denote the second derivative of the same process $(\omega_t,p_t)$---again viewed a function of $(\omega_s,p_s)$---by the (random) bilinear form $J^2_{s,t} \colon (H^n \times T_{p_s})\otimes (H^n \times T_{p_s}) \to H^n \times T_{p_t}$. The precise value of $n$ for which these are defined is determined by Assumption~\ref{asmp:dynamic-bounds} below.
\end{definition}

We now introduce an arbitrary order version of the super-Lyapunov function $V$ given in Definition~\ref{def:V}. We ultimately will only need to apply the result with $n=4$, in which case $V^n$ is essentially the same as $V$, but we consider arbitrary order at no real increase in complexity and for a slight increase in generality.

\begin{definition}
  Let
  \[V^n(\omega) :=  \sigma_n\Big( \|\omega\|_{L^2}^2 +\|\omega\|_{H^n}^{\frac{2}{n+2}} \Big),\]
  where $\sigma_n>0$ is chosen so that we can apply Proposition~\ref{prop:omega-bounds} to
  \[e^{2\sigma_n \eta \|\omega\|_{L^2}^2} \text{ and } e^{2 \sigma_n \eta \|\omega\|_{H^n}^{\frac{2}{n+2}}}\]
  for all $\eta \leq 1$.
\end{definition}

We note the following simple relation between the $V^n$, saying the higher order versions control the lower order ones.

\begin{lemma}\label{lem:Vn-poincare}
  For any $1 \leq k \leq n$, there exists $C(k,n)$ such that
  \[ V^k \leq C V^n.\]
\end{lemma}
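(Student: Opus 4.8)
The plan is to reduce the claimed inequality to a one-variable Sobolev interpolation estimate combined with Young's inequality. Unwinding the definitions, $V^k(\omega) = \sigma_k\big(\|\omega\|_{L^2}^2 + \|\omega\|_{H^k}^{2/(k+2)}\big)$ and similarly for $V^n$, and the $L^2$ parts obviously satisfy $\sigma_k\|\omega\|_{L^2}^2 \le (\sigma_k/\sigma_n)V^n$. Hence, after absorbing the ratio of the $\sigma$'s into the constant, it suffices to prove
\[
  \|\omega\|_{H^k}^{2/(k+2)} \le C\big(\|\omega\|_{L^2}^2 + \|\omega\|_{H^n}^{2/(n+2)}\big),
\]
and one may assume $k < n$, since the case $k = n$ is trivial.

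The first step is the standard log-convexity of Sobolev norms: writing $\|\omega\|_{H^s}^2 = \sum_\xi (1+|\xi|^2)^s |\hat\omega(\xi)|^2$ and applying H\"older in the frequency sum with conjugate exponents $n/(n-k)$ and $n/k$ gives $\|\omega\|_{H^k} \le \|\omega\|_{L^2}^{1-k/n}\|\omega\|_{H^n}^{k/n}$. Raising this to the power $2/(k+2)$ yields
\[
  \|\omega\|_{H^k}^{2/(k+2)} \le \|\omega\|_{L^2}^{a}\|\omega\|_{H^n}^{b}, \qquad a := \tfrac{2(n-k)}{n(k+2)}, \quad b := \tfrac{2k}{n(k+2)}.
\]
The remaining task is to dominate the product $\|\omega\|_{L^2}^{a}\|\omega\|_{H^n}^{b}$ by $\|\omega\|_{L^2}^2 + \|\omega\|_{H^n}^{2/(n+2)}$.

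The naive choice of conjugate exponents — the ones that would turn $\|\omega\|_{L^2}^a$ into $\|\omega\|_{L^2}^2$ and $\|\omega\|_{H^n}^b$ into $\|\omega\|_{H^n}^{2/(n+2)}$ simultaneously — does not close, because those two exponents satisfy $\tfrac1p+\tfrac1q\le 1$ with equality only when $k=n$. I would get around this by splitting on the size of $\|\omega\|_{H^n}$. If $\|\omega\|_{H^n}\le 1$, then $\|\omega\|_{H^k}\le\|\omega\|_{H^n}\le 1$, and since $\tfrac{2}{k+2}\ge\tfrac{2}{n+2}$ we get $\|\omega\|_{H^k}^{2/(k+2)}\le\|\omega\|_{H^k}^{2/(n+2)}\le\|\omega\|_{H^n}^{2/(n+2)}$ immediately. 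If $\|\omega\|_{H^n}>1$, I apply Young's inequality to $\|\omega\|_{L^2}^{a}\|\omega\|_{H^n}^{b}$ with the conjugate pair $p:=2/a$ and $q:=2/(2-a)$, which is admissible because $0<a<2$ when $k<n$; the first factor becomes exactly $\|\omega\|_{L^2}^2$, and a short computation gives $bq=\tfrac{2k}{nk+n+k}\le\tfrac{2}{n+2}$ — the last inequality again being equivalent to $k\le n$ — so that $\|\omega\|_{H^n}^{bq}\le\|\omega\|_{H^n}^{2/(n+2)}$, using $\|\omega\|_{H^n}>1$. Combining the two regimes and restoring constants gives $V^k\le C(k,n)V^n$.

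The only point that requires any care — and where a hurried argument would go wrong — is the exponent bookkeeping in the regime $\|\omega\|_{H^n}>1$: one must check that the exponent on $\|\omega\|_{H^n}$ produced by Young's inequality does not exceed $2/(n+2)$, which reduces precisely to the hypothesis $k\le n$, and one must exploit $\|\omega\|_{H^n}>1$ (together with the complementary trivial regime $\|\omega\|_{H^n}\le 1$) in order to cash in the slack in that exponent. Everything else is routine; in particular, no stochastic estimates from Proposition~\ref{prop:omega-bounds} or elsewhere are needed, as the statement is a purely deterministic interpolation fact.
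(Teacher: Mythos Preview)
Your proof is correct and follows the same approach as the paper's --- Sobolev interpolation between $L^2$ and $H^n$ followed by Young's inequality. You are in fact more careful than the paper's sketch: the paper writes the exponents as $\tfrac{2}{k+1}$, $\tfrac{2}{n+1}$ (for which the Young conjugate-exponent condition closes exactly), whereas the definition has $\tfrac{2}{k+2}$, $\tfrac{2}{n+2}$, and your case split on $\|\omega\|_{H^n}\lessgtr 1$ cleanly handles the resulting slack to give the stated multiplicative bound rather than merely $V^k\le C(1+V^n)$.
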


\begin{proof}
  Expanding the definition,
  \begin{align*}
    V^k(\omega) = \sigma_k \Big(\|\omega\|_{L^2}^2 + \|\omega\|_{H^k}^{\frac{2}{k+1}}\Big) &\leq C \Big(\|\omega\|_{L^2}^2 +  \|\omega\|_{L^2}^{\frac{2(n-k)}{n(k+1)}} \|\omega\|_{H^n}^{\frac{2k}{n(k+1)}}\Big)
    \\&\leq  C \Big(\|\omega\|_{L^2}^2 +  \|\omega\|_{H^n}^{\frac{2}{n+1}}\Big) \leq C V^n(\omega),
  \end{align*}
  where we interpolate and use Young's inequality.
\end{proof}

We now give the essential bounds on the dynamics we need to prove our hypoellipticity result; compare to~\cite[Assumption D.1]{hairer_theory_2011}. We note that below we use the notation $\|T\|_{H^n \to H^k}$ for the operator norm of $T\colon H^n \times TM \to H^k \times TM$, with the $TM$ component normed by the Riemannian metric.

\begin{assumption}\label{asmp:dynamic-bounds}
  There exists $n \in \N$ such that for all $T,q >0, \eta \in (0,1)$ there is $C(p_0,\eta,q,T)>0$, locally bounded in $p_0 \in M$, such that
  \begin{align}
    \E \sup_{0 \leq t \leq T} \sup_{\varphi \in H^4(\T^2), \|\varphi\|\leq 1} \|\Theta_{\varphi}(p_t)\|^q &\leq  C e^{\eta V^n(\omega_0)}
                                                                      \label{eq:theta-moment-bound-asmp}\\
    \E \sup_{0 \leq s \leq t \leq T} \|J_{s,t}\|_{H^n \to H^n}^q &\leq C e^{\eta V^n(\omega_0)}
                                                      \label{eq:J-moment-bound-asmp}\\
                                                     \E \|J_{T/2,3T/4}\|^q_{L^2 \to H^{n+1}} &\leq C e^{\eta V^n(\omega_0)}\label{eq:J-smoothing-bound-asmp}\\
    \E \sup_{0 \leq s \leq t \leq T} \|J^2_{s,t}\|_{H^n \otimes H^n \to H^n}^q &\leq C e^{\eta V^n(\omega_0)}
                                                              \label{eq:J2-moment-bound-asmp}\\ \E \sup_{T/2 \leq t \leq T} \|L_t\|_{H^{n+2} \to H^n}^q &\leq C e^{\eta V^n(\omega_0)} \label{eq:L-moment-bound-asmp}.
  \end{align}
\end{assumption}

We also need a spanning condition on the vector fields. In particular, we would like that at every point $p \in M$, $\{\Theta_{e_k}(p) : |k| \leq R\}$ spans $T_p M$ for some $R$. What we need is a bit stronger though, since we want a uniform quantitative bound on the spanning: both $\{(1,0),(0,1)\}$ and $\{(1,0), (1,10^{-10})\}$ span $\R^2$ but the former is much less degenerate than the latter. In the compact case, a uniform bound with some constant follows directly from a qualitative bound and continuity. We consider some noncompact manifolds as well (such as $M^2, M^T, M^J$), for which we need the following moment bound. 

\begin{assumption}
  \label{asmp:nondegen}
  There exists $n \in \N$ and $R>0$ such that for all $T,q>0, \eta\in (0,1)$, there exists $C(p_0,\eta,q,T)>0,$ locally bounded in $p_0$, such that
  \[\E \left[\sup_{v \in T_{p_T}M, \|v\|=1} \Big(\max_{|k| \leq R} \big|g\big(\Theta_{e_k}(p_T),v\big)\big|\Big)^{-q} \right] \leq C e^{\eta V^n(\omega_0)}.\]
\end{assumption}

For the case that $M$ is compact, we get a much simpler bound, that 
\[ \sup_{v \in T_{p_T}M, \|v\|=1} \Big(\max_{|k| \leq R} \big|g\big(\Theta_{e_k}(p_T),v\big)\big|\Big)^{-1} \leq C.\]
One should keep this bound in mind as the primary example. The moment bound above is just a mild extension to the case where we do not have an a.s.\ bound.

The straightforward but lengthy computational proof of the following proposition giving that the relevant processes satisfy our assumptions is deferred to the Subsections~\ref{ssa:linearization-bounds} and~\ref{ssa:nondegeneracy}.
\begin{proposition}
  \label{prop:manifold-processes-good}
  The one-point, two-point, projective, tangent, and Jacobian processes each satisfy Assumptions~\ref{asmp:dynamic-bounds} and~\ref{asmp:nondegen}.
\end{proposition}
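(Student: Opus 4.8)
The plan is to verify Assumptions~\ref{asmp:dynamic-bounds} and~\ref{asmp:nondegen} for each of the five processes in turn, leveraging the \textit{a priori} estimates of Proposition~\ref{prop:omega-bounds} and the exponential moment bounds on $C^\alpha$ norms in Corollary~\ref{cor:C-alpha-moments}. The key observation is that for all the processes in Table~\ref{table:special-processes}, the vector field $\Theta_u$ and its derivatives are built out of $u$, $\nabla u$, and $\nabla^2 u$ evaluated along the trajectory, composed with explicit polynomial expressions in the manifold coordinate (which lies in a compact factor $\T^2$ or $S^1$ for the one-point and projective processes, and with controlled growth for the tangent, two-point, and Jacobian processes). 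So each of the five displayed inequalities in Assumption~\ref{asmp:dynamic-bounds} reduces, via Duhamel/Gr\"onwall on the linearized equation, to a bound of the form $\E\exp(K\int_0^T \|\nabla u_s\|_{C^\alpha}\,\dif s) \cdot (\text{polynomial factors in the manifold coordinate})$, which is controlled by Corollary~\ref{cor:C-alpha-moments} and Cauchy--Schwarz against the moment bounds from Proposition~\ref{prop:omega-bounds}. Concretely: for~\eqref{eq:theta-moment-bound-asmp} one uses that $\Theta_\varphi(p)$ is linear in $\varphi$ with coefficients bounded by $\|\varphi\|_{H^4}$ times polynomial factors in $p_t$, and $p_t$ is controlled by Gr\"onwall applied to~\eqref{eq:manifold-ode}; for~\eqref{eq:J-moment-bound-asmp} and~\eqref{eq:J2-moment-bound-asmp} one writes $J_{s,t}$, $J^2_{s,t}$ via the variation-of-constants formula against $L_t$ and the derivative of $L_t$, and applies Gr\"onwall; for~\eqref{eq:J-smoothing-bound-asmp} one uses the parabolic smoothing of the $\omega$-component (the $\nu\Delta$ term, cf.~\eqref{eq:Hn-regularization}) together with the fact that the finite-dimensional $TM$ component is a coupled linear ODE that inherits regularity trivially; and~\eqref{eq:L-moment-bound-asmp} is a direct estimate on the explicit form of $L_t$ using~\eqref{eq:Hn-bound} with $n+1$ in place of $n$ and Morrey.

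For Assumption~\ref{asmp:nondegen} I would separate the compact and noncompact cases. For the one-point process on $M^1=\T^2$ and the projective process on $M^P = \T^2\times S^1$, the spanning condition $\{\Theta_{e_k}(p): |k|\le R\}$ spans $T_pM$ is a purely deterministic, qualitative statement—for $M^1$ it says $\{u\mapsto e_k(x)\nabla^\perp e_k \text{-type vector fields}\}$ span $\R^2$ at each $x$, which follows from Assumption~\ref{asmp:modes} exactly as the spanning/H\"ormander-type condition is verified in~\cite{hairer_ergodicity_2006} (the $\Z$-span of $F$ being $\Z^2$ gives enough Fourier modes to span; for $M^P$ one additionally checks the circle direction, which is where the distinct-frequency condition $|k|\ne|j|$ in Assumption~\ref{asmp:modes} enters). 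Continuity plus compactness then upgrades this to the uniform bound, and the moment bound is trivial. For the noncompact manifolds $M^2$, $M^T$, $M^J$, the vector fields $\Theta_{e_k}$ degenerate as one approaches the "bad" set (the diagonal in $M^2$, $\tau=0$ in $M^T$, $|A|\to\infty$ in $M^J$), so the $\inf$ over unit tangent vectors of $\max_{|k|\le R}|g(\Theta_{e_k}(p_T),v)|$ can be small; the point is to show its reciprocal has all moments, controlled by $e^{\eta V^n(\omega_0)}$. This is done by bounding the relevant Gram determinant from below by a negative power of the "distance to the bad set" (e.g.\ $|x-y|$ for $M^2$, $|\tau|$ for $M^T$), and then showing that this distance-to-bad-set has negative moments—which follows because the bad set is approached only exponentially slowly: $|x_T-y_T|\ge |x_0-y_0|\exp(-\int_0^T\|\nabla u_s\|_{L^\infty}\,\dif s)$ and similarly for the others, so $\E\,|x_T-y_T|^{-q} \le |x_0-y_0|^{-q}\,\E\exp(q\int_0^T\|\nabla u_s\|_{L^\infty}\,\dif s)$, bounded via Corollary~\ref{cor:C-alpha-moments}; the factor $|x_0-y_0|^{-q}$ is absorbed into the "locally bounded in $p_0$" allowance.

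The main obstacle I expect is not any single estimate—each is routine given the machinery—but rather the bookkeeping of the Riemannian metrics on the noncompact manifolds and ensuring the constants are genuinely \emph{locally} bounded in $p_0$ (uniformly on compact subsets of each $M$) rather than merely finite pointwise, so that later applications (in particular Lemma~\ref{lem:two-point-smoothing}, which demands constants \emph{independent} of $(x,y)$) can be fed the correct scaling. For the two-point process this is the subtle point: Assumption~\ref{asmp:dynamic-bounds} as stated only gives constants locally bounded in $p_0\in M^2$, which blows up near the diagonal; the sharp $(x,y)$-uniform estimate of Lemma~\ref{lem:two-point-smoothing} is recovered not from this proposition directly but from the tangent-process approximation of Subsection~\ref{ss:two-pt-by-tangent-overview}, so here I only need the weaker locally-bounded version, and the role of this proposition is simply to certify that the abstract smoothing machinery of Section~\ref{sec:malliavin} applies to each process. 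The verification of~\eqref{eq:J-smoothing-bound-asmp}—the regularizing estimate $J_{T/2,3T/4}\colon L^2\to H^{n+1}$—requires a little care because one must check that coupling to the manifold ODE does not destroy the parabolic gain in the $\omega$-component; this follows by treating the $TM$-component perturbatively (it is forced by the already-$L^2$-bounded $\omega$-component and lives in finite dimensions, so it gains regularity for free) and then closing the estimate on the $\omega$-component exactly as in the uncoupled case.
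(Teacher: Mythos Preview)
Your plan is essentially correct and matches the paper's approach: the verification of Assumption~\ref{asmp:dynamic-bounds} is exactly the sequence of Gr\"onwall/interpolation estimates you describe (the paper writes out the linearization and second-derivative equations explicitly and proves a chain of lemmas bounding $\varphi_t$, $y_t$, $p_t$, $B_t$, $\psi_t$, $z_t$, $q_t$, $C_t$ in turn), and your treatment of the smoothing bound~\eqref{eq:J-smoothing-bound-asmp} and the role of ``locally bounded in $p_0$'' versus the sharp two-point estimate is accurate.

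There is one misconception worth correcting in your discussion of Assumption~\ref{asmp:nondegen}. The spanning condition there involves $\Theta_{e_k}$ for \emph{all} $|k|\le R$, not just the forced modes $k\in F$; consequently Assumption~\ref{asmp:modes} plays no role whatsoever in verifying Assumption~\ref{asmp:nondegen} (in particular the distinct-frequency condition $|k|\ne|j|$ does not enter here---it is used only in the bracket-generation argument, Propositions~\ref{prop:B-kj-rep} and~\ref{prop:A-j-span}, which feed into the Malliavin matrix bound). The paper's verification is a bare-hands computation: for each process one writes down three to six explicit velocity fields in $\linspan\{\nabla^\perp\Delta^{-1}e_k:|k|\le 2\}$ (sines and cosines in coordinate directions and along diagonals) and checks by direct case analysis that the resulting $\Theta$-vectors span $T_pM$ with the claimed quantitative lower bound ($|x-y|$ for $M^2$, $|\tau|^{-1}$ for $M^T$, $|A|^{-1}$ for $M^J$, the last via a change-of-inner-product reduction to $A=\Id$). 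Your Gram-determinant-versus-distance-to-bad-set heuristic captures the shape of the answer, but the actual verification is this explicit linear algebra, not an appeal to~\cite{hairer_ergodicity_2006}.
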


For the remainder of this section, we fix $n$ and suppose that Assumptions~\ref{asmp:dynamic-bounds} and~\ref{asmp:nondegen} hold for this $n$. We then use a bracket to denote the inner product with respect to $H^n \times T_p M$, that is
\[\Big\<\begin{pmatrix}\varphi \\p \end{pmatrix}, \begin{pmatrix} r \\ q\end{pmatrix}\Big\> :=  g(r,q)+\<\varphi,\psi\>_{H^n}  = g(r,q) + \int_{\T^2} \nabla^n \varphi : \nabla^n \psi\,\dif x.\]
All adjoints are taken with respect to this inner product, so if $A \colon H^n \times TM \to H^n \times TM,$ $A^* \colon H^n \times TM \to H^n \times TM$, with
\[\<\mathfrak{q}, A\ap\> = \<A^*\mathfrak{q}, \ap\>,\]
where we will throughout use $\mathfrak{p}$ to denote an element $H^n \times TM$. We also use (unsubscripted) norms to refer to the norm on this space,
\[\|\ap\| := \|\ap\|_{H^n} = \sqrt{\<\ap,\ap\>}.\]

\subsection{Malliavin matrix nondegeneracy}

The brief and clear motivation for considering the following objects is provided in~\cite[Section 4.2]{hairer_ergodicity_2006}.

\begin{definition}
\label{defn:A-def}
  We define for $0 \leq s \leq t,$ the operator $A_{s,t} \colon L^2([s,t], \R^F) \to H^n(\T^2) \times T_{p_t}M$
  \[A_{s,t} v :=  \sum_{k \in F}\int_s^t c_k v^k_r J_{r,t} e_k \,\dif r.\]
  We define for $0 \leq t$ the quadratic form $H^n(\T^2) \times T_{p_t} M \to [0,\infty)$ given by
  \[\<\ap, \M_t \ap\> := \<A^*_{0,t} \ap, A^*_{0,t} \ap\> = \sum_{k \in F} c_k^2 \int_0^t |\<\ap,J_{r,t} e_k\>|^2\,\dif r = \sum_{k \in F} c_k^2 \int_0^t |\<e_k,J_{r,t}^*\ap\>|^2\, \dif r.\]
\end{definition}

Our goal for this subsection is to prove Theorem~\ref{thm:malliavin-nondegenerate} under Assumptions~\ref{asmp:dynamic-bounds} and~\ref{asmp:nondegen}, which essentially says that the Malliavin matrix is quantitatively nondegenerate on finite dimensional subspaces. It is the essential ingredient to the proof of Proposition~\ref{prop:size-of-error}, which itself comprises half of the proof of Theorem~\ref{thm:smoothing}. Comparing Theorem~\ref{thm:malliavin-nondegenerate} with \cite[Theorem 6.7]{hairer_asymptotic_2011}, we note the dramatically worse stochastic integrability, with an arbitrary rate $g(\ep)$ in place of the rate $\ep^q$ for any $q>0$. The proof below makes the origin of this difference clear; for a heuristic explanation, see the discussion of Subsection~\ref{ss:proof-of-smoothing}.

\begin{theorem}
\label{thm:malliavin-nondegenerate}
  Let $\Pi_R \colon H^n(\T^2) \times TM \to H^n(\T^2) \times TM$ be the orthogonal projection such that $\Pi_R|_{TM} = \id$, the identity map, and $\Pi_R|_{H^n}$ is the orthogonal projection onto the span of $(e_k)_{|k| \leq R}$. Then for any $T>0$, any $\eta,\delta\in (0,1)$, and any $p_0\in M$ there exists $g\colon (0,1) \to [0,1]$ such that $\lim_{\ep \to 0} g(\ep) = 0$ locally uniformly in $p_0$, and
  \begin{equation}
    \label{eq:almost-invertible-malliavin}
    \P\Big(\inf_{\substack{\|\ap\| = 1\\ \|\Pi_R \ap\| \geq \delta}} \<\ap,\M_T \ap\> <\ep\Big) \leq g(\ep) e^{\eta V^n(\omega_0)}.
  \end{equation}
\end{theorem}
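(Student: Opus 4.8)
The plan is to follow the scheme of \cite[Section 6]{hairer_theory_2011} as adapted in the discussion of Subsection~\ref{ss:proof-of-smoothing}, combining a Norris-lemma argument in the $\uspace$ directions with the nondegeneracy assumption in the $TM$ directions. The starting point is a contradiction-free reformulation: if $\<\ap,\M_T\ap\> < \ep$ and $\|\ap\|=1$, then by Definition~\ref{defn:A-def} we have $\sum_{k\in F} c_k^2 \int_0^T |\<e_k, J_{r,T}^*\ap\>|^2\,\dif r < \ep$, so each directly forced mode $\<e_k, J_{r,T}^*\ap\>$ is small in $L^2([0,T])$. The quantity $h_t := J_{t,T}^*\ap$ solves a (backward) linear equation driven by $L_t^*$, and by iteratively applying the Norris lemma \cite[Theorem~7.1]{hairer_theory_2011}---using that the white noise only directly hits $F$, the first bracket hits $F^{(1)}$, etc.---one propagates smallness of $\<e_k, h_t\>$ from $k \in F$ to $k$ in the $\Z$-linear span of $F$, which by Assumption~\ref{asmp:modes} is all of $\Z^2$. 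The upshot, exactly as in~\eqref{eq:hypo-overview-1}, is that with probability at least $1 - g_1(\ep)e^{\eta V^n(\omega_0)/2}$ we have $\max_{|k|\le R'} \sup_{t\in[T/2,T]} |\<e_k, J_{t,T}^*\ap\>| \le o_\ep(1)$ for any fixed $R'$; the stochastic integrability of the Norris lemma here is genuinely super-polynomial, so this step is not the bottleneck.

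Next I would upgrade ``finitely many Fourier modes small'' to ``$\|\Pi_{H^n} h_{T/2}\|_{H^{n-2}}$ small'', following Proposition~\ref{prop:H-n-2-malliavin} in the overview: choose $R'$ in the previous step large enough that $\|\Pi_{\ge R'} h_{T/2}\|_{H^{n-2}} \le (R')^{-2}\|h_{T/2}\|_{H^n} \le (R')^{-2}\|J_{T/2,T}^*\|_{H^n\to H^n}\|\ap\|$, and control $\|J_{T/2,T}^*\|$ by~\eqref{eq:J-moment-bound-asmp}; splitting into low and high modes gives $\|\Pi_{H^n} h_{T/2}\|_{H^{n-2}} = o_\ep(1)$ off an event of probability $\le g_2(\ep)e^{\eta V^n(\omega_0)/2}$. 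Then I handle the manifold directions via~\eqref{eq:hypo-overview-2}: the same Norris/bracket machinery applied to the $q$-component of $h_t$ shows that $|\<\Theta_{e_k}(p_T) - \nabla^\perp\Delta^{-1}\omega_T\cdot\nabla e_k - \nabla^\perp\Delta^{-1}e_k\cdot\nabla\omega_T, \ap\>|$ is $o_\ep(1)$; combining with the $H^{n-2}$-smallness of $\Pi_{H^n}\ap$ (note $\ap = h_T$) and pairing against $\nabla^\perp\Delta^{-1}\omega_T\cdot\nabla e_k + \nabla^\perp\Delta^{-1}e_k\cdot\nabla\omega_T \in H^n$, which has $H^{n+2}$ norm controlled by a slow-rate event on $\|\omega_T\|_{H^{n+3}}$ (here is where integrability degrades to a bare $o_\ep(1)$ rather than $\ep^q$), we obtain $\max_{|k|\le R}|\<\Theta_{e_k}(p_T),\ap\>| = o_\ep(1)$ off an event of probability $\le g_3(\ep)e^{\eta V^n(\omega_0)/2}$.

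To conclude, I combine the two pieces against the assumed lower bound $\|\Pi_R\ap\|\ge\delta$. Write $\ap = (\varphi, q)$ with $\|\varphi\|_{H^n}^2 + \|q\|_{g}^2 = 1$. If $\|q\|_g \ge \delta/2$, then Assumption~\ref{asmp:nondegen} gives (off an event of probability $\le g_4(\ep)e^{\eta V^n(\omega_0)/2}$) that $\max_{|k|\le R}|g(\Theta_{e_k}(p_T), q)| \ge c(\ep')$ for a deterministic $c>0$ once we truncate to the event $\{\sup_v(\max_k|g(\Theta_{e_k}(p_T),v)|)^{-1} \le (\text{slow rate})\}$; but $\<\Theta_{e_k}(p_T),\ap\> = g(\Theta_{e_k}(p_T),q) + \<\text{something in }H^n, \varphi\>_{H^n}$, and since $\varphi = \Pi_{H^n}\ap$ is small in $H^{n-2}$ hence (being a fixed finite-mode pairing after the Norris step) the $H^n$-part contributes $o_\ep(1)$, we reach a contradiction with $\max_{|k|\le R}|\<\Theta_{e_k}(p_T),\ap\>| = o_\ep(1)$ for $\ep$ small. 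If instead $\|\Pi_{H^n}\varphi\|_{H^n} \ge \delta/2$, this directly contradicts $\|\Pi_{H^n}\ap\|_{H^{n-2}} \le \|\Pi_R\varphi\|_{H^{n-2}} + \dots$---more carefully, $\Pi_R\ap$ lives on modes $|k|\le R$ where $H^{n-2}$ and $H^n$ norms are comparable up to $R^{2}$, so $\delta/2 \le \|\Pi_R\varphi\|_{H^n} \le R^2\|\Pi_{H^n}\ap\|_{H^{n-2}} = o_\ep(1)$, again a contradiction. Taking $g := g_1 + g_2 + g_3 + g_4$ (each arising from Chebyshev applied to the relevant moment bound in Assumptions~\ref{asmp:dynamic-bounds}--\ref{asmp:nondegen}, which is why the $e^{\eta V^n(\omega_0)}$ prefactor and local uniformity in $p_0$ persist) and choosing $\ep_0$ small enough that all the $o_\ep(1)$ terms are below the thresholds forced by $\delta$, we obtain~\eqref{eq:almost-invertible-malliavin}. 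The main obstacle is bookkeeping the degraded stochastic integrability: every place where \cite{hairer_theory_2011} would get $\ep^q$, the need to control $\|\omega_T\|$ in a high Sobolev norm by an arbitrarily slow rate forces us down to $o_\ep(1)$, and one must check this still suffices because the final comparison against $\delta$ only needs \emph{some} rate going to zero, not a polynomial one.
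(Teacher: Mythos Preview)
Your proposal is correct and follows essentially the same route as the paper: Norris-lemma propagation to control $\langle e_k, J_{t,T}^*\ap\rangle$ for all $|k|\le R'$ (Propositions~\ref{prop:malliavin-induction-base}--\ref{prop:malliavin-inducted}), a soft upgrade to $\|\pi_{H^n}\ap\|_{H^{n-2}}$ small via a diagonal choice of $R'(\ep)$ (Proposition~\ref{prop:H-n-2-malliavin}), isolation of $\langle\Theta_{e_k}(p_T),\ap\rangle$ by subtracting the $H^n$ cross terms and invoking Assumption~\ref{asmp:nondegen} (Proposition~\ref{prop:tangent-space-overlap-malliavin}), and then the final split on whether $\|\pi_{TM}\ap\|\ge\delta/2$ or $\|\Pi_R\pi_{H^n}\ap\|\ge\delta/2$. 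One small cleanup: in your second paragraph you write $\|\Pi_{H^n}h_{T/2}\|_{H^{n-2}}$, but what you actually need (and what the paper uses) is $\|\pi_{H^n}\ap\|_{H^{n-2}}$, i.e.\ $h_T$ rather than $h_{T/2}$, since Proposition~\ref{prop:malliavin-inducted} gives the bound on $\langle e_k, J_{t,T}^*\ap\rangle$ uniformly for $t\in[T/2,T]$ and evaluating at $t=T$ gives $\langle e_k,\ap\rangle$ directly with no need to invert $J_{T/2,T}^*$; also, $\Theta_{e_k}(p_T)$ lies purely in $T_{p_T}M$, so $\langle\Theta_{e_k}(p_T),\ap\rangle = g(\Theta_{e_k}(p_T),q)$ exactly---the ``something in $H^n$'' you mention is the cross term $\nabla^\perp\Delta^{-1}\omega_T\cdot\nabla e_k + \nabla^\perp\Delta^{-1}e_k\cdot\nabla\omega_T$ that must be moved to the other side before invoking the $H^{n-2}$ smallness of $\pi_{H^n}\ap$.
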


This convention is inspired by the similar convention in~\cite[Section 6]{hairer_asymptotic_2011}. 

\begin{definition}
  We say a parameterized family of events $H_\ep(\omega_0,p_0) \subseteq \Omega$ is $V^n$-controlled negligible if for all $q>0, \eta\in (0,1)$, there exists $C(p_0,q,\eta)>0$, locally bounded in $p_0$, such that
  \[\P(H_\ep(\omega_0,p_0)) \leq C \ep^q e^{\eta V^n(\omega_0)}.\]

  We say an implication parameterized by $\ep, \omega_0,p_0$
  \[P_\ep(\omega_0,p_0) \implies Q_\ep(\omega_0,p_0)\]
  holds modulo a $V^n$-controlled negligible family of events if there exists a $V^n$-controlled negligible family of events also parameterized by $\ep, \omega_0,p_0$ outside of which the implication holds.
\end{definition}

The next lemma follows directly from the Chebyshev inequality.

\begin{lemma}
  \label{lem:moment-negligible}
  Suppose some parameterized family of random variables $X_{\ep,\omega_0,p_0}$ has the bound that for all $q>0, \eta \in (0,1)$ there exists $C(p_0,q,\eta)>0$, locally bounded in $p_0,$ such that
  \[\E |X|^q \leq C e^{\eta V^n(\omega_0)}.\]
  Then for any $\gamma>0$, the family of events
  \[\{|X| \geq \ep^{-\gamma}\}\]
  is $V^n$-controlled negligible.
\end{lemma}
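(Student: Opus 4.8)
The plan is to apply Markov's inequality with the moment exponent chosen as a function of the target decay rate $q$. This is exactly the content the lemma advertises (``follows directly from the Chebyshev inequality''), so the proof is short and there is no real obstacle; the only thing to be careful about is threading the local-boundedness-in-$p_0$ of the constants through the argument, which is automatic.

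Concretely: fix $q>0$ and $\eta\in(0,1)$, and set $r := q/\gamma > 0$. By Markov's inequality applied to $|X|^r$,
\[
  \P\big(|X_{\ep,\omega_0,p_0}| \geq \ep^{-\gamma}\big)
  = \P\big(|X_{\ep,\omega_0,p_0}|^r \geq \ep^{-\gamma r}\big)
  \leq \ep^{\gamma r}\,\E\big[|X_{\ep,\omega_0,p_0}|^r\big]
  = \ep^{q}\,\E\big[|X_{\ep,\omega_0,p_0}|^r\big].
\]
Now invoke the hypothesis with the exponent $r$ (and the same $\eta$): there is $C(p_0, r, \eta) = C(p_0, q/\gamma, \eta) > 0$, locally bounded in $p_0$, with $\E|X_{\ep,\omega_0,p_0}|^r \leq C(p_0,q/\gamma,\eta)\, e^{\eta V^n(\omega_0)}$. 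Combining, we get
\[
  \P\big(|X_{\ep,\omega_0,p_0}| \geq \ep^{-\gamma}\big) \leq C(p_0, q/\gamma, \eta)\, \ep^{q}\, e^{\eta V^n(\omega_0)}.
\]
Setting $\tilde C(p_0,q,\eta) := C(p_0, q/\gamma, \eta)$, which is again locally bounded in $p_0$ since $q/\gamma$ is a fixed function of $q$, this is precisely the bound required for $\{|X| \geq \ep^{-\gamma}\}$ to be a $V^n$-controlled negligible family. Since $q>0$ and $\eta\in(0,1)$ were arbitrary, the proof is complete.

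The only ``step'' worth flagging is the choice $r = q/\gamma$, which converts the polynomial gain $\ep^{\gamma r}$ from Markov into the desired $\ep^q$; everything else is bookkeeping about which constants depend on what. No genuine difficulty arises.
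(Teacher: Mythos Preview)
The proposal is correct and is exactly the argument the paper has in mind: the paper simply states that the lemma ``follows directly from the Chebyshev inequality,'' and your choice $r=q/\gamma$ in Markov's inequality is precisely that computation.
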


We will essentially use the following interpolation lemma. The key idea behind the inductive argument below is that we are given an object which is small, and we then show its derivative is small using this lemma and the fact that the derivative is itself not \textit{too} rough.
\begin{lemma}[{\cite[Lemma 6.14]{hairer_theory_2011}}]
  \label{lem:holder-interpolation}
  Let $f \colon [0,T] \to \R$ with $f \in C^{1,\alpha}$. Then
  \[\|f'\|_{C^0} \leq \frac{8}{T}\|f\|_{C^0} + 8\|f\|_{C^0}^{\frac{\alpha}{1+\alpha}} \|f'\|_{C^\alpha}^{\frac{1}{1+\alpha}}.\]
\end{lemma}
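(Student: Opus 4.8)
The plan is to prove this by the standard ``reverse Landau--Kolmogorov'' argument: locate a point where $|f'|$ is maximal, use H\"older continuity of $f'$ to carve out a subinterval of $[0,T]$ on which $f'$ keeps a fixed sign and stays comparable in size to its maximum, integrate $f'$ over that subinterval and compare with the $C^0$ bound on $f$, and finally optimize over the length of the subinterval. (The statement is in any case quoted verbatim from~\cite[Lemma~6.14]{hairer_theory_2011}, so one could simply cite it; the argument below is included for completeness.)

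Concretely, set $A := \|f'\|_{C^0}$ and $B := \|f'\|_{C^\alpha}$; only the one-sided estimate $|f'(t) - f'(t_0)| \le B\,|t-t_0|^\alpha$ is used, so it is irrelevant whether $\|\cdot\|_{C^\alpha}$ denotes the full H\"older norm or the seminorm. If $A = 0$ there is nothing to prove, so assume $A > 0$ and pick $t_0 \in [0,T]$ with $|f'(t_0)| = A$; after replacing $f$ by $-f$ we may assume $f'(t_0) = A$. Choose
\[
  \ell := \min\Big(T,\ \big(\tfrac{A}{2B}\big)^{1/\alpha}\Big),
\]
with the convention $\ell = T$ when $B = 0$. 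Since $t_0 \in [0,T]$ and $\ell \le T$, at least one of the intervals $[t_0,t_0+\ell]$ and $[t_0-\ell,t_0]$ is contained in $[0,T]$; call it $I$, with endpoints $a < b$. For $t \in I$ we have $|t-t_0| \le \ell \le (A/2B)^{1/\alpha}$, hence $|f'(t)-f'(t_0)| \le B\,|t-t_0|^\alpha \le A/2$, so $f'(t) \ge A/2 > 0$ on $I$. Thus $f$ is increasing on $I$ and
\[
  \frac{A\ell}{2} \le \int_I f'(t)\,\dif t = f(b) - f(a) \le 2\|f\|_{C^0},
\]
i.e.\ $A \le 4\|f\|_{C^0}/\ell$.

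It remains to unwind the two cases. If the minimum defining $\ell$ is attained at $T$, this reads $A \le 4\|f\|_{C^0}/T \le 8\|f\|_{C^0}/T$. Otherwise $\ell = (A/2B)^{1/\alpha}$, so $A \le 4\|f\|_{C^0}(2B)^{1/\alpha}A^{-1/\alpha}$, which rearranges to $A^{(1+\alpha)/\alpha} \le 4\cdot 2^{1/\alpha}\,\|f\|_{C^0}\,B^{1/\alpha}$; raising to the power $\alpha/(1+\alpha)$ gives
\[
  A \le \big(4\cdot 2^{1/\alpha}\big)^{\alpha/(1+\alpha)}\|f\|_{C^0}^{\alpha/(1+\alpha)}\,\|f'\|_{C^\alpha}^{1/(1+\alpha)},
\]
and since $\alpha \in (0,1)$ the prefactor equals $2^{(2\alpha+1)/(1+\alpha)} \le 2^{3/2} < 8$. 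Adding the two bounds (only one is active in each regime) yields the asserted inequality with constants $\le 8$. The only points requiring any care are ensuring the working subinterval stays inside $[0,T]$ — which is exactly why the factor $1/T$, rather than $1/\ell$ with an a priori unknown $\ell$, survives in the first term — and the harmless constant bookkeeping in the final step; there is no genuine analytic obstacle.
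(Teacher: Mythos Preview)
The paper does not prove this lemma; it is simply quoted from \cite[Lemma~6.14]{hairer_theory_2011}, so there is no ``paper's proof'' to compare against. Your argument is the standard one and is essentially correct, but there is one slip: the assertion ``since $t_0 \in [0,T]$ and $\ell \le T$, at least one of the intervals $[t_0,t_0+\ell]$ and $[t_0-\ell,t_0]$ is contained in $[0,T]$'' is false whenever $T/2 < \ell \le T$ --- take for instance $t_0 = T/2$ and $\ell = T$, in which case neither interval lies in $[0,T]$.

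The fix is immediate. Since $\max(t_0,\,T-t_0) \ge T/2$, at least one of the two intervals, after intersecting with $[0,T]$, has length at least $\ell' := \min(\ell,\,T/2)$, and on that piece one still has $|t-t_0| \le \ell' \le \ell \le (A/2B)^{1/\alpha}$, so $f' \ge A/2$ there. Integrating gives $A\ell'/2 \le 2\|f\|_{C^0}$. In the case $\ell' = T/2$ this yields $A \le 8\|f\|_{C^0}/T$, which is precisely the stated constant (and explains why the first term carries an $8$ rather than the $4$ your version would give); in the case $\ell' = (A/2B)^{1/\alpha}$ your computation of the second term goes through unchanged. Equivalently, one can just set $\ell := \min\big(T/2,\,(A/2B)^{1/\alpha}\big)$ from the outset.
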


We will also often use the standard fact about solution operator semigroups,
\[\frac{\dif}{\dif s} J^*_{s,T} = L^*_s J^*_{s,T}.\]
The following is the base step in an inductive argument analogous to~\cite[Lemma 6.17]{hairer_asymptotic_2011}.
\begin{proposition}
    \label{prop:malliavin-induction-base}
    The implication
    \[\<\ap,\M_T\ap\> \leq \ep \|\ap\|^2 \implies \max_{k \in F} \sup_{t \in [T/2,T]} |\<e_k,J^*_{t,T} \ap\>| \leq C\ep^{1/8} \|\ap\|,\]
    holds modulo a $V^n$-controlled negligible family of events.
\end{proposition}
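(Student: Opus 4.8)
The plan is to run the standard Malliavin-matrix argument from~\cite[Section 6]{hairer_theory_2011}, adapted to our setting. Start from the hypothesis $\langle \ap, \M_T \ap\rangle \le \ep\|\ap\|^2$. By the definition of $\M_T$ in Definition~\ref{defn:A-def}, this reads $\sum_{k\in F} c_k^2 \int_0^T |\langle e_k, J^*_{r,T}\ap\rangle|^2 \,\dif r \le \ep\|\ap\|^2$, and since the $c_k$ are bounded below, for each fixed $k \in F$ we get $\int_0^T |\langle e_k, J^*_{r,T}\ap\rangle|^2 \,\dif r \le C\ep\|\ap\|^2$, hence $\|g_k\|_{C^0([T/2,T])}^{1/2}$-type control is \emph{not} immediate---we only control the $L^2$-in-time norm, so we need to upgrade to $C^0$ control. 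First I would observe that the function $g_k(t) := \langle e_k, J^*_{t,T}\ap\rangle$ satisfies, by the semigroup identity $\frac{\dif}{\dif s}J^*_{s,T} = L^*_s J^*_{s,T}$, the bound $\|g_k'\|_{C^0([T/2,T])} \le \|L^*_\cdot\|\,\|J^*_{\cdot,T}\ap\|$-type estimate; more precisely $g_k'(t) = \langle L_t e_k, J^*_{t,T}\ap\rangle = \langle e_k, J_{t,T}^* L_t^* \ap\rangle$ is wrong---rather $g_k'(t) = \frac{\dif}{\dif t}\langle e_k, J^*_{t,T}\ap\rangle = \langle e_k, L^*_t J^*_{t,T}\ap\rangle = \langle L_t e_k, J^*_{t,T}\ap\rangle$, which is controlled using the Lipschitz-in-time bounds on $L_t$ (via~\eqref{eq:L-moment-bound-asmp}, applied with $e_k$ having bounded $H^{n+2}$ norm) together with~\eqref{eq:J-moment-bound-asmp} to bound $\|J^*_{t,T}\ap\|$. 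So on an event of the type controlled by Lemma~\ref{lem:moment-negligible}, the derivative $g_k'$ is bounded polynomially in $\ep^{-1}$ for $t \in [T/2,T]$; more carefully one should also bound a H\"older seminorm $\|g_k'\|_{C^\alpha}$ the same way, using boundedness of $L_t$ in the relevant operator norms and Hölder continuity in time coming from the SNS regularity estimates in Proposition~\ref{prop:omega-bounds}.

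The core of the argument is then the interpolation inequality, Lemma~\ref{lem:holder-interpolation}: applied to $g_k$ on the interval $[T/2,T]$ it gives $\|g_k'\|_{C^0} \lesssim \|g_k\|_{C^0} + \|g_k\|_{C^0}^{\alpha/(1+\alpha)}\|g_k'\|_{C^\alpha}^{1/(1+\alpha)}$. But we want to go the other way---we know $\int |g_k|^2$ is $O(\ep)$ and we want $\|g_k\|_{C^0}$ small. The right move: interpolate between the $L^2$-in-time bound $\|g_k\|_{L^2([T/2,T])}^2 \le C\ep\|\ap\|^2$ and a $C^1$-in-time bound $\|g_k\|_{C^1([T/2,T])} \le \ep^{-\gamma}\|\ap\|$ (valid modulo a $V^n$-controlled negligible event, via Lemma~\ref{lem:moment-negligible} applied to the moment bounds on $L_t$ and $J_{s,t}$). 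Since $\|g_k\|_{C^0} \lesssim \|g_k\|_{L^2}^{2/3}\|g_k\|_{C^1}^{1/3} + \|g_k\|_{L^2}$ (the standard Gagliardo--Nirenberg-type interpolation on an interval of fixed length), this yields $\|g_k\|_{C^0([T/2,T])} \lesssim (\ep\|\ap\|^2)^{1/3}(\ep^{-\gamma}\|\ap\|)^{1/3} \cdot (\ldots) = \ep^{(1-\gamma)/3}\|\ap\|$ up to renaming; choosing $\gamma$ small enough (e.g. $\gamma \le 5/8$) and taking the worse of the exponents gives the desired $C\ep^{1/8}\|\ap\|$ after relabeling constants. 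Since $F$ is finite, taking the maximum over $k \in F$ only costs a constant.

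The main obstacle, and the step that requires genuine care rather than bookkeeping, is establishing the $C^1$-in-time (and $C^\alpha$) control of $g_k(t) = \langle e_k, J^*_{t,T}\ap\rangle$ with the right stochastic integrability, i.e. verifying it defines a $V^n$-controlled negligible exceptional event. This is where Assumption~\ref{asmp:dynamic-bounds} enters essentially: the bound $\|g_k'\|_{C^0} \le \sup_t \|L_t\|_{H^{n+2}\to H^n}\,\|e_k\|_{H^{n+2}}\,\sup_t\|J_{t,T}\|_{H^n\to H^n}\,\|\ap\|$ combines~\eqref{eq:L-moment-bound-asmp} on $[T/2,T]$ with~\eqref{eq:J-moment-bound-asmp}, and the product of two random variables with $V^n$-controlled exponential moments again has such moments (by Cauchy--Schwarz and splitting $\eta$), so Lemma~\ref{lem:moment-negligible} applies. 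The manifold coordinate contributes nothing new here beyond what is already packaged into the operator norms of $L_t$ and $J_{s,t}$ on $H^n \times TM$. One subtlety: the adjoint $J^*_{t,T}$ should be handled via $\|J^*_{t,T}\|_{H^n\to H^n} = \|J_{t,T}\|_{H^n\to H^n}$ since the inner product is fixed, so no new bound is needed. Assembling these pieces and tracking the exponents carefully through the interpolation gives the claimed $\ep^{1/8}$, which matches the clean form stated; the precise power is not important since it will be improved (or just carried) in the subsequent inductive steps.
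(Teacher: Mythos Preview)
Your proposal is correct and follows essentially the same strategy as the paper: upgrade the $L^2$-in-time smallness of $g_k(t)=\langle e_k,J^*_{t,T}\ap\rangle$ to $C^0$ smallness by interpolating against a derivative bound $\|g_k'\|_{C^0}\le \ep^{-\gamma}\|\ap\|$ obtained (modulo a $V^n$-controlled negligible event) from $g_k'(t)=\langle L_t e_k,J^*_{t,T}\ap\rangle$ and Assumption~\ref{asmp:dynamic-bounds} via Lemma~\ref{lem:moment-negligible}. The only mechanical difference is that the paper sets $f(t):=\int_{T/2}^t g_k(s)\,\dif s$ so that $\|f\|_{C^0}\le C\ep^{1/2}\|\ap\|$ by Cauchy--Schwarz, and then applies Lemma~\ref{lem:holder-interpolation} with $\alpha=1$ (bounding $\|f''\|_{C^0}=\|g_k'\|_{C^0}$ exactly as you do) to get $\|f'\|_{C^0}=\|g_k\|_{C^0}\le C\ep^{1/8}\|\ap\|$, rather than invoking a direct Gagliardo--Nirenberg interpolation on $g_k$; your aside about needing a H\"older seminorm of $g_k'$ is therefore unnecessary for either route.
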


\begin{proof}
     Suppose that $\<\ap,\M_T\ap\> \leq \ep \|\ap\|^2$. Then
    \[\max_{k \in F} \int_0^T |\<e_k, J_{t,T}^*\ap\>|^2\,\dif t \leq C \sum_{k \in F} c_k^2\int_0^T  |\<e_k, J_{t,T}^*\ap\>|^2\,\dif t = \<\ap, \M_T \ap\> \leq \ep \|\ap\|^2.\]
    Fix $k \in F$ and let
    \[f(t) := \int_{T/2}^t  \<e_k, J_{s,T}^*\ap\>\,ds,\]
    so that $\|f\|_{C^0([T/2,T])} \leq C\ep^{1/2} \|\ap\|$.
    Then using Lemma~\ref{lem:holder-interpolation},
    \begin{align*}
    \sup_{t \in [T/2,T]}  |\<e_k, J_{s,T}^*\ap\>| = \|f'\|_{C^0([T/2,T])}
    &\leq  C\|f\|_{C^0([T/2,T])} + C\|f\|_{C^0([T/2,T])}^{1/2} \|f''\|_{C^0([T/2,T])}^{1/2}
    \\&\leq  C\ep^{1/2} \|\ap\| + C\ep^{1/4} \|\ap\|^{1/2}\|f''\|_{C^0([T/2,T])}^{1/2} \leq C\ep^{1/8} \|\ap\|,
    \end{align*}
    unless
    \[\|f''\|_{C^0([T/2,T])} \geq \ep^{-1/4} \|\ap\|.\]
    Thus, to conclude, it suffices to see that the events $\|f''\|_{C^0([T/2,T])} \geq \ep^{-1/4}\|\ap\|$ are $V^n$-controlled negligible (with constants independent of $\ap$). To this end, we compute for $t \in [T/2,T],$
    \[
        |f''(t)|= \Big|\frac{\dif}{\dif t} \<e_k,J^*_{t,T} \ap\>\Big|
        = \big|\<L_t e_k, J^*_{t,T} \ap\>\big|
        \leq \|L_t e_k\| \|J^*_{t,T} \ap\|
        \leq \frac{1}{2}\big(C\|L_t\|_{H^{n+2} \to H^n}^2+ \|J_{t,T}\|_{H^n \to H^n}^2\big) \|\ap\|.
    \]
    Thus $\|f''\|_{C^0([T/2,T])} \geq \ep^{-1/4} \|\ap\|$ implies one of the $\ap$-independent events
    \[\sup_{t \in [T/2,T]}\|L_t\|_{H^{n+2} \to H^n} \geq C^{-1}\ep^{-1/8} \text{ or } \sup_{t \in [T/2,T]}\|J_{t,T}\|_{H^n \to H^n} \geq \ep^{-1/8}.\]
    These are then both $V^n$-controlled negligible by Assumption~\ref{asmp:dynamic-bounds} and Lemma~\ref{lem:moment-negligible}, allowing us to conclude.
\end{proof}

The following is a nonadapted version of Norris' lemma~\cite{norris_simplified_1986}. A preliminary version of such a result appeared in~\cite{mattingly_malliavin_2006}, though we will use the much more user-friendly statement below. This result is essential for the control of the Malliavin matrix, allowing us to separate out the effects of the different Brownian motions.

\begin{theorem}[{\cite[Theorem 7.1]{hairer_theory_2011}}]
    \label{thm:norris-lemma}
    There exists a negligible family of events\footnote{We say a family of events $H_\ep$ is negligible if the family $\tilde H_\ep(\omega_0,p_0):= H_\ep$ is $V^n$-controlled negligible.}, parameterized by $\ep$, outside of which for any set of processes $A^0_t, (A^k_t)_{k \in F}$ on the interval $[T/2,T]$, the implication
    \[\sup_{t \in [T/2,T]} \Big|A^0_t + \sum_{k \in F} A^k_t W^k_t \Big| \leq \ep \text{ and } \max_{k \in F \cup \{0\}}  \sup_{t \in [T/2,T]} \Big|\frac{\dif}{\dif t} A^k_t\Big| \leq \ep^{-1/9} \implies  \max_{k \in F \cup \{0\}}  \sup_{t \in [T/2,T]} |A_k^t| \leq \ep^{1/3}\]
    holds.
\end{theorem}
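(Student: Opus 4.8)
The plan is to prove the statement from a quantitative, \emph{pathwise} linear-independence estimate for the family $\{1\} \cup \{W^k\}_{k \in F}$ on subintervals of $[T/2,T]$, combined with a freezing argument that exploits the slow variation $\bigl|\tfrac{\dif}{\dif t}A^k_t\bigr| \le \ep^{-1/9}$ of the coefficients. One cannot simply invoke the classical adapted Norris lemma of~\cite{norris_simplified_1986} here, because the processes $A^k_t$ are not assumed adapted; the non-adaptedness is precisely what forces the probabilistic input to be a single event on whose complement the relevant bound holds \emph{simultaneously for every coefficient function}.

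\textbf{Step 1: pathwise quantitative linear independence.} I would first produce a negligible family $G_\ep$ outside of which $\sup_{t \in [T/2,T]}|W^k_t| \le C\sqrt{\log \ep^{-1}}$ for all $k \in F$ and, for exponents $0 < \kappa_1 < 2\kappa_2$ to be fixed below, for every subinterval $I \subseteq [T/2,T]$ with $|I| \ge \ep^{\kappa_1}$ and every $(c_0,c) \in \R \times \R^F$,
\[
  \sup_{t \in I}\Bigl|c_0 + \sum_{k \in F} c_k W^k_t\Bigr| \ge \ep^{\kappa_2}\Bigl(|c_0| + \max_{k \in F}|c_k|\Bigr).
\]
For a fixed interval $I = [a,a+\ell]$ and a fixed unit vector, $\osc_I\bigl(\sum_k c_k W^k_{\cdot}\bigr)$ has the law of $\bigl(\sum_k c_k^2\bigr)^{1/2}$ times the oscillation of a standard Brownian motion run for time $\ell$, so the Gaussian small-ball bound $\P(\osc_{[0,\ell]} B \le \delta) \le C\exp(-c\ell\delta^{-2})$ together with $\sup_I|f| \ge \tfrac12 \osc_I f$ controls the left-hand side whenever $\max_k|c_k|$ is not too small; the complementary near-constant case, in which $|c_0|$ dominates, is controlled directly using the a priori bound on the $W^k$ valid outside $G_\ep$, after choosing the threshold separating the two cases to vanish slowly (say like $(\log\ep^{-1})^{-1}$). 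One then upgrades to all intervals and all directions at once by an $\eta$-net over the sphere of $(c_0,c)$ and over the two real parameters describing $I$, with $\eta$ a small power of $\ep$: the Lipschitz dependence of both sides on $(c_0,c)$ and on the endpoints of $I$, with constant controlled by $\sqrt{\log\ep^{-1}}$ outside $G_\ep$, makes the discretization error harmless, and the union bound of the $O(\eta^{-|F|-2})$ single-point probabilities $\exp(-c\ep^{\kappa_1 - 2\kappa_2})$ (up to logarithmic corrections) is super-polynomially small in $\ep$ exactly because $\kappa_1 < 2\kappa_2$.

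\textbf{Step 2: freezing and conclusion.} Working outside $G_\ep$, assume the two hypotheses and set $\ell := \ep^{\kappa_1}$. Partition $[T/2,T]$ into $\asymp \ep^{-\kappa_1}$ intervals $I_j = [t_j,t_j+\ell]$. On $I_j$ the slow-variation bound gives $|A^k_t - A^k_{t_j}| \le \ell\ep^{-1/9}$ for every $k \in F \cup \{0\}$, hence
\[
  \sup_{t \in I_j}\Bigl|A^0_{t_j} + \sum_{k \in F} A^k_{t_j}W^k_t\Bigr| \le \ep + C\ell\ep^{-1/9}\sqrt{\log\ep^{-1}},
\]
while Step 1 bounds the same supremum below by $\ep^{\kappa_2}\bigl(|A^0_{t_j}| + \max_k|A^k_{t_j}|\bigr)$; dividing gives $\max_{k \in F}|A^k_{t_j}| \le \ep^{-\kappa_2}\bigl(\ep + C\ell\ep^{-1/9}\sqrt{\log\ep^{-1}}\bigr)$ for every $j$. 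Since every $t \in [T/2,T]$ lies within $\ell$ of some $t_j$ and $\bigl|\tfrac{\dif}{\dif t}A^k_t\bigr| \le \ep^{-1/9}$, this propagates to $\max_k\sup_t|A^k_t| \le \ep^{1-\kappa_2} + C\ep^{\kappa_1-\kappa_2-1/9}\sqrt{\log\ep^{-1}} + \ep^{\kappa_1 - 1/9}$. The pair $(1/9,1/3)$ is admissible: choosing $\kappa_2$ slightly above $4/9$ and $\kappa_1$ slightly above $\kappa_2 + 4/9$ makes $\kappa_1 < 2\kappa_2$ hold (Step 1), while $1-\kappa_2 > 1/3$, $\kappa_1 - \kappa_2 - 1/9 > 1/3$, and $\kappa_1 - 1/9 > 1/3$, so the three terms above are each $\le \ep^{1/3}$ for $\ep$ small (the logarithm being absorbed into the strict surplus in these inequalities). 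This is exactly the bookkeeping of~\cite[Section~7]{hairer_theory_2011} and is the source of the constants in the statement.

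\textbf{Main obstacle.} The delicate part is Step 1: one must exhibit a single event, with complement probability beating every power of $\ep$, outside of which the pathwise linear-independence bound holds for all subintervals down to polynomially small length \emph{and} all coefficient vectors. This requires a careful balance of the net cardinality, the discretization error (where the logarithmic tail of $\sup_t|W^k_t|$ enters), and the Gaussian small-ball rate, and it is where the relation $\kappa_1 < 2\kappa_2$ — and hence the specific exponents $1/9$ and $1/3$ — is forced. Step 2 is, by comparison, a routine freezing-and-continuity argument.
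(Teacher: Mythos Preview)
The paper does not give its own proof of this statement; it is quoted verbatim from \cite[Theorem~7.1]{hairer_theory_2011} and used as a black box. So there is nothing in the present paper to compare your argument against.

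Your sketch is essentially correct and gives a clean, self-contained argument for the degree-one case that is all that is used here. The two-step structure---(i) a pathwise, uniform-in-coefficients lower bound on $\sup_I\bigl|c_0+\sum_k c_k W^k_t\bigr|$ obtained from the Gaussian small-ball estimate for $\osc_I B$ together with a net over coefficients and interval endpoints, and (ii) a freezing argument exploiting the Lipschitz bound on the $A^k$---is sound, and your parameter accounting (the constraint $\kappa_1<2\kappa_2$ from Step~1 together with $\kappa_2\in(4/9,2/3)$ and $\kappa_1\in(\kappa_2+4/9,\,2\kappa_2)$ from Step~2) closes correctly. Your remark that this is ``exactly the bookkeeping of \cite[Section~7]{hairer_theory_2011}'' is a mild overstatement: Hairer--Mattingly treat Wiener polynomials of arbitrary degree and organize the proof around an interpolation inequality and a quantitative H\"older-roughness lemma for the Brownian path, proceeding by induction on the degree; for degree one their argument reduces to something equivalent to yours, but the packaging is different. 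Your route via the small-ball estimate is arguably more direct for the linear case, since it produces in one stroke the uniform-over-all-coefficients lower bound that makes the conclusion hold for non-adapted $A^k$.
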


Following the scheme of~\cite[Section 6]{hairer_theory_2011}, we make the following definition:
\begin{equation}
\label{eq:v-def}
    v_t := \omega_t -\sum_{k \in F} c_k W^k_t.
\end{equation}
The process $v_t$ then has finite variation in time. This allows us to separate out the contributions of the Brownian motions and apply Theorem~\ref{thm:norris-lemma} in the following argument. The next proposition is the inductive step of the main argument of this subsection.

\begin{proposition}
    \label{prop:malliavin-inductive-step}
    For any $k \in \Z^2 \setminus \{0\}$ and any $\gamma > 0,$ the implications
    \begin{align*}\sup_{t \in [T/2,T]} |\<e_k, J^*_{t,T} \ap\>|  &\leq \ep^\gamma \|\ap\|\\
      \implies &\max_{j\in F} \sup_{t \in [T/2,T]} |\< \nabla^\perp \Delta^{-1} e_j\cdot \nabla e_k + \nabla^{\perp} \Delta^{-1} e_k \cdot \nabla e_j, J^*_{t,T} \ap\>| \leq \ep^{\gamma/24} \|\ap\|\\
    \sup_{t \in [T/2,T]} |\<e_k, J^*_{t,T} \ap\>|  &\leq \ep^\gamma \|\ap\|
    \\\implies  &\sup_{t \in [T/2,T]} |\<\Theta_{e_k}(p_t) + \Delta e_k -  \nabla^\perp \Delta^{-1} v_t \cdot \nabla e_k - \nabla^{\perp} \Delta^{-1} e_k \cdot \nabla v_t, J_{t,T}^* \ap\>| \leq \ep^{\gamma/24} \|\ap\|
    \end{align*}
    hold modulo a $V^n$-controlled negligible family of events, where $v_t$ is as in~\eqref{eq:v-def}.
\end{proposition}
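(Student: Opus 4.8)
The plan is to differentiate $f(t) := \langle e_k, J^*_{t,T}\ap\rangle$ in $t$, split the result into a finite-variation part plus a linear combination of the driving Brownian motions, and feed this into the nonadapted Norris lemma (Theorem~\ref{thm:norris-lemma}) exactly as in~\cite[Section 6]{hairer_theory_2011}; this is the inductive analogue of Proposition~\ref{prop:malliavin-induction-base}. By linearity of $J^*_{t,T}$, of $f$, and of all objects below in $\ap$, we may normalize $\|\ap\| = 1$, so that every $V^n$-controlled negligible family we produce is independent of $\ap$. Using $\frac{\dif}{\dif s}J^*_{s,T} = L^*_s J^*_{s,T}$, the formula for $L_t$ from Definition~\ref{defn:linearization}, and the decomposition $\omega_t = v_t + \sum_{j \in F} c_j W^j_t e_j$ coming from~\eqref{eq:v-def}, one computes
\[
  f'(t) = \langle L_t e_k, J^*_{t,T}\ap\rangle = B^0_t + \sum_{j \in F} B^j_t W^j_t,
\]
where, with the usual abuse that $\Theta_{e_k}(p_t)$ is paired against the $T_{p_t}M$-component of $J^*_{t,T}\ap$,
\[
  B^0_t := \big\langle \nu\Delta e_k - \nabla^\perp\Delta^{-1}v_t \cdot \nabla e_k - \nabla^\perp\Delta^{-1}e_k \cdot \nabla v_t + \Theta_{e_k}(p_t),\ J^*_{t,T}\ap\big\rangle,
\]
\[
  B^j_t := -c_j \big\langle \nabla^\perp\Delta^{-1}e_j \cdot \nabla e_k + \nabla^\perp\Delta^{-1}e_k \cdot \nabla e_j,\ J^*_{t,T}\ap\big\rangle, \qquad j \in F.
\]
Since $\Delta e_k = -|k|^2 e_k$, the Laplacian term in $B^0_t$ is just $-\nu|k|^2 f(t)$.

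First I would run an interpolation step converting smallness of $f$ into smallness of $f'$. The process $v_t$ has finite variation with $\dot v_t = \nu\Delta\omega_t - u_t\cdot\nabla\omega_t$, which on $[T/2,T]$ is controlled in every $H^m$ with all moments by the regularization estimate~\eqref{eq:Hn-regularization}; combined with the bounds on $J^*_{t,T}$, $L_t$, and $J^2$ from Assumption~\ref{asmp:dynamic-bounds}, the local-in-$p_0$ boundedness of $\Theta$, and the Hölder regularity of the $W^j$, this shows $f \in C^{1,\alpha}([T/2,T])$ for any fixed $\alpha < 1/2$ with $\|f'\|_{C^\alpha([T/2,T])}$ bounded by a random constant with moments $\lesssim e^{\eta V^n(\omega_0)}$. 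By Lemma~\ref{lem:moment-negligible} we may therefore assume $\|f'\|_{C^\alpha} \le \ep^{-\rho}$ modulo a $V^n$-controlled negligible family, with $\rho$ as small as we wish. Feeding $\|f\|_{C^0([T/2,T])} \le \ep^\gamma$ into Lemma~\ref{lem:holder-interpolation} then yields $\|f'\|_{C^0([T/2,T])} \le \ep^{\gamma/4}$ — any fixed fraction of $\gamma$ would do, and the interpolation constants are absorbed into a slightly smaller exponent for $\ep$ small — again modulo a $V^n$-controlled negligible family.

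Next I would invoke Theorem~\ref{thm:norris-lemma} with small parameter $\ep^{\gamma/4}$ in place of $\ep$, noting that a family negligible in $\ep^c$ is negligible in $\ep$. Its first hypothesis, $\sup_{[T/2,T]}|B^0_t + \sum_j B^j_t W^j_t| = \|f'\|_{C^0} \le \ep^{\gamma/4}$, is the previous step. Its second hypothesis, $\max_{j \in F \cup \{0\}}\sup_{[T/2,T]}|\frac{\dif}{\dif t}B^j_t| \le \ep^{-\gamma/36}$ modulo negligible, follows from Lemma~\ref{lem:moment-negligible} once one observes that $\frac{\dif}{\dif t}B^j_t$ involves only $\frac{\dif}{\dif t}J^*_{t,T}\ap = L^*_t J^*_{t,T}\ap$ — bounded using~\eqref{eq:L-moment-bound-asmp} of Assumption~\ref{asmp:dynamic-bounds} on $[T/2,T]$, since $e_j,e_k$ are smooth — and, for $j=0$, additionally $\dot v_t = \nu\Delta\omega_t - u_t\cdot\nabla\omega_t$ and $\frac{\dif}{\dif t}\Theta_{e_k}(p_t)$, both of which have moments $\lesssim e^{\eta V^n(\omega_0)}$ on $[T/2,T]$ by~\eqref{eq:Hn-regularization} and Assumption~\ref{asmp:dynamic-bounds}. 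Theorem~\ref{thm:norris-lemma} then gives $\max_{j \in F \cup \{0\}}\sup_{[T/2,T]}|B^j_t| \le \ep^{\gamma/12}$ modulo a $V^n$-controlled negligible family, comfortably better than the claimed exponent. For $j \in F$ this gives the first implication after dividing by $|c_j| \ne 0$ and using $\ep^{\gamma/12}\le \ep^{\gamma/24}$ for $\ep$ small; restoring the normalization reinserts $\|\ap\|$ on the right. For the second implication, the target bracket differs from $B^0_t$ by $\langle (1-\nu)\Delta e_k, J^*_{t,T}\ap\rangle = (\nu-1)|k|^2 f(t)$, which is $\le |\nu-1|\,|k|^2\ep^\gamma\|\ap\|$ by the hypothesis and hence negligible next to $\ep^{\gamma/12}\|\ap\|$, so $|B^0_t|\le \ep^{\gamma/12}$ also yields the stated combination with bound $\ep^{\gamma/24}\|\ap\|$ for $\ep$ small.

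I expect the main obstacle to be not the logical skeleton, which is faithful to~\cite[Section 6]{hairer_theory_2011} and already rehearsed in Proposition~\ref{prop:malliavin-induction-base}, but the uniform verification of the $V^n$-controlled moment bounds on the time-derivatives $\frac{\dif}{\dif t}B^j_t$ — in particular controlling the manifold term $\Theta_{e_k}(p_t)$ together with its time-derivative $\frac{\dif}{\dif t}\Theta_{e_k}(p_t) = \nabla\Theta_{e_k}(p_t)\cdot\Theta_{u_t}(p_t)$ on a possibly non-compact $M$, where one must lean on the local-in-$p_0$ structure built into Assumption~\ref{asmp:dynamic-bounds} — and the fact that the appeal to the regularization estimate~\eqref{eq:Hn-regularization} (needed so that $\dot v_t$, and indeed $L_t e_k$ itself, lie in $H^n$ with controlled moments) is precisely what confines the whole argument to the interval $[T/2,T]$ rather than $[0,T]$.
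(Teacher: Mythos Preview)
Your proposal is correct and follows essentially the same route as the paper: define $f(t)=\langle e_k,J^*_{t,T}\ap\rangle$, compute $f'(t)=\langle L_t e_k,J^*_{t,T}\ap\rangle$, split via $\omega_t=v_t+\sum_j c_j W^j_t e_j$ into a finite-variation piece plus Brownian coefficients, use Lemma~\ref{lem:holder-interpolation} to pass from smallness of $f$ to smallness of $f'$ (modulo a $V^n$-controlled negligible event coming from moment bounds on $\|f'\|_{C^\alpha}$), and then feed $f'$ into Theorem~\ref{thm:norris-lemma}. Your exponent bookkeeping differs slightly from the paper's (you aim for $\|f'\|_{C^0}\le\ep^{\gamma/4}$ and land at $\ep^{\gamma/12}$, whereas the paper takes $\alpha=1/3$, gets $\ep^{\gamma/8}$, and lands exactly at $\ep^{\gamma/24}$), but as you note any fixed fraction suffices; your observation that the target of the second implication differs from $B^0_t$ by $(1-\nu)|k|^2 f(t)$ is a clean way to absorb the $\nu$ that the paper silently drops.
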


\begin{proof}
    Let
    \[f(t) := \<e_k, J^*_{t,T} \ap\>,\]
    so by assumption
    \begin{equation}
        \label{eq:malliavin-inductive-asmpt}
        \|f\|_{C^0([T/2,T])} \leq \ep^\gamma \|\ap\|.
    \end{equation}
    Then we compute
    \begin{align*}
        f'(t) &= \<L_t e_k, J^*_{t,T} \ap\>
        \\&= \<\Theta_{e_k}(p_t)+\Delta e_k -  \nabla^\perp \Delta^{-1} \omega_t \cdot \nabla e_k - \nabla^{\perp} \Delta^{-1} e_k \cdot \nabla \omega_t, J^*_{t,T} \ap\>
        \\&= \<\Theta_{e_k}(p_t) + \Delta e_k -  \nabla^\perp \Delta^{-1} v_t \cdot \nabla e_k - \nabla^{\perp} \Delta^{-1} e_k \cdot \nabla v_t, J^*_{t,T} \ap\>
        \\&\qquad  -\sum_{j \in F} c_j W^j_t \< \nabla^\perp \Delta^{-1} e_j\cdot \nabla e_k + \nabla^{\perp} \Delta^{-1} e_k \cdot \nabla e_j, J^*_{t,T} \ap\>,
    \end{align*}
    where we use the definition of $v_t$~\eqref{eq:v-def}. Our goal is to apply Theorem~\ref{thm:norris-lemma} to $f'(t)$ in order to conclude, so we need to begin verifying the assumptions of the theorem. Our first task is to bound $\|f'\|_{C^0([T/2,T])}$. Using Lemma~\ref{lem:holder-interpolation} and~\eqref{eq:malliavin-inductive-asmpt},
    \begin{equation}
    \label{eq:malliavin-inductive-f-prime-bound}
        \|f'\|_{C^0([T/2,T])} \leq C \|f\|_{C^0([T/2,T])} + C \|f\|_{C^0([T/2,T])}^{\frac{1}{4}} \|f'\|_{C^{1/3}([T/2,T])}^{\frac{3}{4}}
        \leq  \ep^{\gamma/8} \|\ap\|,
    \end{equation}
    unless
    \[\|f'\|_{C^{1/3}([T/2,T])} \geq C^{-1} \ep^{-\gamma/6} \|\ap\|,\]
    which we now check is $V^n$-controlled negligible. Then for $T/2 \leq t \leq s \leq T,$
    \begin{align}
      &\|f'\|_{C^{1/3}([T/2,T])} \notag\\
      &\quad \leq C\Big\|\frac{\dif}{\dif t}\<\Theta_{e_k}(p_t)+\Delta e_k -  \nabla^\perp \Delta^{-1} v_t \cdot \nabla e_k - \nabla^{\perp} \Delta^{-1} e_k \cdot \nabla v_t, J^*_{t,T} \ap\>\Big\|_{C^0([T/2,T])}\label{eq:big-f'-1}
        \\&\qquad  +C \sum_{j \in F} \|W^j_t\|_{C^{1/3}([T/2,T])} \|\< \nabla^\perp \Delta^{-1} e_j\cdot \nabla e_k + \nabla^{\perp} \Delta^{-1} e_k \cdot \nabla e_j, J^*_{t,T} \ap\>\|_{C^0([T/2,T])} \label{eq:big-f'-2}
        \\&\qquad+C \sum_{j \in F} \|W^j_t\|_{C^0([T/2,T])} \Big\|\frac{\dif}{\dif t}\< \nabla^\perp \Delta^{-1} e_j\cdot \nabla e_k + \nabla^{\perp} \Delta^{-1} e_k \cdot \nabla e_j, J^*_{t,T} \ap\>\Big\|_{C^0([T/2,T])} \label{eq:big-f'-3}.
        \end{align}
We now control each term separately. For~\eqref{eq:big-f'-1}, it is controlled by
\begin{align*}
    &C\Big\|\< \Theta_{\omega_t}(p_t) \cdot \nabla \Theta_{e_k}(p_t)- \nabla^\perp \Delta^{-1} \frac{\dif}{\dif t} v_t \cdot \nabla e_k - \nabla^{\perp} \Delta^{-1} e_k \cdot \nabla \frac{\dif}{\dif t} v_t, J^*_{t,T} \ap\>\Big\|_{C^0([T/2,T])}
        \\&\qquad + C \big\|\<L_t\big(\Theta_{e_k}(p_t) +\Delta e_k -  \nabla^\perp \Delta^{-1} v_t \cdot \nabla e_k - \nabla^{\perp} \Delta^{-1} e_k \cdot \nabla v_t\big), J^*_{t,T} \ap\>\big\|_{C^0([T/2,T])}
        \\&\leq C \sup_{t \in [T/2,T]} \|\Theta_{\omega_t}(p_t)\|\sup_{t \in [T/2,T]}\|J_{t,T}\|_{H^n \to H^n} \|\ap\|
        \\&\qquad + C \sup_{t \in [T/2,T]} \Big\|\frac{\dif}{\dif t} v_t\Big\|_{H^{n+1}} \sup_{t \in [T/2,T]}\|J_{t,T}\|_{H^n \to H^n} \|\ap\|
        \\&\qquad + C\sup_{t \in [T/2,T]}\|L_t\|_{H^{n+2} \to H^n} \Big(1  +  \sup_{t \in [T/2,T]}\|v_t\|_{H^{n+3}} \Big) \sup_{t \in [T/2,T]}\|J_{t,T}\|_{H^n \to H^n} \|\ap\|
\end{align*}
While~\eqref{eq:big-f'-2} and~\eqref{eq:big-f'-3} are bounded by
        
        \begin{align*}
        &C \sum_{j \in F} \|W^j_t\|_{C^{1/3}([T/2,T])}  \sup_{t \in [T/2,T]}\| J_{t,T}\|_{H^n \to H^n} \|\ap\|
        \\&\qquad+C \sum_{j \in F} \|W^j_t\|_{C^0([T/2,T])} \big\|\<L_t \big( \nabla^\perp \Delta^{-1} e_j\cdot \nabla e_k + \nabla^{\perp} \Delta^{-1} e_k \cdot \nabla e_j\big), J^*_{t,T} \ap\>\big\|_{C^0([T/2,T])}
        \\&\quad \leq C \sum_{j \in F} \|W^j_t\|_{C^{1/3}([T/2,T])}  \sup_{t \in [T/2,T]}\| J_{t,T}\|_{H^n \to H^n} \|\ap\|
        \\&\qquad+C \sum_{j \in F} \|W^j_t\|_{C^0([T/2,T])} \sup_{t \in [T/2,T]}\|L_t\|_{H^{n+2} \to H^n} \sup_{t \in [T/2,T]} \|J_{t,T}\|_{H^n \to H^n}\| \ap\|.
    \end{align*}
    Thus to see~\eqref{eq:malliavin-inductive-f-prime-bound}, it suffices to verify the following events are $V^n$-controlled negligible:
    \begin{align}
        \sup_{t \in [T/2,T]} \|\Theta_{\omega_t}(p_t)\| &\geq C^{-1} \ep^{-\gamma/18}, \label{eq:V-negligible}
        \\
        \sup_{t \in [T/2,T]}\|J_{t,T}\|_{H^n \to H^n} &\geq C^{-1} \ep^{-\gamma/18}, \label{eq:J-negligible}
        \\\sup_{t \in [T/2,T]}\|L_t\|_{H^{n+2} \to H^n}&\geq C^{-1} \ep^{-\gamma/18}, \label{eq:L-t-H2-negligible}
        \\\|W^j_t\|_{C^{1/3}([T/2,T])}  +  \|W^j_t\|_{C^0([T/2,T])} &\geq C^{-1} \ep^{-\gamma/18}. \label{eq:brownian-negligible}
        \\ \sup_{t \in [T/2,T]} \Big\|\frac{\dif}{\dif t} v_t\Big\|_{H^{n+1}} &\geq C^{-1} \ep^{-\gamma/18}, \label{eq:d-dt-v-negligible}
        \\ \sup_{t \in [T/2,T]}\|v_t\|_{H^{n+3}}&\geq C^{-1} \ep^{-\gamma/18}. \label{eq:v-H3-negligible}
    \end{align}
    For~\eqref{eq:V-negligible}, we have that
    \begin{align*}\E\sup_{t \in [T/2,T]} \|\Theta_{\omega_t}(p_t)\|^q &\leq \E \sup_{t \in [T/2,T]} \|\omega_t\|_{H^4}^q \sup_{t \in [T/2,T]} \sup_{\|\varphi\|_{H^4} = 1} \|\Theta_{\varphi}(p_t)\|^q
    \\&\leq \E \sup_{t \in [T/2,T]} \|\omega_t\|_{H^4}^{2q} +  \E\sup_{t \in [T/2,T]} \sup_{\|\varphi\|_{H^4} = 1} \|\Theta_{\varphi}(p_t)\|^{2q} \leq C e^{\eta V^n(\omega_0)},
    \end{align*}
    where the last inequality is from Assumption~\ref{asmp:dynamic-bounds} and~\eqref{eq:Hn-bound}. Thus, we can conclude~\eqref{eq:V-negligible} by Lemma~\ref{lem:moment-negligible}.

    We note that \eqref{eq:J-negligible} and~\eqref{eq:L-t-H2-negligible} follow directly from Assumption~\ref{asmp:dynamic-bounds} and Lemma~\ref{lem:moment-negligible}. Also,~\eqref{eq:brownian-negligible} follows from Chebyshev and standard Brownian motion moment estimates. Thus, we need only show~\eqref{eq:d-dt-v-negligible} and~\eqref{eq:v-H3-negligible}. For~\eqref{eq:d-dt-v-negligible}, for any $t \in [T/2,T]$
    \begin{align*}
        \Big\|\frac{\dif}{\dif t} v_t\Big\|_{H^{n+1}} &\leq \|\Delta \omega_t\|_{H^{n+1}} + \|\nabla^{\perp} \Delta^{-1} \omega_t \cdot \nabla \omega_t\|_{H^{n+1}}
        \\&\leq \|\omega_t\|_{H^{n+3}} + C \|\omega_t\|_{W^{n+2,4}}^2
        \\&\leq C ( 1+ \|\omega_t\|_{H^{n+3}}^2).
    \end{align*}
    Also for~\eqref{eq:v-H3-negligible},
    \[\|v_t\|_{H^{n+3}} \leq \|\omega_t\|_{H^{n+3}} + C \sum_{k \in F} |W^k_t|.\]
    Thus since $\sup_{t \in [T/2,T]} \max_{k \in F} |W^k_t| \geq C^{-1} \ep^{-\gamma/18}$ is negligible, we see that for~\eqref{eq:d-dt-v-negligible} and~\eqref{eq:v-H3-negligible}, it suffices to see that
    \[\sup_{t \in [T/2,T]} \|\omega_t\|_{H^{n+3}} \geq C^{-1} \ep^{-\gamma/36}\]
    is $V^n$-controlled negligible. This is then direct from Lemma~\ref{lem:moment-negligible} and Proposition~\ref{prop:omega-bounds}.

    We have thus established~\eqref{eq:malliavin-inductive-f-prime-bound}; we now need to verify the other assumption of Theorem~\ref{thm:norris-lemma}. We note though that directly from the above computations, we have that
    \[\sup_{t \in [T/2,T]} \Big|\frac{\dif}{\dif t}\<\Theta_{e_k}(p_t) +\Delta e_k -  \nabla^\perp \Delta^{-1} v_t \cdot \nabla e_k - \nabla^{\perp} \Delta^{-1} e_k \cdot \nabla v_t, J^*_{t,T} \ap\>\Big| \leq \ep^{-\gamma/72}\]
    and
    \[\sup_{t \in [T/2,T]} \Big|\frac{\dif}{\dif t} \< \nabla^\perp \Delta^{-1} e_j\cdot \nabla e_k + \nabla^{\perp} \Delta^{-1} e_k \cdot \nabla e_j, J^*_{t,T} \ap\>\Big| \leq \ep^{-\gamma/72}.\]
    modulo a $V^n$-controlled negligible family of events. Thus, by Theorem~\ref{thm:norris-lemma} applies to $f'(t)$ and we conclude.
\end{proof}

\begin{equation}
    \label{eq:B-kj-def}
B_{k,j}:= \nabla^\perp \Delta^{-1} e_j\cdot \nabla e_k + \nabla^{\perp} \Delta^{-1} e_k \cdot \nabla e_j.
\end{equation}

The next proposition can be verified by direct computation, using from Assumption~\ref{asmp:modes} that $F = -F$.
\begin{proposition}
    \label{prop:B-kj-rep}
    Let $B_{k,j}$ be defined by~\eqref{eq:B-kj-def}. Suppose $A \subseteq \Z^2 \setminus \{0\}$ such that $A = - A$. Then
    \[\linspan\Big(\{e_k : k \in A\} \cup \{B_{k,j} : k \in A, j\in F\}\Big) = \linspan\Big(\{e_{k+j} : k \in A, j \in F, |k| \ne |j|, k \cdot j^\perp \ne 0\}\Big).\]
\end{proposition}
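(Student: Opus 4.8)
The proof is a direct Fourier-mode computation whose only analytic input is the formula for the bilinear term, and I would organize it as follows. First I would pass to the complex exponential basis $E_m(x):=e^{im\cdot x}$ ($m\in\Z^2$). From $\Delta^{-1}E_m=-|m|^{-2}E_m$, $\nabla E_l=i\,l\,E_l$ and $\nabla^\perp E_m=i\,m^\perp E_m$ one gets at once
\[
\nabla^\perp\Delta^{-1}E_m\cdot\nabla E_l=|m|^{-2}(m^\perp\cdot l)\,E_{m+l},
\]
and hence, using $l^\perp\cdot m=-\,m^\perp\cdot l$, the symmetrized expression
\[
\nabla^\perp\Delta^{-1}E_m\cdot\nabla E_l+\nabla^\perp\Delta^{-1}E_l\cdot\nabla E_m=(m^\perp\cdot l)\bigl(|m|^{-2}-|l|^{-2}\bigr)\,E_{m+l},
\]
which vanishes precisely when $|m|=|l|$ or $m\parallel l$.

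Next I would translate back to the real basis by writing $e_k=\alpha_k E_k+\overline{\alpha_k}\,E_{-k}$ with $\alpha_k=\frac1{2i}$ when $k$ is lexicographically positive and $\alpha_k=\frac12$ otherwise. By bilinearity $B_{k,j}$ expands into a real combination of the four symmetrized brackets of $\pm k$ against $\pm j$, so by the previous step $B_{k,j}$ is a real-linear combination of $E_{\pm(k+j)}$ and $E_{\pm(k-j)}$ alone. Inspecting the four coefficients yields two facts: (i) $B_{k,j}=0$ unless $|k|\ne|j|$ and $k\cdot j^\perp\ne0$, and when these hold $k\pm j\ne0$; (ii) in that case the component of $B_{k,j}$ in the real plane $\linspan_\R\{E_{k+j},E_{-(k+j)}\}=\linspan_\R\{e_{k+j},e_{-(k+j)}\}$ is a nonzero multiple of a single basis function — proportional to $\cos((k+j)\cdot x)$ or to $\sin((k+j)\cdot x)$ according as $\alpha_k\alpha_j$ is real or imaginary — and likewise for the $(k-j)$-block, so the frequency support of $B_{k,j}$ is exactly $\{\pm(k+j),\pm(k-j)\}$.

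Then come the two inclusions. For "$\subseteq$": using $F=-F$ and $A=-A$, each frequency in the support of $B_{k,j}$ — namely $\pm(k+j)$ and $\pm(k-j)$ — equals $k'+j'$ for one of the valid pairs $(k,j),(-k,-j),(k,-j),(-k,j)$, and for each such frequency $n$ the pair $\{e_n,e_{-n}\}$ spans the real $n$-plane, so $B_{k,j}$ lies in the right-hand span; the same is true of each $e_k$, $k\in A$, using that $A$ is closed under $A\mapsto\{k+j:k\in A,\,j\in F,\,|k|\ne|j|,\,k\cdot j^\perp\ne0\}$ (which holds for every set $A$ arising in the inductive argument this proposition feeds; equivalently, $\{e_k:k\in A\}$ may simply be read into the right-hand side). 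For "$\supseteq$": fix a valid pair $(k,j)$; since $\pm j\in F$, both $B_{k,j}$ and $B_{k,-j}$ lie in the left-hand span, and a short computation shows their frequency-$(k+j)$ parts are nonzero multiples of the two \emph{distinct} basis functions — one picks out $\cos((k+j)\cdot x)$, the other $\sin((k+j)\cdot x)$ — while their frequency-$(k-j)$ parts are multiples of $e_{\pm(k-j)}$. Since $k-j=k+(-j)$ is again a valid sum with $k\in A$ and $-j\in F$, closure gives $e_{\pm(k-j)}\in\{e_m:m\in A\}$, which sits in the left-hand span; subtracting the frequency-$(k-j)$ parts from $B_{k,j}$ and $B_{k,-j}$ then leaves nonzero multiples of $e_{k+j}$ and of $e_{-(k+j)}$ in the left-hand span, giving $\supseteq$.

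The one place that needs genuine care — the "direct computation" the statement alludes to, and the step I expect to be the main obstacle — is the sign bookkeeping forced by the lexicographic convention for $e_k$: one must verify that each frequency block of $B_{k,j}$ really is a \emph{single} basis function, with coefficient vanishing exactly when $|k|=|j|$ or $k\cdot j^\perp=0$, and that $B_{k,j}$ together with $B_{k,-j}$ sees \emph{both} basis functions at frequency $k+j$, so that the final subtraction recovers $e_{k+j}$ itself and not merely $e_{-(k+j)}$. Everything else reduces to the one-line identity for the bilinear term above together with the hypothesis $F=-F$, which is exactly what makes $B_{k,-j}$ available.
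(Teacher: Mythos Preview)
Your overall approach via the complex exponential basis is exactly the direct computation the paper intends, and the identity $\nabla^\perp\Delta^{-1}E_m\cdot\nabla E_l+\nabla^\perp\Delta^{-1}E_l\cdot\nabla E_m=(m^\perp\cdot l)\bigl(|m|^{-2}-|l|^{-2}\bigr)E_{m+l}$ is the right starting point. Your observation about the role of $\{e_k:k\in A\}$ on the left-hand side is also well taken.

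However, your $\supseteq$ argument has a genuine gap. You assert that ``closure gives $e_{\pm(k-j)}\in\{e_m:m\in A\}$,'' i.e., that $k-j\in A$. This is false in general: the closure hypothesis you invoked earlier was $A\subseteq\{k'+j':\text{valid}\}$, not the reverse inclusion, so nothing forces $k-j$ to lie in $A$. You therefore cannot subtract the $(k-j)$-frequency component of $B_{k,j}$ and $B_{k,-j}$ using only $\{e_m:m\in A\}$, and appealing instead to $e_{k-j}$ lying in the \emph{right}-hand span is circular since that inclusion is what you are proving.

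The fix is to exploit the hypothesis $A=-A$, which you used for $\subseteq$ but not here. Since $-k\in A$, the left-hand span also contains $B_{-k,j}$ and $B_{-k,-j}$. All four of $B_{\pm k,\pm j}$ are supported on the four-dimensional space $\linspan\{e_{k+j},e_{-(k+j)},e_{k-j},e_{-(k-j)}\}$, and a short check in your complex basis (for instance, with $k,j>0$ lexicographically one finds $B_{k,j}-B_{-k,-j}$ proportional to $\cos((k+j)\cdot x)$ and $B_{k,-j}+B_{-k,j}$ proportional to $\sin((k+j)\cdot x)$) shows they span it whenever $c(k,j)\ne0$. This yields $e_{k+j}$ in the left-hand span directly, with no need to eliminate the $(k-j)$-block separately.
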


\begin{definition}
    Define $A_0 := F$ and
    \[A_{j+1} = A_j \cup \{k+j : k \in A, j \in F, |k| \ne |j|, k \cdot j^\perp \ne 0\}.\]
\end{definition}

The following is a direct consequence of~\cite[Corollary 4.5]{hairer_ergodicity_2006}, using Assumption~\ref{asmp:modes}.

\begin{proposition}
    \label{prop:A-j-span}
    \[\bigcup_{j=0}^\infty A_j = \Z^2 \setminus \{0\}.\]
\end{proposition}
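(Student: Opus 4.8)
## Proof proposal for Proposition~\ref{prop:A-j-span}

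The plan is to show that the iterative construction of the sets $A_j$ generates, in terms of the associated linear spans of Fourier modes, exactly the same objects as the bracket-type iteration appearing in~\cite[Section 4]{hairer_ergodicity_2006}, and then to invoke~\cite[Corollary 4.5]{hairer_ergodicity_2006} directly. The key observation is that the recursion defining $A_{j+1}$ from $A_j$ adds precisely those wavenumbers $k+j$ with $k \in A_j$, $j \in F$, $|k| \neq |j|$, and $k \cdot j^\perp \neq 0$ — and by Proposition~\ref{prop:B-kj-rep} these are exactly the new modes produced when one closes the span of $\{e_k : k \in A_j\}$ under the quadratic operations $B_{k,j}$ for $j \in F$. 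In~\cite{hairer_ergodicity_2006}, the analogous sets (there denoted, after translating notation, by the increasing family generated from $F$ under the Navier--Stokes nonlinearity restricted to the forced directions) are shown to exhaust $\Z^2 \setminus \{0\}$ precisely under Assumption~\ref{asmp:modes}; this is the content of~\cite[Corollary 4.5]{hairer_ergodicity_2006}.

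Concretely, first I would record that $A_0 = F = -F$ and that each $A_{j+1}$ remains symmetric: if $k + j \in A_{j+1}$ with $k \in A_j$, $j \in F$, then $-k \in A_j$ (by the inductive symmetry of $A_j$) and $-j \in F$ (by Assumption~\ref{asmp:modes}), and $|-k| \neq |-j|$, $(-k) \cdot (-j)^\perp = k \cdot j^\perp \neq 0$, so $-(k+j) = (-k) + (-j) \in A_{j+1}$. This symmetry is what allows Proposition~\ref{prop:B-kj-rep} to be applied at each stage with $A = A_j$. Second, I would note that the sequence $A_j$ is nondecreasing and hence has a well-defined union $A_\infty := \bigcup_j A_j$, which is symmetric and satisfies the fixed-point property: for every $k \in A_\infty$ and $j \in F$ with $|k| \neq |j|$ and $k \cdot j^\perp \neq 0$, we have $k + j \in A_\infty$.

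Third, the substantive step: $A_\infty = \Z^2 \setminus \{0\}$. Here I would match the fixed-point property of $A_\infty$ against the combinatorial condition that drives~\cite[Corollary 4.5]{hairer_ergodicity_2006}. That corollary states (in the notation of that paper) that the smallest symmetric subset of $\Z^2 \setminus \{0\}$ containing $F$ and closed under the operation $(k, j) \mapsto k + j$ for $j \in F$ with $|k| \neq |j|$ and $k \cdot j^\perp \neq 0$ equals all of $\Z^2 \setminus \{0\}$, exactly when $F$ satisfies Assumption~\ref{asmp:modes} (two modes of distinct length, $\Z$-span equal to $\Z^2$). Since $A_\infty$ contains $F$, is symmetric, and is closed under precisely this operation, and since $A_\infty$ is contained in $\Z^2 \setminus \{0\}$, minimality gives one inclusion and the corollary gives the other, so $A_\infty = \Z^2 \setminus \{0\}$.

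The main obstacle I anticipate is purely bookkeeping: reconciling the exact form of the generating operation in our Definition of $A_{j+1}$ with whatever normalization~\cite{hairer_ergodicity_2006} uses (they may generate via brackets of vector fields $e_k \nabla^\perp \Delta^{-1} e_j$ rather than via the symbols directly, and the roles of $k$ and $j$ may be interchanged or symmetrized). Proposition~\ref{prop:B-kj-rep} is designed to bridge exactly this gap — it identifies $\linspan(\{e_k\} \cup \{B_{k,j}\})$ with $\linspan(\{e_{k+j} : |k| \neq |j|, k \cdot j^\perp \neq 0\})$ — so the argument reduces to citing Proposition~\ref{prop:B-kj-rep} to justify that one round of the $A_j$-recursion corresponds to one round of the span-closure in~\cite{hairer_ergodicity_2006}, followed by a direct appeal to~\cite[Corollary 4.5]{hairer_ergodicity_2006}. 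No genuinely new estimate is needed; the proof is a short induction plus a citation.
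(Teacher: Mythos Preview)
Your proposal is correct and takes essentially the same approach as the paper: the paper's proof is a one-line citation to~\cite[Corollary 4.5]{hairer_ergodicity_2006} under Assumption~\ref{asmp:modes}, and you have simply unpacked why that citation suffices. Your invocation of Proposition~\ref{prop:B-kj-rep} is unnecessary here since the definition of $A_{j+1}$ and the statement of~\cite[Corollary 4.5]{hairer_ergodicity_2006} are already purely combinatorial in the wavenumbers, but it does no harm.
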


Inducting with Propositions~\ref{prop:malliavin-induction-base} and Proposition~\ref{prop:malliavin-inductive-step} and then using Proposition~\ref{prop:B-kj-rep} and Proposition~\ref{prop:A-j-span}, we get the following.
\begin{proposition}
    \label{prop:malliavin-inducted}
    For any $R>1$, there exists $\gamma_R >0$ such that the implications
    \begin{gather*}\<\ap,\M_T\ap\> \leq \ep \|\ap\|^2  \implies \max_{|k| \leq R} \sup_{t \in [T/2,T]} |\< e_k, J^*_{t,T} \ap\>| \leq \ep^{\gamma_R} \|\ap\|
    \\\<\ap,\M_T\ap\> \leq \ep \|\ap\|^2  \implies \max_{|k| \leq R} |\<\Theta_{e_k}(p_T) + \Delta e_k -  \nabla^\perp \Delta^{-1} v_T \cdot \nabla e_k - \nabla^{\perp} \Delta^{-1} e_k \cdot \nabla v_T, \ap\>| \leq \ep^{\gamma_R} \|\ap\|
    \end{gather*}
    hold modulo a $V^n$ controlled negligible family of events, where $v_T$ is as in~\eqref{eq:v-def}.
\end{proposition}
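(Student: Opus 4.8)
The plan is to iterate the base estimate Proposition~\ref{prop:malliavin-induction-base} and the inductive estimate Proposition~\ref{prop:malliavin-inductive-step} along the increasing sequence $A_0 \subseteq A_1 \subseteq \cdots$, using Proposition~\ref{prop:B-kj-rep} at each stage to convert bracket bounds against $\{e_k : k \in A_\ell\}$ and $\{B_{k,j} : k \in A_\ell,\, j \in F\}$ into bracket bounds against $\{e_{k'} : k' \in A_{\ell+1}\}$, and finally invoking Proposition~\ref{prop:A-j-span} to reach every $k$ with $|k| \le R$ after finitely many stages.

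Concretely, I would prove by induction on $\ell$ that there is $\gamma_\ell > 0$ such that, modulo a $V^n$-controlled negligible family of events, $\langle \ap, \M_T \ap \rangle \le \ep \|\ap\|^2$ implies $\max_{k \in A_\ell} \sup_{t \in [T/2,T]} |\langle e_k, J^*_{t,T}\ap\rangle| \le \ep^{\gamma_\ell}\|\ap\|$. The base case $\ell = 0$ is Proposition~\ref{prop:malliavin-induction-base} with, say, $\gamma_0 = 1/9$: the constant $C$ there is harmless, since replacing $C\ep^{1/8}$ by $\ep^{1/9}$ fails only on an event contained in $\{\ep > C^{-72}\}$, a family supported on $\ep$ bounded away from $0$ and hence $V^n$-controlled negligible. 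For the inductive step one first records that $A_\ell = -A_\ell$, which follows by induction from $A_0 = F = -F$ and the invariance of the defining condition under $(k,j) \mapsto (-k,-j)$. Applying the first implication of Proposition~\ref{prop:malliavin-inductive-step} with $\gamma = \gamma_\ell$ to each $k \in A_\ell$ (a finite set, so the finitely many resulting negligible families union to a negligible one), the inductive hypothesis yields $\max_{k \in A_\ell,\, j \in F} \sup_{t \in [T/2,T]} |\langle B_{k,j}, J^*_{t,T}\ap\rangle| \le \ep^{\gamma_\ell/24}\|\ap\|$. Since $A_{\ell+1}$ is a fixed finite set, Proposition~\ref{prop:B-kj-rep} (applied with $A = A_\ell$) expresses each $e_{k'}$, $k' \in A_{\ell+1}$, as a fixed finite linear combination with bounded coefficients of the $e_k$ ($k \in A_\ell$) and the $B_{k,j}$ ($k \in A_\ell$, $j \in F$); hence $\max_{k' \in A_{\ell+1}} \sup_{t} |\langle e_{k'}, J^*_{t,T}\ap\rangle| \le C_{\ell+1}\ep^{\gamma_\ell/24}\|\ap\|$, and absorbing $C_{\ell+1}$ as in the base case we may take $\gamma_{\ell+1} := \gamma_\ell/25$. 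This closes the induction.

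To finish, fix $R$. Since the $A_\ell$ increase to $\Z^2 \setminus \{0\}$ by Proposition~\ref{prop:A-j-span} and $\{k : 0 < |k| \le R\}$ is finite, there is $\ell = \ell(R)$ with $\{k : 0 < |k| \le R\} \subseteq A_\ell$; set $\gamma_R := \gamma_\ell/24$, which gives the first asserted implication a fortiori. For the second implication, feed the bound just obtained into the \emph{second} implication of Proposition~\ref{prop:malliavin-inductive-step} with $\gamma = \gamma_\ell$, for each $k$ with $|k| \le R$: modulo a $V^n$-controlled negligible family this gives $\sup_{t \in [T/2,T]} |\langle \Theta_{e_k}(p_t) + \Delta e_k - \nabla^\perp\Delta^{-1}v_t \cdot \nabla e_k - \nabla^\perp\Delta^{-1}e_k \cdot \nabla v_t,\, J^*_{t,T}\ap\rangle| \le \ep^{\gamma_\ell/24}\|\ap\|$, and evaluating at $t = T$ (where $J^*_{T,T} = \id$) yields the claimed bracket bound with the same exponent $\gamma_R$. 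The only step requiring care is the exponent bookkeeping — each stage divides $\gamma$ by a fixed constant and there are only finitely many stages, so $\gamma_R$ stays positive — together with the routine observations that finite unions of $V^n$-controlled negligible families are $V^n$-controlled negligible and that each $A_\ell$ is finite; I do not anticipate any genuine obstacle beyond this.
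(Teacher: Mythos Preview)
Your proposal is correct and matches the paper's approach exactly: the paper's proof consists of the single sentence ``Inducting with Propositions~\ref{prop:malliavin-induction-base} and~\ref{prop:malliavin-inductive-step} and then using Proposition~\ref{prop:B-kj-rep} and Proposition~\ref{prop:A-j-span}, we get the following,'' and you have faithfully unwound that induction. Your bookkeeping on constants (absorbing $C$ by slightly lowering the exponent, noting that the exceptional $\ep$ live in a bounded-away-from-zero range and hence give a negligible family), the check that $A_\ell = -A_\ell$, and the evaluation of the second implication at $t=T$ via $J^*_{T,T}=\id$ are all correct and precisely the details the paper leaves implicit.
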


If we did not have to consider the manifold directions, the first display of the above proposition would be sufficient to conclude Theorem~\ref{thm:malliavin-nondegenerate}. We will need to use the second display to control the manifold directions, as that is the only one in which we control overlaps of vectors which have nontrivial components in $TM$. We would like to isolate the $\Theta_{e_k}(p_T)$ part from the other contributions of the vector in the second display. Roughly, we should expect that the other terms, which all are in the $H^n(\T^2)$ direction, should be small as a consequence of the first display, which says that overlaps with vectors in the $H^n$ direction are small. The issue is that the first display only controls finitely many Fourier modes at a time, while $v_T$ is generically supported on infinitely many Fourier modes. It turns out that a soft argument together with the first display applied with infinitely many $R$ allow us to control the overlap of $\ap$ with all Fourier modes, which we now demonstrate.

\begin{lemma}\label{lem:soft-decay}
  Let $f \colon (0, 1) \times (0, 1) \to [0, 1]$ be monotone decreasing in the first variable and increasing in the second variable, with the property that $\lim_{y \to 0} f(x, y) = 0$ for every $x \in (0, 1)$. Then there is a continuous increasing function $r \colon (0, 1) \to [0, 1]$ such that $f(r(y), y) \leq r(y)$ for all $y \in (0, 1)$ and $\lim_{y \to 0} r(y) = 0$.
\end{lemma}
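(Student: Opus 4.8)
The plan is to reduce the statement to a one–dimensional monotonicity argument, followed by a routine mollification. For $y\in(0,1)$ let $S(y):=\{x\in(0,1):f(x,y)\le x\}$ and $s(y):=\inf S(y)$, with $\inf\emptyset:=1$. The key structural observation is that $S(y)$ is an ``up--set'': if $x\in S(y)$ and $x<x'<1$, then, using that $f$ is non-increasing in its first argument, $f(x',y)\le f(x,y)\le x<x'$, so $x'\in S(y)$. Hence $(s(y),1)\subseteq S(y)$, and it suffices to exhibit a continuous non-decreasing $r\colon(0,1)\to(0,1)$ with $r(y)>s(y)$ for every $y$ and $r(y)\to 0$ as $y\to 0^+$: then automatically $r(y)\in S(y)$, i.e.\ $f(r(y),y)\le r(y)$, and the codomain may be enlarged to $[0,1]$ as in the statement. (If $f(\cdot,y)\equiv 1$ for some $y$, then $S(y)=\emptyset$ and $s(y)=1$; on the terminal interval where this happens one sets $r\equiv 1$ and reads $f(1,\cdot)$ as the limit $\lim_{x\to 1^-}f(x,\cdot)\le 1$. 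This degenerate slice does not arise in the application, so I suppress it below and assume $S(y)\ne\emptyset$ for all $y$.)

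I then record three properties of $s$, all immediate from the hypotheses. First, $s$ is non-decreasing: for $y_1<y_2$, if $f(x,y_2)\le x$ then $f(x,y_1)\le f(x,y_2)\le x$ by monotonicity in the second variable, so $S(y_2)\subseteq S(y_1)$. Second, $s(0^+)=0$: given $\varepsilon\in(0,1)$, the hypothesis $\lim_{y\to0}f(\varepsilon/2,y)=0$ yields $y_0$ with $f(\varepsilon/2,y)\le\varepsilon/2$ for $y<y_0$, whence $\varepsilon/2\in S(y)$ and $s(y)\le\varepsilon/2$. Third, $s(y)<1$: picking $x_1$ with $f(x_1,y)<1$, any $x_2\in(\max(x_1,f(x_1,y)),1)$ satisfies $f(x_2,y)\le f(x_1,y)<x_2$, so $x_2\in S(y)$. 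Thus $s\colon(0,1)\to[0,1)$ is non-decreasing with $s(0^+)=0$, and it remains only to majorize it by an admissible $r$.

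For the mollification, put $\ell(y):=\min(y,\tfrac{1-y}{2})$, which is continuous, positive on $(0,1)$, and vanishes at both endpoints, and set
\[
  r_0(y):=\frac{1}{\ell(y)}\int_y^{y+\ell(y)} s(t)\,\dif t=\int_0^1 s\big(y+\ell(y)u\big)\,\dif u .
\]
Since $s$ is non-decreasing, $s(y)\le r_0(y)\le s(y+\ell(y))<1$ (note $y+\ell(y)=\min(2y,\tfrac{1+y}{2})<1$), and $r_0(y)\le s(y+\ell(y))\to 0$ as $y\to 0^+$; moreover $r_0$ is continuous because $t\mapsto\int^t s$ is continuous and $\ell$ is continuous and locally bounded below by a positive constant, and $r_0$ is non-decreasing because for each fixed $u\in[0,1]$ the map $y\mapsto y+\ell(y)u$ is non-decreasing (on $\{\ell(y)=y\}$ it equals $y(1+u)$, on $\{\ell(y)=\tfrac{1-y}{2}\}$ it equals $y(1-u/2)+u/2$, and the pieces match at $y=1/3$). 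Finally define
\[
  r(y):=r_0(y)+\min\Big(\tfrac{y}{2},\ \tfrac{1-r_0(y)}{2}\Big).
\]
Then $r(y)>r_0(y)\ge s(y)$, $r(y)\le\tfrac{1+r_0(y)}{2}<1$, and $r(y)\to 0$ as $y\to 0^+$; $r$ is continuous; and $r$ is non-decreasing, because $y\mapsto y+r_0(y)$ is strictly increasing, so $\{y+r_0(y)\le 1\}$ is an initial interval on which $r(y)=r_0(y)+y/2$ (non-decreasing), while on the complementary interval $r(y)=\tfrac{1+r_0(y)}{2}$ (non-decreasing), and the two formulas agree at the single crossing point. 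This $r$ has all the required properties.

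The content of the argument lies entirely in the first paragraph: the up--set structure of $S(y)$ together with the three monotonicity/limit properties of $s$. The only place requiring care is the smoothing step, which is also the main (though modest) obstacle: one must choose the mollification so that it simultaneously preserves monotonicity in $y$, keeps the majorant strictly below $1$ even though $s(y)$ may tend to $1$ as $y\to 1$, and still tends to $0$ as $y\to 0$; the shrinking window $\ell(y)$ and the capped correction $\min(y/2,(1-r_0)/2)$ are precisely what reconcile these demands.
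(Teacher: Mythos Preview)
Your proof is correct and takes a genuinely different route from the paper's. The paper gives a short, direct piecewise-linear construction: it chooses a sequence $y_n\downarrow 0$ with $f(n^{-1},y_n)\le n^{-1}$ (possible by the pointwise limit hypothesis), sets $r\equiv 1$ on $[y_2,1)$, and linearly interpolates the values $n^{-1}$ between the nodes $y_{n+1}$; the verification $f(r(y),y)\le f((n+1)^{-1},y_{n+1})\le (n+1)^{-1}\le r(y)$ is then a one-line monotonicity check. Your argument instead isolates the ``optimal'' threshold $s(y)=\inf\{x:f(x,y)\le x\}$, proves the up-set structure of $S(y)$ and the three monotonicity/limit properties of $s$, and then mollifies $s$ from above. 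This makes the structural content of the lemma explicit and yields an $r$ that hugs the minimal admissible choice, but the smoothing step---simultaneously preserving monotonicity, strict majorization of $s$, the limit at $0$, and the bound $r<1$---is noticeably more delicate than the paper's interpolation and accounts for most of your proof's length. Both arguments share the same mild imprecision in the degenerate slice $f(\cdot,y)\equiv 1$ (where one must set $r(y)=1$ and read $f(1,y)$ by extension); as you note, this does not occur in the application.
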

\begin{proof}
  Define $y_1 := 1$ and, for integer $n \geq 2$, choose $y_n \leq \frac{1}{2}y_{n-1}$ satisfying $f(n^{-1}, y_n) \leq n^{-1}$. Let
  \begin{equation*}
    r(y) :=
    \begin{cases}
      1 &\quad \text{if $y \geq y_2$}\\
      \theta n^{-1} + (1-\theta){(n+1)}^{-1} &\quad \text{if $y = \theta y_{n+1} + (1-\theta) y_{n+2}$ for $n \in \N$ and $\theta \in [0, 1]$.}
    \end{cases}
  \end{equation*}
  We observe that $r$ is continuous, defined on all of $(0, 1)$ because $y_n \to 0$ as $n \to \infty$, and $r(y) \to 0$ as $y \to 0$. For $y$ of the form $y = \theta y_{n+1} + (1-\theta) y_{n+2}$, we compute
  \[
    f(r(y), y) \leq f({(n+1)}^{-1}, y_{n+1}) \leq {(n+1)}^{-1} \leq r(y).\qedhere
  \]
\end{proof}

\begin{definition}
    We denote by $\pi_{H^n} \colon H^n \times TM \to H^n$ the projection onto the $H^n$ coordinate and $\pi_{TM} \colon H^n \times TM \to TM$ the projection onto the $TM$ coordinate.
\end{definition}

In the proposition below, we control infinitely many Fourier modes of $\ap$ at once. In particular, we show that with high probability a Sobolev norm of the $H^n$ component of $\ap$ is not too large. Note that the rate $g(\ep)$ can be arbitrarily poor, so the following bound is soft in $\varepsilon$.

\begin{proposition}
    \label{prop:H-n-2-malliavin}
    For any $q>0,$ $\eta \in (0,1),p_0 \in M$, there exists $C(p_0,q,\eta)>0$, locally bounded in $p_0$, and some $g\colon (0,1) \to [0,1]$ such that $\lim_{\ep \to 0} g(\ep) =0$ locally uniformly in $p_0$ and
    \[\P\Big( \exists \ap \in H^n \times T_{p_T}M, \|\ap\| =1 \land  \<\ap, \M_T\ap\> \leq \ep \land \|\pi_{H^n} \ap\|_{H^{n-2}} \geq g(\ep)\Big) \leq C \ep^q e^{\eta V^n(\omega_0)}.\]
\end{proposition}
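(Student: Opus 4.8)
The plan is to feed the mode-by-mode estimate of Proposition~\ref{prop:malliavin-inducted}, evaluated at the final time $t=T$ (where $J^*_{T,T}=\id$), into a trivial high-frequency tail bound, and then to diagonalize in the truncation parameter $R$. Fix $R>1$. The first implication of Proposition~\ref{prop:malliavin-inducted} at $t=T$ furnishes $\gamma_R>0$ and a $V^n$-controlled negligible family $\mathcal{H}^R_\ep$, whose complement does not depend on $\ap$ (the bad events in Propositions~\ref{prop:malliavin-induction-base} and~\ref{prop:malliavin-inductive-step} and in Theorem~\ref{thm:norris-lemma} are all $\ap$-independent), outside of which $\<\ap,\M_T\ap\>\leq\ep\|\ap\|^2$ forces $\max_{|k|\leq R}|\<e_k,\ap\>|\leq\ep^{\gamma_R}\|\ap\|$ for every $\ap$ simultaneously. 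Since $\<e_k,\ap\>=\<e_k,\pi_{H^n}\ap\>_{H^n}$ and the $e_k$ are orthogonal eigenfunctions of $-\Delta$, this bounds the individual low-frequency Fourier coefficients of $\pi_{H^n}\ap$ by $\lesssim|k|^{-2n}\ep^{\gamma_R}\|\ap\|$, and summing over $|k|\leq R$ (the series $\sum_{k}|k|^{-2n-4}$ converges) gives $\|\Pi_{\leq R}\pi_{H^n}\ap\|_{H^{n-2}}\leq C\ep^{\gamma_R}\|\ap\|$ with $C$ absolute; the complementary modes are handled deterministically via $\|\Pi_{>R}\pi_{H^n}\ap\|_{H^{n-2}}\leq R^{-2}\|\Pi_{>R}\pi_{H^n}\ap\|_{H^n}\leq R^{-2}\|\ap\|$. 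Thus on $(\mathcal{H}^R_\ep)^c$ every $\ap$ with $\|\ap\|=1$ and $\<\ap,\M_T\ap\>\leq\ep$ obeys $\|\pi_{H^n}\ap\|_{H^{n-2}}\leq C\ep^{\gamma_R}+R^{-2}$.

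It remains to let $R=R(\ep)\to\infty$ slowly as $\ep\to0$ while keeping the failure probability of order $\ep^q e^{\eta V^n(\omega_0)}$. Fix $q,\eta,p_0$. Using $V^n$-controlled negligibility of $\mathcal{H}^R_\ep$ at exponent $q+1$, there is a threshold $h(R)>0$, locally bounded below in $p_0$ and which we may take decreasing in $R$, such that $\ep\leq h(R)$ implies both $\P(\mathcal{H}^R_\ep)\leq\ep^q e^{\eta V^n(\omega_0)}$ and $C\ep^{\gamma_R}\leq R^{-2}$. Setting $R(\ep):=\sup\{R\geq1:h(R)\geq\ep\}$ for small $\ep$ (and $g(\ep):=1$ for $\ep$ bounded away from $0$, absorbing the trivial probability bound into $C$) and $g(\ep):=2R(\ep)^{-2}$, the bound of the previous paragraph applied with $R=R(\ep)$ shows that the event in the statement is contained in $\mathcal{H}^{R(\ep)}_\ep$, hence has probability at most $\ep^q e^{\eta V^n(\omega_0)}$, while $g(\ep)\to0$, locally uniformly in $p_0$, as $\ep\to0$. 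The monotonicity and continuity bookkeeping of this step is exactly what Lemma~\ref{lem:soft-decay} streamlines: one applies it to (a clipped version of) the weighted failure probability $f(x,\ep):=e^{-\eta V^n(\omega_0)}\,\P\big(\exists\,\ap\ \text{with}\ \|\ap\|=1,\ \<\ap,\M_T\ap\>\leq\ep,\ \|\pi_{H^n}\ap\|_{H^{n-2}}>x\big)$, which by the previous paragraph is decreasing in $x$, increasing in $\ep$, and tends to $0$ as $\ep\to0$ for each fixed $x$; the output $r$ then plays the role of $g$, up to the extra power of $\ep$ extracted above.

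The one genuine obstacle is that the negligibility constants in Proposition~\ref{prop:malliavin-inducted} depend on, and deteriorate with, $R$: the parameter $R$ can be neither held fixed (then $g(\ep)\not\to0$) nor taken too large (then the probability bound degrades). The room to let $R(\ep)\to\infty$ comes precisely from having $V^n$-controlled negligibility at \emph{every} polynomial rate, so that trading one power of $\ep$ absorbs the $R$-dependent constant. Everything else is routine: the passage from $\<e_k,\ap\>$ to a coefficient bound, the high-mode tail estimate, and the measurability of the supremum-type event (reduce to a countable dense set of admissible $\ap$).
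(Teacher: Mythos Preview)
Your proof is correct and follows essentially the same approach as the paper: control low modes of $\pi_{H^n}\ap$ via Proposition~\ref{prop:malliavin-inducted} evaluated at $t=T$, control high modes by the trivial $H^n\to H^{n-2}$ gain of $R^{-2}$, and then diagonalize $R=R(\ep)\to\infty$ slowly enough that the $R$-dependent negligibility constants are absorbed by an extra power of $\ep$. The paper organizes the diagonalization slightly differently---applying Lemma~\ref{lem:soft-decay} to $f(\delta,\ep)=\ep^{\gamma_{\delta^{-1}}}$ and then separately constructing a second rate to tame the constants $K_R$---whereas you build the threshold $h(R)$ directly; both are equivalent bookkeeping for the same idea.
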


\begin{proof}
    Fix $q>0, \eta \in (0,1)$.
    For $k \in \Z^2- \{0\},$ let
    \[\hat \ap_k := \<e_k, \ap\>.\]
    Then we have that
    \[\|\pi_{H^n} \ap\|_{H^{n-2}}^2 = \sum_{k \in \Z^2 \setminus \{0\}} |k|^{-4} |\hat \ap_k|^2 \leq R^{-4} + C\max_{|k| \leq R} |\hat \ap_k|^2,\]
    so
    \begin{align}
        &\P\Big( \exists \ap, \|\ap\| =1 \land  \<\ap, \M_T\ap\> \leq \ep \land \|\pi_{H^n} \ap\|_{H^{n-2}} \geq g(\ep)\Big)
        \notag\\&\qquad\leq \P\Big( \exists \ap, \|\ap\| =1 \land  \<\ap, \M_T\ap\> \leq \ep \land \max_{|k| \leq C(g(\ep))^{-1}} |\hat \ap_k|  \geq C^{-1} g(\ep)\Big).
        \label{eq:g-ep-comp-1}
    \end{align}
    From Proposition~\ref{prop:malliavin-inducted}, we have that for any $R>1$,
    \[ \P\Big( \exists \ap, \|\ap\| =1 \land  \<\ap, \M_T\ap\> \leq \ep \land \max_{|k| \leq R} |\hat \ap_k|  \geq \ep^{\gamma_R}\Big) \leq K_R \ep^{2q} e^{\eta V^n(\omega_0)},\]
    where $\gamma_R$ is without loss of generality decreasing in $R$ and $K_R$ increasing in $R$. Then let
    \[f(\delta, \ep) := \ep^{\gamma_{\delta^{-1}}},\]
    which is decreasing in the first variable, increasing in the second, and has for any fixed $\delta \in (0,1)$, $\lim_{\ep \to 0} f(\delta,\ep) = 0$. Thus, we can apply Lemma~\ref{lem:soft-decay}, to get an increasing $r(\ep)$ such that $\lim_{\ep \to 0} r(\ep) = 0$ and $f(r(\ep), \ep) \leq r(\ep)$. Then taking $g(\ep)$ so that $C^{-1} g(\ep) \geq r(\ep)$, we have
    \[\ep^{\gamma_{C (g(\ep))^{-1}}} = f(C^{-1} g(\ep), \ep) \leq f(r(\ep),\ep) \leq r(\ep) \leq C^{-1} g(\ep),\]
    and so
    \begin{align}
         &\P\Big( \exists \ap, \|\ap\| =1 \land  \<\ap, \M_T\ap\> \leq \ep \land \max_{|k| \leq C(g(\ep))^{-1}} |\hat \ap_k|  \geq C^{-1} g(\ep)\Big)
         \notag\\&\qquad\leq  \P\Big( \exists \ap, \|\ap\| =1 \land  \<\ap, \M_T\ap\> \leq \ep \land \max_{|k| \leq C(g(\ep))^{-1}} |\hat \ap_k|  \geq \ep^{\gamma_{C (g(\ep))^{-1}}}\Big)
         \notag\\&\qquad \leq K_{C (g(\ep))^{-1}} \ep^{2q} e^{\eta V^n(\omega_0)}.
         \label{eq:g-ep-comp-2}
    \end{align}
    Let $h(\ep)$ an increasing function such that $\lim_{\ep \to 0} h(\ep) = 0$ and such that
    \begin{equation}
    \label{eq:h-ep-prop}
        K_{h(\ep)^{-1}} \leq K_1 \ep^{-q}
    \end{equation}
    and then let
    \[g(\ep) = C\max(r(\ep), h(\ep)).\]
    Then by~\eqref{eq:g-ep-comp-1},~\eqref{eq:g-ep-comp-2}, and~\eqref{eq:h-ep-prop}, we have that
    \[\P\Big( \exists \ap, \|\ap\| =1 \land  \<\ap, \M_T\ap\> \leq \ep \land \|\pi_{H^n} \ap\|_{H^{n-2}} \geq g(\ep)\Big)  \leq C \ep^q e^{\eta V^n(\omega_0)},\]
    as desired.
\end{proof}

Below we leverage the above proposition, which gives control on all Fourier modes of $\ap$, together with the second display of Proposition~\ref{prop:malliavin-inducted} in order to control the overlaps of $\ap$ with the $TM$ directions. We note that the qualitative rate of Proposition~\ref{prop:H-n-2-malliavin} only allows for a qualitative rate and qualitative stochastic integrability below.

\begin{proposition}
\label{prop:tangent-space-overlap-malliavin}
    For any $\eta\in (0,1)$ and $p_0 \in M$, there exists $g\colon (0,1) \to [0,1]$ such that $\lim_{\ep \to 0} g(\ep) =0$ locally uniformly in $p_0$ and such that
    \[\P\Big( \exists \ap \in H^n \times T_{p_T}M, \|\ap\| =1 \land  \<\ap, \M_T\ap\> \leq \ep \land \|\pi_{TM} \ap\| \geq g(\ep)\Big) \leq  g(\ep) e^{\eta V^n(\omega_0)}.\]
\end{proposition}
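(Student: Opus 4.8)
The plan is to carry out the strategy sketched in Subsection~\ref{ss:proof-of-smoothing}: use the second implication of Proposition~\ref{prop:malliavin-inducted} to control, for $|k|\le R$ (with $R$ the wavenumber from Assumption~\ref{asmp:nondegen}), the overlap of $\ap$ with $\Theta_{e_k}(p_T)+\Delta e_k-\nabla^\perp\Delta^{-1}v_T\cdot\nabla e_k-\nabla^\perp\Delta^{-1}e_k\cdot\nabla v_T$; then invoke Proposition~\ref{prop:H-n-2-malliavin} to argue that the contributions of the $H^n$-valued terms $\Delta e_k$, $\nabla^\perp\Delta^{-1}v_T\cdot\nabla e_k$, $\nabla^\perp\Delta^{-1}e_k\cdot\nabla v_T$ are small, thereby isolating the metric pairing $g(\Theta_{e_k}(p_T),\pi_{TM}\ap)$; and finally invert using the quantitative spanning bound of Assumption~\ref{asmp:nondegen}. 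Fix $\eta\in(0,1)$, $p_0\in M$, and some $q>0$, and write $\psi_k:=\Delta e_k-\nabla^\perp\Delta^{-1}v_T\cdot\nabla e_k-\nabla^\perp\Delta^{-1}e_k\cdot\nabla v_T$ with $v_T$ as in~\eqref{eq:v-def}, so that the vector in the second implication of Proposition~\ref{prop:malliavin-inducted} is exactly $(\psi_k,\Theta_{e_k}(p_T))\in H^n\times T_{p_T}M$.

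First I would record the decomposition $\<(\psi_k,\Theta_{e_k}(p_T)),\ap\>=\<\psi_k,\pi_{H^n}\ap\>_{H^n}+g(\Theta_{e_k}(p_T),\pi_{TM}\ap)$, where $g(\cdot,\cdot)$ is the Riemannian metric. Integrating by parts in two of the $n$ derivatives and using Poincar\'e gives the duality bound $|\<\psi_k,\pi_{H^n}\ap\>_{H^n}|\le C\|\psi_k\|_{H^{n+2}}\|\pi_{H^n}\ap\|_{H^{n-2}}$, and since $e_k$ is smooth and $|k|\le R$ is bounded, the Sobolev product rule gives $\|\psi_k\|_{H^{n+2}}\le CZ$ with $Z:=1+\|v_T\|_{H^{n+2}}$, where $\E[Z^q]\le Ce^{\eta V^n(\omega_0)}$ by Proposition~\ref{prop:omega-bounds} (equation~\eqref{eq:Hn-regularization}, $T$ fixed) together with Brownian moment bounds, and $C$ does not depend on $p_0$. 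Combining with the second implication of Proposition~\ref{prop:malliavin-inducted}, I obtain that modulo a $V^n$-controlled negligible family of events, every unit $\ap$ with $\<\ap,\M_T\ap\>\le\ep$ satisfies
\[\max_{|k|\le R}\big|g(\Theta_{e_k}(p_T),\pi_{TM}\ap)\big|\ \le\ \ep^{\gamma_R}+CZ\,\|\pi_{H^n}\ap\|_{H^{n-2}}.\]

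Next I would feed in Proposition~\ref{prop:H-n-2-malliavin}: for unit $\ap$ with $\<\ap,\M_T\ap\>\le\ep$, outside an event of probability $\le C\ep^q e^{\eta V^n(\omega_0)}$ one has $\|\pi_{H^n}\ap\|_{H^{n-2}}\le g_1(\ep)$ with $g_1(\ep)\to0$ locally uniformly in $p_0$. On the further event $\{Z<g_1(\ep)^{-1/2}\}$, whose complement has probability $\le g_1(\ep)^{q/2}\E[Z^q]\le Cg_1(\ep)^{q/2}e^{\eta V^n(\omega_0)}$, this gives $\max_{|k|\le R}|g(\Theta_{e_k}(p_T),\pi_{TM}\ap)|\le \ep^{\gamma_R}+Cg_1(\ep)^{1/2}=:g_2(\ep)\to0$. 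Writing $m_T:=\big(\inf_{\|v\|=1}\max_{|k|\le R}|g(\Theta_{e_k}(p_T),v)|\big)^{-1}$, homogeneity gives $\|\pi_{TM}\ap\|\le m_T\,g_2(\ep)$ for all such $\ap$, and Assumption~\ref{asmp:nondegen} gives $\E[m_T^q]\le Ce^{\eta V^n(\omega_0)}$, so on $\{m_T<g_2(\ep)^{-1/2}\}$ I get $\|\pi_{TM}\ap\|\le g_2(\ep)^{1/2}$ for every unit $\ap$ with $\<\ap,\M_T\ap\>\le\ep$. Collecting the four exceptional events (the negligible family, the bad event of Proposition~\ref{prop:H-n-2-malliavin}, $\{Z\ge g_1(\ep)^{-1/2}\}$, $\{m_T\ge g_2(\ep)^{-1/2}\}$), their total probability is $\le B(\ep)e^{\eta V^n(\omega_0)}$ with $B(\ep):=C\ep^q+Cg_1(\ep)^{q/2}+Cg_2(\ep)^{q/2}\to0$ locally uniformly in $p_0$; taking $g(\ep):=\min\big(1,\max(g_2(\ep)^{1/2},B(\ep))\big)$ makes the same function serve as both the threshold and the probability bound, which concludes the proof.

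The main obstacle is the merely qualitative $\ep$-dependence of the rate $g_1$ supplied by Proposition~\ref{prop:H-n-2-malliavin}: since $\|v_T\|_{H^{n+2}}$ and $m_T$ are not bounded but only enjoy good polynomial moments, one cannot pair them against a fixed power of $\ep$. The resolving device is the two-level thresholding above — bounding $Z$ and $m_T$ by $g_i(\ep)^{-1/2}$ at the cost of only $g_i(\ep)^{1/2}$ in the estimate — together with the bookkeeping needed so that the single function $g$ simultaneously controls the threshold, the error, and the probability while retaining local uniformity in $p_0$ (which holds because all constants from Propositions~\ref{prop:malliavin-inducted},~\ref{prop:H-n-2-malliavin} and Assumption~\ref{asmp:nondegen} are locally bounded in $p_0$, and $R,\gamma_R$ are fixed).
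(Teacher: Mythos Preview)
Your proof is correct and follows essentially the same route as the paper: decompose the pairing from Proposition~\ref{prop:malliavin-inducted} into the $H^n$ part and the $TM$ part, control the $H^n$ part via Proposition~\ref{prop:H-n-2-malliavin} together with a moment bound on $\|v_T\|$, then invert with Assumption~\ref{asmp:nondegen}, each time splitting off an event where the unbounded random factor exceeds $g_i(\ep)^{-1/2}$. One minor indexing slip: the term $\nabla^\perp\Delta^{-1}e_k\cdot\nabla v_T$ forces $\|\psi_k\|_{H^{n+2}}\le C(1+\|v_T\|_{H^{n+3}})$ rather than $H^{n+2}$ (the paper uses $H^{n+3}$), but this is harmless since~\eqref{eq:Hn-regularization} still supplies the needed moments.
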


\begin{proof}
    Fix $\eta \in (0,1)$. Let $R>1$ be such that Assumption~\ref{asmp:nondegen} applies. By Proposition~\ref{prop:malliavin-inducted}, applied with $q=1$, there exists $\gamma>0$ such that
    \begin{align*}
      &\P\Big(\exists \ap, \|\ap\| =1 \land  \<\ap, \M_T\ap\> \leq \ep \land  \max_{|k| \leq R} |\<\Theta_{e_k}(p_T) + \Delta e_k - B(v_T, e_k), \ap\>| \geq \ep^{\gamma} \Big) \leq C \ep  e^{\eta V^n(\omega_0)},
    \end{align*}
    where $B(u, v) := \nabla^\perp \Delta^{-1} u \cdot \nabla v + \nabla^\perp \Delta^{-1} v \cdot \nabla u$ denotes the symmetrized nonlinearity.

    Thus for any $g(\ep) \geq 2\ep^{\gamma}$, we have that
    \begin{align}
        &\P\Big(\exists \ap, \|\ap\| =1 \land  \<\ap, \M_T\ap\> \leq \ep \land  \max_{|k| \leq R} |\<\Theta_{e_k}(p_T),  \ap\>| \geq g(\ep) \Big) \leq  C \ep  e^{\eta V^n (\omega_0)}
        \notag\\& \qquad+ \P\Big(\exists \ap, \|\ap\| =1 \land  \<\ap, \M_T\ap\> \leq \ep \land  \max_{|k| \leq R} |\<\Delta e_k -  B(v_T, e_k),  \ap\>| \geq g(\ep) \Big)
        \notag\\&\quad\leq C \ep  e^{\eta V^n (\omega_0)}+ \P\Big(\exists \ap, \|\ap\| =1 \land  \<\ap, \M_T\ap\> \leq \ep \land  C(1+ \|v_T\|_{H^{n+3}}) \|\ap\|_{H^{n-2}} \geq g(\ep) \Big).
        \label{eq:vector-field-overlap-comp-1}
    \end{align}
    Then
    \begin{align}
        &\P\Big(\exists \ap, \|\ap\| =1 \land  \<\ap, \M_T\ap\> \leq \ep \land  C(1+ \|v_T\|_{H^{n+3}}) \|\ap\|_{H^{n-2}} \geq g(\ep) \Big)
        \notag\\&\quad\leq \P\Big(\exists \ap, \|\ap\| =1 \land  \<\ap, \M_T\ap\> \leq \ep \land \|\ap\|_{H^{n-2}} \geq g(\ep)^{1/2} \Big)
        \notag\\&\qquad+\P\Big( C(1+ \|v_T\|_{H^{n+3}})  \geq g(\ep)^{-1/2}\Big)
        \notag\\&\quad\leq C \ep  e^{\eta V^n (\omega_0)} + C g(\ep) e^{\eta V^n (\omega_0)} \leq Cg(\ep) e^{\eta V^n(\omega_0)},
        \label{eq:vector-field-overlap-comp-2}
    \end{align}
    where we choose $g(\ep)^{1/2}$ as in Proposition~\ref{prop:H-n-2-malliavin} and for the other term we use Chebyshev and Proposition~\ref{prop:omega-bounds}. Thus combining~\eqref{eq:vector-field-overlap-comp-1} and \eqref{eq:vector-field-overlap-comp-2}, we have
    \[\P\Big(\exists \ap, \|\ap\| =1 \land  \<\ap, \M_T\ap\> \leq \ep \land  \max_{|k| \leq R} |\<\Theta_{e_k}(p_T),  \ap\>| \geq g(\ep) \Big) \leq Cg(\ep) e^{\eta V^n(\omega_0)}.\]
    Then, using Assumption~\ref{asmp:nondegen}, we compute
    \begin{align*}
        &\P\Big( \exists \ap, \|\ap\| =1 \land  \<\ap, \M_T\ap\> \leq \ep \land \|\pi_{TM} \ap\| \geq g(\ep)^{1/2}\Big)
        \\&\qquad\qquad\leq \P\Big( \exists \ap, \|\ap\| =1 \land  \<\ap, \M_T\ap\> \leq \ep \land \max_{|k| \leq R} |\<\Theta_{e_k}(p_T),  \ap\>| \geq g(\ep)\Big)
        \\&\qquad\qquad \qquad + \Big( \exists \ap, \|\ap\| =1 \land  \<\ap, \M_T\ap\> \leq \ep \land \sup_{v \in T_{p_T} M, \|v\|=1} \big(\max_{|k| \leq R}  |\<\Theta_{e_k}(p_T), v\>\big)^{-1} \geq g(\ep)^{-1/2}\Big)
        \\&\leq Cg(\ep) e^{\eta V^n(\omega_0)} \leq C g(\ep)^{1/2} e^{\eta V^n(\omega_0)},
    \end{align*}
    thus we conclude, after redefining $g(\ep)$.
\end{proof}

The proof of Theorem~\ref{thm:malliavin-nondegenerate} now follows straightforwardly. 

\begin{proof}[Proof of Theorem~\ref{thm:malliavin-nondegenerate}]
      We note that
    \begin{align*}
        \P\Big(\inf_{\substack{\|\ap\| = 1\\ \|\Pi \ap\| \geq \delta}} \<\ap,\M_T \ap\> <\ep\Big) &\leq \P\Big( \exists \ap, \|\ap\| =1 \land  \<\ap, \M_T\ap\> \leq \ep \land \|\pi_{TM} \ap\| \geq \delta/2\Big)
        \\&\quad+  \P\Big( \exists \ap, \|\ap\| =1 \land  \<\ap, \M_T\ap\> \leq \ep \land \|\pi_{H^n} \ap\|_{H^{n-2}} \geq C^{-1}\delta\Big)
        \\&\leq  \P\Big( \exists \ap, \|\ap\| =1 \land  \<\ap, \M_T\ap\> \leq \ep \land \|\pi_{TM} \ap\| \geq g(\ep)\Big)
        \\&\quad + \P\Big( \exists \ap, \|\ap\| =1 \land  \<\ap, \M_T\ap\> \leq \ep \land \|\pi_{H^n} \ap\|_{H^{n-2}} \geq g(\ep)\Big)
        \\&\leq g(\ep) e^{\eta V^n_\alpha(\omega_0)} + C \ep^p e^{\eta V^n_\alpha(\omega_0)}
        \\&\leq C g(\ep) e^{\eta V^n_\alpha(\omega_0)},
    \end{align*}
    where the second inequality holds for $\ep$ small enough and for third inequality we use Proposition~\ref{prop:H-n-2-malliavin} and Proposition~\ref{prop:tangent-space-overlap-malliavin}. Redefining $g(\ep)$ we conclude~\eqref{eq:almost-invertible-malliavin}.
\end{proof}

\subsection{A unit-time smoothing estimate}

Our goal for this subsection is to prove the following smoothing estimate, under Assumptions~\ref{asmp:dynamic-bounds} and~\ref{asmp:nondegen}. This argument follows very similarly to~\cite[Section 4.6]{hairer_ergodicity_2006}, with the primary difference beginning in the proof of Proposition~\ref{prop:size-of-error}, where we need to compensate for the worse stochastic integrability we have in Theorem~\ref{thm:malliavin-nondegenerate}. The main input to this argument is the bound on the Malliavin matrix given by Theorem~\ref{thm:malliavin-nondegenerate}.

\begin{theorem}\label{thm:smoothing}
    For all $\eta>0, p_0 \in M, \gamma \in (0,1)$, there exists $C(\eta,\gamma,p_0)>0$, bounded locally uniformly in $p_0$, such that for all Fr\'echet differentiable observables $\varphi\colon H^n(\T^2) \times M \to \R,$
    \[\|\nabla P_1 \varphi(\omega_0, p_0)\|_{H^5} \leq e^{\eta V^n(\omega_0)} \Big(C \sqrt{P_1 |\varphi|^2(\omega_0,p_0)} + \gamma \sqrt{P_1 \|\nabla \varphi\|_{H^n}^2(\omega_0,p_0)}\Big).\]
\end{theorem}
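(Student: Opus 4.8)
\textbf{Proof plan for Theorem~\ref{thm:smoothing}.}

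The plan is to follow the approximate-Malliavin-control scheme of~\cite[Section~4.6]{hairer_ergodicity_2006}, adapted to the coupled manifold system and to the weaker stochastic integrability of Theorem~\ref{thm:malliavin-nondegenerate}. Fix a Fr\'echet differentiable $\varphi$ and an initial point $(\omega_0,p_0)$, and fix a direction $\xi \in H^n(\T^2)\times T_{p_0}M$ with $\|\xi\|_{H^5} \le 1$. The starting identity is the chain rule $D_\xi (P_1\varphi)(\omega_0,p_0) = \E[ \<\nabla\varphi(\omega_1,p_1), J_{0,1}\xi\>]$, where $J_{0,1}$ is the linearization from Definition~\ref{defn:linearization}. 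The goal is to rewrite $J_{0,1}\xi$ (inside the expectation against $\nabla\varphi$) as a Malliavin derivative in direction $v$ plus a small residual $\rho$: we seek an adapted control $v \in L^2([0,1],\R^F)$ with $\mathcal D^v(\omega_1,p_1) = J_{0,1}\xi - \rho$, so that by the Malliavin integration-by-parts formula (the approximate version~\eqref{eq:integration-by-parts}, really a Skorokhod integral identity),
\begin{equation*}
  D_\xi(P_1\varphi)(\omega_0,p_0) = \E\Big[\varphi(\omega_1,p_1)\!\int_0^1\! v_s\,\dif W_s\Big] + \E\big[\<\nabla\varphi(\omega_1,p_1),\rho\>\big].
\end{equation*}
The first term is controlled by Cauchy--Schwarz by $\sqrt{\E[\varphi^2]}\cdot\sqrt{\E[(\int v\,\dif W)^2]}$, i.e.\ by $\sqrt{P_1|\varphi|^2}$ times the ($L^2$) cost of the control; the second by $\sqrt{P_1\|\nabla\varphi\|_{H^n}^2}$ times $\sqrt{\E\|\rho\|^2}$. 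So the whole theorem reduces to two estimates: (a) the cost of the control $v$ is bounded by $Ce^{\eta V^n(\omega_0)}$, and (b) the residual has $\E\|\rho\|_{H^n}^2 \le \gamma^2 e^{2\eta V^n(\omega_0)}$, for the prescribed $\gamma$.

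The control is constructed exactly as in~\cite[Section~4.8]{hairer_ergodicity_2006}: split $[0,1]$ at $1/2$, do nothing on $[1/2,1]$ (so $J_{1/2,1}$ simply transports the residual forward, using~\eqref{eq:J-moment-bound-asmp}), and on $[0,1/2]$ set $v$ to be the (regularized) solution of the least-squares problem associated to $A_{0,1/2}$, concretely $v = A_{0,1/2}^*(\M_{1/2}+\beta)^{-1}J_{0,1/2}\xi$ for a small parameter $\beta>0$ to be chosen. With this choice the residual reduces to $\rho = J_{1/2,1}\,\beta(\M_{1/2}+\beta)^{-1}J_{0,1/2}\xi$. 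As the excerpt's Subsection~\ref{ss:proof-of-smoothing} emphasizes, the cost bound (a) is \emph{unchanged} from~\cite[Section~4.8]{hairer_ergodicity_2006}: the worse stochastic integrability and the manifold coordinates do not affect the argument, which only uses the a priori bounds of Proposition~\ref{prop:omega-bounds} (and~\eqref{eq:J-moment-bound-asmp}, \eqref{eq:J2-moment-bound-asmp}, \eqref{eq:theta-moment-bound-asmp}) together with the smoothing bound~\eqref{eq:J-smoothing-bound-asmp}; so I would cite that argument essentially verbatim, checking only that the manifold terms fit (they do, since the linearized equation~\eqref{eq:manifold-ode} for the $p$-component is linear with coefficients controlled by Assumption~\ref{asmp:dynamic-bounds}). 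The point where I must diverge from~\cite{hairer_ergodicity_2006} is step (b), because Theorem~\ref{thm:malliavin-nondegenerate} only gives $g(\ep)$-type control, not $\ep^q$.

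\textbf{The residual bound is the main obstacle.} Here I follow the reduction sketched in Subsection~\ref{ss:proof-of-smoothing}: since $\|\beta(\M_{1/2}+\beta)^{-1}\|\le 1$ deterministically and $J_{1/2,1}$ has all moments bounded (by~\eqref{eq:J-moment-bound-asmp}), it suffices to bound $\E\|\beta(\M_{1/2}+\beta)^{-1}J_{0,1/2}\xi\|_{H^n}^2$. One splits this using the projection $\Pi_R$ of Theorem~\ref{thm:malliavin-nondegenerate} into a \emph{low-mode} and a \emph{high-mode} part. For the high-mode part ($\|\Pi_R\ap\| < \delta$ on the relevant vectors), one exploits that $\beta(\M_{1/2}+\beta)^{-1}$ contracts together with the fact that the $H^n$-norm of $J_{0,1/2}\xi$ (which started with only $H^5$-size) can be bounded using the smoothing estimate~\eqref{eq:J-smoothing-bound-asmp}, and that the tail $\|\Pi_{>R}\|$ of $J_{0,1/2}\xi$ in $H^n$ is $\le R^{-2}\|J_{0,1/2}\xi\|_{H^{n+1}}$ by interpolation — this term is made $\le \gamma/2$ by choosing $R$ large. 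For the low-mode part, on the event $\inf_{\|\ap\|=1,\|\Pi_R\ap\|\ge\delta}\<\ap,\M_{1/2}\ap\> \ge \ep$ one has $\|\beta(\M_{1/2}+\beta)^{-1}\Pi_R\| \lesssim \beta/\ep$ (interpolating the deterministic bound $\le 1$ with the nondegeneracy $\M_{1/2}^{-1}\le \ep^{-1}$ on low modes), so choosing $\beta$ small relative to $\ep$ makes this contribution $\le \gamma/2$; and the complementary event has probability $\le g(\ep)e^{\eta V^n(\omega_0)}$ by Theorem~\ref{thm:malliavin-nondegenerate}, on which one uses the crude deterministic bound $\le 1$ together with moment bounds on $\|J_{0,1/2}\xi\|_{H^n}$ and Cauchy--Schwarz. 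The quantitative order of choices is: given $\gamma$, first fix $R$ large and $\delta$; then fix $\ep$ small enough that $g(\ep)$ (times the $V^n$-moment factor) is small; then fix $\beta$ small relative to $\ep$; the constant $C(\eta,\gamma,p_0)$ absorbing the $V^n$-moments then comes out via Cauchy--Schwarz from Proposition~\ref{prop:omega-bounds} and Corollary~\ref{cor:super-lyapunov}. Since the whole argument for fixed $\xi$ is uniform in $\|\xi\|_{H^5}\le 1$, taking the supremum over $\xi$ gives the stated bound on $\|\nabla P_1\varphi\|_{H^5}$. Throughout, the only genuinely new ingredient relative to~\cite{hairer_ergodicity_2006,hairer_theory_2011} is the soft $g(\ep)$-integrability bookkeeping and the verification that the $TM$-coordinates are harmless, both of which the preceding subsections have set up.
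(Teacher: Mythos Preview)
Your plan matches the paper's proof almost exactly: the control $v^\beta = A_{0,1/2}^*(\M_{1/2}+\beta)^{-1}J_{0,1/2}\xi$, the residual $\rho^\beta = \beta J_{1/2,1}(\M_{1/2}+\beta)^{-1}J_{0,1/2}\xi$, and the reduction to the two propositions (cost of control, size of error) are identical to Propositions~\ref{prop:cost-of-control} and~\ref{prop:size-of-error}. Two technical corrections are needed.

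First, $v^\beta$ is \emph{not} adapted (it depends on $\M_{1/2}$, hence on the entire noise on $[0,1/2]$); you already flag the Skorokhod integral, but the cost bound then genuinely requires moment bounds on the Malliavin derivative $\mathcal D^k_s v^\beta$, which the paper obtains from~\eqref{eq:J2-moment-bound-asmp} (see the proof of Proposition~\ref{prop:cost-of-control}).

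Second, and more substantively, your high-mode step in the residual bound tries to use smoothing of $J_{0,1/2}$ to make $\|\Pi_{>R}J_{0,1/2}\xi\|_{H^n}$ small via $\|J_{0,1/2}\xi\|_{H^{n+1}}$. But~\eqref{eq:J-smoothing-bound-asmp} only gives smoothing for $J_{T/2,3T/4}$, not for linearizations starting at time $0$: since $\omega_0$ is only in $H^n$, the bound $\|J_{0,1/2}\|_{L^2\to H^{n+1}}$ is not controlled by $V^n(\omega_0)$. The paper instead inserts $\Pi_R+\Pi_{\geq R}$ \emph{after} the resolvent and before $J_{1/2,1}$, and bounds $\|J_{1/2,1}\Pi_{\geq R}\|_{H^n\to H^n}\le CR^{-1}$ via Lemma~\ref{lem:linearization-smoothing} (which uses smoothing on $[1/2,1]$, where it is available). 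The low-mode piece is then Lemma~\ref{lem:low-modes-low-error}, whose proof is the interpolation-on-events argument you sketch --- with the refinement that one works directly with the vector $\psi^\beta=\beta(\M_{1/2}+\beta)^{-1}J_{0,1/2}\xi$ and the identity $\langle\psi^\beta,\M_{1/2}\psi^\beta\rangle\le\beta\|J_{0,1/2}\xi\|^2$, rather than trying to bound $\|\beta(\M_{1/2}+\beta)^{-1}\Pi_R\|$ as an operator norm (which is not obviously $\lesssim\beta/\ep$, since the resolvent does not preserve the low-mode subspace).
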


We note that Proposition~\ref{prop:manifold-processes-good} and Theorem~\ref{thm:smoothing} imply Proposition~\ref{prop:unit-time-smoothing}.

Let $\ap \in H^n(\T^2) \times T_{p_0} M$ with $\|\ap\|=1$ arbitrary. Recalling Definition~\ref{defn:A-def} of $A_{0,t}$, for any $\beta>0$, for $t \in [0,1/2]$ we define
\[v^\beta_t:= A^*_{0,1/2}(\M_{1/2} + \beta)^{-1} J_{0,1/2} \ap.\]
We extend $v^\beta_t$ by $0$ on $[1/2,1]$. Note then that for any $\beta>0$, $v^\beta_t \in L^2([0,1],\R^F)$. We then define
\[\rho^\beta := J_{0,1} \ap - A_{0,1} v^\beta.\]
We then have the fundamental approximate integration by parts computation,
\begin{align}
    \<\ap, \nabla P_1\varphi(\omega_0,p_0)\> &= \E \<J_{0,t}\ap, \nabla \varphi(\omega_t,p_t)\>
    \notag\\&= \E \<A_{0,1} v^\beta, \nabla \varphi(\omega_t,p_t)\>+\E \<\rho^\beta, \nabla \varphi(\omega_t,p_t)\>
    \notag\\&= \E \big(\varphi(\omega_t,p_t) \delta(v^\beta)\big) +\E \<\rho^\beta, \nabla \varphi(\omega_t,p_t)\>
    \notag\\&\leq \big(\E \delta(v^\beta)^2\big)^{1/2} \sqrt{P_1 |\varphi|^2(\omega_0,p_0)}+  \Big(\E \|\rho^\beta\|_{H^n}^2\Big)^{1/2}  \sqrt{P_1 \|\nabla \varphi\|_{H^n}^2(\omega_0,p_0)},
    \label{eq:integration-by-parts}
\end{align}
where $\delta(v^\beta)$ is the Skorokhod integral of $v^\beta$ from $0$ to $1$. We then claim the following two propositions.

\begin{proposition}
    \label{prop:size-of-error}
    For all $\eta, \gamma>0$ there exists $\beta_0(\eta,p_0,\gamma)>0$, locally lower bounded in $p_0$, such that for all $0<\beta \leq \beta_0,$
    \[\E \|\rho^\beta\|_{H^n}^2 \leq \gamma e^{\eta V^n(\omega_0)}.\]
\end{proposition}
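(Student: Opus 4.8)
The starting point is the identity $\rho^\beta = J_{0,1}\ap - A_{0,1}v^\beta$, where by construction of $v^\beta_t = A^*_{0,1/2}(\M_{1/2}+\beta)^{-1}J_{0,1/2}\ap$ (supported on $[0,1/2]$) we have, using the semigroup property $J_{0,1} = J_{1/2,1}J_{0,1/2}$ and the definition of $A_{0,1/2}$, that
\[
\rho^\beta = J_{1/2,1}\big(J_{0,1/2}\ap - A_{0,1/2}v^\beta\big) = J_{1/2,1}\,\beta(\M_{1/2}+\beta)^{-1}J_{0,1/2}\ap.
\]
This is the ``simple argument'' alluded to in Subsection~\ref{ss:proof-of-smoothing}: the whole error reduces to the single vector $\xi^\beta := \beta(\M_{1/2}+\beta)^{-1}J_{0,1/2}\ap$, which is then propagated by $J_{1/2,1}$. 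Since $\|J_{1/2,1}\|_{H^n\to H^n}$ has all moments bounded by $Ce^{\eta V^n(\omega_0)}$ (Assumption~\ref{asmp:dynamic-bounds}), by Cauchy--Schwarz it suffices to prove $\E\|\xi^\beta\|_{H^n}^4 \leq \gamma e^{\eta V^n(\omega_0)}$ once $\beta$ is small enough (absorbing the $J_{1/2,1}$ moment via Cauchy--Schwarz and adjusting $\eta$).

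The key point for $\xi^\beta$ is the interpolation between two bounds: the deterministic operator bound $\|\beta(\M_{1/2}+\beta)^{-1}\|_{\mathrm{op}} \leq 1$, valid always, and the nondegeneracy bound of Theorem~\ref{thm:malliavin-nondegenerate}, which controls $(\M_{1/2})^{-1}$ on finite-dimensional subspaces. Concretely, split $\xi^\beta = \Pi_R\xi^\beta + (I-\Pi_R)\xi^\beta$ for the projection $\Pi_R$ onto $TM \oplus \linspan\{e_k : |k|\le R\}$. The high-mode part $(I-\Pi_R)\xi^\beta$ is small in $H^n$-norm compared to its $H^{n+\epsilon}$-norm, but we must be careful: $\xi^\beta$ need not gain regularity. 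Instead one uses that $\|(I-\Pi_R)\xi^\beta\|_{H^n} \le \|(I-\Pi_R)\beta(\M_{1/2}+\beta)^{-1}J_{0,1/2}\ap\|_{H^n}$ and leverages the smoothing bound $\|J_{0,1/2}\|_{L^2\to H^{n+1}}$ from~\eqref{eq:J-smoothing-bound-asmp} together with $\|\beta(\M_{1/2}+\beta)^{-1}\|_{\mathrm{op}}\le 1$ — actually the cleaner route is to first write $J_{0,1/2} = J_{1/4,1/2}J_{0,1/4}$, use the $L^2\to H^{n+1}$ smoothing of $J_{1/4,1/2}$ to get a vector bounded in $H^{n+1}$, apply the contraction $\beta(\M_{1/2}+\beta)^{-1}$ which preserves the $H^n$ bound and at least does not destroy it, and then note $\|(I-\Pi_R)w\|_{H^n} \le R^{-1}\|w\|_{H^{n+1}}$. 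This makes the high-mode part $\le CR^{-1}e^{\eta V^n(\omega_0)}$ in expectation, small for $R$ large.

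For the low-mode part $\Pi_R\xi^\beta$, one argues as in~\cite[Section 4.8]{hairer_ergodicity_2006}: on the event $E_\ep := \{\inf_{\|\ap\|=1, \|\Pi_R\ap\|\ge\delta}\langle\ap,\M_{1/2}\ap\rangle \ge \ep\}$ one has a quantitative lower bound on $\M_{1/2}$ restricted to the relevant subspace, which forces $\|\Pi_R\beta(\M_{1/2}+\beta)^{-1}w\|$ to be $O(\beta/\ep)$ small (for $w$ of bounded norm), while on the complement $E_\ep^c$ — which by Theorem~\ref{thm:malliavin-nondegenerate} has probability $\le g(\ep)e^{\eta V^n(\omega_0)}$ — one uses only the deterministic bound $\|\Pi_R\xi^\beta\|\le\|J_{0,1/2}\ap\|$, whose fourth moment is $\le Ce^{\eta V^n(\omega_0)}$. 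Choosing first $R$ large (killing the high-mode term), then $\ep$ small (killing $g(\ep)$ and hence the bad-event contribution), then $\beta$ small depending on $\ep, R$ (killing the $\beta/\ep$ term on the good event), drives $\E\|\xi^\beta\|_{H^n}^4$ below $\gamma e^{\eta V^n(\omega_0)}$, completing the proof. The main obstacle is the bookkeeping around the poor stochastic integrability of Theorem~\ref{thm:malliavin-nondegenerate}: because $g(\ep)$ decays at only an uncontrolled rate, the splitting must be arranged so that the bad event is multiplied by a quantity with \emph{good} (polynomial-in-anything, i.e.\ all-moments) integrability, namely the deterministic $\|\cdot\|_{\mathrm{op}}\le 1$ bound composed with $J$-moment bounds — never by anything that itself blows up as $\beta\to 0$. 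Getting the order of quantifiers right ($R$, then $\ep$, then $\beta$) is the crux, but it is exactly the structure indicated in Subsection~\ref{ss:proof-of-smoothing}.
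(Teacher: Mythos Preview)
Your reduction to bounding $\E\|\xi^\beta\|_{H^n}^4$ (with $\xi^\beta:=\beta(\M_{1/2}+\beta)^{-1}J_{0,1/2}\ap$) is a genuine gap, and in fact that quantity cannot be made small. The operator $\M_{1/2}=A_{0,1/2}A_{0,1/2}^*$ has infinite-dimensional kernel (the range of $A_{0,1/2}$ is at best the span of $\{J_{r,1/2}e_k\}$), and on that kernel $\beta(\M_{1/2}+\beta)^{-1}$ acts as the identity regardless of $\beta$. So $\xi^\beta$ retains the component of $J_{0,1/2}\ap$ lying in $\ker\M_{1/2}$, and $\|\xi^\beta\|_{H^n}$ does not tend to $0$ as $\beta\to 0$. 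Pulling $J_{1/2,1}$ out by Cauchy--Schwarz throws away exactly the ingredient you need.

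The fix is to keep $J_{1/2,1}$ inside and place the high/low mode split \emph{between} $J_{1/2,1}$ and the resolvent:
\[
\|\rho^\beta\|_{H^n}\le \|J_{1/2,1}\Pi_{\ge R}\|_{H^n\to H^n}\,\|J_{0,1/2}\|_{H^n\to H^n}\;+\;\|J_{1/2,1}\|_{H^n\to H^n}\,\|\Pi_R\,\xi^\beta\|_{H^n}.
\]
For the first term one proves $\E\|J_{1/2,1}\Pi_{\ge R}\|_{H^n\to H^n}^q\le CR^{-q}e^{\eta V^n(\omega_0)}$ using the $L^2\to H^{n+1}$ smoothing of the linearization (and its adjoint) over $[1/2,1]$; this is Lemma~\ref{lem:linearization-smoothing}. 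Your proposed route---smooth $J_{0,1/2}\ap$ into $H^{n+1}$ \emph{before} the resolvent and then use $\|(I-\Pi_R)w\|_{H^n}\le R^{-1}\|w\|_{H^{n+1}}$ afterward---fails because $\beta(\M_{1/2}+\beta)^{-1}$ is a contraction only for the $H^n$ inner product in which $\M_{1/2}$ is self-adjoint; it has no reason to preserve an $H^{n+1}$ bound, and $\Pi_{\ge R}$ does not commute past it. For the second (low-mode) term, one does bound $\E\|\Pi_R\xi^\beta\|_{H^n}^4$---this is Lemma~\ref{lem:low-modes-low-error}---but the argument is not simply ``$\Pi_R\beta(\M_{1/2}+\beta)^{-1}=O(\beta/\ep)$ on the good event'': since $\Pi_R$ and $\M_{1/2}$ do not commute, one instead uses $\langle\xi^\beta,\M_{1/2}\xi^\beta\rangle\le\beta\|J_{0,1/2}\ap\|^2$ and applies Theorem~\ref{thm:malliavin-nondegenerate} to $\xi^\beta$ itself on the event $\{\|\Pi_R\xi^\beta\|\ge\gamma^{1/4}\|\xi^\beta\|\}$, interpolating against the deterministic bound $\|\xi^\beta\|\le\|J_{0,1/2}\ap\|$ elsewhere.
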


\begin{proposition}
    \label{prop:cost-of-control}
    For all $\beta>0$, there exists $C(\beta,\eta,p_0)>0$, locally bounded in $p_0$, such that
    \[\E \delta(v^\beta)^2 \leq Ce^{\eta V^n(\omega_0)}.\]
\end{proposition}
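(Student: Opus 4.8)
The plan is to follow closely the argument of~\cite[Section 4.8]{hairer_ergodicity_2006}, which—as the authors remark in Subsection~\ref{ss:proof-of-smoothing}—carries over to our setting essentially verbatim; the worse stochastic integrability in Theorem~\ref{thm:malliavin-nondegenerate} plays no role here, and the manifold coordinates only contribute a finite-dimensional piece that is handled by Assumption~\ref{asmp:dynamic-bounds}. The Skorokhod integral $\delta(v^\beta)$ of a (non-adapted) process decomposes via the standard identity $\E\,\delta(v^\beta)^2 = \E\int_0^1 |v^\beta_t|^2\,\dif t + \E\int_0^1\int_0^1 \operatorname{tr}\!\big(D_s v^\beta_t\, D_t v^\beta_s\big)\,\dif s\,\dif t$, so it suffices to bound (i) the $L^2$-in-time norm of $v^\beta$ and (ii) the $L^2([0,1]^2)$ norm of its Malliavin derivative $D_s v^\beta_t$.

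First I would bound (i). By definition $v^\beta_t = A^*_{0,1/2}(\M_{1/2}+\beta)^{-1}J_{0,1/2}\ap$ on $[0,1/2]$ and zero afterward, so $\int_0^1 |v^\beta_t|^2\,\dif t = \big\langle (\M_{1/2}+\beta)^{-1}J_{0,1/2}\ap,\ \M_{1/2}(\M_{1/2}+\beta)^{-1}J_{0,1/2}\ap\big\rangle$, using $A_{0,1/2}A^*_{0,1/2} = \M_{1/2}$. Since $\M_{1/2}(\M_{1/2}+\beta)^{-1}\preceq \mathrm{Id}$ and $(\M_{1/2}+\beta)^{-1}\preceq \beta^{-1}\mathrm{Id}$ deterministically, this is at most $\beta^{-1}\|J_{0,1/2}\ap\|^2 \le \beta^{-1}\|J_{0,1/2}\|_{H^n\to H^n}^2$, whose $q$-th moments are controlled by $Ce^{\eta V^n(\omega_0)}$ via~\eqref{eq:J-moment-bound-asmp}. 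This already gives the desired bound for the first term with a constant depending on $\beta$.

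Next I would handle (ii), the Malliavin derivative term, which is the only place requiring real work. Here one differentiates the formula for $v^\beta_t$ in the Malliavin sense; since $\M_{1/2}$ and $A_{0,1/2}$ and $J_{0,1/2}$ are all built from $J_{r,t}$ and $J^2_{r,t}$, applying the Malliavin derivative $D_s$ produces terms involving $J_{r,t}$, $J^2_{r,t}$, and the operator $(\M_{1/2}+\beta)^{-1}$, using the resolvent identity $D_s(\M_{1/2}+\beta)^{-1} = -(\M_{1/2}+\beta)^{-1}(D_s\M_{1/2})(\M_{1/2}+\beta)^{-1}$. Every factor of $(\M_{1/2}+\beta)^{-1}$ contributes at most $\beta^{-1}$ deterministically, every factor of $A^*_{0,1/2}$ or $A_{0,1/2}$ is controlled through $\M_{1/2}(\M_{1/2}+\beta)^{-1}\preceq\mathrm{Id}$, and every remaining factor of $J_{r,t}$, $J^2_{r,t}$, or $\Theta_\varphi(p_t)$ is bounded in all moments via~\eqref{eq:J-moment-bound-asmp},~\eqref{eq:J2-moment-bound-asmp}, and~\eqref{eq:theta-moment-bound-asmp}. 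Assembling these with Hölder's inequality and the super-Lyapunov property Corollary~\ref{cor:super-lyapunov} to absorb products of $e^{\eta' V^n}$ factors into a single $e^{\eta V^n(\omega_0)}$ (choosing the intermediate $\eta'$ small), one obtains $\E\,\delta(v^\beta)^2 \le C(\beta,\eta,p_0)e^{\eta V^n(\omega_0)}$ with the constant locally bounded in $p_0$ by the corresponding local boundedness in Assumption~\ref{asmp:dynamic-bounds}.

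The main obstacle is purely bookkeeping: organizing the expansion of $D_s\delta(v^\beta)$ into finitely many terms, each of which is a product of explicitly $\beta$-controlled resolvent factors and moment-controlled dynamical factors, and then verifying that the time integrations over $[0,1/2]^2$ converge—this last point is exactly as in~\cite[Section 4.8]{hairer_ergodicity_2006}, where the key observation is that $A_{0,1/2}$ acting on $L^2$ gains an integration in time, so no singularity arises. I do not expect any new difficulty relative to the original argument, since the presence of the coupled ODE on $M$ only adds the bounded term $\Theta_\varphi(p_t)$ to the linearization $L_t$, already accounted for by~\eqref{eq:theta-moment-bound-asmp}.
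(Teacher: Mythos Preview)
Your proposal is correct and follows essentially the same route as the paper: the Skorokhod isometry reduces matters to bounding $\|v^\beta\|_{L^2}$ and $\|\mathcal{D}_s v^\beta\|_{L^2}$, and both are handled by expanding via the product/resolvent rule and invoking the moment bounds~\eqref{eq:J-moment-bound-asmp},~\eqref{eq:J2-moment-bound-asmp} on $J_{s,t}$ and $J^2_{s,t}$ from Assumption~\ref{asmp:dynamic-bounds}. One small simplification: you do not need Corollary~\ref{cor:super-lyapunov} to combine the $e^{\eta' V^n(\omega_0)}$ factors---since each moment bound in Assumption~\ref{asmp:dynamic-bounds} already refers to $\omega_0$ and holds for arbitrary $\eta'\in(0,1)$, H\"older's inequality with sufficiently small $\eta'$ in each factor suffices.
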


We note that Theorem~\ref{thm:smoothing} is a direct consequence of~\eqref{eq:integration-by-parts}, Proposition~\ref{prop:size-of-error} and Proposition~\ref{prop:cost-of-control}.

\subsubsection{Size of error, proof of Proposition~\ref{prop:size-of-error}}

For Proposition~\ref{prop:size-of-error}, we first need the following estimate on the linearization $J_{s,t}$.
\begin{definition}
    For $R  \in [1,\infty)$, let $\Pi_{\geq R} \colon H^n(\T^2) \times TM \to H^n(\T^2)$ be the orthogonal projection onto the span of Fourier modes with wavenumber $k$ such that $|k| \geq R$.
\end{definition}

\begin{lemma}
\label{lem:linearization-smoothing}
    For all $\eta,q, R> 0$ there exists $C(\eta,q,R)>0$ such that
    \[\E\|\Pi_{\geq R} J_{1/2,1}\|_{H^n \to H^n}^q + \E\|J_{1/2,1} \Pi_{\geq R}\|_{H^n \to H^n}^q \leq CR^{-q} e^{\eta V^n(\omega_0)}.\]
\end{lemma}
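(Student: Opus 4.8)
The plan is to exploit the parabolic smoothing of the $H^n$ component of the linearized equation. Recall that $J_{1/2,1}$ acting on $(\varphi, q) \in H^n \times TM$ solves $\frac{\dif}{\dif t} J_{1/2,t} = L_t J_{1/2,t}$, where $L_t$ is the operator in Definition~\ref{defn:linearization}; in the $H^n$ component this is $\nu \Delta \varphi - \nabla^\perp\Delta^{-1}\varphi \cdot \nabla \omega_t - \nabla^\perp\Delta^{-1}\omega_t\cdot\nabla\varphi$, and the key point is that the leading term $\nu\Delta$ generates a heat semigroup with the standard gain of derivatives. Write the Duhamel formula $J_{1/2,1} = e^{\nu\Delta/2}\,\pi_{H^n} + \int_{1/2}^1 e^{\nu\Delta(1-s)}\,N_s\,J_{1/2,s}\,\dif s$ (plus the manifold-component flow, which does not interact with $\Pi_{\geq R}$ since $\Pi_{\geq R}|_{TM} = 0$), where $N_s$ collects the transport/stretching terms. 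For the heat kernel one has the elementary bounds $\|\Pi_{\geq R} e^{\nu\Delta t}\|_{H^n \to H^n} \leq e^{-c\nu R^2 t}$ and, since $e^{\nu\Delta t}$ commutes with $\Pi_{\geq R}$, also $\|e^{\nu\Delta t}\Pi_{\geq R}\|_{H^n\to H^n} \leq e^{-c\nu R^2 t}$; moreover $\|\Pi_{\geq R} e^{\nu\Delta t}\|_{H^{n-1}\to H^n}\leq C(\nu t)^{-1/2} e^{-c\nu R^2 t/2} \leq C R^{-1} (\nu t)^{-1/2}$, which is the gain that produces the $R^{-q}$ factor.

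First I would treat $\|J_{1/2,1}\Pi_{\geq R}\|$: apply the Duhamel formula with the commuting projection $\Pi_{\geq R}$ sitting on the right throughout, so that the free term is $e^{\nu\Delta/2}\Pi_{\geq R}$ (exponentially small in $R^2$) and in the Duhamel term one writes $e^{\nu\Delta(1-s)} N_s J_{1/2,s}\Pi_{\geq R}$; the nonlinear operator $N_s$ maps $H^n \to H^{n-1}$ (losing one derivative, with norm controlled by $\|\omega_s\|_{H^{n+1}}$ via the product estimates already used in Proposition~\ref{prop:malliavin-inductive-step}), so the half-derivative gain of the heat kernel and the factor $\|J_{1/2,s}\Pi_{\geq R}\|_{H^n\to H^n} \leq \|J_{1/2,s}\|_{H^n\to H^n}$ combine to give a Gronwall inequality whose solution carries the prefactor $R^{-1}$ times an integrable singularity $(1-s)^{-1/2}$. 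Iterating the Duhamel expansion (or Gronwall directly) upgrades $R^{-1}$ to any $R^{-q}$ at the cost of more derivatives lost, which is harmless since all the $\omega$-norms appearing have the exponential moment bounds of Proposition~\ref{prop:omega-bounds} and $\|J_{s,t}\|_{H^n\to H^n}$ has the moment bound~\eqref{eq:J-moment-bound-asmp} in Assumption~\ref{asmp:dynamic-bounds}. For $\|\Pi_{\geq R} J_{1/2,1}\|$ the argument is the mirror image: put $\Pi_{\geq R}$ on the left, note $\Pi_{\geq R} e^{\nu\Delta(1-s)} = e^{\nu\Delta(1-s)}\Pi_{\geq R}$, and bound $\Pi_{\geq R} e^{\nu\Delta(1-s)} N_s$ from $H^n$ to $H^n$ by $C R^{-1}(1-s)^{-1/2}$ as above. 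One then collects all the random factors, applies Hölder in the probability space, and uses Proposition~\ref{prop:omega-bounds} together with~\eqref{eq:J-moment-bound-asmp} to obtain the stated $e^{\eta V^n(\omega_0)}$ bound, with $\eta$ arbitrarily small by choosing the Hölder exponents appropriately.

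The main obstacle I anticipate is not the heat-kernel estimate itself but organizing the Duhamel iteration so that the derivative loss from the nonlinearity is absorbed cleanly: each application of $N_s$ loses a derivative, so to reach $R^{-q}$ one effectively needs to control $J_{1/2,s}$ in $H^n \to H^{n+q}$-type norms, or equivalently to use that the backward-in-$s$ heat smoothing can supply many derivatives as long as one stays strictly between times $1/2$ and $1$. The clean way around this is to interpolate: one has the trivial bound with no gain ($\|J_{1/2,1}\Pi_{\geq R}\| \leq \|J_{1/2,1}\|_{H^n\to H^n}$, moments controlled) and the $R^{-1}$-gain bound from a single Duhamel step, and interpolating these two against each other — or, more simply, splitting the time interval $[1/2,1]$ into finitely many subintervals and applying the one-step gain on each — yields $R^{-q}$ for any fixed $q$ with only finitely much derivative loss, all of which is covered by the Sobolev moment bounds in Proposition~\ref{prop:omega-bounds}. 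The manifold component requires no work since $\Pi_{\geq R}$ annihilates it on the relevant side, so the whole estimate reduces to the scalar parabolic computation on $\T^2$.
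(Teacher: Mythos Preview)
Your Duhamel approach is workable but both more laborious than the paper's and contains two concrete missteps.

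First, compare with the paper. The paper never writes a Duhamel expansion here; it simply uses the smoothing already packaged in Assumption~\ref{asmp:dynamic-bounds}. For $\Pi_{\geq R} J_{1/2,1}$ one has the one-line estimate $\|\Pi_{\geq R} J_{1/2,1}\|_{H^n\to H^n}\leq R^{-1}\|J_{1/2,1}\|_{L^2\to H^{n+1}}$, and the right-hand side has moments by~\eqref{eq:J-smoothing-bound-asmp} and~\eqref{eq:J-moment-bound-asmp}. For $J_{1/2,1}\Pi_{\geq R}$ the paper factors $J_{1/2,1}=J_{3/4,1}J_{1/2,3/4}$, bounds $\|J_{3/4,1}\|_{L^2\to H^n}$ by~\eqref{eq:J-smoothing-bound-asmp}, and then observes $\|J_{1/2,3/4}\Pi_{\geq R}\|_{L^2\to L^2}=\|\Pi_{\geq R}J^*_{1/2,3/4}\|_{L^2\to L^2}$, which is handled by the adjoint smoothing estimate of \cite[Lemma~4.17]{hairer_ergodicity_2006}. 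Your approach essentially re-proves the content of~\eqref{eq:J-smoothing-bound-asmp} by hand.

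Now the two gaps. First, the $R^{-q}$ in the statement does \emph{not} require iterating the Duhamel expansion to gain $R^{-q}$ pathwise: a single pathwise gain of $R^{-1}$, raised to the $q$-th power inside the expectation, already gives $R^{-q}$. Your worry about accumulating derivative loss through $q$ iterations is therefore misplaced and the whole ``interpolate/split the interval'' paragraph is unnecessary.

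Second, for $J_{1/2,1}\Pi_{\geq R}$ your plan has a genuine hole. In the Duhamel term $e^{\nu\Delta(1-s)}N_s J_{1/2,s}\Pi_{\geq R}$, the projection sits to the right of $J_{1/2,s}$ and does not interact with the heat kernel, so if you use (as you write) $\|J_{1/2,s}\Pi_{\geq R}\|\leq\|J_{1/2,s}\|$ that term carries no factor of $R^{-1}$ at all; only the free term $e^{\nu\Delta/2}\Pi_{\geq R}$ is small. To recover the gain you must either keep $f(s):=\|J_{1/2,s}\Pi_{\geq R}\|$ inside the Volterra inequality and use that $\int_{1/2}^1 e^{-c\nu R^2(s-1/2)}\,\dif s\lesssim R^{-2}$ when you unwind the resolvent, or---as the paper does---pass to the adjoint, where $\Pi_{\geq R}$ lands to the left of $J^*$ and the heat-kernel gain applies directly. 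Either fix closes the argument, but neither is what you wrote.
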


\begin{proof}
    We note that
    \[
        \|\Pi_{\geq R} J_{1/2,1}\|_{H^n\to H^n} \leq \frac{1}{R} \|J_{1/2,1}\|_{L^2 \to H^{n+1}}
    \]
    so the bound on $\|\Pi_{\geq R} J_{1/2,1}\|_{H^n \to H^n}$ follows from~\eqref{eq:J-smoothing-bound-asmp} and~\eqref{eq:J-moment-bound-asmp}.

    For the other bound,
    \[\|J_{1/2,1} \Pi_{\geq R}\|_{H^n \to H^n} \leq \|J_{3/4,1}\|_{L^2 \to H^n} \|J_{1/2,3/4} \Pi_{\geq R}\|_{L^2 \to L^2},\]
    so by~\eqref{eq:J-smoothing-bound-asmp},
    \[ \E\|J_{1/2,1} \Pi_{\geq R}\|_{H^n \to H^n}^q \leq C e^{\eta V^n(\omega_0)} \Big(\E \|J_{1/2,3/4} \Pi_{\geq R}\|_{L^2 \to L^2}^{2q}\Big)^{1/2},\]
    we then conclude from~\cite[Lemma 4.17]{hairer_ergodicity_2006}.\footnote{The first thing to note is that since we first project onto the large Fourier modes and so kill any directions in the tangent bundle of the manifold, the bound we need is exactly as is given in~\cite{hairer_ergodicity_2006}. In~\cite{hairer_ergodicity_2006}, they do not give the bound with that specific dependence on $R$, but it is clear from the proof they could and further we never use that specific dependence on $R$, though it is simpler to just state it as is. Finally, we note that the bound on $\|J_{1/2,1} \Pi_{\geq R}\|$ follows from a smoothing estimate for the adjoint equation, using that $\|J_{1/2,1} \Pi_{\geq R}\| = \|\Pi_{\geq R}^* J_{1/2,1}^*\| = \|\Pi_{\geq R} J_{1/2,1}^*\|$. This argument is straightforward, but we omit it as we otherwise do not have to introduce the adjoint linearization equations.}
\end{proof}

We will need the following lemma to control the low modes.

\begin{lemma}
\label{lem:low-modes-low-error}
    For all $\eta, \gamma,R>0$ there exists $\beta_0(\eta,p_0,\gamma,R)>0$, locally lower bounded in $p_0$, such that for all $0<\beta \leq \beta_0,$
    \[\E\|\beta\Pi_{R} (\M_{1/2} + \beta)^{-1} J_{0,1/2}\ap\|_{H^n}^4 \leq \gamma e^{\eta V^n(\omega_0)},\]
    where we recall the definition of $\Pi_R$ from Theorem~\ref{thm:malliavin-nondegenerate}.
\end{lemma}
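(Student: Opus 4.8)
The plan is to exploit the deterministic operator bound $\|\beta(\M_{1/2}+\beta)^{-1}\|_{\mathrm{op}} \leq 1$ together with the quantitative nondegeneracy of $\M_{1/2}$ on the low modes from Theorem~\ref{thm:malliavin-nondegenerate}, interpolating between these two facts to extract smallness. First I would write $w^\beta := \beta(\M_{1/2}+\beta)^{-1}J_{0,1/2}\ap$ and observe, via functional calculus / the spectral decomposition of the self-adjoint positive operator $\M_{1/2}$, that $\langle w^\beta, \M_{1/2}^{-1}w^\beta\rangle \leq \beta^{-1}\langle w^\beta, J_{0,1/2}\ap\rangle \leq \beta^{-1}\|w^\beta\|_{H^n}\|J_{0,1/2}\|_{H^n\to H^n}\|\ap\|_{H^n}$ and also $\|w^\beta\|_{H^n} \leq \|J_{0,1/2}\|_{H^n\to H^n}$ deterministically (since $\|\beta(\M_{1/2}+\beta)^{-1}\|_{\mathrm{op}}\leq 1$ and $\|\ap\|=1$). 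More usefully, $\langle w^\beta, \M_{1/2} w^\beta\rangle \leq \beta \langle w^\beta, J_{0,1/2}\ap\rangle \leq \beta \|J_{0,1/2}\|^2_{H^n\to H^n}$, again using the operator bound.

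Next I would set up a "good event'' argument. Fix a threshold $\delta$ (to be chosen small in terms of $\gamma$) and apply Theorem~\ref{thm:malliavin-nondegenerate} with $T = 1/2$, this $\delta$, and the given $\eta$ (say halved), and parameter $\ep$ to be chosen as a function of $\beta$: this produces a function $g$ with $g(\ep)\to 0$ and
\[
\P\Big(\inf_{\|\ap'\|=1,\ \|\Pi_R\ap'\|\geq \delta}\langle \ap',\M_{1/2}\ap'\rangle < \ep\Big) \leq g(\ep)e^{\tfrac{\eta}{2}V^n(\omega_0)}.
\]
On the complementary good event, for \emph{any} vector $\psi$ with $\|\Pi_R\psi\| \geq \delta\|\psi\|$ we have $\langle \psi, \M_{1/2}\psi\rangle \geq \ep\|\psi\|^2$. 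I would apply this with $\psi = w^\beta$: either $\|\Pi_R w^\beta\| < \delta\|w^\beta\| \leq \delta\|J_{0,1/2}\|_{H^n\to H^n}$ directly (the "small projection'' alternative), or else $\ep\|w^\beta\|^2 \leq \langle w^\beta, \M_{1/2}w^\beta\rangle \leq \beta\|J_{0,1/2}\|^2_{H^n\to H^n}$, giving $\|w^\beta\|^2 \leq \beta\ep^{-1}\|J_{0,1/2}\|^2_{H^n\to H^n}$, hence $\|\Pi_R w^\beta\| \leq (\beta/\ep)^{1/2}\|J_{0,1/2}\|_{H^n\to H^n}$. In either case, on the good event $\|\Pi_R w^\beta\|_{H^n} \leq (\delta + (\beta/\ep)^{1/2})\|J_{0,1/2}\|_{H^n\to H^n}$.

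To finish, I would choose $\ep = \beta^{1/2}$, so that $(\beta/\ep)^{1/2} = \beta^{1/4} \to 0$, and then take $\beta_0$ small enough that $\beta_0^{1/4} < \delta$; then on the good event $\|\Pi_R w^\beta\|_{H^n}^4 \leq (2\delta)^4\|J_{0,1/2}\|^4_{H^n\to H^n}$. Taking expectations and splitting over the good/bad events:
\[
\E\|\Pi_R w^\beta\|_{H^n}^4 \leq (2\delta)^4\,\E\|J_{0,1/2}\|^4_{H^n\to H^n} + \E\Big[\mathbf{1}_{\text{bad}}\|J_{0,1/2}\|^4_{H^n\to H^n}\Big].
\]
The first term is bounded by $(2\delta)^4 C e^{\tfrac{\eta}{2}V^n(\omega_0)}$ using~\eqref{eq:J-moment-bound-asmp} from Assumption~\ref{asmp:dynamic-bounds}; choosing $\delta$ small makes it at most $\tfrac{\gamma}{2}e^{\eta V^n(\omega_0)}$. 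For the second term, Cauchy--Schwarz gives $\E[\mathbf{1}_{\text{bad}}\|J_{0,1/2}\|^4] \leq \P(\text{bad})^{1/2}\,(\E\|J_{0,1/2}\|^8)^{1/2} \leq (g(\beta^{1/2}))^{1/2} e^{\tfrac{\eta}{4}V^n(\omega_0)}\cdot C e^{\tfrac{\eta}{4}V^n(\omega_0)}$, again by~\eqref{eq:J-moment-bound-asmp}; since $g(\beta^{1/2})\to 0$ as $\beta\to 0$, shrinking $\beta_0$ further makes this at most $\tfrac{\gamma}{2}e^{\eta V^n(\omega_0)}$. Combining, $\E\|w^\beta\|_{H^n}^4 \leq \gamma e^{\eta V^n(\omega_0)}$, and the local lower bound on $\beta_0$ in $p_0$ tracks the local uniformity of the constants in Theorem~\ref{thm:malliavin-nondegenerate} and Assumption~\ref{asmp:dynamic-bounds}. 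The main obstacle is purely bookkeeping: matching the qualitative (arbitrarily slow) rate $g(\ep)$ against the fixed target $\gamma$ by choosing $\ep$ as the right power of $\beta$ and then shrinking $\beta_0$; there is no analytic difficulty beyond the deterministic operator bound, which does all the real work.
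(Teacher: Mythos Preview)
Your proposal is correct and uses the same essential ingredients as the paper: the deterministic operator bound $\|\beta(\M_{1/2}+\beta)^{-1}\|\le 1$, the quadratic-form inequality $\langle w^\beta,\M_{1/2}w^\beta\rangle\le\beta\|J_{0,1/2}\ap\|^2$, and Theorem~\ref{thm:malliavin-nondegenerate} applied with a fixed threshold $\delta$ depending on $\gamma$. Your assembly of these pieces is, however, more streamlined than the paper's. The paper first splits on the event $A^\beta=\{\|\Pi_R\psi^\beta\|\ge\gamma^{1/4}\|\psi^\beta\|\}$, and on $A^\beta$ it derives two separate tail bounds for $\P(1_{A^\beta}\|\Pi_R\psi^\beta\|>\lambda)$ (one Chebyshev-type, one from the Malliavin nondegeneracy) and then integrates via a layer-cake decomposition over three $\lambda$-regimes. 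You instead split once on the Malliavin-nondegeneracy ``good event'' $G$, observe that on $G$ the dichotomy $\|\Pi_R w^\beta\|<\delta\|w^\beta\|$ versus $\|\Pi_R w^\beta\|\ge\delta\|w^\beta\|$ gives the pointwise bound $\|\Pi_R w^\beta\|\le(\delta+(\beta/\ep)^{1/2})\|J_{0,1/2}\|$ directly, and handle $G^c$ by Cauchy--Schwarz against the deterministic bound $\|\Pi_R w^\beta\|\le\|J_{0,1/2}\|$. This avoids the tail integration entirely and is a genuine simplification; the paper's route buys nothing extra here. One cosmetic remark: your opening mention of $\langle w^\beta,\M_{1/2}^{-1}w^\beta\rangle$ is ill-posed since $\M_{1/2}$ need not be invertible, but you correctly drop it and use only the well-defined bound $\langle w^\beta,\M_{1/2}w^\beta\rangle\le\beta\|J_{0,1/2}\|^2$.
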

 Let us first see how this lemma allows us to conclude the bound on $\rho^\beta$.

\begin{proof}[Proof of Proposition~\ref{prop:size-of-error}]
    We first note that
    \begin{align*}
        \rho^\beta &= J_{0,1} \ap - A_{0,1} v^\beta
        \\&= J_{1/2,1}\big(J_{0,1/2}  - A_{0,1/2}A^*_{0,1/2}(\M_{1/2} + \beta)^{-1} J_{0,1/2}\big)\ap
        \\&= \beta J_{1/2,1}(\M_{1/2} + \beta)^{-1} J_{0,1/2}\ap,
    \end{align*}
    where for the final equality we add and subtract $\beta$ from $A_{0,1/2}A^*_{0,1/2}$ and use $\M_{1/2} = A_{0,1/2} A^*_{0,1/2}$. We then compute that, for $R>1,$
    \begin{align*}
    \|\rho^\beta\|_{H^n} &= \|\beta J_{1/2,1}(\M_{1/2} + \beta)^{-1} J_{0,1/2}\ap\|_{H^n}
    \\&\leq  \|\beta J_{1/2,1} \Pi_{\geq R} (\M_{1/2} + \beta)^{-1} J_{0,1/2}\ap\|_{H^n} +  \|\beta J_{1/2,1} \Pi_{R} (\M_{1/2} + \beta)^{-1} J_{0,1/2}\ap\|_{H^n}
    \\&\leq  \|J_{1/2,1} \Pi_{\geq R}\|_{H^n \to H^n}  \|J_{0,1/2}\|_{H^n \to H^n} + \|J_{1/2,1}\|_{H^n \to H^n}  \|\beta\Pi_{R} (\M_{1/2} + \beta)^{-1} J_{0,1/2}\ap\|_{H^n}.\end{align*}
    Then using H\"older's inequality, Lemma~\ref{lem:linearization-smoothing} with $R= \gamma^{-1}$, Lemma~\ref{lem:low-modes-low-error}, and the bound~\eqref{eq:J-moment-bound-asmp} of Assumption~\ref{asmp:dynamic-bounds}, we see that for $\beta \leq \beta_0(p_0,\gamma, \eta)$, with $\beta_0$ locally bounded below in $p_0,$
    \[\E \|\rho^\beta\|_{H^n}^2 \leq C \gamma e^{\eta V^n(\omega_0)},\]
    where $C$ does not depend on $\gamma$. We then conclude after redefining $\gamma$.
\end{proof}

It is in the below proof that we need to slightly deviate from~\cite[Section 4.6]{hairer_ergodicity_2006} due to our worse stochastic integrability in Theorem~\ref{thm:malliavin-nondegenerate}. This is not a substantial problem, as we can use the information given by Theorem~\ref{thm:malliavin-nondegenerate} locally while using an $O(1)$ polynomial moment bound to control the probabilistic tail, as is made clear below.

\begin{proof}[Proof of Lemma~\ref{lem:low-modes-low-error}]
    Fix $R> 1$ and define
    \[\psi^\beta := \beta (\M_{1/2} + \beta)^{-1} J_{0,1/2}\ap,\]
    so that we want to show
    \[\E \|\Pi_R\psi^\beta\|_{H^n}^4 \leq \gamma e^{\eta V^n(\omega_0)}.\]
    We note that for $q>0$
    \[\E \|\psi^\beta\|_{H^n}^q \leq \E \|J_{0,1/2}\|_{H^n \to H^n}^q \leq C e^{\eta V^n(\omega_0)}.\]
    Let $A^\beta$ be the event
    \[A^\beta := \{\|\Pi_R \psi^\beta\|_{H^5} \geq \gamma^{1/4} \|\psi^\beta\|_{H^5}\}.\]
    Then
    \begin{align*}\E \|\Pi_R\psi^\beta\|_{H^n}^4 &\leq C \Big(\E 1_{A^\beta}  \|\Pi_R  \psi^\beta\|_{H^n}^4 + \E 1_{(A^\beta)^C} \|\Pi_R \psi^\beta\|_{H^n}^4\Big)
    \\&\leq C \Big(\E 1_{A^\beta}  \|\Pi_R  \psi^\beta\|_{H^n}^4 + \gamma \E \|\psi^\beta\|_{H^n}^4\Big)
    \\&\leq C \E 1_{A^\beta} \|\Pi_R \psi^\beta\|_{H^n}^4 + C \gamma e^{\eta V^n(\omega_0)}.
    \end{align*}
    Thus to conclude it suffices to show, for $C>0$ independent of $\gamma$, that
    \begin{equation}
        \label{eq:low-modes-small-on-A}
    \E 1_{A^\beta} \|\Pi_R \psi^\beta\|_{H^n}^4  \leq C \gamma e^{\eta V^n(\omega_0)},
    \end{equation}
    and redefine $\gamma$ to absorb the constant. Thus we now focus on showing~\eqref{eq:low-modes-small-on-A}. Our first bound is that
    \begin{equation}
    \label{eq:psi-beta-prob-bound-1}
    \P\big( 1_{A^\beta} \|\Pi_R \psi^\beta\|_{H^n}   \geq \lambda\big) \leq \P\big(\|\psi^\beta\|_{H^n} \geq \lambda\big) \leq C \lambda^{-8} e^{\eta V^n(\omega_0)},
    \end{equation}
    where we use the moment bound on $\|\psi^\beta\|_{H^n}$ and the Chebyshev inequality. It is worth emphasizing that the above bound did not use Theorem~\ref{thm:malliavin-nondegenerate} and is uniform in $\beta$. We will be using this bound to control the tail, when $\lambda$ is large.
    
    We then note that
    \[\<\psi^\beta, \M_{1/2} \psi^\beta\> \leq \<\psi^\beta, (\M_{1/2}+\beta) \psi^\beta\>
        =  \beta\< \beta(\mathscr{M}_{1/2} + \beta)^{-1} J_{0,1/2} \ap,  J_{0,1/2} \ap\> \leq \beta \|J_{0,1/2}\ap\|_{H^n}^2.\]
    Thus we have that
    \begin{align*}
        \P\big(\beta \|J_{0,1/2}\ap\|_{H^n}^2 < \ep 1_{A^\beta}\|\psi^\beta\|_{H^n}^2\big) & \leq \P\big(\<\psi^\beta, \M_{1/2} \psi^\beta\> < \ep 1_{A^\beta}\|\psi^\beta\|_{H^n}^2\big)
        \\&\leq   \P\Big(\inf_{\substack{\|\mathfrak{q}\| = 1\\ \|\Pi_R \mathfrak{q}\| \geq \gamma^{1/4}}} \<\mathfrak{q},\M_{1/2} \mathfrak{q}\> <\ep\Big)
        \\&\leq g(\ep) e^{\eta V^n(\omega_0)},
    \end{align*}
    where we use Theorem~\ref{thm:malliavin-nondegenerate} for the last inequality and so $g$ depends on $\eta, \gamma, p_0$ and is such that $\lim_{\ep \to 0} g(\ep) = 0$ locally uniformly in $p_0$. Thus, we have that
    \begin{align}
        \P\big( 1_{A^\beta} \|\Pi_R \psi^\beta\|_{H^n}  >\lambda\big) &\leq \P\big(\|J_{0,1/2}\ap\|_{H^n} \geq \lambda^{1/2} \beta^{-1/4}\big) + \P\big(\beta \|J_{0,1/2}\ap\|_{H^n}^2 \leq  \lambda^{-1} \beta^{1/2}1_{A^\beta}\|\psi^\beta\|_{H^n}^2\big)
       \notag \\&\leq \big(C\lambda^{-8} \beta^4   + g(\lambda^{-1} \beta^{1/2})\big)e^{\eta V^n(\omega_0)} \leq g(\lambda^{-1} \beta^{1/2}) e^{\eta V^n(\omega_0)},
       \label{eq:psi-beta-prob-bound-2}
    \end{align}
    where for the second inequality we use the moment bounds on $\|J_{0,1/2}\|_{H^n \to H^n}$~\eqref{eq:J-moment-bound-asmp} and Chebyshev and for the last inequality we take $g$ perhaps larger. We will use this bound when $\lambda$ is small.
    
    Using then~\eqref{eq:psi-beta-prob-bound-1} and~\eqref{eq:psi-beta-prob-bound-2}, we compute
    \begin{align*}
    \E 1_{A^\beta} \|\Pi_R \psi^\beta\|_{H^n}^4 &= \int_0^\infty \P(  1_{A^\beta} \|\Pi_R \psi^\beta\|_{H^n} \geq \lambda^{1/4})\,\dif \lambda
    \\&\leq \gamma +e^{\eta V^n(\omega_0)} \int_\gamma^{\gamma^{-1}} g(\lambda^{-1/4} \beta^{1/2})\,\dif \lambda +Ce^{\eta V^n(\omega_0)} \int_{\gamma^{-1}}^\infty  \lambda^{-2} \,\dif \lambda
    \\&\leq C\gamma e^{\eta V^n(\omega_0)} + 2e^{\eta V^n(\omega_0)} \gamma^{-1} \sup_{0 \leq r \leq \gamma^{-1/4} \beta^{1/2}} g(r).
    \end{align*}
    Then for the remaining integral, we can take $\beta \leq \beta_0(\gamma,\eta,R,p_0)$, with $\beta_0$ locally lower bounded in $p_0$, so that $ \sup_{0 \leq r \leq \gamma^{-1/4} \beta^{1/2}} g(r) \leq \gamma^2$, which then completes the bound
    \[\E 1_{A^\beta} \|\Pi_R \psi^\beta\|_{H^n}^4 \leq  C\gamma e^{\eta V^n(\omega_0)},\]
    which is precisely~\eqref{eq:low-modes-small-on-A}. So by the above discussion, we conclude.
\end{proof}

\subsubsection{Cost of control, proof of Proposition~\ref{prop:cost-of-control}}

This argument is highly similar to~\cite[Section 4.6]{hairer_ergodicity_2006} and requires no new ideas. In order to prove Proposition~\ref{prop:cost-of-control}, we first use the fundamental $L^2$-isometry of Skorokhod integral~\cite[Section 4.8]{hairer_ergodicity_2006} or~\cite[Proposition 1.3.1]{nualart_malliavin_2006},

\[\E \delta(v^\beta)^2 \leq \int_0^{1/2} |v_t^\beta|^2\,\dif t + \sum_{k \in F} \int_0^{1/2} \int_0^{1/2} \|\mathcal{D}^k_s v_t^\beta\|^2\, \dif s\dif t =  \|v^\beta\|_{L^2([0,1/2])}^2+ \sum_{k \in F} \int_0^{1/2}\|\mathcal{D}^k_s v^\beta\|_{L^2([0,1/2])}\, \dif s,\]
where $\mathcal{D}^k_s$ is the Malliavin derivative in the $k$th component of the noise at time $s$, that is for a function of the noise $X((W^k_t)_{k\in F})$, we have
\[\mathcal{D}^k_s X((W^\ell_t)_{\ell \in F}) = \lim_{\ep \to 0} \frac{X((W^\ell_t)_{\ell \in F} + \ep\delta_{\ell,k} 1_{t \geq s}) - X((W^\ell_t)_{\ell \in F})}{\ep},\]
that is $\mathcal{D}^k_s X$ gives the differential effect of differentially shifting the noise $W^k_t$ to $W^k_t + \ep 1_{t \geq s}$.

We first note that
\begin{equation}
\label{eq:v-beta-bound-l2}
\|v^\beta\|_{L^2([0,1/2])} =\|A^*_{0,1/2}(\M_{1/2} + \beta)^{-1} J_{0,1/2} \ap\|_{L^2([0,1/2])} \leq \|A^*_{0,1/2}\|_{H^n \to L^2([0,1/2])} \beta^{-1} \|J_{0,1/2}\|_{H^n \to H^n}.
\end{equation}
Note also that by the product rule, for each $k \in F$
\begin{align*}
  &\|\mathcal{D}^k_s v^\beta\|_{L^2([0,1/2])}\\
  &\quad =  \|\mathcal{D}^k_s  (A^*_{0,1/2}(\M_{1/2} + \beta)^{-1} J_{0,1/2} \ap)\|_{L^2([0,1/2])}
    \\&\quad \leq \|\mathcal{D}^k_s A^*_{0,1/2}\|_{H^n \to L^2([0,1/2])} \beta^{-1} \|J_{0,1/2}\|_{H^n \to H^n}
    \\&\quad \qquad+  \|A^*_{0,1/2}\|_{H^n \to L^2([0,1/2])} \beta^{-2} \|\mathcal{D}^k_sA_{0,1/2}\|_{L^2([0,1/2]) \to H^n}\| A^*_{0,1/2}\|_{H^n \to L^2([0,1/2])} \|J_{0,1/2}\|_{H^n \to H^n}
    \\&\quad \qquad + \|A^*_{0,1/2}\|_{H^n \to L^2([0,1/2])} \beta^{-2} \|A_{0,1/2}\|_{L^2([0,1/2]) \to H^n}\|\mathcal{D}^k_s A^*_{0,1/2}\|_{H^n \to L^2([0,1/2])} \|J_{0,1/2}\|_{H^n \to H^n}
    \\&\quad \qquad+  \|A^*_{0,1/2}\|_{H^n \to L^2([0,1/2])} \beta^{-1} \|\mathcal{D}^k_s J_{0,1/2}\|_{H^n \to H^n}.
\end{align*}
Thus Proposition~\ref{prop:cost-of-control} follows from~\eqref{eq:J-moment-bound-asmp} together with the following moment bounds, where $C(q,p_0, \eta)>0$ is locally bounded in $p_0$,
\begin{align}
    \E\|A^*_{0,1/2}\|_{H^n \to L^2([0,1/2])}^q+ \|A_{0,1/2}\|_{L^2([0,1/2]) \to H^n}^q &\leq C e^{\eta V_n(\omega_0)}
    \label{eq:A-op-bound}\\
        \E\sup_{0 \leq s \leq 1/2} \|\mathcal{D}^k_s A^*_{0,1/2}\|_{H^n \to L^2([0,1/2])}^q +   \|\mathcal{D}^k_sA_{0,1/2}\|_{L^2([0,1/2]) \to H^n}^q &\leq C e^{\eta V_n(\omega_0)}
    \label{eq:A-malliavin-deriv-bound}\\
    \E \sup_{0 \leq s,r \leq 1/2}  \|\mathcal{D}^k_s J_{r,1/2}\|_{H^n \to H^n} &\leq C e^{\eta V_n(\omega_0)}\label{eq:J-malliavin-deriv-bound}.
\end{align}

We recall the definition of $A_{0,1/2},$
\[A_{0,1/2} v :=  \sum_{k \in F}\int_0^{1/2} c_k v^k_r J_{r,1/2} e_k \,\dif r.\]
Then the bound~\eqref{eq:A-op-bound} follows from~\eqref{eq:J-moment-bound-asmp}, using $\|A^*_{0,1/2}\|_{H^n \to L^2([0,1/2])} = \|A_{0,1/2}\|_{L^2([0,1/2]) \to H^n}$. For~\eqref{eq:A-malliavin-deriv-bound}, we note that similarly the bound for $A^*_{0,1/2}$ follows from the bound on $A_{0,1/2}$. For $A_{0,1/2}$, we have from the definition that
\[(\mathcal{D}^k_sA_{0,1/2}) v = \sum_{\ell \in F}\int_0^{1/2} c_\ell \mathcal{D}^k_sJ_{r,{1/2}} e_\ell v_r^\ell \,\dif r,\]
thus
\[\sup_{0 \leq s \leq 1/2} \|\mathcal{D}^k_sA_{0,1/2}\|_{L^2([0,1/2]) \to H^n} \leq C \sup_{0 \leq s,r \leq 1/2} \|\mathcal{D}^k_s J_{r,1/2}\|_{H^n \to H^n}.\]
Thus~\eqref{eq:A-malliavin-deriv-bound} follows from~\eqref{eq:J-malliavin-deriv-bound}.

Then for~\eqref{eq:J-malliavin-deriv-bound}, direct computation then gives that
\[\mathcal{D}^k_s J_{r,1/2} \ap = \begin{cases}
    J^2_{s,1/2}( c_k e_k ,J_{r,s} \ap)&s \geq r\\
    J^2_{r,1/2}(c_k J_{s,r} e_k, \ap) & s \leq r,
\end{cases}\]
so that
\[\|\mathcal{D}^k_s J_{r,1/2}\|_{H^n} \leq C\sup_{0 \leq a \leq 1/2} \|J^2_{a,1/2}\|_{H^n \otimes H^n \to H^n} \sup_{0 \leq a \leq b \leq 1/2} \|J_{a,b}\|_{H^n \to H^n}.\]
Thus~\eqref{eq:J-malliavin-deriv-bound} follows from~\eqref{eq:J-moment-bound-asmp} and~\eqref{eq:J2-moment-bound-asmp}. Thus, we conclude the proof of Proposition~\ref{prop:cost-of-control}.

\subsubsection{Proof of Lemma~\ref{lem:two-point-smoothing}}

Finally, we prove the smoothing estimate given by Lemma~\ref{lem:two-point-smoothing} for the two-point process with sharper dependence on $x,y \in \T^2$ than provided by Theorem~\ref{thm:smoothing}.

\begin{definition}
\label{defn:B-xy-def}
    For $(x,y) \in M^2$, we define the linear operator $B_{x,y}$ on $H^4(\T^2) \times \R^2 \times \R^2$ given by
    \[B_{x,y}(\xi,p,q) = (\xi,p,|x-y|q + p).\]
\end{definition}

$B_{x,y}$ should be thought of as a map $H^4(\T^2) \times T_{(x,\tau)} M^T \to H^4(\T^2) \times T_{(x,y)} M^2$. Note that
\[\|B_{x,y} (\xi,p,q)\|_{H^4,2} \leq 2 \|(\xi,p,q)\|_{H^4},\]
where we take $B_{x,y} (\xi,p,q) \in  H^4(\T^2) \times T_{(x,y)} M^2$. Our goal is to approximate the two-point process by the tangent process. We consider the tangent process $(\omega_t,x_t,\tau_t)$ taken with initial data $(\omega_0, x_0, \tau_0),$
with
\[\tau_0 := \frac{y_0-x_0}{|y_0-x_0|}.\]

For $0 \leq s \leq t$,  we let $J^T_{s,t}$ denote the linearization of the tangent process about $(\omega_t, x_t,\tau_t)$. We use $J_{s,t}$ to denote the linearization of the two-point process. We note the tangent process and two-point process are naturally coupled (and hence so are the linearizations $J_{s,t}$ and $J^T_{s,t}$). We use this natural coupling throughout.

The fundamental estimates we will use are the following, the computational proofs of which we defer to Subsection~\ref{ssa:two-point-by-tangent}.
\begin{lemma}
    \label{lem:two-point-linearization-by-tangent}
    For $\eta,q>0$, there exists $C(\eta,q)>0$ such that
    \[\E\sup_{0 \leq s \leq t \leq 1}\|B_{x_0,y_0}^{-1} J_{s,t} B_{x_0,y_0} - J_{s,t}^T\|_{H^4 \to H^4}^q \leq C |x_0 - y_0|^q e^{\eta V(\omega_0)}.\]
\end{lemma}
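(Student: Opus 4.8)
The plan is to compare the two flows differentially in time, exploiting that both $J_{s,t}$ and $J^T_{s,t}$ satisfy linear ODEs driven by (Fréchet derivatives of) the respective vector field maps, and that the conjugated operator $B_{x_0,y_0}^{-1} J_{s,t} B_{x_0,y_0}$ satisfies an ODE whose generator differs from that of $J^T_{s,t}$ by an $O(|x_0-y_0|)$ error. Concretely, write $\varepsilon := |x_0 - y_0|$ and parametrize the two-point initial data as $(x_0, y_0)$ with $y_0 = x_0 + \varepsilon\tau_0$, $|\tau_0| = 1$. First I would record the explicit forms of the linearizations: $J_{s,t}$ for the two-point process acts on $(\varphi, p, q) \in H^4 \times \R^2 \times \R^2$ via the generator
\[
\widehat{L}^2_t(\varphi,p,q) = \bigl(\nu\Delta\varphi - \nabla^\perp\Delta^{-1}\varphi\cdot\nabla\omega_t - \nabla^\perp\Delta^{-1}\omega_t\cdot\nabla\varphi,\; p\cdot\nabla u_t(x_t) + (\nabla^\perp\Delta^{-1}\varphi)(x_t),\; q\cdot\nabla u_t(y_t) + (\nabla^\perp\Delta^{-1}\varphi)(y_t)\bigr),
\]
while $J^T_{s,t}$ for the tangent process has generator with third component $q\cdot\nabla u_t(x_t) + \tau_t\cdot(q\cdot\nabla)\nabla u_t(x_t)\cdot(\dots)$ — i.e., the second-order term in the $\tau$-direction. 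The key algebraic observation is that $B_{x_0,y_0}$ is designed exactly so that $B_{x_0,y_0}^{-1}\widehat{L}^2_t B_{x_0,y_0}$, after cancellation, equals the tangent generator $\widehat{L}^T_t$ plus a remainder that is a difference quotient of $\nabla u_t$ between $x_t$ and $y_t$ minus its first-order Taylor term, all of size $O(\varepsilon \|\nabla^2 u_t\|_{L^\infty}\text{-type quantities})$, together with the discrepancy $|x_t - y_t|/|x_0-y_0|$ which stays comparable to $1$ on bounded time intervals.

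Second, I would set $E_{s,t} := B_{x_0,y_0}^{-1} J_{s,t} B_{x_0,y_0} - J^T_{s,t}$, which satisfies $\partial_t E_{s,t} = \widehat{L}^T_t E_{s,t} + \mathcal{R}_t \, (B_{x_0,y_0}^{-1}J_{s,t}B_{x_0,y_0})$ with $E_{s,s}=0$, where $\mathcal{R}_t := B_{x_0,y_0}^{-1}\widehat{L}^2_t B_{x_0,y_0} - \widehat{L}^T_t$ is the generator discrepancy. By Duhamel's formula,
\[
E_{s,t} = \int_s^t J^T_{r,t}\,\mathcal{R}_r\,\bigl(B_{x_0,y_0}^{-1}J_{s,r}B_{x_0,y_0}\bigr)\,\dif r,
\]
so that, taking operator norms,
\[
\sup_{0\le s\le t\le 1}\|E_{s,t}\|_{H^4\to H^4} \le C\sup_{0\le r\le 1}\|J^T_{r,1}\|_{H^4\to H^4}\;\sup_{0\le r\le1}\|\mathcal{R}_r\|_{H^4\to H^4}\;\sup_{0\le s\le r\le1}\|B_{x_0,y_0}^{-1}J_{s,r}B_{x_0,y_0}\|_{H^4\to H^4}.
\]
The first and third factors have exponential moments in $V(\omega_0)$ of all orders by~\eqref{eq:J-moment-bound-asmp} of Assumption~\ref{asmp:dynamic-bounds} applied to the two-point and tangent processes (both covered by Proposition~\ref{prop:manifold-processes-good}), with the conjugation by $B_{x_0,y_0}$ contributing only a harmless uniform factor of $4$ from $\|B_{x_0,y_0}\|\|B_{x_0,y_0}^{-1}\| \le 4$ once one checks $\|B_{x,y}^{-1}\|$ is bounded on the relevant scale (here using that $\|B_{x,y}(\xi,p,q)\|_{H^4,2}\le 2\|(\xi,p,q)\|_{H^4}$ and a matching lower bound). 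The middle factor $\|\mathcal{R}_r\|$ is where the gain of $\varepsilon$ comes from: its $H^4$ component vanishes identically, and its manifold components are, after the $B$-conjugation, precisely $|x_r-y_r|^{-1}$ times the Taylor remainder of $u_r$ (and of $\nabla u_r$, via the Fréchet-derivative-of-vector-field terms $\Theta_\varphi$) between $x_r$ and $y_r$ around $x_r$ in the $\tau_r$ direction, which is $O(|x_r-y_r|\,\|u_r\|_{C^{2,\alpha}})$; dividing by $|x_r-y_r|$ and multiplying by the initial-scale factor $|x_0-y_0|$ from the conjugation yields the net $O(\varepsilon)$. One then controls $\sup_r\|u_r\|_{C^{2,\alpha}}$ by $\sup_r\|\omega_r\|_{H^{n}}$ via Morrey and absorbs it into the exponential-moment bounds using Proposition~\ref{prop:omega-bounds} and Hölder. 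Finally, a Hölder inequality combining these three factors gives $\E\sup\|E_{s,t}\|^q \le C\varepsilon^q e^{\eta V(\omega_0)}$.

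The main obstacle I anticipate is the bookkeeping in the algebraic identity for $\mathcal{R}_r$: one must verify that the operator $B_{x,y}$ conjugates the two-point generator into the tangent generator up to precisely the Taylor-remainder error, including correctly handling the ratio $|x_r - y_r|/|x_0-y_0|$ (which is itself not constant in $r$ but stays in, say, $[1/2,2]$ with overwhelming probability by a Grönwall estimate on $\log|x_r-y_r|$ using $\|\nabla u_r\|_{L^\infty}$, exactly as in Step 3 of the proof of Proposition~\ref{prop:two-point-drift-function}) and making sure the $\Theta_\varphi(p_t)$-type inhomogeneous terms in the linearization (the terms coupling the $H^4$ direction into $TM$) also satisfy the right difference estimate. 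A secondary technical point is that $\mathcal{R}_r$ involves $\nabla^2 u_r$, so one genuinely needs the $C^{2,\alpha}$ (equivalently $H^n$ for $n$ large enough, here $n=4$ suffices since $H^4(\T^2)\hookrightarrow C^{2,\alpha}$) control of the velocity field rather than just $C^{1,\alpha}$; this is available but must be invoked carefully with the exponential-moment machinery. Modulo these computations, which I would defer to Subsection~\ref{ssa:two-point-by-tangent} as the excerpt indicates, the Duhamel-plus-Hölder structure closes the estimate.
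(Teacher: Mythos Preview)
Your Duhamel strategy is essentially the paper's approach rewritten at the operator level rather than component-wise, but there are two genuine gaps in the plan as stated.

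First, the claim that ``the conjugation by $B_{x_0,y_0}$ contributes only a harmless uniform factor of $4$ from $\|B_{x_0,y_0}\|\|B_{x_0,y_0}^{-1}\|\le 4$'' is false in the norm you need. The statement to be proved is in the $H^4\to H^4$ operator norm, and in that norm $\|B_{x_0,y_0}^{-1}\|_{H^4\to H^4}\sim |x_0-y_0|^{-1}$, since $B_{x_0,y_0}^{-1}(\xi,p,a)=(\xi,p,(a-p)/|x_0-y_0|)$. The bound $\|B_{x,y}(\xi,p,q)\|_{H^4,2}\le 2\|(\xi,p,q)\|_{H^4}$ with a matching lower bound only says that $B_{x_0,y_0}$ is an isomorphism from $(H^4,\|\cdot\|_{H^4})$ to $(H^4\times TM^2,\|\cdot\|_{H^4,2})$; to exploit this you would need $\|J_{s,r}\|$ in the $\|\cdot\|_{H^4,2}$ norm, but Assumption~\ref{asmp:dynamic-bounds} only gives $\|J_{s,r}\|_{H^4\to H^4}$ in the standard (flat) metric on $\R^4$. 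So you cannot bound the third Duhamel factor $\|B_{x_0,y_0}^{-1}J_{s,r}B_{x_0,y_0}\|_{H^4\to H^4}$ by simple factoring. The paper handles this by deriving the ODE for $r_t:=(a_t-p_t)/|x_0-y_0|$ directly and proving $\E\sup_t|r_t|^q\le Ce^{\eta V(\omega_0)}$ via a separate Gr\"onwall (this is~\eqref{eq:r-bound}); in your framework the cleanest fix is to write $B^{-1}JB=J^T+E$ inside the Duhamel integral and close via Gr\"onwall on $E$ rather than estimating the third factor independently.

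Second, your description of $\mathcal{R}_r$ as ``Taylor remainder of $u_r$ between $x_r$ and $y_r$ around $x_r$ in the $\tau_r$ direction'' misses a structural term. The conjugated two-point generator expands $u_r(y_r)-u_r(x_r)$ in the direction $\sigma_r:=(y_r-x_r)/|x_0-y_0|$, whereas the tangent generator involves $\tau_r$; these are \emph{different} vectors once $r>0$. The leading-order cancellation therefore leaves a $(\sigma_r-\tau_r)$-contribution (e.g.\ $(\sigma_r-\tau_r)\otimes p:\nabla^2 u_r(x_r)$ and $(\sigma_r-\tau_r)\cdot\nabla\nabla^\perp\Delta^{-1}\varphi(x_r)$) in addition to the genuine second-order Taylor remainder. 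You need the preliminary estimate $\E\sup_t|\sigma_t-\tau_t|^q\le C|x_0-y_0|^q e^{\eta V(\omega_0)}$, which the paper proves first (equation~\eqref{eq:sigma-tau-difference}) via its own Gr\"onwall argument on $\sigma_t-\tau_t$. Without this, your $\|\mathcal{R}_r\|=O(\varepsilon)$ claim does not follow. Once both of these ingredients are supplied, your outline and the paper's proof coincide.
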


\begin{lemma}
\label{lem:x-y-dont-separate}
        For any $\eta, q>0$, there exists $C(\eta,q)>0$ such that
    \begin{equation*}
    \E \sup_{0 \leq t \leq 1} \Big(\frac{|y_t-x_t|}{|y_0-x_0|} + \frac{|y_0 - x_0|}{|y_t-x_t|}\Big)^q \leq C e^{\eta V(\omega_0)}.
    \end{equation*}
\end{lemma}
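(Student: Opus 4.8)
\textbf{Proof proposal for Lemma~\ref{lem:x-y-dont-separate}.}

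The plan is to control the logarithmic derivative of $|y_t - x_t|$ along the flow and then apply Gr\"onwall together with the exponential moment bounds already established. Writing $z_t := y_t - x_t$, we compute
\[
\frac{\dif}{\dif t} \log |z_t| = \frac{z_t \cdot (u_t(y_t) - u_t(x_t))}{|z_t|^2},
\]
and by the mean value inequality $|u_t(y_t) - u_t(x_t)| \leq \|\nabla u_t\|_{L^\infty} |z_t|$, so
\[
\Big| \frac{\dif}{\dif t} \log |z_t| \Big| \leq \|\nabla u_t\|_{L^\infty}.
\]
Integrating over $[0,t]$ for $t \in [0,1]$ gives the two-sided bound
\[
\exp\Big( - \int_0^1 \|\nabla u_s\|_{L^\infty}\,\dif s \Big) \leq \frac{|z_t|}{|z_0|} \leq \exp\Big( \int_0^1 \|\nabla u_s\|_{L^\infty}\,\dif s \Big),
\]
uniformly in $t \in [0,1]$. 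Hence
\[
\sup_{0 \leq t \leq 1}\Big( \frac{|y_t - x_t|}{|y_0 - x_0|} + \frac{|y_0 - x_0|}{|y_t - x_t|} \Big) \leq 2 \exp\Big( \int_0^1 \|\nabla u_s\|_{L^\infty}\,\dif s \Big).
\]

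It then remains to take $q$th moments of the right-hand side. Raising to the $q$th power and using $\|\nabla u_s\|_{L^\infty} \leq C \|\nabla u_s\|_{C^\alpha}$ (Morrey, or simply bounding $L^\infty$ by $C^\alpha$), we get
\[
\E \sup_{0 \leq t \leq 1}\Big( \frac{|y_t - x_t|}{|y_0 - x_0|} + \frac{|y_0 - x_0|}{|y_t - x_t|} \Big)^q \leq 2^q\, \E \exp\Big( Cq \int_0^1 \|\nabla u_s\|_{C^\alpha}\,\dif s \Big),
\]
and Corollary~\ref{cor:C-alpha-moments} (applied with $K = Cq$ and the prescribed $\eta$, $t = 1$) bounds this last quantity by $C(q,\eta) \exp(\eta V(\omega_0))$, which is the claim (noting the abuse $V(\omega_0) = V(u_0)$).

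There is no real obstacle here; the estimate is essentially the deterministic Gr\"onwall bound for particle separation under a Lipschitz velocity field, and the only probabilistic input is the exponential integrability of $\int_0^1 \|\nabla u_s\|_{C^\alpha}\,\dif s$, which is exactly Corollary~\ref{cor:C-alpha-moments}. The one point requiring a word of care is that the bound on $\frac{\dif}{\dif t}\log|z_t|$ must hold as long as $z_t \neq 0$, but since the two points follow the same vector field on $M^2$, the diagonal is never reached in finite time when $z_0 \neq 0$; alternatively, the displayed two-sided inequality for $|z_t|/|z_0|$ shows directly that $|z_t|$ stays bounded away from $0$ on $[0,1]$, so the computation is justified throughout.
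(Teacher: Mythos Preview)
Your proof is correct and follows essentially the same approach as the paper: both argue via a Gr\"onwall bound on the separation $|y_t-x_t|$ using the Lipschitz estimate $|u_t(y_t)-u_t(x_t)|\le \|\nabla u_t\|_{L^\infty}|y_t-x_t|$, obtaining the two-sided bound by $\exp(\int_0^1\|\nabla u_s\|_{L^\infty}\,\dif s)$ and then invoking exponential moment bounds on this integral. The only cosmetic difference is that the paper phrases the Gr\"onwall step in terms of $\sigma_t:=(y_t-x_t)/|y_0-x_0|$ and appeals to the estimate~\eqref{eq:exp-omega-H-5/4-moment-bound} on $\int\|\omega_r\|_{H^{5/4}}\,\dif r$, whereas you work with $\log|z_t|$ directly and use Corollary~\ref{cor:C-alpha-moments}; these are equivalent.
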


\begin{proof}[Proof of Lemma~\ref{lem:two-point-smoothing}]
    Fix $\eta, \gamma \in (0,1), (\omega_0,x_0,y_0) \in H^4(\T^2) \times M^2, \varphi \colon H^4(\T^2) \times M^2 \to \R$. Throughout this proof, we let $K$ be a constant the changes line by line but is \textit{independent of $\gamma$} while $C$ \textit{can depend on $\gamma$}. We will prove Lemma~\ref{lem:two-point-smoothing} with $K\gamma$ in place of $\gamma$. By redefining $\gamma$, this directly implies the stated version of the lemma. We fix some $0<r_0(\gamma,\eta) \leq \gamma$ to be determined below.

    If $|x_0 - y_0| \geq r_0$, then we simply use Proposition~\ref{prop:unit-time-smoothing} with $\alpha := \gamma r_0$ as well as Lemma~\ref{lem:x-y-dont-separate} to see that\footnote{In the below two displays, the bounds on the norms may seem to go the wrong way, since $\|\cdot\|_{H^4} \lesssim \|\cdot\|_{H^4,2}$. This is because we are taking norms of derivatives $\nabla \psi$ for $\R$-valued functions $\psi$. These derivatives naturally live in the dual space, and so the order of the size of the norms is switched. This is a bit muddled by our conflation of the space with its dual and the Fr\'echet derivative with the gradient.}
    \begin{align*}
        \|\nabla P \varphi(\omega_0, x_0, y_0)\|_{H^4,2} &\leq \|\nabla P \varphi(\omega_0, x_0, y_0)\|_{H^4}
        \\&\leq e^{\eta V(\omega_0)/2} \left(C\sqrt{P_1{|\varphi|}^2(\omega_0, x_0,y_0)} + \gamma r_0\sqrt{P_1{\|\nabla \varphi\|_{H^4}^2}(\omega_0, x_0,y_0)}\right).
    \end{align*}
    Then we note that
    \begin{align*}
    P_1{\|\nabla \varphi\|_{H^4}^2}(\omega_0, x_0,y_0) &\leq |y_0-x_0|^{-2}P_1{\frac{|y_0-x_0|^2}{|y_t-x_t|^2}\|\nabla \varphi\|_{H^4,2}^2}(\omega_0, x_0,y_0)
    \\&\leq Kr_0^{-2} e^{\eta V(\omega_0)/2} \Big(P_1 \|\nabla \varphi\|_{H^4,2}^4(\omega_0,x_0,y_0)\Big)^{1/2},
    \end{align*}
    where we use Lemma~\ref{lem:x-y-dont-separate} and H\"older's inequality. Putting the displays together, we see
    \[ \|\nabla P \varphi(\omega_0, x_0, y_0)\|_{H^4,2} \leq e^{\eta V(\omega_0)} \left(C\sqrt{P_1{|\varphi|}^2(\omega_0, x_0,y_0)} + K\gamma\big(P_1{\|\nabla \varphi\|_{H^4,2}^4}(\omega_0, x_0,y_0)\big)^{1/4}\right).\]

    Thus, we conclude in the case that $|y_0 -x_0| \geq r_0$. For the remainder, we suppose $|y_0 -x_0|<r_0$. We fix $(\xi, p,q) \in H^4(\T^2) \times \R^4$ such that $\|(\xi,p,q)\|_{H^4,2} = 1$. We want to show that
\[|\<(\xi,p,q), \nabla P_1 \varphi(\omega_0,x_0, y_0)\>| \leq  e^{\eta V(\omega_0)}\left(C\sqrt{P_1|\varphi|^2(\omega_0, x_0, y_0)} + K\gamma\big(P_1\|\nabla \varphi\|_{H^4,2}^4(\omega_0, x_0, y_0)\big)^{1/4}\right).\]
Recalling the definition of $J^T_{s,t}$ as the linearization of the tangent process, we define
\[A^T_{s,t} v :=  \sum_{k \in F}\int_s^{t} c_k v^k_r J^T_{r,t} e_k \,\dif r \text{ and } \M_t := A^T_{0,t} (A^T_{0,t})^*.\]
    Then for $\beta>0$, define
\[v^\beta_t:= (A^T)^*_{0,1/2}(\M^T_{1/2} + \beta)^{-1} J^T_{0,1}B_{x_0,y_0}^{-1}(\xi,p,q).\]
Define also
\[\rho^\beta := J_{0,1}(\xi,p,q) - A_{0,1} v^\beta,\]
where the (unsuperscripted) $J_{0,1}, A_{0,1}$ denote the operators with respect to the two-point process. We note that~\eqref{eq:integration-by-parts} gives
\begin{align}
|\<(\xi,p,q), \nabla P_1 \varphi(\omega_0,x_0, y_0)\>| &\leq \big(\E \delta(v^\beta)^2\big)^{1/2} \sqrt{P_1 |\varphi|^2(\omega_0,x_0,y_0)}
\notag\\&\qquad+  \Big(\E \|\rho^\beta\|_{H^4,2}^{4/3}\Big)^{3/4}  \big(P_1 \|\nabla \varphi\|_{H^4,2}^{4}(\omega_0,x_0,y_0)\big)^{1/4}.
 \label{eq:two-point-integration-by-parts}
\end{align}
The control of the first term follows directly from (the proof of) Proposition~\ref{prop:cost-of-control}, giving
\[\big(\E \delta(v^\beta)^2\big)^{1/2} \sqrt{P_1 |\varphi|^2(\omega_0,x_0,y_0)} \leq C e^{\eta V(\omega_0)}\sqrt{P_1 |\varphi|^2(\omega_0,x_0,y_0)},\]
where the constant depends on $\beta$ but is \textit{independent of} $x_0,y_0$, since we use the tangent process when constructing $v^\beta$ and take $\tau_0$ to be unit sized, and so the constant is independent of $\tau_0$, using the local uniformity of the constants in $(x_0,\tau_0)$ and that we take $|\tau_0| = 1$.

Thus to conclude, it suffices to show that there exists $\beta_0(\eta,\gamma)>0$ such that
\[
\Big(\E \|\rho^{\beta_0}\|_{H^4,2}^{4/3}\Big)^{3/4} \leq K\gamma e^{\eta V(\omega_0)}.
\]
To this end, we note that
\begin{align}
    \Big(\E \|\rho^\beta\|_{H^4,2}^{4/3}\Big)^{3/4} &\leq \Big(\E  \big\|J_{0,1}(\xi,p,q) - B_{x_0,y_0}J_{0,1}^T B_{x_0,y_0}^{-1}(\xi,p, q)\big\|_{H^4,2}^{4/3} \Big)^{3/4}
    \notag\\&\qquad+ \Big(\E \big\|B_{x_0,y_0}J_{0,1}^TB_{x_0,y_0}^{-1}(\xi,p, q) - B_{x_0,y_0}A^T_{0,1} v^\beta\big\|_{H^4,2}^{4/3}  \Big)^{3/4}
    \notag\\&\qquad+ \Big(\E \big\|(B_{x_0,y_0}A^T_{0,1} - A_{0,1}) v^\beta\big\|_{H^4,2}^{4/3}\Big)^{3/4},
    \notag\\&\leq  K e^{\eta V(\omega_0)/2}\Big(\E  \big\| B_{x_0,y_0}^{-1} J_{0,1}(\xi,p,q) - J_{0,1}^T B_{x_0,y_0}^{-1}(\xi,p, q)\big\|_{H^4}^{2} \Big)^{1/2}
    \label{eq:rho-beta-triangle-1}\\&\qquad+ K e^{\eta V(\omega_0)/2}\Big(\E \big\|J_{0,1}^TB_{x_0,y_0}^{-1}(\xi,p, q) - A^T_{0,1} v^\beta\big\|_{H^4}^{2}  \Big)^{1/2}
    \label{eq:rho-beta-triangle-2}\\&\qquad+ K e^{\eta V(\omega_0)/2}\Big(\E \big\|(A^T_{0,1} - B_{x_0,y_0}^{-1}A_{0,1}) v^\beta\big\|_{H^4}^2\Big)^{1/2},
    \label{eq:rho-beta-triangle-3}
\end{align}
where we used that $\| \cdot \|_{H^4,2} \leq 2 \|B_{x_1,y_1}^{-1} \cdot\|_{H^4}$ and used H\"older and Lemma~\ref{lem:x-y-dont-separate} to deal with the factor of $\|B_{x_1, y_1}^{-1} B_{x_0,y_0}\| \leq \frac{|x_0 - y_0|}{|x_1-y_1|}$.

We note then that by (the proof of) Proposition~\ref{prop:size-of-error} for the tangent process, together with the construction $v^\beta$, we have that there exists $\beta_0(\eta,\gamma)>0$ such for all $0<\beta \leq \beta_0$,
\begin{equation}
\label{eq:rho-beta-triangle-2-control}
\E \big\|J_{0,1}^TB_{x_0,y_0}^{-1}(\xi,p, q) - A^T_{0,1} v^\beta\big\|_{H^4}^{2}\leq \gamma^2 e^{\eta V(\omega_0)}.
\end{equation}
For~\eqref{eq:rho-beta-triangle-1}, we note that
\begin{align}\E  \big\| B_{x_0,y_0}^{-1} J_{0,1}(\xi,p,q) - J_{0,1}^T B_{x_0,y_0}^{-1}(\xi,p, q)\big\|_{H^4}^{2} &\leq 4\E \|B_{x_0,y_0}^{-1} J_{0,1} B_{x_0,y_0} - J_{0,1}^T\|_{H^4 \to H^4}^2
\notag\\&\leq K |x_0 - y_0|^2 e^{\eta V(\omega_0)} \leq K \gamma^2 e^{\eta V(\omega_0)},
\label{eq:rho-beta-triangle-1-control}
\end{align}
where we use Lemma~\ref{lem:two-point-linearization-by-tangent} for the second inequality and that $|x_0 - y_0| \leq r_0 \leq \gamma$ for the final inequality.

For~\eqref{eq:rho-beta-triangle-3}, we note that
\begin{align}
    \E \big\|(A^T_{0,1} - B_{x_0,y_0}^{-1}A_{0,1}) v^\beta\big\|_{H^4}^2 &\leq K  \sum_{k \in F} \E\int_0^{1} |c_k|^2 |(v^\beta)^k_r|^2 \|J^T_{r,1} - B_{x_0,y_0}^{-1} J_{r,1} B_{x_0,y_0}\|^2_{H^4 \to H^4} \,\dif r
    \notag\\&\leq K \E \|v^\beta\|_{L^2([0,1])}^2 \sup_{0 \leq t \leq 1} \|J^T_{t,1} - B_{x_0,y_0}^{-1} J_{t,1} B_{x_0,y_0}\|^2_{H^4 \to H^4}
    \notag\\&\leq K \beta^{-2} |x_0 -y_0|^2 e^{\eta V(\omega_0)} \leq K \beta^{-2} r_0^2 e^{\eta V(\omega_0)},
    \label{eq:rho-beta-triangle-3-control}
\end{align}
where we use~\eqref{eq:v-beta-bound-l2} to control $\|v^\beta\|_{L^2([0,1])}$, together with H\"older and Lemma~\ref{lem:two-point-linearization-by-tangent} for the second inequality. We now let $r_0 := \beta_0 \gamma \land \gamma$, where $\beta_0$ is as in~\eqref{eq:rho-beta-triangle-2-control}. Then using~\eqref{eq:rho-beta-triangle-1-control} to bound~\eqref{eq:rho-beta-triangle-1},~\eqref{eq:rho-beta-triangle-2-control} to bound~\eqref{eq:rho-beta-triangle-2}, and~\eqref{eq:rho-beta-triangle-3-control} to bound~\eqref{eq:rho-beta-triangle-3}, we see that
\[\Big(\E \|\rho^{\beta_0}\|_{H^4,2}^{4/3}\Big)^{3/4}  \leq K \gamma e^{\eta V(\omega_0)},\]
which by the above discussion allows us to conclude.
\end{proof}

\appendix
\section{\textit{A priori} bounds}
\subsection{\textit{A priori} bounds on stochastic Navier--Stokes}\label{ssa:sns-bounds}

In this section, we prove the \textit{a priori} bounds on stochastic Navier--Stokes given in Proposition~\ref{prop:omega-bounds}. We include these arguments for completeness and the reader's convenience, making no claim of novelty. These propositions in particular have substantial overlap with the bounds given in~\cite[Proposition 4.10]{hairer_ergodicity_2006} as well as the bounds of~\cite[Appendices A-D]{mattingly_dissipative_2002}.

We first recall a standard exponential martingale estimate we will be using often.
\begin{lemma}\label{lem:exp-martingale-bound}
  Let $M_t$ a continuous $L^2$ martingale. Then for all $0 \leq \eta \leq A,$
  \[\E\exp \Big(\sup_{t \geq 0} \eta M_t - \eta A\<M,M\>_t \Big) \leq 2\E e^{\eta M_0}.\]
\end{lemma}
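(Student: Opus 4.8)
The statement to prove is the standard exponential martingale estimate, Lemma~\ref{lem:exp-martingale-bound}: for a continuous $L^2$ martingale $M_t$ and $0 \leq \eta \leq A$,
\[\E\exp\Big(\sup_{t \geq 0} \eta M_t - \eta A \langle M, M\rangle_t\Big) \leq 2\E e^{\eta M_0}.\]

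The plan is to reduce to the exponential supermartingale (Doléans-Dade) inequality. First I would recall that $\mathcal{E}_t := \exp(2\eta M_t - 2\eta^2 \langle M, M\rangle_t)$ is a nonnegative local martingale, hence a supermartingale, so by the optional stopping theorem and Fatou $\E \mathcal{E}_\tau \leq \E \mathcal{E}_0 = \E e^{2\eta M_0}$ for any stopping time $\tau$. The quantity we want to control involves $\sup_t (\eta M_t - \eta A \langle M, M\rangle_t)$, so the idea is to write $\eta M_t - \eta A \langle M, M\rangle_t = \tfrac{1}{2}(2\eta M_t - 2\eta^2 \langle M, M\rangle_t) - (\eta A - \eta^2)\langle M, M\rangle_t$. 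Since $\eta \leq A$ we have $\eta A - \eta^2 \geq 0$, so the last term is nonpositive and can be dropped:
\[\eta M_t - \eta A \langle M, M\rangle_t \leq \tfrac{1}{2}\log \mathcal{E}_t,\]
whence $\exp(\sup_t(\eta M_t - \eta A\langle M, M\rangle_t)) \leq \sup_t \mathcal{E}_t^{1/2}$.

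The main step is then to bound $\E \sup_t \mathcal{E}_t^{1/2}$. Here I would apply the maximal inequality for the nonnegative supermartingale $\mathcal{E}_t$: for any $\lambda > 0$, $\P(\sup_t \mathcal{E}_t \geq \lambda) \leq \lambda^{-1}\E\mathcal{E}_0 = \lambda^{-1}\E e^{2\eta M_0}$. Integrating the tail,
\[\E \sup_t \mathcal{E}_t^{1/2} = \int_0^\infty \P(\sup_t \mathcal{E}_t \geq \lambda^2)\,\dif\lambda \leq \int_0^\infty \big(1 \land \lambda^{-2}\E e^{2\eta M_0}\big)\,\dif\lambda = 2\big(\E e^{2\eta M_0}\big)^{1/2}.\]
Combining, $\E\exp(\sup_t(\eta M_t - \eta A\langle M, M\rangle_t)) \leq 2(\E e^{2\eta M_0})^{1/2}$. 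To get the clean form in the statement, I would note that if $M_0$ is deterministic (as it is in all applications in this paper, where the martingale starts from $0$) this already gives the bound $2 e^{\eta M_0} = 2\E e^{\eta M_0}$; in general one can absorb the factor by applying the estimate to the shifted martingale $M_t - M_0$, which has the same quadratic variation, obtaining $\E\exp(\sup_t(\eta(M_t - M_0) - \eta A\langle M,M\rangle_t)) \leq 2$, and then multiplying through by $e^{\eta M_0}$ inside the expectation (or by a further Cauchy--Schwarz). The only subtlety worth a remark is the reduction from local martingale to the supermartingale bound via a localizing sequence $\tau_n \uparrow \infty$ together with monotone convergence in $n$ and Fatou, which is routine but should be mentioned for rigor; no serious obstacle arises.
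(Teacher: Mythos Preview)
The paper does not give a proof of this lemma: it is stated as a ``standard exponential martingale estimate'' and simply invoked thereafter. Your argument via the Dol\'eans-Dade exponential $\mathcal{E}_t = \exp(2\eta M_t - 2\eta^2\langle M,M\rangle_t)$, the supermartingale maximal inequality, and tail integration is exactly the standard proof and is correct.

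One point deserves a cleaner statement. Your bound $\E\sup_t \mathcal{E}_t^{1/2} \leq 2(\E e^{2\eta M_0})^{1/2}$ is in general \emph{weaker} than the claimed $2\E e^{\eta M_0}$, since by Jensen $(\E e^{2\eta M_0})^{1/2} \geq \E e^{\eta M_0}$. The way to recover the exact constant is precisely your ``shift by $M_0$'' idea, but phrased as conditioning: apply the maximal inequality to the supermartingale $\mathcal{E}_t/\mathcal{E}_0$ \emph{conditionally on $\mathcal{F}_0$} to get $\E[\sup_t(\mathcal{E}_t/\mathcal{E}_0)^{1/2}\mid\mathcal{F}_0]\leq 2$, and then use that $e^{\eta M_0}=\mathcal{E}_0^{1/2}$ is $\mathcal{F}_0$-measurable. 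The phrase ``multiplying through by $e^{\eta M_0}$ inside the expectation'' is this conditioning step, but it should be made explicit since $M_0$ is not independent of the path. (In all the paper's applications one could also just use your unconditioned bound, since either $M_0$ is deterministic or the extra square is harmless.)
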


We first show an easier version of~\eqref{eq:l2-energy-bound}.

\begin{lemma}\label{lem:initial-energy-bound}
  There exists $C>0$ such that for all $\eta \leq C^{-1},$
  \[\E \exp\big( \eta\|\omega_t\|_{L^2}^2\big) \leq C \exp\big(\eta e^{-C^{-1} t} \|\omega_0\|_{L^2}^2 \big).\]
\end{lemma}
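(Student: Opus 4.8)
The plan is to prove Lemma~\ref{lem:initial-energy-bound} by a standard It\^o-plus-exponential-martingale argument applied to the $L^2$ energy of $\omega_t$. First I would apply It\^o's formula to $\|\omega_t\|_{L^2}^2$ for the solution of~\eqref{eq:sns}. The transport nonlinearity $(\nabla^\perp\Delta^{-1}\omega_t)\cdot\nabla\omega_t$ is orthogonal to $\omega_t$ in $L^2$ (it is the advection of $\omega_t$ by a divergence-free field on the torus), so it drops out. This leaves
\[
\dif\|\omega_t\|_{L^2}^2 = \Big(-2\nu\|\omega_t\|_{H^1}^2 + \sum_{k\in F}c_k^2\|e_k\|_{L^2}^2\Big)\dif t + 2\sum_{k\in F}c_k\langle\omega_t,e_k\rangle\,\dif W_t^k,
\]
where I use $\|\nabla\omega_t\|_{L^2}^2 = \|\omega_t\|_{H^1}^2$ up to the mean-zero Poincar\'e equivalence (recall $\int\omega_t = 0$). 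Writing $E := \sum_{k\in F}c_k^2\|e_k\|_{L^2}^2$, which is a constant depending only on $F$ and the $c_k$, and using the Poincar\'e inequality $\|\omega_t\|_{H^1}^2 \geq \|\omega_t\|_{L^2}^2$, I get the drift bound $-2\nu\|\omega_t\|_{L^2}^2 + E$.

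Next I would form the exponentiated process. Let $M_t := 2\sum_{k\in F}c_k\int_0^t\langle\omega_s,e_k\rangle\,\dif W_s^k$ denote the martingale part, whose quadratic variation satisfies $\dif\langle M\rangle_t = 4\sum_{k\in F}c_k^2\langle\omega_t,e_k\rangle^2\,\dif t \leq 4\big(\max_k c_k^2\big)\|\omega_t\|_{L^2}^2\,\dif t =: C_0\|\omega_t\|_{L^2}^2\,\dif t$. Fix $\eta > 0$ small. By It\^o's formula, $\exp(\eta\|\omega_t\|_{L^2}^2)$ has drift
\[
\eta\exp(\eta\|\omega_t\|_{L^2}^2)\Big(-2\nu\|\omega_t\|_{L^2}^2 + E + \tfrac{\eta}{2}\tfrac{\dif\langle M\rangle_t}{\dif t}\Big) \leq \eta\exp(\eta\|\omega_t\|_{L^2}^2)\Big(-(2\nu - \tfrac{\eta C_0}{2})\|\omega_t\|_{L^2}^2 + E\Big),
\]
so choosing $\eta \leq C^{-1}$ with $C$ large enough that $\tfrac{\eta C_0}{2}\leq\nu$, the drift is $\leq \eta\exp(\eta\|\omega_t\|_{L^2}^2)(-\nu\|\omega_t\|_{L^2}^2 + E)$. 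To close this into an exponential-decay bound on the expectation, I would use the elementary inequality $-\nu x + E \leq -\tfrac{\nu}{2}x + C'$ where $C' := C'(E,\nu)$, combined with the fact that $x \mapsto \eta x\,e^{\eta x}$ dominates (a constant multiple of) $e^{\eta x} - 1$, to obtain a differential inequality of Gr\"onwall type: $\tfrac{\dif}{\dif t}\E\exp(\eta\|\omega_t\|_{L^2}^2) \leq -c\,\big(\E\exp(\eta\|\omega_t\|_{L^2}^2) - C\big)$ for constants $c, C > 0$. Integrating yields
\[
\E\exp(\eta\|\omega_t\|_{L^2}^2) \leq C + e^{-ct}\big(\exp(\eta\|\omega_0\|_{L^2}^2) - C\big) \leq C\exp(\eta e^{-ct}\|\omega_0\|_{L^2}^2),
\]
where the last step uses that $e^{-ct}\eta x \leq \eta x$ and $a + e^{-ct}(e^{b} - a) \leq C e^{e^{-ct}b}$ for $a \leq C$. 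Renaming constants gives the claimed form.

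The main obstacle is making the exponential-moment estimate rigorous rather than merely formal: one must justify that the local martingale $\int_0^t\eta\exp(\eta\|\omega_s\|_{L^2}^2)\,\dif M_s$ is a genuine martingale (or at least a supermartingale after the drift is handled), which requires either a localization/stopping-time argument with Fatou, or invoking Lemma~\ref{lem:exp-martingale-bound} directly on a suitably completed-the-square version of the exponent. The cleanest route is probably: stop at $\tau_R := \inf\{t : \|\omega_t\|_{L^2} \geq R\}$, run the above computation on $[0,t\wedge\tau_R]$ where everything is bounded, take expectations (the stochastic integral now vanishes in expectation), apply Gr\"onwall, and then send $R\to\infty$ using monotone convergence together with the a priori knowledge that solutions exist globally and $\|\omega_t\|_{L^2}$ does not blow up (standard for 2D SNS). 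The negative-drift coming from viscosity is what makes the Gr\"onwall step produce a genuinely contractive bound; the role of choosing $\eta$ small is precisely to ensure the It\^o correction $\tfrac{\eta}{2}\dif\langle M\rangle_t$ does not overwhelm the dissipation $2\nu\|\omega_t\|_{H^1}^2$.
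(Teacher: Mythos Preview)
Your It\^o computation and the Gr\"onwall step for $\E\exp(\eta\|\omega_t\|_{L^2}^2)$ are fine, but the final inequality you invoke,
\[
a + e^{-ct}(e^{b}-a) \leq C\,e^{e^{-ct}b}\qquad(a\le C),
\]
is false. Take $a=0$, fix any $t>0$, and let $b\to\infty$: the left side behaves like $e^{-ct}e^{b}$ while the right side behaves like $C e^{e^{-ct}b}$, and since $1-e^{-ct}>0$ the left side dominates. Put differently, your Gr\"onwall argument yields
\[
\E\exp(\eta\|\omega_t\|_{L^2}^2)\le C + e^{-ct}\exp(\eta\|\omega_0\|_{L^2}^2),
\]
which places the exponential decay \emph{outside} the exponential; the lemma asks for decay \emph{inside} the exponential, $C\exp(\eta e^{-C^{-1}t}\|\omega_0\|_{L^2}^2)$. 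These are inequivalent: for fixed $t>0$ the first bound grows like $e^{\eta\|\omega_0\|^2}$ in the initial data, the second only like $e^{\eta e^{-C^{-1}t}\|\omega_0\|^2}$. The inside-the-exponent decay is exactly what is needed later (e.g.\ for the super-Lyapunov property), so this is not a cosmetic difference.

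The fix is to obtain a \emph{pathwise} bound on $\|\omega_t\|_{L^2}^2 - e^{-\nu t}\|\omega_0\|_{L^2}^2$ before exponentiating. Apply It\^o to $e^{\nu t}\|\omega_t\|_{L^2}^2$; after Poincar\'e and dividing by $e^{\nu t}$ you get
\[
\|\omega_t\|_{L^2}^2 - e^{-\nu t}\|\omega_0\|_{L^2}^2 \le e^{-\nu t}M_t - C^{-1}e^{-2\nu t}\langle M,M\rangle_t + C
\]
for a suitable martingale $M_t$ with $M_0=0$. Now exponentiate and apply the exponential martingale estimate (Lemma~\ref{lem:exp-martingale-bound}) to bound $\E\exp\big(\eta(e^{-\nu t}M_t - C^{-1}e^{-2\nu t}\langle M\rangle_t)\big)\le 2$ for $\eta$ small, which gives the desired inequality directly. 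The point is that the integrating factor $e^{\nu t}$ pulls the decay onto $\|\omega_0\|^2$ at the level of the energy identity, before any exponentiation.
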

\begin{proof}
  By It\^o's formula, we have
  \[\frac{\dif}{\dif t} \Big(e^{\nu t} \int \omega_t^2\,\dif x\Big) = \nu e^{\nu t} \int \omega_t^2\,\dif x - 2\nu e^{\nu t}\int |\nabla \omega_t|^2\,\dif x + 2e^{\nu t}\sum_{k \in F} \Big(\int c_k e_k \omega_t\, \dif x\Big)\dif W^k_t + e^{\nu t}\sum_{k \in F} \int c_k^2 e_k^2\,\dif x.\]
  Integrating in time, using Poincar\'e's inequality, and dividing by $e^{\nu t}$, we see that
  \begin{align*}
    \|\omega_t\|_{L^2}^2 - e^{-\nu t}\|\omega_0\|_{L^2}^2 &\leq - \nu \int_0^te^{-\nu (t-s)}\|\omega_s\|_{L^2}^2\,\dif s + 2 \int_0^t e^{-\nu (t-s)}\sum_{k \in F} \Big(\int c_k e_k \omega_s\, \dif x\Big)\dif W^k_s + C
    \\&\leq -  e^{-2 \nu t} C^{-1}\<M,M\>_t+ e^{-\nu t} M_t  + C,
  \end{align*}
  where
  \[M_t := 2 \int_0^t e^{\nu s}\sum_{k \in F} \Big(\int c_k e_k \omega_s\, \dif x\Big)\dif W^k_s,\]
  and we compute
  \[e^{-2 \nu t} \<M,M\>_t = 4\int_0^t e^{-2\nu (t-s)} \sum_{k \in F} \Big(\int c_k e_k \omega_s\,\dif x\Big)^2 ds \leq C\int_0^t e^{-\nu (t-s)} \|\omega_s\|_{L^2}^2ds.\]
  Thus by Lemma~\ref{lem:exp-martingale-bound} and using that $M_0 =0$, we get that for $\eta \leq  C^{-1},$
  \begin{align*}
    \E \exp\big(\eta\|\omega_t\|_{L^2}^2 - e^{- \nu t}\eta \|\omega_0\|_{L^2}^2 - \eta C\big) \leq 2.
  \end{align*}
  Rearranging and using the bound on $\eta$, we get the result.
\end{proof}

\begin{proof}[Proof of~\eqref{eq:l2-energy-bound}]
  By It\^o's formula,
  \begin{align*} \|\omega_t\|_{L^2}^2  + \nu \int_s^t \|\omega_r\|_{H^1}^2\,\dif r - (t-s)\|c\|^2 &= \|\omega_s\|_{L^2}^2+  2 \sum_{k \in F} \int_s^t\Big(\int c_k e_k \omega_r\, \dif x\Big)\dif W^k_r - \nu \int_s^t \|\omega_r\|_{H^1}^2\,\dif r
    \\& \leq  M^s_t - C^{-1} \<M^s, M^s\>_t,
  \end{align*}
  where
  \[M^s_t := \|\omega_s\|_{L^2}^2+  2 \sum_{k \in F} \int_s^t\Big(\int c_k e_k \omega_r\, \dif x\Big)\dif W^k_r\]
  and we compute
  \[\<M^s, M^s\>_t \leq C \int_s^t \|\omega_r\|_{L^2}^2\,\dif r \leq C \nu \int_s^t \|\omega_r\|_{H^1}^2\,\dif r.\]
  Thus Lemma~\ref{lem:exp-martingale-bound} gives
  \[\E \exp\Big(\eta   \Big(\sup_{t \geq s} \|\omega_t\|_{L^2}^2  +\nu\int_s^t \|\omega_r\|_{H^1}^2\,\dif r - C(t-s)\Big)\Big) \leq 2 \E \exp\big(\eta \|\omega_s\|_{L^2}^2\big).\]
  Using Lemma~\ref{lem:initial-energy-bound} to bound the right-hand side and rearranging somewhat, we conclude.
\end{proof}

We now consider controlling higher derivatives of $\omega_t$. Differentiating the equation for $\omega_t$~\eqref{eq:sns}, we see
\begin{equation}
  \label{eq:nabla-n-omega}
  \frac{\dif}{\dif t}\nabla^n \omega_t =   \nu \Delta \nabla^n \omega_t -\Delta^{-1} \nabla^\perp \omega_t \cdot \nabla \nabla^n \omega_t - \sum_{j=0}^{n-1} \binom{n}{j} \Delta^{-1} \nabla^\perp \nabla^{n-j} \omega_t \cdot \nabla \nabla^j \omega_t+ \sum_{k \in F} c_k \nabla^n e_k \dif W^k_t.
\end{equation}

Before continuing, we note the following bound.
\begin{lemma}
  \label{lem:cross-term-interpolation}
  Let $f \in C^\infty(\T^2)$, then there exists $C(n)>0$ such that
  \[2\sum_{j=0}^{n-1} \binom{n}{j} \int |\nabla^n f| |\Delta^{-1} \nabla^\perp \nabla^{n-j} f | |\nabla \nabla^j f|\,\dif x \leq \frac{\nu}{2}\|f\|_{H^{n+1}} + C\|f\|_{L^2}^{2n+4}.\]
\end{lemma}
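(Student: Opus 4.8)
## Proof proposal for Lemma~\ref{lem:cross-term-interpolation}

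\textbf{Overall approach.} This is a standard commutator-type interpolation estimate for the 2D Navier--Stokes nonlinearity, and the plan is to bound each term in the sum over $j$ using Hölder's inequality followed by Gagliardo--Nirenberg interpolation, distributing derivatives so that the top-order factor $\|f\|_{H^{n+1}}$ appears to the first power only, with the remaining ``excess'' derivatives absorbed into powers of $\|f\|_{L^2}$. The key structural point (as in~\cite[Proposition 4.10]{hairer_ergodicity_2006} or~\cite[Appendices A--D]{mattingly_dissipative_2002}) is that $\Delta^{-1}\nabla^\perp$ gains one derivative, so the product $|\nabla^n f|\,|\Delta^{-1}\nabla^\perp\nabla^{n-j}f|\,|\nabla\nabla^j f|$ has total differential order $n + (n-j-1) + (j+1) = 2n$, i.e. one less than $2(n+1)$, which is exactly the room needed to land below $\tfrac{\nu}{2}\|f\|_{H^{n+1}}^2$ plus a lower-order term after Young's inequality.

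\textbf{Key steps.} First I would fix $0 \le j \le n-1$ and apply Hölder with exponents $(2, q, q')$, $\tfrac12 + \tfrac1q + \tfrac1{q'} = 1$, to the $j$-th summand, putting $\nabla^n f$ in $L^2$ and the other two factors in $L^q$, $L^{q'}$ respectively; a symmetric choice $q = q' = 4$ is the natural starting point but one may need to optimize depending on $j$. Second, I would invoke Gagliardo--Nirenberg on $\T^2$: for the factor $\Delta^{-1}\nabla^\perp\nabla^{n-j}f$ (effective order $n-j-1$) write $\|\nabla^{n-j-1}g\|_{L^4} \lesssim \|g\|_{H^{n+1}}^{\theta_1}\|g\|_{L^2}^{1-\theta_1}$ with $g = f$ and the appropriate $\theta_1 \in [0,1]$ determined by scaling on the torus, and similarly $\|\nabla^{j+1}f\|_{L^4} \lesssim \|f\|_{H^{n+1}}^{\theta_2}\|f\|_{L^2}^{1-\theta_2}$; the bound $\|\nabla^n f\|_{L^2} \le \|f\|_{H^{n+1}}$ is immediate. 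Third, multiply the three estimates: the total power of $\|f\|_{H^{n+1}}$ is $1 + \theta_1 + \theta_2$, and the counting identity above (total derivative order $2n$ versus the ``critical'' $2(n+1)$, with the $L^4\cdot L^4$ integrability on $\T^2$ contributing the remaining slack) forces $1 + \theta_1 + \theta_2 < 2$, strictly. Fourth, apply Young's inequality $ab \le \epsilon a^{p} + C_\epsilon b^{p'}$ with $p = \tfrac{2}{1+\theta_1+\theta_2} > 1$ to split off $\tfrac{\nu}{2}\|f\|_{H^{n+1}}^2$ (choosing $\epsilon$ small, absorbing the finitely many $j$-terms and the binomial constants into $C(n)$), leaving a power $\|f\|_{L^2}^{m}$ with $m = (1-\theta_1)p' + (1-\theta_2)p'$; a crude bound $m \le 2n+4$ then gives the stated right-hand side (using that we may always enlarge the $L^2$ exponent since $\|f\|_{L^2}$ need not be small --- or, more carefully, one tracks $m$ exactly and checks $m \le 2n+4$ via the scaling relations). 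Summing over the finitely many $j$ completes the proof.

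\textbf{Main obstacle.} The only real subtlety is the bookkeeping of the Gagliardo--Nirenberg exponents on $\T^2$ (the torus, not $\R^2$, so one must include the zero-frequency / Poincaré correction, which is why $\|f\|_{H^{n+1}}$ appears rather than the homogeneous seminorm) and the verification that the resulting power of $\|f\|_{H^{n+1}}$ is strictly below $2$ uniformly in $j$, together with the bound $m \le 2n+4$ on the $L^2$ exponent. This is entirely routine dimensional analysis — the $2n$ versus $2(n+1)$ derivative count makes it work — but it is the step that requires care rather than invocation of a black box. The endpoint cases $j = 0$ and $j = n-1$ (where one factor is at order $n-1$ or $n$) should be checked separately as they are the ``worst'' terms, and for $j=0$ one should exploit the divergence-free structure (the term $\Delta^{-1}\nabla^\perp f \cdot \nabla\nabla^n f$ paired against $\nabla^n f$ is handled by integration by parts in the actual energy estimate, but here, since we only need the displayed inequality with absolute values inside the integral, a direct Hölder--Gagliardo--Nirenberg bound suffices and the $j=0$ term has effective orders $n, n-1, n+1$ — wait, $j=0$ gives orders $n$, $n-1$, $1$, total $2n$, consistent — so no integration by parts is needed here).
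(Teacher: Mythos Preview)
Your overall strategy (H\"older $+$ Gagliardo--Nirenberg $+$ Young) is the right one and matches the paper's, but there is a genuine gap in your exponent counting. With your choice of putting $\nabla^n f$ in $L^2$ and then using the crude bound $\|\nabla^n f\|_{L^2}\le \|f\|_{H^{n+1}}$, the total power of $\|f\|_{H^{n+1}}$ is \emph{exactly} $2$, not strictly less. Concretely: in dimension $2$, Gagliardo--Nirenberg gives $\|\nabla^m f\|_{L^4}\lesssim \|f\|_{H^{n+1}}^{\theta}\|f\|_{L^2}^{1-\theta}$ with $\theta=\tfrac{2m+1}{2(n+1)}$, so for the two factors at orders $n-j-1$ and $j+1$ one finds $\theta_1+\theta_2=\tfrac{(2n-2j-1)+(2j+3)}{2(n+1)}=1$ for every $j$. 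Hence $1+\theta_1+\theta_2=2$ and Young's inequality does not separate the terms. Your heuristic that ``the $L^4\cdot L^4$ integrability on $\T^2$ contributes the remaining slack'' is precisely where the argument breaks: in two dimensions the $L^2\times L^4\times L^4$ splitting is scaling-critical for this trilinear form, so there is no slack.

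The fix is simple: do not waste the first factor. Either interpolate $\|\nabla^n f\|_{L^2}\le \|f\|_{H^{n+1}}^{n/(n+1)}\|f\|_{L^2}^{1/(n+1)}$, which brings the total $H^{n+1}$ power down to $\tfrac{2n+1}{n+1}<2$ and (after Young) yields exactly $\|f\|_{L^2}^{2n+4}$; or follow the paper's route, which places $\nabla^n f$ in $L^4$ and uses a $j$-dependent H\"older triple so that the other two factors interpolate between $L^2$ and $W^{n,4}$ with exponents summing to $1$, giving the clean $j$-independent bound $C\|f\|_{W^{n,4}}^2\|f\|_{L^2}$, after which a single interpolation $\|f\|_{W^{n,4}}\le C\|f\|_{H^{n+1}}^{(2n+1)/(2n+2)}\|f\|_{L^2}^{1/(2n+2)}$ and Young's inequality finish the job. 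Either correction yields the stated estimate with the exact exponent $2n+4$ on $\|f\|_{L^2}$; your remark that one may ``always enlarge the $L^2$ exponent'' is not safe in general (since $\|f\|_{L^2}$ may be small), but becomes moot once the exponents are tracked.
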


\begin{proof}
  Fix $n$ and $1 \leq k \leq n$, then by H\"older and Calder\'on-Zygmund estimates to cancel the inverse Laplacian,
  \[\int |\nabla^n f| |\Delta^{-1} \nabla^\perp \nabla^{n-k+1} f | |\nabla \nabla^{k-1} f|\,\dif x \leq C\|f\|_{W^{n,4}} \|f\|_{W^{n-k,p}} \|f\|_{W^{k,q}},\]
  where
  \[q =  \frac{2n-k}{4n} \text{ and } p = \frac{n+k}{4n},\]
  so that
  \[\frac{1}{4}+ \frac{1}{p} + \frac{1}{q} = 1.\]
  We then interpolate $\|f\|_{W^{n-k,p}}$ and $\|f\|_{W^{k,q}}$ between $L^2$ and $W^{n,4}$ using Gagliardo-Nirenberg interpolation. That gives that
  \[ \|f\|_{W^{n-k,p}} \leq C \|f\|_{W^{n,4}}^{\theta}\|f\|_{L^2}^{1-\theta} \text{ and } C \|f\|_{W^{n,4}}^{\theta'}\|f\|_{L^2}^{1-\theta'}\]
  where
  \[\theta = \frac{k}{n} \text{ and } \theta' = \frac{n-k}{n}.\]
  One can readily verify that these parameter values satisfy the hypotheses of the Gagliardo-Nirenberg interpolation inequality. Thus we see
  \[\int |\nabla^n f| |\Delta^{-1} \nabla^\perp \nabla^{n-k+1} f | |\nabla \nabla^{k-1} f|\,\dif x \leq C\|f\|_{W^{n,4}}^2 \|f\|_{L^2}.\]
  To conclude, we interpolate $\|f\|_{W^{n,4}}$ between $L^2$ and $H^{n+1}$. One can readily check that this gives
  \[\|f\|_{W^{n,4}} \leq C\|f\|_{H^{n+1}}^\theta\|f\|_{L^2}^{1-\theta} \text{ with } \theta = \frac{2n+1}{2n+2}.\]
  Thus
  \begin{align*}
    &2\sum_{j=0}^{n-1} \binom{n}{j} \int |\nabla^n f| |\Delta^{-1} \nabla^\perp \nabla^{n-j} f | |\nabla \nabla^j f|\,\dif x \leq C\|f\|_{H^{n+1}}^{2\theta} \|f\|_{L^2}^{3-2\theta}
    \\&\qquad\leq \frac{1}{2}\|f\|_{H^{n+1}}^{2} + C\|f\|_{L^2}^{\frac{3-2\theta}{1-\theta}}=  \frac{1}{2}\|f\|_{H^{n+1}}^{2} + C\|f\|_{L^2}^{2n+4},
  \end{align*}
  as claimed.
\end{proof}

We now show an easier version of~\eqref{eq:Hn-bound}, analogous to Lemma~\ref{lem:initial-energy-bound}.

\begin{lemma}
  \label{lem:Hn-initial}
  There exists $C(n, \|c\|, F,\nu) >0$ such that for all $0 \leq \eta \leq C^{-1}$ and all $t \geq 0$,
  \[ \E \exp\Big(\eta \|\omega_t\|_{H^n}^{\frac{2}{n+2}}\Big)  \leq  C\exp\Big(e^{-C^{-1}t} \eta \|\omega_0\|_{H^n}^{\frac{2}{n+2}}+C \eta \|\omega_0\|_{L^2}^2\Big).\]
\end{lemma}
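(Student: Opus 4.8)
The plan is to run an It\^o estimate on the exponentially weighted energy $e^{\lambda t}\|\omega_t\|_{H^n}^2$, extract a pointwise-in-time bound with an exponentially decaying memory kernel, and only then apply the concave power $x\mapsto x^{1/(n+2)}$, using the already-established $L^2$ estimate~\eqref{eq:l2-energy-bound} (together with Lemma~\ref{lem:initial-energy-bound}) to control what remains. The reason for the exponent $\tfrac{2}{n+2}$ will become clear below.

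\emph{Step 1: the weighted energy inequality.} Using the equation~\eqref{eq:nabla-n-omega} for $\nabla^n\omega_t$ and It\^o's formula, the transport term $-\Delta^{-1}\nabla^\perp\omega_t\cdot\nabla\nabla^n\omega_t$ integrated against $\nabla^n\omega_t$ vanishes by incompressibility, the viscous term contributes $-2\nu\|\omega_t\|_{H^{n+1}}^2$, and the lower-order commutator terms are controlled by Lemma~\ref{lem:cross-term-interpolation}, which absorbs $\tfrac\nu2\|\omega_t\|_{H^{n+1}}^2$ at the cost of a supercritical source $C\|\omega_t\|_{L^2}^{2n+4}$. The It\^o correction from the forcing is a fixed constant $C_0=\sum_{k\in F} c_k^2\|\nabla^n e_k\|_{L^2}^2$, and the martingale part $M_t$ has quadratic variation $\dif\langle M\rangle_t=4\sum_{k\in F}c_k^2\langle\nabla^n\omega_t,\nabla^n e_k\rangle^2\,\dif t\le C\|\omega_t\|_{L^2}^2\,\dif t$ --- crucially $O(1)$ in the Fourier weights, because $F$ is a fixed finite set. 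Taking $\lambda\le\nu$ and using $\|\omega_t\|_{H^{n+1}}^2\ge\|\omega_t\|_{H^n}^2$ to absorb the $\lambda e^{\lambda t}\|\omega_t\|_{H^n}^2$ produced by the weight, one obtains, after integrating and discarding the remaining (negative) dissipation integral,
\[
  \|\omega_t\|_{H^n}^2 \le e^{-\lambda t}\|\omega_0\|_{H^n}^2 + C\int_0^t e^{-\lambda(t-s)}\big(\|\omega_s\|_{L^2}^{2n+4}+1\big)\,\dif s + N_t,\qquad N_t:=e^{-\lambda t}\int_0^t e^{\lambda s}\,\dif M_s.
\]

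\emph{Step 2: the concave power and the three terms.} By subadditivity of $x\mapsto x^{1/(n+2)}$,
\[
  \|\omega_t\|_{H^n}^{2/(n+2)} \le C\Big(e^{-\lambda t/(n+2)}\|\omega_0\|_{H^n}^{2/(n+2)} + \Big(\int_0^t e^{-\lambda(t-s)}(\|\omega_s\|_{L^2}^{2n+4}+1)\,\dif s\Big)^{1/(n+2)} + |N_t|^{1/(n+2)}\Big),
\]
and I bound the exponential moment of each term by H\"older. The first term is deterministic and contributes exactly the factor $\exp(e^{-C^{-1}t}\eta\|\omega_0\|_{H^n}^{2/(n+2)})$ with $C^{-1}=\lambda/(n+2)$. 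For the memory term I split $[0,t]$ into unit intervals: with $H_j:=\int_{(t-j-1)_+}^{t-j}(\|\omega_s\|_{L^2}^{2n+4}+1)\,\dif s$ one has $\int_0^t e^{-\lambda(t-s)}(\cdots)\,\dif s\le C\sum_j e^{-\lambda j}H_j$, so by subadditivity of the root and generalized H\"older with weights $w_j\propto e^{-\lambda j/(n+2)}$ summing to $1$, the exponential moment is dominated by $\sum_j w_j\,\E\exp(C_\lambda\eta H_j^{1/(n+2)})$; and since $H_j^{1/(n+2)}\le 1+\sup_{s\in[t-j-1,t-j]}\|\omega_s\|_{L^2}^2$, conditioning on $\omega_{t-j-1}$, applying~\eqref{eq:l2-energy-bound} over the unit window $[t-j-1,t-j]$ (whose prefactor $Ce^{C}$ is $t$- and $j$-independent) and then Lemma~\ref{lem:initial-energy-bound} for the outer expectation, each term is $\le C\exp(C_\lambda\eta\|\omega_0\|_{L^2}^2)$ uniformly in $t,j$. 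For the martingale term, use $|N_t|^{1/(n+2)}\le 1+|N_t|$ and note that $N_t$ solves $\dif N_t=-\lambda N_t\,\dif t+\dif M_t$, so a standard supermartingale argument (or Lemma~\ref{lem:exp-martingale-bound}) gives $\E\exp(\theta N_t)\le\big[\E\exp(C\theta^2\int_0^t e^{-2\lambda(t-s)}\|\omega_s\|_{L^2}^2\,\dif s)\big]^{1/2}$, and the integral inside is handled by Jensen in time against the density $\propto e^{-2\lambda(t-s)}$ together with Lemma~\ref{lem:initial-energy-bound}, yielding $\E\exp(\theta N_t)\le C\exp(C\theta^2\|\omega_0\|_{L^2}^2)$ for $\theta$ small (and symmetrically for $-N_t$). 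Combining the three factors and taking $\eta$ small enough that every invocation of~\eqref{eq:l2-energy-bound}, Lemma~\ref{lem:initial-energy-bound} and Lemma~\ref{lem:exp-martingale-bound} lies in its admissible range yields the claim.

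\emph{Main obstacle.} The only real difficulty is the supercritical forcing term $\|\omega_t\|_{L^2}^{2n+4}$: it has no useful exponential moments, and the viscous dissipation cannot absorb it (when $\omega_t$ is smooth but large in $L^2$ there is no competing smoothing). The resolution --- and the reason $\tfrac{2}{n+2}$, rather than $2$, is the natural exponent here --- is that $(\|\omega_t\|_{L^2}^{2n+4})^{1/(n+2)}=\|\omega_t\|_{L^2}^2$, which \emph{does} have good exponential moments by Lemma~\ref{lem:initial-energy-bound}. The order of operations is what makes this usable: integrate the differential inequality into a convolution against $e^{-\lambda(t-s)}$ first, take the concave power second, and split into unit windows third; keeping the decaying kernel through all of these steps is exactly what produces a constant independent of $t$.
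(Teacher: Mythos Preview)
Your proof is correct, and in one respect it is actually cleaner than the paper's.

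Both arguments begin identically with the weighted It\^o estimate on $e^{\lambda t}\|\omega_t\|_{H^n}^2$ using Lemma~\ref{lem:cross-term-interpolation}. The divergence is in how the resulting inequality is processed. The paper retains part of the dissipation $-\tfrac12\|\omega_t\|_{H^n}^2$ and uses it as the quadratic-variation term in Lemma~\ref{lem:exp-martingale-bound}, obtaining the full-strength bound $\E\exp\big(\eta\|\omega_t\|_{H^n}^2 - e^{-\nu t}\eta\|\omega_0\|_{H^n}^2 - C\eta\int_0^t e^{-\nu(t-s)}\|\omega_s\|_{L^2}^{2n+4}\,\dif s\big)\le C$ \emph{before} taking the concave power; only then does it split via the $(\,\cdot\,)_+$ trick and Cauchy--Schwarz. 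This packages the martingale cleanly. You instead discard dissipation, take the concave power first, and treat the bare term $|N_t|^{1/(n+2)}$ via $|N_t|^{1/(n+2)}\le 1+|N_t|$ and a separate Cauchy--Schwarz against the (small, $O(\|\omega\|_{L^2}^2)$) quadratic variation --- more indirect, but correct.

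Where your route pays off is the memory term. The paper bounds $\big(\int_0^t e^{-\nu(t-s)}\|\omega_s\|_{L^2}^{2n+4}\,\dif s\big)^{1/(n+2)}\le C\sup_{0\le s\le t}\|\omega_s\|_{L^2}^2$ and invokes~\eqref{eq:l2-energy-bound}, which introduces a prefactor $Ce^{Ct}$; strictly speaking the paper's written proof therefore yields a slightly weaker conclusion than the stated lemma (harmless for the downstream applications, which all carry $e^{CT}$). Your unit-interval decomposition with weights $w_j\propto e^{-\lambda j/(n+2)}$, combined with~\eqref{eq:l2-energy-bound} on each unit window and Lemma~\ref{lem:initial-energy-bound} for the outer expectation, genuinely avoids this growth and delivers the $t$-uniform constant the lemma claims. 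The same Jensen-in-time trick you use for the quadratic-variation integral of $N_t$ also keeps that term $t$-uniform.
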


\begin{proof}
  By It\^o's formula,~\eqref{eq:nabla-n-omega}, and Lemma~\ref{lem:cross-term-interpolation}
  \begin{align*}\frac{\dif}{\dif t} \big(e^{\nu t}\|\omega_t\|_{H^n}^2\big) &\leq  \nu e^{\nu t}\|\omega_t\|_{H^n}^2- \frac{3}{2} \nu e^{\nu t}\|\omega_t\|_{H^{n+1}}^2 + Ce^{\nu t}\|\omega_t\|_{L^2}^{2n+4}\\
    &\qquad + e^{\nu t}\sum_{k \in  F} c_k \Big(\int \nabla^n\omega_t : \nabla^n e_k\,\dif x\Big)\dif W^k_t + Ce^{\nu t}
    \\&\leq -\frac{1}{2} e^{\nu t}\|\omega_t\|_{H^{n}}^2 + Ce^{\nu t}\|\omega_t\|_{L^2}^{2n+4} + e^{\nu t}\sum_{k \in F} c_k \Big(\int \nabla^n\omega_t : \nabla^n e_k\,\dif x\Big)\dif W^k_t + Ce^{\nu t}.
  \end{align*}
  Integrating and dividing by $e^{\nu t}$, we get
  \[\|\omega_t\|_{H^n}^2 - e^{-\nu t} \|\omega_0\|_{H^n}^2 - C\int_0^te^{-\nu (t-s)} \|\omega_s\|_{L^2}^{2n+4}\,\dif s - C \leq e^{-\nu t} M_t - C^{-1} e^{-2 \nu t} \<M,M\>_t,\]
  where
  \[M_t := \int_0^t  e^{\nu s}\sum_{k \in  F} c_k \Big(\int \nabla^n\omega_s : \nabla^n e_k\,\dif x\Big)\dif W^k_s,\]
  and we see the quadratic variation term as in the proof of Lemma~\ref{lem:initial-energy-bound}. Then by Lemma~\ref{lem:exp-martingale-bound}, we see that for $0 \leq \eta \leq C^{-1},$
  \[\E \exp\Big( \eta \|\omega_t\|_{H^n}^2 - e^{-\nu t} \eta \|\omega_0\|_{H^n}^2 - C\eta \int_0^te^{-\nu (t-s)} \|\omega_s\|_{L^2}^{2n+4}\,\dif s\Big) \leq 6.\]
  Note that this immediately implies
  \begin{equation}
    \label{eq:omega-Hn-initial-bad-bound}
    \E \exp\Big( \eta \|\omega_t\|_{H^n}^2 - e^{-\nu t} \eta \|\omega_0\|_{H^n}^2 - C\eta \int_0^te^{-\nu (t-s)} \|\omega_s\|_{L^2}^{2n+4}\,\dif s\Big)_+ \leq 7,
  \end{equation}
  where $x_+ := x \lor 0$. Our goal is now to use~\eqref{eq:l2-energy-bound} to remove the term involving $\|\omega_s\|_{L^2}$. To that end, we compute
  \begin{align}
    &\E \exp\Big(\eta \|\omega_t\|_{H^n}^{\frac{2}{n+2}}\Big)\nonumber\\
    &\quad \leq \E \exp\Big(\Big(\eta^{n+2}\|\omega_t\|_{H^n}^2  - e^{-\nu t} \eta^{n+2} \|\omega_0\|_{H^n}^2 - C\eta^{n+2} \int_0^te^{-\nu (t-s)}\|\omega_s\|_{L^2}^{2n+4}\,\dif s \Big)_+^{\frac{1}{n+2}}\Big)
                                                     \notag\\&\qquad\qquad\times \exp\Big(e^{-C^{-1}t} \eta \|\omega_0\|_{H^n}^{\frac{2}{n+2}} + C \eta \Big(\int_0^t e^{-\nu (t-s)} \|\omega_s\|_{L^2}^{2n+4}\,\dif s\Big)^{\frac{1}{n+2}} \Big)
    \notag\\&\quad \leq C\bigg(\E \exp\Big(2\eta^{n+2}\|\omega_t\|_{H^n}^2  - 2e^{-\nu t} \eta^{n+2} \|\omega_0\|_{H^n}^2 - C\eta^{n+2} \int_0^te^{-\nu (t-s)}\|\omega_s\|_{L^2}^{2n+4}\,\dif s \Big)_+\bigg)^{1/2}
    \notag\\&\qquad\qquad\times \exp\Big(e^{-C^{-1} t} \eta \|\omega_0\|_{H^n}^{\frac{2}{n+2}}\Big) \bigg(\E\exp\Big( C \eta \Big(\int_0^t e^{-\nu (t-s)} \|\omega_s\|_{L^2}^{2n+4}\,\dif s\Big)^{\frac{1}{n+2}} \Big)\bigg)^{1/2}
    \notag\\&\quad \leq C\exp\Big(e^{-C^{-1} t} \eta \|\omega_0\|_{H^n}^{\frac{2}{n+2}}\Big) \bigg(\E\exp\Big( C \eta \Big(\int_0^t e^{-\nu (t-s)} \|\omega_s\|_{L^2}^{2n+4}\,\dif s\Big)^{\frac{1}{n+2}} \Big)\bigg)^{1/2},
    \label{eq:Hn-initial-bound-0}
  \end{align}
  where we use~\eqref{eq:omega-Hn-initial-bad-bound}. Our goal is now to control the last term using~\eqref{eq:l2-energy-bound}.
  \begin{align}
    \E\exp\Big( C \eta \Big(\int_0^t e^{-\nu (t-s)} \|\omega_s\|_{L^2}^{2n+4}\,\dif s\Big)^{\frac{1}{n+2}} \Big) &\leq \E \exp\Big(C\eta  \sup_{0\leq s \leq t} \|\omega_s\|_{L^2}^2\Big)
                                                                                                    \notag\\&\leq C e^{Ct} \exp\big(C \eta \|\omega_0\|_{L^2}^2\big).
    \label{eq:Hn-initial-bound-1}
  \end{align}
  Combining~\eqref{eq:Hn-initial-bound-0} and~\eqref{eq:Hn-initial-bound-1},  we conclude.
\end{proof}

\begin{proof}[Proof of~\eqref{eq:Hn-bound}]
  By It\^o's formula,~\eqref{eq:nabla-n-omega}, and Lemma~\ref{lem:cross-term-interpolation}
  \begin{equation}
    \label{eq:Hn-ito-derivative}
    \frac{\dif}{\dif t} \|\omega_t\|_{H^n}^2 \leq - \frac{3}{2} \nu \|\omega_t\|_{H^{n+1}}^2 + C\|\omega_t\|_{L^2}^{2n+4} + \sum_{k \in  F} c_k \Big(\int \nabla^n\omega_t : \nabla^n e_k\,\dif x\Big)\dif W^k_t + C.
  \end{equation}
  Integrating, we see that
  \begin{equation}
    \label{eq:H-n-by-martingale}
    \|\omega_t\|_{H^n}^2 + \nu \int_s^t \|\omega_r\|_{H^{n+1}}^2\,\dif r -C\int_s^t \|\omega_r\|_{L^2}^{2n+4}\, \dif r - C(t-s) -\|\omega_s\|_{H^n}^2\leq  M^s_t - \frac{1}{C} \<M^s,M^s\>_t,
  \end{equation}
  where
  \[M^s_t :=  \int_s^t \sum_{k \in  F} c_k \Big(\int \nabla^n\omega_r : \nabla^n e_k\,\dif x\Big)\dif W^k_r.\]
  Thus we have that
  \begin{align*}
    &\E \exp\Big( \eta \sup_{s \leq t \leq T} \Big(\|\omega_t\|_{H^n}^2 + \nu \int_s^t \|\omega_r\|_{H^{n+1}}^2\,\dif r -C\int_s^t \|\omega_r\|_{L^2}^{2n+4}\, \dif r\Big)_+^\alpha\Big)
    \\&\quad\leq \E \exp\Big( \eta \sup_{s \leq t \leq T} \Big(\|\omega_t\|_{H^n}^2 + \nu \int_s^t \|\omega_r\|_{H^{n+1}}^2\,\dif r -C\int_s^t \|\omega_r\|_{L^2}^{2n+4}\, \dif r - \|\omega_s\|_{H^n}^2\Big)_+^\alpha + \eta \|\omega_s\|_{H^n}^{2\alpha}\Big)
    \\&\quad\leq  \bigg(\E \exp\Big( 2\eta \sup_{s \leq t \leq T} \Big(\|\omega_t\|_{H^n}^2 + \nu \int_s^t \|\omega_r\|_{H^{n+1}}^2\,\dif r -C\int_s^t \|\omega_r\|_{L^2}^{2n+4}\, \dif r - \|\omega_s\|_{H^n}^2\Big)_+^\alpha\Big)\bigg)^{1/2}
    \\&\quad\qquad\times \bigg(\E \exp\big( 2\eta \|\omega_s\|_{H^n}^{2\alpha}\big)\bigg)^{1/2}
    \\&\quad\leq  C e^{C T}\exp\Big(e^{-C^{-1} s} \eta \|\omega_0\|_{H^n}^{2\alpha}+C\eta \|\omega_0\|_{L^2}^2\Big),
  \end{align*}
  where $\alpha:= \frac{1}{n+2}$ and we use Lemma~\ref{lem:exp-martingale-bound},~\eqref{eq:H-n-by-martingale}, and Lemma~\ref{lem:Hn-initial} for the final line, taking $\eta \leq C^{-1}$. Again, we want to remove the term involving $\|\omega\|_{L^2}$ using~\eqref{eq:l2-energy-bound}. Arguing somewhat similarly to the proof of Lemma~\ref{lem:Hn-initial}, we see
  \begin{align*}
    &\E \exp\Big( \eta \sup_{s \leq t \leq T} \Big(\|\omega_t\|_{H^n}^2 + \nu \int_s^t \|\omega_r\|_{H^{n+1}}^2\,\dif r\Big)^\alpha\Big)
    \\&\quad\leq \bigg(\E \exp\Big( 2\eta \sup_{s \leq t \leq T} \Big(\|\omega_t\|_{H^n}^2 + \nu \int_s^t \|\omega_r\|_{H^{n+1}}^2\,\dif r -C\int_s^t \|\omega_r\|_{L^2}^{2n+4}\, \dif r\Big)_+^\alpha\Big)\bigg)^{1/2}
    \\&\quad\qquad\times \bigg(\E \exp\Big( C\eta\Big(\int_s^T \|\omega_r\|_{L^2}^{2n+4}\, \dif r\Big)^\alpha\Big)\bigg)^{1/2}
    \\&\quad\leq C e^{CT}\exp\Big(e^{-C^{-1} s} \eta \|\omega_0\|_{H^n}^{2\alpha}+C\eta  \|\omega_0\|_{L^2}^2\Big) \bigg(\E \exp\Big( C\eta\Big(\int_s^T \|\omega_r\|_{L^2}^{2n+4}\, \dif r\Big)^\alpha\Big)\bigg)^{1/2}.
  \end{align*}
  Thus to conclude it suffices to see that
  \[ \E \exp\Big( C\eta\Big(\int_s^T \|\omega_r\|_{L^2}^{2n+4}\, \dif r\Big)^\alpha\Big)\leq C e^{CT}\exp\big(C \eta \|\omega_0\|_{L^2}^2\big).\]
  To that end, we note
  \begin{align*}
    \E \exp\Big( C\eta\Big(\int_s^T \|\omega_r\|_{L^2}^{2n+4}\, \dif r\Big)^\alpha\Big)& \leq \E \exp\Big(C \eta \sup_{s \leq r \leq T} \|\omega_r\|_{L^2}^{\frac{4n+4}{2n+4}} \Big(\int_s^T \|\omega_r\|_{L^2}^2\,\dif r \Big)^{\frac{2}{2n+4}}\Big)
    \\&\leq \E \exp\Big(C \eta \sup_{s \leq r \leq T} \|\omega_r\|_{L^2}^{2} + C \eta \int_s^T \|\omega_r\|_{L^2}^2\,\dif r \Big)
    \\&\leq \E \exp\Big(C \eta \sup_{s \leq r \leq T}\Big(\|\omega_r\|_{L^2}^{2} + \nu \int_s^r \|\omega_r\|_{H^1}^2\,\dif r\Big) \Big)
    \\&\leq C e^{CT}\exp\big(C \eta \|\omega_0\|_{L^2}^2\big),
  \end{align*}
  by~\eqref{eq:l2-energy-bound}.
  Thus, we have shown the claim and so conclude.
\end{proof}

\begin{lemma}
  \label{lem:regularization-initial}
  There exists $C(n,\|c\|, F,\nu)>0$ such that for any $0\leq t \leq 1$ and any $\eta \leq C^{-1},$
  \[\E \exp\big(\eta \|\omega_t\|_{H^{n+1}}^{\frac{2}{n+3}}\big) \leq e^{Ct^{-1}} + C\exp\Big( C\eta \|\omega_0\|_{H^n}^{\frac{2}{n+2}} + C \eta \|\omega_0\|_{L^2}^2\Big).\]
\end{lemma}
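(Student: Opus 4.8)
The plan is to run a time-weighted energy estimate for $\|\omega_t\|_{H^{n+1}}^2$: multiplying the energy identity by the weight $t$ kills the (unavailable) dependence on $\|\omega_0\|_{H^{n+1}}$, while the resulting term $\int_0^t\|\omega_r\|_{H^{n+1}}^2\,\dif r$ is kept and fed into the already-established bound~\eqref{eq:Hn-bound}, which controls it exponentially in the power $\tfrac{1}{n+2}$ and supplies the $\|\omega_0\|_{H^n}$ dependence. Concretely, I would first differentiate~\eqref{eq:sns} $n+1$ times (use~\eqref{eq:nabla-n-omega} with $n$ replaced by $n+1$) and apply It\^o's formula to $\|\omega_t\|_{H^{n+1}}^2$. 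The transport term drops out after an integration by parts, since $\nabla^\perp\Delta^{-1}\omega_t=u_t$ is divergence free, and Lemma~\ref{lem:cross-term-interpolation} at order $n+1$ controls the commutator terms by $\tfrac{\nu}{2}\|\omega_t\|_{H^{n+2}}^2+C\|\omega_t\|_{L^2}^{2n+6}$; exactly as in the derivation of~\eqref{eq:Hn-ito-derivative} one level higher, this gives
\[\dif\|\omega_t\|_{H^{n+1}}^2 \le \Big(-\tfrac{3\nu}{2}\|\omega_t\|_{H^{n+2}}^2 + C\|\omega_t\|_{L^2}^{2n+6}+C\Big)\dif t + \dif N_t,\qquad \dif\langle N,N\rangle_t \le C\|\omega_t\|_{H^{n+1}}^2\,\dif t.\]
Multiplying by $t$ and integrating from $0$ (the boundary term vanishes) yields, with $M_t:=\int_0^t r\,\dif N_r$ so that $\dif\langle M,M\rangle_r\le Cr^2\|\omega_r\|_{H^{n+1}}^2\,\dif r$,
\[t\|\omega_t\|_{H^{n+1}}^2 + \tfrac{3\nu}{2}\int_0^t r\|\omega_r\|_{H^{n+2}}^2\,\dif r \le \int_0^t\|\omega_r\|_{H^{n+1}}^2\,\dif r + C\int_0^t r\|\omega_r\|_{L^2}^{2n+6}\,\dif r + Ct + M_t.\]

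Next I would absorb the quadratic variation of $M_t$ into the weighted dissipation. Gagliardo--Nirenberg gives $\|\omega\|_{H^{n+1}}^2\le C\|\omega\|_{H^{n+2}}^{2(n+1)/(n+2)}\|\omega\|_{L^2}^{2/(n+2)}$, and matching the powers of $r$ in Young's inequality gives $r^2\|\omega_r\|_{H^{n+1}}^2\le \delta r\|\omega_r\|_{H^{n+2}}^2 + C(\delta)r^{n+3}\|\omega_r\|_{L^2}^2$; writing $M_t=(M_t-A\langle M,M\rangle_t)+A\langle M,M\rangle_t$ and choosing $\delta$ small so that the $\int_0^t r\|\omega_r\|_{H^{n+2}}^2\,\dif r$ terms cancel, the constant $A$ can be taken as large as we wish, at the price of a fixed (large) constant $C_A$:
\[t\|\omega_t\|_{H^{n+1}}^2 \le \int_0^t\|\omega_r\|_{H^{n+1}}^2\,\dif r + C\int_0^t r\|\omega_r\|_{L^2}^{2n+6}\,\dif r + C_A\int_0^t r^{n+3}\|\omega_r\|_{L^2}^2\,\dif r + Ct + \big(M_t-A\langle M,M\rangle_t\big).\]
Dividing by $t$, raising to the power $\tfrac{1}{n+3}$, and using subadditivity of $x\mapsto x^{1/(n+3)}$ together with $(\cdot)_+$, I would estimate each summand. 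Writing $I_t:=\int_0^t\|\omega_r\|_{H^{n+1}}^2\,\dif r$, Young's inequality gives $t^{-1/(n+3)}I_t^{1/(n+3)}\le \mu I_t^{1/(n+2)}+C(\mu)t^{-1}$, whose first piece is exponentially integrable by~\eqref{eq:Hn-bound} (take $\mu$ small relative to $\eta$) and produces the $C\eta\|\omega_0\|_{H^n}^{2/(n+2)}$ term. Since $\|\omega_r\|_{L^2}^{2n+6}=(\|\omega_r\|_{L^2}^2)^{n+3}$ and $t\le1$, both $L^2$-integrals rescale with coefficients bounded (indeed vanishing) on $[0,1]$, namely $t^{-1/(n+3)}\big(\int_0^t r\|\omega_r\|_{L^2}^{2n+6}\,\dif r\big)^{1/(n+3)}\le Ct^{1/(n+3)}\sup_{r\le t}\|\omega_r\|_{L^2}^2$ and $t^{-1/(n+3)}\big(C_A\int_0^t r^{n+3}\|\omega_r\|_{L^2}^2\,\dif r\big)^{1/(n+3)}\le C_At\,(1+\sup_{r\le t}\|\omega_r\|_{L^2}^2)$, hence controlled by~\eqref{eq:l2-energy-bound} for $\eta$ small; the $Ct$ term gives a constant. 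Finally $t^{-1/(n+3)}(M_t-A\langle M,M\rangle_t)_+^{1/(n+3)}\le \mu(M_t-A\langle M,M\rangle_t)_+ + C(\mu)t^{-1}$ by Young, and $\E\exp\big(\lambda(M_t-A\langle M,M\rangle_t)_+\big)\le 3$ for $\lambda\le A$ by Lemma~\ref{lem:exp-martingale-bound}, where one uses that $A$ is large. Combining these with H\"older's inequality gives $\E\exp\big(\eta\|\omega_t\|_{H^{n+1}}^{2/(n+3)}\big)\le e^{Ct^{-1}}\cdot C\exp\big(C\eta\|\omega_0\|_{H^n}^{2/(n+2)}+C\eta\|\omega_0\|_{L^2}^2\big)$, and a product of two numbers $\ge 1$ is dominated by the sum of their squares, which is of the stated form after renaming $C$.

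The main obstacle is the mismatch between the exponent $\tfrac{2}{n+3}$ in the conclusion and the exponents $\tfrac{2}{n+2}$ (for $H^{n+1}$-integrals, via~\eqref{eq:Hn-bound}) and $2$ (for the $L^2$ norm, via~\eqref{eq:l2-energy-bound}) that carry good exponential moments, together with the fact that those a priori bounds hold only for $\eta\le C^{-1}$ rather than for all $\eta$. Trading $I_t^{1/(n+3)}$ and the localized-martingale term down to these favorable powers via Young's inequality is exactly what produces the deterministic blow-up $e^{Ct^{-1}}$, and one must arrange the $r$-weights in the quadratic-variation absorption — and use the weight $\phi(t)=t$ rather than a higher power — so that the leftover $L^2$-contributions rescale with bounded coefficients while the $\int_0^t\|\omega_r\|_{H^{n+1}}^2\,\dif r$ term survives to carry the $\|\omega_0\|_{H^n}$ dependence. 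A minor point is the usual bookkeeping between homogeneous and inhomogeneous Sobolev norms and the divergence-free cancellation, handled exactly as in the proofs of~\eqref{eq:l2-energy-bound}--\eqref{eq:Hn-bound}.
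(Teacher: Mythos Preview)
Your argument is correct and takes a genuinely different route from the paper. The paper proceeds probabilistically: it first bounds $\P(\inf_{0\le r\le t}\|\omega_r\|_{H^{n+1}}\ge K)$ using that $\int_0^t\|\omega_r\|_{H^{n+1}}^2\,\dif r$ is controlled by~\eqref{eq:Hn-bound}, then introduces the stopping time $\tau_{K/2}=\inf\{r:\|\omega_r\|_{H^{n+1}}\le K/2\}\wedge t$, applies the strong Markov property and~\eqref{eq:Hn-bound} at level $n+1$ (now with the favorable initial bound $\|\omega_{\tau_{K/2}}\|_{H^{n+1}}\le K/2$) to get a tail bound on $\|\omega_t\|_{H^{n+1}}$, and finally integrates the tail to recover the exponential moment. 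Your approach replaces all of this with the classical parabolic-smoothing device of multiplying the $H^{n+1}$ energy identity by the weight $t$, so the unavailable $\|\omega_0\|_{H^{n+1}}$ term never appears and the burden shifts to $\int_0^t\|\omega_r\|_{H^{n+1}}^2\,\dif r$, which is already handled by~\eqref{eq:Hn-bound}.

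What each buys: your proof stays entirely within It\^o calculus and Lemma~\ref{lem:exp-martingale-bound}, avoiding stopping times and the strong Markov property; the trade-off is the somewhat delicate bookkeeping with Young's inequality to reconcile the exponents $\tfrac{1}{n+3}$, $\tfrac{1}{n+2}$, and $1$, and the need to absorb the weighted quadratic variation into the weighted dissipation via the interpolation $r^2\|\omega_r\|_{H^{n+1}}^2\le \delta r\|\omega_r\|_{H^{n+2}}^2+C(\delta)r^{n+3}\|\omega_r\|_{L^2}^2$. The paper's argument is more modular---it uses~\eqref{eq:Hn-bound} as a black box at two levels---and the additive structure $e^{Ct^{-1}}+C\exp(\cdots)$ of the statement falls out directly from the two-term tail bound, whereas you obtain a product first and then pass to the sum. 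Two minor remarks on your write-up: the Young step on the martingale term actually produces $t^{-1/(n+2)}$ rather than $t^{-1}$, but this is harmless since $t\le 1$; and the claim ``$A$ can be taken as large as we wish'' is correct but in fact any fixed $A\ge 1$ suffices, since the Young parameter in front of $(M_t-A\langle M,M\rangle_t)_+$ is free.
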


\begin{proof}
  Fix $t \leq 1$. We first note that for any $K >0$ and $\eta \leq C^{-1},$
  \begin{align*}
    \P\Big( \inf_{0 \leq r \leq t} \|\omega_r\|_{H^{n+1}} \geq K\Big) &\leq \P\Big(\frac{1}{t}\int_0^t \|\omega_r\|^2_{H^{n+1}}\,\dif r\geq K^2\Big)
    \\&\leq \P\Big( \eta \Big(\nu \int_0^t \|\omega_r\|^2_{H^{n+1}}\,\dif r \Big)^{\frac{1}{n+2}} \geq \eta K^{\frac{2}{n+2}} (\nu t)^{\frac{1}{n+2}}\Big)
    \\&\leq \exp\big( {-\eta} K^{\frac{2}{n+2}} (\nu t)^{\frac{1}{n+2}}\big) \E \exp\Big(\eta \Big(\nu \int_0^t \|\omega_r\|_{H^{n+1}}^2\,\dif r\Big)^{\frac{1}{n+2}}\Big)
    \\&\leq C  \exp\big( {-\eta} K^{\frac{2}{n+2}} (\nu t)^{\frac{1}{n+2}}\big) \exp\Big( \eta \|\omega_0\|_{H^n}^{\frac{2}{n+2}} + C \eta \|\omega_0\|_{L^2}^2\Big),
  \end{align*}
  where we use Chebyshev then~\eqref{eq:Hn-bound}.

  Then we define the stopping time
  \[\tau_{K/2} := \inf \{ t :  \|\omega_t\|_{H^{n+1}} \leq K/2\} \land t.\]
  Then by the strong Markov property, for $\eta \leq C^{-1}$
  \begin{align*}
    \P(\|\omega_t\|_{H^{n+1}} \geq K) &\leq \P\big(\sup_{s \in [0,t]} \|\omega_{\tau_{K/2} + s}\|_{H^{n+1}} \geq K\big)
    \\&= \E \P_{\omega_{\tau_{K/2}}}\Big(\sup_{s \in [0,t]} \|\omega_s\|_{H^{n+1}} \geq K\Big)
    \\&\leq \E 1 \land \bigg(C \exp\Big( \eta \|\omega_{\tau_{K/2}}\|_{H^{n+1}}^{{\frac{2}{n+3}}}+C\eta \|\omega_{\tau_{K/2}}\|_{L^2}^2\Big) \exp\big({- \eta K^{\frac{2}{n+3}}}\big)\bigg)
  \end{align*}
  where $\P_v$ denotes the probability measure for the Markov process governing $\omega_t$ started from the initial condition $\omega_0 = v$ and we use Chebyshev and~\eqref{eq:Hn-bound} for the final line. Continuing the computation, we see
  \begin{align*}
    \P(\|\omega_t\|_{H^{n+1}} \geq K) &\leq \P(\tau_{K/2} = t) + C\E \exp\Big( \eta (K/2)^{{\frac{2}{n+3}}}+C\eta \|\omega_{\tau_{K/2}}\|_{L^2}^2\Big) \exp\big({- \eta K^{\frac{2}{n+3}}}\big)
    \\&\leq \P\big(\inf_{0 \leq r \leq t} \|\omega\|_{H^{n+1}} \geq K/2\big) + C \exp\big({- C^{-1}\eta K^{\frac{2}{n+3}}}\big)\E \exp\Big(C\eta \sup_{t \in [0,2t]}\|\omega_t\|_{L^2}^2\Big)
    \\&\leq C  \exp\big( {-\eta} C^{-1} K^{\frac{2}{n+2}} t^{\frac{1}{n+2}}\big) \exp\Big( \eta \|\omega_0\|_{H^n}^{\frac{2}{n+2}} + C \eta \|\omega_0\|_{L^2}^2\Big)
    \\&\qquad+ C \exp\big({- C^{-1}\eta K^{\frac{2}{n+3}}}\big) \exp\big(C\eta \|\omega_0\|_{L^2}^2\big).
  \end{align*}
  Thus
  \begin{align*}
    \E \exp\big(\alpha \|\omega_t\|_{H^{n+1}}^{\frac{2}{n+3}}\big) &= \int_0^\infty \P\Big(\|\omega_t\|_{H^{n+1}} \geq \alpha^{-\frac{n+3}{2}} \big(\log \lambda\big)^{\frac{n+3}{2}}\Big)\,\dif \lambda
    \\&\leq a + \int_a^\infty C  \exp\big( {-\eta} C^{-1} \alpha^{-\frac{n+3}{n+2}} \big(\log \lambda\big)^{\frac{n+3}{n+2}} t^{\frac{1}{n+2}}\big) \exp\Big( \eta \|\omega_0\|_{H^n}^{\frac{2}{n+2}} + C \eta \|\omega_0\|_{L^2}^2\Big)
    \\&\qquad\qquad\qquad+ C \exp\big({- C^{-1}}\eta \alpha^{-1}\log \lambda\big) \exp\big(C\eta \|\omega_0\|_{L^2}^2\big)\,\dif \lambda
    \\&\leq  a + C\exp\Big( \eta \|\omega_0\|_{H^n}^{\frac{2}{n+2}} + C \eta \|\omega_0\|_{L^2}^2\Big)\int_a^\infty   \lambda^{-  (\log \lambda t/\alpha)^{\frac{1}{n+2}}} \,\dif \lambda
    \\&\qquad\qquad\qquad+ C  \exp\big(C\eta \|\omega_0\|_{L^2}^2\big)
  \end{align*}
  provided $a\geq 1$ and $\eta \geq C\alpha$. Then, if we take
  \[a:=  e^{2^{n+2}\frac{\alpha}{t}},\]
  the remaining integral is uniformly bounded, and so taking $\eta = C\alpha,$ we get for $\alpha\leq C^{-1}$
  \[\E \exp\big(\alpha \|\omega_t\|_{H^{n+1}}^{\frac{2}{n+3}}\big) \leq e^{C\alpha t^{-1}} + C\exp\Big( C\alpha \|\omega_0\|_{H^n}^{\frac{2}{n+2}} + C \alpha \|\omega_0\|_{L^2}^2\Big).\]
  Relabeling parameters, we conclude.
\end{proof}

Conditioning on times $jt/n$ for $j = 0,\dots,n-1$ and iteratively using the Markov property, Lemma~\ref{lem:regularization-initial}, and~\eqref{eq:l2-energy-bound}, we see

\begin{lemma}
  \label{lem:Hn-regularization-initial-2}
  There exists $C(n,\|c\|, F,\nu)>0$ such that for any $0\leq t \leq 1$ and any $\eta \leq C^{-1},$
  \[\E \exp\big(\eta \|\omega_t\|_{H^{n}}^{\frac{2}{n+2}}\big) \leq e^{Ct^{-1}} + C\exp\Big(C \eta \|\omega_0\|_{L^2}^2\Big).\]
\end{lemma}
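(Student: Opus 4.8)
The plan is to bootstrap the one-step regularization estimate of Lemma~\ref{lem:regularization-initial}, which upgrades $H^m$ control of the data to $H^{m+1}$ control of the solution, along the uniform time partition $t_j := jt/n$ for $j=0,1,\dots,n$, gaining exactly one spatial derivative on each subinterval $[t_j,t_{j+1}]$. Equivalently one may phrase the argument as an induction on $n$: the base case $n=0$ is the bound $\E\exp(\eta\|\omega_t\|_{L^2}^2)\le C\exp(C\eta\|\omega_0\|_{L^2}^2)$ from Lemma~\ref{lem:initial-energy-bound} (the term $e^{Ct^{-1}}$ being harmlessly present), and the inductive step splits $[0,t]$ into $[0,(n-1)t/n]$, on which the inductive hypothesis gives $H^{n-1}$ control of $\omega_{(n-1)t/n}$, followed by $[(n-1)t/n,t]$, on which one further application of Lemma~\ref{lem:regularization-initial} promotes this to $H^{n}$ control of $\omega_t$. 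I will follow the explicit $n$-fold iteration (conditioning on the times $jt/n$); the two organizations are interchangeable.

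For the iteration, suppose inductively that $\E\exp(\eta\|\omega_{t_j}\|_{H^{j}}^{2/(j+2)})\le e^{Cn/t}+C\exp(C\eta\|\omega_0\|_{L^2}^2)$ for all $\eta\le C_j^{-1}$ (the case $j=0$ being trivial since $\omega_0$ is deterministic). By the Markov property, conditioning on $\omega_{t_j}$ and applying Lemma~\ref{lem:regularization-initial} with index $j$ and time horizon $t/n\le 1$ gives
\[
\E\big[\exp\big(\eta\|\omega_{t_{j+1}}\|_{H^{j+1}}^{2/(j+3)}\big)\,\big|\,\omega_{t_j}\big]\le e^{Cn/t}+C\exp\big(C\eta\|\omega_{t_j}\|_{H^{j}}^{2/(j+2)}+C\eta\|\omega_{t_j}\|_{L^2}^2\big).
\]
Taking expectations and using Cauchy--Schwarz to decouple the two contributions on the right, the $L^2$ factor is bounded by~\eqref{eq:l2-energy-bound} (using $e^{-C^{-1}t_j}\le1$) and the $H^j$ factor by the inductive hypothesis, at the cost of shrinking the admissible range of $\eta$ by a constant factor. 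This reproduces the same estimate with $j+1$ in place of $j$; since $t_n=t$, after $n$ steps we arrive at the claimed bound.

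The only real content is the bookkeeping that keeps the right-hand side in the stated shape, and I do not expect it to pose a genuine obstacle. Three points: (i) each step shrinks the threshold on $\eta$ by a fixed constant factor, but there are only $n$ steps with $n$ fixed, so the final requirement is merely $\eta\le C(n)^{-1}$; (ii) each subinterval has length $t/n$, so every regularization error is $e^{C(t/n)^{-1}}=e^{Cn/t}$, and the finitely many such terms that get added or multiplied along the iteration combine into $e^{C't^{-1}}$ after absorbing the fixed factor $n$ into the constant; (iii) whenever Cauchy--Schwarz produces a product of an $e^{Ct^{-1}}$-type term with a $\exp(C\eta\|\omega_0\|_{L^2}^2)$-type term, one re-splits it using Young's inequality $ab\le\tfrac12(a^2+b^2)$ (shrinking $\eta$ once more) so that the right-hand side remains a sum of the two allowed types rather than a cross term. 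These are precisely the manipulations already performed in the proofs of Lemma~\ref{lem:Hn-initial} and of~\eqref{eq:Hn-bound}.
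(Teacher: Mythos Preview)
Your proposal is correct and follows the same approach as the paper, which also conditions on the times $jt/n$ and iterates the Markov property together with Lemma~\ref{lem:regularization-initial} and~\eqref{eq:l2-energy-bound}. Your write-up is a faithful expansion of the paper's one-sentence proof, including the necessary bookkeeping for $\eta$, the $e^{Cn/t}$ contributions, and the cross terms.
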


Using the Markov property and~\eqref{eq:Hn-bound} conditioned on time $s/2 \land 1$, Lemma~\ref{lem:Hn-regularization-initial-2} implies~\eqref{eq:Hn-regularization}.

\subsection{\textit{A priori} bounds on the linearization and second derivative of the coupled systems}
\label{ssa:linearization-bounds}

Our goal in this section is to prove half of Proposition~\ref{prop:manifold-processes-good} which gives that the one-point, two-point, tangent, projective, and the matrix processes satisfy Assumption~\ref{asmp:dynamic-bounds}. We note that it suffices to verify the bounds for the total system $(\omega_t, x_t,y_t,\tau_t,  v_t, A_t) \in H^4(\T^2) \times \T^2 \times \T^2 \times \R^2 \times S^1 \times \SL(2,\R)$. We emphasize though that this total system is not hypoelliptic, so when applying the hypoellipticity theory, we consider only a subset of these coordinates.

The bounds for the two-point process follow very similarly to the bounds for the one-point process, as $y_t$ can be bounded exactly symmetrically to $x_t$. The bounds for the projective process $v_t$ follow directly from the bounds on the tangent process $\tau_t$, as $v_t = \frac{\tau_t}{\|\tau_t\|}$. Thus, we actually only consider the somewhat simpler system $(\omega_t, x_t, \tau_t, A_t)$, removing the second point $y_t$ and the normalized tangent vector $v_t$.

We fix some initial data $(\omega_0,x_0, \tau_0, A_0)$, and times $0 \leq s \leq t \leq T$, as well as a (random) direction $(\varphi_s, y_s, p_s, B_s) \in H^4(\T^2) \times \R^2 \times \R^2 \times T_{A_s} \SL(2, \R)$ such that (surely) $\|(\varphi_s, y_s, u_s, B_s)\| = 1$. We take the natural embedding $\SL(2,\R) \subseteq \R^{2 \times 2}$, which induces the embedding $T_{A_s} \SL(2, \R) \subseteq \R^{2\times 2}$. We use these embeddings when writing equations. We then let $(\varphi_t, y_t,p_t,B_t)$ be the directional derivative of $(\omega_t, x_t, \tau_t,A_t)$---viewed as (random) functions of $\omega_s, x_s, \tau_s, A_s$---in the $(\varphi_s, y_s, p_s, B_s)$ direction.\footnote{Note that this $y$ is a different $y$ from in the two-point process.} We then let $(\psi_t, z_t,  q_t, C_t)$ be the second directional derivative of the same function in the same direction.

In the language of Definition~\ref{defn:linearization}, we have that
\[(\varphi_t, y_t,p_t,B_t) = J_{s,t} (\varphi_s,y_s,p_s,B_s)\]
and
\[(\psi_t,z_t,q_t,C_t) = J^2_{s,t}((\varphi_s, y_s,q_s,B_s), (\varphi_s, y_s,q_s,B_s)).\]
We note that to control the symmetric bilinear form $J^2_{s,t}$, it suffices to control the diagonal.
Direct computation verifies that these quantities are well-defined and further they solve the equations
\begin{equation}
  \label{eq:linearization}
  \begin{cases}
    \dot \varphi_t = \nu\Delta \varphi_t - \nabla^\perp \Delta^{-1} \varphi_t \cdot \nabla \omega_t - \nabla^\perp \Delta^{-1} \omega_t \cdot \nabla \varphi_t\\
    \dot y_t = y_t \cdot \nabla u_t(x_t) + \nabla^\perp \Delta^{-1}\varphi(x_t)\\
    \dot p_t = p_t \cdot \nabla u_t(x_t) + \tau_t \otimes y_t : \nabla^2 u_t(x_t) + \tau_t \cdot \nabla \nabla^\perp \Delta^{-1} \varphi_t(x_t)\\
    \dot B_t = B_t \nabla u_t(x_t) + A_t y_t \cdot \nabla^2 u_t(x_t) + A_t \nabla \nabla^\perp \Delta^{-1} \varphi_t(x_t),
  \end{cases}
\end{equation}
where these equations have initial data given by $(\varphi_s, y_s,p_s,B_s)$. For the second derivative, we have
\begin{equation}
  \label{eq:second-derivative}
  \begin{cases}
    \dot \psi_t =& \nu \Delta \psi_t - \nabla^\perp \Delta^{-1} \psi_t \cdot \nabla \omega_t - \nabla^\perp \Delta^{-1} \omega_t \cdot \nabla \psi_t - 2\nabla^\perp \Delta^{-1} \varphi_t \cdot \nabla \varphi_t\\
    \dot z_t =& z_t \cdot \nabla u_t(x_t) + y_t \otimes y_t : \nabla^2 u_t(x_t)  +  2 y_t \cdot \nabla \nabla^\perp \Delta^{-1} \varphi_t(x_t)+ \nabla^{\perp} \Delta^{-1} \psi_t(x_t)\\
    \dot q_t =& q_t \cdot \nabla u_t(x_t) + \tau_t \nabla \nabla^\perp \Delta^{-1} \psi_t(x_t) + \tau_t \otimes z_t : \nabla^2 u_t(x_t) +\tau_t \otimes y_t \otimes y_t : \nabla^3 u_t(x_t)
    \\&\qquad+2  \tau_t \otimes y_t : \nabla^2 \nabla^\perp \Delta^{-1} \varphi_t(x_t)  + 2 p_t \otimes y_t : \nabla^2 u_t(x_t) + 2 p_t \cdot \nabla \nabla^\perp \Delta^{-1} \varphi_t(x_t) \\
    \dot C_t =&C_t \nabla u_t(x_t) + A_t \nabla \nabla^\perp \Delta^{-1} \psi_t(x_t) + A_t z_t \cdot \nabla^2 u_t(x_t) + A_t y_t \otimes y_t : \nabla^3 u_t(x_t)
    \\&\qquad+ 2 B_t \nabla \nabla^\perp \Delta^{-1} \varphi_t(x_t) + 2A_t y_t \cdot \nabla^2\nabla^\perp \Delta^{-1} \varphi_t(x_t)  +2  B_t y_t \cdot \nabla^2 u_t(x_t),
  \end{cases}
\end{equation}
where these equations are given initial data $0$.

We first need to bound $\varphi_t$ in $L^2$.

\begin{lemma}
  \label{lem:phi-l2-bound}
  There exists $C>0$ such that for $t \geq s,$
  \[\|\varphi_t\|_{L^2} \leq \|\varphi_s\|_{L^2} \exp\Big(C\int_s^t \|\omega_r\|_{H^1}^{4/3}\,\dif r\Big).\]
\end{lemma}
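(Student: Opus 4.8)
The statement is a standard energy estimate for the linearized vorticity equation, the first line of \eqref{eq:linearization}:
\[
\dot\varphi_t = \nu\Delta\varphi_t - \nabla^\perp\Delta^{-1}\varphi_t\cdot\nabla\omega_t - \nabla^\perp\Delta^{-1}\omega_t\cdot\nabla\varphi_t.
\]
The plan is to test this equation against $\varphi_t$ in $L^2$ and use a Gr\"onwall argument. First I would compute $\frac{\dif}{\dif t}\tfrac12\|\varphi_t\|_{L^2}^2$. The viscous term contributes $-\nu\|\nabla\varphi_t\|_{L^2}^2 \le 0$ and can be discarded (or retained if one wants a sharper bound, but it is not needed here). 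The transport term $\nabla^\perp\Delta^{-1}\omega_t\cdot\nabla\varphi_t$ is in divergence form against a divergence-free field, so $\int (\nabla^\perp\Delta^{-1}\omega_t\cdot\nabla\varphi_t)\varphi_t\,\dif x = 0$ by integration by parts. Thus only the stretching-type term survives:
\[
\frac{\dif}{\dif t}\tfrac12\|\varphi_t\|_{L^2}^2 \le -\int \big(\nabla^\perp\Delta^{-1}\varphi_t\cdot\nabla\omega_t\big)\varphi_t\,\dif x.
\]

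Next I would estimate this remaining term by $\|\varphi_t\|_{L^2}^2$ times a norm of $\omega_t$. Integrating by parts to move the derivative off $\omega_t$ (using $\nabla\cdot\nabla^\perp\Delta^{-1}\varphi_t = 0$), one has $\int (\nabla^\perp\Delta^{-1}\varphi_t\cdot\nabla\omega_t)\varphi_t = -\int \omega_t\,\nabla^\perp\Delta^{-1}\varphi_t\cdot\nabla\varphi_t$, which is bounded by $\|\omega_t\|_{L^\infty}\|\nabla^\perp\Delta^{-1}\varphi_t\|_{L^2}\|\nabla\varphi_t\|_{L^2}$; combined with the viscous term via Young's inequality this would give a bound in terms of $\|\omega_t\|_{L^\infty}^2$. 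To match the exponent $\|\omega_r\|_{H^1}^{4/3}$ in the statement, I would instead keep the term in the form $\|\nabla\omega_t\|_{L^p}\|\nabla^\perp\Delta^{-1}\varphi_t\|_{L^q}\|\varphi_t\|_{L^{q'}}$ for a H\"older-admissible triple, apply Calder\'on--Zygmund boundedness of $\nabla^\perp\Delta^{-1}$, and interpolate the resulting $L^{q}$, $L^{q'}$ norms of $\varphi_t$ between $L^2$ and $H^1$, absorbing the $H^1$ part into the discarded $-\nu\|\nabla\varphi_t\|_{L^2}^2$ term via Young; this should produce a factor $\|\omega_t\|_{H^1}^{4/3}\|\varphi_t\|_{L^2}^2$ (the exponent $4/3$ arising from $\tfrac{1}{1-\theta}$ for the appropriate interpolation parameter $\theta = 1/4$, analogous to the Gagliardo--Nirenberg bookkeeping in Lemma~\ref{lem:cross-term-interpolation}). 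Then the differential inequality $\frac{\dif}{\dif t}\|\varphi_t\|_{L^2}^2 \le C\|\omega_t\|_{H^1}^{4/3}\|\varphi_t\|_{L^2}^2$ integrates by Gr\"onwall to the claimed bound.

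The main obstacle, such as it is, is purely a matter of choosing the right H\"older exponents and interpolation parameters so that the surviving higher-regularity norm of $\varphi_t$ is exactly $\|\nabla\varphi_t\|_{L^2}$ (absorbable by viscosity) while the power on $\|\omega_t\|_{H^1}$ comes out to $4/3$; this is a routine but slightly fiddly computation of the same flavor as Lemma~\ref{lem:cross-term-interpolation}. No stochastic input is needed here — this is a pathwise \textit{a priori} bound valid for each fixed realization of $\omega_t$ — and the moment bounds on $\int_s^t\|\omega_r\|_{H^1}^{4/3}\,\dif r$ (needed later to deduce \eqref{eq:J-moment-bound-asmp}-type estimates) follow separately from Proposition~\ref{prop:omega-bounds}.
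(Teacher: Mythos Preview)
Your proposal is correct and matches the paper's proof essentially step for step: test the $\varphi$-equation in $L^2$, drop the transport term by divergence-freeness, estimate the remaining trilinear term by H\"older with the split $\|\nabla\omega_t\|_{L^2}\|\nabla^\perp\Delta^{-1}\varphi_t\|_{L^4}\|\varphi_t\|_{L^4}$, interpolate to get $C\|\omega_t\|_{H^1}\|\varphi_t\|_{L^2}^{3/2}\|\varphi_t\|_{H^1}^{1/2}$, absorb the $H^1$ factor into the viscous term via Young (exponents $4$ and $4/3$), and apply Gr\"onwall. The paper's choice of exponents is exactly the $\theta=1/4$ interpolation you anticipated.
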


\begin{proof}
  We compute
  \begin{align*}\frac{\dif}{\dif t}\frac{1}{2} \|\varphi_t\|_{L^2}^2 &\leq - \nu \|\varphi_t\|_{H^1} + C\|\omega_t\|_{H^1} \|\varphi_t\|_{W^{-1,4}} \|\varphi_t\|_{L^4}
    \\&\leq  - \nu \|\varphi_t\|_{H^1} + C\|\omega_t\|_{H^1} \|\varphi_t\|_{L^2}^{3/2} \|\varphi_t\|_{H^1}^{1/2} \leq C \|\omega_t\|_{H^1}^{4/3} \|\varphi_t\|_{L^2}^2,
  \end{align*}
  where we interpolate for the second inequality and use Young's for the final inequality. Using Gr\"onwall, we conclude.
\end{proof}

We need to bound $\|\varphi_t\|_{H^4}$. Taking derivatives of the equation for $\varphi_t$, we see,
\begin{equation}
  \label{eq:linearization-derivative}
  \frac{\dif}{\dif t} \nabla^n \varphi_t=\nu \Delta \nabla^n \varphi_t  - \sum_{j=0}^n \binom{n}{j} \big(  \nabla^{n-j} \nabla^{\perp} \Delta^{-1} \omega_t \cdot \nabla \nabla^j\varphi_t + \nabla^{\perp} \Delta^{-1}  \nabla^j \varphi_t \cdot \nabla \nabla^{n-j} \omega_t\big).
\end{equation}

We will need the following control on the second term on the right-hand side.
\begin{lemma}
  For $n \geq 1,$ there exists $C(n)>0$ such that
  \begin{align}
    &\Big|\int \nabla^n \varphi_t : \sum_{j=0}^n \binom{n}{j} \big(  \nabla^{n-j} \nabla^{\perp} \Delta^{-1} \omega_t \cdot \nabla \nabla^j\varphi_t + \nabla^{\perp} \Delta^{-1}  \nabla^j \varphi_t \cdot \nabla \nabla^{n-j} \omega_t\big)\,\dif x\Big|
      \notag\\&\qquad\qquad\qquad\leq  \frac{\nu}{2} \|\varphi_t\|_{H^{n+1}}^2 + C\|\omega_t\|_{H^n}^{2n+2} \|\varphi_t\|_{L^2}^2.
    \label{eq:Hn-linearization-crossterms}
  \end{align}
\end{lemma}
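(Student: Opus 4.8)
The plan is to expand the sum over $j$, peel off the two summands that cannot be bounded naively, and treat all the rest by H\"older's inequality, Calder\'on--Zygmund estimates for $\nabla^\perp\Delta^{-1}$, and Gagliardo--Nirenberg interpolation, finishing with Young's inequality. This is precisely the classical $H^n$ energy estimate for the $\mathrm{curl}$ of a linearized $2$D Euler/Navier--Stokes nonlinearity, written out term by term.

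First I would isolate the transport term. The $j=n$ summand of the first sum is $(\nabla^\perp\Delta^{-1}\omega_t)\cdot\nabla\nabla^n\varphi_t$, and since the Biot--Savart velocity $\nabla^\perp\Delta^{-1}\omega_t$ is divergence-free, pairing it with $\nabla^n\varphi_t$ and integrating by parts gives $\int (\nabla^\perp\Delta^{-1}\omega_t)\cdot\nabla\big(\tfrac12|\nabla^n\varphi_t|^2\big)\,\dif x = 0$. Next, the only summand carrying $n+1$ derivatives of $\omega_t$ is the $j=0$ summand of the second sum, $(\nabla^\perp\Delta^{-1}\varphi_t)\cdot\nabla\nabla^n\omega_t$; I would integrate by parts once to move that derivative off $\omega_t$. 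The piece in which the derivative lands on $\nabla^\perp\Delta^{-1}\varphi_t$ vanishes because $\nabla\cdot\nabla^\perp\Delta^{-1}\varphi_t=0$, leaving only $-\int \nabla^{n+1}\varphi_t : \big((\nabla^\perp\Delta^{-1}\varphi_t)\otimes\nabla^n\omega_t\big)\,\dif x$, in which $\omega_t$ now appears with at most $n$ derivatives.

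After these two reductions every remaining contribution is a trilinear integral of the form $\int (\nabla^a\varphi_t)\,(\mathcal{R}\nabla^b\omega_t)\,(\nabla^c\varphi_t)\,\dif x$, where $\mathcal{R}$ is a composition of $\nabla^\perp\Delta^{-1}$ with derivatives and hence a Calder\'on--Zygmund operator of nonpositive order, $b\le n$, $a\le n+1$, and the total derivative count is balanced. For each such term I would apply H\"older's inequality with three (generally non-endpoint) Lebesgue exponents summing to $1$, use Calder\'on--Zygmund and $2$-dimensional Sobolev embeddings to control $\mathcal{R}\nabla^b\omega_t$ by a Sobolev norm of $\omega_t$ of order at most $n$, and then use Gagliardo--Nirenberg to interpolate the intermediate norms, of the schematic form $\|\nabla^a\varphi_t\|_{L^p}\lesssim\|\varphi_t\|_{L^2}^{1-\theta}\|\varphi_t\|_{H^{n+1}}^{\theta}$ and $\|\nabla^b\omega_t\|_{L^q}\lesssim\|\omega_t\|_{L^2}^{1-\vartheta}\|\omega_t\|_{H^n}^{\vartheta}$, with the interpolation weights tied to the chosen exponents. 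Collecting powers, each term is bounded by $C\,\|\varphi_t\|_{H^{n+1}}^{\kappa}\,\|\varphi_t\|_{L^2}^{2-\kappa}\,\|\omega_t\|_{H^n}^{\lambda}$ with $\kappa<2$; Young's inequality with conjugate exponents $\big(\tfrac{2}{\kappa},\tfrac{2}{2-\kappa}\big)$ then absorbs $\|\varphi_t\|_{H^{n+1}}^{\kappa}$ into $\tfrac{\nu}{2}\|\varphi_t\|_{H^{n+1}}^2$ and leaves $C\,\|\varphi_t\|_{L^2}^2\,\|\omega_t\|_{H^n}^{2\lambda/(2-\kappa)}$. Summing the finitely many terms, the exponents $2\lambda/(2-\kappa)$ never exceed $2n+2$ --- the extremal case being $\kappa=\tfrac{2n+1}{n+1}$ and $\lambda=1$, which mirrors the interpolation exponent $\tfrac{2n+1}{n+1}$ already appearing in the proof of Lemma~\ref{lem:cross-term-interpolation} --- and this yields \eqref{eq:Hn-linearization-crossterms}.

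The only real work is the exponent bookkeeping in the last step: for each of the $O(n)$ surviving terms one must choose the H\"older exponents within the admissible simplex so that the $\varphi_t$-interpolation weight $\kappa$ stays strictly below $2$ (so the top-order term can be absorbed into $\tfrac\nu2\|\varphi_t\|_{H^{n+1}}^2$) while keeping the resulting power of $\|\omega_t\|_{H^n}$ at most $2n+2$. In particular one must avoid the naive $L^2$--$L^\infty$--$L^2$ H\"older split, which would require bounding $\nabla^\perp\Delta^{-1}$ of an $L^2$ function in $L^\infty$; instead one takes slightly off-endpoint exponents, exploiting that $\Delta^{-1}$ gains two derivatives, which in two dimensions is more than enough integrability to close the estimate. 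All the remaining ingredients --- the two incompressibility cancellations, the single integration by parts, and the interpolation inequalities --- are entirely standard.
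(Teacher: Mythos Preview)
Your proposal is correct and follows essentially the same route as the paper: isolate the $j=0$ term of the second sum and integrate by parts, bound the remaining terms via H\"older and the interpolation $\|\varphi_t\|_{W^{n,4}}^2 \leq \|\varphi_t\|_{L^2}^{1/(n+1)}\|\varphi_t\|_{H^{n+1}}^{(2n+1)/(n+1)}$, and finish with Young's inequality, which is exactly what produces the extremal exponent $2n+2$ you identify. The one minor difference is that you use the incompressibility cancellation to drop the $j=n$ transport term outright, whereas the paper simply bounds it by $\|\varphi_t\|_{H^n}\|\nabla^\perp\Delta^{-1}\omega_t\|_{L^\infty}\|\varphi_t\|_{H^{n+1}}$ and absorbs it along with everything else; your version is slightly cleaner but not materially different.
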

\begin{proof}
  First we compute
  \begin{align*}
    &\Big|\int \nabla^n \varphi_t : \sum_{j=0}^n \binom{n}{j} \big(  \nabla^{n-j} \nabla^{\perp} \Delta^{-1} \omega_t \cdot \nabla \nabla^j\varphi_t + \nabla^{\perp} \Delta^{-1}  \nabla^j \varphi_t \cdot \nabla \nabla^{n-j} \omega_t\big)\,\dif x\Big|
    \\&\quad \leq \Big|\int \nabla^n \varphi_t : \nabla^{\perp} \Delta^{-1}  \varphi_t \cdot \nabla \nabla^n \omega_t\,\dif x \Big|+\Big|\int \nabla^n \varphi_t : \nabla^{\perp} \Delta^{-1} \omega_t \cdot \nabla \nabla^n\varphi_t \,\dif x\Big| + C\|\omega_t\|_{H^n} \|\varphi_t\|_{W^{n,4}}^2
    \\&\quad\leq \Big|\int \nabla^{n+1} \varphi_t : \nabla^{\perp} \Delta^{-1}  \varphi_t \nabla^n \omega_t\,\dif x \Big|+\Big|\int \nabla^n \varphi_t : \nabla^{\perp} \Delta^{-1} \omega_t \cdot \nabla \nabla^n\varphi_t \,\dif x\Big| + C\|\omega_t\|_{H^n} \|\varphi_t\|_{W^{n,4}}^2
    \\&\quad\leq \|\varphi_t\|_{H^{n+1}}  \|\nabla^{\perp} \Delta^{-1}  \varphi_t\|_{L^\infty} \| \omega_t\|_{H^n} + \|\varphi_t\|_{H^n} \|\nabla^{\perp} \Delta^{-1} \omega_t\|_{L^\infty} \|\varphi_t\|_{H^{n+1}}  + C\|\omega_t\|_{H^n} \|\varphi_t\|_{W^{n,4}}^2
    \\&\quad\leq \|\varphi_t\|_{H^{n+1}}  \|\varphi_t\|_{H^1} \| \omega_t\|_{H^n} + \|\varphi_t\|_{H^n} \| \omega_t\|_{H^1} \|\varphi_t\|_{H^{n+1}}  + C\|\omega_t\|_{H^n} \|\varphi_t\|_{W^{n,4}}^2.
  \end{align*}
  Then note
  \[\|\varphi_t\|_{W^{n,4}}^2 \leq \|\varphi_t\|_{H^{n+1/2}}^2 \leq \|\varphi_t\|_{H^n} \|\varphi_t\|_{H^{n+1}} \leq \|\varphi_t\|_{L^2}^{\frac{1}{n+1}} \|\varphi_t\|_{H^{n+1}}^{\frac{2n+1}{n+1}}.\]
  Thus putting it together
  \begin{align*}
    &\Big|\int \nabla^n \varphi_t : \sum_{j=0}^n \binom{n}{j} \big(  \nabla^{n-j} \nabla^{\perp} \Delta^{-1} \omega_t \cdot \nabla \nabla^j\varphi_t + \nabla^{\perp} \Delta^{-1}  \nabla^j \varphi_t \cdot \nabla \nabla^{n-j} \omega_t\big)\,\dif x\Big|
    \\&\qquad\qquad \leq C\|\omega_t\|_{H^n}\|\varphi_t\|_{L^2}^{\frac{1}{n+1}} \|\varphi_t\|_{H^{n+1}}^{\frac{2n+1}{n+1}} \leq \frac{\nu}{2} \|\varphi_t\|_{H^{n+1}}^2 + C\|\omega_t\|_{H^n}^{2n+2} \|\varphi_t\|_{L^2}^2.
  \end{align*}
\end{proof}

\begin{lemma}
  \label{lem:phi-Hn-bound}
  For all $n \in \N$, there exists $C(n) >0$ such that
  \begin{equation}
    \label{eq:linearization-Hn-bound}
    \|\varphi_t\|_{H^n} + \frac{\nu}{2}\int_s^t \|\varphi_r\|_{H^{n+1}}\,\dif r\leq  \|\varphi_s\|_{H^n} + C (t-s) \sup_{s \leq r \leq t}\big( \|\omega_r\|_{H^n}^{2n+2} +  \|\varphi_r\|_{L^2}^{2}\big).
  \end{equation}
\end{lemma}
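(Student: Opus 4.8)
The plan is to run the standard parabolic energy estimate on the top-order derivative $\nabla^n \varphi_t$, with the cross-term bound~\eqref{eq:Hn-linearization-crossterms} as the one nontrivial input. We take $n \geq 1$ throughout (the $n=0$ case being contained in Lemma~\ref{lem:phi-l2-bound}). First I would pair equation~\eqref{eq:linearization-derivative} with $\nabla^n \varphi_t$ in $L^2(\T^2)$ to get
\[
\frac{1}{2}\frac{\dif}{\dif t}\|\nabla^n \varphi_t\|_{L^2}^2 = -\nu\|\nabla^{n+1}\varphi_t\|_{L^2}^2 - \int \nabla^n\varphi_t : \sum_{j=0}^n \binom{n}{j}\big(\nabla^{n-j}\nabla^\perp\Delta^{-1}\omega_t\cdot\nabla\nabla^j\varphi_t + \nabla^\perp\Delta^{-1}\nabla^j\varphi_t\cdot\nabla\nabla^{n-j}\omega_t\big)\,\dif x,
\]
and bound the last term using~\eqref{eq:Hn-linearization-crossterms} by $\frac{\nu}{2}\|\varphi_t\|_{H^{n+1}}^2 + C\|\omega_t\|_{H^n}^{2n+2}\|\varphi_t\|_{L^2}^2$. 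Since $\varphi_t$ has zero spatial mean — it is the linearization of the mean-zero vorticity equation, and a direct computation (integrating the two transport-type terms by parts against constants, using that $u_t = \nabla^\perp\Delta^{-1}\omega_t$ is divergence-free) shows $\int \varphi_t\,\dif x$ is independent of $t$ — the iterated Poincar\'e inequality gives $\|\varphi_t\|_{H^{n+1}}^2 \leq C\|\nabla^{n+1}\varphi_t\|_{L^2}^2$, so a suitable fraction of the dissipation absorbs the $\frac{\nu}{2}\|\varphi_t\|_{H^{n+1}}^2$ term while still leaving a coercive contribution bounded below by $c\nu\|\varphi_t\|_{H^{n+1}}^2$.

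Adding the analogous but easier $L^2$ estimate obtained directly from the $\varphi_t$ equation itself (the transport term drops by incompressibility, and the remaining term $-\int\varphi_t\,\nabla^\perp\Delta^{-1}\varphi_t\cdot\nabla\omega_t\,\dif x$ is controlled exactly as in the proof of Lemma~\ref{lem:phi-l2-bound}, hence crudely by $C(\|\omega_t\|_{H^n}^{2n+2}+1)\|\varphi_t\|_{L^2}^2$), and working with the equivalent norm $\|\varphi\|_{H^n}^2 = \|\varphi\|_{L^2}^2 + \|\nabla^n\varphi\|_{L^2}^2$, I obtain the differential inequality
\[
\frac{\dif}{\dif t}\|\varphi_t\|_{H^n}^2 + \frac{\nu}{2}\|\varphi_t\|_{H^{n+1}}^2 \leq C\big(\|\omega_t\|_{H^n}^{2n+2}+1\big)\|\varphi_t\|_{L^2}^2.
\]
Integrating from $s$ to $t$ yields $\|\varphi_t\|_{H^n}^2 + \frac{\nu}{2}\int_s^t\|\varphi_r\|_{H^{n+1}}^2\,\dif r \leq \|\varphi_s\|_{H^n}^2 + C\int_s^t(\|\omega_r\|_{H^n}^{2n+2}+1)\|\varphi_r\|_{L^2}^2\,\dif r$; bounding the integral by $(t-s)$ times the supremum of the integrand, then cleaning up with Young's inequality and the $L^2$ bound of Lemma~\ref{lem:phi-l2-bound} to handle $\|\varphi_r\|_{L^2}$, gives~\eqref{eq:linearization-Hn-bound}.

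There is no genuine obstacle here: the only delicate ingredient — control of the nonlinear cross terms by a small multiple of the dissipation plus a polynomial factor times $\|\varphi_t\|_{L^2}^2$ — is exactly the already-proved estimate~\eqref{eq:Hn-linearization-crossterms}. The one point deserving a little care is the absorption step, i.e.\ checking that after subtracting $\frac{\nu}{2}\|\varphi_t\|_{H^{n+1}}^2$ from the dissipation $\nu\|\nabla^{n+1}\varphi_t\|_{L^2}^2$ one still controls the full $H^{n+1}$ norm; this is precisely where the mean-zero property of $\varphi_t$ (hence Poincar\'e) is used, and everything else is routine bookkeeping.
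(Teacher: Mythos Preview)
Your approach is essentially the same as the paper's: pair~\eqref{eq:linearization-derivative} with $\nabla^n\varphi_t$, invoke the cross-term bound~\eqref{eq:Hn-linearization-crossterms}, and integrate. The paper's proof is two lines and implicitly works with the homogeneous seminorm $\|\nabla^n\cdot\|_{L^2}$ (which on mean-zero functions is equivalent to the full $H^n$ norm), so your care about Poincar\'e and the separate $L^2$ estimate are just making explicit what the paper elides. One small point: your final ``cleaning up with \dots\ the $L^2$ bound of Lemma~\ref{lem:phi-l2-bound}'' is unnecessary and would introduce unwanted exponential factors; the paper simply applies Young to the product $\|\omega_r\|_{H^n}^{2n+2}\|\varphi_r\|_{L^2}^2$ and leaves the sup of $\|\varphi_r\|_{L^2}$ on the right-hand side, which is exactly what the statement records.
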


\begin{proof}
  By~\eqref{eq:linearization-derivative} and~\eqref{eq:Hn-linearization-crossterms}, we have that
  \begin{equation*}
    \frac{\dif}{\dif t} \|\varphi_t\|_{H^n}^2 + \frac{\nu}{2} \|\varphi_t\|_{H^{n+1}}^2 \leq  C\|\omega_t\|_{H^n}^{2n+2} \|\varphi_t\|_{L^2}^2 \leq C \|\omega_t\|_{H^n}^{4n+4} +  C\|\varphi_r\|_{L^2}^{4}
  \end{equation*}
  Integrating this bound on $[s,t]$, we conclude.
\end{proof}

We now get control on $A_t$ and $\tau_t$.

\begin{lemma}
  \label{lem:A-bound}
  There exists $C>0$ such that
  \[|A_t| \leq \exp\Big(C\int_s^t \|\omega_r\|_{H^{5/4}}\,\dif r\Big) |A_s|,\]
  and
  \[|\tau_t| \leq \exp\Big(C\int_s^t \|\omega_r\|_{H^{5/4}}\,\dif r\Big) |\tau_s|.\]
\end{lemma}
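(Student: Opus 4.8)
The plan is to prove Lemma~\ref{lem:A-bound} by deriving a differential inequality for $|A_t|$ and $|\tau_t|$ directly from the linearization equations~\eqref{eq:linearization}, and then applying Gr\"onwall's inequality. First I would observe that both $A_t$ and $\tau_t$ satisfy linear ODEs whose coefficient is essentially $\nabla u_t(x_t)$, a fixed matrix evaluated along the flow. Precisely, $\dot\tau_t = \tau_t \cdot \nabla u_t(x_t)$ and $\dot A_t = A_t \nabla u_t(x_t)$, so
\[
\frac{\dif}{\dif t} |\tau_t|^2 = 2\tau_t \cdot (\tau_t \cdot \nabla u_t(x_t)) \leq 2\|\nabla u_t(x_t)\|_{\mathrm{op}} |\tau_t|^2 \leq 2\|\nabla u_t\|_{L^\infty} |\tau_t|^2,
\]
and identically for $|A_t|^2$ using the embedding $\SL(2,\R) \subseteq \R^{2\times 2}$ and $|A_t \nabla u_t(x_t)| \leq |A_t| \, \|\nabla u_t(x_t)\|_{\mathrm{op}}$. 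This gives $|\tau_t| \leq |\tau_s| \exp(\int_s^t \|\nabla u_r\|_{L^\infty}\,\dif r)$ and the same for $A_t$.

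The remaining point is to bound $\|\nabla u_r\|_{L^\infty}$ by $C\|\omega_r\|_{H^{5/4}}$. Since $u_r = \nabla^\perp \Delta^{-1}\omega_r$, we have $\nabla u_r = \nabla\nabla^\perp\Delta^{-1}\omega_r$, a zeroth-order (Calder\'on--Zygmund-type) operator applied to $\omega_r$, so $\|\nabla u_r\|_{H^s} \leq C\|\omega_r\|_{H^s}$ for every $s$. By Morrey's inequality (Sobolev embedding on $\T^2$), $H^s(\T^2) \hookrightarrow L^\infty(\T^2)$ for $s > 1$, and in particular $\|\nabla u_r\|_{L^\infty} \leq C\|\nabla u_r\|_{H^{5/4}} \leq C\|\omega_r\|_{H^{5/4}}$. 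Substituting this into the Gr\"onwall bounds above yields exactly the claimed estimates
\[
|A_t| \leq \exp\Big(C\int_s^t \|\omega_r\|_{H^{5/4}}\,\dif r\Big)|A_s|, \qquad |\tau_t| \leq \exp\Big(C\int_s^t \|\omega_r\|_{H^{5/4}}\,\dif r\Big)|\tau_s|.
\]

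There is no serious obstacle here; the only thing to be slightly careful about is the choice of matrix norm for $A_t$ and the verification that the bilinear structure $A_t \nabla u_t(x_t)$ really does contract under the operator norm of $\nabla u_t(x_t)$ — this is immediate once one writes $\frac{\dif}{\dif t}|A_t|^2 = 2\langle A_t, A_t\nabla u_t(x_t)\rangle_{\mathrm{HS}}$ and uses Cauchy--Schwarz together with $|A_t B| \leq |A_t|\|B\|_{\mathrm{op}} \leq |A_t|\,|B|$ (the Hilbert--Schmidt norm dominating the operator norm). The exponent $5/4$ is not sharp — any exponent strictly above $1$ would do — but $5/4$ is a convenient fixed choice that will later interact cleanly with the $a~priori$ bounds of Proposition~\ref{prop:omega-bounds}. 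I would also remark that the same argument, applied to the equation for $p_t$ (the tangent-bundle component of $M^T$), would require controlling $\nabla^2 u_t$ as well, which is why the lemma is stated only for the $A_t$ and $\tau_t$ variables whose equations are genuinely linear and first-order in the velocity.
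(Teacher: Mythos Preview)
Your proposal is correct and matches the paper's proof essentially line for line: differentiate $|A_t|^2$ (resp.\ $|\tau_t|^2$), bound by $\|\nabla u_t\|_{L^\infty}|A_t|^2$, invoke the Sobolev embedding $H^{5/4}(\T^2)\hookrightarrow L^\infty$ to replace $\|\nabla u_t\|_{L^\infty}$ by $C\|\omega_t\|_{H^{5/4}}$, and apply Gr\"onwall. One minor remark: the equations $\dot A_t = A_t\nabla u_t(x_t)$ and $\dot\tau_t = \tau_t\cdot\nabla u_t(x_t)$ come directly from the definitions of the Jacobian and tangent processes (Table~\ref{table:special-processes}), not from the linearization system~\eqref{eq:linearization}, which records the derivatives $(\varphi_t,y_t,p_t,B_t)$ of the process with respect to its initial data.
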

\begin{proof}
  Using the equation for $A_t$,
  \[
    \frac{\dif}{\dif t} \frac{1}{2} |A_t|^2 \leq |A_t|^2 \|\nabla u\|_{L^\infty} \leq C |A_t|^2 \|\omega\|_{H^{5/4}},\]
  so we conclude by Gr\"onwall. The computation for $\tau$ follows similarly.
\end{proof}

We can now control the first derivatives $y_t,p_t,B_t$.

\begin{lemma}
  There exists $C>0$ such that
  \begin{align}
    |y_t| &\leq Ce^{t-s}\exp\Big(C\int_s^t \|\omega_r\|_{H^{5/4}}\,\dif r\Big) \big(|y_s| + \sup_{s \leq r \leq t} \|\varphi_r\|_{H^1}\big), \label{eq:y-bound}\\
    |p_t| &\leq C e^{t-s} \exp\Big(C\int_s^t \|\omega_r\|_{H^{5/4}}\,\dif r\Big)\big(|p_s| + \sup_{s \leq r \leq t} \|\varphi_r\|_{H^2}^{3/2}+ \|\omega_r\|_{H^3}^3 + |y_r|^3\big)
            \label{eq:p-bound}\\
    |B_t| &\leq C e^{t-s} \exp\Big(C\int_s^t \|\omega_r\|_{H^{5/4}}\,\dif r\Big)\big(|B_s| + \sup_{s \leq r \leq t} |A_r|^3 + |y_r|^3 + \|\omega_r\|_{H^{3}}^3 +\|\varphi_r\|_{H^2}^{3/2} \big).
            \label{eq:B-bound}
  \end{align}
\end{lemma}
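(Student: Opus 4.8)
The plan is to read off scalar differential inequalities for $|y_t|$, $|p_t|$, $|B_t|$ directly from the linearization system~\eqref{eq:linearization} and close each one by Gr\"onwall's inequality. The only inputs are Sobolev embedding on $\T^2$ and the already-established bounds on the base quantities $\tau_t$, $A_t$ from Lemma~\ref{lem:A-bound}. Since $u_t=\nabla^\perp\Delta^{-1}\omega_t$ and $H^s(\T^2)\hookrightarrow L^\infty$ for $s>1$, one has the pointwise bounds $\|\nabla u_t\|_{L^\infty}\lesssim\|\omega_t\|_{H^{5/4}}$, $\|\nabla^2 u_t\|_{L^\infty}\lesssim\|\omega_t\|_{H^{9/4}}\le\|\omega_t\|_{H^{3}}$, $\|\nabla^\perp\Delta^{-1}\varphi_t\|_{L^\infty}\lesssim\|\varphi_t\|_{H^1}$, and $\|\nabla\nabla^\perp\Delta^{-1}\varphi_t\|_{L^\infty}\lesssim\|\varphi_t\|_{H^2}$; these are the only harmonic-analytic facts used.

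First I would establish~\eqref{eq:y-bound}: the $y$-equation in~\eqref{eq:linearization} gives $\tfrac{\dif}{\dif t}|y_t|\le\|\nabla u_t(x_t)\|\,|y_t|+\|\nabla^\perp\Delta^{-1}\varphi_t(x_t)\|\le C\|\omega_t\|_{H^{5/4}}|y_t|+C\|\varphi_t\|_{H^1}$, so Gr\"onwall yields $|y_t|\le e^{C\int_s^t\|\omega_r\|_{H^{5/4}}\dif r}\bigl(|y_s|+C(t-s)\sup_{s\le r\le t}\|\varphi_r\|_{H^1}\bigr)$, and absorbing $(t-s)\le e^{t-s}$ into the prefactor gives the stated form.

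Then I would handle~\eqref{eq:p-bound} and~\eqref{eq:B-bound} by the same recipe; the new feature is the inhomogeneous terms $\tau_t\otimes y_t:\nabla^2u_t(x_t)$ and $\tau_t\cdot\nabla\nabla^\perp\Delta^{-1}\varphi_t(x_t)$ (and, for $B_t$, the analogues with $A_t$ in place of $\tau_t$). Bounding each factor with the estimates above gives $|\tau_t\otimes y_t:\nabla^2u_t|\lesssim|\tau_t|\,|y_t|\,\|\omega_t\|_{H^3}$ and $|\tau_t\cdot\nabla\nabla^\perp\Delta^{-1}\varphi_t|\lesssim|\tau_t|\,\|\varphi_t\|_{H^2}$; applying Young's inequality with exponents $(3,3,3)$ to the first and $(3,\tfrac32)$ to the second produces exactly the powers $\|\omega_r\|_{H^3}^3$, $|y_r|^3$, $\|\varphi_r\|_{H^2}^{3/2}$ that appear in the statement, together with a leftover term $|\tau_t|^3$ (for $B_t$, a term $|A_t|^3$, which is retained, matching~\eqref{eq:B-bound}). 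The leftover $|\tau_t|^3$ in the $p$-equation is dominated, via Lemma~\ref{lem:A-bound}, by $e^{C\int_s^t\|\omega_r\|_{H^{5/4}}\dif r}|\tau_s|^3$ and absorbed into the exponential prefactor. A final Gr\"onwall step, again using $(t-s)\le e^{t-s}$, closes~\eqref{eq:p-bound} and~\eqref{eq:B-bound}.

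The main obstacle is purely bookkeeping: checking that $\nabla u$ and $\nabla^2u$ cost precisely $\|\omega\|_{H^{5/4}}$ and $\|\omega\|_{H^{3}}$ in $L^\infty$, and choosing the Young splittings so the exponents land on the cube and power-$3/2$ terms demanded by the statement. There is no analytic difficulty beyond these elementary embeddings and the scalar Gr\"onwall arguments.
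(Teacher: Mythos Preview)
Your proposal is correct and follows essentially the same approach as the paper: derive scalar differential inequalities from~\eqref{eq:linearization}, bound the pointwise terms by Sobolev embedding, split products by Young's inequality, and close with Gr\"onwall. The only cosmetic difference is that the paper works with $\tfrac{\dif}{\dif t}\tfrac12|y_t|^2$, $\tfrac{\dif}{\dif t}\tfrac12|p_t|^2$, $\tfrac{\dif}{\dif t}\tfrac12|B_t|^2$ rather than $\tfrac{\dif}{\dif t}|y_t|$ etc., so its forcing terms appear with doubled exponents (e.g.\ $|y_t|^6$, $\|\omega_t\|_{H^3}^6$, $|\tau_t|^6$) before taking a square root; your observation that the leftover $|\tau_t|^3$ term is handled via Lemma~\ref{lem:A-bound} matches how the paper actually uses this bound downstream.
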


\begin{proof}
  We first show~\eqref{eq:y-bound}. Using the equation for $y_t,$
  \[
    \frac{\dif}{\dif t} \frac{1}{2}|y_t|^2 \leq |y_t|^2 (\|\nabla u_t\|_{L^\infty} + 1) + \|\nabla^\perp \Delta^{-1} \varphi_t\|_{L^\infty}^2 \leq |y_t|^2 (C\| \omega_t\|_{H^{5/4}} + 1) + C\|\varphi_t\|_{H^1}^2.
  \]
  So we conclude by Gr\"onwall.

  For~\eqref{eq:p-bound}, using the equation for $p_t$, we compute
  \begin{align*}
    \frac{\dif}{\dif t} \frac{1}{2}|p_t|^2& \leq |p_t|^2 \|\nabla u_t\|_{L^\infty} + |y_t| |p_t| |\tau_t| \|\nabla^2 u_t\|_{L^\infty} + |p_t| |\tau_t| \|\nabla \nabla^\perp \Delta^{-1} \varphi_t\|_{L^\infty}
    \\&\leq  |p_t|^2 (C\|\omega_t\|_{H^{5/4}} + 1) + C|y_t|^6 + C \|\omega_t\|_{H^3}^6 + C\|\varphi_t\|_{H^2}^3 + C|\tau_t|^6,
  \end{align*}
  so we conclude by Gr\"onwall.

  For~\eqref{eq:B-bound}, using the equation for $B_t$, we compute
  \begin{align*}
    \frac{\dif}{\dif t} |B_t|^2 &\leq |B_t|^2 \|\nabla u\|_{L^\infty} + |B_t||A_t| |y_t| \|\nabla^2 u\|_{L^\infty} + |B_t| |A_t| \|\nabla^2 \Delta^{-1} \varphi_t\|_{L^\infty}
    \\&\leq |B_t|^2 \big(C \|\omega_t\|_{H^{5/4}} + 1\big) + C |A_t|^6 + C |y_t|^6 + C\|\omega_t\|_{H^3} + C\|\varphi_t\|_{H^2}^3,
  \end{align*}
  so we conclude by Gr\"onwall.
\end{proof}

\begin{lemma}
  \label{lem:psi-l2-bound}
  There exists $C>0$ such that
  \[ \|\psi_t\|_{L^2} \leq C(t-s)\exp\Big(C\int_s^t \|\omega_r\|_{H^1}^{4/3}\,\dif r\Big)  \sup_{s \leq r \leq t} \|\varphi_r\|_{H^1}^2.\]
\end{lemma}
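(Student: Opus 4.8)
\textbf{Proof proposal for Lemma~\ref{lem:psi-l2-bound}.}

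The plan is to mimic exactly the proof of Lemma~\ref{lem:phi-l2-bound}, but now working with the equation~\eqref{eq:second-derivative} for $\psi_t$ rather than the equation for $\varphi_t$. The key observation is that $\psi_t$ solves the \emph{same} linear transport--diffusion equation as $\varphi_t$ — namely $\dot\psi_t = \nu\Delta\psi_t - \nabla^\perp\Delta^{-1}\psi_t\cdot\nabla\omega_t - \nabla^\perp\Delta^{-1}\omega_t\cdot\nabla\psi_t$ — with the single extra \emph{source term} $-2\nabla^\perp\Delta^{-1}\varphi_t\cdot\nabla\varphi_t$, and with zero initial data. So the structure of the energy estimate is identical; only the forcing is new.

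Concretely, first I would compute $\frac{\dif}{\dif t}\tfrac12\|\psi_t\|_{L^2}^2$ by pairing the equation with $\psi_t$. The diffusion gives $-\nu\|\psi_t\|_{H^1}^2$; the two terms that are linear in $\psi_t$ are handled exactly as in Lemma~\ref{lem:phi-l2-bound}, contributing at most $C\|\omega_t\|_{H^1}^{4/3}\|\psi_t\|_{L^2}^2$ after interpolation ($\|\psi_t\|_{W^{-1,4}}\le\|\psi_t\|_{L^2}^{3/4}\|\psi_t\|_{H^1}^{1/4}$ type bound, Young's inequality absorbing into the $H^1$ term). The new term $-2\int\psi_t\,(\nabla^\perp\Delta^{-1}\varphi_t\cdot\nabla\varphi_t)\,\dif x$ I would bound by $C\|\psi_t\|_{L^2}\|\nabla^\perp\Delta^{-1}\varphi_t\|_{L^4}\|\nabla\varphi_t\|_{L^4}\le C\|\psi_t\|_{L^2}\|\varphi_t\|_{H^1}^2$ (using Calder\'on--Zygmund/Sobolev to handle $\Delta^{-1}$ and absorb the $L^4$ norms into $H^1$; one could also move a derivative by integration by parts onto $\psi_t$ and absorb a small multiple of $\|\psi_t\|_{H^1}^2$, whichever is cleaner). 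Thus
\[
\frac{\dif}{\dif t}\|\psi_t\|_{L^2}^2 \le C\|\omega_t\|_{H^1}^{4/3}\|\psi_t\|_{L^2}^2 + C\|\varphi_t\|_{H^1}^2\,\|\psi_t\|_{L^2},
\]
or equivalently $\frac{\dif}{\dif t}\|\psi_t\|_{L^2}\le C\|\omega_t\|_{H^1}^{4/3}\|\psi_t\|_{L^2} + C\|\varphi_t\|_{H^1}^2$.

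Then I would apply Gr\"onwall's inequality in its integral (Duhamel) form: since $\psi_s = 0$,
\[
\|\psi_t\|_{L^2} \le C\int_s^t \exp\Big(C\int_r^t \|\omega_a\|_{H^1}^{4/3}\,\dif a\Big)\|\varphi_r\|_{H^1}^2\,\dif r
\le C(t-s)\exp\Big(C\int_s^t \|\omega_r\|_{H^1}^{4/3}\,\dif r\Big)\sup_{s\le r\le t}\|\varphi_r\|_{H^1}^2,
\]
which is exactly the claimed bound. There is no real obstacle here — the only mild subtlety is making sure the source term is estimated in a way compatible with the diffusion absorption (i.e.\ not generating an uncontrolled power of $\|\psi_t\|_{H^1}$), and this is routine; everything else is a direct transcription of Lemma~\ref{lem:phi-l2-bound} with Duhamel in place of homogeneous Gr\"onwall.
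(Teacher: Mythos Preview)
Your proposal is correct and follows essentially the same approach as the paper: compute $\frac{\dif}{\dif t}\tfrac12\|\psi_t\|_{L^2}^2$, handle the linear-in-$\psi$ terms exactly as in Lemma~\ref{lem:phi-l2-bound}, bound the new source term by $C\|\psi_t\|_{L^2}\|\varphi_t\|_{H^1}^2$, and conclude by Gr\"onwall using $\psi_s=0$. The paper's write-up is a two-line sketch (it applies Young to get $C\|\varphi_t\|_{H^1}^4$ on the right and then Gr\"onwall on $\|\psi_t\|_{L^2}^2$), but your version---keeping the mixed term and applying Duhamel to $\|\psi_t\|_{L^2}$ directly---is the same idea and in fact produces the stated $(t-s)$ factor on the nose.
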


\begin{proof}
  Using the equation for $\psi_t$, we compute as in Lemma~\ref{lem:phi-l2-bound},
  \[
    \frac{\dif}{\dif t} \frac{1}{2} \|\psi_t\|_{L^2}^2 \leq C \|\omega_t\|_{H^1}^{4/3} \|\psi_t\|_{L^2}^2 + C\|\varphi_t\|_{H^1}^4,\]
  and so conclude by Gr\"onwall, using that $\psi_s =0$.
\end{proof}

\begin{lemma}
  \label{lem:psi-Hn-bound}
  For all $n \geq 1$, there exists $C(n) >0$ such that
  \[\|\psi_t\|_{H^n} \leq C(t-s) \big(\sup_{s \leq r \leq t} \|\omega_r\|_{H^n}^{4n+4} +  \|\psi_r\|_{L^2}^2  +  \|\varphi_r\|_{H^{n}}^2\big)\]
\end{lemma}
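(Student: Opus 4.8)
The plan is to mimic the proof of Lemma~\ref{lem:phi-Hn-bound}; the only new feature is the forcing term $-2\nabla^\perp\Delta^{-1}\varphi_t\cdot\nabla\varphi_t$ appearing in the $\psi_t$-equation in~\eqref{eq:second-derivative}. Differentiating that equation $n$ times in space, the terms linear in $\psi_t$ have exactly the structure of the right-hand side of~\eqref{eq:linearization-derivative} (with $\psi$ in place of $\varphi$), while the forcing contributes the extra term $-2\nabla^n\big(\nabla^\perp\Delta^{-1}\varphi_t\cdot\nabla\varphi_t\big)$. First I would pair with $\nabla^n\psi_t$ in $L^2(\T^2)$ and integrate: the dissipative term $\nu\Delta\psi_t$ yields $-\nu\|\nabla^{n+1}\psi_t\|_{L^2}^2$, and the advection cross-terms are controlled by applying~\eqref{eq:Hn-linearization-crossterms} verbatim with $\varphi$ replaced by $\psi$ --- the divergence-free cancellation used there involves only the transporting field $\nabla^\perp\Delta^{-1}\omega_t$ --- giving a contribution bounded by $\tfrac{\nu}{2}\|\psi_t\|_{H^{n+1}}^2 + C\|\omega_t\|_{H^n}^{2n+2}\|\psi_t\|_{L^2}^2$.

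It then remains to bound the forcing contribution $\big|\int_{\T^2}\nabla^n\psi_t:\nabla^n\big(\nabla^\perp\Delta^{-1}\varphi_t\cdot\nabla\varphi_t\big)\,\dif x\big|$. I would integrate by parts once to move a derivative onto $\psi_t$ and apply Young's inequality, bounding this by $\tfrac{\nu}{4}\|\psi_t\|_{H^{n+1}}^2 + C\big\|\nabla^\perp\Delta^{-1}\varphi_t\cdot\nabla\varphi_t\big\|_{H^{n-1}}^2$. The proof then reduces to the product estimate
\[\big\|\nabla^\perp\Delta^{-1}\varphi\cdot\nabla\varphi\big\|_{H^{n-1}} \leq C\|\varphi\|_{H^n}^2.\]
For $n=1$ this is immediate from $\|\nabla^\perp\Delta^{-1}\varphi\|_{L^\infty}\lesssim\|\varphi\|_{H^1}$ and H\"older. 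For $n\geq 2$ one expands with the Leibniz rule and observes that every resulting term distributes at most $n-1$ derivatives; after removing $\Delta^{-1}$ with Calder\'on--Zygmund estimates and interpolating both factors with the Gagliardo--Nirenberg inequality (exactly as in the proof of Lemma~\ref{lem:cross-term-interpolation}), both factors lie in Sobolev spaces of order at most $n$, which gives the claim. I expect this product estimate to be the only real point of the proof: a naive Moser-type estimate would cost a derivative on $\varphi$ and produce $\|\varphi\|_{H^{n+1}}$, so the interpolation has to be carried out by hand to keep the right-hand side at $\|\varphi\|_{H^n}^2$.

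Combining the three contributions yields the differential inequality
\[\frac{\dif}{\dif t}\|\psi_t\|_{H^n}^2 + \frac{\nu}{4}\|\psi_t\|_{H^{n+1}}^2 \leq C\|\omega_t\|_{H^n}^{2n+2}\|\psi_t\|_{L^2}^2 + C\|\varphi_t\|_{H^n}^4 + C\|\psi_t\|_{L^2}^2.\]
To finish, I would apply Young's inequality to the first term on the right, integrate in $t$ over $[s,t]$ using the initial condition $\psi_s=0$, and take a square root, absorbing lower-order terms and adjusting constants and exponents exactly as at the end of the proof of Lemma~\ref{lem:phi-Hn-bound}. Every step other than the product estimate above is a transcription of that proof, so no further difficulty is expected.
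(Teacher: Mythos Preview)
Your proposal is correct and follows essentially the same route as the paper. The paper also differentiates the $\psi$-equation, applies~\eqref{eq:Hn-linearization-crossterms} verbatim with $\psi$ in place of $\varphi$, and then bounds the forcing term by $C\|\psi_t\|_{H^{n+1}}\|\varphi_t\|_{H^n}^2$; the only cosmetic difference is that the paper estimates the inner product $\int\nabla^n\psi_t:\nabla^n(\nabla^\perp\Delta^{-1}\varphi_t\cdot\nabla\varphi_t)$ directly via Leibniz and H\"older (splitting off the $j=n$ term and integrating by parts once there), whereas you first shift one derivative onto $\psi_t$ and then invoke the product estimate $\|\nabla^\perp\Delta^{-1}\varphi\cdot\nabla\varphi\|_{H^{n-1}}\lesssim\|\varphi\|_{H^n}^2$ --- the same computation rearranged.
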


\begin{proof}
  Differentiating the equation for $\psi_t,$ we see that
  \begin{align*}
    \frac{\dif}{\dif t} \nabla^n \psi_t&=\nu \Delta \nabla^n \psi_t  - \sum_{j=0}^n \binom{n}{j} \big(  \nabla^{n-j} \nabla^{\perp} \Delta^{-1} \omega_t \cdot \nabla \nabla^j\psi_t + \nabla^{\perp} \Delta^{-1}  \nabla^j \psi_t \cdot \nabla \nabla^{n-j} \omega_t\big)
    \\&\qquad\qquad\qquad\qquad-2 \sum_{j=0}^n \binom{n}{j}  \nabla^\perp \Delta^{-1} \nabla^{n-j} \varphi_t \cdot \nabla \nabla^j \varphi_t.
  \end{align*}
  For the final term, we note for $n\geq 1,$
  \begin{align*}
    &\Big| 2 \sum_{j=0}^n \binom{n}{j} \int \nabla^n \psi_t:  \nabla^\perp \Delta^{-1} \nabla^{n-j} \varphi_t \cdot \nabla \nabla^j \varphi_t\,\dif x\Big|
    \\&\qquad \leq C  \|\psi_t\|_{H^{n+1}} \|\nabla^\perp \Delta^{-1} \varphi_t\|_{L^2} \|\varphi_t\|_{H^n} +C \sum_{j=0}^{n-1} \|\psi_t\|_{W^{n,4}} \|\varphi_t\|_{H^{j+1}} \|\varphi_t\|_{W^{n-j-1,4}}\\ &\qquad \leq C\|\psi_t\|_{H^{n+1}} \|\varphi_t\|_{H^{n}}^2.
  \end{align*}

  Dealing with the other term as in Lemma~\ref{lem:phi-Hn-bound}, we see
  \begin{align*}
    \frac{\dif}{\dif t} \|\psi_t\|_{H^n}^2 \leq  C\|\omega_t\|_{H^n}^{4n+4} +  \|\psi_t\|_{L^2}^4  + C \|\varphi_t\|_{H^{n}}^4.
  \end{align*}
  Integrating this bound on $[s,t],$ we conclude.
\end{proof}

\begin{lemma}
  There exists $C>0$ such that
  \begin{align}
    |z_t|&\leq Ce^{t-s}\exp\Big(C\int_s^t \|\omega_r\|_{H^{5/4}}\,\dif r\Big)\big(\sup_{s \leq r \leq t} |y_r|^4 +  \|\omega_r\|_{H^3}^2 +  \|\varphi_r\|_{H^2}^{4/3} +  \|\psi_r\|_{H^1}\big)
           \label{eq:z-bound}
    \\|q_t| &\leq Ce^{t-s}\exp\Big(C\int_s^t \|\omega_r\|_{H^{5/4}}\,\dif r\Big) \nonumber \\
    &\qquad \times\big( \sup_{s \leq r \leq t} \|\psi_r\|_{H^2}^2 + |z_r|^3  + |y_r|^6 +  \|\omega_r\|_{H^4}^3+ \|\varphi_r\|_{H^3}^3  + |p_r|^3 + |\tau_r|^3 +1 \big)
             \label{eq:q-bound}
    \\|C_t| &\leq Ce^{t-s}\exp\Big(C\int_s^t \|\omega_r\|_{H^{5/4}}\,\dif r\Big)\nonumber \\
    &\qquad \times \big(\sup_{s \leq r \leq t}  \|\psi_r\|_{H^2}^2+ |A_r|^3 +|B_r|^3+ |z_r|^3 +|y_r|^{6}+ \|\omega_r\|_{H^4}^3  +  \|\varphi_r\|_{H^3}^2 +1 \big)
             \label{eq:C-bound}
  \end{align}
\end{lemma}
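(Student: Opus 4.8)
The plan is to bound $z_t$, $q_t$, and $C_t$ in that order, each by the same energy-estimate-plus-Gr\"onwall scheme already used for $y_t,p_t,B_t$ in \eqref{eq:y-bound}--\eqref{eq:B-bound} and for $\psi_t$ in Lemma~\ref{lem:psi-l2-bound} and Lemma~\ref{lem:psi-Hn-bound}. For $z_t$, I differentiate $\tfrac12|z_t|^2$ using the equation for $z_t$ in \eqref{eq:second-derivative}. The self-interaction term $z_t\cdot\nabla u_t(x_t)$ contributes $|z_t|^2\|\nabla u_t\|_{L^\infty}\lesssim|z_t|^2\|\omega_t\|_{H^{5/4}}$ by Morrey's inequality (recall $u_t=\nabla^\perp\Delta^{-1}\omega_t$), and every remaining term is linear in $z_t$ with coefficient $|y_t|^2\|\nabla^2u_t\|_{L^\infty}$, $|y_t|\,\|\nabla\nabla^\perp\Delta^{-1}\varphi_t\|_{L^\infty}$, or $\|\nabla^\perp\Delta^{-1}\psi_t\|_{L^\infty}$. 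Bounding these (generously, leaving room for Morrey) by $\|\omega_t\|_{H^3}$, $\|\varphi_t\|_{H^2}$, and $\|\psi_t\|_{H^1}$ respectively, and using Young's inequality to split off the $|z_t|$ factors, yields a drift inequality of the form
\[
\frac{d}{dt}\tfrac12|z_t|^2\le|z_t|^2\big(C\|\omega_t\|_{H^{5/4}}+1\big)+C\big(|y_t|^4+\|\omega_t\|_{H^3}^2+\|\varphi_t\|_{H^2}^{4/3}+\|\psi_t\|_{H^1}\big),
\]
where the precise Young exponents ($4$ against $4/3$ on the $|y_t|\,\|\varphi_t\|_{H^2}$ cross term, etc.) are chosen exactly so the forcing matches the powers claimed in \eqref{eq:z-bound}. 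Since $z_s=0$, Gr\"onwall's inequality then gives \eqref{eq:z-bound}.

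For $q_t$ and $C_t$ the scheme is identical but the bookkeeping is heavier. Differentiating $\tfrac12|q_t|^2$ and $\tfrac12|C_t|^2$ using the corresponding lines of \eqref{eq:second-derivative}, one again isolates the self-interactions $|q_t|^2\|\nabla u_t\|_{L^\infty}$ and $|C_t|^2\|\nabla u_t\|_{L^\infty}$, bounds them by $|q_t|^2\|\omega_t\|_{H^{5/4}}$ and $|C_t|^2\|\omega_t\|_{H^{5/4}}$, and Young's inequality converts all the cross terms into a forcing. The new ingredients compared with $z_t$ are: $\|\nabla^3u_t\|_{L^\infty}\lesssim\|\omega_t\|_{H^4}$, the higher derivatives $\|\nabla^2\nabla^\perp\Delta^{-1}\varphi_t\|_{L^\infty}\lesssim\|\varphi_t\|_{H^3}$ and $\|\nabla\nabla^\perp\Delta^{-1}\psi_t\|_{L^\infty}\lesssim\|\psi_t\|_{H^2}$ (Calder\'on--Zygmund plus Morrey), and the appearance of the already-controlled quantities $p_t,B_t,|A_t|,|\tau_t|$ as well as $z_t$ itself; since $z_t$ has just been bounded in \eqref{eq:z-bound} (and $y_t,p_t,B_t$ in \eqref{eq:y-bound}--\eqref{eq:B-bound}, $|A_t|,|\tau_t|$ in Lemma~\ref{lem:A-bound}), all of these may be treated as known forcing terms. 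This produces drift inequalities $\frac{d}{dt}\tfrac12|q_t|^2\le|q_t|^2(C\|\omega_t\|_{H^{5/4}}+1)+C(\text{forcing})$ and the same for $|C_t|^2$, and Gr\"onwall (with zero initial data) gives \eqref{eq:q-bound} and \eqref{eq:C-bound}.

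There is no genuine obstacle here; the only real work is the combinatorial bookkeeping of matching the precise powers in \eqref{eq:z-bound}--\eqref{eq:C-bound} by the correct choice of Young-inequality exponents across the many terms of \eqref{eq:second-derivative}, and keeping the Sobolev indices consistent (always leaving a sliver of extra regularity so Morrey's inequality applies in two dimensions). I would also note, though it belongs to the proof of Proposition~\ref{prop:manifold-processes-good} rather than to this lemma, that combining these pointwise-in-$\omega$ bounds with Proposition~\ref{prop:omega-bounds}, Lemma~\ref{lem:phi-Hn-bound}, Lemma~\ref{lem:psi-Hn-bound}, the first-derivative bounds, and H\"older's inequality yields the exponential-moment estimates \eqref{eq:theta-moment-bound-asmp}--\eqref{eq:L-moment-bound-asmp} of Assumption~\ref{asmp:dynamic-bounds} for the one-point, two-point, tangent, projective, and Jacobian processes.
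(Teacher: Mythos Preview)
Your approach is correct and essentially identical to the paper's: differentiate $\tfrac12|z_t|^2$, $\tfrac12|q_t|^2$, $\tfrac12|C_t|^2$ using~\eqref{eq:second-derivative}, bound $\|\nabla u_t\|_{L^\infty}\lesssim\|\omega_t\|_{H^{5/4}}$ and the higher derivatives via Sobolev embedding, split the cross terms with Young, and apply Gr\"onwall with zero initial data. One small slip: in your displayed drift inequality for $\tfrac12|z_t|^2$ the forcing should carry the \emph{squared} powers ($|y_t|^8$, $\|\omega_t\|_{H^3}^4$, $\|\varphi_t\|_{H^2}^{8/3}$, $\|\psi_t\|_{H^1}^2$), so that after Gr\"onwall and taking the square root you land on the exponents in~\eqref{eq:z-bound}; your stated Young pair ``$4$ against $4/3$'' is exactly right and produces these squared powers, so this is only a transcription error.
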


\begin{proof}
  For~\eqref{eq:z-bound}, we compute using the equation for $z_t$,
  \begin{align*}
    \frac{\dif}{\dif t} \frac{1}{2} |z_t|^2 &\leq |z_t|^2 \|\nabla u\|_{L^\infty} + |z_t||y_t|^2 \|\nabla^2 u\|_{L^\infty} + 2 |z_t| |y_t| \|\nabla^2 \Delta^{-1} \varphi_t\|_{L^\infty} + |z_t| \|\nabla \Delta^{-1} \psi_t\|_{L^\infty}
    \\&\leq |z_t|^2 \big( C\|\omega\|_{H^{5/4}} +1\big) + C|y_t|^8 + C \|\omega_t\|_{H^3}^4 + C \|\varphi_t\|_{H^2}^{8/3} + C \|\psi_t\|_{H^1}^2,
  \end{align*}
  so we conclude by Gr\"onwall.

  For~\eqref{eq:q-bound}, we compute using the equation for $q_t$,
  \begin{align*}
    \frac{\dif}{\dif t} \frac{1}{2} |q_t|^2 &\leq |q_t|^2 \|\nabla u\|_{L^\infty} + |q_t| |\tau_t| \|\nabla^2 \Delta^{-1} \psi_t\|_{L^\infty} + |q_t||\tau_t||z_t| \|\nabla^2 u_t\|_{L^\infty} + |q_t| |y_t|^2 |\tau_t| \|\nabla^3 u\|_{L^\infty}
    \\&\qquad+2|q_t||y_t| |\tau_t|\|\nabla^3 \Delta^{-1} \varphi_t\|_{L^\infty}+ 2 |q_t||p_t||y_t| \|\nabla^2 u_t\|_{L^\infty} + 2 |q_t| |p_t| \|\nabla^2 \Delta^{-1} \varphi_t\|_{L^\infty}
    \\&\leq |q_t|^2 \big(C\|\omega_t\|_{H^{5/4}} +1\big) + C\|\psi_t\|_{H^2}^4 + |\tau_t|^6 + C|z_t|^6  + C|y_t|^{12} + C \|\omega_t\|_{H^4}^6\\
    &\qquad + C\|\varphi_t\|_{H^3}^6  + C |p_t|^6 +C,
  \end{align*}
  so we conclude by Gr\"onwall.

  Finally, for~\eqref{eq:C-bound}, we compute using the equation for $C_t,$
  \begin{align*}
    \frac{\dif}{\dif t} \frac{1}{2} |C_t|^2 &\leq |C_t|^2 \|\nabla u\|_{L^\infty} + |C_t||A_t| \|\nabla^2 \Delta^{-1} \psi_t\|_{L^\infty} + |C_t||A_t| |z_t| \|\nabla^2 u_t\|_{L^\infty} + |C_t| |A_t| |y_t|^2 \|\nabla^3 u\|_{L^\infty}
    \\&\qquad+ 2|C_t||B_t| \|\nabla^2 \Delta^{-1} \varphi_t\|_{L^\infty} + 2 |C_t||A_t| |y_t| \|\nabla^3 \Delta^{-1} \varphi_t\|_{L^\infty} + 2|C_t| |B_t| |y_t| \|\nabla^2 u\|_{L^\infty}
    \\&\leq |C_t|^2 \big( C \|\omega_t\|_{H^{5/4}} + 1\big) + C \|\psi_t\|_{H^2}^4+ C |A_t|^6 + C |B_t|^6+ C |z_t|^6
    \\&\qquad+C|y_t|^{12}+ C \|\omega_t\|_{H^4}^6  + C \|\varphi_t\|_{H^3}^4 +C,
  \end{align*}
  so we conclude by Gr\"onwall.
\end{proof}

Our goal now is to conclude the bounds of all the bounds of Assumption~\ref{asmp:dynamic-bounds} except~\eqref{eq:J-smoothing-bound-asmp}, which are directly implied by the following bounds, where $C(\tau_0,A_0,q,\eta,t)>0$ is locally bounded in $(\tau_0,A_0)$. In particular, \eqref{eq:theta-moment-bound-asmp} follows from~\eqref{eq:theta-moment-bound}:
\begin{align}
    \E \sup_{0 \leq r \leq t}  \sup_{\|\gamma\|_{H^4} = 1} |\nabla^\perp \Delta^{-1} \gamma(x_r)|^q +  |\tau_t \cdot \nabla \nabla^\perp \Delta^{-1}\gamma(x_t)|^q\qquad &
 \notag\\+| A_t\nabla \nabla^\perp \Delta^{-1} \gamma(x_t))|^q&\leq C\exp(\eta V^4(\omega_0))
   \label{eq:theta-moment-bound}
\end{align}
 \eqref{eq:J-moment-bound-asmp} follows from \eqref{eq:phi-moment-bound-H-4},~\eqref{eq:y-moment-bound}, and~\eqref{eq:p-B-moment-bound}:
\begin{align}
     \E\sup_{0\leq s \leq r \leq t} \sup_{\|\varphi_s\|_{L^2} = 1}  \|\varphi_r\|_{L^2}^q &\leq C \exp(\eta V^4(\omega_0))
   \label{eq:phi-moment-bound-L-2}\\
  \E\sup_{0\leq s \leq r \leq t}\sup_{\|\varphi_s\|_{H^4} = 1}  \|\varphi_r\|_{H^4}^q &\leq C \exp(\eta V^4(\omega_0))
   \label{eq:phi-moment-bound-H-4}\\
   \E\sup_{0\leq s \leq r \leq t}\sup_{\|\varphi_s\|_{H^4} + |y_s| = 1}  |y_r|^q &\leq C \exp(\eta V^4(\omega_0))
   \label{eq:y-moment-bound}\\
     \E \sup_{0\leq s \leq r \leq t}\sup_{\|\varphi_s\|_{H^4} + |y_s| + |p_s| + |B_s| = 1}  |B_r|^q + |p_r|^q &\leq C \exp(\eta V^4(\omega_0))
   \label{eq:p-B-moment-bound}
\end{align}
   \eqref{eq:J2-moment-bound-asmp} follows from~\eqref{eq:psi-moment-bound-H-4}, \eqref{eq:z-moment-bound}, and~\eqref{eq:q-C-moment-bound}:
\begin{align}
  \E\sup_{0\leq s \leq r \leq t}\sup_{\|\varphi_s\|_{L^2} = 1}  \|\psi_r\|_{L^2}^q &\leq C \exp(\eta V^4(\omega_0))
   \label{eq:psi-moment-bound-L-2}\\
  \E\sup_{0\leq s \leq r \leq t}\sup_{\|\varphi_s\|_{H^4} = 1}  \|\psi_r\|_{H^4}^q &\leq C \exp(\eta V^4(\omega_0))
   \label{eq:psi-moment-bound-H-4}\\
  \E \sup_{0\leq s \leq r \leq t}\sup_{\|\varphi_s\|_{H^4} + |y_s| = 1} |z_r|^q &\leq C \exp(\eta V^4(\omega_0))
   \label{eq:z-moment-bound}\\
  \E \sup_{0\leq s \leq r \leq t}\sup_{\|\varphi_s\|_{H^4} + |y_s| + |p_s| + |B_s| = 1} |C_r|^q + |q_r|^q &\leq C \exp(\eta V^4(\omega_0))
   \label{eq:q-C-moment-bound}
   \end{align}
Finally, \eqref{eq:L-moment-bound-asmp} follows from~\eqref{eq:L-moment-bound-vorticity},~\eqref{eq:L-moment-bound-torus},~\eqref{eq:L-moment-bound-tangent}, and~\eqref{eq:L-moment-bound-matrix}:
   \begin{align}
  \E \sup_{t/2 \leq r \leq t} \sup_{\|\gamma\|_{H^6} = 1} \|\nu\Delta \gamma - \nabla^\perp \Delta^{-1} \gamma \cdot \nabla \omega_r - \nabla^\perp \Delta^{-1} \omega_r \cdot \nabla \gamma\|_{H^4} &\leq C  \exp(\eta V^4(\omega_0))
\label{eq:L-moment-bound-vorticity}\\
  \E \sup_{0 \leq r \leq t} \sup_{\|\gamma\|_{H^4} + |a| + |b| + |D|= 1} |y_t \cdot \nabla u_t(x_t) + \nabla^\perp \Delta^{-1}\varphi(x_t)|^q &\leq C \exp(\eta V^4(\omega_0))
   \label{eq:L-moment-bound-torus}\\
  \E \sup_{0 \leq r \leq t} \sup_{\|\gamma\|_{H^4} + |a| + |b|= 1} \Big|b \cdot \nabla u_t(x_t) +
  + \tau_t \otimes a : \nabla^2 u_t(x_t) \qquad&
     \notag\\ + \tau_t \cdot \nabla \nabla^\perp \Delta^{-1} \gamma(x_t)\Big|^q  & \leq C \exp(\eta V^4(\omega_0))
   \label{eq:L-moment-bound-tangent}\\
  \E \sup_{0 \leq r \leq t} \sup_{\|\gamma\|_{H^4} + |a|+ |D|= 1} \Big|D \nabla u_t(x_t) + A_t a \cdot \nabla^2 u_t(x_t)\qquad&
   \notag\\+ A_t \nabla \nabla^\perp \Delta^{-1} \gamma(x_t)\Big|^q &\leq  C \exp(\eta V^4(\omega_0)).
   \label{eq:L-moment-bound-matrix}
\end{align}

Before we show these bounds, we note additionally
\begin{align*}\exp\Big(C \int_s^t \|\omega_r\|_{H^{5/4}}\,\dif r\Big) &\leq \exp\Big(C \int_s^t \|\omega_r\|_{H^1}^{3/4} \|\omega_r\|_{H^2}^{1/4}\,\dif r\Big)
  \\&\leq \exp\Big(C\Big(\int_s^t \|\omega_r\|_{H^1}^{2}\,\dif r\Big)^{3/4} + C \Big(\int_s^t \|\omega_r\|_{H^2}^{2}\Big)^{1/4}\Big).
\end{align*}
Thus by~\eqref{eq:l2-energy-bound} and~\eqref{eq:Hn-bound} with $n=1$,
\begin{equation}
  \label{eq:exp-omega-H-5/4-moment-bound}
  \E \Big(\exp\Big(C \int_s^t \|\omega_r\|_{H^{5/4}}\,\dif r\Big)\Big)^q \leq C \exp(c\eta V^1(\omega_0))\leq C \exp(\eta V^4(\omega_0)),
\end{equation}
where we use Lemma~\ref{lem:Vn-poincare} for the final inequality.

We now sketch how to prove the bounds (\ref{eq:theta-moment-bound}-\ref{eq:L-moment-bound-matrix}). For~\eqref{eq:theta-moment-bound}, we use Sobolev embeddings together Lemma~\ref{lem:A-bound} and~\eqref{eq:exp-omega-H-5/4-moment-bound}. For \eqref{eq:phi-moment-bound-L-2}, we use Lemma~\ref{lem:phi-l2-bound} with \eqref{eq:l2-energy-bound}. For \eqref{eq:phi-moment-bound-H-4}, we use Lemma~\ref{lem:phi-Hn-bound} together with \eqref{eq:Hn-bound}---both with $n=4$---together with \eqref{eq:phi-moment-bound-L-2}. For \eqref{eq:psi-moment-bound-L-2}, we use Lemma~\ref{lem:psi-l2-bound} with \eqref{eq:l2-energy-bound} and \eqref{eq:phi-moment-bound-H-4}. For \eqref{eq:psi-moment-bound-H-4}, we use Lemma~\ref{lem:psi-Hn-bound} with \eqref{eq:Hn-bound}, \eqref{eq:psi-moment-bound-L-2}, and \eqref{eq:phi-moment-bound-H-4}. For \eqref{eq:y-moment-bound}, we use \eqref{eq:y-bound}, \eqref{eq:exp-omega-H-5/4-moment-bound}, and \eqref{eq:phi-moment-bound-H-4}. For \eqref{eq:z-moment-bound}, we use \eqref{eq:z-bound} with \eqref{eq:exp-omega-H-5/4-moment-bound}, \eqref{eq:y-moment-bound}, \eqref{eq:Hn-bound}, \eqref{eq:phi-moment-bound-H-4}, and \eqref{eq:psi-moment-bound-H-4}. For \eqref{eq:p-B-moment-bound}, we use \eqref{eq:p-bound} and \eqref{eq:B-bound}, together with \eqref{eq:exp-omega-H-5/4-moment-bound}, \eqref{eq:phi-moment-bound-H-4}, \eqref{eq:Hn-bound}, \eqref{eq:y-moment-bound}, and Lemma~\ref{lem:A-bound}. For \eqref{eq:q-C-moment-bound}, we use \eqref{eq:q-bound} and \eqref{eq:C-bound}, together with \eqref{eq:exp-omega-H-5/4-moment-bound}, \eqref{eq:psi-moment-bound-H-4}, \eqref{eq:z-moment-bound}, \eqref{eq:y-moment-bound}, \eqref{eq:Hn-bound}, \eqref{eq:p-B-moment-bound}, and Lemma~\ref{lem:A-bound}. For~\eqref{eq:L-moment-bound-vorticity}, we use~\eqref{eq:Hn-regularization}. For~\eqref{eq:L-moment-bound-torus}, we use~\eqref{eq:Hn-bound},~\eqref{eq:y-moment-bound}, and~\eqref{eq:phi-moment-bound-H-4}. For~\eqref{eq:L-moment-bound-tangent}, we use~\eqref{eq:Hn-bound}, Lemma~\ref{lem:A-bound}, and~\eqref{eq:exp-omega-H-5/4-moment-bound}. Finally, for~\eqref{eq:L-moment-bound-matrix}, we use~\eqref{eq:Hn-bound} and Lemma~\ref{lem:A-bound}.

Finally, in order to conclude, we show~\eqref{eq:J-smoothing-bound-asmp}. We now suppose that $(\varphi_t)_{T/2 \leq t \leq T}$ is such that
\[ \dot \varphi_t = \nu\Delta \varphi_t - \nabla^\perp \Delta^{-1} \varphi_t \cdot \nabla \omega_t - \nabla^\perp \Delta^{-1} \omega_t \cdot \nabla \varphi_t.\]
Given the bounds shown above, in order to conclude~\eqref{eq:J-smoothing-bound-asmp}, it suffices to prove the following.
\begin{lemma}
    \label{lem:J-smoothing-bound}
    For all $\eta, q,T>0, n\in \N$ there exists $C(\eta,q,T,n)>0$ such that
    \[\E \sup_{\|\varphi_{T/2}\|_{L^2} \leq 1} \|\varphi_{3T/4}\|_{H^{n+1}}^q \leq C \exp(\eta V^n(\omega_0)).\]
\end{lemma}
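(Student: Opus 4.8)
The plan is a short induction on the regularity index that extracts parabolic smoothing from the $\nu\Delta$ term in the linearized equation \eqref{eq:linearization}, gaining one derivative per step on the time interval $[T/2,3T/4]$. Since the map $\varphi_{T/2}\mapsto\varphi_{3T/4}$ is linear, $\sup_{\|\varphi_{T/2}\|_{L^2}\le 1}\|\varphi_{3T/4}\|_{H^{n+1}}$ is exactly the operator norm $\|J_{T/2,3T/4}\|_{L^2\to H^{n+1}}$ (restricted to the vorticity component), so it suffices to bound $\|\varphi_{3T/4}\|_{H^{n+1}}$ for a fixed $\varphi$ with $\|\varphi_{T/2}\|_{L^2}=1$ by a random constant depending only on $(\omega_r)_{0\le r\le 3T/4}$ whose $q$-th moment is $\lesssim\exp(\eta V^n(\omega_0))$. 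One may also assume $\varphi_{T/2}$ is mean-zero: the linearized flow preserves the spatial mean, the mean part contributes only an additive $O(\|\varphi_{T/2}\|_{L^2}^2)$, and on mean-zero fields on $\T^2$ one has $\|\cdot\|_{H^j}\le C_j\|\nabla^j\cdot\|_{L^2}$, so the dissipation controls the full $H^j$ norm at each order.

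Fix times $T/2=m_0<m_1<\cdots<m_{n+1}<3T/4$ and prove inductively, for $0\le k\le n+1$ and all $q>0$, the estimate
\[
  (\star_k):\qquad \E\sup_{m_k\le t\le 3T/4}\|\varphi_t\|_{H^k}^{q}+\E\Big(\int_{m_k}^{3T/4}\|\varphi_r\|_{H^{k+1}}^2\,\dif r\Big)^{q}\le C_{k,q}\exp(\eta V^n(\omega_0)).
\]
The base case $(\star_0)$ is Lemma~\ref{lem:phi-l2-bound}, which gives $\sup_{[T/2,3T/4]}\|\varphi_t\|_{L^2}\le\exp\big(C\int_{T/2}^{3T/4}\|\omega_r\|_{H^1}^{4/3}\,\dif r\big)$, together with the $L^2$ energy identity used in its proof, which (after absorbing $\|\varphi\|_{H^1}$ into the dissipation) yields $\int_{T/2}^{3T/4}\|\varphi_r\|_{H^1}^2\,\dif r\lesssim 1+\sup_{[T/2,3T/4]}\|\varphi_r\|_{L^2}^2\int_{T/2}^{3T/4}\|\omega_r\|_{H^1}^{4/3}\,\dif r$; all moments on the right are then bounded by $\exp(\eta V^n(\omega_0))$ via \eqref{eq:l2-energy-bound}, Hölder, Jensen (to pass between $L^2$ and $H^1$ exponents of $\omega$), and Lemma~\ref{lem:Vn-poincare}.

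For the step $(\star_k)\Rightarrow(\star_{k+1})$, the integral bound in $(\star_k)$ lets one choose a random time $t^\ast\in[m_k,\tfrac12(m_k+m_{k+1})]$ with $\|\varphi_{t^\ast}\|_{H^{k+1}}^2\le\frac{C}{m_{k+1}-m_k}\int_{m_k}^{3T/4}\|\varphi_r\|_{H^{k+1}}^2\,\dif r$ (such a time exists by averaging), so $\E\|\varphi_{t^\ast}\|_{H^{k+1}}^{2q}$ is controlled by $(\star_k)$. Running the energy estimate of (the proof of) Lemma~\ref{lem:phi-Hn-bound} at level $k+1$ starting from $t^\ast$, and invoking the cross-term bound \eqref{eq:Hn-linearization-crossterms}, gives the pathwise inequality
\[
  \sup_{m_{k+1}\le t\le 3T/4}\|\varphi_t\|_{H^{k+1}}^2+\int_{m_{k+1}}^{3T/4}\|\varphi_r\|_{H^{k+2}}^2\,\dif r\le \|\varphi_{t^\ast}\|_{H^{k+1}}^2+C\sup_{T/2\le r\le 3T/4}\big(\|\omega_r\|_{H^{k+1}}^{4k+8}+\|\varphi_r\|_{L^2}^4\big),
\]
using $[m_{k+1},3T/4]\subseteq[t^\ast,3T/4]$. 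Taking $q$-th moments and applying Hölder: the term $\|\varphi_{t^\ast}\|_{H^{k+1}}^2$ is handled by $(\star_k)$, the term $\|\varphi_r\|_{L^2}$ by $(\star_0)$, and the term $\|\omega_r\|_{H^{k+1}}$ (note $k+1\le n+1$), taken over the interval $[T/2,3T/4]$ which is bounded away from the origin, by the regularization estimate \eqref{eq:Hn-regularization} — this is what keeps the right-hand side of the form $\exp(\eta V^n(\omega_0))$ rather than involving $\|\omega_0\|_{H^{n+1}}$. This proves $(\star_{k+1})$, and the supremum part of $(\star_{n+1})$ evaluated at $t=3T/4$ is the claim. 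The main point — the ``hard part'' — is exactly this derivative-gaining move: with only $L^2$ control of $\varphi_{T/2}$ one cannot start an $H^{k+1}$-energy estimate at time $T/2$, so one pays a short wait and harvests a favorable intermediate time from the space-time integral produced at the previous level, all while tracking which norms of $\omega$ enter so that the final dependence on $\omega_0$ is only through $V^n(\omega_0)$.
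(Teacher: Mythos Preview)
Your proposal is correct and follows essentially the same approach as the paper: both arguments gain one derivative per step by using the time-integrated dissipation $\int\|\varphi_r\|_{H^{k+1}}^2\,\dif r$ from the level-$k$ energy estimate to select (by averaging) a favorable intermediate time at which $\|\varphi_{t^\ast}\|_{H^{k+1}}$ is controlled, then restarting the level-$(k+1)$ energy estimate from $t^\ast$. The paper packages this one-step gain as a separate lemma (Lemma~\ref{lem:one-step-J-smoothing}) and then simply iterates it, while you carry the induction through explicitly with a partition $T/2=m_0<\cdots<m_{n+1}<3T/4$; the auxiliary terms are handled identically, via~\eqref{eq:Hn-regularization} for $\sup\|\omega_r\|_{H^{k+1}}$ and~\eqref{eq:phi-moment-bound-L-2} (equivalently your $(\star_0)$) for $\sup\|\varphi_r\|_{L^2}$.
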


First, we prove the following.
\begin{lemma}
    \label{lem:one-step-J-smoothing}
    For all $n\geq 0, 0 \leq s \leq t \leq T$, there exists $C(n, t-s, T)>0$ such that
    \[\|\varphi_t\|_{H^{n+1}} \leq C \|\varphi_s\|_{H^n} + C \sup_{s \leq r \leq t} \big(\|\omega_r\|_{H^{n+1}}^{2n+4} + \|\varphi_r\|_{L^2}^2 + 1\big).\]
\end{lemma}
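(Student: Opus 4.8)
The plan is to prove this by a \textbf{time-weighted parabolic energy estimate} for the linearized equation \eqref{eq:linearization-derivative}, which quantifies the fact that the dissipation $\nu\Delta$ gains a full derivative on the time scale $t-s$ (morally, $e^{\nu(t-s)\Delta}\colon H^n\to H^{n+1}$ has norm $\sim (t-s)^{-1/2}$). Throughout I would let $C$ depend on $n$, $t-s$, and $T$, which is consistent with the statement since $C$ is permitted to blow up as $t-s\to 0$. The first step is to derive a differential inequality at order $n+1$: applying the (deterministic) chain rule to $\|\varphi_t\|_{H^{n+1}}^2$ using \eqref{eq:linearization-derivative} with $n$ replaced by $n+1$, and estimating the nonlinear crossterms by \eqref{eq:Hn-linearization-crossterms} (again with $n$ replaced by $n+1$, which is legitimate since $n+1\geq 1$ for all $n\geq 0$), one gets
\[
  \frac{\dif}{\dif t}\|\varphi_t\|_{H^{n+1}}^2 + \frac{\nu}{2}\|\varphi_t\|_{H^{n+2}}^2 \leq C\|\omega_t\|_{H^{n+1}}^{2n+4}\|\varphi_t\|_{L^2}^2 \leq C\big(\|\omega_t\|_{H^{n+1}}^{4n+8} + \|\varphi_t\|_{L^2}^4\big),
\]
using Young's inequality in the last step.

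Next I would insert the time weight: dropping the favorable $H^{n+2}$-dissipation term, multiplying the previous inequality by $(t-s)$, and integrating over $[s,t]$ (the boundary term at $r=s$ vanishes because of the weight) yields
\[
  (t-s)\|\varphi_t\|_{H^{n+1}}^2 \leq \int_s^t \|\varphi_r\|_{H^{n+1}}^2\,\dif r + C(t-s)^2 \sup_{s\leq r\leq t}\big(\|\omega_r\|_{H^{n+1}}^{4n+8} + \|\varphi_r\|_{L^2}^4\big).
\]
It then remains to absorb the time integral $\int_s^t\|\varphi_r\|_{H^{n+1}}^2\,\dif r$, which is precisely the dissipation term in the energy identity \emph{one order lower}. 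Integrating the order-$n$ analogue of the differential inequality above (this is the computation underlying Lemma~\ref{lem:phi-Hn-bound}; for the boundary case $n=0$ one instead integrates the $L^2$ dissipation identity from the proof of Lemma~\ref{lem:phi-l2-bound}, noting $\|\omega\|_{H^1}^{4/3}\leq 1+\|\omega\|_{H^1}^{2n+4}$ to match powers) produces
\[
  \tfrac{\nu}{2}\int_s^t\|\varphi_r\|_{H^{n+1}}^2\,\dif r \leq \|\varphi_s\|_{H^n}^2 + C(t-s)\sup_{s\leq r\leq t}\big(\|\omega_r\|_{H^{n+1}}^{4n+8} + \|\varphi_r\|_{L^2}^4 + 1\big).
\]
Combining the two displays, dividing by $(t-s)$, using $t-s\leq T$, and finally taking square roots (with $\sqrt{a+b}\leq\sqrt a+\sqrt b$) gives exactly the asserted bound, since $\sqrt{\|\omega\|^{4n+8}}=\|\omega\|^{2n+4}$ and $\sqrt{\|\varphi\|_{L^2}^4}=\|\varphi\|_{L^2}^2$.

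I do not expect a serious obstacle here: the estimate is a standard instance of parabolic smoothing and every ingredient is already available in the excerpt. The only points requiring a little care are (i) the boundary case $n=0$, where \eqref{eq:Hn-linearization-crossterms} cannot be invoked "at order $0$" and one must fall back on the $L^2$ energy identity to control $\int_s^t\|\varphi_r\|_{H^1}^2\,\dif r$; (ii) bookkeeping of the powers of $\|\omega_r\|$, where Young's inequality and the harmless additive $+1$ are used to replace subcritical powers (such as the $8/3$ appearing for $n=0$) by $\|\omega_r\|_{H^{n+1}}^{2n+4}+1$; and (iii) making sure the needed time-integrated bound $\int_s^t\|\varphi_r\|_{H^{n+1}}^2\,\dif r\lesssim \|\varphi_s\|_{H^n}^2 + (t-s)\sup(\cdots)$ is the squared version of the energy estimate, which is cleanest to re-derive directly from \eqref{eq:linearization-derivative} and \eqref{eq:Hn-linearization-crossterms} rather than quoting Lemma~\ref{lem:phi-Hn-bound} verbatim.
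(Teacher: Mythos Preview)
Your argument is correct and gives the lemma; it takes a slightly different (but equally standard) route from the paper. The paper argues by pigeonhole and restart: from the order-$n$ energy estimate~\eqref{eq:linearization-Hn-bound} it controls $\int_s^t\|\varphi_r\|_{H^{n+1}}\,\dif r$, picks a good time $t_0\in[s,(s+t)/2]$ at which $\|\varphi_{t_0}\|_{H^{n+1}}$ is bounded pointwise, and then reapplies~\eqref{eq:linearization-Hn-bound} at order $n+1$ on $[t_0,t]$. Your time-weighted estimate collapses these two steps into a single integration by parts, avoiding the need to locate $t_0$; both arguments rest on the same two ingredients (the order-$n$ and order-$(n+1)$ energy identities) and produce the same $(t-s)^{-1/2}$ dependence in the constant. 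One small slip in wording: when you write ``multiplying the previous inequality by $(t-s)$'' you of course mean the weight $(r-s)$ in the integration variable, since otherwise the boundary term at $r=s$ would not vanish; the displayed result you reach is the correct one.
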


\begin{proof}
    By~\eqref{eq:linearization-Hn-bound},
    \[ \frac{\nu}{2}\int_s^t \|\varphi_r\|_{H^{n+1}}\,\dif r\leq  \|\varphi_s\|_{H^n} + C\sup_{s \leq r \leq t}\big( \|\omega_r\|_{H^n}^{2n+2} +  \|\varphi_r\|_{L^2}^{2}\big).\]
    Thus there exists $t_0 \in [s, (t+s)/2]$ such that
    \[\|\varphi_{t_0}\|_{H^{n+1}} \leq   C\|\varphi_s\|_{H^n} + C\sup_{s \leq r \leq t}\big( \|\omega_r\|_{H^n}^{2n+2} +  \|\varphi_r\|_{L^2}^{2}\big).\]
    Applying again~\eqref{eq:linearization-Hn-bound} now for $n+1$ and on $[t_0,t],$ we see that
    \[\|\varphi_t\|_{H^{n+1}} \leq  \|\varphi_{t_0}\|_{H^{n+1}} + C\sup_{t_0 \leq r \leq t}\big( \|\omega_r\|_{H^{n+1}}^{2n+4} +  \|\varphi_r\|_{L^2}^{2}\big).\]
    Combining the two displays, we conclude.
\end{proof}

\begin{proof}[Proof of Lemma~\ref{lem:J-smoothing-bound}]
    The result is direct from iterating Lemma~\ref{lem:one-step-J-smoothing} and using the bounds~\eqref{eq:phi-moment-bound-L-2} and~\eqref{eq:Hn-regularization} to control the remaining terms.
\end{proof}

\subsection{Nondegeneracy of the vector fields for the projective and matrix processes}
\label{ssa:nondegeneracy}

Our goal for this subsection is to prove the second half of Proposition~\ref{prop:manifold-processes-good} that says that the one-point, two-point, tangent, projective, and Jacobian processes satisfy Assumption~\ref{asmp:nondegen}. Together with the arguments of Subsection~\ref{ssa:linearization-bounds}, this concludes the proof of Proposition~\ref{prop:manifold-processes-good}. Assumption~\ref{asmp:nondegen} for the one-point process follows from Assumption~\ref{asmp:nondegen} for any of the other four processes. Assumption~\ref{asmp:nondegen} for the projective process also follows from Assumption~\ref{asmp:nondegen} (alternatively the proof follows exactly analogously). Thus, we only focus on the two-point, tangent, and Jacobian processes. Unlike in Subsection~\ref{ssa:linearization-bounds}, these do have to be treated separately rather than as part of one larger process as the $(\omega_t, x_t,y_t,\tau_t,A_t)$ process \textit{does not satisfy} Assumption~\ref{asmp:nondegen}. Lemma~\ref{lem:two-point-nondegen}, Lemma~\ref{lem:tangent-nondegen}, and Lemma~\ref{lem:jacobian-nondegen} together with the above discussion show Assumption~\ref{asmp:nondegen} for the processes under consideration and thus show the second half of Proposition~\ref{prop:manifold-processes-good}. Combining this with Subsection~\ref{ssa:linearization-bounds}, we conclude the proof of Proposition~\ref{prop:manifold-processes-good}.

\subsubsection{The two-point process}

\begin{lemma}
\label{lem:two-point-nondegen}
    There exists $C,R>0$ such that for all $x',y' \in \T^2\times \T^2$ and all $a,b \in \R^2\times \R^2$,
    \[|a| + |b| \leq C |x'-y'|^{-1} \max_{|k| \leq R} |\<\Theta_{e_k}^2(x',y'),(a,b)\>|.\]
\end{lemma}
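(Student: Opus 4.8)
The degeneracy near the diagonal becomes transparent after passing to center-of-mass and difference variables. Write $K_k := \nabla^\perp\Delta^{-1}e_k$ for the velocity field of the $k$-th mode, so that $\Theta^2_{e_k}(x',y') = (K_k(x'),K_k(y'))$ and $\langle\Theta^2_{e_k}(x',y'),(a,b)\rangle = a\cdot K_k(x') + b\cdot K_k(y')$. Using $\Delta^{-1}e_k = -|k|^{-2}e_k$ and $e_k = \sin(k\cdot x)$ for $k>0$ lexicographically, $e_{-k}=\cos(k\cdot x)$, a direct computation gives, with $\alpha_k := a\cdot k^\perp$ and $\beta_k := b\cdot k^\perp$,
\[
|k|^2\langle\Theta^2_{e_k}(x',y'),(a,b)\rangle = -(\alpha_k\cos(k\cdot x') + \beta_k\cos(k\cdot y')),\qquad |k|^2\langle\Theta^2_{e_{-k}}(x',y'),(a,b)\rangle = \alpha_k\sin(k\cdot x') + \beta_k\sin(k\cdot y').
\]
Summing the squares, the cross terms recombine into $|\alpha_k e^{ik\cdot x'} + \beta_k e^{ik\cdot y'}|^2 = \alpha_k^2+\beta_k^2 + 2\alpha_k\beta_k\cos(k\cdot d)$ with $d := x'-y'$, and writing $s := a+b$, $t := a-b$ (so $\alpha_k+\beta_k = s\cdot k^\perp$, $\alpha_k-\beta_k = t\cdot k^\perp$, and $|a|+|b|\le|s|+|t|\le 2(|a|+|b|)$) this is exactly
\[
|k|^4\big(\langle\Theta^2_{e_k}(x',y'),(a,b)\rangle^2 + \langle\Theta^2_{e_{-k}}(x',y'),(a,b)\rangle^2\big) = (s\cdot k^\perp)^2\,\tfrac{1+\cos(k\cdot d)}{2} + (t\cdot k^\perp)^2\,\tfrac{1-\cos(k\cdot d)}{2}.
\]
In particular $\max_{1\le|m|\le R}|\langle\Theta^2_{e_m}(x',y'),(a,b)\rangle|^2 \ge \tfrac12\max_{1\le|k|\le R}|k|^{-4}\big[(s\cdot k^\perp)^2\tfrac{1+\cos(k\cdot d)}{2} + (t\cdot k^\perp)^2\tfrac{1-\cos(k\cdot d)}{2}\big]$, and the factor $\tfrac{1-\cos(k\cdot d)}{2} = \sin^2(k\cdot d/2)$, which degenerates like $|x'-y'|^2$, is precisely what is responsible for the $|x'-y'|^{-1}$ loss in the statement. (Both ``sin'' and ``cos'' modes of norm $\le R$ are available since the maximum is over all $k\in\Z^2$ with $1\le|k|\le R$, not just $k\in F$.)

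\textbf{Reducing to mode selection.} It remains to lower-bound the two terms by choosing $k$ well, uniformly in $(x',y')$. I would fix $\rho_0 = \pi/R$ and split. If $\rho := |x'-y'|\ge\rho_0$, the set $\{(x',y')\in M^2 : |x'-y'|\ge\rho_0\}$ is compact; for $R$ a sufficiently large absolute constant both maps $(d,\hat s)\mapsto\max_{1\le|k|\le R}|k|^{-2}|\hat s\cdot k^\perp|(1+\cos(k\cdot d))^{1/2}$ and $(d,\hat t)\mapsto\max_{1\le|k|\le R}|k|^{-2}|\hat t\cdot k^\perp|(1-\cos(k\cdot d))^{1/2}$ are strictly positive for $d\ne 0$ (if one vanished, every $k\in\Z^2$ of norm $\le R$ would lie on the line $\R\hat s$, resp.\ $\R\hat t$, together with a single coset of a sublattice of $\Z^2$ — impossible once $R\ge 5$, since such a union meets $\{|k|\le R\}$ in too few points), hence bounded below by compactness; this yields $\max_{|m|\le R}|\langle\cdot\rangle|^2\gtrsim |s|^2 + |t|^2\gtrsim(|a|+|b|)^2$, which suffices because $\rho\le\mathrm{diam}(\T^2)$. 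If $\rho<\rho_0$, then $|k\cdot d|\le R\rho<\pi$ for all $|k|\le R$, so $\sin^2(k\cdot d/2)\ge(k\cdot d)^2/\pi^2$; to control $|s|$ I pick $k^\ast$ from the fixed alphabet $\{(1,0),(0,1),(1,1),(1,-1)\}$ maximizing $|\hat s\cdot k^\perp|$ (this maximum is $\ge\sqrt3/2$ since the four values have squares summing to $3$), and since $|k^\ast|\le\sqrt2$ we get $|k^\ast\cdot d|\le\sqrt2\rho_0\le\pi/2$, so $\tfrac{1+\cos(k^\ast\cdot d)}{2}\ge\tfrac12$; to control $|t|$ I pick $k^\ast$ from a slightly larger fixed alphabet of lattice vectors avoiding the two thin cones about $\R t$ and $\R d^\perp$ (possible with a uniform opening angle), so $|t\cdot(k^\ast)^\perp|\gtrsim|t|$ and $|k^\ast\cdot d|\gtrsim|k^\ast|\rho\ge\rho$, whence $\tfrac{1-\cos(k^\ast\cdot d)}{2}\gtrsim\rho^2$. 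Substituting into the displayed lower bound gives $\max_{|m|\le R}|\langle\cdot\rangle|^2\gtrsim|s|^2 + \rho^2|t|^2\gtrsim\rho^2(|a|+|b|)^2$, which is the claim.

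\textbf{Main obstacle.} The nontrivial point is the last step: producing the two mode-selection lower bounds with constants uniform over all of $M^2$. The real tension is between the near-diagonal regime — where the transverse factor $\sin^2(k\cdot d/2)\sim(k\cdot d)^2$ collapses, forcing a choice of $k$ not nearly orthogonal to $d$ while simultaneously keeping $|k|$ bounded so that the longitudinal factor $\cos^2(k\cdot d/2)$ stays away from zero — and the ``resonant'' directions of $d$ (for instance $d\in\pi\Z^2$, where $\cos(k\cdot d)\in\{\pm1\}$ for every $k$), where the naive ``bad set is two lines'' heuristic fails. Separating $|x'-y'|\ge\rho_0$ (treated softly, using only that $\Z^2$ is never covered by two lines together with a sparse coset) from $|x'-y'|<\rho_0$ (treated by the explicit finite alphabets above, where the Taylor bound $\sin^2(k\cdot d/2)\gtrsim(k\cdot d)^2$ is available) is how I would resolve it. Everything else — the explicit form of $K_k$, the trigonometric recombination into the key identity, and the bookkeeping of constants — is routine.
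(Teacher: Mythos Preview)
Your argument is correct and takes a genuinely different route from the paper. The paper exploits translation invariance to set $x'=0$ and then picks three explicit shear flows $u_1,u_2,u_3$ (namely $(\sin x_2,0)$, $(0,\sin x_1)$, $(\sin(x_1+x_2),-\sin(x_1+x_2))$) that \emph{vanish at the origin}; this kills the $a$-contribution outright, so after the harmless reduction $|a|\le|b|$ one only has to read off $|b_1|,|b_2|\lesssim|y'|^{-1}$ from $|\sin(y'_j)b_i|\lesssim 1$ and a short case split on which coordinate of $y'$ is large. It is a two-line computation with no compactness and no mode-selection. Your approach instead keeps both points and derives the clean identity
\[
|k|^4\big(\langle\Theta^2_{e_k},(a,b)\rangle^2+\langle\Theta^2_{e_{-k}},(a,b)\rangle^2\big)=(s\cdot k^\perp)^2\cos^2\tfrac{k\cdot d}{2}+(t\cdot k^\perp)^2\sin^2\tfrac{k\cdot d}{2},
\]
which makes the $|x'-y'|^{-1}$ loss appear transparently as the $\sin^{-1}(k\cdot d/2)$ degeneration in the difference variable; you then do a far/near split, handling the far regime by compactness and the near regime by explicit finite alphabets avoiding two thin cones. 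This is more conceptual and would port more easily to other settings (higher dimension, other mode structures), at the cost of the soft compactness step. One small remark: in your positivity claim for the far regime, the bad set for the $t$-map is a line through the origin together with the \emph{sublattice} $\{k:k\cdot d\in 2\pi\Z\}$ (not a nontrivial coset), and this sublattice can have index $2$, so ``too few points'' should be read as ``at most half plus $O(R)$''; a cleaner way to see positivity is to note that for any $d\neq 0$ in $\T^2$, at least two of $(1,0),(0,1),(1,1),(1,-1)$ lie outside that sublattice and are linearly independent, so a single line cannot absorb both---in fact $R=2$ already works.
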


\begin{proof}
    Without loss of generality, we suppose that $|x'_1 - y'_1| \geq |x_2'-y_2'|$, $|a|\leq |b|$, $x'=0$.

    Let $S \subseteq H^5(\T^2)$ be defined by
    \[S := \linspan\{\nabla^\perp \Delta^{-1} e_k : |k| \leq R\},\]
    for some $R \geq 2$ to be determined.
    We then suppose,
    \[ \max_{|k| \leq R} |\<\Theta^2_{e_k}(x',y'), (a,b)\>| \leq 1,\]
    so that for any $u \in S$,
    \begin{equation}
    \label{eq:S-small}
        |\<\Theta^2_u(x',y'), (a,b)\>| \leq C \|u\|_{H^5}.
    \end{equation}
    To conclude, it suffices then to show
    \[|b| \leq C |y'|^{-1}.\]
    For $|y'| \geq \frac{3\pi}{4}$, it is clear one can choose $R$ sufficiently large so that
    \[|b| \leq C \max_{k \leq R} |\<\Theta_{e_k}^2(x',y'),(a,b)\>| \leq  C |y'|^{-1}\max_{k \leq R} |\<\Theta_{e_k}^2(x',y'),(a,b)\>|,\]
    so we only consider $|y'| \leq \frac{3\pi}{4}$.

    Note that
    \begin{align*}
        u_1(x) &:= (\sin(x_2),0)\\
        u_2(x) &:= (0,\sin(x_1))
    \end{align*}
    are such that $u_1,u_2\in S$. Thus using~\eqref{eq:S-small}, we see
    \[| \sin(y'_2) b_1|+|\sin(y_1') b_2|\leq C.\]
    Thus $\frac{3\pi}{4} \geq |y'_1| \geq |y'|/2$, so that
    \[|\sin(y'_1)| \geq C^{-1} |y'|,\]
    giving that
    \[|b_2| \leq C|y'|^{-1}.\]
    Then if $|y'_2| \geq |y'|/10,$ we also see symmetrically that
    \[|b_1| \leq C |y'|^{-1}.\]
    On the other hand, if $|y'_2| \leq |y'|/10,$ defining
    \[u_3(x) := (\sin(x_1 + x_2), - \sin(x_1+x_2)) \in S,\]
     we get that
     \[|\sin(y_1'+ y_2') (b_1 -b_2)| \leq C.\]
    Then since we've already seen that $|b_2| \leq C |y'|^{-1},$ we either have $|b_1| \leq C |y'|^{-1}$ or
    \[|b_1| \leq C |\sin(y_1'+y_2')|^{-1} \leq C |y'|^{-1},\]
    where we use that on the set $\{(y_1,y_2) : |y| \leq \frac{3\pi}{4}, |y_2| \leq  |y|/10\}$, we have that $|\sin(y_1 + y_2)| \geq C^{-1} |y|$. Thus, we have seen that in either case $|b_1| \leq C|y'|^{-1}$, allowing us to conclude for $|y'| \leq \frac{3\pi}{4}$.

    Putting the cases $|y'| \geq \frac{3\pi}{4}$ and $|y'| \leq \frac{3\pi}{4}$, we conclude for all $y'$.
\end{proof}

\subsubsection{The tangent process}

\begin{lemma}
    \label{lem:tangent-nondegen}
    There exists $C>0$ such that for all $(x',\tau') \in \T^2 \times \R^2$ and all $y,p \in  \R^2 \times \R^2$,
    \[|(y,p)| \leq C (1+|\tau'|^{-1})\max_{|k| \leq 2} |\<\Theta^T_{e_k}(x',\tau'), (y,p)\>|.\]
\end{lemma}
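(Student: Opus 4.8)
The plan is to follow the scheme of the proof of Lemma~\ref{lem:two-point-nondegen}. After rescaling we may assume $\max_{|k|\le 2}|\<\Theta^T_{e_k}(x',\tau'),(y,p)\>|\le 1$, and the goal becomes $|(y,p)|\le C(1+|\tau'|^{-1})$. Since the fields $\nabla^\perp\Delta^{-1}e_k$ with $|k|\le 2$ form a basis of the finite-dimensional, translation-invariant space $S:=\linspan\{\nabla^\perp\Delta^{-1}e_k:|k|\le 2\}$, the normalization gives $|\<\Theta^T_u(x',\tau'),(y,p)\>|\le C_0\|u\|_{H^5}$ for every $u\in S$; replacing $u(\cdot)$ by $u(\cdot+x')$ (which stays in $S$ with the same $H^5$ norm) reduces us to $x'=0$. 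Using $\Theta^T_u(0,\tau')=(u(0),Du(0)\tau')$ and writing the pairing as $\<\Theta^T_u(0,\tau'),(y,p)\>=u(0)\cdot y+(Du(0)\tau')\cdot p$, the problem becomes a finite linear-algebra question about which pairs $(u(0),Du(0))\in\R^2\times\mathfrak{sl}(2,\R)$ occur for $u\in S$ (the Jacobian lands in $\mathfrak{sl}(2,\R)$ because $u$ is divergence-free).

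The computations I would record are: for $k>(0,0)$ lexicographically, $\nabla^\perp\Delta^{-1}e_k(0)=|k|^{-2}(k_2,-k_1)$ and $D(\nabla^\perp\Delta^{-1}e_k)(0)=0$ (differentiating the $\cos(k\cdot x)$ factor brings down $\sin(k\cdot x)$, which vanishes at the origin), while for $k<(0,0)$, $\nabla^\perp\Delta^{-1}e_k(0)=0$ and $D(\nabla^\perp\Delta^{-1}e_k)(0)=|k|^{-2}\bigl(\begin{smallmatrix}-k_1k_2&-k_2^2\\k_1^2&k_1k_2\end{smallmatrix}\bigr)$. Thus $k=(0,1)$ and $k=(1,0)$ give fields whose values at $0$ span $\R^2$ with vanishing Jacobian, while $k\in\{(-1,0),(0,-1),(-1,1),(-1,-1)\}$ give fields vanishing at $0$ whose Jacobians are, up to scalar multiples, $F$, $-E$, $H-E+F$, $-H-E+F$, which span all of $\mathfrak{sl}(2,\R)$; here $H=\bigl(\begin{smallmatrix}1&0\\0&-1\end{smallmatrix}\bigr)$, $E=\bigl(\begin{smallmatrix}0&1\\0&0\end{smallmatrix}\bigr)$, $F=\bigl(\begin{smallmatrix}0&0\\1&0\end{smallmatrix}\bigr)$. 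Hence the linear map $S\ni u\mapsto(u(0),Du(0))\in\R^2\times\mathfrak{sl}(2,\R)$ is surjective, so one can fix $u_a,u_b,u_H,u_S\in S$ with $H^5$ norms bounded by an absolute constant realizing $(e_1,0)$, $(e_2,0)$, $(0,H)$, $(0,E+F)$.

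Finally I would conclude by testing. Pairing with $u_a,u_b$ gives $|y_1|,|y_2|\le C$ at once. Pairing with $u_H$ and $u_S$ gives $|\tau'_1p_1-\tau'_2p_2|\le C$ and $|\tau'_2p_1+\tau'_1p_2|\le C$, that is, $\bigl(\begin{smallmatrix}\tau'_1&-\tau'_2\\\tau'_2&\tau'_1\end{smallmatrix}\bigr)p$ has bounded norm; since this matrix is $|\tau'|$ times an orthogonal matrix, its inverse has operator norm $|\tau'|^{-1}$, so $|p|\le C|\tau'|^{-1}$. Combining, $|(y,p)|\le C(1+|\tau'|^{-1})$, which is the claim after undoing the normalization. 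The one substantive point — and the reason $|k|\le 2$ is needed rather than $|k|\le 1$ — is this last step: the modes with $|k|=1$ only produce Jacobians in $\linspan\{E,F\}$, hence only the vectors $E\tau'=(\tau'_2,0)$ and $F\tau'=(0,\tau'_1)$, which would give the far weaker bound $|p|\lesssim|\tau'_1|^{-1}+|\tau'_2|^{-1}$; it is exactly the matrix $H$, available only once $|k|\le 2$ modes are included, that allows the rotation-conditioned pair $\{H\tau',(E+F)\tau'\}$ and the sharp $|\tau'|^{-1}$ scaling.
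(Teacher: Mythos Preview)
Your proof is correct and follows the same overall scheme as the paper's: reduce to $x'=0$ by translation invariance of $S$, normalize so that the pairings are bounded, then test against well-chosen fields in $S$ to bound $|y|$ and $|p|$ separately. The difference lies in the choice of test fields and the endgame for $|p|$. The paper writes down six explicit fields $u_1,\dots,u_6$ corresponding directly to Fourier modes and obtains the constraints $|\tau'_1 p_2|,\ |\tau'_2 p_1|,\ |(\tau'_1+\tau'_2)(p_1-p_2)|,\ |(\tau'_1-\tau'_2)(p_1+p_2)|\le C$; it then runs a short case analysis (assuming $|\tau'_1|\ge|\tau'|/2$, then splitting on whether $|p_1|\le 2|p_2|$ and on which of $|\tau'_1\pm\tau'_2|$ dominates) to extract $|p|\le C|\tau'|^{-1}$. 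You instead first argue abstractly that $u\mapsto(u(0),Du(0))$ surjects onto $\R^2\times\mathfrak{sl}(2,\R)$ and then pick $u_H,u_S$ with Jacobians $H$ and $E+F$; the resulting two constraints package as $\bigl(\begin{smallmatrix}\tau'_1&-\tau'_2\\\tau'_2&\tau'_1\end{smallmatrix}\bigr)p$ bounded, and since this matrix is $|\tau'|$ times an orthogonal matrix you get $|p|\le C|\tau'|^{-1}$ in one line with no cases. Your observation about why $|k|\le 2$ is essential (the $|k|=1$ modes only hit $\linspan\{E,F\}$, giving the useless bound $|p|\lesssim|\tau'_1|^{-1}+|\tau'_2|^{-1}$) is a nice remark that the paper does not make explicit.
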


\begin{proof}
    We assume without loss of generality $x' = 0$. Let $S \subseteq H^5(\T^2)$ be defined by
    \[S := \linspan\{\nabla^\perp \Delta^{-1} e_k : |k| \leq 2\}\]
    Suppose
    \[ \max_{|k| \leq 1} |\<\Theta^T_{e_k}(x',\tau'), (y,p)\>| \leq 1,\]
    so that for any $u \in S$,
    \begin{equation}
        \label{eq:tangent-overlap-small}
    \<\Theta^T_u(x',\tau'), (y,p)\>| \leq C\|u\|_{H^5}.
    \end{equation}
    Note that
    \begin{align*}
        u_1(x) &:= (\cos(x_2),0)\\
        u_2(x) &:= (0,\cos(x_1))\\
        u_3(x) &:= (0,\sin(x_1))\\
        u_4(x) &:= (\sin(x_2),0)\\
        u_5(x) &:= (\sin(x_1 + x_2), -\sin(x_1+x_2))\\
        u_6(x) &:= (\sin(x_1 - x_2), \sin(x_1-x_2))
    \end{align*}
    are such that $u_i \in S$. Thus by~\eqref{eq:tangent-overlap-small}, we have that
    \[|y_1| + |y_2| + |\tau'_1| |p_2| + |\tau'_2| |p_1| + |(\tau'_1+\tau'_2) (p_1 -p_2)| + |(\tau'_1-\tau'_2)(p_1+p_2)| \leq C.\]
    In particular,
    \[|y| \leq C \leq C(|\tau'|^{-1} + 1).\]
    Then without loss of generality $|\tau'_1| \geq |\tau'|/2$, giving that
    \[|p_2| \leq C |\tau'|^{-1} \leq C (|\tau'|^{-1} + 1).\]
    Then either
    \[|p_1| \leq 2|p_2| \leq C (|\tau'|^{-1} + 1) \text{ or } |p_1| \leq  2|p_1+p_2| \land 2|p_1 - p_2|.\]
    But then we also have that either $|\tau'_1+ \tau'_2| \geq |\tau'|$ or $|\tau'_1 - \tau'_2| \geq |\tau'|$ and so either
    \[|p_1 - p_2| \leq C|\tau'|^{-1} \text{ or } |p_1+ p_2| \leq C |\tau'|^{-1}.\]
    Thus combining all the cases, we see that also
    \[|p_1| \leq C(\tau'|^{-1} + 1).\]
    Combining all the bounds, we conclude.
\end{proof}

\subsubsection{The Jacobian process}

We now consider the Jacobian process. We first want to reduce the problem to understanding the behavior for $A' = I$. First, we use a linear algebra fact.

\begin{lemma}
    \label{lem:inner-product-nondegen-comp}
    Let $V$ a $d$-dimensional vector space with two inner-products $g_1, g_2$. Suppose that $g_1,g_2$ induce norms with the bound
    \[ \sqrt{g_1(v,v)} \leq \Lambda \sqrt{g_2(v,v)}.\]
    Let $v_1,\dots,v_n \in V$ a finite collection of vectors and suppose $K>0$ is such that for all $v \in V$,
    \[\sqrt{g_1(v,v)} \leq K \max_{1 \leq j \leq n} |g_1(v_j,v)|.\]
    Then
    \[\sqrt{g_2(v,v)} \leq \Lambda K \max_{1 \leq j \leq n} |g_2(v_j,v)|.\]
\end{lemma}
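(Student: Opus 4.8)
The plan is a direct linear-algebra computation. Write $W := V$ with the two inner products, fix a vector $v \in V$, and let $j^\ast \in \{1,\dots,n\}$ be the index achieving $\max_{1\le j \le n} |g_2(v_j,v)|$. I would start by applying the given dominating bound for $g_1$ to $v$ itself:
\[
\sqrt{g_1(v,v)} \leq K \max_{1 \le j \le n} |g_1(v_j, v)| = K\,|g_1(v_{j_0}, v)|
\]
for some index $j_0$ realizing the max on the right-hand side. The issue is that the right-hand side involves $g_1$, not $g_2$, so the next step is to move between the two inner products. The natural device is the self-adjoint (with respect to $g_2$) positive-definite operator $P$ defined by $g_1(u,w) = g_2(Pu, w)$; the hypothesis $\sqrt{g_1(v,v)} \le \Lambda \sqrt{g_2(v,v)}$ says exactly that $\|P\|_{g_2} \le \Lambda^2$, equivalently $\sqrt{g_2(Pu,Pu)} \le \Lambda^2 \sqrt{g_2(u,u)}$, and more usefully that $g_1$-length is at most $\Lambda$ times $g_2$-length on every vector.

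Carrying this out: by Cauchy--Schwarz in $g_1$ and then the domination hypothesis applied to the vector $v_{j_0}$,
\[
|g_1(v_{j_0}, v)| \leq \sqrt{g_1(v_{j_0},v_{j_0})}\,\sqrt{g_1(v,v)} \leq \Lambda \sqrt{g_2(v_{j_0},v_{j_0})}\,\sqrt{g_1(v,v)}.
\]
That still has a $\sqrt{g_1(v,v)}$ floating around, so instead I would argue more carefully. The cleaner route: apply the given bound to $v$, pick $j_0$, and write $|g_1(v_{j_0},v)| = |g_2(Pv_{j_0}, v)|$. Hmm, this reintroduces $P$ acting on $v_{j_0}$, which need not be one of the listed vectors. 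So the honest approach is the following two-line argument. From $\sqrt{g_1(v,v)} \le K |g_1(v_{j_0},v)|$ and Cauchy--Schwarz for $g_1$ we get $\sqrt{g_1(v,v)} \le K \sqrt{g_1(v_{j_0},v_{j_0})} \sqrt{g_1(v,v)}$, which is vacuous. So plainly the bound must be used with $v$ kept on one side. The correct manipulation is: the hypothesis says $\sqrt{g_2(v,v)} \le \Lambda \sqrt{g_1(v,v)}$? No — it says $\sqrt{g_1} \le \Lambda\sqrt{g_2}$, i.e. $g_2$-balls are the larger ones. That means $\sqrt{g_2(v,v)}$ could be much bigger than $\sqrt{g_1(v,v)}$, so we cannot bound $\sqrt{g_2(v,v)}$ by $\sqrt{g_1(v,v)}$ directly and the claimed inequality genuinely needs the spanning structure.

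Here is the argument I would actually write. Set $j^\ast := \arg\max_j |g_2(v_j,v)|$ and abbreviate $N_i(v) := \sqrt{g_i(v,v)}$. Using that $g_1(u,w) = g_2(Pu,w)$ with $N_1(Pu) \le \Lambda^2 N_2(u)$ is awkward; instead use the adjoint the other way. Since $N_1 \le \Lambda N_2$ pointwise, for \emph{any} $w$ we have $|g_1(w,v)| \le N_1(w) N_1(v) \le \Lambda N_2(w) N_1(v)$, but also there is an operator $Q$ with $g_2(u,w) = g_1(Qu,w)$ and $N_2(Qu) \le \Lambda N_1(u)$ (this is just $P^{-1}$ conjugated appropriately; one checks $\|Q\|$ from $N_1 \le \Lambda N_2$). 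Then
\[
N_2(v)^2 = g_2(v,v) = g_1(Qv, v) \le K \max_j |g_1(v_j, Qv)| = K \max_j |g_2(v_j, v)| = K\,|g_2(v_{j^\ast}, v)|,
\]
where the middle inequality is the $g_1$-domination hypothesis applied to the vector $Qv$, and the next equality uses $g_1(v_j, Qv) = g_1(Q v, v_j)$? That is not $g_2(v_j,v)$ unless $Q$ is $g_1$-self-adjoint. Fortunately $Q$ \emph{is} $g_1$-self-adjoint: $g_1(Qu,w) = g_2(u,w) = g_2(w,u) = g_1(Qw,u) = g_1(u,Qw)$. Hence $g_1(v_j, Qv) = g_1(Q v_j, v)$... no, $g_1(v_j,Qv) = g_1(Qv_j, v)$, still not right. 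The clean identity is $g_1(v_j, Qv) = g_1(Qv, v_j)$ and then I want this to equal $g_2(v, v_j)$, which holds iff $g_1(Q\,\cdot\,, \cdot) = g_2(\cdot,\cdot)$ — but the definition was $g_2(u,w) = g_1(Qu,w)$, so $g_1(Qv, v_j) = g_2(v, v_j) = g_2(v_j,v)$. Good, it works. Finally $|g_2(v_{j^\ast},v)| \le \Lambda N_2(v_{j^\ast}) N_2(v) \le \Lambda N_2(v)$ by the normalization $N_2(v_j) \le 1$ implicit in the intended application (or just carry the factor). Dividing by $N_2(v)$ gives $N_2(v) \le \Lambda K \max_j |g_2(v_j,v)|$, as desired.

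\textbf{Main obstacle.} The only subtle point is setting up the operator $Q$ relating the two inner products and checking it is self-adjoint for $g_1$ with operator norm controlled by $\Lambda$; once that bookkeeping is done the inequality falls out of one application of the hypothesis to the vector $Qv$ plus Cauchy--Schwarz. I expect this to be a short lemma (under ten lines) with no real analytic content, so the ``hard part'' is purely getting the adjoints and the direction of the $\Lambda$ inequality straight.
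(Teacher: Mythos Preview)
Your setup is exactly the paper's: define the $g_1$-self-adjoint operator $Q$ by $g_2(u,w)=g_1(Qu,w)$, apply the $g_1$-hypothesis to $Qv$, and use $g_1(v_j,Qv)=g_2(v_j,v)$. But the written chain has a genuine gap. The hypothesis applied to $Qv$ yields
\[
\sqrt{g_1(Qv,Qv)} \;\le\; K\max_j |g_1(v_j,Qv)|,
\]
not $g_1(Qv,v)\le K\max_j|g_1(v_j,Qv)|$ as you wrote; these differ by a factor $\sqrt{g_1(v,v)}$ (Cauchy--Schwarz), which you have silently dropped. Your attempted repair in the last paragraph then goes wrong in two places: Cauchy--Schwarz for $g_2$ gives $|g_2(v_{j^\ast},v)|\le N_2(v_{j^\ast})N_2(v)$ with no $\Lambda$, and there is no normalization $N_2(v_j)\le 1$ in the lemma. (Separately, the asserted bound $N_2(Qu)\le \Lambda N_1(u)$ is false: the hypothesis $N_1\le\Lambda N_2$ bounds the eigenvalues of $Q$ from \emph{below} by $\Lambda^{-2}$, not from above.)

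The fix is one line and turns your argument into the paper's. Insert Cauchy--Schwarz and then the norm comparison on $v$:
\[
N_2(v)^2 = g_1(Qv,v) \le \sqrt{g_1(Qv,Qv)}\,\sqrt{g_1(v,v)} \le \Big(K\max_j|g_2(v_j,v)|\Big)\cdot \Lambda\,N_2(v),
\]
and divide by $N_2(v)$. The paper phrases this equivalently by first proving $\sqrt{g_1(A^{-1}w,w)}\le\Lambda\sqrt{g_1(w,w)}$ and applying it with $w=Av$; either way, the point you missed is that the factor $\Lambda$ enters via $\sqrt{g_1(v,v)}\le\Lambda N_2(v)$, not via any bound on $Q$.
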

\begin{proof}
    By standard theory of quadratic forms, there exists an invertible linear map $A \colon V \to V$ such that for all $v,w \in V$,
    \[g_2(v,w) = g_1(v,Aw).\]
    Further we have that for any $v \in V$ with $g_1(v,v) =1$,
    \[g_1(A^{-1} v,v) \leq \sqrt{g_1(A^{-1}v, A^{-1} v)}\leq \Lambda \sqrt{g_2(A^{-1}v,A^{-1}v)} = \Lambda \sqrt{g_1(A^{-1}v ,v)}.\]
    Thus dividing both sides by $\sqrt{g_1(A^{-1}v,v)}$ and using the homogeneity, we see that for any $v \in V$,
    \[\sqrt{g_1(A^{-1}v,v)} \leq \Lambda \sqrt{g_1(v,v)}.\]
    Then for any $v \in V,$
    \begin{align*}
        \sqrt{g_2(v,v)}&= \sqrt{g_2(A^{-1}Av, v)}
        \\&= \sqrt{g_1(A^{-1} A v, Av)}
        \\&\leq \Lambda\sqrt{g_1(Av,Av)}
        \\&\leq \Lambda K \max_{1 \leq j \leq n} |g_1(v_j,Av)|
        \\&= \Lambda K \max_{1 \leq j \leq n} |g_2(v_j,v)|.\qedhere
    \end{align*}

\end{proof}

\begin{lemma}
    \label{lem:jacobian-nondegen-from-ident}
    Suppose that for some $C,R>0$ with $R \geq 1$, we have for all $x' \in \T^2$ and $(y,B) \in \R^2 \times T_I \SL(2,\R)$ that
    \[|(y,B)| \leq C \max_{|k|\leq R} \big|\<\Theta^J_{e_k}(x', I), (y,B)\>\big|.\]
    Then for all $(x',A') \in \T^2 \times \SL(2,\R)$ and $(y,B) \in \R^2 \times T_{A'} \SL(2,\R)$ we have that
    \[|(y,B)| \leq C |A'|\max_{|k|\leq R} \big|\<\Theta^J_{e_k}(x', A'), (y,B)\>\big|.\]
\end{lemma}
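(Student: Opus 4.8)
The plan is to deduce the estimate at a general $A' \in \SL(2,\R)$ from the estimate at the identity by using the natural left-translation on $\SL(2,\R)$, under which the Jacobian vector fields $\Theta^J_{e_k}$ transform exactly to their values at the identity, and then to absorb the resulting (non-isometric) distortion of the Riemannian metric into the constant via the linear-algebra Lemma~\ref{lem:inner-product-nondegen-comp}.

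First I would record the product structure of the Jacobian vector fields: with $v_k := \nabla^\perp\Delta^{-1}e_k(x') \in \R^2$ and $w_k := \nabla(\nabla^\perp\Delta^{-1}e_k)(x') \in \R^{2\times 2}$, both depending only on $x'$ and $k$, one has $\Theta^J_{e_k}(x', A) = (v_k, Aw_k)$ for every $A \in \SL(2,\R)$, and in particular $\Theta^J_{e_k}(x', I) = (v_k, w_k)$; recall also $\tr w_k = \operatorname{div}(\nabla^\perp\Delta^{-1}e_k) = 0$, consistent with $Aw_k \in T_A\SL(2,\R)$. Fix $(x', A')$ and introduce the left-translation isomorphism $\Psi \colon \R^2 \times T_{A'}\SL(2,\R) \to W := \R^2 \times T_I\SL(2,\R)$ given by $\Psi(y, B) := (y, (A')^{-1}B)$; this is well-defined and linear since $B \in T_{A'}\SL(2,\R)$ means $\tr((A')^{-1}B) = 0$, and it satisfies $\Psi(\Theta^J_{e_k}(x', A')) = (v_k, (A')^{-1}A'w_k) = (v_k, w_k) =: \xi_k$, a vector of $W$ independent of $A'$. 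Note $\Psi^{-1}(y', B') = (y', A'B')$.

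Next I would equip $W$ with two inner products: $g_1$, the standard one (Euclidean on $\R^2$, Frobenius on the matrix block), which coincides with the Riemannian inner product of $H^n \times T\SL(2,\R)$ at $(x', I)$; and the pulled-back inner product $g_2(\xi,\zeta) := \langle \Psi^{-1}\xi, \Psi^{-1}\zeta\rangle$, where $\langle\cdot,\cdot\rangle$ is the Riemannian inner product at $(x', A')$. Then $g_2(\xi_k, \Psi(y,B)) = \langle \Theta^J_{e_k}(x', A'), (y,B)\rangle$ and $g_2(\Psi(y,B), \Psi(y,B)) = |(y,B)|^2$, while the hypothesis at the identity reads precisely $\sqrt{g_1(\xi,\xi)} \leq C \max_{|k|\leq R} |g_1(\xi_k, \xi)|$ for all $\xi \in W$. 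The only genuine computation is the metric comparison: writing $\xi = (y', B') \in W$, so that $\Psi^{-1}\xi = (y', A'B')$,
\[
  g_1(\xi,\xi) = |y'|^2 + |B'|^2 \leq |A'|^2\big(|y'|^2 + |A'B'|^2\big) = |A'|^2 g_2(\xi,\xi),
\]
which follows from $|A'B'| \geq \sigma_{\min}(A')|B'| = |A'|^{-1}|B'|$ (using $\det A' = 1$, so $\sigma_{\min}(A')^{-1} = \sigma_{\max}(A') \leq |A'|$) together with $|A'| \geq 1$. Applying Lemma~\ref{lem:inner-product-nondegen-comp}, whose distortion hypothesis is now met with its constant $\Lambda$ taken to be $|A'|$, with $K = C$ and the finite collection $\{\xi_k\}_{|k|\leq R}$, gives $\sqrt{g_2(\xi,\xi)} \leq C|A'| \max_{|k|\leq R}|g_2(\xi_k,\xi)|$ for all $\xi \in W$; undoing $\Psi$ yields the claimed bound $|(y,B)| \leq C|A'|\max_{|k|\leq R}|\langle\Theta^J_{e_k}(x',A'),(y,B)\rangle|$.

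The argument is essentially bookkeeping and I do not expect a serious obstacle. The one step that must be done carefully is the metric comparison, where the factor $|A'|$ enters and the constraint $\det A' = 1$ is crucially used; this is exactly what makes the left-translated metric controlled by $|A'|^2$ times the standard one. A minor caveat is that ``$|A'|$'' should be read as a fixed matrix norm throughout, and replacing, for instance, the operator norm by the Frobenius norm only changes the implicit constant, which is harmless.
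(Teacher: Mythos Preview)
Your proof is correct and follows essentially the same approach as the paper: both use left translation by $(A')^{-1}$ to pull back to the identity, observe that $\Theta^J_{e_k}$ transforms exactly, and then apply Lemma~\ref{lem:inner-product-nondegen-comp} with the distortion constant $\Lambda = |A'|$ coming from $\det A' = 1$. The only cosmetic difference is that you place both inner products on $W = \R^2 \times T_I\SL(2,\R)$ via $\Psi$, whereas the paper places them on $\R^2 \times T_{A'}\SL(2,\R)$ via the inverse map; these are the same computation viewed from opposite ends of the isomorphism.
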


\begin{proof}
    We without loss of generality always take $x' = 0$.

    Suppose $C,R>0$, $R \geq 1,$ are such that for all $(y,B) \in \R^2 \times T_I \SL(2,\R),$
    \begin{equation}
        \label{eq:control-at-ident}
    |(y,B)| \leq C \max_{|k|\leq R} \big|\<\Theta^J_{e_k}(0, I), (y,B)\>\big|.
    \end{equation}

    Let $A' \in \SL(2,\R)$ and $(y,B) \in \R^2 \times T_{A'} \SL(2,\R)$. We then introduce an additional inner product on $\R^2\times T_{A'} \SL(2,\R)$. Let
    \[g_\ell((z,C), (y,B)) := z \cdot y + \<(A')^{-1} C, (A')^{-1}B\> = z \cdot y + \text{Tr}(C^T (A')^{-T} A^{-1}B).\]
    Then we note that $(y,(A')^{-1} B) \in T_I \SL(2,\R)$, so
    \begin{align*}
      \sqrt{g_\ell((y,B),(y,B))} &= \|(y, (A')^{-1} B)\|\\
                              &\leq C \max_{|k| \leq R} |\<\Theta_{e_k}^J(0,I), (y,(A')^{-1}B)\>|\\
                              &= C \max_{|k|\leq R} |g_\ell(\Theta_{e_k}^J(0,A'), (y,B))|.
    \end{align*}
    Note also that
    \[g_\ell((z,M),(z,M)) = |z|^2 + \text{Tr}( (A')^{-T} (A')^{-1} MM^T) \leq |z|^2 + C|A'|^2|M|^2 \leq C|A'|^2 \<(z,M),(z,M)\>.\]
    Thus by Lemma~\ref{lem:inner-product-nondegen-comp}, we have that
    \[|(y,B)| \leq C |A'|\max_{|k|\leq R} \big|\<\Theta^J_{e_k}(0, A'), (y,B)\>\big|.\qedhere\]
\end{proof}

\begin{lemma}
    \label{lem:jacobian-nondegen}
     Then for all $(x',A') \in \T^2 \times \SL(2,\R)$ and $(y,B) \in \R^2 \times T_{A'} \SL(2,\R)$ we have that
    \[|(y,B)| \leq C |A'|\max_{|k|\leq R} \big|\<\Theta^J_{e_k}(x', A'), (y,B)\>\big|.\]
\end{lemma}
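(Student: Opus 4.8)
The plan is to reduce Lemma~\ref{lem:jacobian-nondegen} to the case $A' = \mathrm{I}$ via Lemma~\ref{lem:jacobian-nondegen-from-ident}, and then prove the required uniform spanning bound at the identity by an explicit computation with low-frequency Fourier modes, exactly in the style of Lemma~\ref{lem:two-point-nondegen} and Lemma~\ref{lem:tangent-nondegen}. By Lemma~\ref{lem:jacobian-nondegen-from-ident}, it suffices to produce $C, R > 0$ with $R \geq 1$ such that, for every $x' \in \T^2$ and every $(y, B) \in \R^2 \times T_{\mathrm{I}} \SL(2,\R)$,
\[
  |(y,B)| \leq C \max_{|k| \leq R} \big|\<\Theta^J_{e_k}(x', \mathrm{I}), (y,B)\>\big|.
\]
Recall that the Jacobian vector field is $\Theta^J_u(x, A) = (u(x), A \nabla u(x))$, so at $A = \mathrm{I}$ and using $u_k := \nabla^\perp \Delta^{-1} e_k$ we get $\Theta^J_{e_k}(x', \mathrm{I}) = (u_k(x'), \nabla u_k(x'))$, and the pairing with $(y,B)$ is $y \cdot u_k(x') + \operatorname{Tr}(B^T \nabla u_k(x'))$. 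The tangent space $T_{\mathrm{I}}\SL(2,\R)$ is the space of trace-free $2\times2$ matrices, so $B$ has three free real parameters, and together with the two components of $y$ we have five unknowns to pin down uniformly.

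First I would set $x' = 0$ without loss of generality, exploiting that translating $x'$ merely relabels the Fourier phases, so it suffices to check a spanning condition that is uniform over the shift; concretely one picks a fixed finite set of modes and notes that the $5 \times (\#\text{modes})$ matrix whose entries are the pairings has a minimum singular value bounded below uniformly in $x'$ by a compactness/continuity argument once one exhibits five modes giving a nonsingular matrix at each $x'$. Alternatively, and more in keeping with the paper's hands-on style, I would directly exhibit explicit combinations: low-frequency sines and cosines (analogous to $u_1(x) = (\sin x_2, 0)$, $u_2(x) = (0, \sin x_1)$, their cosine counterparts, and diagonal combinations like $(\sin(x_1+x_2), -\sin(x_1+x_2))$) lie in the span of $\{\nabla^\perp\Delta^{-1} e_k : |k| \leq R\}$ for $R$ a small absolute constant, and evaluating these vector fields and their gradients at a generic point $x'$ yields quantities like $y_i$ (from the cosine fields whose gradient vanishes suitably) and the entries of $\nabla u_k(x')$ paired against $B$. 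The cosine fields have gradients of the form $\pm\cos/\sin$ times coordinate directions, so pairing with $B$ extracts individual matrix entries up to trigonometric factors; the same case analysis as in Lemma~\ref{lem:tangent-nondegen} (splitting on which coordinate of the phase is bounded away from the degenerate locus) handles the points where one trigonometric factor is small by switching to a rotated mode such as $e_{(1,1)}$ or $e_{(1,-1)}$.

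The main obstacle is bookkeeping: one must verify that a fixed small collection of low modes simultaneously controls all five unknowns $y_1, y_2$ and the three trace-free entries of $B$, uniformly over $x' \in \T^2$, and this requires carefully choosing enough modes and enough rotated variants so that at every point at least one admissible combination has a trigonometric prefactor bounded below by an absolute constant. Because $M^J$ is noncompact only in the $A$ direction and that noncompactness is entirely absorbed by the $|A'|$ factor from Lemma~\ref{lem:jacobian-nondegen-from-ident}, there is no $\omega_0$-dependence and no moment bound needed here — the estimate is deterministic and uniform, so Assumption~\ref{asmp:nondegen} for the Jacobian process follows immediately with the trivial (constant) moment bound. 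I would close the proof by assembling the per-mode bounds exactly as in the final lines of Lemma~\ref{lem:tangent-nondegen}: normalize so the max pairing is $\leq 1$, deduce $|y| \leq C$ and each entry of $B$ is $\leq C$ by the case analysis, hence $|(y,B)| \leq C$, and then invoke Lemma~\ref{lem:jacobian-nondegen-from-ident} to transfer to general $A'$ with the stated $|A'|$ loss.
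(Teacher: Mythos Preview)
Your approach is correct and matches the paper's: reduce to $A' = I$ via Lemma~\ref{lem:jacobian-nondegen-from-ident}, then exhibit explicit low-frequency modes controlling the five unknowns $(y_1,y_2,a,b,c)$ with $B = \begin{psmallmatrix} a & b \\ c & -a \end{psmallmatrix}$. You overcomplicate one point, though: the reduction to $x' = 0$ is genuinely without loss of generality, not a special case still requiring uniformity over $x'$. The span $S = \linspan\{\nabla^\perp\Delta^{-1} e_k : |k| \leq R\}$ is translation-invariant (shifting $e_k$ gives a linear combination of $e_k$ and $e_{-k}$) and the $H^5$ norm is translation-invariant as well, so the quantity $\sup_{u \in S,\,\|u\|_{H^5}\leq 1} |\<\Theta^J_u(x',I),(y,B)\>|$ is literally independent of $x'$. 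Consequently the case analysis and compactness argument you sketch are unnecessary. At $x'=0$ the paper uses only the three fields $u_1(x)=(\sin x_2,0)$, $u_2(x)=(0,\sin x_1)$, $u_3(x)=(\sin(x_1+x_2),-\sin(x_1+x_2))$; their gradients at $0$ paired against $B$ give $|c|+|b|+|2a-b+c|\leq C$, hence $|B|\leq C$ in one line, while $|y|\leq C$ is handled exactly as in the earlier lemmas.
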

\begin{proof}
    By Lemma~\ref{lem:jacobian-nondegen-from-ident}, it suffices to consider $A' = I$. We also without loss of generality take $x'=0$. We take $(y, B) \in \R^2 \times T_I \SL(2,\R)$ and $R\geq 1$ to be fixed later. We suppose that
    \begin{equation*}
    \max_{|k|\leq R} \big|\<\Theta^J_{e_k}(0, I), (y,B)\>\big| \leq 1.
    \end{equation*}
    To conclude, we need to show that
    \[|y| + |B| \leq C.\]
    Verifying $|y|\leq 2 \leq C$ follows exactly as in Lemma~\ref{lem:two-point-nondegen} and Lemma~\ref{lem:tangent-nondegen}, so we only consider showing $|B| \leq C$. Note that $\text{Tr}(B) = 0$ from $B \in T_I \SL(2,\R),$ thus we can write
    \[B = \begin{pmatrix} a & b \\ c & -a \end{pmatrix}.\]
    Let $S \subseteq H^5(\T^2)$ be defined by
    \[S := \linspan\{\nabla^\perp \Delta^{-1} e_k : |k| \leq R\},\]
    so that for $u \in S,$
    \begin{equation}
        \label{eq:jacobian-overlap-small}
    \big|\<\Theta^J_{u}(0, I), (y,B)\>\big| \leq C \|u\|_{H^5}.
    \end{equation}
    Then note that
    \begin{align*}
        u_1(x) &:= (\sin(x_2),0)\\
        u_2(x) &:= (0,\sin(x_1))\\
        u_3(x)&:= (\sin(x_1+x_2),-\sin(x_1+x_2))
    \end{align*}
    are all such that $u_j \in S$. Thus from~\eqref{eq:jacobian-overlap-small}, we have that
    \[|c| + |b| + |2a-b +c| \leq C,\]
    which immediately implies that $|B| \leq C,$ allowing us to conclude.
\end{proof}

\subsection{Approximating the two-point process by the tangent process}
\label{ssa:two-point-by-tangent}

The goal of this subsection is to prove Lemma~\ref{lem:two-point-linearization-by-tangent}. Rephrasing the lemma, we consider an arbitrary $(\omega_0, x_0, y_0) \in H^4(\T^2) \times M^2$. We then let
\[\tau_0 := \frac{y_0 - x_0}{|y_0-x_0|}\]
and suppose $\omega_t$ solves~\eqref{eq:sns} and
\begin{equation*}
\begin{cases}
    \dot x_t = u_t(x_t),\\
    \dot y_t = u_t(y_t),\\
    \dot \tau_t = \tau_t \cdot \nabla u_t(x_t),
    \end{cases}
\end{equation*}
where $u_t$ and $\omega_t$ are related in the usual way. Then we consider an arbitrary random time $0 \leq s \leq 1$ and random $(\varphi_s, p_s, q_s) \in H^4(\T^2) \times \R^4$ such that
\[\|\varphi_s\| + |p_s| + |q_s| \leq 1.\]
We then let $a_s := p_s + q_s |y_0-x_0|$ and let $(\varphi_t, p_t,q_t,a_t)_{t \geq s}$ solve
  \begin{equation*}
  \begin{cases}
   \dot \varphi_t = \nu\Delta \varphi_t - \nabla^\perp \Delta^{-1} \varphi_t \cdot \nabla \omega_t - \nabla^\perp \Delta^{-1} \omega_t \cdot \nabla \varphi_t\\
    \dot p_t = p_t \cdot \nabla u_t(x_t) + \nabla^\perp \Delta^{-1} \varphi_t(x_t)\\
    \dot q_t = q_t \cdot \nabla u_t(x_t) + \tau_t \otimes p_t : \nabla^2 u_t(x_t) + \tau_t \cdot \nabla \nabla^\perp \Delta^{-1} \varphi_t(x_t)\\
    \dot a_t = a_t \cdot\nabla u_t(y_t) + \nabla^\perp \Delta^{-1} \varphi_t(y_t).
\end{cases}
    \end{equation*}
We then let
\[r_t := \frac{a_t - p_t}{|y_0-x_0|}.\]
We note that then, in the language of Lemma~\ref{lem:two-point-linearization-by-tangent} that
\[(\varphi_t,p_t, q_t) = J^T_{s,t} (\varphi_s, p_s,q_s) \text{ and } (\varphi_t, p_t, r_t) = B_{x_0,y_0}^{-1} J_{s,t} B_{x_0,y_0} (\varphi_s, p_s,q_s).\]
Thus to conclude Lemma~\ref{lem:two-point-linearization-by-tangent}, it suffices for any $q,\eta >0$, there exists $C(q,\eta)$ such that
\[\E \sup_{s \leq t \leq 1} |q_t - r_t|^q \leq C |y_0-x_0|^q e^{\eta V(\omega_0)}.\]

First, we need to control the difference between\footnote{We note that $y_t -x_t$ is only defined mod $2\pi$. At time $s$, we take $y_s - x_s \in \R^2$ to be the minimal length representative of the equivalence class. For future times we take the unique lift of $y_t-x_t$ to $\R^2$ that makes the path $t \mapsto y_t - x_t$ continuous.}
\[\sigma_t := \frac{y_t -x_t}{|y_0-x_0|}\]
and $\tau_t$.

\begin{lemma}
    For any $\eta, q>0$, there exists $C(\eta,q)>0$ such that
    \begin{equation}
    \label{eq:sigma-tau-difference}
    \E \sup_{0 \leq t \leq 1} |\tau_t - \sigma_t|^q \leq C |y_0-x_0|^q e^{\eta V(\omega_0)},
    \end{equation}
    and
    \begin{equation}
    \label{eq:x-y-dont-separate}
    \E \sup_{0 \leq t \leq 1} \Big(\frac{|y_t-x_t|}{|y_0-x_0|} + \frac{|y_0 - x_0|}{|y_t-x_t|}\Big)^q \leq C e^{\eta V(\omega_0)}.
    \end{equation}
\end{lemma}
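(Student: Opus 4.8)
The plan is to derive closed evolution equations for the discrepancies $\tau_t - \sigma_t$ and $\log(|y_t - x_t|/|y_0 - x_0|)$, then run a Gr\"onwall-type argument using the exponential moment bounds from Corollary~\ref{cor:C-alpha-moments} (and Proposition~\ref{prop:omega-bounds} through $V$). First I would establish~\eqref{eq:x-y-dont-separate}. Writing $\delta_t := y_t - x_t$ (with the continuous lift specified in the footnote), one computes
\[
\frac{\dif}{\dif t}\log|\delta_t| = \frac{\delta_t \cdot (u_t(y_t) - u_t(x_t))}{|\delta_t|^2},
\]
so that $\bigl|\tfrac{\dif}{\dif t}\log|\delta_t|\bigr| \leq \|\nabla u_t\|_{L^\infty}$ by the mean value inequality. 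Integrating gives the two-sided bound
\[
\exp\!\Bigl(-\!\int_0^t \|\nabla u_s\|_{L^\infty}\,\dif s\Bigr) \leq \frac{|\delta_t|}{|\delta_0|} \leq \exp\!\Bigl(\int_0^t \|\nabla u_s\|_{L^\infty}\,\dif s\Bigr),
\]
and~\eqref{eq:x-y-dont-separate} follows by raising to the $q$-th power, using $\|\nabla u_s\|_{L^\infty} \lesssim \|\nabla u_s\|_{C^\alpha}$ (Morrey), and applying Corollary~\ref{cor:C-alpha-moments} to absorb both the positive and negative exponentials into $Ce^{\eta V(\omega_0)}$.

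Next I would prove~\eqref{eq:sigma-tau-difference}. Both $\tau_t$ and $\sigma_t$ start equal at $t=0$ (since $\sigma_0 = (y_0-x_0)/|y_0-x_0| = \tau_0$), but they obey different ODEs: $\dot\tau_t = \tau_t \cdot \nabla u_t(x_t)$, whereas differentiating $\sigma_t = \delta_t/|\delta_0|$ gives $\dot\sigma_t = (u_t(y_t) - u_t(x_t))/|\delta_0|$. The key is to Taylor-expand the finite difference: $u_t(y_t) - u_t(x_t) = \delta_t \cdot \nabla u_t(x_t) + E_t$ with $|E_t| \leq \tfrac12 \|\nabla^2 u_t\|_{L^\infty} |\delta_t|^2 \lesssim \|\nabla u_t\|_{C^\alpha}|\delta_t|^{1+\alpha}$ (or use a $C^{1,\alpha}$ remainder bound). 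Hence
\[
\dot\sigma_t = \sigma_t \cdot \nabla u_t(x_t) + \frac{E_t}{|\delta_0|},
\]
so $w_t := \tau_t - \sigma_t$ satisfies $\dot w_t = w_t \cdot \nabla u_t(x_t) - E_t/|\delta_0|$ with $w_0 = 0$. Then
\[
\frac{\dif}{\dif t}\tfrac12|w_t|^2 \leq |w_t|^2 \|\nabla u_t\|_{L^\infty} + |w_t|\frac{|E_t|}{|\delta_0|} \leq |w_t|^2(\|\nabla u_t\|_{L^\infty} + 1) + \frac{|E_t|^2}{|\delta_0|^2},
\]
and $|E_t|^2/|\delta_0|^2 \lesssim \|\nabla u_t\|_{C^\alpha}^2 |\delta_t|^{2+2\alpha}/|\delta_0|^2 \leq \|\nabla u_t\|_{C^\alpha}^2 |\delta_0|^{2\alpha}(|\delta_t|/|\delta_0|)^{2+2\alpha}$. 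By Gr\"onwall,
\[
\sup_{0\leq t\leq 1}|w_t|^2 \lesssim |\delta_0|^{2\alpha} \exp\!\Bigl(C\int_0^1 \|\nabla u_s\|_{C^\alpha}\,\dif s\Bigr)\sup_{0\leq t\leq 1}\Bigl(\frac{|\delta_t|}{|\delta_0|}\Bigr)^{2+2\alpha}.
\]
Raising to the $q/2$ power, applying H\"older, and using Corollary~\ref{cor:C-alpha-moments} together with~\eqref{eq:x-y-dont-separate} gives $\E\sup_t |w_t|^q \lesssim |\delta_0|^{q\alpha} e^{\eta V(\omega_0)}$.

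The main obstacle here is that this yields the H\"older rate $|x_0-y_0|^{q\alpha}$ rather than the Lipschitz rate $|x_0-y_0|^q$ claimed in~\eqref{eq:sigma-tau-difference}. To recover the full power I would not expand to first order but rather exploit that both $\tau_t$ and $\sigma_t$ evolve by \emph{the same} linearized flow up to a genuinely quadratic (not merely $C^{1,\alpha}$) error, which requires $u_t \in C^2$ with control on $\|\nabla^2 u_t\|_{L^\infty}$; this is available via $\omega_t \in H^4 \hookrightarrow C^2$ and the $H^n$ bounds of Proposition~\ref{prop:omega-bounds}, at the cost of a higher-order exponential moment. With $|E_t| \leq \tfrac12\|\nabla^2 u_t\|_{L^\infty}|\delta_t|^2$ genuinely quadratic, the source term $|E_t|/|\delta_0| \lesssim \|\nabla^2 u_t\|_{L^\infty}|\delta_0|(|\delta_t|/|\delta_0|)^2$ carries the full factor $|\delta_0|$, and repeating the Gr\"onwall argument (now with $\|\nabla^2 u_t\|_{L^\infty}$ in place of $\|\nabla u_t\|_{C^\alpha}$, still controlled in all exponential moments by Proposition~\ref{prop:omega-bounds} and Sobolev embedding) yields $\E\sup_t|w_t|^q \lesssim |x_0-y_0|^q e^{\eta V(\omega_0)}$ as required. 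The care needed is purely in choosing which norms of $u_t$ to use so that every integral $\int_0^1 \|\cdot\|\,\dif s$ appearing in a Gr\"onwall exponential still has all exponential moments bounded by $Ce^{\eta V(\omega_0)}$; no new idea beyond Proposition~\ref{prop:omega-bounds}, Morrey, and Gr\"onwall is needed.
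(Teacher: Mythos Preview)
Your approach is correct and essentially matches the paper's: Gr\"onwall on $\log|y_t-x_t|$ (equivalently on $|\sigma_t|$) for~\eqref{eq:x-y-dont-separate}, and a second-order Taylor expansion of $u_t(y_t)-u_t(x_t)$ feeding into Gr\"onwall on $|\tau_t-\sigma_t|^2$ for~\eqref{eq:sigma-tau-difference}, with the $\|\nabla^2 u_t\|_{L^\infty}$ factor controlled via the $H^n$ bounds. One small clarification: in your Gr\"onwall inequality the coefficient of $|w_t|^2$ is only $\|\nabla u_t\|_{L^\infty}$, so $\|\nabla^2 u_t\|_{L^\infty}$ stays in the \emph{source term} and appears multiplicatively outside the exponential after Gr\"onwall---you therefore only need polynomial moments of $\sup_t\|\nabla^2 u_t\|_{L^\infty}$ (which Proposition~\ref{prop:omega-bounds} certainly gives), not the exponential moments you mention, and this is exactly how the paper closes the estimate.
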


Note that~\eqref{eq:x-y-dont-separate} implies Lemma~\ref{lem:x-y-dont-separate}.

\begin{proof}
    We note that by construction $\sigma_0 = \tau_0$ and that $\sigma_t$ solves
    \[\dot \sigma_t = \frac{1}{|y_0 - x_0|} \big(u_t(y_t) - u_t(x_t)\big) = \sigma_t \cdot \nabla u_t(x_t) + \frac{1}{|y_0 - x_0|} \big(u_t(y_t) - u_t(x_t) - (y_t-x_t) \cdot \nabla u_t(x_t)\big).\]
    Then using the first expression for $\dot \sigma_t$ and the Lipschitz bound on $u_t,$
\[\frac{\dif }{\dif t} |\sigma_t|^2 \leq 2 \|\nabla u_t\|_{L^\infty} |\sigma_t|^2,\]
    so that
    \[|\sigma_t| \leq \exp\Big(\int_0^t \|\nabla u_r\|_{L^\infty}\dif r\Big) \leq \exp\Big(C\int_0^t \|\omega_r\|_{H^{5/4}}\dif r\Big).\]
    Using~\eqref{eq:exp-omega-H-5/4-moment-bound} we conclude one half of~\eqref{eq:x-y-dont-separate}. The other half of~\eqref{eq:x-y-dont-separate} follows from running the ODE for $\sigma_t$ backward in time and using the same Gr\"onwall bound, which gives that
    \[1=|\sigma_0| \leq |\sigma_t|\exp\Big(C\int_0^t \|\omega_r\|_{H^{5/4}}\dif r\Big),\]
    which then again by~\eqref{eq:exp-omega-H-5/4-moment-bound} gives the second half of~\eqref{eq:x-y-dont-separate}.

    For~\eqref{eq:sigma-tau-difference}, using the other expression for the $\dot \sigma_t$, we see that
    \begin{align*}
    \frac{\dif }{\dif t} \frac{1}{2}|\sigma_t - \tau_t|^2& =  |\sigma_t - \tau_t|^2 + (\sigma_t -\tau_t) \cdot  \frac{1}{|y_0 - x_0|} \big(u_t(y_t) - u_t(x_t) - (y_t-x_t) \cdot \nabla u_t(x_t)\big)
    \\&\leq |\sigma_t -\tau_t|^2 + |\sigma_t-\tau_t| |\sigma_t|^2 |y_0-x_0| \|\nabla^2u_t\|_{L^\infty}
    \\&\leq 2 |\sigma_t - \tau_t|^2 + \frac{1}{2}|\sigma_t|^4 |y_0-x_0|^2 \|\nabla^2 u_t\|_{L^\infty}^2,
    \end{align*}
    thus
    \[|\sigma_t - \tau_t| \leq e^{4t} |y_0 - x_0|\sup_{0 \leq r \leq t} |\sigma_r|^2 \|\nabla^2 u_r\|_{L^\infty} \leq e^{4t} |y_0-x_0| \exp\Big(C\int_0^t \|\omega_r\|_{H^{5/4}} \dif r\Big) \sup_{0 \leq r \leq t} \|\nabla^2 u_r\|_{L^\infty}.\]
    Using then~\eqref{eq:exp-omega-H-5/4-moment-bound} and~\eqref{eq:Hn-bound}, we conclude~\eqref{eq:sigma-tau-difference}.
\end{proof}

Before proceeding, we write the equation for $r_t$ in a suggestive way.
    \begin{align*}
    \dot r_t &= r_t \cdot \nabla u_t(y_t) + \tau_t\otimes p_t : \nabla^2 u_t(x_t) + \tau_t \cdot\nabla \nabla^\perp \Delta^{-1} \varphi_t(x_t)
    \\&\qquad+ (\sigma_t-\tau_t)\otimes p_t : \nabla^2 u_t(x_t) + (\sigma_t-\tau_t) \cdot\nabla \nabla^\perp \Delta^{-1} \varphi_t(x_t)
    \\&\qquad+\frac{1}{|y_0 -x_0|} \Big( p_t \cdot \big(\nabla u_t(y_t) - \nabla u_t(x_t) - (y_t -x_t) \cdot \nabla^2 u_t(x_t)\big)\Big)
    \\&\qquad+\frac{1}{|y_0-x_0|}\Big(\nabla^\perp \Delta^{-1} \varphi_t(y_t) - \nabla^\perp \Delta^{-1} \varphi_t(x_t) - (y_t -x_t) \cdot \nabla \nabla^\perp \Delta^{-1} \varphi_t\Big)
    \\&=:  r_t \cdot \nabla u_t(y_t) + \tau_t\otimes p_t : \nabla^2 u_t(x_t) + \tau_t \cdot\nabla \nabla^\perp \Delta^{-1} \varphi_t(x_t) + \gamma_t.
    \end{align*}
We note that
 \begin{equation}
     \label{eq:gamma-bound}
 |\gamma_t| \leq |\sigma_t - \tau_t| \big(|p_t| \|\nabla^2 u_t\|_{L^\infty} + \|\nabla^2 \Delta^{-1} \varphi_t\|_{L^\infty}\big)  + |y_0 - x_0| |\sigma_t|^2 \big(|p_t| \|\nabla^3 u_t\|_{L^\infty} + \|\nabla^3 \Delta^{-1} \varphi_t\|_{L^\infty}\big).
 \end{equation}
 Then by~\eqref{eq:Hn-bound},~\eqref{eq:y-moment-bound},~\eqref{eq:phi-moment-bound-H-4},~\eqref{eq:sigma-tau-difference}, and~\eqref{eq:x-y-dont-separate}, we see that for any $q,\eta>0$, there exists $C(q,\eta)$ such that
 \[\E \sup_{s \leq t \leq 1} |\gamma_t|^q \leq C|y_0-x_0|^q e^{\eta V(\omega_0)}.\]

We now use this to bound $|r_t|$, noting that
\[\frac{\dif }{\dif t} \frac{1}{2} |r_t|^2 \leq |r_t|^2 (\|\nabla u\|_{L^\infty}+1) + C|\tau_t|^2 |p_t|^2 \|\nabla^2 u_t(x_t)\|_{L^\infty}^2 + C|\tau_t|^2 \|\nabla^2 \Delta^{-1}\varphi_t\|_{L^\infty}^2 + C|\gamma_t|^2,\]
so that, using that $|r_s| = |q_s| \leq 1,$
\[|r_t| \leq C\exp\Big(C\int_s^t \|\omega_r\|_{H^{5/4}}\,\dif r\Big) \big(1 + \sup_{s \leq r \leq t} \big(|\tau_r| |p_r| \|\nabla^2 u_r(x_t)\|_{L^\infty} + |\tau_r| \|\nabla^2 \Delta^{-1}\varphi_r\|_{L^\infty} +|\gamma_r|\big)\big),\]
and so by~\eqref{eq:Hn-bound},~\eqref{eq:exp-omega-H-5/4-moment-bound},~\eqref{eq:y-moment-bound},~\eqref{eq:gamma-bound}, and Lemma~\ref{lem:A-bound}, we conclude that for any $q, \eta >0$, there exists $C(\eta,q)>0$ such that
\begin{equation}
    \label{eq:r-bound}
    \E\sup_{s \leq t \leq 1} |r_t|^q \leq Ce^{\eta V(\omega_0)}.
\end{equation}

We are finally ready to control the difference $|q_t - r_t|$. Computing directly using the equation for $q_t$ and $r_t$, we see
\begin{align*}
    \frac{\dif }{\dif t} |r_t- q_t|^2 =  2 (r_t - q_t) \cdot \nabla u_t(x_t) \cdot (r_t - q_t) +  2 (r_t - q_t) \cdot \gamma_t+2 r_t \cdot (\nabla u_t(y_t) - \nabla u_t(x_t)) \cdot (r_t - q_t),
\end{align*}
thus
\[ \frac{\dif }{\dif t} |r_t- q_t|^2 \leq 2|r_t -q_t|^2 (\|\nabla u\|_{L^\infty}+1) + C |\gamma_t|^2 + C |y_0-x_0|^2|r_t|^2  |\sigma_t|^2 \|\nabla^2 u_t\|_{L^\infty}^2,\]
so by Gr\"onwall, using that $r_s = q_s,$
\[|r_t - q_t| \leq C \exp\Big(C\int_s^t \|\omega_r\|_{H^{5/4}}\,\dif r\Big) \sup_{s \leq r \leq t} \big(|\gamma_t| +  |y_0-x_0||r_t|  |\sigma_t| \|\nabla^2 u_t\|_{L^\infty} \big).\]
Using then~\eqref{eq:exp-omega-H-5/4-moment-bound},~\eqref{eq:gamma-bound}, \eqref{eq:r-bound}, \eqref{eq:x-y-dont-separate}, and \eqref{eq:Hn-bound}, we conclude that for any $q,\eta>0$, there exists $C(q,\eta)>0$ such that
\[\E \sup_{s \leq t \leq 1} |r_t -q_t|^q \leq C |y_0-x_0|^q e^{\eta V(\omega_0)}.\]
By the discussion above, this concludes the proof of Lemma~\ref{lem:two-point-linearization-by-tangent}.

\section{Approximate controllability}
The proofs in this section are elementary but involve lengthy direct computations, which are visually clear. We refer to Figure~\ref{fig:proj-control} and Figure~\ref{fig:2pt-control} for a visual explanation.
Let $M$ be a manifold corresponding to one of the special processes in Table~\ref{table:special-processes}, viewed as an embedded manifold of $\R^N$ and equipped with corresponding Euclidean distance.

\begin{definition}\label{defn:reach}
  Given $E_1,E_2 \subseteq \uspace \times M$, we write $\Reach(E_1, E_2)$ to denote the statement that for every $\varepsilon > 0$, there is $T > 0$ and $\delta > 0$ such that for every $(u_0, p_0) \in E_1$, we have
  \begin{equation}\label{eq:reach-defn}
    \P[\exists (\tilde{u}, \tilde{p}) \in E_2 \; \text{s.t.} \; \|u_T-\tilde{u}\|_{\uspace} + |p_T-\tilde{p}| < \varepsilon] > \delta.
  \end{equation}
  If $(\tilde{u}, \tilde{p}) \in \uspace \times M$, we write $\Reach((u_0, p_0), (\tilde{u}, \tilde{p}))$ to mean $\Reach(\{(u_0, p_0)\}, \{(\tilde{u}, \tilde{p})\})$.
\end{definition}

\begin{lemma}\label{lem:reach-control}
  $\Reach(E_1, E_2)$ is satisfied if, for every $\varepsilon > 0$, there exists a time $T > 0$ such that for every $(u_0, p_0) \in E_1$ there is a Lipschitz control $g \colon F \times \R_{\geq 0} \to \R$ with $|\dot{g}^k| \leq 1$ such that the solution $(u_t^g, p_t^g)$ to the controlled equation
\begin{equation*}
  \begin{cases}
    \dot{u}_t^g = \nu \Delta u_t^g - u_t^g \cdot \nabla u_t^g + \sum_{k \in F} c_k \nabla^\perp \Delta^{-1} e_k(x)\dot{g}^k(t),\\
    \nabla \cdot u_t^g = 0,\\
    \dot{p}_t = \Phi_{u}(p_t)
  \end{cases}
\end{equation*}
satisfies $\|u_T - \tilde{u}\|_{\uspace} + |p_T - \tilde{p}| < \varepsilon$ for some $(\tilde{u}, \tilde{p}) \in E_2$.

Furthermore, if $K \subseteq M$ is compact and $\Reach(E_1, \{u\} \times K)$ and $\Reach((u, y), E_2)$ for all $y \in K$, then $\Reach(E_1, E_2)$ holds. In particular, $\Reach(\cdot, \cdot)$ is transitive on $\uspace \times M$.
\end{lemma}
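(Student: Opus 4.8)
The statement of Lemma~\ref{lem:reach-control} has two parts: the sufficient condition via controls, and the transitivity-type composition property. For the \textbf{first part}, the plan is to invoke the Stroock--Varadhan support theorem for the stochastic Navier--Stokes equation (as used in~\cite{hairer_ergodicity_2006}), which asserts that the law of $(u_T,p_T)$ charges every neighborhood (in $\uspace \times M$) of any point reachable by a smooth control $g$; here the coupling with the ODE $\dot p_t = \Theta_u(p_t)$ on the manifold is harmless because the map $u \mapsto (u_t)$ is continuous from the relevant path space and the ODE depends continuously (in $C^0_{\mathrm{loc}}$) on $u$, so the solution map $g \mapsto (u^g_T, p^g_T)$ is continuous in the topology for which the support theorem gives density. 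Concretely, given $\eps>0$ and the control $g$ from the hypothesis producing $(u^g_T,p^g_T)$ within $\eps/2$ of some $(\tilde u,\tilde p)\in E_2$, the support theorem yields $\delta>0$ with $\P[\|u_T - u^g_T\|_{\uspace} + |p_T - p^g_T| < \eps/2] > \delta$, and the triangle inequality closes~\eqref{eq:reach-defn}. The requirement $|\dot g^k|\le 1$ is only a convenient normalization (rescaling time), and the Lipschitz regularity of $g$ is exactly what makes the controlled SPDE well-posed with the needed continuity.

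\textbf{Second part (composition/transitivity).} Suppose $K\subseteq M$ is compact, $\Reach(E_1,\{u\}\times K)$ holds, and $\Reach((u,y),E_2)$ holds for every $y\in K$. Fix $\eps>0$. The plan is a two-step argument using the Markov property. First, apply $\Reach((u,y),E_2)$ at each $y\in K$: this gives for each $y$ a time $T_y$, a radius $\delta_y>0$, and moreover---by continuity of transition probabilities in a weak sense, or more simply by running the same control from nearby initial data---a neighborhood $U_y\ni y$ such that for all $(u',y')$ with $\|u'-u\|_{\uspace}+|y'-y|$ small one still has $\P[\exists(\tilde u,\tilde p)\in E_2:\ \|u'_{T_y}-\tilde u\|+|p'_{T_y}-\tilde p|<\eps]>\delta_y/2$. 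By compactness of $K$, extract a finite subcover $U_{y_1},\dots,U_{y_m}$; let $\delta' := \min_i \delta_{y_i}/2 > 0$ and note the times $T_{y_i}$ are finitely many. Second, apply $\Reach(E_1,\{u\}\times K)$ with the radius $\eps'$ chosen small enough that an $\eps'$-ball around $\{u\}\times K$ lies inside $\bigcup_i (\{u'\}\times U_{y_i})$-neighborhoods as above: this gives a time $S$ and $\delta''>0$ such that, starting from any point of $E_1$, with probability $>\delta''$ the process at time $S$ lands in that neighborhood, hence in some $U_{y_i}$ with $u$-component close to $u$. Conditioning on $\mathcal F_S$ and using the Markov property, on that event the conditional probability of reaching (within $\eps$) $E_2$ by the additional time $T_{y_i}$ is at least $\delta'$. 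Taking $T := S + \max_i T_{y_i}$ and noting that after landing one can simply wait (or, more carefully, that the bound~\eqref{eq:reach-defn} is monotone under enlarging the target neighborhood and one may absorb the waiting into the control as in the first part), we get $\P[\dots] > \delta'\delta''$, establishing $\Reach(E_1,E_2)$. For the final sentence, transitivity of $\Reach(\cdot,\cdot)$ on points follows by taking $K=\{y\}$ a singleton (trivially compact) in the composition property.

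\textbf{Main obstacle.} The delicate point is the uniformity/continuity needed to pass from ``$\Reach$ from each fixed $y\in K$'' to ``$\Reach$ from a neighborhood of $y$ with a uniform lower bound $\delta$''. One cannot literally quote strong Feller here (which is the whole point of the paper), but one does not need it: the event in~\eqref{eq:reach-defn} is an \emph{open} condition on the endpoint, so it suffices that the control $g$ witnessing $\Reach((u,y),E_2)$ also works, with a slightly smaller success probability, from all initial data in a small enough $\uspace\times M$ ball around $(u,y)$---and this follows from continuous dependence of the controlled-then-stochastic dynamics on initial data on bounded time intervals (Gr\"onwall-type estimates, cf.\ the $H^1$-stability of~\eqref{eq:sns-velocity-form} and the geodesic completeness of $M$ ensuring no finite-time blow-up of $p_t$), together with the support theorem applied with the \emph{same} control. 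Thus the compactness of $K$ combined with this local uniformity, rather than any global regularity of transition kernels, is what makes the composition go through. The remaining steps are routine applications of the Markov property and the triangle inequality.
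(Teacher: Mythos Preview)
Your proposal is correct and aligns with the paper's approach: the paper's own proof is the single sentence ``Immediate from positivity of Wiener measure and stability of the equation,'' and your expansion via the support theorem (equivalently, positivity of Wiener measure plus continuous dependence on the noise) for the first part and Markov property plus compactness-and-stability-in-initial-data for the second is exactly what is intended. Your identification of the continuity-in-initial-data step as the main obstacle—resolved by Gr\"onwall-type stability estimates rather than any Feller property—is on point and in fact more carefully articulated than the paper's one-line proof; the residual hand-wave about unifying the times $T_{y_i}$ is harmless in all applications (where the intermediate $u$ is $0$, a fixed point under zero forcing, so one may pad controls freely), and the paper does not address it either.
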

\begin{proof}
  Immediate from positivity of Wiener measure and stability of the equation.
\end{proof}

\begin{definition}
  Given $k \in F$ and $t \in \R$, we define the diffeomorphism
  \[
    f_{k,t}(x) := x + t\nabla^\perp \Delta^{-1}e_k(x).
  \]
\end{definition}

The next two lemmas follow from the fact that for each $k_0 \in F$, the vector field $\nabla^\perp \Delta^{-1}e_k$ is an exact solution to~\eqref{eq:sns} on which the nonlinearity vanishes. They are immediately proved using Lemma~\ref{lem:reach-control} and a control which is the concatenation in time of controls of the form $g^k(t) = \psi(t)\delta_{k_0,k}$ for an appropriate bump function $\psi$ and where $\delta$ denotes the Kronecker delta.

\begin{lemma}\label{lem:dif-proj}
  If $E_1, E_2 \subseteq M^P$, then $Reach(\{0\} \times E_1, \{0\} \times E_2)$ is satisfied for the projective process if for every $\varepsilon > 0$ there is some $T > 0$ such that for each $(x_0, v_0) \in E_1$ there are finite sequences $t_1, t_2, \dots, t_n$ with $\sum_i |t_i| \leq T$ and $k_1, k_2, \dots, k_n \in F$ such that
  \[
    |(f_{k_n,t_n} \circ \dots \circ f_{k_1,t_1})(x_0) - \tilde{x}| < \varepsilon
  \]
  and
  \[
    |v_0 \cdot \nabla(f_{k_n,t_n} \circ \dots \circ f_{k_1,t_1})(x_0) - \tilde{v}| < \varepsilon
  \]
  for some $(\tilde{x}, \tilde{v}) \in E_2$, where we identify a vector in $\R^2$ with a point on $S^1$ by projection.
\end{lemma}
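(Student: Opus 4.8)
The plan is to invoke Lemma~\ref{lem:reach-control} with a control obtained by concatenating single-mode maneuvers. Two elementary facts drive the argument. First, for each fixed $k_0\in F$ the field $w_{k_0}:=\nabla^\perp\Delta^{-1}e_{k_0}$ is, as noted above, an exact solution of~\eqref{eq:sns} on which the nonlinearity vanishes: in vorticity form, $w_{k_0}\cdot\nabla e_{k_0}$ is pointwise a scalar multiple of $k_0^\perp\cdot k_0=0$. Second, along any integral curve of $w_{k_0}$ (indeed of $\lambda(s)w_{k_0}$ for any scalar profile $\lambda$) the quantity $k_0\cdot x$ is conserved, because $w_{k_0}\perp k_0$; hence the time-$\ell$ flow of $\lambda(s)w_{k_0}$ is the affine map $x\mapsto x+\bigl(\int_0^\ell\lambda\bigr)\,w_{k_0}(x)=f_{k_0,\int_0^\ell\lambda}(x)$, and $\nabla w_{k_0}$ is constant along each such curve and nilpotent (since $(k_0^\perp\otimes k_0)^2=0$), so the induced action on tangent vectors is exactly $\nabla f_{k_0,\int_0^\ell\lambda}$.

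Given $\varepsilon>0$ and $(x_0,v_0)\in E_1$ with sequences $t_1,\dots,t_n$ and $k_1,\dots,k_n$ as in the hypothesis, I would split $[0,T']$ into consecutive blocks $I_1,\dots,I_n$ and on $I_i$ set $\dot g^k\equiv 0$ for $k\ne k_i$ while choosing $\dot g^{k_i}$ so that the controlled velocity is exactly $u_t=\lambda_i(t)w_{k_i}$ on $I_i$; here $\lambda_i$ is a fixed smooth bump rescaled to vanish at both endpoints of $I_i$ and to satisfy $\int_{I_i}\lambda_i=t_i$, and $\dot g^{k_i}$ is determined by the scalar relation $\dot\lambda_i=-\nu|k_i|^2\lambda_i+c_{k_i}\dot g^{k_i}$ (using $\Delta e_{k_i}=-|k_i|^2e_{k_i}$ and the vanishing of the nonlinearity). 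By 2D well-posedness this $u_t$ is the controlled solution; making $|I_i|$ large enough, depending only on $|t_i|$, $F$, $\nu$ and $\max_k(|c_k|+|c_k|^{-1})$, forces $|\dot g^{k_i}|\le 1$, and we set $T':=\sum_i|I_i|$ (extending $g$ by a constant past $T'$). Since $\lambda_i$ vanishes at the endpoints of $I_i$, the velocity is $0$ at every block boundary, so the blocks chain consistently and $u_{T'}=0$. By the second fact above, crossing block $I_i$ sends the base point through $f_{k_i,t_i}$ and the projective coordinate through the $S^1$-normalization of $v\mapsto \nabla f_{k_i,t_i}\,v$, so at time $T'$ the controlled trajectory sits exactly at
\[
\Bigl(0,\ (f_{k_n,t_n}\circ\cdots\circ f_{k_1,t_1})(x_0),\ \bigl[\,v_0\cdot\nabla(f_{k_n,t_n}\circ\cdots\circ f_{k_1,t_1})(x_0)\,\bigr]\Bigr),
\]
with $[\cdot]$ the projection $\R^2\setminus\{0\}\to S^1$. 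The hypothesis places this within $\varepsilon$ of $\{0\}\times E_2$ in the first two coordinates, and within $C\varepsilon$ overall after passing from closeness of vectors to closeness on $S^1$ (harmless, as the pushforward vectors stay in a fixed compact subset of $\R^2\setminus\{0\}$, the maps $\nabla f_{k_i,t_i}$ being invertible).

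Applying Lemma~\ref{lem:reach-control} with $\varepsilon/C$ in place of $\varepsilon$ then yields $\Reach(\{0\}\times E_1,\{0\}\times E_2)$, provided the horizon $T'$ is uniform over $E_1$. I expect the main point requiring care to be exactly the bookkeeping of the middle paragraph: keeping the controlled velocity on the one-dimensional family $\{\lambda w_{k_i}\}$ throughout each block, so that the nonlinearity genuinely vanishes and the flow decomposes as the composition $f_{k_n,t_n}\circ\cdots\circ f_{k_1,t_1}$, while simultaneously honoring $|\dot g|\le 1$. This last constraint forces each block to have a length bounded below, so that each switch of forced mode costs a definite amount of time and $T'\lesssim_{F,\nu}\; T+n$. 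The uniform-horizon hypothesis of Lemma~\ref{lem:reach-control} therefore holds whenever the number of maneuvers $n$ can be chosen bounded over $E_1$ — in particular when $E_1$ is a single point, which together with the transitivity clause of Lemma~\ref{lem:reach-control} is how the result is invoked in Sections~\ref{sec:lyapunov} and~\ref{s:exponential-mixing}.
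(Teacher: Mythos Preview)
Your proof is correct and follows the same approach as the paper, which gives only a one-sentence sketch: invoke Lemma~\ref{lem:reach-control} with a control that is the concatenation of single-mode bump functions $g^k(t)=\psi(t)\delta_{k_0,k}$, using that $\nabla^\perp\Delta^{-1}e_{k_0}$ is an exact solution on which the nonlinearity vanishes. Your observation that the uniform horizon in Lemma~\ref{lem:reach-control} requires $n$ (not just $\sum_i|t_i|$) to be bounded over $E_1$ is a genuine subtlety the paper does not make explicit; in the applications (Lemmas~\ref{lem:proj-control} and~\ref{lem:2pt-control}) the number of maneuvers is indeed uniformly bounded, so no issue arises.
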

\begin{lemma}\label{lem:dif-2pt}
  If $E_1, E_2 \subseteq M^2$, then $Reach(\{0\} \times E_1, \{0\} \times E_2)$ holds for the two-point process if for every $\varepsilon > 0$ there is some $T > 0$ such that for each $(x_0, v_0) \in E_1$ there are finite sequences $t_1, t_2, \dots, t_n$ with $\sum_i |t_i| \leq T$ and $k_1, k_2, \dots, k_n \in F$ such that
  \[
    |(f_{k_n,t_n} \circ \dots \circ f_{k_1,t_1})(x_0) - \tilde{x}| < \varepsilon
  \]
  and
  \[
    |(f_{k_n,t_n} \circ \dots \circ f_{k_1,t_1})(y_0) - \tilde{y}| < \varepsilon
  \]
  for some $(\tilde{x}, \tilde{y}) \in E_2$.

\end{lemma}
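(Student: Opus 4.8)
The plan is to verify the sufficient condition supplied by Lemma~\ref{lem:reach-control}: given $\varepsilon>0$, I must produce, for each $(x_0,y_0)\in E_1$, a Lipschitz control $g$ with $|\dot g^k|\le 1$ on a time interval $[0,T']$ (with $T'$ uniform over $E_1$), whose controlled solution $(u^g,x^g,y^g)$ satisfies $\|u^g_{T'}\|_{\uspace}+|x^g_{T'}-\tilde x|+|y^g_{T'}-\tilde y|<\varepsilon$ for some $(\tilde x,\tilde y)\in E_2$. By hypothesis there is a uniform $T$ and, for the given $(x_0,y_0)$, sequences $t_1,\dots,t_n$ and $k_1,\dots,k_n\in F$ with $\sum_i|t_i|\le T$ such that the diffeomorphism $\Psi:=f_{k_n,t_n}\circ\cdots\circ f_{k_1,t_1}$ carries $x_0$ to within $\varepsilon/2$ of some $\tilde x$ and $y_0$ to within $\varepsilon/2$ of some $\tilde y$, with $(\tilde x,\tilde y)\in E_2$. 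So it suffices to construct a control whose flow on $M^2$ approximates $\Psi$ arbitrarily well and whose terminal velocity is arbitrarily small.

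The structural observation I would use is that each shear field $V_k:=\nabla^\perp\Delta^{-1}e_k$ has two properties: it is an exact steady Euler solution, $(V_k\cdot\nabla)V_k=0$, because $V_k(x)=k^\perp h(k\cdot x)$ for a trigonometric function $h$ and hence $(V_k\cdot\nabla)V_k\propto (k^\perp\cdot k)=0$; and its integral curves freeze $k\cdot x$, so the flow from time $0$ to $t$ of the \emph{time-dependent} field $a(\cdot)V_k$ is exactly $f_{k,\int_0^t a}$. Since also $\Delta V_k=-|k|^2V_k$, it follows that $u_t=a(t)V_k$ solves~\eqref{eq:sns} with the control $g$ (taking zero pressure and holding all modes other than $k$ constant) precisely when the scalar ODE $\dot a=-\nu|k|^2 a+c_k\dot g^k$ holds. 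Thus the only obstruction to $V_k$ being an exact \emph{controlled} solution is its viscous eigenvalue, which is scalar and linear and can be tracked exactly.

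The control $g$ is then built block by block. On block $i$, of length $\ell_i$, I hold $g^k$ constant for $k\ne k_i$ and choose $g^{k_i}$ so that the amplitude $a_i$ solving $\dot a_i=-\nu|k_i|^2 a_i+c_{k_i}\dot g^{k_i}$, started from the (small) residual amplitude left over from block $i-1$, ramps over a sub-interval of length $\asymp|t_i|$ up to the plateau value $b_i:=\nu|k_i|^2 t_i/c_{k_i}$ and then holds there for the remainder of the block; holding the control constant lets viscosity drive the amplitude back to within $e^{-c\ell_i}$ of zero by the block's end. The plateau value is forced by the requirement $\int_{\text{block }i}a_i=t_i$ (which, via the Duhamel formula for the scalar ODE, holds up to an $e^{-c\ell_i}$ error), and the ramp length is forced by $|\dot g^{k_i}|\le1$; since $|t_i|\le T$, one may take $\ell_i\asymp 1+|t_i|$ with constants depending only on $F,\nu$ and $\max_k(|c_k|+|c_k|^{-1})$. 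On block $i$ the true velocity is $a_i(t)V_{k_i}$ plus an exponentially small residual inherited from $V_{k_{i-1}}$, whose cross interaction with $V_{k_i}$ is (small)$\times$(bounded); so by finite-time well-posedness and stability of~\eqref{eq:sns} the true flow on $M^2$ over the block approximates $f_{k_i,t_i}$ to any prescribed tolerance. Composing the $n$ blocks, the terminal velocity is $O(e^{-c\ell_n})$ and the terminal point on $M^2$ is within $\varepsilon/2$ of $\Psi(x_0,y_0)$, hence within $\varepsilon$ of $(\tilde x,\tilde y)$; choosing the per-block tolerances so their sum is $\le\varepsilon/2$ concludes via Lemma~\ref{lem:reach-control}.

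The hard part is bookkeeping rather than anything conceptual: controlling the accumulation of the inter-block residual-velocity and cross-nonlinearity errors (handled by finiteness of $n$ and summable tolerances), and arranging the total time $T'=\sum_i\ell_i$ to be uniform over $E_1$. When $E_1$ is a single point the latter is automatic; when $E_1$ is compact, I would instead invoke the transitivity/compactness part of Lemma~\ref{lem:reach-control}, reaching a fixed compact intermediate $\{0\}\times K$ from $E_1$ and then $E_2$ from each point of $\{0\}\times K$, so that only finitely many of the above block constructions enter and $T'$ is their maximum.
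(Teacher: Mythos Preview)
Your approach is essentially the same as the paper's: the paper proves this lemma (together with Lemma~\ref{lem:dif-proj}) in a single sentence, noting that each $\nabla^\perp\Delta^{-1}e_k$ is an exact solution to~\eqref{eq:sns} on which the nonlinearity vanishes, and that the control is a concatenation in time of controls of the form $g^k(t)=\psi(t)\delta_{k_0,k}$ for an appropriate bump function $\psi$, then invoking Lemma~\ref{lem:reach-control}. You have correctly identified and fleshed out exactly this argument, including the scalar ODE for the amplitude and the block-by-block structure; your more detailed treatment of the inter-block residuals and the uniformity of the total time goes beyond what the paper writes but is in the same spirit.
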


\subsection{The projective and Jacobian processes}
\begin{figure}
  \centering
  \fbox{\begin{subfigure}{0.47\textwidth}
      \includegraphics[width=\textwidth]{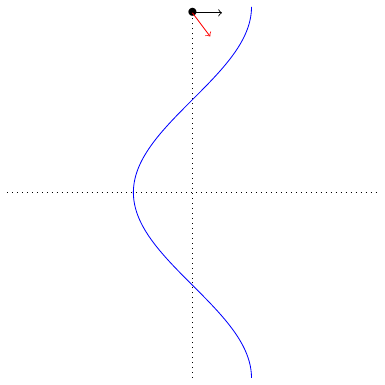}
      \caption{Step 1}
    \end{subfigure}}
  \fbox{\begin{subfigure}{0.47\textwidth}
      \includegraphics[width=\textwidth]{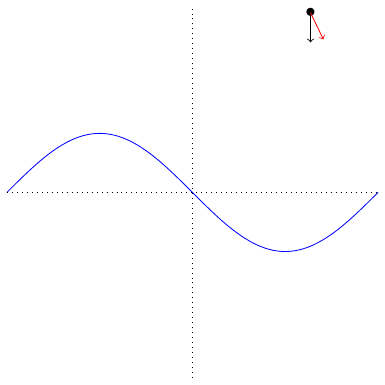}
      \caption{Step 2}
    \end{subfigure}}

  \fbox{\begin{subfigure}{0.47\textwidth}
      \includegraphics[width=\textwidth]{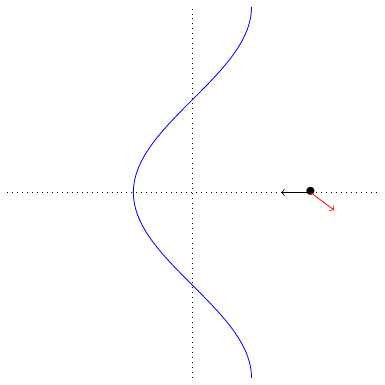}
      \caption{Step 3}
    \end{subfigure}}
  \fbox{\begin{subfigure}{0.47\textwidth}
      \includegraphics[width=\textwidth]{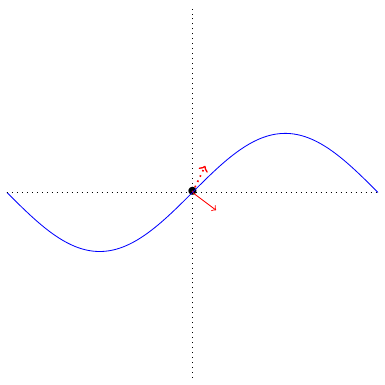}
      \caption{Step 4}
    \end{subfigure}}
  \caption{Controlling the projective process; the black arrow denotes the direction $x$ moves under the flow with profile drawn in blue. The red arrow is the coordinate $v \in S^1$, and the dotted red arrow in Step 4 is $\tilde{v}$.}\label{fig:proj-control}
\end{figure}
\begin{lemma}\label{lem:proj-control}
  For each $\tilde{v} \in S^1$ and $C > 0$, we have
  \[
    \Reach(\{(u, x, v) \in \uspace \times M^P \mid V(u) \leq C\}, (0, 0, \tilde{v})).
  \]
\end{lemma}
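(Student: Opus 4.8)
The strategy is to use Lemma~\ref{lem:reach-control} together with Lemma~\ref{lem:dif-proj}, reducing the problem to an explicit control construction for the flow maps $f_{k,t}$ on the torus, carrying along the projective coordinate $v$. The first observation is that, by transitivity of $\Reach(\cdot, \cdot)$ and stability/approximate-controllability of stochastic Navier--Stokes (the standard argument of~\cite{hairer_ergodicity_2006}, available via Lemma~\ref{lem:reach-control}), it suffices to handle the case of zero initial velocity: one first drives $u$ close to $0$ while keeping $x, v$ in a compact set, then applies the projective-process control from $(0, x_0, v_0)$ to $(0, 0, \tilde v)$. Concretely, I would show $\Reach(\{V(u) \le C\}, \{0\} \times M^P)$ first (this follows from the controllability of the base equation, since $0$ is reachable and $M^P = \T^2 \times S^1$ is compact, so we may take $K = M^P$), and then combine it with $\Reach((0, x_0, v_0), (0, 0, \tilde v))$ for every $(x_0, v_0)$, which is the genuinely new content.

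For the $u_0 = 0$ case, I would invoke Lemma~\ref{lem:dif-proj}: it is enough to exhibit, for any $\eps > 0$, a finite concatenation of flow maps $f_{k_n, t_n} \circ \cdots \circ f_{k_1, t_1}$ (with bounded total time, though for $\Reach$ only existence of a control with $|\dot g^k| \le 1$ is needed, so total time is unconstrained) sending $x_0$ to within $\eps$ of $0$ and $v_0 \cdot \nabla(f_{k_n,t_n}\circ\cdots\circ f_{k_1,t_1})(x_0)$ to within $\eps$ of $\tilde v$ in $S^1$. The construction proceeds in the four steps illustrated in Figure~\ref{fig:proj-control}. Using Assumption~\ref{asmp:modes}, $F$ contains modes spanning $\Z^2$, so by composing translations-with-shear generated by the $\nabla^\perp \Delta^{-1} e_k$ one has a large supply of available diffeomorphisms. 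Step~1: apply one flow $f_{k,t}$ for a small time chosen so that $x_0$ lands at a point where the local shear $\nabla(\nabla^\perp\Delta^{-1}e_k)$ is nondegenerate, using this to begin rotating $v_0$. Step~2: apply a second flow in a transverse direction (here we need $k, j \in F$ with $|k| \ne |j|$, i.e.\ the hypoellipticity condition, to guarantee the two shears do not commute and together generate rotations of the $S^1$-fiber); alternating these as many times as needed rotates $v$ to any desired target direction. Step~3: having set the fiber coordinate, note that the maps $f_{k,t}$ act transitively (approximately) on $\T^2$ because the vector fields $\nabla^\perp\Delta^{-1}e_k$ span the tangent space everywhere away from their zeros and the set of zeros can be escaped by composing two such fields; so we can now move $x$ to near $0$. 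Step~4: a final short correction flow near $x = 0$, chosen to have derivative $\approx \Id$ at $0$ (e.g.\ a flow whose profile vanishes to high order at $0$, or simply a sufficiently short time), repositions $x$ exactly into the $\eps$-ball of $0$ while perturbing $v$ by less than $\eps$; continuity of $v \mapsto v\cdot\nabla(\cdots)$ in the control parameters then gives the claim.

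The main obstacle is the simultaneous control of $x$ and $v$: moving $x$ across the torus necessarily drags $v$ along via the composed differential, so the two cannot be controlled independently in one pass. The resolution is precisely the ordering in Steps 1--4 --- first set $v$ by exploiting the non-commutativity of two shears at a fixed nearly-stationary location for $x$ (this is where $|k| \ne |j|$ enters, mirroring the role of the hypoellipticity bracket condition), and only afterwards transport $x$ to the target using flows whose differential at the relevant points can be made arbitrarily close to a rotation by an amount we pre-compensate for, or which we simply absorb into the final $\eps$ by continuity. The remaining details --- verifying that the relevant Jacobians $\nabla f_{k,t}$ are genuinely nondegenerate and that the group they generate acts with dense orbit on $\T^2 \times S^1$ (equivalently $\T^2 \times RP^1$) --- are the ``lengthy but visually clear'' computations referred to in the preamble to this section, and I would carry them out by direct calculation with the explicit formulas $f_{k,t}(x) = x + t\nabla^\perp\Delta^{-1}e_k(x)$ and $\nabla^\perp\Delta^{-1}e_k(x) = \pm |k|^{-2} k^\perp \cos(k\cdot x)$ or $\sin(k\cdot x)$, referencing Figure~\ref{fig:proj-control} for the geometric picture.
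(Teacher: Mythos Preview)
Your overall scheme (first let $u$ dissipate to $0$, then invoke Lemma~\ref{lem:dif-proj} and work with concatenations of the shear maps $f_{k,t}$) is the same as the paper's. But the order in which you treat the two manifold coordinates is reversed, and this creates a real gap.

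The paper drives $x$ to $0$ first (its Steps~1--3), letting $v$ land wherever it lands, and only \emph{then} rotates $v$ to $\tilde v$ (its Step~4), using a commutator-type sequence $f_{k_0,T},f_{k_1,T},f_{k_0,-T},f_{k_1,-T}$ based at the origin and showing the resulting $v_t$ winds once around $S^1$. You instead propose to set $v$ first and move $x$ afterward. The problem is your Step~3: any composite of shears that transports $x$ over an $O(1)$ distance has differential an $O(1)$ element of $SL(2,\R)$, so it shifts the projective coordinate by $O(1)$, not by $o_\eps(1)$; the ``absorb into $\eps$'' option is therefore false. The ``pre-compensate'' option is circular as stated: the differential you would need to invert depends on the starting point of the $x$-transport, which is where $x$ lands after your $v$-setting step, which in turn depends on the target you are pre-compensating toward. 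The paper's ordering dissolves this: once $x$ is pinned at a single base point, rotating $v$ is a problem on a fixed fibre, and the winding argument handles it.

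A second, smaller issue: you attribute the non-commutativity of the shears to the hypothesis $|k|\neq|j|$ from Assumption~\ref{asmp:modes}. That hypothesis plays no role in this lemma. What is needed---and what the paper uses---is only two \emph{linearly independent} $k_0,k_1\in F$, which already follows from the $\Z$-span condition; for such $k_0,k_1$ the shears generated by $\nabla^\perp\Delta^{-1}e_{k_0}$ and $\nabla^\perp\Delta^{-1}e_{k_1}$ fail to commute regardless of whether $|k_0|=|k_1|$. The condition $|k|\neq|j|$ is used elsewhere in the paper (Propositions~\ref{prop:B-kj-rep}--\ref{prop:A-j-span}) to cascade energy through the Navier--Stokes nonlinearity in the Malliavin argument, not here.
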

\begin{proof}
  Fix $k_0, k_1 \in F$ linearly independent with $k_0, k_1 > (0, 0)$ lexicographically and assume without loss of generality that $c_{k_0}, c_{k_1} > 0$. Fix $(u_0, x_0, v_0)$ with $V(u_0) \leq C$. We will construct a control in several steps which leads to $(0, 0, \tilde{v})$, noting that in each step the dependence on $u_0$ is only through the upper bound $V(u) \leq C$.

  \textbf{Step 0 (decay of $u$).}
  We show $\Reach((u_0, x_0, v_0), \{0\} \times M^P)$ holds. Indeed, the solution $u_t^0$ to the unforced equation tends to $0$ exponentially fast, so taking $g \equiv 0$ suffices.
  Indeed, this follows (for example) by noting that $\frac{\dif}{\dif t}\|\omega_t^0\|_{L^2} \leq -\nu \|\nabla \omega_t^0\|_{L^2}$ (by testing the equation with $\omega_t^0$), and interpolating between this inequality the growth bound obtained by repeating the proof of Proposition~\ref{prop:omega-bounds} with no forced modes (so the system is entirely deterministic).

  For the remaining steps, we use Lemma~\ref{lem:dif-proj} and refer to Figure~\ref{fig:proj-control}

  \textbf{Step 1 (moving $x$ to the workable region).}
  We show that $\Reach((0, x_0, v_0), \{(0, x, v) \mid |\sin(k_0 \cdot x)| \geq \frac{1}{2} \land v \in S^1\})$ holds. We first note that either $|\sin(k_1 \cdot x_0)| \geq \frac{1}{2}$ or $|\cos(k_1 \cdot x_0)| \geq \frac{1}{2}$. Then use the diffeomorphism $f := f_{k_1,T}$ in the first case and $f := f_{-k_1,T}$ in the second case, where $T$ is chosen to be the first time when $|\sin(k_0 \cdot f(x_0))| \geq \frac{1}{2}$.

  \textbf{Step 2 (aligning $x$ with $0$ in the $k_1$ direction).} We show that if $|\sin(k_0 \cdot x_0)| \geq \frac{1}{2}$, then $\Reach\left((0, x_0, v_0), \{(0, x, v) \mid k_1 \cdot x_0 = 0, v \in S^1\}\right)$ holds. Similarly to the previous step, we use the diffeomorphism $f_{k_0,T}$ and choose $T$ to be the first time when the desired condition is met.

  \textbf{Step 3 (moving $x$ to $0$).} We show that if $k_1 \cdot x_0 = 0$, then $\Reach((0, x_0, v_0), \{(0, 0)\} \times S^1)$ holds. This follows as in the previous steps, using the diffeomorphism $f_{-k_1,T}$.

  \textbf{Step 4 (moving $v$ to $\tilde{v}$).}
  We conclude that $\Reach((0, 0, v_0), (0, 0, \tilde{v}))$ holds. Indeed, choose $T > 0$ large and define
  \begin{equation*}
    v_t :=
    \begin{cases}
      v_0 \cdot \nabla f_{k_0,T}(x) &\quad t \in [0, T],\\
      v_T \cdot \nabla f_{k_1,T}(x) &\quad t \in (T, 2T],\\
      v_{2T} \cdot \nabla f_{k_0,-T}(x) &\quad t \in (2T, 3T],\\
      v_{3T} \cdot \nabla f_{k_1,-T}(x) &\quad t \in (3T, 4T],
    \end{cases}
  \end{equation*}
  where we again identify vectors in $\R^2$ with points in $S^1$ by projection. Unwrapping the definition of $f$, direct computation shows that $v_t$ winds around the circle at least once and therefore takes on every value in $S^1$.
\end{proof}
\begin{lemma}\label{lem:Jacobian-control}
  $\Reach((0, 0, \Id), \{(0, 0, A) \in \uspace \times M^J : |A| > M\})$ holds.
\end{lemma}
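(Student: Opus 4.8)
The plan is to produce, via Lemma~\ref{lem:reach-control} and Lemma~\ref{lem:dif-proj}-type reasoning (adapted to the Jacobian process), a concatenation of elementary-shear controls starting from $(0,0,\Id)$ that drives the vorticity back near $0$, keeps $x$ near $0$, and inflates the matrix coordinate $A$ to have $|A| > M$. The key observation is the same one used in Lemma~\ref{lem:proj-control}: for each $k_0 \in F$, the profile $\nabla^\perp \Delta^{-1} e_{k_0}$ is a stationary solution of the forced Navier--Stokes equation (the nonlinearity vanishes on it), so running the control $g^k(t) = \psi(t)\delta_{k_0,k}$ produces a velocity field that is, up to the short ramp-up/ramp-down of $\psi$, exactly the shear $\nabla^\perp\Delta^{-1} e_{k_0}$, under which $x$ evolves by the diffeomorphism $f_{k_0,t}$ and the Jacobian $A_t$ evolves by $A_t = A_0 \nabla (f_{k_0,t})(x_0)$ composed across the concatenation (by the chain rule / cocycle property of the Jacobian process). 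As in Step~0 of Lemma~\ref{lem:proj-control}, setting $g \equiv 0$ afterward lets $\omega$ decay exponentially back toward $0$, and by the stability statement in Lemma~\ref{lem:reach-control} it suffices to exhibit the desired behavior for the \emph{controlled, deterministic} flow and then invoke positivity of Wiener measure.

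Concretely, I would proceed as follows. First, pick $k_0, k_1 \in F$ linearly independent with $k_0,k_1 > (0,0)$ lexicographically and $c_{k_0}, c_{k_1} > 0$ (possible by Assumption~\ref{asmp:modes}). Starting from $(0,0,\Id)$, apply the control corresponding to $f_{k_0, T}$ for a large time $T$: since $x_0 = 0$ and $\nabla^\perp\Delta^{-1}e_{k_0}(0)$ and its derivatives are explicit, the point $x$ traces out a line and $\nabla f_{k_0,T}(x)$ is an explicit matrix with entries growing linearly in $T$ in the appropriate shear directions, so $|A_T|$ can be made larger than $M$ by choosing $T$ large. (One must take a little care that the trajectory $x_t$ stays where $\sin(k_0 \cdot x_t)$ or $\cos(k_0\cdot x_t)$ is bounded below so that the shear actually produces unbounded Jacobian growth; since we start at $x=0$ and the relevant second derivative $\nabla^2 f_{k_0,t}(0)$ is generically nonzero, a single shear of sufficient length already gives a Jacobian of norm $> 2M$, say.) After this step we are at a point $(0, x_1, A_1)$ with $|A_1| > 2M$ but $x_1 \neq 0$ in general. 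Then, exactly mimicking Steps~1--3 of Lemma~\ref{lem:proj-control}, apply a finite sequence of shears $f_{\pm k_0, \cdot}, f_{\pm k_1, \cdot}$ to bring $x$ back to $0$; this multiplies $A$ by further bounded invertible matrices (each shear over a bounded time has $\|\nabla f\|$ and $\|\nabla f^{-1}\|$ bounded by a constant depending only on $F$ and the total control time, which is uniformly bounded). Hence $|A|$ after this cleanup stays $\geq M$ provided in the first step we inflated to $|A_1| > C_0 M$ for the appropriate universal constant $C_0$ absorbing the cleanup factors. Finally, set $g\equiv 0$: $\omega$ decays to $0$ while $x \equiv 0$ and $A$ is frozen (since $\nabla u \equiv 0$ once $\omega = 0$, and approximately frozen for $\omega$ small), so we reach an arbitrarily small neighborhood of $(0,0,A)$ with $|A| > M$.

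The only genuinely delicate point is bookkeeping the effect of the ramp functions $\psi$ (needed so the control is Lipschitz with $|\dot g^k|\le 1$ rather than a pure indicator) and the $\omega$-decay tail: during these sub-intervals the velocity field is not exactly a stationary shear, so the Jacobian picks up extra bounded factors and $x$ moves a bit. But these are handled precisely as in the proof of Lemma~\ref{lem:proj-control} and Lemma~\ref{lem:reach-control} --- the perturbation is uniformly small on a fixed time horizon and does not affect the ``inflate then clean up'' structure since all correction factors on a bounded time interval are uniformly bounded above and below. So I expect the main obstacle to be purely notational: writing the Jacobian cocycle along the concatenation and verifying that one shear of length $T$ really does produce $\|\nabla f_{k_0,T}(0)\|\to\infty$ as $T \to \infty$ (an explicit $2\times 2$ matrix computation using that $\partial^2_{x}(\nabla^\perp\Delta^{-1}e_{k_0})(0)\neq 0$), after which transitivity of $\Reach$ from Lemma~\ref{lem:reach-control} assembles the steps. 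I would present it in the same four-step format as Lemma~\ref{lem:proj-control}, citing Figure~\ref{fig:proj-control} for the geometry of Steps~1--3 and adding one short paragraph for the new inflation step.
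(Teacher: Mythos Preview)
Your inflation step has a genuine gap. With $k_0 > (0,0)$ lexicographically one has $e_{k_0}(x)=\sin(k_0\cdot x)$, hence the shear velocity $\nabla^\perp\Delta^{-1}e_{k_0}(x) = -|k_0|^{-2}k_0^\perp\cos(k_0\cdot x)$ is everywhere parallel to $k_0^\perp$; the trajectory of $f_{k_0,\cdot}$ through $x_0=0$ therefore stays on the line $\{k_0\cdot x=0\}$, along which $\nabla(\nabla^\perp\Delta^{-1}e_{k_0})(x)\propto\sin(k_0\cdot x)$ vanishes identically. Consequently $\nabla f_{k_0,T}(0)=\Id$ for every $T$, and a single sine-mode shear launched from the origin produces \emph{no} Jacobian growth whatsoever. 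Your parenthetical appeal to $\nabla^2 f_{k_0,t}(0)\neq 0$ does not rescue this: the Jacobian coordinate tracks only the first spatial derivative of the flow map, and that derivative is exactly the identity here.

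The paper's argument is far shorter and sidesteps both this difficulty and all of your cleanup and decay steps. Rather than ramping up a large shear and then undoing its effect on $(u,x)$, the paper forces a \emph{single} mode at small constant rate $g^{k}(t)=\delta t$, so that $u_t$ is for all time an exact scalar multiple (of size $O(\delta)$) of one shear. Choosing the phase so that this shear vanishes at the origin---the cosine case, available since $F=-F$---keeps $x_t\equiv 0$ exactly, while $\nabla u_t(0)$ is a nonzero nilpotent matrix proportional to $k^\perp\otimes k$; thus $A_t=\Id + \big(\int_0^t c(s)\,\dif s\big)|k|^{-2}k^\perp\otimes k$ has $|A_t|\to\infty$ as $t\to\infty$ with $\|u_t\|_{\uspace}=O(\delta)$ throughout. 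One then takes $T$ large and $\delta$ small. Your multi-step scheme is repairable---for instance, first displace $x$ off the null set $\{\sin(k_0\cdot x)=0\}$, shear, then bring it back---but once you see the one-mode observation the whole construction collapses to a single control with no cleanup.
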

\begin{proof}
  Fix $k_0 \in F$ with $k_0 > (0, 0)$ lexicographically, so $e_{k_0}(x) = \sin(k_0 \cdot x)$ and in particular $e_{k_0}(0) = 0$. For $\delta > 0$, define $g \colon F \times \R_{\geq 0} \to \R$ by $g^{k_0}(t) := \delta t$ and $g^k(t) := 0$ for $k \neq k_0$. We observe that the solution $(u_t^g, x_t^g, A_t^g)$ to the controlled equation satisfies $\|u_t\|_{\uspace} \leq o_\delta(1)$, $x_t \equiv 0$ uniformly in $t$ and therefore a direct computation shows $|A_t^g| \to \infty$ as $t \to \infty$, so we choose $T$ sufficiently large and $\delta$ sufficiently small to conclude.
\end{proof}
\subsection{The two-point process}
\begin{figure}
  \fbox{\begin{subfigure}{0.47\textwidth}
      \includegraphics[width=\textwidth]{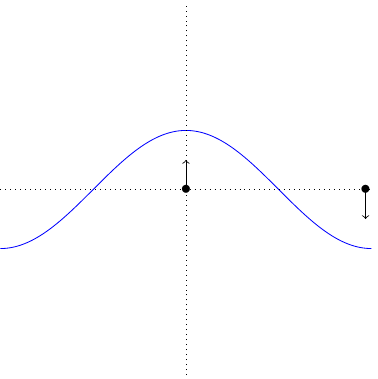}
      \caption{$f_{-k_2,\cdot}$}
    \end{subfigure}}
  \fbox{\begin{subfigure}{0.47\textwidth}
      \includegraphics[width=\textwidth]{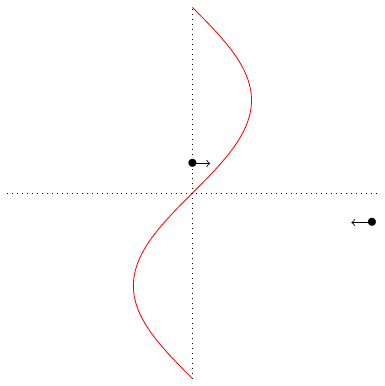}
      \caption{$f_{k_3,.}$}
    \end{subfigure}}

  \fbox{\begin{subfigure}{0.47\textwidth}
      \includegraphics[width=\textwidth]{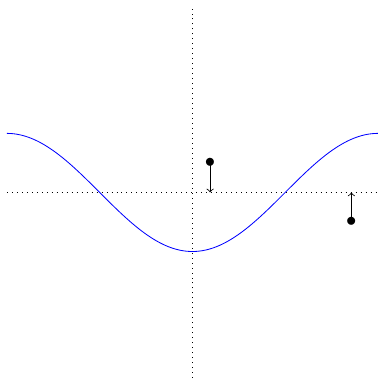}
      \caption{$f_{-k_2,\cdot}$}
    \end{subfigure}}
  \fbox{\begin{subfigure}{0.47\textwidth}
      \includegraphics[width=\textwidth]{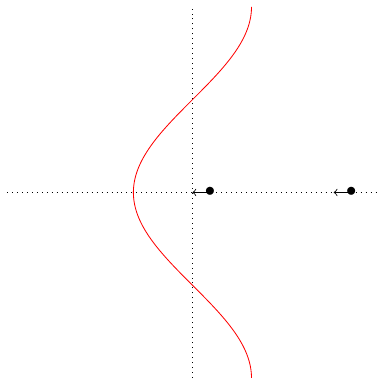}
      \caption{$f_{-k_3,\cdot}$}
    \end{subfigure}}
  \caption{Step 3 (moving $y$ to the workable region) for the two-point process; we illustrate the case where $k_2 = (1, 0)$, $k_3 = (0, 1)$ and draw $e_{\pm k_2}$ in blue and $e_{\pm k_3}$ in red. Observe that the point $x$ which starts at the origin ends at the origin, and the point $y$ which starts at $(\pi, 0)$ ends at $(\pi-\delta, 0)$ for some small constant $\delta > 0$.}\label{fig:2pt-control}
\end{figure}
\begin{lemma}\label{lem:2pt-control}
  There is $\tilde{y} \in \xspace$ such that for any $C > 0$,
  \[
    \Reach(\{(u, x, y) \in \uspace \times M^2 : V(u),|x-y|^{-1} \leq C\}, (0, 0, \tilde{y}))
  \]
  holds.
\end{lemma}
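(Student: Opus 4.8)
\textbf{Proof plan for Lemma~\ref{lem:2pt-control}.}
The plan is to exhibit a single target point $\tilde y \in \xspace \setminus \{0\}$ and show that every initial configuration $(u_0,x_0,y_0)$ with $V(u_0), |x_0-y_0|^{-1} \le C$ can be steered arbitrarily close to $(0,0,\tilde y)$. By Lemma~\ref{lem:reach-control} (transitivity of $\Reach$ via a compact intermediary) and Lemma~\ref{lem:dif-2pt}, it suffices to work in four stages: first kill $u_0$ by running the unforced equation, which decays exponentially exactly as in Step~0 of Lemma~\ref{lem:proj-control}; then, starting from $(0,x_0,y_0)$, use compositions of the explicit shear diffeomorphisms $f_{k,t}(x) = x + t\nabla^\perp\Delta^{-1}e_k(x)$ to move the pair $(x_0,y_0)$ to $(0,\tilde y)$. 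Because $\Reach$ on $\uspace \times M^2$ is transitive and the set of configurations with $V(u)\le C$, $|x-y|^{-1}\le C$ has compact $M^2$-component (closed and bounded away from the diagonal), I only need, for each fixed intermediate configuration, a finite control sequence with total time bounded uniformly over the (compact) parameter set; uniformity then comes from continuity of the flow maps in $(x_0,y_0)$.

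The geometric core is the two-point analogue of Steps~1--3 of Lemma~\ref{lem:proj-control}. Fix $k_2, k_3 \in F$ linearly independent with $k_2,k_3 > (0,0)$ lexicographically (possible since $\linspan_\Z F = \Z^2$), so $e_{\pm k_2}, e_{\pm k_3}$ are $\sin/\cos$ of $k_2\cdot x$, $k_3\cdot x$. I first move $x$ to the origin by the same three-substep procedure as in Lemma~\ref{lem:proj-control} (move $x$ into a region where $|\sin(k_2\cdot x)|\ge\tfrac12$, align $x$ with $0$ in the $k_3$ direction, then push $x$ to $0$ along $f_{-k_3,\cdot}$); crucially, during these moves $y$ is dragged along by the same diffeomorphisms, and since the $f_{k,t}$ are diffeomorphisms of $\xspace$ and $|x_0-y_0|^{-1}$ is bounded, the image pair still satisfies a uniform lower bound on $|x-y|$ — the points never collide. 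Once $x$ is at (or near) $0$, I must move $y$ to the target while keeping $x$ fixed: this is the step illustrated in Figure~\ref{fig:2pt-control}, where one applies the commutator-like sequence $f_{-k_3,\cdot}\circ f_{-k_2,\cdot}\circ f_{k_3,\cdot}\circ f_{-k_2,\cdot}$, whose net effect fixes the origin (since $e_{k}(0)$ and $\nabla^\perp\Delta^{-1}e_k(0)$ are arranged to vanish appropriately when $x=0$) but displaces $y$ by a controllable amount. Iterating such loops and adjusting times lets me sweep $y$ over an open subset of $\xspace$; I take $\tilde y$ to be any point in that reachable set, e.g.\ the $(\pi-\delta,0)$-type point indicated in the figure caption, and then fine-tune with one more short control to land within $\varepsilon$.

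The bookkeeping is that $\Reach$ only requires reaching an $\varepsilon$-neighborhood with positive probability, and Lemma~\ref{lem:reach-control} reduces this to constructing deterministic controls with $|\dot g^k|\le 1$ and uniformly bounded total time; concatenating the finitely many shear controls above (each of the form $g^k(t)=\psi(t)\delta_{k,k_i}$) and invoking positivity of Wiener measure and stability of~\eqref{eq:sns-velocity-form} closes the argument. The main obstacle is the last stage: verifying that the explicit composition of shears can move $y$ to the chosen $\tilde y$ while returning $x$ exactly to $0$, and that the reachable set of $y$-values is genuinely open (so that the $\varepsilon$-approximation works and $\tilde y$ can be fixed once and for all, independently of $(x_0,y_0)$). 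This is where one genuinely uses that $k_2,k_3$ span and that the shear vector fields $\nabla^\perp\Delta^{-1}e_k$ generate, via their Lie brackets evaluated off the diagonal, enough directions in $T_{(x,y)}M^2$; concretely it is a finite, visually clear but slightly tedious computation with the maps $f_{k,t}$, exactly of the type the paper defers to Figure~\ref{fig:2pt-control}. A secondary point to check carefully is the uniformity over the non-compact velocity variable — but this is handled cleanly by Step~0, which collapses all such $u_0$ to a single point $0$ before any geometric maneuvering begins.
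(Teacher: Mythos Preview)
Your plan tracks the paper's proof in structure---kill $u$ via unforced decay, drive $x$ to the origin with the shear moves from Lemma~\ref{lem:proj-control}, then steer $y$ using compositions of the $f_{k,t}$---but your decomposition of the last stage differs from the paper's in two ways worth flagging. First, the commutator loop of Figure~\ref{fig:2pt-control} is \emph{not} the paper's mechanism for moving $y$ to $\tilde y$: it appears only in Step~3, to dislodge $y$ from the degenerate locus where every available shear nearly fixes it, after which Step~4 uses two \emph{direct} shears $f_{k_2,T}$, $f_{k_3,T'}$ to align $y$ with a target $\tilde y$ that is fixed \emph{a priori} (any point with $|\sin(k_0\cdot\tilde y)|,|\sin(k_1\cdot\tilde y)|\ge\tfrac12$). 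Fixing $\tilde y$ explicitly up front sidesteps the obstacle you yourself identify---showing that the reachable set of $y$-values is common to all starting configurations---rather than having to argue that iterated commutators sweep out an open set independent of $y_0$. Second, the paper inserts an explicit separation step (its Step~2) between driving $x$ to $0$ and steering $y$: after Step~1 the residual $|y|$ carries a lower bound depending on $|x_0-y_0|$ and on the Lipschitz constants accumulated while $u$ decayed, so the paper iterates $f_{k_0,10},f_{k_1,10}$ to restore a \emph{data-independent} lower bound $|y|\ge (100\max|k|)^{-1}$ before attempting Steps~3--4. Your compactness-of-the-$M^2$-slice invocation gives uniformity of existence of controls, but does not by itself furnish this geometric lower bound that the explicit later constructions rely on.
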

\begin{proof}
  Fix $k_0, k_1 \in F$ linearly independent with $k_0, k_1 > (0, 0)$ lexicographically and assume without loss of generality that $c_{k_0}, c_{k_1} > 0$. Choose $\tilde{y} \in \xspace$ to be such that $|\sin(k_0 \cdot \tilde{y})|, |\sin(k_1 \cdot \tilde{y})| \geq \frac{1}{2}$.

  \textbf{Step 1 (decay of $u$).}
  We show that $\Reach((u_0, x_0, y_0), \{(0, 0)\} \times \xspace)$ holds, and that the point $(\tilde{y}, \tilde{x})$ which witnesses this fact satisfies $|\tilde{y}-\tilde{x}|^{-1} \leq C(\varepsilon)\exp(V(u_0))|y_0-x_0|^{-1}$. We use the same control as in Steps 0, 1 and 2 in the proof of Lemma~\ref{lem:proj-control}, noting that in, Step 0, $u_t$ decays at an exponential rate (and therefore $|x_T-y_T|^{-1} \leq C\exp(V(u_0))$ after that step) and that the remaining steps contract $|x-y|$ by another constant factor, depending on $\varepsilon$.
  We note that this is the only step which depends on $u_0$, and it is clear that the dependence is only through $V(u_0)$.

  \textbf{Step 2 (separation of $x$ and $y$).}
  We show that
  \[
    \Reach\left(\{(0, 0)\} \times \{y \in \xspace : |y| \geq c\}, \{(0, 0)\} \times \left\{y \in \xspace : |y| \geq \frac{1}{100\max(|k_0|, |k_1|)}\right\}\right)
  \]
  holds for each $c \geq 0$. This is immediate by iterating the pair of diffeomorphisms $f_{k_0,10},f_{k_1,10}$.

  \textbf{Step 3 (moving $y$ to the workable region).}
  We show that $\Reach((0, 0, y_0), \{(0, 0)\} \times R)$ holds whenever $y_0 \geq \frac{1}{100\max_{k \in F}|k|}$ and, for each $k \in F$,
  \[
    R_k := \left\{y \in \xspace : |\sin(k \cdot y)| \geq \frac{1}{100\max_{k' \in F}|k'|}\right\},
  \]
  and
  \[
    R := \bigcup_{k_2, k_3 \in F \; \text{linearly independent}} (R_{k_2} \cap R_{k_3})
  \]
  If $y_0 \in R_{k_2}$ for some $k_2 \in F$, then we can simply use the diffeomorphism $f_{k_2,T}$.

  Otherwise, using the assumption that the integer linear span of $F$ is $\Z^2$, there are some linearly independent $k_2,k_3 \in F$, with $k_2,k_3 > (0, 0)$ lexicographically, such that $\cos(k_2 \cdot y_0) < -\frac{1}{2}$. For some $\delta_1, \delta_2, \delta_3, \delta_4 > 0$, the controls $f_{-k_2,\delta_1}$ (until $y \in R_{k_3}$), $f_{k_3,\delta_2}$ (until $y \in R_{k_2}$), $f_{-k_2,-\delta_3}$ (until $k_3 \cdot x = 0$) and $f_{-k_3,-\delta_4}$ (until $x = 0$). One can check (see Figure~\ref{fig:2pt-control}) that after all the diffeomorphisms are applied successively, we have $y \in R_{k_2}$. We therefore conclude this step.

  \textbf{Step 4 (conclusion).}
  We show that if $\sin(k_2 \cdot y_0),\sin(k_3 \cdot y_0) \neq 0$ for some $k_2,k_3 \in F$, then we have $\Reach((0, 0, y_0), (0, 0, \tilde{y}))$ holds for $y_0 \neq 0$. This follows from using the diffeomorphisms $f_{k_2,T}$ and $f_{k_3,T'}$ successively, first ensuring that $k_3 \cdot f_{k_2,T}(y_0) = k_3 \cdot \tilde{y}$, and then aligning in the $k_2$ direction.
\end{proof}

{\small
  \bibliographystyle{alpha}
  \bibliography{references,references-bill}
}

\end{document}